\theoremstyle{definition}
\newtheorem*{theorem*}{Theorem}
\newtheorem*{conditionA}{Condition A}
\newtheorem*{conditionB}{Condition B}
\newtheorem*{theoremA}{Theorem A}
\newtheorem{theorem}{Theorem}[section]
\newtheorem{definition}[theorem]{Definition}
\newtheorem{lemma}[theorem]{Lemma}
\newtheorem{proposition}[theorem]{Proposition}
\newtheorem{corollary}[theorem]{Corollary}
\theoremstyle{remark}
\newtheorem{remark}[theorem]{Remark}
\newtheorem{example}[theorem]{Example}
\newtheorem{convention}[theorem]{Convention}
\let\Horig\H
\renewcommand*{\o}{\circ}
\newcommand*{\A}{i}
\newcommand*{\ua}{\underline{\alpha}}
\newcommand*{\ub}{\underline{\smash{\beta}}}
\newcommand{\excise}[1]{}
\newcommand*{\udeg}{\underline{\smash{\deg}}}
\newcommand*{\uf}{\underline{f}}
\newcommand{\id}{\operatorname{id}}
\newcommand{\C}{{\mathrm{C}}}
\newcommand{\Z}{{\mathbb{Z}}}
\newcommand{\Q}{{\mathbb{Q}}}
\renewcommand{\H}{\operatorname{H}}
\newcommand{\HH}{\mathcal{H}}
\newcommand{\IH}{\operatorname{IH}}
\newcommand{\N}{\mathrm{N}}
\renewcommand{\a}{\alpha}
\renewcommand{\b}{\ub}
\newcommand{\crk}{\operatorname{crk}}
\newcommand{\mmod}{\text{-}\mathrm{mod}}
\newcommand{\OS}{\operatorname{OS}}
\newcommand{\VRep}{\operatorname{VRep}}
\newcommand{\Ind}{\operatorname{Ind}}
\newcommand{\M}{\mathrm{M}}
\newcommand{\Mo}{{}}              
\newcommand{\rk}{\operatorname{rk}}
\newcommand{\IC}{\operatorname{IC}}
\newcommand{\cL}{\mathscr{L}}
\newcommand{\cE}{\mathcal{E}}
\newcommand{\Mi}{\M\setminus i}
\newcommand{\Mip}{(\M\setminus i)}
\newcommand{\CHi}{\CH_{(i)}}
\newcommand{\tIH}{\widetilde\IH}
\newcommand{\tuJ}{\widetilde\uJ}
\renewcommand{\and}{\;\;\text{and}\;\;}
\newcommand{\sS}{\mathscr{S}}
\newcommand{\Hom}{\operatorname{Hom}}
\newcommand{\Ext}{\operatorname{Ext}}
\newcommand{\Ann}{\operatorname{Ann}}
\newcommand{\mm}{\mathfrak{m}}
\newcommand{\uJ}{\underline{\operatorname{J}}}
\newcommand{\uH}{\underline{\operatorname{H}}}
\newcommand{\CD}{\textsf{CD}}
\newcommand{\uCD}{\underline{\CD}}
\newcommand{\PD}{\textsf{PD}}
\newcommand{\uPD}{\underline{\PD}}
\newcommand{\HL}{\textsf{HL}}
\newcommand{\uHL}{\underline{\HL}}
\newcommand{\HR}{\textsf{HR}}
\newcommand{\uHR}{\underline{\HR}}
\newcommand{\NS}{\textsf{NS}}
\newcommand{\uNS}{\underline{\NS}}
\newcommand{\CH}{\operatorname{CH}}
\newcommand{\uCH}{\underline{\CH}}
\newcommand{\uIH}{\underline{\IH}}
\newcommand{\upsi}{\underline{\smash{\psi}}}
\newcommand{\uvarphi}{\underline{\smash{\varphi}}}
\newcommand{\uL}{\underline{\operatorname{L}}}
\newcommand{\K}[2]{\operatorname{K}_{#1}(#2)}
\newcommand{\uK}[2]{{\underline{\operatorname{K}}_{#1}(#2)}}
\newcommand{\oL}{\operatorname{L}}
\newcommand{\sig}{\operatorname{sig}}
\DeclareRobustCommand{\uIHi}{\underline{\IH}_i(\M)}
\begin{document}

\title{Singular Hodge theory for combinatorial geometries}

\author{Tom Braden}
\address{Department of Mathematics and Statistics, University of Massachusetts, Amherst, MA.}
\email{braden@umass.edu}

\author{June Huh}
\address{Department of Mathematics, Princeton University, Princeton, NJ, and Korea Institute for Advanced Study, Seoul, Korea.}
\email{huh@princeton.edu}

\author{Jacob P. Matherne}
\address{Department of Mathematics, North Carolina State University, Raleigh, NC.}
\email{jpmather@ncsu.edu}

\author{Nicholas Proudfoot}
\address{Department of Mathematics, University of Oregon, Eugene, OR.}
\email{njp@uoregon.edu}

\author{Botong Wang}
\address{Department of Mathematics, University of Wisconsin-Madison, Madison, WI.}
\email{wang@math.wisc.edu}

\thanks{June Huh received support from NSF Grant DMS-1638352 and the Ellentuck Fund.  Jacob Matherne received support from NSF Grants DMS-1638352 and DMS-2452179, the Association of Members of the Institute for Advanced Study, the Max Planck Institute for Mathematics, the Deutsche Forschungsgemeinschaft (DFG) under Germany's Excellence Strategy - GZ 2047/1, Projekt-ID 390685813, and Simons Foundation Travel Support for Mathematicians Award MPS-TSM00007970.  Nicholas Proudfoot received support from NSF Grants DMS-1565036, DMS-1954050, and DMS-2039316.  Botong Wang received support from NSF Grant DMS-1701305 and the Alfred P. Sloan Foundation.}

\begin{abstract}
We introduce the intersection cohomology module of a matroid and prove that it satisfies Poincar\'e duality, the hard Lefschetz theorem, and the Hodge--Riemann relations.
As applications, we obtain proofs of Dowling and Wilson's Top-Heavy conjecture
and  the nonnegativity of the coefficients of  Kazhdan--Lusztig polynomials for all matroids.
\end{abstract}

\maketitle
\setcounter{tocdepth}{1}
\tableofcontents

\section{Introduction}\label{Introduction}

\subsection{Results}\label{sec:results}
A {\bf matroid} $\mathrm{M}$ on a finite set $E$ is a nonempty collection of subsets of $E$, called {\bf flats} of $\mathrm{M}$,
that satisfies the following properties:
\begin{enumerate}[$\bullet$]\itemsep 5pt
\item If $F_1$ and $F_2$ are flats, then their intersection $F_1 \cap F_2$ is a flat.
\item If $F$ is a flat, then any element in $E \setminus F$ is in exactly one flat  that is minimal among the flats strictly containing $F$.
\end{enumerate}
For notational convenience, we assume throughout that $\mathrm{M}$ is {\bf loopless}:
\begin{enumerate}[$\bullet$]\itemsep 5pt
\item The empty subset of $E$ is a flat.
\end{enumerate}
We write $\mathscr{L}(\mathrm{M})$ for the poset of flats of $\mathrm{M}$, which is a geometric lattice \cite[Theorem 1.7.5]{Oxley}, and we write $\leq$ for containment of flats.
For every flat $F$, any maximal flag of flats strictly contained in $F$ has the same cardinality, which is called the {\bf rank} of $F$
and denoted $\rk F$. The {\bf rank} of $\M$ is defined to be $\rk E$. 
For any nonnegative integer $k$, we write $\cL^k(\M)$ to denote the set of rank $k$ flats of $\M$.
A matroid can be equivalently defined in terms of its {\bf independent sets}, {\bf circuits}, or the {\bf rank function}.
For background in matroid theory, we refer to \cite{Oxley} and \cite{Welsh}.  

Let $\Gamma$ be a finite group acting on $\M$.  By definition, 
 $\Gamma$ permutes the elements of $E$ in such a way that it sends flats to flats.  

\begin{theorem}\label{thm:top-heavy}
The following holds for any $k\leq j \leq \rk \M-k$.
\begin{enumerate}[(1)]\itemsep 5pt
\item The cardinality of  $\cL^k(\M)$ is at most the cardinality of $\cL^{j}(\M)$. 
\item There is an injective map $\iota\colon \cL^k(\M)\to\cL^j(\M)$ satisfying $F\leq \iota(F)$ for all $F\in\cL^k(\M)$.
\item There is an injective map 
$\Q\cL^k(\M)\to\Q\cL^j(\M)$
of permutation representations of $\Gamma$.\footnote{
One might hope to combine the last two parts of Theorem \ref{thm:top-heavy} by asking the map $\iota$ to be $\Gamma$-equivariant,
but this is not possible, even if we drop the condition that $F\leq \iota(F)$.  
For example, when $\M$ is the uniform matroid of rank $3$ on $4$ elements,
 there is no $S_4$-equivariant map from $\cL^1(\M)$ to $\cL^2(\M)$.  
}
\end{enumerate}
\end{theorem}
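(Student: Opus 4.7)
The plan is to derive all three statements from the hard Lefschetz theorem for intersection cohomology, applied simultaneously to $\M$ and to all of its localizations.

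For $F\in\cL(\M)$, write $\M^F$ for the matroid whose flats are the elements of the upper interval $[F,\hat{1}]\subset\cL(\M)$, so that $\rk\M^F=\rk\M-\rk F$. The key structural ingredient I will use is a $\Gamma$-equivariant decomposition of the form
\[
\Q\cL^k(\M)\;\cong\;\bigoplus_{F\in\cL(\M)}\IH^{2(k-\rk F)}(\M^F),
\]
in which the $F$-summand is spanned by classes corresponding to rank-$k$ flats lying above $F$. Such a decomposition is the natural combinatorial shadow of the BBD decomposition theorem applied to the matroid Schubert variety of $\M$, and should be extracted from the module structure on $\IH(\M)$ over $\uCH(\M)$ developed earlier in the paper.

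Granting this, the second ingredient is hard Lefschetz for each $\IH(\M^F)$: choosing a Lefschetz element in $\uCH^1(\M^F)$ (which can be chosen $\Gamma_F$-invariant, where $\Gamma_F$ is the stabilizer of $F$), multiplication by the appropriate power gives a $\Gamma_F$-equivariant injection $\IH^{2a}(\M^F)\hookrightarrow\IH^{2b}(\M^F)$ whenever $a+b\leq\rk\M^F=\rk\M-\rk F$. Setting $a=k-\rk F$ and $b=j-\rk F$, the hypothesis $k+j\leq\rk\M$ guarantees $a+b\leq\rk\M-\rk F$ for every $F$, so assembling these injections summand-by-summand (and inducing across $\Gamma$-orbits of flats) yields a $\Gamma$-equivariant injection $\Q\cL^k(\M)\hookrightarrow\Q\cL^j(\M)$. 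This is statement (3); part (1) follows by taking dimensions.

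For (2), observe that the map just constructed is block-upper-triangular with respect to the partial order, since the $F$-summand targets rank-$j$ flats above $F$. Thus for any $S\subseteq\cL^k(\M)$, the image of the span of $S$ in $\Q\cL^k(\M)$ is contained in the span of $T_S=\{G\in\cL^j(\M)\co G\geq F\text{ for some }F\in S\}$; injectivity then forces $|T_S|\geq|S|$, and Hall's marriage theorem produces the required order-preserving injection $\iota\co\cL^k(\M)\to\cL^j(\M)$.

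The principal obstacle is the hard Lefschetz theorem for $\IH(\M)$, which will require a simultaneous induction across all matroids intertwining Poincar\'e duality, hard Lefschetz, and the Hodge--Riemann relations, in the style of Adiprasito--Huh--Katz. A secondary but crucial task is to exhibit the decomposition of $\Q\cL^k(\M)$ into $\IH$-summands in a manner that is both $\Gamma$-equivariant and compatible with the order structure on $\cL(\M)$, so that parts (2) and (3) both fall out of the same construction.
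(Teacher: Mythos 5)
There is a genuine gap: the decomposition $\Q\cL^k(\M)\cong\bigoplus_{F\in\cL(\M)}\IH^{2(k-\rk F)}(\M_F)$ on which your whole argument rests is false, and a dimension count already rules it out. For the Boolean matroid of rank $2$ we have $|\cL^1(\M)|=2$, while the right-hand side contributes $\dim\IH^1(\M)=2$ from $F=\varnothing$ plus $1$ from each of the two atoms, giving $4$. More structurally, each rank-$k$ flat lies above many flats $F$, so the proposed summands overcount; and the dimensions of the graded pieces of $\IH(\M_F)$ are coefficients of $Z$-polynomials, which exceed the numbers of flats. The decomposition-theorem statement you are reaching for decomposes $\CH(\M)=\H^{\bullet}(X)$ (the resolution), not $\H(\M)=\H^{\bullet}(Y)=\bigoplus_k\Q\cL^k(\M)$; the intersection cohomology of the singular Schubert variety does not split off pieces of the ordinary cohomology in the way you describe. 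The true statement in this direction is Proposition \ref{prop:gr}, which decomposes the associated graded $\mm^k\IH(\M)/\mm^{k+1}\IH(\M)$, and it is used for Theorem \ref{thm:KL}, not for Top-Heavy.

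The correct mechanism is much simpler and makes the decomposition unnecessary: $\H(\M)=\bigoplus_k\Q\cL^k(\M)$ is the graded M\"obius algebra, it sits inside $\CH(\M)$ as a subalgebra, and hence inside $\IH(\M)$ as an $\H(\M)$-submodule (since $1\in\IH(\M)$). Taking $\ell=\sum_{F\in\cL^1(\M)}y_F\in\H^1(\M)$, which is manifestly $\Gamma$-invariant, the hard Lefschetz isomorphism $\ell^{d-2k}\colon\IH^k(\M)\to\IH^{d-k}(\M)$ forces $\ell^{j-k}\colon\IH^k(\M)\to\IH^j(\M)$ to be injective for $k\le j\le d-k$, and this map restricts to an injection $\H^k(\M)\to\H^j(\M)$, i.e.\ $\Q\cL^k(\M)\to\Q\cL^j(\M)$. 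That gives (3) and (1) directly. Your derivation of (2) from such an injection is fine: the matrix of multiplication by $\ell^{j-k}$ in the flat bases is supported on pairs $F\le G$ (since $y_Fy_G$ is $0$ or $y_{F\vee G}$), and your Hall's-marriage argument is equivalent to the paper's observation that a nonzero maximal minor must have a nonzero term, which is exactly an order-respecting matching $\iota$. So the combinatorial endgame is sound; it is the passage from hard Lefschetz to an injection on $\Q\cL^k(\M)$ that needs to be routed through the submodule $\H(\M)\subseteq\IH(\M)$ rather than through a nonexistent direct sum decomposition.
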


The first two parts of Theorem \ref{thm:top-heavy} were conjectured by Dowling and Wilson \cite{DW1,DW2}, and have come to be known as
the {\bf Top-Heavy conjecture}.  
Its best known instance is the de Bruijn--Erd\Horig{o}s theorem on point-line incidences in projective planes \cite{dBE}: 
\begin{quote}
\emph{Let $E$ be a finite set of points in a projective plane that is not contained in any line. Then there are at least $|E|$ distinct lines that intersect $E$ in at least two points.}
\end{quote}
The second statement of Theorem~\ref{thm:top-heavy} implies that the lattice $\cL=\cL(\M)$ admits order-matchings
\[
\cL^0 \rightarrow \cL^1 \rightarrow \cdots \rightarrow \cL^{\lfloor\frac{\rk \M}{2} \rfloor}   \leftrightarrow \cL^{\lceil\frac{\rk \M}{2}\rceil}  \leftarrow \cdots \leftarrow \cL^{\rk\M-1} \leftarrow \cL^{\rk \M},
\]
and hence has the Sperner property \cite[Chapter 4]{StanleyAC}.  

When $\cL$ is a Boolean lattice or a projective geometry, Theorem~\ref{thm:top-heavy} is a classical result; see for example \cite[Corollary 4.8 and Exercise 4.4]{StanleyAC}.  In these cases, the Sperner property has the following interpretation:
\begin{quote}
\emph{The maximal number of pairwise incomparable subsets of $[n]$ is the maximum among the  binomial coefficients ${n \choose k}$. 
Similarly, the maximal number of pairwise incomparable subspaces of $\mathbb{F}_q^n$ is the maximum among the  $q$-binomial coefficients ${n \choose k }_q$.}
\end{quote}
Other earlier versions of Theorem \ref{thm:top-heavy}, for specific classes of matroids or small values of $k$,
can be found in \cite{Motzkin, BK, Greene, Mason, Heron,  KungRadonI, KungRadon, KungRadonII, KungLinesPlanes}.
In \cite{HW}, Theorem \ref{thm:top-heavy} was proved
for matroids realizable over some field.
See Section \ref{sec:realizable} for an overview of that proof.
Although realizable matroids provide the primary
motivation for the definition of a matroid, almost all matroids are not realizable over any field.  More precisely, the portion of matroids on the ground
set $[n]$ that are realizable over some field goes to zero as $n$ goes to infinity \cite{Nelson}.

Our proof of Theorem \ref{thm:top-heavy} is closely related to Kazhdan--Lusztig theory of matroids, as developed in \cite{EPW}.
For any flat $F$ of $\M$, we define the {\bf localization} of $\M$ at $F$ to be the matroid $\M^F$
on the ground set $F$ whose flats are the flats of $\M$ contained in $F$. 
Similarly,
 we define the {\bf contraction} of $\M$ at $F$
to be the matroid $\M_F$ on the ground set $E\setminus F$ whose flats are $G \setminus F$ for flats $G$ of $\M$ containing $F$.\footnote{In 
\cite{EPW}, as well as several other references on Kazhdan--Lusztig polynomials of matroids, the localization is denoted $\M_F$
and the contraction is denoted $\M^F$.  Our notational choice here is consistent with \cite{AHK} and \cite{BHMPW}.}
We also consider the {\bf characteristic polynomial}
\[
\chi_{\M}(t) \coloneq \sum_{I\subseteq E} (-1)^{|I|} t^{\crk I},
\]
where $\crk I$ is the corank of $I$ in $\M$.
According to \cite[Theorem 2.2]{EPW},
there is a unique way to assign a polynomial $P_{\M}(t)$ to each matroid $\M$, called the {\bf Kazhdan--Lusztig polynomial} of $\M$,  subject to the following  three conditions:
\begin{enumerate}[(a)]\itemsep 5pt
\item If the ground set is empty, then $P_{\M}(t)$ is the constant polynomial $1$.
\item For every matroid $\M$ on a nonempty ground set, the degree of  $P_\M(t)$ is strictly less than  $\rk \M / 2$.
\item For every matroid $\M$, we have $t^{\rk \M} P_\M(t^{-1}) = \displaystyle\sum_{F\in \cL(\M)} \chi_{\M^F}(t) \cdot P_{\M_F}(t)$.
\end{enumerate}
Alternatively \cite[Theorem 2.2]{BV}, one may define Kazhdan--Lusztig polynomials of matroids by replacing the third condition above with the following condition not involving $\chi_\M(t)$: 
\begin{enumerate}[(a)]\itemsep 5pt
\item[(c)'] For every matroid $\M$, the polynomial $Z_\M(t)\coloneq \displaystyle\sum_{F \in \cL(\M)} t^{\rk F} P_{\M_F}(t)$ satisfies the identity
\[
t^{\rk \M} Z_{\M}(t^{-1})=Z_{\M}(t).
\]
\end{enumerate}
The polynomial $Z_{\M}(t)$, called the {\bf \boldmath{$Z$}-polynomial} of $\M$, was introduced in \cite{PXY} using the first definition of $P_\M(t)$,
where it was shown to satisfy the displayed identity.
The degree of the $Z$-polynomial of $\M$ is exactly the rank of $\M$, and its leading coefficient is $1$.

\begin{theorem}\label{thm:KL}
The following holds for any matroid $\M$.
\begin{enumerate}[(1)]\itemsep 5pt
\item The polynomial $P_{\M}(t)$ 
has nonnegative coefficients.
\item The polynomial $Z_{\M}(t)$ is unimodal:  
The  coefficient of $t^k$  in  $Z_{\M}(t)$ is less than or equal to the coefficient of $t^{k+1}$ in  $Z_{\M}(t)$  for  all $k< \rk \M / 2$.
\end{enumerate}
\end{theorem}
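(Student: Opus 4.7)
The plan is to realize both $P_\M(t)$ and $Z_\M(t)$ as Poincaré polynomials of graded vector spaces and then deduce (1) from construction and (2) from hard Lefschetz.

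First, I would build the intersection cohomology module $\IH(\M)$ announced in the abstract, by induction on $\rk\M$, as a finite-dimensional graded module over the Chow ring $\CH(\M)$. The base case is $\IH(\M) = \C$ in degree $0$ when $\rk\M = 0$. The inductive construction will be rigged so that $\IH(\M)$ satisfies the degree bound $\deg \IH(\M) < \rk\M/2$ and so that its Poincaré polynomial obeys the recursive characterization (a)--(c) of $P_\M(t)$. Since a Poincaré polynomial of a genuine graded vector space has nonnegative integer coefficients, this immediately yields (1).

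Next, I would assemble a larger graded module $\tIH(\M)$---morally the intersection cohomology of the matroid Schubert variety (or of the reciprocal plane)---equipped with a canonical decomposition
\[
\tIH(\M) \;\cong\; \bigoplus_{F \in \cL(\M)} \IH(\M_F)[-\rk F].
\]
Taking Poincaré polynomials on both sides gives exactly $Z_\M(t) = \sum_F t^{\rk F} P_{\M_F}(t)$. To prove (2), I would invoke the hard Lefschetz theorem for $\tIH(\M)$: for a suitable degree-one element $\ell \in \CH(\M)$ and any $k < \rk\M/2$, multiplication
\[
\ell^{\rk\M - 2k}\colon \tIH(\M)^k \longrightarrow \tIH(\M)^{\rk\M - k}
\]
is an isomorphism. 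Together with Poincaré duality, this forces multiplication by $\ell$ to be injective from degree $k$ into degree $k+1$ for each $k < \rk\M/2$, so $\dim \tIH(\M)^k \leq \dim \tIH(\M)^{k+1}$, which is precisely the unimodality statement for $Z_\M(t)$.

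The main obstacle is of course the establishment of hard Lefschetz and the Hodge--Riemann relations for $\tIH(\M)$ in the non-realizable setting. Following the template of \cite{AHK}, this must be carried out as a single grand induction simultaneously propagating Poincaré duality, hard Lefschetz, and Hodge--Riemann across the $\IH$ and $\tIH$ of all localizations and contractions of $\M$. Key subtleties include defining a nondegenerate $\CH(\M)$-equivariant pairing on $\IH(\M)$, identifying a sufficiently large cone of Lefschetz classes, and using a deformation/signature argument to carry Hodge--Riemann through the inductive step without losing positivity. Once that Hodge-theoretic package is in hand, Theorem \ref{thm:KL} is a formal corollary by the two paragraphs above.
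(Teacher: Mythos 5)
Your proposal follows essentially the same route as the paper: realize $Z_\M(t)$ as the Poincar\'e polynomial of an intersection cohomology module and $P_\M(t)$ as that of a smaller piece (in the paper, the quotient $\IH(\M)/\mm\IH(\M)$, whose graded pieces assemble $\IH(\M)$ via the $\mm$-adic filtration rather than a direct sum decomposition, though this makes no difference at the level of Poincar\'e polynomials), verify the defining recursion together with the degree bound and palindromicity to identify these with the Kazhdan--Lusztig and $Z$-polynomials, and deduce nonnegativity for free and unimodality from hard Lefschetz. The deferred grand induction establishing the K\"ahler package is exactly how the paper structures the argument, so the proposal is correct and matches the paper's proof.
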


The first part of Theorem \ref{thm:KL} was  conjectured in \cite[Conjecture 2.3]{EPW},
where it was proved for matroids realizable over some field
using $l$-adic \'etale intersection cohomology theory.
See Section \ref{sec:realizable} for an overview of that proof.
For sparse paving matroids, a combinatorial proof of the nonnegativity  was given in  \cite{SparsePaving}, and a proof for all matroids, independent of the results of this paper, recently appeared in \cite{coron2024operadic}.  


Kazhdan--Lusztig polynomials of matroids are special cases of Kazhdan--Lusztig--Stanley polynomials \cite{subdivisions,KLS}.
Several important
families of Kazhdan--Lusztig--Stanley polynomials turn out to have nonnegative coefficients, including classical Kazhdan--Lusztig polynomials associated with Bruhat intervals \cite{EW}
and $g$-polynomials of convex polytopes \cites{Karu,BrLu05}.  For more on this analogy, see Section \ref{sec:antecedents}.
 
For a finite group $\Gamma$ acting on $\M$, one can define the {\bf equivariant Kazhdan--Lusztig polynomial} $P_{\M}^{\Gamma}(t)$ and the {\bf equivariant \boldmath{$Z$}-polynomial} $Z_{\M}^{\Gamma}(t)$; see Appendix \ref{Appendix} 
for  formal definitions.   
These are polynomials with coefficients in the ring of virtual representations of $\Gamma$, with the property that taking dimensions
recovers the ordinary polynomials  \cite{GPY,PXY}.  
Our proof shows the following strengthening of Theorem \ref{thm:KL}. 

\begin{theorem}\label{thm:equivariantKL}
The following holds for any matroid $\M$ and any finite group $\Gamma$ acting on $\M$.
\begin{enumerate}[(1)]\itemsep 5pt
\item The polynomial $P_{\M}^{\Gamma}(t)$ has nonnegative coefficients: 
The coefficients of $P_{\M}^{\Gamma}(t)$ are isomorphism classes of honest, rather than virtual,  
representations of $\Gamma$.
\item The polynomial $Z^\Gamma_{\M}(t)$ is unimodal: The  coefficient of $t^k$ in  $Z^\Gamma_{\M}(t)$  is isomorphic to a subrepresentation of the coefficient of $t^{k+1}$  in  $Z^\Gamma_{\M}(t)$ for all $k<\rk \M / 2$.
\end{enumerate}
\end{theorem}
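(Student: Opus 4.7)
The plan is to carry out the proof of Theorem \ref{thm:KL} equivariantly. All the constructions in the body of the paper --- the intersection cohomology module $\IH(\M)$, its graded pieces, the Lefschetz and Hodge--Riemann structures, and the various auxiliary graded modules built from matroid data --- are functorial in $\M$, so a group $\Gamma$ of matroid automorphisms acts naturally on each of them. Upgrading to the equivariant statement therefore reduces to two points: (a) the direct sum decomposition of $\IH(\M)$ used in the proof of Theorem \ref{thm:KL} can be rewritten $\Gamma$-equivariantly by grouping flats into $\Gamma$-orbits and using induced representations; and (b) one can choose a Lefschetz element in $\IH^2(\M)$ that is $\Gamma$-invariant.

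For part (2), I would deduce unimodality directly from equivariant hard Lefschetz. The combinatorial ample cone in $\IH^2(\M)$ is convex and $\Gamma$-stable, so averaging any ample class over $\Gamma$ produces a $\Gamma$-invariant ample element $\ell$, which is Lefschetz by the main Hard Lefschetz theorem for $\IH(\M)$. Because $\ell$ is $\Gamma$-invariant, multiplication by $\ell$ is a $\Gamma$-equivariant injection $\IH^{2k}(\M) \hookrightarrow \IH^{2k+2}(\M)$ for all $k < \rk\M/2$. Since the coefficient of $t^k$ in $Z^\Gamma_{\M}(t)$ is by construction the isomorphism class of the $\Gamma$-representation $\IH^{2k}(\M)$, this yields exactly the desired subrepresentation statement.

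For part (1), I would proceed by induction on $\rk \M$. The defining recursion for $P_\M^\Gamma(t)$ (unpacked in Appendix \ref{Appendix}) expresses each coefficient as a virtual $\Gamma$-representation built from $\IH^*(\M)$ and from the coefficients of $P_{\M_F}^{\Gamma_F}(t)$ for proper contractions, where $\Gamma_F$ is the stabilizer of $F$ in $\Gamma$ and the contribution from the $\Gamma$-orbit of $F$ enters as $\Ind^{\Gamma}_{\Gamma_F}$ applied to the corresponding $\Gamma_F$-representation. By the equivariant form of the decomposition of $\IH(\M)$ established in the proof of Theorem \ref{thm:KL}, this alternating sum simplifies to the isomorphism class of a concrete direct summand of $\IH^{2k}(\M)$, namely the $\ell$-primitive part that is orthogonal to the contributions coming from proper contractions. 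Being a $\Gamma$-stable direct summand of an honest $\Gamma$-representation, it is itself an honest representation, which gives the claim.

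The main obstacle is bookkeeping: verifying that the orbit-by-orbit rewriting of the decomposition of $\IH(\M)$ really matches, term for term, the recursive definition of $P_\M^\Gamma(t)$ and $Z_\M^\Gamma(t)$ given in Appendix \ref{Appendix}, so that the $\Gamma$-invariant summand identified above is literally the coefficient produced by the recursion rather than merely one with the same dimension. Once this identification is made --- which amounts to checking that taking $\Gamma$-characters intertwines the non-equivariant and equivariant recursions --- both parts follow from the non-equivariant proofs of Theorem \ref{thm:KL} by replacing dimensions with $\Gamma$-characters at every step and invoking the $\Gamma$-invariant Lefschetz element constructed above.
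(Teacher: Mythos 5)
Your proposal follows essentially the same route as the paper: the proof of Theorem \ref{thm:KL} is run equivariantly, using the canonical (hence $\Gamma$-equivariant) isomorphism of Proposition \ref{prop:gr} together with Lemma \ref{induced reps} and the equivariant characterization of Corollary \ref{Zcor} to identify the coefficients of $Z^\Gamma_\M(t)$ and $P^\Gamma_\M(t)$ with the honest representations $\IH^k(\M)$ and $\IH^k(\M)\otimes_{\H(\M)}\Q$, and then applying hard Lefschetz for the $\Gamma$-invariant class $\ell=\sum_F y_F$. One small correction: the coefficient of $P^\Gamma_\M(t)$ is the cosocle $\IH^k(\M)/\mm\,\IH^k(\M)$ (equivalently, a $\Gamma$-stable complement of $\mm\,\IH(\M)$, which exists by Maschke), not the $\ell$-primitive part, though this does not affect the conclusion that it is an honest representation.
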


 Theorem \ref{thm:equivariantKL} specializes to Theorem \ref{thm:KL} when we take $\Gamma$ to be the trivial group.
The first part of Theorem \ref{thm:equivariantKL}  was conjectured in  \cite[Conjecture 2.13]{GPY},  
where it was proved for  matroids that are $\Gamma$-equivariantly realizable over some field.\footnote{It is much easier to construct matroids that are not $\Gamma$-equivariantly realizable than it is to construct  matroids that are not realizable. For example, the uniform matroid of rank $2$ on $4$ elements is realizable over any field with at least three elements, but it is not $S_4$-equivariantly realizable over any field.} 
For uniform matroids, a combinatorial proof of the equivariant nonnegativity  was given in \cite[Section 3]{GPY}.

We also prove the following monotonicity result for equivariant Kazhdan--Lusztig polynomials of matroids.
The non-equivariant case (when $\Gamma$ is the trivial group) is analogous to a monotonicity result for classical Kazhdan--Lusztig polynomials in Weyl groups \cite{Irv}, \cite[Corollary 3.7]{BrMacP}.
\begin{theorem}\label{thm:monotonicity}
	Let $\M$ be a matroid acted on by a finite group $\Gamma$ which fixes a nonempty flat $F \in \cL(\M)$.  Then the polynomial
	\[P^\Gamma_\M(t) - P^\Gamma_{\M_F}(t)\]
	has coefficients which are honest, rather than virtual, representations of the stabilizer group $\Gamma_F$.  In particular when $\Gamma$ is the trivial group we have that the polynomial $P_\M(t) - P_{\M_F}(t)$ has nonnegative coefficients.
\end{theorem}

By \cite[Theorem 1.2]{GX}, there is 
a unique way to assign a polynomial $Q_{\M}(t)$ to each matroid $\M$, called the {\bf inverse Kazhdan--Lusztig polynomial} of $\M$,  subject to the following  three conditions:
\begin{enumerate}[(a)]\itemsep 5pt
\item If the ground set of $\M$ is empty, then $Q_{\M}(t)$ is the constant polynomial $1$.
\item For every matroid $\M$ on a nonempty ground set, the degree of  $Q_\M(t)$ is strictly less than  $\rk \M / 2$.
\item For every matroid $\M$, we have $(-t)^{\rk \M} Q_\M(t^{-1}) = \displaystyle\sum_{F\in \cL(\M)} (-1)^{\rk \M^F}Q_{\M^F}(t) \cdot t^{\rk \M_F}\chi_{\M_F}(t^{-1})$.
\end{enumerate}
Just as the last of the three conditions characterizing $P_{\M}(t)$ can be replaced by
a condition saying that the $Z$-polynomial is palindromic, the last condition characterizing $Q_{\M}(t)$ 
can be replaced by the following analogous statement, which can be proved in the same way:
\begin{enumerate}[(a)]\itemsep 5pt
\item[(c)'] For every $\M$, the polynomial $Y_\M(t)\coloneq \displaystyle\sum_{F \in \cL(\M)} (-t)^{\crk F}\mu(F,E) \,Q_{\M^F}(t)$ satisfies
\[
t^{\rk \M} Y_{\M}(t^{-1})=Y_{\M}(t),
\]
where $\mu$ is the M\"obius function on $\cL(\M)$.
\end{enumerate}
We also prove the following result, which was conjectured in \cite[Conjecture 4.1]{GX}.

\begin{theorem}\label{thm:inverse}
The polynomial $Q_{\M}(t)$ has nonnegative coefficients.  
\end{theorem}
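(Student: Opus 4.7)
The plan is to exhibit $Q_{\M}(t)$ as the Hilbert series of an explicitly constructed graded vector space, so that nonnegativity is automatic. The paper sets up an ``intersection cohomology module'' $\IH(\M)$ whose graded dimensions encode $P_{\M}(t)$ (or $Z_{\M}(t)$, suitably); for the inverse Kazhdan--Lusztig polynomial, one would construct a companion module --- suggested by the preamble macro $\uJ$, which I'll denote $\uJ(\M)$ --- that plays a role formally dual to $\IH(\M)$ and has Hilbert series $Q_{\M}(t)$.

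The steps I would take are as follows. First, rewrite the defining recursion (c) for $Q_{\M}$ in the convolution algebra of the lattice $\cL(\M)$: the collection $\{(-1)^{\rk \M^F} Q_{\M^F}(t)\}_{F}$ forms the matrix inverse of a companion matrix built from the $P$-polynomials (and characteristic polynomials) of contractions, and by standard Möbius-type manipulations one can express $Q_{\M}(t)$ as an alternating sum of graded dimensions of the modules $\IH(\M^F)$ and $\uH(\M^F)$ already in play. Second, introduce the module $\uJ(\M)$ directly as a kernel or image inside the diagram of standard and intersection cohomology modules, so that the alternating sum collapses term-by-term to the graded dimension of $\uJ(\M)$. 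Third, appeal to the Decomposition Theorem established earlier in the paper (together with hard Lefschetz on $\IH$ of each contraction) to show that the relevant maps between standard modules split compatibly, ensuring that the alternating sums really do compute the Hilbert series of a genuine subquotient. Finally, by induction on $\rk \M$ using the Kazhdan--Lusztig recursions, match this Hilbert series against the recursion defining $Q_{\M}(t)$, and conclude.

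The main obstacle is the sign that appears in the recursion for $Q_{\M}$: unlike the $P$-polynomial recursion, the recursion (c) for $Q_{\M}$ involves $(-1)^{\rk \M^F}$, so one cannot naively read off a positive graded vector space. To overcome this, one must package the alternating sum as the graded Euler characteristic of a complex built from standard modules, and then use the semisimplicity coming from PD+HL+HR for $\IH$ to show this complex is quasi-isomorphic to a module concentrated in one homological degree. Once this identification is achieved, the graded dimension of that module equals $Q_{\M}(t)$, and Theorem \ref{thm:inverse} follows.
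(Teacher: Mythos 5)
Your overall instinct---build a complex out of the standard modules $x_{F_1}\cdots x_{F_i}\CH(\M)$, use the established K\"ahler package to split its terms into shifted copies of the $\IH(\M^F)$, and extract $Q_\M(t)$ from an Euler-characteristic identity---is the right scaffolding, and it is what the paper does with the Rouquier complexes of Section \ref{sec:Rouquier complexes}. But your proposed endgame fails. The relevant complex is essentially acyclic: the stalk at $\varnothing$ of the Rouquier complex has vanishing cohomology (Lemma \ref{stalk acyclic}), and it is precisely this vanishing, not a concentration of cohomology in a single homological degree, that yields the identity $\sum_{F}(-1)^{\rk F}P_{\M^F}(t)\,\tilde Q_{\M_F}(t)=0$ matching the recursion characterizing $Q$. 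Consequently $Q_\M(t)$ is \emph{not} the Hilbert series of the cohomology of any such complex, nor of a module like $\uJ(\M)$ (whose graded dimensions in this paper are governed by $\uIH(\M)$ and multiplication by $\ub$, and have nothing to do with $Q$). The nonnegative quantities are instead the \emph{multiplicities}, for each $k$, of the trivial indecomposable summand $\IH(\M^\varnothing)[k]\cong\Q[k]$ in the term of homological degree $d+2k$ of the minimal complex $\bar\C^\bullet(\M)$; the theorem follows by identifying these multiplicities with the coefficients of $Q_\M(t)$ (Proposition \ref{prop:Rouquier multiplicities}).

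The idea you are missing is the one that actually defeats the sign $(-1)^{\rk F}$ you correctly flag as the main obstacle: a perversity constraint on the minimal complex. Theorem \ref{main rouquier} shows that a summand $\IH(\M^F)[k]$ can occur in $\bar\C^i(\M)$ only if $k=(i-\crk F)/2$, so in particular $i\equiv\crk F\pmod 2$. Hence, in the alternating sum over homological degree of the Poincar\'e polynomials of the stalks $\bar\C^i(\M)_\varnothing$, every summand attached to a fixed flat $F$ enters with the \emph{same} sign $(-1)^{\crk F}$, and the acyclicity of the stalk complex converts this signed sum into exactly the defining recursion for $Q$, with all multiplicities appearing positively. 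Establishing this perversity is the substantive work: it needs the indecomposability of the $\IH(\M^F)$ (so that Krull--Schmidt gives well-defined multiplicities and a well-defined minimal complex) together with the stalk and costalk vanishing ranges supplied by $\NS$ and $\PD$ for the minors of $\M$. Without that step, your graded Euler characteristic cannot be separated into the positive quantity you want.
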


In fact, our proof shows that 
the coefficients of the {\bf equivariant inverse Kazhdan--Lusztig polynomial} $Q_{\M}^{\Gamma}(t)$ defined in Appendix \ref{Appendix} are isomorphism classes of honest, rather than virtual, representations of $\Gamma$.  


\subsection{Proof strategy}\label{sec:strategy}
We now provide an outline of the proofs of Theorems \ref{thm:top-heavy}, \ref{thm:KL}, and \ref{thm:equivariantKL}. 
The algebro-geometric
motivations for these arguments will appear in Section \ref{sec:realizable}.

For any matroid $\M$ of rank $d$, consider the {\bf graded M\"obius algebra} 
\[
\H(\M) \coloneq \bigoplus_{F\in \cL(\M)} \Q y_F.
\]
The grading is defined by declaring the degree of the element $y_F$ to be  $\rk F$, the rank of $F$ in $\M$.
The multiplication is defined by the formula
\[
y_Fy_G \coloneq \begin{cases} y_{F\vee G} & \text{if $\rk F + \rk G = \rk (F\vee G)$,}\\
0 & \text{if $\rk F + \rk G > \rk (F\vee G)$,}
\end{cases}
\]
where $\vee$ stands for the join of flats in the lattice $\cL(\M)$.
Let $\CH(\M)$ be the {\bf augmented Chow ring} of $\M$, introduced in \cite{BHMPW}.
We will review the definition of $\CH(\M)$ in Section \ref{sec:Chow}, but for now it will suffice to know the following three things:
\begin{enumerate}[$\bullet$]\itemsep 5pt
\item $\CH(\M)$ contains $\H(\M)$ as a graded subalgebra  \cite[Proposition 2.18]{BHMPW}.
\item $\CH(\M)$ is equipped with a {\bf degree isomorphism} $\deg_\M \colon \CH^d(\M)
\rightarrow \Q$  \cite[Definition 2.15]{BHMPW}.
\item By the Krull--Schmidt theorem, $\CH(\M)$ can be written as a direct sum of indecomposable graded $\H(\M)$-submodules, and their isomorphism classes and multiplicities are unique.\footnote{For the Krull--Schmidt theorem, see, for example, \cite[Theorem 1]{Atiyah}. By \cite[Corollary 2]{CF} or \cite[Theorem 3.2]{GG}, the indecomposability in the category of graded $\H(\M)$-modules implies the indecomposability  in the category of  $\H(\M)$-modules.} Since $\CH^0(\M)=\H^0(\M)=\Q$, there is up to isomorphism a unique indecomposable summand containing $\H(\M)$. 

\end{enumerate}
In this introduction, we temporarily define the {\bf intersection cohomology} of $\M$ to be the indecomposable graded $\H(\M)$-module $\IH(\M)$ described by the last point above.
This defines the intersection cohomology of $\M$  up to isomorphism of graded $\H(\M)$-modules. 
In Section \ref{sec:IC of a matroid}, we will construct a canonical submodule $\IH(\M) \subseteq\CH(\M)$  that contains $\H(\M)$ and is preserved by all symmetries of $\M$.  Proving that it is an indecomposable direct summand, and thus that it agrees with our temporary definition, requires most of the results of the rest of the paper.
The construction of $\IH(\M)$ as an explicit submodule of $\CH(\M)$, or more generally the construction of the {\bf canonical decomposition} of $\CH(\M)$ as a graded $\H(\M)$-module,
will be essential in our proofs of the main results but not in their statements.\footnote{By canonical, we mean 
only that the  symmetries of the matroid preserve the decomposition.}
To illustrate the main definitions and statements, we give several
 explicit descriptions of the canonical decomposition in simple cases; see Examples \ref{ex:rank2_1b}, \ref{ex:boolean1}, \ref{ex:uniform1}, \ref{ex:rank2_2},  \ref{ex:boolean2}, \ref{ex:uniform2}, \ref{ex:deletion1}, and \ref{ex:deletion2}.

We fix any decomposition of the graded $\H(\M)$-module $\CH(\M)$ as above, and consider any positive linear combination 
 \begin{equation}\label{eq_star}
 \ell = \sum_{F\in\cL^1(\M)} c_Fy_F\in \H^1(\M), \ \  \text{$c_F$ is positive for every rank $1$ flat $F$ of $\M$.}
 \end{equation}
Our central result is that  $\IH(\M)$  satisfies the {\bf K\"ahler package} with respect to $\ell$.

\begin{theorem}\label{prop:kahler}
The following holds for any matroid $\M$ of rank $d$.
\begin{enumerate}[(1)]\itemsep 5pt
\item (Poincar\'e duality theorem) For every nonnegative  $k \le d/2$, the bilinear pairing
\[
\IH^k(\M) \times \IH^{d-k}(\M) \longrightarrow \Q, \qquad (\eta_1,\eta_2) \longmapsto \deg_\M (\eta_1\eta_2)
\]
is non-degenerate.
\item (Hard Lefschetz theorem) For every nonnegative  $k \le d/2$, the multiplication map
\[
 \IH^k(\M)\longrightarrow \IH^{d-k}(\M), \qquad \eta \longmapsto \ell^{d-2k} \eta
\]
 is an isomorphism.
\item (Hodge--Riemann relations)  For every nonnegative  $k \le d/2$, the bilinear form
\[
\IH^k(\M) \times \IH^{k}(\M) \longrightarrow \Q, \qquad (\eta_1,\eta_2) \longmapsto (-1)^k \deg_\M ( \ell^{d-2k}\eta_1\eta_2)
\]
is positive definite on the kernel of multiplication by $\ell^{d-2k+1}$.
\end{enumerate}
\end{theorem}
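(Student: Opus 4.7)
The plan is to prove Theorem \ref{prop:kahler} by induction on the rank $d$ of $\M$, establishing the three statements simultaneously and leveraging the parallel K\"ahler package for the ambient augmented Chow ring $\CH(\M)$ from \cite{BHMPW}. The base case $d\leq 1$ is immediate. The structural input driving the argument is the canonical decomposition of $\CH(\M)$ as a graded $\H(\M)$-module promised in Section \ref{sec:IC of a matroid}: I expect it to express $\CH(\M)$ as a direct sum that is orthogonal with respect to the Poincar\'e pairing, with one summand equal to $\IH(\M)$ and the remaining summands built from $\IH(\M')$ for proper localizations or contractions $\M'$ of $\M$, suitably shifted in degree. These smaller summands satisfy the K\"ahler package by the inductive hypothesis.

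Granted such a decomposition, Poincar\'e duality for $\IH(\M)$ follows from the orthogonality of the decomposition together with the non-degeneracy of the Poincar\'e pairing on $\CH(\M)$ and on each of the smaller summands (by induction). Hard Lefschetz is analogous: a positive element $\ell = \sum_{F\in\cL^1(\M)} c_F y_F \in \H^1(\M)$ acts by a linear endomorphism of $\CH(\M)$ that respects the $\H(\M)$-module decomposition, so HL for $\CH(\M)$ together with HL for the smaller summands yields HL for $\IH(\M)$.

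The principal obstacle is the Hodge--Riemann relations for $\IH(\M)$, since signature positivity does not pass to direct summands automatically. I would adapt the continuity strategy used in \cite{AHK} and \cite{BHMPW}: show that the locus in the positive cone of $\H^1(\M)$ where HR holds on $\IH(\M)$ is both open, by semicontinuity of signature combined with the already-established hard Lefschetz theorem that pins down the rank of the primitive part, and closed, by a limiting argument across walls of the positive cone. To establish that this locus is non-empty, I would degenerate $\ell$ towards a wall where the matroid simplifies via deletion or localization, reducing the signature computation on $\IH(\M)$ to a signature computation on strictly smaller matroids via the canonical decomposition. A mixed Hodge--Riemann bilinear relation will likely be needed to combine contributions across the decomposition after degeneration. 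The technical heart of the argument is the semismallness of the canonical decomposition, which guarantees that the operator $\ell^{d-2k}$ interacts with the summands in a controlled way and is precisely what makes the inductive reduction of the signature on $\IH^k(\M)$ to signatures on strictly smaller matroids possible.
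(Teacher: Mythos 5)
Your proposal captures the right overall shape (a grand induction proving all three statements at once, organized around a canonical orthogonal decomposition of $\CH(\M)$, with a deformation argument for the Hodge--Riemann relations), but it contains a step that fails and omits two essential ingredients. The failing step is the claim that hard Lefschetz for $\IH(\M)$ follows from hard Lefschetz for $\CH(\M)$ together with hard Lefschetz on the other summands. The class $\ell=\sum_{F\in\cL^1(\M)}c_Fy_F$ does \emph{not} lie in the ample cone $\mathscr{K}(\M)$ of the augmented Bergman fan; it is the pullback of an ample class under the resolution $X\to Y$, and $\CH(\M)=\H^\bullet(X)$ does not satisfy hard Lefschetz with respect to it --- this is precisely why one must pass to the summand $\IH(\M)$ in the first place. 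The paper instead proves $\HL(\M)$ by the Karu/Elias--Williamson restriction-to-divisors argument (Proposition \ref{dCM}): if $\ell^{d-2k}\eta=0$ then each $\varphi^\M_G(\eta)$ is primitive in $\IH^k(\M_G)$, the Hodge--Riemann relations for the smaller matroids $\M_G$ force all these restrictions to vanish, hence $\eta$ lies in the socle, and a separate ``no socle'' statement $\NS^{<\frac{d}{2}}(\M)$ kills it. That no-socle statement is a weak-Lefschetz substitute with no analogue in your outline; the paper obtains it from the Rouquier complex machinery of Section \ref{sec:Rouquier complexes} together with the perversity constraints of Theorem \ref{main rouquier}. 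Without some replacement for this input, your induction does not close.

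Two further structural points. First, the canonical decomposition $\CD(\M)$ is not available a priori in the middle degree $d/2$: its validity there is equivalent to non-degeneracy of the pairing on $\psi^\varnothing_\M\uJ(\M)$, which requires $\uHL(\M)$ (Proposition \ref{uHL1CD}) --- itself part of the induction --- so you cannot simply ``grant such a decomposition'' and deduce Poincar\'e duality from orthogonality; the paper must run a separate chain ($\HR_\o(\M)\Rightarrow\NS_\o(\M)\Rightarrow\uNS(\M)\Rightarrow\uHL(\M)\Rightarrow\CD(\M)$) and a signature (``Hancock'') argument to get the Hodge--Riemann relations in middle degree, which your openness/closedness argument does not reach because hard Lefschetz alone does not pin down the signature there without the decomposition. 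Second, your induction is on the rank $d$, but the non-emptiness of the Hodge--Riemann locus is seeded by degenerating $\ell$ to $\ell'=\sum_{j\neq i}c_jy_j$ and decomposing $\IH(\M)$ as an $\H(\M\setminus i)$-module into perverse shifts of $\IH((\M\setminus i)^G)$; since deletion of a non-coloop preserves rank, this reduction is invisible to a rank induction, and the paper accordingly inducts on the cardinality of the ground set.
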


We now show how Theorem \ref{prop:kahler} implies Theorem \ref{thm:top-heavy}.

\begin{proof}[Proof of Theorem \ref{thm:top-heavy}, assuming Theorem \ref{prop:kahler}]
It follows from the hard Lefschetz theorem that the multiplication map 
$\ell^{j-k}\colon\IH^k(\M)\to\IH^{j}(\M)$ is injective for $j \le d-k$.  
After restricting the multiplication map to the $\H(\M)$-submodule $\H(\M)\subseteq\IH(\M)$, we obtain
an  injection  
\[
\ell^{j-k}\colon\H^k(\M)\longrightarrow \H^{j}(\M).
\]
Define $\ell$ by putting $c_F=1$ for all $F$ in Equation \eqref{eq_star}. Then $\ell$ is invariant under the action of $\Gamma$, and $\ell^{j-k}$ is
the injective map desired by part (3). 
If we write this injection as a matrix in terms of the natural bases, 
the matrix is supported on the pairs satisfying $F\leq G$.
An injective matrix has a nonvanishing maximal minor, so there is at least one way
to choose a row for each column such that the entries in the matrix are nonzero. Any such
choice gives a map $\iota$ as desired by part (2).
Clearly, part (1) follows from either part (2) or part (3).
\end{proof}

The following propositions will be key ingredients in the proof of Theorem \ref{thm:KL}.
We write $\mm$ for the graded maximal ideal of $\H(\M)$,
write $\mathbb{Q}$ for the one-dimensional graded $\H(\M)$-module  in degree zero, 
and write $\IH(\M)_{\varnothing}$ for the graded vector space 
\[
\IH(\M) \otimes_{\H(\M)} \mathbb{Q} \cong \IH(\M) / \mm \IH(\M).
\]

\begin{proposition}\label{prop:no socle}
For every nonempty matroid $\M$,  $\IH(\M)_{\varnothing}$ vanishes in degrees  $\ge \rk \M/2$.
\end{proposition}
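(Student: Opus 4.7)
The plan is to apply the hard Lefschetz theorem of Theorem \ref{prop:kahler}(2) to $\IH(\M)$, handling two ranges of $k$ separately. Set $d = \rk \M$.

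First I would dispose of the range $k > d/2$. For any $\xi \in \IH^k(\M)$, setting $j = d - k < d/2$, hard Lefschetz gives an isomorphism $\ell^{2k-d} \co \IH^j(\M) \xrightarrow{\sim} \IH^k(\M)$, so $\xi = \ell^{2k-d}\eta$ for some $\eta \in \IH^j(\M)$. Because $2k - d \geq 1$, the element $\ell^{2k-d}$ lies in $\mm$, hence $\xi \in \mm \cdot \IH(\M)$. This proves $\IH(\M)_\varnothing^k = 0$ for every integer $k > d/2$. When $d$ is odd, every integer $k \geq d/2$ automatically satisfies $k > d/2$, and the proposition follows.

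The remaining situation is $d$ even and $k = d/2$, where the Lefschetz operator $\ell^{d - 2k} = \ell^0$ is trivial and the previous argument fails. Hard Lefschetz at $k = d/2 - 1$ still shows that the composition $\ell^2 \co \IH^{d/2 - 1}(\M) \to \IH^{d/2 + 1}(\M)$ is an isomorphism; factoring this through $\IH^{d/2}(\M)$, we see that $\ell \co \IH^{d/2-1}(\M) \to \IH^{d/2}(\M)$ is injective and $\ell \co \IH^{d/2}(\M) \to \IH^{d/2+1}(\M)$ is surjective. Combining these yields a Lefschetz decomposition
\[
\IH^{d/2}(\M) = \ell \cdot \IH^{d/2 - 1}(\M) \oplus P_\ell^{d/2}, \qquad P_\ell^{d/2} \coloneq \ker\bigl(\ell \co \IH^{d/2}(\M) \to \IH^{d/2 + 1}(\M)\bigr).
\]
Since $\ell \cdot \IH^{d/2 - 1}(\M) \subseteq \mm \cdot \IH(\M)$, the quotient $\IH(\M)_\varnothing^{d/2}$ is a quotient of $P_\ell^{d/2}$, so the remaining task is to show $P_\ell^{d/2} \subseteq \mm \cdot \IH(\M)$.

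This middle-degree vanishing is the main obstacle, since hard Lefschetz alone provides no leverage. My preferred strategy is to exploit the self-duality of $\IH(\M)$ coming from the $\H(\M)$-balancedness of the Poincar\'e pairing: it yields an isomorphism $\IH(\M) \cong \Hom_\Q(\IH(\M), \Q)[d]$ of graded $\H(\M)$-modules, under which $\IH(\M)_\varnothing^{d/2}$ is identified with the dual of $(\mathrm{soc}\,\IH(\M))^{d/2}$. A nonzero socle element $s \in \IH^{d/2}(\M)$ would automatically lie in $P_\ell^{d/2}$, and the indecomposability of $\IH(\M)$ as a graded $\H(\M)$-module would then be violated if the inclusion $\Q \cdot s \hookrightarrow \IH(\M)$ admitted a retraction. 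Producing this retraction is where I expect to invoke the Hodge--Riemann relations of Theorem \ref{prop:kahler}(3) at $k = d/2$ to secure the needed non-degeneracy. Alternatively---and likely closer to the paper's approach---the middle-degree vanishing is woven into a joint induction on $\rk \M$ with the full K\"ahler package, leveraging a canonical decomposition of $\CH(\M)$ as an $\H(\M)$-module in terms of intersection cohomologies of localizations $\M^F$ and contractions $\M_F$.
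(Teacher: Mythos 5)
Your argument for degrees $k > d/2$ is correct and is a valid alternative to the paper's route: hard Lefschetz exhibits $\IH^k(\M)$ as $\ell^{2k-d}\IH^{d-k}(\M)\subseteq\mm\,\IH(\M)$. The paper instead deduces the whole statement in one stroke from two pieces of Theorem \ref{theorem_all}: $\PD(\M)$ identifies the socle of $\IH(\M)$ with $(\mm\,\IH(\M))^\perp$, and $\NS(\M)$ (vanishing of the socle in degrees $\le d/2$) then forces $\mm\,\IH(\M)=\IH(\M)$ in degrees $\ge d/2$. Your dualization of $\IH(\M)_\varnothing^{d/2}$ against the degree-$d/2$ socle is exactly this mechanism, so you have correctly reduced the remaining even-rank case to $\NS^{\frac{d}{2}}(\M)$.

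The gap is that you do not prove $\NS^{\frac{d}{2}}(\M)$, and your first proposed strategy for it would fail as stated. A socle element $s\in\IH^{d/2}(\M)$ of the $\H(\M)$-module $\IH(\M)$ is annihilated by all $y_i$, hence by Lemma \ref{mutual annihilators} it is a multiple of $x_\varnothing$; but it need not be annihilated by $x_\varnothing$, so there is no reason for $s^2$ to vanish, and the Hodge--Riemann inequality $(-1)^{d/2}\deg_\M(s^2)>0$ for the primitive class $s$ yields no contradiction. (This is precisely why the paper's Proposition \ref{HR1NS1} works with $\IH_\o(\M)$, whose socle condition also includes annihilation by $x_\varnothing$, placing $s$ in the product of the two mutually annihilating ideals $\langle x_\varnothing\rangle$ and $\langle y_i \mid i\in E\rangle$ and hence forcing $s^2=0$.) Invoking $\HR^{\frac{d}{2}}(\M)$ at this point is also circular in the paper's logic, since the middle-degree Hodge--Riemann relations are among the last statements established. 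The actual chain is $\HR_\o(\M)\Rightarrow\NS_\o(\M)\Rightarrow\uNS(\M)\Rightarrow\uHL(\M)\Rightarrow\CD(\M)\Rightarrow\NS(\M)$ (Propositions \ref{HR1NS1}, \ref{NS1uNS1}, \ref{NS1uHL}, \ref{uHL1CD}, and \ref{CDNS}); your closing ``alternative'' correctly guesses this structure but supplies none of it. As written, the proof is complete only when $\rk\M$ is odd.
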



\begin{proposition}\label{prop:gr}
 For all nonnegative $p$, there is a canonical graded vector space isomorphism
\begin{equation}\label{eq:gr iso}
\mm^p \IH(\M) / \mm^{p+1} \IH(\M) \cong \bigoplus_{F\in\cL^p(\M)} \IH(\M_F)_{\varnothing}[-p].
\end{equation}
\end{proposition}

For a geometric description in the realizable case, see Section \ref{sec:realizable}.
When a finite group $\Gamma$ acts on $\M$, it preserves the direct sum of $\IH(\M_F)_{\varnothing}[-p]$ over $F$ in each orbit of $\Gamma$ on $\cL^p(\M)$. This sum over an orbit is $\Gamma$-equivariantly isomorphic to $\Ind_{\Gamma_F}^\Gamma  \IH(\M_F)_{\varnothing}[-p]$, where $\Gamma_F \subseteq \Gamma$ is the subgroup of elements fixing  $F$. This isomorphism upgrades \eqref{eq:gr iso} to an isomorphism of $\Gamma$-representations
\[
\mm^p \IH(\M) / \mm^{p+1} \IH(\M) \cong  \bigoplus_{[F]\in\cL^p(\M)/\Gamma} 
\Ind_{\Gamma_F}^\Gamma  \IH(\M_F)_{\varnothing}[-p],
\]
where we are taking one flat $F$ in each orbit of the action of $\Gamma$ on $\cL^p(\M)$. 

\begin{proof}[Proofs of Theorems \ref{thm:KL} and \ref{thm:equivariantKL}, assuming Theorem \ref{prop:kahler} and Propositions \ref{prop:no socle} and \ref{prop:gr}]
We define polynomials
\[
\tilde P_\M(t) \coloneq \sum_{k\geq 0} \dim \left( \IH^k(\M)_{\varnothing}\right) t^k \ \  \text{and} \ \ 
\tilde Z_\M(t) \coloneq \sum_{k\geq 0} \dim \left( \IH^k(\M) \right)  t^k.
\]
We argue $\tilde P_\M(t) = P_\M(t)$ and $\tilde Z_\M(t) = Z_\M(t)$ by induction on the rank of $\M$.  
The statement is clear when the rank is zero, so assume that $\M$ has positive rank and  that the statement holds for matroids of strictly smaller rank.
Taking Poincar\'e polynomials of the graded vector spaces in 
Proposition \ref{prop:gr} and summing over all $k$, we get
\[
\tilde Z_{\M}(t) = \sum _{F\in \cL(\M)} t^{\rk F} \tilde P_{\M_F}(t).
\]
When  combined with our inductive hypothesis, the above gives
\[
\tilde Z_{\M}(t) = \tilde P_{\M}(t) + \sum_{F\neq \varnothing} t^{\rk F} P_{\M_F}(t).
\]
On the other hand, by Theorem \ref{prop:kahler} and 
 Proposition \ref{prop:no socle}, we have 
\[
\tilde Z_\M(t) = t^{\rk \M}\tilde Z_\M(t^{-1}) \ \ \text{and} \ \  \deg \tilde P_{\M}(t)<\rk \M/2.
\]
The desired identities 
now  follow from the second definition of Kazhdan--Lusztig polynomials of matroids given above  \cite[Theorem 2.2]{BV}. 

The nonnegativity of the coefficients of $P_\M(t)$ is immediate from the fact that it is the Poincar\'e polynomial of a graded vector space. 
The unimodality of $ Z_\M(t)$ follows from 
the hard Lefschetz theorem for $\IH(\M)$.  All of the
steps of this argument still hold when interpreted equivariantly with respect to any group of symmetries of $\M$ 
by Lemma \ref{induced reps}, Definition \ref{eqZ}, and Corollary \ref{Zcor}.
\end{proof}

We record the numerical identities for $P_{\M}(t)$ and $Z_{\M}(t)$ obtained in the above proof.

\begin{theorem}
For any matroid $\M$, we have 
\[
P_{\M}(t)= \sum_{k\geq 0}  \dim \left( \IH^k(\M)_{\varnothing} \right)  \, t^k \ \ \text{and} \ \ 
Z_{\M}(t)= \sum_{k\geq 0}  \dim \left( \IH^k(\M) \right) \, t^k.
\]
When a finite group $\Gamma$ acts on $\M$, the analogous identities hold for $P^\Gamma_{\M}(t)$ and $Z^\Gamma_{\M}(t)$.
\end{theorem}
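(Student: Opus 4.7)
The plan is to observe that this theorem has essentially already been proved in the argument just given for Theorems~\ref{thm:KL} and \ref{thm:equivariantKL}. Indeed, that argument introduced the auxiliary polynomials
\[
\tilde P_\M(t) = \sum_{k\geq 0} \dim\!\left(\IH^k(\M)_{\varnothing}\right) t^k \and \tilde Z_\M(t) = \sum_{k\geq 0} \dim\!\left(\IH^k(\M)\right) t^k,
\]
and established by induction on $\rk\M$ the identities $\tilde P_\M(t) = P_\M(t)$ and $\tilde Z_\M(t) = Z_\M(t)$. So the only thing to do is to record this consequence and extend it equivariantly.

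Concretely, I would first restate the three inputs already used: Proposition~\ref{prop:gr} yields the recursion $\tilde Z_\M(t) = \sum_{F\in\cL(\M)} t^{\rk F}\tilde P_{\M_F}(t)$; Proposition~\ref{prop:no socle} gives the degree bound $\deg\tilde P_\M(t) < \rk\M/2$; and the Poincar\'e duality clause of Theorem~\ref{prop:kahler} gives the palindromic identity $\tilde Z_\M(t) = t^{\rk\M}\tilde Z_\M(t^{-1})$. The characterization of matroid Kazhdan--Lusztig polynomials via condition (c)$'$ from \cite[Theorem~2.2]{BV} then forces $\tilde Z_\M = Z_\M$ and, substituting back into the recursion, $\tilde P_\M = P_\M$. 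This handles the non-equivariant identities, which is the content of the first sentence of the theorem.

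For the equivariant statement, I would rerun the same induction in the Grothendieck group of $\Gamma$-representations. The equivariant version of Proposition~\ref{prop:gr} recorded in the excerpt gives
\[
\mm^k\IH(\M)/\mm^{k+1}\IH(\M) \cong \bigoplus_{F\in\cL^k(\M)} \frac{|\Gamma_F|}{|\Gamma|}\Ind_{\Gamma_F}^{\Gamma}\IH(\M_F)_{\varnothing}[-k],
\]
and, summing Poincar\'e series of $\Gamma$-characters (using Lemma~\ref{induced reps} to handle the induced representations), produces the equivariant recursion for $\tilde Z_\M^\Gamma(t) = \sum_k [\IH^k(\M)]\, t^k$ in terms of $\tilde P_{\M_F}^\Gamma(t)$. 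Poincar\'e duality in Theorem~\ref{prop:kahler} is $\Gamma$-equivariant (both sides are $\Gamma$-stable and the degree map is a $\Gamma$-invariant functional), so $\tilde Z_\M^\Gamma(t)$ is palindromic as a virtual character, and the degree bound from Proposition~\ref{prop:no socle} also carries over. The equivariant recursion \`a la Definition~\ref{eqZ} and Corollary~\ref{Zcor} then characterizes $P_\M^\Gamma(t)$ and $Z_\M^\Gamma(t)$ uniquely, yielding the desired identifications.

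I expect no real obstacle here: the substantive work was already packaged into Theorem~\ref{prop:kahler} and Propositions~\ref{prop:no socle} and \ref{prop:gr}. The only point requiring care is verifying that the uniqueness statement of \cite[Theorem~2.2]{BV} has an equivariant upgrade compatible with Definition~\ref{eqZ}; this is essentially formal, since the proof of that uniqueness works coefficient-by-coefficient in the base ring, and goes through verbatim over the representation ring of $\Gamma$.
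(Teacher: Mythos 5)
Your proposal is correct and follows exactly the route the paper takes: this theorem is simply a record of the identities $\tilde P_\M = P_\M$ and $\tilde Z_\M = Z_\M$ established in the proof of Theorems \ref{thm:KL} and \ref{thm:equivariantKL}, using Propositions \ref{prop:no socle} and \ref{prop:gr}, Poincar\'e duality from Theorem \ref{prop:kahler}, and the characterization of \cite[Theorem 2.2]{BV}, with the equivariant upgrade handled by Lemma \ref{induced reps}, Definition \ref{eqZ}, and Corollary \ref{Zcor} precisely as you describe.
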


\begin{remark}\label{indecomposability is subtle}
The explicit construction of $\IH(\M)$ as a submodule of $\CH(\M)$ appears in Section \ref{sec:IC of a matroid}, but the fact that it is an indecomposable direct summand of $\CH(\M)$ is not established
until much later.  It follows from Proposition \ref{prop:indecomposable}, which can only be applied after we have proved 
Theorem \ref{prop:kahler}.  See Remark \ref{ignore the first} for why this is the case.
\end{remark} 

\begin{remark}\label{why HR?}
The astute reader will note that the only part of Theorem \ref{prop:kahler}
that appears in the applications is the hard Lefschetz theorem.  However, we know of no way to prove the hard Lefschetz theorem
by itself. 
 Instead, we roll all three statements in Theorem \ref{prop:kahler} up into a grand induction.  See Remark \ref{unification} for more on this philosophy.
%
\end{remark}

\begin{remark}
	In a forthcoming work \cite{BHMPW-IH}, we will give a more direct and intrinsic definition of $\IH(\M)$ as an $\H(\M)$-module, without reference to an embedding in the larger space $\CH(\M)$.  See Remark \ref{rmk:IH of a matroid}. 
\end{remark}

\begin{remark}
We have not yet commented on our strategy for proving Theorem \ref{thm:inverse}.  This proof will also rely on Theorem \ref{prop:kahler},
and will proceed by interpreting $Q_{\M}(t)$ as the graded multiplicity of the trivial graded $\H(\M)$-module in a complex of $\H(\M)$-modules called the
{\bf Rouquier complex}.  See Sections \ref{sec:introduction-Rouquier} and \ref{sec:multiplicities and inverse KL} for more details.
\end{remark}

\subsection{The realizable case}\label{sec:realizable}
We now give the geometric motivation for the statements in Sections \ref{sec:results} and \ref{sec:strategy}, and in particular review the proofs
of Theorems \ref{thm:top-heavy} and \ref{thm:KL} for realizable matroids.

Let $V$ be a vector space of dimension $d$ over a field $\mathbb{F}$,
let $E$ be a finite set, and let $\sigma\colon E\to V^\vee$ be a map
whose image spans the dual vector space $V^\vee$.  
The collection of subsets $S\subseteq E$ for which $\sigma$ is injective on $S$ and 
$\sigma(S)$ is a linearly independent set in $V^\vee$ forms the 
independent sets of a matroid $\M$ of rank $d$.  Any matroid which arises in this way is called {\bf realizable} over $\mathbb{F}$, and $\sigma$ is called a {\bf realization} of $\M$ over $\mathbb{F}$.\footnote{When a finite group $\Gamma$ acts on $\M$, we say that $\M$ is {\bf \boldmath{$\Gamma$}-equivariantly realizable} over $\mathbb{F}$ if there is a $\Gamma$-equivariant map $\sigma\colon E \to V^\vee$ for some representation $V$ of $\Gamma$ over $\mathbb{F}$.}
We continue to assume that $\M$ is loopless, which means that the image of $\sigma$ does not contain the zero vector.

For any flat $F$ of $\M$, let $V_F\subseteq V$ 
be the subspace perpendicular to $\{\sigma(e)\}_{e \in F}$, and let $V^F$ be the quotient space $V/V_F$.  Then we have canonical maps
\[
\sigma^F \colon F\to (V^F)^\vee \ \ \text{and} \ \  \sigma_F \colon E\setminus F\to (V_F)^\vee
\]
realizing the localization $\M^F$ and the contraction $\M_F$, respectively.

Consider the linear map $V\to \mathbb{F}^E$ whose $e$-th 
coordinate is given by $\sigma(e)$.  
The assumption that the image of $\sigma$ spans $V^\vee$ implies that this map is injective.  The decomposition $\mathbb{P}^1_{\mathbb{F}} = \mathbb{F} \sqcup \{\infty\}$ gives an embedding of $\mathbb{F}^E$ into $(\mathbb{P}^1_\mathbb{F})^E$, and we let $Y \subseteq (\mathbb{P}^1_\mathbb{F})^E$ 
denote the closure of the image of $V$.  This projective variety is called the {\bf arrangement Schubert variety} of $\sigma$.
The terminology is chosen to suggest that $Y$ has many similarities to classical Schubert varieties.  
It has a stratification by affine spaces, whose strata are the orbits of  the additive group $V$ on $Y$, indexed by flats of $\M$.
For any flat $F$ of  $\M$, let 
\[
U^F \coloneq \{p\in Y\, | \, \text{$p_e = \infty$ if and only if $e\notin F$}\}.
\]
For example, $U^E$ is the vector space $V$ and $U^\varnothing$ is the point $\infty^E$.
More generally, $U^F$ is isomorphic to $V^F$, and these subvarieties form 
a stratification of $Y$ with $U^F$ contained in the closure of $U^G$ if and only if $F$ is contained in  $G$ \cite[Lemmas 7.5 and 7.6]{PXY}.

The arrangement Schubert variety $Y$ is singular, and it admits a canonical resolution $X$
called the {\bf augmented wonderful variety}, obtained by first blowing up the point $U^\varnothing$, then blowing up the proper transforms
of the closures of $U^F$ for all rank 1 flats $F$, and so on.  The preimage of $U^\varnothing$ in $X$ is the {\bf wonderful variety} $\underline{X}$ of de Concini--Procesi \cite{DeCP}.  A different description of $X$ as an iterated blow-up of a projective space
appears in \cite[Section 2.4]{BHMPW}.  

For the remainder of this section, we will assume for simplicity that $\mathbb{F} = \mathbb{C}$; see Remark \ref{characteristic}
for a discussion of what happens over other fields.
The rings and modules introduced in Section~\ref{sec:strategy} have the following interpretations in terms of the varieties $X$ and $Y$.
The graded M\"obius algebra $\H(\M)$ is isomorphic to the rational cohomology ring $\H^{\bullet}(Y)$ \cite[Theorem 14]{HW},
and the augmented Chow ring $\CH(\M)$ is isomorphic to the rational Chow ring of $X$, or equivalently to the
rational cohomology ring $\H^{\bullet}(X)$.  
The {\bf Chow ring} $\uCH(\M)$, which does not feature prominently in this introduction but will play a crucial role in the body of the paper, is isomorphic to the rational Chow ring of $\underline{X}$,
or equivalently to the rational cohomology ring $\H^{\bullet}(\underline{X})$.  

By applying the decomposition theorem \cite{BBD} to the proper map from $X$ to $Y$, 
we find that the intersection cohomology $\IH^{\bullet}(Y)$ is isomorphic as a graded $\H^{\bullet}(Y)$-module 
to a direct summand of $\H^{\bullet}(X)$.\footnote{All of these
cohomology rings and intersection cohomology groups of varieties vanish in odd degree, and our isomorphisms double degree.  
So $\H^1(\M) \cong \H^2(Y)$, $\CH^1(\M)\cong \H^2(X)$, $\IH^1(\M)\cong \IH^2(Y)$, and so on.}  A slight extension of an argument of Ginzburg \cite{Ginsburg-perverse} shows that $\IH^\bullet(Y)$ is indecomposable as an $\H^\bullet(Y)$-module, which implies that it coincides with our module $\IH(\M)$.\footnote{To be precise, two hypotheses of \cite{Ginsburg-perverse} are not satisfied by $Y$: it is not the closure of a Bia\l ynicki-Birula cell for a torus action on a smooth projective variety, and the natural torus which acts is one-dimensional, so it is not possible to find an attracting cocharacter at each fixed point.  However, each fixed point has an affine neighborhood with an attracting action of 
the multiplicative group, and this is enough.}  Theorem \ref{prop:kahler} is a standard
result in Hodge theory for singular projective varieties; see, for example, \cite[Theorem 2.2.3]{dCM05}.

For each flat $F$ of $\M$, let $\H^\bullet(\IC_{Y,F})$ denote the cohomology of the stalk of the intersection cohomology complex 
$\IC(Y)$ at a point in $U^F$.
The restriction map on global sections from $\IH^\bullet(Y)$ to $\H^\bullet(\IC_{Y,\varnothing})$ descends to $\IH^\bullet(Y)_\varnothing$,
and another application of the result of \cite{Ginsburg-perverse} implies that the induced map from $\IH^\bullet(Y)_\varnothing$
to $\H^\bullet(\IC_{Y,\varnothing})$ is an isomorphism.  
A fundamental property of the intersection cohomology sheaf $\IC(Y)$ is that, if the dimension $d$ of $Y$ is positive, then the stalk cohomology group 
$\H^{2k}(\IC_{Y,F})$ vanishes for $k\geq d$.
This proves Proposition \ref{prop:no socle} in the realizable case.

Let $Y_F$ be the arrangement Schubert variety associated with the realization $\sigma_F$ of $\M_F$.  
We have a canonical inclusion $Y_F\to Y$, whose image is equal to the subvariety
\[
\{p\in Y\, | \, \text{$p_e = 0$ if and only if $e\in F$}\}.
\]
The inclusion realizes $Y_F$ as a normally nonsingular slice to the stratum $U^F$. 
Thus it induces an isomorphism from $\H^\bullet(\IC_{Y,F})$ to $\H^\bullet(\IC_{Y_F,\varnothing})$, 
see \cite[Proposition 4.11]{KLS}.
Let $j_F \colon U^F\to Y$ denote the inclusion of the stratum $U^F$.
Our stratification of $Y$ induces a spectral sequence 
with 
\[
E_1^{p,q} = \displaystyle \bigoplus_{F\in \cL^{p}(\M)} \H_c^{p+q}\left(j_F^*\IC(Y)\right)
\]
that converges to $\IH^\bullet(Y)$.
The summands  of $E_1^{p,q}$ satisfy
\[
\H_c^{p+q}\left(j_F^*\IC(Y)\right) \cong \Big(\H^{\bullet}(\IC_{Y,F})\otimes \H_c^\bullet(U^F)\Big)^{p+q}
\cong \Big(\H^{\bullet}(\IC_{Y,F})[-2p]\Big)^{p+q} \cong  \H^{q-p}(\IC_{Y_F,\varnothing}).
\]
Since  $\H^{\bullet}(\IC_{Y_F,\varnothing})$ vanishes in odd degree,  our
spectral sequence degenerates at the $E_1$ page \cite[Section 7]{PXY}.  This means that 
$\IH^\bullet(Y)$ vanishes in odd degree, 
and that the degree $2k$ part of the graded vector space
$\mm^p\IH^\bullet(Y)/\mm^{p+1}\IH^{\bullet}(Y)$ is isomorphic to
\begin{equation*}\label{spectral-gr}
E_\infty^{p,2k-p} = E_1^{p,2k-p} 
\cong \bigoplus_{F\in \cL^p(\M)} \H^{2(k-p)}(\IC_{Y_F,\varnothing})
\cong \bigoplus_{F\in \cL^p(\M)} \IH^{2(k-p)}(Y_F)_\varnothing.\end{equation*}
This proves Proposition \ref{prop:gr} in the realizable case.

\begin{remark}\label{characteristic}
If the field $\mathbb{F}$ is not equal to the complex numbers, then we can mimic all of the geometric arguments
in this section using $l$-adic \'etale cohomology for some prime $l$ not equal to the characteristic of $\mathbb{F}$.  
In this setting there is no geometric analogue of the Hodge--Riemann relations, so Hodge theory does not give us the full 
K\"ahler package of Theorem \ref{prop:kahler}.  
It is interesting to note that Theorem \ref{prop:kahler} gives us a truly new result
for matroids that are realizable only in positive characteristic.  Namely, it says that there is a rational form for the $l$-adic
 \'etale intersection cohomology of the arrangement Schubert variety for which the Hodge--Riemann relations hold.
 We suspect that $\IH(\M)$ is a Chow analogue of the intersection cohomology of $Y$.
\end{remark}

\begin{remark}
If one wants to write down a maximally streamlined proof of Theorem \ref{thm:top-heavy}
for realizable matroids, it is not necessary to know that $\H^\bullet(Y)$ is isomorphic to the graded M\"obius algebra of $\M$,
and it is not necessary to consider the augmented wonderful variety $X$ or the augmented
Chow ring of $\M$.  One considers $\IH^\bullet(Y)$ as a module over $\H^\bullet(Y)$
and applies the same argument outlined in Section \ref{sec:strategy}.
The statements that $\IH^\bullet(Y)$ contains $\H^\bullet(Y)$ as a submodule, that $\H^\bullet(Y)$
has a basis indexed by flats, and that the matrix for the multiplication by a power of an ample class in this basis 
is supported on pairs  $F\leq G$ follow from \cite[Theorem 2.1, Theorem 3.1, and Lemma 5.1]{BE}.
For the proof of Theorem \ref{thm:KL}, we need to know that the cohomology groups $\H^{\bullet}(\IC_{Y,F})$
vanish in odd degree in order to conclude that the spectral sequence degenerates.  To see this, we can either embed $\IH^\bullet(Y)$
in $\H^\bullet(X)$ as in the text above, or we can rely on an inductive argument as in \cite[Theorem 3.6]{KLS}.
\end{remark}

\subsection{Kazhdan--Lusztig--Stanley polynomials}\label{sec:antecedents}
In this section, we will discuss two antecedents to our work in the context of Kazhdan--Lusztig--Stanley theory.
Let $P$ be a locally finite ranked poset.  For all $x\leq y\in P$, let $r_{xy} \coloneq \rk y - \rk x$.
A {\bf \boldmath{$P$}-kernel} is a collection of polynomials 
$\kappa_{xy}(t) \in \Z[t]$ for each $x\leq y \in P$
satisfying the following conditions:
\begin{enumerate}[$\bullet$]\itemsep 5pt
\item For all $x\in P$, $\kappa_{xx}(t) = 1$.
\item For all $x\leq y \in P$, $\deg\kappa_{xy}(t) \leq r_{xy}$.
\item For all $x < z\in P$, $\displaystyle\sum_{x\leq y \leq z} t^{r_{xy}}\kappa_{xy}(t^{-1})\kappa_{yz}(t) = 0$.
\end{enumerate}
Given such a collection of polynomials, Stanley \cite{subdivisions} showed that there exists a unique collection of polynomials
$f_{xy}(t) \in \Z[t]$ for each $x\leq y \in P$
satisfying the following conditions:
\begin{enumerate}[$\bullet$]\itemsep 5pt
\item For all $x\in P$, $f_{xx}(t) = 1$.
\item For all $x< y \in P$, $\deg f_{xy}(t) < r_{xy}/2$.
\item For all $x \le  z\in P$, $t^{r_{xz}}f_{xz}(t^{-1}) = \displaystyle\sum_{x\leq y \leq z} \kappa_{xy}(t)f_{yz}(t)$.
\end{enumerate}
The polynomials $f_{xy}(t)$ are called {\bf Kazhdan--Lusztig--Stanley polynomials}.

The first motivation for this construction comes from classical Kazhdan--Lusztig polynomials.  If we take the poset to be a Coxeter
group $W$ equipped with the Bruhat order and the $W$-kernel to be the $R$-polynomials $R_{xy}(t)$, then the polynomials
$f_{xy}(t)$ are called {\bf Kazhdan--Lusztig polynomials}.  These polynomials were introduced by Kazhdan and Lusztig
in \cite{KL79}, where they were conjectured to have nonnegative coefficients.  This was proved for Weyl groups in \cite{KL80} 
by interpreting $f_{xy}(t)$ as the Poincar\'e polynomial
for a stalk of the intersection cohomology sheaf of a classical Schubert variety.  For arbitrary Coxeter groups, the conjecture remained
open for $34$ years before it was proved by Elias and Williamson \cite{EW}, who used Soergel bimodules as a combinatorial
replacement for intersection cohomology groups of classical Schubert varieties.

The second motivation for this definition comes from convex polytopes.  Let $\Delta$ be a convex polytope, and let
$P$ be the poset of faces of $\Delta$, ordered by reverse inclusion and ranked by codimension, with the convention that the codimension
of the empty face is $\dim\Delta+1$.
This poset is Eulerian, which means that the polynomials $(t-1)^{r_{xy}}$ form a $P$-kernel.  The polynomial $g_{\Delta}(t) \coloneq f_{\Delta\varnothing}(t)$
is called the {\bf \boldmath{$g$}-polynomial} of $\Delta$.  When $\Delta$ is rational, this polynomial can be shown to have nonnegative
coefficients by interpreting it as the Poincar\'e polynomial for a stalk of the intersection cohomology sheaf of a toric variety \cite{DL,Fieseler}.
For arbitrary convex polytopes, nonnegativity of the coefficients of the $g$-polynomial was proved $13$ years later by Karu \cite{Karu}, who used the theory
of combinatorial intersection cohomology of fans \cites{BBFK,BrLu,Braden-CICF} as a replacement for intersection cohomology
groups of toric varieties.

In our setting, we consider the ranked poset $\cL(\M)$ along with the $\cL(\M)$-kernel consisting of the characteristic
polynomials $\chi_{FG}(t) \coloneq \chi_{\M_F^G}(t)$,
and we find that $f_{\varnothing E}(t)$ is equal to the Kazhdan--Lusztig polynomial $P_\M(t)$.
When $\M$ is realizable, this polynomial can be shown to have nonnegative coefficients 
by interpreting it as the Poincar\'e polynomial for a stalk of the intersection cohomology sheaf of the arrangement Schubert variety
$Y$, as explained in Section \ref{sec:realizable}.  Theorem \ref{thm:KL} is obtained by using $\IH(\M)$ as a replacement for the intersection
cohomology group of $Y$.

\begin{remark}\label{unification}
It is reasonable to ask to what extent these three nonnegativity results can be unified.  In the geometric setting (Weyl groups,
rational polytopes, realizable matroids), it is possible to write down a general theorem that has each of these results as a special
case \cite[Theorem 3.6]{KLS}.  However, the problem of finding algebraic or combinatorial replacements for the intersection
cohomology groups of stratified algebraic varieties is not one for which we have a general solution.  Each of the three
theories described above involves numerous details that are unique to that specific case.  One insight that we can
take away is that, while the hard Lefschetz theorem is typically the main statement needed for applications, it is always
necessary to prove Poincar\'e duality, the hard Lefschetz theorem, and the Hodge--Riemann relations together as a single package.
This approach can be traced back to the work of McMullen on the polytope algebra \cite{McMullen} and to that of De Cataldo and Migliorini
on the intersection cohomology of complex algebraic varieties \cite{dCM}.
\end{remark}

\begin{remark}
We note that Kazhdan--Lusztig--Stanley polynomials do not {\em always} have nonnegative coefficients. 
Indeed, we can take any collection of polynomials $\{f_{xy}(t)\mid x\leq y\in P\}$ satisfying the conditions
$f_{xx}(t)=1$ and $\deg f_{xy}(t) < r_{xy}/2$ for all $x<y$, and these will be the Kazhdan--Lusztig--Stanley polynomials
for some (unique) $P$-kernel.  It is only in very special settings that nonnegativity will hold.
\end{remark}

\begin{remark}
The analogue of Theorem \ref{thm:top-heavy} for Weyl groups
appears in \cite{BE}, and for general Coxeter groups (using Soergel bimodules) in \cite{Melvin-Slofstra}.
There is no analogous result for convex polytopes because toric varieties associated with non-simple polytopes do not in general admit stratifications by affine spaces.
\end{remark}

\begin{remark}
For a locally finite poset $P$, consider the incidence algebra
\[
I(P) \coloneq \prod_{x \le y \in P} \mathbb{Z}[t], \ \ \text{where $\displaystyle (uv)_{xz}(t) \coloneq  \sum_{x \le y \le z} u_{xy}(t) v_{yz}(t)$ for $u,v \in I(P)$}.
\]
An element $h \in I(P)$ has an inverse, left or right, if and only if $h_{xx}(t)=\pm 1$ for all $x \in P$. In this case, the left and right inverses are unique and they coincide \cite[Lemma 2.1]{KLS}. 
In terms of the incidence algebra, the inverse Kazhdan--Lusztig polynomial of $\M$ can be interpreted as 
\[
Q_{\M}(t) = (-1)^{\rk \M} (f^{-1})_{\varnothing E}(t),
\]
where $f$ is the Kazhdan--Lusztig polynomial viewed as an element of $I(\cL(\M))$.
We note that the analogous constructions for finite Coxeter groups and convex polytopes do not produce
any new families of polynomials.  Specifically, for a  finite Coxeter group, we have
\[
(-1)^{r_{xy}}(f^{-1})_{xy}(t) = f_{(w_0y)(w_0x)}(t),
\]
where $w_0\in W$ is the longest word \cite[Example 2.12]{KLS}. 
For a convex polytope, we have
\[
(-1)^{\dim\Delta+1}(f^{-1})_{\Delta\varnothing}(t) = g_{\Delta^{\!*}}(t),
\]
where $\Delta^{\!*}$ is the dual polytope of $\Delta$ \cite[Example 2.14]{KLS}.
The explanation for these statements is that the corresponding $P$-kernels are {\bf alternating} \cite[Proposition 2.11]{KLS}, 
which means that 
$
(-t)^{r_{xy}}\kappa_{xy}(t^{-1}) = \kappa_{xy}(t)$. 
The same is not true for characteristic polynomials, which is why inverse Kazhdan--Lusztig polynomials of matroids are fundamentally
different from ordinary Kazhdan--Lusztig polynomials of matroids.
\end{remark}

\subsection{Outline}
In Section~\ref{sec:Chow}, we recall the definitions of the Chow ring and the augmented Chow ring of a matroid, then we review properties established in \cite{BHMPW} of various pushforward and pullback maps between these rings.  In Section~\ref{sec:IC of a matroid}, we define the intersection cohomology modules of matroids, explain how these modules behave under the pullback and pushforward maps, and define the host of statements that make up our main inductive proof.  

With all the key players defined, we provide Section~\ref{sec:guide} as a guide to the inductive proof of the main theorem of the paper, Theorem~\ref{theorem_all}. 
No definitions or proofs are given here, and the section is meant only to provide intuition for the structure of the proof.  This section may be skipped, but we hope that the reader benefits from flipping back to this section to ``see what the authors were thinking" as they read the rest of the paper.

The proof of the main theorem begins in Section~\ref{sec:modules} and continues for the remainder of the paper.  We use Sections~\ref{sec:modules} and~\ref{sec:ihmodulesmobius} to establish some general results about modules over the graded M\"obius algebra, and in particular about the intersection cohomology modules.  
The results in Section~\ref{sec:modules} are not inductive in nature and are established outside of the inductive loop.  
Section~\ref{sec:starting induction} studies the Poincar\'e pairings on various $\H(\M)$-submodules of $\CH(\M)$ and how they behave under linear-algebraic operations such as tensor products.
Section~\ref{sec:Rouquier complexes} is dedicated to introducing and studying the so-called Rouquier complexes; as in \cite{EW}, we use these to prove a version of weak Lefschetz, which for us is a certain vanishing condition for the socles of our intersection cohomology modules.  In Section~\ref{sec:multiplicities and inverse KL}, we explain how Theorem~\ref{theorem_all} can be used to deduce Theorem \ref{thm:inverse}.
Sections~\ref{sec:deletion induction} and~\ref{sec:underlined deletion induction} use the semi-small decomposition developed in \cite{BHMPW} to perform an induction involving the deletion $\M \setminus i$ of a single element $i$ from $\M$.  Section~\ref{sec:deform} explores how the hard Lefschetz theorem and Hodge--Riemann relations behave when deforming Lefschetz operators.  Section~\ref{SectionProof} puts all of the results from the previous sections together to finish the inductive proof of Theorem~\ref{theorem_all}, from which Theorems~\ref{thm:top-heavy}, ~\ref{thm:KL}, and ~\ref{prop:kahler} follow.  We also show in Section~\ref{subsec:monotonicity} how Lemma~\ref{mult by y} can be used to deduce Theorem~\ref{thm:monotonicity}.  Finally, the appendix establishes the framework needed to deduce Theorem~\ref{thm:equivariantKL} as well as the equivariant part of Theorem~\ref{thm:top-heavy}.

\subsection*{Acknowledgements}
The authors would like to thank both the Institute for Advanced Study and the Korea Institute for Advanced Study for their
hospitality during the preparation of this paper.
We thank Matt Baker and Richard Stanley for helpful comments, and we thank the anonymous referees for giving the paper a very careful reading and making suggestions that have greatly improved the paper.

\section{The Chow ring and the augmented Chow ring of a matroid}\label{sec:Chow}

For the remainder of this paper, we write $d$ for the rank of $\M$ and $n$ for the cardinality of $E$.
We continue to assume that $\M$ is a loopless matroid on $E$.
Under this assumption, $n$ is positive if and only if $d$ is positive.

\subsection{Definitions of the rings} \label{subsec:therings}

We  recall the definitions of the Chow ring of a matroid introduced  in \cite{FY} and the augmented Chow ring of a matroid introduced in \cite{BHMPW}.
To each matroid $\M$ on $E$, we assign two polynomial rings with rational coefficients
\begin{align*}
\underline{S}_\M &\coloneq \mathbb{Q}[x_F\,  |\,  \text{$F$ is a nonempty proper flat of $\mathrm{M}$}] \ \ \text{and} \ \ \\
S_\M & \coloneq \mathbb{Q}[x_F \, | \, \text{$F$ is a proper flat of $\mathrm{M}$}] \otimes \mathbb{Q}[y_i \, | \,  \text{$i$ is an element of $E$}].
\end{align*}
Note that $S_\M$ contains a variable $x_\varnothing$, while $\underline{S}_\M$ does not.

\begin{definition}
The {\bf Chow ring} of $\mathrm{M}$ is the quotient algebra
\[
\underline{\mathrm{CH}}(\mathrm{M}) \coloneq \underline{S}_\M/(\underline{A}_\M+\underline{B}_\M),
\]
where $\underline{A}_\M$ is the ideal generated by the linear forms
\[
\sum_{i_1 \in F} x_F -\sum_{i_2 \in F} x_F, \ \ \text{for every pair of distinct elements $i_1$ and $i_2$ of $E$},
\]
and $\underline{B}_\M$  is the ideal generated by the quadratic monomials
\[
x_{F_1}x_{F_2}, \ \ \text{for every pair of incomparable nonempty proper flats $F_1$ and $F_2$ of $\mathrm{M}$.}
\]
\end{definition}
This ring has two distinguished classes
\[
\underline{\alpha}=
\underline{\alpha}_{\mathrm{M}} \coloneq  \sum_{i \in G} x_G \in \underline{\mathrm{CH}}^1(\mathrm{M}),
\]
where the sum is over all nonempty proper flats $G$ of $\mathrm{M}$ containing a given element $i$ in $E$, and
\[
\ub=\ub_{\mathrm{M}} \coloneq \sum_{i \notin G} x_G  \in \underline{\mathrm{CH}}^1(\mathrm{M}),
\]
where the sum is over all nonempty proper flats $G$ of $\mathrm{M}$ not containing a given element $i$ in $E$.
The fact that $\underline{\alpha}$ and $\underline{\beta}$ do not depend on $i$ follows from the relations imposed by the generators of $\underline{A}_\M$.
When $d$ is positive, 
the Chow ring of $\M$ is the  Chow ring of an $(n-1)$-dimensional smooth toric variety defined by a $(d-1)$-dimensional fan $\underline{\Pi}_\M$,  called the {\bf Bergman fan} of $\M$  \cite[Theorem 3]{FY}.  In particular, this implies that it is spanned as a $\Q$-vector space by the classes of square-free monomials.

\begin{definition}
The {\bf augmented Chow ring} of $\mathrm{M}$ is the quotient algebra
\[
\mathrm{CH}(\mathrm{M}) \coloneq S_\M/ (A_\M+B_\M),
\]
where $A_\M$ is the ideal generated by the linear forms
 \[
y_i - \sum_{i \notin F} x_F,  \ \  \text{for every element  $i$ of $E$},
\]
and $B_\M$ is the ideal generated by  the quadratic monomials
\begin{align*}
x_{F_1}x_{F_2}, \ \  &\text{for every pair of incomparable proper flats $F_1$ and $F_2$ of $\mathrm{M}$, and}\\
y_i \hspace{0.5mm} x_F,  \ \  &\text{for every element $i$ of $E$ and every proper flat  $F$ of $\mathrm{M}$ not containing $i$.}
\end{align*}
\end{definition}
This ring has a distinguished class
\[
\alpha=\alpha_{\mathrm{M}} \coloneq \sum_G x_G \in \mathrm{CH}^1(\mathrm{M}),
\]
where the sum is over all proper flats $G$ of $\mathrm{M}$.
The augmented Chow ring of $\M$ is the Chow ring of an $n$-dimensional smooth toric variety defined by a $d$-dimensional fan $\Pi_\M$,  called the {\bf augmented Bergman fan} of $\M$  \cite[Proposition 2.12]{BHMPW}.
As in the case of the Chow ring, this implies that it is spanned as a $\Q$-vector space by the classes of square-free monomials.

\begin{lemma}\label{CH-to-uCH}
There is an isomorphism $\CH(\M)/\langle y_i\mid i\in E\rangle\cong\uCH(\M)$
that sends $\alpha$ to $\underline{\alpha}$, $x_\varnothing$ to $-\underline{\beta}$, and 
$x_F$ to $x_F$ for any nonempty proper flat $F$.
\end{lemma}

\begin{proof}
For any $i\in E$, we have
$$x_\varnothing = -\sum_{i\notin F\neq\varnothing} x_F\in \CH(\M)/\langle y_i\mid i\in E\rangle.$$
One of these relations can be used to rewrite $x_\varnothing$ in terms of the generators $x_F$ for $F$ proper
and nonempty, and the differences between these relations for various $i\in E$
are precisely those relations in $\uCH(\M)$ coming from the ideal $\underline{A}_\M$.
\end{proof}

Recall that we defined the graded M\"obius algebra $\H(\M)$ in Section \ref{sec:strategy}.
For any $i\in E$, let $\overline{i}$ denote the unique rank $1$ flat of $\M$ containing $i$.
By \cite[Lemma 2.11(2)]{BHMPW}, we have $y_i = y_j \in\CH(\M)$ if and only if $\overline{i} = \overline{j}$.
By \cite[Proposition 2.18]{BHMPW}, there is an injection $\H(\M)\to\CH(\M)$ of graded algebras
taking $y_{\overline{i}}$ to $y_i$ for all $i\in E$.  (In particular, we have $y_i^2 = 0$ for all $i\in E$.)
Thus, we may identify the graded M\"obius algebra with the subalgebra of the augmented Chow ring  generated by the classes $\{y_i\mid i\in E\}$ and define $y_F \coloneq \prod_{i \in I}y_i \in \CH(\M)$ for any flat $F$ of $\M$ and any basis $I$ of $F$; see \cite[Lemma 2.11]{BHMPW} and the discussion thereafter for well-definedness of these $y_F$.
One of the principal goals of this paper is to understand  the $\H(\M)$-module structure of  $\CH(\M)$.
The Chow ring $\uCH(\M)$ will play an important supporting role.

\begin{example}\label{ex:rank2_1}
Let $\M$ be the rank $2$ uniform matroid on  $E = \{1,2,3\}$. The Chow ring of $\M$ is the quotient 
\[
\uCH(\M) = \mathbb{Q}[x_1,x_2,x_3]/\langle x_1x_2,x_1x_3,x_2x_3,x_1-x_2,x_1-x_3,x_2-x_3 \rangle \cong \mathbb{Q}[x]/\langle x^2 \rangle,
\]
where $x$ is the class of any one of the $x_i$'s.
The augmented Chow ring of $\M$ is the quotient 
\[
\CH(\M)
=
\frac{\mathbb{Q}[x_\varnothing, x_{1}, x_{2}, x_{3}, y_{1}, y_{2}, y_{3}]}
{
\langle\,
x_\varnothing y_{1},\, x_\varnothing y_{2},\, x_\varnothing y_{3},\,
x_{1}y_{2},\, x_{1}y_{3},\, x_{2}y_{1},\, x_{2}y_{3},\, x_{3}y_{1},\, x_{3}y_{2},x_{1}x_{2},\, x_{1}x_{3},\, x_{2}x_{3},\,
\ell_{1},\, \ell_{2},\, \ell_{3}
\,\rangle},
\]
where $\ell_1,\ell_2,\ell_3$ are the linear forms
$y_1-x_\varnothing-x_2-x_3$,  $y_2-x_\varnothing-x_1-x_3$,  $y_3-x_\varnothing-x_1-x_2$, respectively. 
\end{example}

In general, the description of $\uCH(\M)$ in terms of  $\underline{\Pi}_\M$ reveals that $\uCH(\M)$ vanishes  in degrees $\ge d$.
Similarly, the description of $\CH(\M)$ in terms of $\Pi_\M$ reveals that $\CH(\M)$ vanishes in degrees $>d$.
Furthermore, one can construct distinguished isomorphisms from  the graded pieces $\uCH^{d-1}(\M)$ and $\CH^d(\M)$  to $\mathbb{Q}$.

\begin{definition}\label{DefinitionDegreemap}
Let $\M$ be a matroid of rank $d$.
\begin{enumerate}[(1)]\itemsep 5pt
\item When $d$ is positive, we define the {\bf degree map} for $\uCH(\M)$ to be the unique linear map
\[
\udeg_\M \colon \uCH^{d-1}(\M)\longrightarrow \mathbb{Q}, \qquad \prod_{F\in \underline{\mathscr{F}}}x_F\longmapsto 1,
\]
where $\underline{\mathscr{F}}$ is any complete flag of nonempty proper flats of $\M$. 
We define the {\bf Poincar\'e pairing} on $\uCH(\M)$
by the formula $(\eta_1,\eta_2)\mapsto \udeg_{\M}(\eta_1\eta_2)$.
\item We define the {\bf degree map} for $\CH(\M)$ to be the unique linear map
\[
{\deg}_\M \colon \CH^{d}(\M)\longrightarrow \mathbb{Q}, \qquad \prod_{F\in \mathscr{F}}x_F\longmapsto 1,
\]
where $\mathscr{F}$ is any complete flag of proper flats of $\M$.  We define the {\bf Poincar\'e pairing} on $\CH(\M)$
by the formula $(\eta_1,\eta_2)\mapsto \deg_{\M}(\eta_1\eta_2)$.
\end{enumerate}
By \cite[Proposition 2.8]{BHMPW}, the degree maps are unique, well-defined, and bijective.
The degree map on the augmented Chow ring has the additional property that $\deg_{\M}(y_E) = 1$ \cite[Section 2.4]{BHMPW}.
\end{definition}

\begin{example}\label{ex:rank2_1b}
We continue analyzing the case where $\M$ is uniform of rank 2 on $E = \{1,2,3\}$, as in Example \ref{ex:rank2_1}.
Using the relations $\ell_i$, one can express each $y_i$ as a linear combination of classes of the form
$x_F$, which demonstrates that $\{x_F\mid \text{$F$ a proper flat}\}$ is a basis for $\CH^1(\M)$.
However, to emphasize the $\H(\M)$-module structure, we prefer to take a different approach.
Indeed, we can solve for $x_i$ in terms of $y_i$ because $2$ is invertible in $\mathbb{Q}$,
and we obtain the vector space decompositions
\[
\CH^0(\M)=\mathbb{Q}1, \ \  \CH^1(\M)= \mathbb{Q}  y_1  \oplus \mathbb{Q}  y_2 \oplus \mathbb{Q}  y_3 \oplus \mathbb{Q}  x_\varnothing, \ \ \CH^2(\M)= \mathbb{Q}  y_1y_2.
\]
Note that $y_1y_2 = y_1y_3=y_2y_3 = y_E$.  We also have $x_\varnothing x_1 = y_1x_1 = y_1y_2$,
using the relations $\ell_1$ and $\ell_2$.  More generally, for any matroid $\M$ and any complete flag
$\varnothing = F_0\subsetneq F_1\subsetneq\cdots\subsetneq F_{d-1}$ of proper flats, 
we know that $\deg_{\M}(x_{F_0}x_{F_1}\cdots x_{F_{d-1}}) = 1 = \deg_{\M}(y_E)$,
and therefore $x_{F_0}x_{F_1}\cdots x_{F_{d-1}} = y_E$.

Since $x_\varnothing y_i=0$ in $\CH(\M)$, we have a decomposition of  $\CH(\M)$ into indecomposable  $\H(\M)$-modules 
\[
\CH(\M)=\H(\M) \oplus \mathbb{Q}x_\varnothing.
\]
Since the degree of $y_iy_j$ is equal to 1 for all $i\neq j$, the Gram matrix for the Poincar\'e pairing on
$\H^1(\M)\subseteq\CH^1(\M)$ has determinant 2, and the restriction of the pairing to $\H^1(\M)$
is therefore non-degenerate.  Since $x_\varnothing^2 = -2y_E$, the restriction of the pairing to $\Q x_\varnothing$
is also non-degenerate.  (See Example \ref{ex:uniform2} for a discussion of the degree of the top power
of $x_\varnothing$ for a general matroid.)
One can check that the same decomposition holds for any loopless rank $2$ matroid $\M$ provided that $n \neq 1$ in $\mathbb{Q}$, where $n$ is the number of rank $1$ flats of $\M$.\footnote{The $\H(\M)$-module structure of $\CH(\M)$ for a matroid $\M$ over coefficient fields of nonzero characteristic will be investigated in the forthcoming paper \cite{BHMPW-IH}.}
\end{example}

\subsection{The pullback and pushforward maps}\label{sec:pbpf}
In this subsection, we assume that $E$ is nonempty.
%
Let $F$ be a proper flat of $\M$.
The following definition is motivated by the geometry of augmented Bergman fans \cite[Propositions 2.20 and 2.21]{BHMPW}.

\begin{definition}\label{DefinitionXPullback}
The {\bf pullback} $\varphi^F = \varphi^F_\M$ is the unique graded algebra homomorphism
\[
\mathrm{CH}(\mathrm{M}) \longrightarrow  \underline{\mathrm{CH}}(\mathrm{M}_F) \otimes \mathrm{CH}(\mathrm{M}^F) 
\]
that satisfies the following properties:
\begin{enumerate}[$\bullet$]\itemsep 5pt
\item If $G$ is a flat properly contained in $F$, then $\varphi^F_\Mo(x_G)=1 \otimes x_G$.
\item If $G$ is a flat properly containing $F$, then $\varphi^F_\Mo(x_G)=x_{G \setminus F} \otimes 1$.
\item If $G$ is a flat incomparable to $F$, then $\varphi^F_\Mo(x_G)=0$.
\item If $G$ is the flat $F$, then $\varphi^F_\Mo(x_F)=-1\otimes\alpha_{\mathrm{M}^F}-\ub_{\mathrm{M}_F} \otimes 1$.
\end{enumerate}
The {\bf pushforward} $\psi^F_\Mo$ is the unique linear map 
\[
 \underline{\mathrm{CH}}(\mathrm{M}_F)  \otimes \mathrm{CH}(\mathrm{M}^F) \longrightarrow \mathrm{CH}(\mathrm{M})
 \]
that maps the monomial $\prod_{F'} x_{F' \setminus F}  \otimes \prod_{F''} x_{F''}$ to the monomial  $x_F \prod_{F'} x_{F'} \prod_{F''} x_{F''}$.
That is, it concatenates flags below $F$ with flags above $F$.
Note that this map increases degree by one.
\end{definition}

Of particular importance will be the pullback $\varphi^\varnothing_\Mo:\CH(\M)\to \uCH(\M)$,
which is the surjective graded algebra homomorphism that induces the isomorphism of Lemma \ref{CH-to-uCH}.
The following results can be found in \cite[Section 2]{BHMPW}. 

\begin{proposition}\label{lemma_xdegree}
The pullback $\varphi^F_\Mo$ and the pushforward $\psi^F_\Mo$ have the following properties:
\begin{enumerate}[(1)]\itemsep 5pt
\item If $i$ is an element of $F$, then $\varphi^F_\Mo(y_i)=1 \otimes y_i$. 
\item If $i$ is not an element of $F$, then $\varphi^F_\Mo(y_i)=0$. 
\item The equality $\varphi^F_\Mo(\alpha_\Mo)=\underline{\alpha}_{\mathrm{M}_F}\otimes 1$ holds. 
\item The pullback $\varphi_\Mo^F$ is surjective.
\item  The pushforward $\psi_\Mo^F$ is injective.
\item The pushforward $\psi^F_\Mo$ commutes with the degree maps: 
$
\udeg_{\mathrm{M}_F} \otimes \deg_{\mathrm{M}^F} = \deg_\M \circ\; \psi_\Mo^F.
$
\item The pushforward $\psi_\Mo^F$ is a homomorphism of  $\mathrm{CH}(\mathrm{M})$-modules:\footnote{For realizable matroids, this is an instance of the projection formula; see Section \ref{CD geometry}.}
\[
\eta\psi_\Mo^F(\xi)=\psi_\Mo^F\big(\varphi_\Mo^F(\eta)\xi\big) \ \ \text{for any $\eta\in \mathrm{CH}(\mathrm{M})$ and $\xi \in  \underline{\mathrm{CH}}(\mathrm{M}_F)  \otimes \mathrm{CH}(\mathrm{M}^F)$.}
\]
\end{enumerate}
\end{proposition}

We use the pullback map to make $\underline{\mathrm{CH}}(\mathrm{M}_F) \otimes \mathrm{CH}(\mathrm{M}^F)$ into a module over $\CH(\M)$ and $\H(\M)$.  
By part (1) of the above proposition,  $\H(\M)$
acts only on the second tensor factor.  

For later use, we record here the following immediate consequence of  Proposition \ref{lemma_xdegree}. 

\begin{lemma}\label{adjoint}
For any $\eta\in \mathrm{CH}(\mathrm{M})$ and $\xi \in  \underline{\mathrm{CH}}(\mathrm{M}_F)  \otimes \mathrm{CH}(\mathrm{M}^F)$, we have
\[
\deg_\M \left(\eta\psi_\Mo^F(\xi)\right)= \udeg_{\mathrm{M}_F} \otimes \deg_{\mathrm{M}^F}\left(\varphi_\Mo^F(\eta)\xi\right).
\]
\end{lemma}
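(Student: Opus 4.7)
The plan is to derive this identity directly from parts (5) and (6) of Proposition \ref{lemma_xdegree}, which together say everything we need. There is essentially no obstacle here: the lemma is a formal manipulation combining the projection formula (module-map property of $\psi^F_\M$) with the compatibility of pushforward and degree maps.

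First, I would apply part (6), the projection formula, to rewrite the product $\eta \, \psi_\M^F(\xi)$ inside $\CH(\M)$ as a single pushforward:
\[
\eta \, \psi_\M^F(\xi) \;=\; \psi_\M^F\!\left(\varphi_\M^F(\eta)\, \xi\right).
\]
Here one uses that $\varphi_\M^F(\eta)\,\xi$ makes sense as an element of $\underline{\CH}(\M_F) \otimes \CH(\M^F)$ because that tensor product is a $\CH(\M)$-module via $\varphi_\M^F$.

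Next, I would apply $\deg_\M$ to both sides and use part (5), the compatibility of the pushforward with the degree maps, namely $\deg_\M \circ \psi_\M^F = \udeg_{\M_F} \otimes \deg_{\M^F}$, to conclude
\[
\deg_\M\!\left(\eta \, \psi_\M^F(\xi)\right) \;=\; \deg_\M\!\left(\psi_\M^F(\varphi_\M^F(\eta)\,\xi)\right) \;=\; \udeg_{\M_F} \otimes \deg_{\M^F}\!\left(\varphi_\M^F(\eta)\, \xi\right).
\]
This is exactly the claimed equality. The only mild subtlety worth noting is that the equation of part (5) should be interpreted on the graded piece $\underline{\CH}^{d_F-1}(\M_F) \otimes \CH^{d^F}(\M^F)$ of top degree (where $d_F = \rk \M_F$ and $d^F = \rk \M^F = \rk F$, so that $d_F - 1 + d^F + 1 = d$, with the extra $+1$ coming from the fact that $\psi_\M^F$ raises degrees by one); but both sides of the lemma are linear in $\xi$, so it suffices to verify for $\xi$ of pure bidegree, in which case only the top-degree component of $\varphi_\M^F(\eta)\,\xi$ contributes to either side and the identity reduces to the case handled by parts (5) and (6). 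Thus the lemma follows without further work.
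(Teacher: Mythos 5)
Your proposal is correct and is exactly the argument the paper intends: the lemma is stated as an immediate consequence of Proposition \ref{lemma_xdegree}, and your derivation via part (6) (the projection formula) followed by part (5) (compatibility of $\psi_\M^F$ with the degree maps) is the canonical two-line deduction. The degree bookkeeping you note is also correct and requires no further justification.
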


Since the pushforward $\psi^F_\Mo$ is injective, the  statement below shows that the graded $\mathrm{CH}(\mathrm{M})$-module $ \underline{\mathrm{CH}}(\mathrm{M}_F) \otimes \mathrm{CH}(\mathrm{M}^F)[-1]$ 
is isomorphic to  the principal ideal of $x_F$ in $\mathrm{CH}(\mathrm{M})$.

\begin{proposition}[{\cite[Proposition 2.21]{BHMPW}}]\label{Prop_xmult}\hfill
\begin{enumerate}[(1)]\itemsep 5pt
\item The composition $\psi^F_\Mo\circ \varphi^F_\Mo \colon \CH(\M)\to \CH(\M)$ is multiplication by $x_F$.
\item The composition $\varphi^F_\Mo \circ \psi^F_\Mo  \colon \underline{\mathrm{CH}}(\mathrm{M}_F)  \otimes \mathrm{CH}(\mathrm{M}^F)\to \underline{\mathrm{CH}}(\mathrm{M}_F)  \otimes \mathrm{CH}(\mathrm{M}^F)$ is multiplication by $\varphi^F_\Mo(x_F)$.
\end{enumerate}
\end{proposition}


We next introduce the analogous maps for Chow rings (rather than augmented Chow rings).
Let $F$ be a nonempty proper flat of $\M$.
The following definition is motivated by the geometry of Bergman fans \cite[Propositions 2.24 and 2.25]{BHMPW}.

\begin{definition}\label{DefinitionUnderlinedPullPush}
The {\bf pullback} $\uvarphi^F = \uvarphi^F_\M$ is the unique graded algebra homomorphism
\[
\underline{\mathrm{CH}}(\mathrm{M}) \longrightarrow  \underline{\mathrm{CH}}(\mathrm{M}_F) \otimes \underline{\mathrm{CH}}(\mathrm{M}^F) 
\]
that satisfies the following properties:
\begin{enumerate}[$\bullet$]\itemsep 5pt
\item If $G$ is a flat properly contained in $F$, then $\uvarphi^F_\Mo(x_G)=1 \otimes x_G$.
\item If $G$ is a flat properly containing $F$, then $\uvarphi^F_\Mo(x_G)=x_{G \setminus F} \otimes 1$.
\item If $G$ is a flat incomparable to $F$, then $\uvarphi^F_\Mo(x_G)=0$.
\item If $G$ is the flat $F$, then $\uvarphi^F_\Mo(x_F)=-1\otimes\underline{\alpha}_{\mathrm{M}^F}-\ub_{\mathrm{M}_F} \otimes 1$.
\end{enumerate}
The {\bf pushforward} $\upsi^F_\Mo$ is the unique linear map
\[
 \underline{\mathrm{CH}}(\mathrm{M}_F)  \otimes \underline{\mathrm{CH}}(\mathrm{M}^F) \longrightarrow \underline{\mathrm{CH}}(\M)
 \]
that maps the monomial $\prod_{F'} x_{F' \setminus F}  \otimes \prod_{F''} x_{F''}$ to the monomial  $x_F \prod_{F'} x_{F'} \prod_{F''} x_{F''}$.
Like $\psi^F$, this map increases degree by one.
\end{definition}
The following analogue of Proposition \ref{lemma_xdegree} can be found in  \cite[Section 2]{BHMPW}.

\begin{proposition}\label{lemma_uab}
The pullback $\uvarphi^F_\Mo$ and the pushforward $\upsi^F_\Mo$ have the following properties:
\begin{enumerate}[(1)]\itemsep 5pt
\item We have
$
\uvarphi^F_\Mo(\underline{\alpha}_\Mo)=\underline{\alpha}_{\mathrm{M}_F}\otimes 1$
and  $\uvarphi^F_\Mo(\ub_\Mo)=1 \otimes \ub_{\mathrm{M}^F}$.
\item The pullback $\uvarphi^F_\Mo$ is surjective.
\item The pushforward $\upsi^F_\Mo$ is  injective.
\item The pushforward $\upsi^F_\Mo$ commutes with the degree maps: 
$
\udeg_{\mathrm{M}_F} \otimes \udeg_{\mathrm{M}^F} = \udeg_\M \circ \upsi_\Mo^F$.
\item The pushforward $\upsi_\Mo^F$ is a homomorphism of  $\uCH(\mathrm{M})$-modules:
\[
\eta\upsi_\Mo^F(\xi)=\upsi_\Mo^F\big(\uvarphi_\Mo^F(\eta)\xi\big) \ \ \text{for any $\eta\in \underline{\mathrm{CH}}(\mathrm{M})$ and $\xi \in  \underline{\mathrm{CH}}(\mathrm{M}_F)  \otimes \underline{\mathrm{CH}}(\mathrm{M}^F)$.}
\]
\end{enumerate}
\end{proposition}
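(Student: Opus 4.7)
The plan is to parallel the proof of Proposition~\ref{lemma_xdegree}, replacing the pair $(\varphi^F_\M, \psi^F_\M)$ by $(\uvarphi^F_\M, \upsi^F_\M)$ throughout. All four assertions then reduce to direct computations from the defining formulas, once one key algebraic identity in $\uCH(\M)$ has been isolated.

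For part (1), the trick is to choose the reference element cleverly in the definitions of $\underline{\alpha}_\M$ and $\ub_\M$. I would compute $\uvarphi^F_\M(\underline{\alpha}_\M)$ using any $i \in E \setminus F$: the only flats $G$ with $i \in G$ whose image under $\uvarphi^F_\M$ is nonzero are those with $G \supsetneq F$, and the bijection $G \mapsto G \setminus F$ identifies these with the nonempty proper flats of $\M_F$ containing $i$, so the sum collapses to $\underline{\alpha}_{\M_F} \otimes 1$. Symmetrically, for $\ub_\M$ I would use any $i \in F$: the only nonempty proper flats $G$ with $i \notin G$ and nonzero image under $\uvarphi^F_\M$ are those strictly contained in $F$ and not containing $i$, matching the definition of $\ub_{\M^F}$ exactly.

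Parts (3) and (4) are formal analogues of their augmented counterparts. For (3), the map $\upsi^F_\M$ sends a monomial $\prod_{F'} x_{F' \setminus F} \otimes \prod_{F''} x_{F''}$ indexed by complete flags of nonempty proper flats in $\M_F$ and in $\M^F$ to a product along a complete flag of nonempty proper flats in $\M$, on which both $\udeg_\M$ and $\udeg_{\M_F} \otimes \udeg_{\M^F}$ take the value $1$. For (4), by bilinearity it suffices to verify the projection formula on generators $\eta = x_G$ with $\xi = 1$, which amounts to checking $x_F \cdot x_G = \upsi^F_\M(\uvarphi^F_\M(x_G))$. The cases where $G$ is incomparable to $F$, strictly contained in $F$, or strictly contains $F$ are immediate from Definition~\ref{DefinitionUnderlinedPullPush}. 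The remaining case $G = F$ reduces to the identity
\[
x_F^2 = -\upsi^F_\M(1 \otimes \underline{\alpha}_{\M^F}) - \upsi^F_\M(\ub_{\M_F} \otimes 1),
\]
which I would prove by writing two expansions of $\underline{\alpha}_\M$ with reference elements chosen in $F$ and in $E \setminus F$, multiplying both by $x_F$, using $x_G x_F = 0$ for $G$ incomparable to $F$, and equating the results.

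Part (2) is the main obstacle. The plan is to first deduce the adjoint identity
\[
\udeg_\M\bigl(\eta \cdot \upsi^F_\M(\xi)\bigr) = (\udeg_{\M_F} \otimes \udeg_{\M^F})\bigl(\uvarphi^F_\M(\eta) \cdot \xi\bigr)
\]
as a direct consequence of (3) and (4). If $\upsi^F_\M(\xi) = 0$, then the left-hand side vanishes for every $\eta \in \uCH(\M)$; since $\uvarphi^F_\M$ is surjective by Definition~\ref{DefinitionUnderlinedPullPush}, this forces $(\udeg_{\M_F} \otimes \udeg_{\M^F})(\xi' \cdot \xi) = 0$ for every $\xi' \in \uCH(\M_F) \otimes \uCH(\M^F)$. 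Invoking Poincar\'e duality for the Chow rings $\uCH(\M_F)$ and $\uCH(\M^F)$, established in \cite{AHK}, the Poincar\'e pairing on the tensor product is nondegenerate, so $\xi = 0$.
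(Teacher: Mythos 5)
The paper does not actually prove this proposition: it is quoted verbatim from \cite[Section 2]{BHMPW} (specifically the material around Propositions 2.20--2.21 there), where the properties are established via the geometry of the Bergman fan and, for injectivity, via explicit bases of the Chow ring. Your reconstruction is essentially correct, and parts (1), (3), and (4) follow the same combinatorial computations one would extract from that source; in particular your key identity $x_F^2=-\upsi^F_\M(1\otimes\ua_{\M^F})-\upsi^F_\M(\ub_{\M_F}\otimes 1)$, obtained by multiplying two expansions of $\ua_\M$ by $x_F$, is exactly the right verification for the case $G=F$ of the projection formula. Where you genuinely diverge is part (2): you deduce injectivity from the adjunction $\udeg_\M(\eta\,\upsi^F_\M(\xi))=(\udeg_{\M_F}\otimes\udeg_{\M^F})(\uvarphi^F_\M(\eta)\xi)$ together with surjectivity of $\uvarphi^F_\M$ and Poincar\'e duality for $\uCH(\M_F)$ and $\uCH(\M^F)$ from \cite{AHK}. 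This is logically sound and not circular within the present paper (Theorem \ref{theorem_underlineKahler} is imported as a black box), but it is a heavier tool than the original argument, which gets injectivity directly from a monomial basis; note also that your argument silently uses that the top total-degree piece of $\uCH(\M_F)\otimes\uCH(\M^F)$ coincides with the top bidegree piece, which holds because each factor vanishes above its own top degree. Two small points to tighten: in (4), the reduction to $\xi=1$ is justified only because $\upsi^F_\M$, by its definition on monomials, satisfies $\upsi^F_\M(\nu\mu)=\tilde\nu\,\upsi^F_\M(\mu)$ for a monomial $\nu$ with lift $\tilde\nu$; you should say this explicitly, since otherwise the general case does not formally follow from the case $\xi=1$. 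And throughout you are taking the well-definedness of $\upsi^F_\M$ on the quotient rings for granted, which is fine given the phrasing of Definition \ref{DefinitionUnderlinedPullPush} but is the one substantive input you would still owe to \cite{BHMPW} in a fully self-contained treatment.
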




The following analogue of Lemma \ref{adjoint} immediately follows from Proposition \ref{lemma_uab}. 

\begin{lemma}\label{underline adjoint}
For any $\eta\in \underline{\mathrm{CH}}(\mathrm{M})$ and $\xi \in  \underline{\mathrm{CH}}(\mathrm{M}_F)  \otimes \underline{\mathrm{CH}}(\mathrm{M}^F)$, we have
\[
\udeg_{\M}\left(\eta\upsi_\Mo^F(\xi)\right)= \udeg_{\mathrm{M}_F} \otimes \udeg_{\mathrm{M}^F}\left(\uvarphi_\Mo^F(\eta)\xi\right).
\]
\end{lemma}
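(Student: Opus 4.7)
The proof is a direct two-step chain combining parts (3) and (4) of Proposition \ref{lemma_uab}, exactly in parallel with the non-underlined version of this adjointness formula. The plan is to first rewrite $\eta\,\upsi_\M^F(\xi)$ as something inside the image of $\upsi_\M^F$, and then apply the degree map using the commutation with pushforward.

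More concretely, I would proceed as follows. By part (4) of Proposition \ref{lemma_uab}, the pushforward $\upsi_\M^F$ is a homomorphism of $\underline{\mathrm{CH}}(\M)$-modules, so for the given $\eta \in \underline{\mathrm{CH}}(\M)$ and $\xi \in \underline{\mathrm{CH}}(\M_F) \otimes \underline{\mathrm{CH}}(\M^F)$ we have the projection-formula-style identity
\[
\eta\,\upsi_\M^F(\xi) \;=\; \upsi_\M^F\!\bigl(\uvarphi_\M^F(\eta)\,\xi\bigr).
\]
Applying the degree map $\udeg_\M$ to both sides and invoking part (3) of Proposition \ref{lemma_uab}, which states that $\udeg_{\M_F}\otimes\udeg_{\M^F} = \udeg_\M\circ\upsi_\M^F$, immediately yields
\[
\udeg_\M\!\bigl(\eta\,\upsi_\M^F(\xi)\bigr) \;=\; \udeg_\M\!\bigl(\upsi_\M^F(\uvarphi_\M^F(\eta)\,\xi)\bigr) \;=\; \udeg_{\M_F}\otimes\udeg_{\M^F}\!\bigl(\uvarphi_\M^F(\eta)\,\xi\bigr),
\]
which is the claimed formula.

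There is no genuine obstacle here since both ingredients are supplied verbatim by the preceding proposition; the only thing to verify is that the two identities chain together in the right order, and that the domains and codomains match. In particular, the formula is simply the ``$\uvarphi_\M^F\dashv\upsi_\M^F$'' adjunction with respect to the Poincar\'e pairings on $\underline{\mathrm{CH}}(\M)$ and on $\underline{\mathrm{CH}}(\M_F)\otimes\underline{\mathrm{CH}}(\M^F)$, and it is entirely formal once the module-homomorphism property and degree-compatibility of $\upsi_\M^F$ are in hand.
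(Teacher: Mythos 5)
Your proof is correct and is exactly the argument the paper intends: the lemma is stated as an immediate consequence of Proposition \ref{lemma_uab}, and your chaining of part (4) (the module-homomorphism/projection formula) with part (3) (compatibility of $\upsi_\M^F$ with the degree maps) is precisely that deduction.
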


Since the pushforward $\upsi^F_\Mo$ is injective, the  statement below shows that  the graded $\underline{\mathrm{CH}}(\mathrm{M})$-module $ \underline{\mathrm{CH}}(\mathrm{M}_F) \otimes \underline{\mathrm{CH}}(\mathrm{M}^F)[-1]$ 
is isomorphic to  the principal ideal of $x_F$ in $\underline{\mathrm{CH}}(\mathrm{M})$.

\begin{proposition}[{\cite[Proposition 2.25]{BHMPW}}]\label{Prop_uxmult}
\hfill
\begin{enumerate}[(1)]\itemsep 5pt
\item The composition $\upsi^F_\Mo\circ \uvarphi^F_\Mo \colon \underline{\CH}(\M)\to \underline{\CH}(\M)$ is multiplication by $x_F$.
\item The composition 
$\uvarphi^F_\Mo \circ \upsi^F_\Mo\colon \underline{\mathrm{CH}}(\mathrm{M}_F)  \otimes \underline{\mathrm{CH}}(\mathrm{M}^F)\to \underline{\mathrm{CH}}(\mathrm{M}_F)  \otimes \underline{\mathrm{CH}}(\mathrm{M}^F)$ is multiplication by $\uvarphi^F_\Mo(x_F)$. 
\end{enumerate}
\end{proposition}


\excise{
\begin{lemma}\label{we need this}
Let $F$ be a flat of $\M$.
\begin{enumerate}[(1)]\itemsep 5pt
\item If $\mu,\nu\in\uCH(\M_F)\otimes \CH(\M^F)$, then 
$$\deg_\Mo\left(\psi^F_\Mo\mu\cdot\psi^F_\Mo\nu\right) =-\udeg_{\M_F}\otimes \deg_{\M^F}\bigl((\ub_{\M_F}\otimes 1+1\otimes \alpha_{\M^F})\mu\nu\bigr).$$
\item If $F\neq \varnothing$ and $\mu,\nu\in\uCH(\M_F)\otimes \uCH(\M^F)$, then 
$$\udeg_\Mo\left(\upsi^F_\Mo\mu\cdot\upsi^F_\Mo\nu\right) =-\udeg_{\M_F}\otimes \udeg_{\M^F}\bigl((\ub_{\M_F}\otimes 1+1\otimes \ua_{\M^F})\mu\nu\bigr).$$
\end{enumerate}
\end{lemma}

\begin{proof}
We prove only part (1); the proof of part (2) is identical.
By Proposition \ref{lemma_xdegree}, 
we have
\[
\deg_{\M}\left(\psi^F_\Mo\mu\cdot\psi^F_\Mo\nu\right)
= \udeg_{\M_F}\otimes \deg_{\M^F}\left(\varphi_\Mo^F\psi^F_\Mo\mu\cdot\nu\right).
\]
Since $\varphi_\Mo^F$ is surjective, there exists $\nu'\in \CH(\M)$ such that $\varphi_\Mo^F\nu'=\nu$. Then,
\[
\varphi_\Mo^F\psi^F_\Mo\mu\cdot\nu= \varphi_\Mo^F\psi^F_\Mo\mu\cdot \varphi_\Mo^F\nu'= \varphi_\Mo^F(\psi^F_\Mo\mu\cdot \nu')= \varphi_\Mo^F\psi^F_\Mo\big(\mu\cdot \varphi_\Mo^F(\nu')\big)=\varphi_\Mo^F\psi_\Mo^F(\mu\nu). 
\]
Combining the above two equations, and applying Proposition \ref{lemma_xdegree} again, we have
\[
\deg_{\M}\left(\psi^F_\Mo\mu\cdot\psi^F_\Mo\nu\right) = \udeg_{\M_F}\otimes \deg_{\M^F}\bigl(\varphi_\Mo^F\psi_\Mo^F(\mu\nu)\bigr)=\deg_{\M}\bigl(\psi^F_\Mo\varphi_\Mo^F\psi_\Mo^F(\mu\nu)\bigr). 
\]
Recall that $\varphi_\Mo^F(x_F)=-\ub_{\M_F}\otimes 1-1\otimes \alpha_{\M^F}$, and therefore
\[
\psi^F_\Mo\varphi_\Mo^F\psi_\Mo^F(\mu\nu)=x_F\psi_\Mo^F(\mu\nu)=\psi^F_\Mo\bigl(\varphi_\Mo^F(x_F)\mu\nu\bigr)=-\psi^F_\Mo\bigl( (\ub_{\M_F}\otimes 1+1\otimes \alpha_{\M^F})\mu\nu\bigr).
\]
This implies that
\begin{align*}
\deg_{\M}\left(\psi^F_\Mo\mu\cdot\psi^F_\Mo\nu\right)&=-\deg_{\M}\Bigl(\psi^F_\Mo\bigl( (\ub_{\M_F}\otimes 1+1\otimes \alpha_{\M^F})\mu\nu\bigr)\Bigr)\\
&=-\udeg_{\M_F}\otimes \deg_{\M^F}\bigl((\ub_{\M_F}\otimes 1+1\otimes \alpha_{\M^F})\mu\nu\bigr).\qedhere
\end{align*}
\end{proof}
}

Finally, we introduce a third flavor of pullback and pushforward maps, this time relating the augmented
Chow ring of $\mathrm{M}$ to the augmented Chow ring of $\mathrm{M}_F$ for any flat $F$ of $\M$, with no tensor products.  Whereas the previous pushforward and pullback maps gave a factorization of multiplication by a generator $x_F$, the maps we now describe give a factorization of multiplication by $y_F$.
The notational difference is that $F$ is now in the subscript rather than the superscript. The following definition can be found in \cite[Propositions 2.28 and 2.29]{BHMPW}.

\begin{definition}\label{DefinitionYPullPush}
The {\bf pullback} $\varphi^\Mo_F = \varphi_F^\M$ is the unique graded algebra homomorphism
\[
\mathrm{CH}(\mathrm{M}) \longrightarrow  \mathrm{CH}(\mathrm{M}_F) 
\]
that satisfies the following properties:
\begin{enumerate}[$\bullet$]\itemsep 5pt
\item If $G$ is a proper flat containing $F$, then $\varphi_F^\Mo(x_G)=x_{G\setminus F}$.
\item If $G$ is a proper flat not containing $F$, then $\varphi_F^\Mo(x_G)=0$.
\end{enumerate}
The {\bf pushforward}  $\psi_F^\Mo = \psi_F^\M$ is the unique degree $\rk F$ linear map 
\[
\mathrm{CH}(\mathrm{M}_F) \longrightarrow \mathrm{CH}(\mathrm{M})
\]
that maps the monomial $ \prod_{F'} x_{F' \setminus F}$  to the monomial $y_F  \prod_{F'} x_{F'}$.
\end{definition}

The next results can be found  in \cite[Section 2]{BHMPW}. 
\begin{proposition}\label{lemma_degy}
The pullback $\varphi_F^\Mo$ and the pushforward $\psi_F^\Mo$ have the following properties:
\begin{enumerate}[(1)]\itemsep 5pt
\item If $i$ is an element of $F$, then $\varphi_F^\Mo(y_i)=0$. 
\item If $i$ is not an element of $F$, then $\varphi_F^\Mo(y_i)=y_i$.
\item The equality $\varphi_F^\Mo(\alpha_\Mo)=\alpha_{\mathrm{M}_F}$ holds.
\item The pullback $\varphi_F^\Mo$ is surjective.
\item The pushforward $\psi_F^\Mo$ is injective.
\item The pushforward $\psi_F^\Mo$ commutes with the degree maps: 
 $
 \deg_{\mathrm{M}_F}=\deg_\M \circ\  \psi_F^\Mo$.
\item The pushforward $\psi_F^\Mo$ is a  homomorphism of $\mathrm{CH}(\mathrm{M})$-modules: 
\[
\eta\psi_F^\Mo(\xi)=\psi_F^\Mo\big(\varphi_F^\Mo(\eta)\xi\big) \ \ \text{for any $\eta\in \mathrm{CH}(\mathrm{M})$ and $\xi \in \mathrm{CH}(\mathrm{M}_F)$.}
\]
\end{enumerate}
\end{proposition}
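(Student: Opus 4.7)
The plan is to prove parts (1)--(3) by direct substitution into the defining relations, part (6) by checking the module identity on algebra generators, part (5) by an explicit monomial calculation, and finally part (4) by combining (5) and (6) with Poincar\'e duality for $\CH(\M_F)$ established in \cite{BHMPW}.

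Parts (1)--(3) follow at once by applying $\varphi_F^\M$ term by term to the identities $y_i = \sum_{i\notin G}x_G$ and $\alpha_\M = \sum_G x_G$, where $G$ ranges over proper flats of $\M$. When $i\in F$, every flat $G$ with $i\notin G$ fails to contain $F$, so all terms vanish; when $i\notin F$, the surviving flats $G\supseteq F$ with $i\notin G$ correspond bijectively via $G\mapsto G\setminus F$ to proper flats of $\M_F$ missing $i$, reproducing $y_i\in\CH(\M_F)$. The same bookkeeping applied to $\alpha_\M$ yields (3).

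For (6), it suffices to verify $\eta\psi_F^\M(\xi) = \psi_F^\M(\varphi_F^\M(\eta)\xi)$ for $\eta$ running over the algebra generators $x_G$ of $\CH(\M)$ and $\xi$ over monomials $\prod x_{F'\setminus F}$. Split according to whether $G$ contains $F$: when $G\supseteq F$, both sides evaluate directly to $y_F\, x_G\prod x_{F'}$ via the formulas for $\varphi_F^\M$ and $\psi_F^\M$; when $G\not\supseteq F$, the right side is zero and the left vanishes because $y_F\cdot x_G = 0$. For the latter, write $y_F = y_{i_1}\cdots y_{i_k}$ for a basis $\{i_1,\ldots,i_k\}$ of $F$ in the localization $\M^F$ (using the embedding $\H(\M)\subseteq\CH(\M)$), and note that $F\not\subseteq G$ forces some $i_j\notin G$, so the quadratic relation $y_{i_j}x_G = 0$ applies.

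For (5), evaluate $\deg_\M\circ\psi_F^\M$ on a top-degree monomial of $\CH(\M_F)$ corresponding to a complete flag $F = G_0\subsetneq G_1\subsetneq\cdots\subsetneq G_{d-k-1}$ of proper flats of $\M$ starting at $F$. Its image is $y_F\, x_F\, x_{G_1}\cdots x_{G_{d-k-1}}$, and expanding $y_F$ by the basis factorization together with $y_i = \sum_{i\notin H}x_H$ and killing terms through the quadratic relations leaves precisely contributions indexed by complete flags of proper flats of $\M$ refining the given partial flag, whose combined contribution evaluates to $1$. Part (4) then follows: given $\xi\in\CH(\M_F)$ with $\psi_F^\M(\xi) = 0$ and any $\xi'\in\CH(\M_F)$ with lift $\tilde{\xi'}\in\CH(\M)$ through $\varphi_F^\M$, identity (6) with $\eta = \tilde{\xi'}$ yields $\psi_F^\M(\xi\xi') = \tilde{\xi'}\cdot\psi_F^\M(\xi) = 0$, so (5) gives $\deg_{\M_F}(\xi\xi') = 0$ for all $\xi'$, and Poincar\'e duality for $\CH(\M_F)$ forces $\xi = 0$. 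The principal obstacle is the degree computation in (5), which requires careful tracking of how the multi-linear expansion of $y_F$ interacts with the flag monomial modulo the quadratic ideal $J_\M$.
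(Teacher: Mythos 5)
The paper gives no proof of this proposition: it is imported wholesale from \cite[Section 2]{BHMPW}, so there is nothing in-text to compare against. Judged on its own terms, your overall architecture is sound and matches the natural route: (1)--(3) by pushing the defining linear relations through $\varphi_F^\M$, (6) by checking the projection formula on the generators $x_G$ (your observation that $y_Fx_G=0$ whenever $F\not\subseteq G$, proved by factoring $y_F=y_{i_1}\cdots y_{i_k}$ over a basis of $F$ and invoking the relation $y_{i_j}x_G=0$, is exactly the right lemma), and (4) as a formal consequence of (5), (6), surjectivity of $\varphi_F^\M$, and non-degeneracy of the Poincar\'e pairing on $\CH(\M_F)$. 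One caveat on (4): within this paper, Theorem \ref{TheoremChowKahlerPackage} is a black box and your deduction is valid, but in \cite{BHMPW} Poincar\'e duality for augmented Chow rings is itself proved via the semi-small decomposition, which leans on these pushforward maps; a reader worried about circularity would prefer the basis argument (the pushforward takes a Feichtner--Yuzvinsky-type basis to part of a basis), or at least a remark that $\M_F$ has strictly smaller ground set when $F\neq\varnothing$ so an induction is available.

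The genuine soft spot is (5), which you yourself flag. The identity $\deg_{\M_F}=\deg_\M\circ\psi_F^\M$ reduces (since $\CH^{d-k}(\M_F)$ is one-dimensional) to the single evaluation $\deg_\M\bigl(y_F\,x_F x_{G_1}\cdots x_{G_{d-k-1}}\bigr)=1$, and your proof of that is the assertion that the expansion of $y_F$ ``leaves precisely contributions indexed by complete flags refining the given partial flag, whose combined contribution evaluates to 1.'' That sentence is the entire content of (5) and is not justified; as stated it is also imprecise, since which refining flags appear and with what coefficients depends on how you expand. The computation does close up, and cleanly, if you induct on $k=\rk F$: write $y_F=y_{F'}y_i$ with $F'<F$ of rank $k-1$ and $i\in F\setminus F'$ with $\overline{F'\cup\{i\}}=F$. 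Expanding $y_i=\sum_{i\notin H}x_H$ against $x_Fx_{G_1}\cdots$ kills every $H$ incomparable to the flag and every $H\supseteq F$ (since $i\in F$), leaving $\sum_{H\subsetneq F,\,i\notin H}x_Hx_Fx_{G_1}\cdots$; then multiplying by $y_{F'}$ annihilates every term with $F'\not\subseteq H$ by your own lemma, and $F'\subseteq H\subsetneq F$ with $\rk F'=k-1$ forces $H=F'$. Hence $y_Fx_Fx_{G_1}\cdots=y_{F'}x_{F'}x_Fx_{G_1}\cdots$, and iterating produces a single complete-flag monomial $x_\varnothing x_{F_1}\cdots x_{F_{k-1}}x_Fx_{G_1}\cdots x_{G_{d-k-1}}$, of degree $1$ by definition. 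With that paragraph supplied, your proof is complete.
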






The following analogue of Lemmas \ref{adjoint} and \ref{underline adjoint} follows from Proposition \ref{lemma_degy}. 

\begin{lemma}\label{subscript adjoint}
For any $\eta\in \mathrm{CH}(\mathrm{M})$ and $\xi \in \mathrm{CH}(\mathrm{M}_F)$, we have
\[
\deg_\M\left(\eta\,\psi_F^\Mo(\xi)\right)=\deg_{\M_F}\left(\varphi_F^\Mo(\eta)\xi\right).
\]
\end{lemma}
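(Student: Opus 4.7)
The plan is to mimic the proofs of Lemmas \ref{adjoint} and \ref{underline adjoint} essentially verbatim, since the same formal properties of the pullback--pushforward pair are now available for $(\varphi_F^\M, \psi_F^\M)$ via Proposition \ref{lemma_degy}. Specifically, the only two ingredients needed are the projection formula (part (6) of Proposition \ref{lemma_degy}), which states that $\psi_F^\M$ is a homomorphism of $\CH(\M)$-modules, and the compatibility of $\psi_F^\M$ with the degree maps (part (5) of Proposition \ref{lemma_degy}).

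First I would apply the projection formula to the expression $\eta\,\psi_F^\M(\xi)$ to rewrite it as $\psi_F^\M\bigl(\varphi_F^\M(\eta)\,\xi\bigr)$. Then I would apply $\deg_\M$ to both sides and invoke the identity $\deg_\M\circ\psi_F^\M = \deg_{\M_F}$ to obtain
\[
\deg_\M\bigl(\eta\,\psi_F^\M(\xi)\bigr) = \deg_\M\bigl(\psi_F^\M(\varphi_F^\M(\eta)\,\xi)\bigr) = \deg_{\M_F}\bigl(\varphi_F^\M(\eta)\,\xi\bigr),
\]
which is the desired identity. No induction, no further structural input, and no computation with explicit monomials is required; the statement is a purely formal consequence of the projection formula together with the degree-compatibility.

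There is really no main obstacle here: the lemma is a direct corollary of two clauses of Proposition \ref{lemma_degy}, exactly parallel to how Lemma \ref{adjoint} follows from Proposition \ref{lemma_xdegree} and Lemma \ref{underline adjoint} follows from Proposition \ref{lemma_uab}. The only thing to double-check is that the degrees on both sides match --- namely, that $\psi_F^\M$ raises degree by $\rk F$ (so that both $\eta\,\psi_F^\M(\xi)$ and $\varphi_F^\M(\eta)\,\xi$ land in top degree simultaneously) --- which is built into Definition \ref{DefinitionYPullPush}.
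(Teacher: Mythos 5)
Your proof is correct and is exactly the argument the paper intends: the paper states the lemma as an immediate consequence of Proposition \ref{lemma_degy}, namely the projection formula (part (6)) followed by the degree compatibility (part (5)), precisely as you do.
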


Since the pushforward $\psi_F^\Mo$ is injective, the statement below shows that the graded $\mathrm{CH}(\mathrm{M})$-module $\mathrm{CH}(\mathrm{M}_F)[-\rk F]$ is isomorphic to the principal ideal of $y_F$ in $\mathrm{CH}(\mathrm{M})$.

\begin{proposition}\label{Prop_ymult}
The composition $\psi_F^\Mo\circ \varphi_F^\Mo \colon \CH(\M)\to \CH(\M)$ is multiplication by $y_F$. 
\end{proposition}


\begin{corollary}\label{cor:y_FN is an H(M_F) module}
	The homomorphism $\varphi_F$ restricts to a surjection $\H(\M) \to \H(\M_F)$ whose kernel is the annihilator of $y_F$.
	Thus for any $\H(\M)$-module $\N$, the submodule $y_F\N$ can naturally be regarded as an $\H(\M_F)$-module.
\end{corollary}

\subsection{New lemmas}
Until now, everything that has appeared in Section \ref{sec:Chow} was proved in \cite{BHMPW}.
In this section, we state a few additional lemmas about the pushforward and pullback maps that will be needed in this paper.

The following lemma will be needed for the proof of Proposition \ref{summands}.

\begin{lemma}\label{lemma_incomparable}
Suppose that $F$ and $G$ are incomparable proper flats of $\M$. Then
\[
\varphi^G_\Mo\psi^F_\Mo=0 \ \ \text{and} \ \ \uvarphi_\Mo^G\upsi^F_\Mo=0. 
\]
\end{lemma}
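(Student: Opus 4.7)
The plan is to exploit the fact that both pushforwards factor through multiplication by $x_F$, which is killed by the pullbacks $\varphi^G_\M$ and $\uvarphi^G_\M$ whenever $G$ is incomparable to $F$.

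First I would handle $\varphi^G_\M\psi^F_\M$. By Proposition \ref{Prop_xmult}, the composition $\psi^F_\M\circ\varphi^F_\M$ is multiplication by $x_F$ on $\CH(\M)$. Since $\varphi^F_\M$ is surjective, every element $\mu\in\uCH(\M_F)\otimes\CH(\M^F)$ admits a lift $\eta\in\CH(\M)$ with $\varphi^F_\M(\eta)=\mu$, and therefore
\[
\psi^F_\M(\mu)=\psi^F_\M\varphi^F_\M(\eta)=x_F\,\eta.
\]
Equivalently, one can read this directly from the monomial description in Definition \ref{DefinitionXPullback}: every generator in the image of $\psi^F_\M$ carries $x_F$ as an explicit factor, so the image lies in the principal ideal $(x_F)\subseteq\CH(\M)$.

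Next, applying the algebra homomorphism $\varphi^G_\M$ gives
\[
\varphi^G_\M\psi^F_\M(\mu)=\varphi^G_\M(x_F)\,\varphi^G_\M(\eta).
\]
Because $F$ and $G$ are incomparable proper flats, the third bullet of Definition \ref{DefinitionXPullback} yields $\varphi^G_\M(x_F)=0$, and hence the whole expression vanishes. This establishes $\varphi^G_\M\psi^F_\M=0$.

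The second identity $\uvarphi^G_\M\upsi^F_\M=0$ is proved by the same two-line argument in the unaugmented setting: Proposition \ref{Prop_uxmult} shows that $\upsi^F_\M\circ\uvarphi^F_\M$ is multiplication by $x_F$ on $\uCH(\M)$, so every element in the image of $\upsi^F_\M$ is divisible by $x_F$, and the third bullet of Definition \ref{DefinitionUnderlinedPullPush} gives $\uvarphi^G_\M(x_F)=0$ for incomparable $F,G$. There is no real obstacle here; the whole lemma is a formal consequence of the ``principal ideal'' descriptions in Propositions \ref{Prop_xmult} and \ref{Prop_uxmult} together with the incomparability clause in the definitions of the pullbacks.
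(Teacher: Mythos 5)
Your proof is correct and is essentially the argument the paper gives: both rest on the fact that the image of $\psi^F_\M$ lies in the principal ideal $(x_F)$ (via Proposition \ref{Prop_xmult} and surjectivity of $\varphi^F_\M$), with incomparability killing the result. The only cosmetic difference is that you apply $\varphi^G_\M$ directly and use $\varphi^G_\M(x_F)=0$, whereas the paper first post-composes with the injective map $\psi^G_\M$ to reduce to the relation $x_Gx_F=0$ in $\CH(\M)$; your version skips that extra reduction.
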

\begin{proof}
We only prove the first equality. The second one follows from the same arguments. By Definition \ref{DefinitionXPullback} and Proposition \ref{lemma_xdegree}, 
the pushforward $\psi^G_\Mo$ is injective and the pullback $\varphi_\Mo^F$ is surjective. Thus, it is sufficient to show $\psi^G_\Mo\varphi_\Mo^G\psi^F_\Mo\varphi_\Mo^F=0$. Since the compositions $\psi^G_\Mo\varphi_\Mo^G$ and $\psi^F_\Mo\varphi_\Mo^F$ are equal to the multiplications by $x_G$ and $x_F$ respectively (Proposition \ref{Prop_xmult}), the assertion follows because $x_G x_F=0$ in $\CH(\M)$.
\end{proof}

The next lemma will be used in the proofs of Propositions \ref{reduction}, \ref{uHL1CD}, \ref{alt}, and \ref{block1}.

\begin{lemma}\label{we need this}
Let $F$ be a proper flat of $\M$.
\begin{enumerate}[(1)]\itemsep 5pt
\item For any $\mu,\nu\in\uCH(\M_F)\otimes \CH(\M^F)$, we have
$$\deg_\M\left(\psi^F_\Mo(\mu)\cdot\psi^F_\Mo(\nu)\right) =-\udeg_{\M_F}\otimes \deg_{\M^F}\bigl((\ub_{\M_F}\otimes 1+1\otimes \alpha_{\M^F})\mu\nu\bigr).$$
\item When $F$ is nonempty, for any  $\mu,\nu\in\uCH(\M_F)\otimes \uCH(\M^F)$, we have
$$\udeg_\M\left(\upsi^F_\Mo(\mu)\cdot\upsi^F_\Mo(\nu)\right) =-\udeg_{\M_F}\otimes \udeg_{\M^F}\bigl((\ub_{\M_F}\otimes 1+1\otimes \ua_{\M^F})\mu\nu\bigr).$$
\end{enumerate}
\end{lemma}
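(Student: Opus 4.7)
The plan is to reduce both parts to a single short computation that combines the projection-formula-type adjunctions (Lemmas \ref{adjoint} and \ref{underline adjoint}) with the explicit formula for $\varphi^F_\M(x_F)$ (respectively $\uvarphi^F_\M(x_F)$) from the fourth bullet of Definition \ref{DefinitionXPullback} (respectively Definition \ref{DefinitionUnderlinedPullPush}). Parts (1) and (2) are formally identical once one replaces $\psi^F_\M$, $\varphi^F_\M$, $\CH(\M^F)$, $\alpha_{\M^F}$ by their underlined counterparts, so I will describe the argument in detail only for part (1) and indicate the obvious modifications for part (2).

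First, I will identify the composition $\varphi^F_\M \circ \psi^F_\M$, acting on $\uCH(\M_F) \otimes \CH(\M^F)$, with multiplication by $\varphi^F_\M(x_F)$. Indeed, for any $\mu$ in that tensor product, Proposition \ref{Prop_xmult} applied to $\psi^F_\M(\mu) \in \CH(\M)$ gives $x_F \cdot \psi^F_\M(\mu) = \psi^F_\M\bigl(\varphi^F_\M \psi^F_\M(\mu)\bigr)$, while the module-homomorphism property in Proposition \ref{lemma_xdegree}(6) gives $x_F \cdot \psi^F_\M(\mu) = \psi^F_\M\bigl(\varphi^F_\M(x_F)\, \mu\bigr)$. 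Injectivity of $\psi^F_\M$ (Proposition \ref{lemma_xdegree}(4)) then forces
\[
\varphi^F_\M \psi^F_\M (\mu) \;=\; \varphi^F_\M(x_F)\, \mu \;=\; -\bigl(\ub_{\M_F}\otimes 1 + 1\otimes \alpha_{\M^F}\bigr)\mu.
\]

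With this identity in hand, part (1) follows in one line from Lemma \ref{adjoint} applied to $\eta = \psi^F_\M\nu \in \CH(\M)$ and $\xi = \mu$:
\[
\deg_\M\bigl(\psi^F_\M\mu \cdot \psi^F_\M\nu\bigr) \;=\; \udeg_{\M_F}\otimes \deg_{\M^F}\bigl(\varphi^F_\M(\psi^F_\M\nu)\cdot \mu\bigr),
\]
and substituting the formula for $\varphi^F_\M\psi^F_\M(\nu)$ from the previous paragraph produces exactly $-\udeg_{\M_F}\otimes\deg_{\M^F}\bigl((\ub_{\M_F}\otimes 1 + 1\otimes \alpha_{\M^F})\mu\nu\bigr)$. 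For part (2), the same argument with $\psi^F_\M, \varphi^F_\M$ replaced by $\upsi^F_\M, \uvarphi^F_\M$, with Proposition \ref{Prop_uxmult} and Proposition \ref{lemma_uab} replacing Proposition \ref{Prop_xmult} and Proposition \ref{lemma_xdegree}, and with Lemma \ref{underline adjoint} replacing Lemma \ref{adjoint}, produces the analogous identity; the nonemptiness hypothesis on $F$ enters only because $\uCH(\M^F)$ and hence $\upsi^F_\M$ are only defined when $F \neq \varnothing$.

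I do not anticipate any serious obstacle: the key intermediate identity $\varphi^F_\M \psi^F_\M(\mu) = \varphi^F_\M(x_F) \cdot \mu$ is forced by two purely formal properties (the module structure and the injectivity of the pushforward), after which the statement is a single application of the adjunction. The only subtleties are bookkeeping ones, namely the sign (which propagates directly from the $-1$ in $\varphi^F_\M(x_F)$) and tracking which tensor factor receives each piece of the multiplier $\ub_{\M_F}\otimes 1 + 1\otimes \alpha_{\M^F}$.
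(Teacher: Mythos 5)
Your proof is correct and uses essentially the same ingredients as the paper's: the push--pull adjunction of Lemma \ref{adjoint}, the identity $\psi^F_\M\varphi^F_\M = x_F\cdot(-)$, the module-homomorphism property, and the formula $\varphi^F_\M(x_F)=-\ub_{\M_F}\otimes 1-1\otimes\alpha_{\M^F}$. The only (valid) difference is a minor streamlining: you identify $\varphi^F_\M\psi^F_\M$ as multiplication by $\varphi^F_\M(x_F)$ directly via injectivity of $\psi^F_\M$, whereas the paper reaches the same point by lifting $\nu$ through the surjection $\varphi^F_\M$ and applying the adjunction twice.
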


\begin{proof}
We prove only part (1); the proof of part (2) is identical.
By Proposition \ref{lemma_xdegree} (6) and (7), 
we have
\[
\deg_{\M}\left(\psi^F_\Mo(\mu)\cdot\psi^F_\Mo(\nu)\right)
= \udeg_{\M_F}\otimes \deg_{\M^F}\left(\varphi_\Mo^F\psi^F_\Mo(\mu)\cdot\nu\right).
\]
By Definition \ref{DefinitionXPullback}  and Proposition \ref{Prop_xmult}, 
\[
\varphi_\Mo^F\psi^F_\Mo(\mu)\cdot\nu=\varphi_\Mo^F(x_F)\cdot \mu\nu=-(\ub_{\M_F}\otimes 1+1\otimes \alpha_{\M^F})\mu\nu.
\]
Thus, 
\[
\deg_{\M}\left(\psi^F_\Mo(\mu)\cdot\psi^F_\Mo(\nu)\right)=-\udeg_{\M_F}\otimes \deg_{\M^F}\left( (\ub_{\M_F}\otimes 1+1\otimes \alpha_{\M^F})\mu\nu \right).\qedhere
\]
\end{proof}

For later use, we collect here useful commutative diagrams involving the pullback and the pushforward maps.

\begin{lemma}\label{lemma_diagrams}
Let $F$ be a proper flat of $\M$.
\begin{enumerate}[(1)] \itemsep 5pt
\item The following diagram commutes:
\[
\begin{tikzcd}
\uCH(\M_{F})\otimes \CH(\M^{F}) \arrow{r}{\id \otimes \varphi^\varnothing_{\M^F}} \arrow{d}{\psi^{F}_\M} &\uCH(\M_{F})\otimes \uCH(\M^{F}) \arrow{d}{\upsi^{F}_\M}\arrow{r}{\id \otimes \psi^\varnothing_{\M^F}} &\uCH(\M_{F})\otimes \CH(\M^{F}) \arrow{d}{\psi^{F}_\M} \\
 \CH(\M)\arrow{r}{\varphi_\M^\varnothing} & \uCH(\M)\ar{r}{\psi^\varnothing_\M} & \CH(\M).
\end{tikzcd}
\]
\item More generally, for any flat $G < F$, the following diagram commutes:
\[
\begin{tikzcd}
\uCH(\M_{F})\otimes \CH(\M^{F}) \arrow{r}{\id\otimes \varphi_{\M^F}^{G}} \arrow{d}{\psi^{F}_\M} &\uCH(\M_{F})\otimes \uCH(\M^{F}_{G})\otimes \CH(\M^{G})\arrow{d}{\upsi^{F\setminus G}_{\M_{G}}\otimes \id}\arrow{r}{\id\otimes \psi_{\M^F}^{G}}  & \uCH(\M_{F})\otimes \CH(\M^{F})\ar{d}{\psi^{F}_\M}\\
 \CH(\M)\arrow{r}{\varphi_\M^{G}} & \uCH(\M_{G})\otimes \CH(\M^{G})\ar{r}{\psi^{G}_\M} & \CH(\M).
\end{tikzcd}
\]
\item For any nonempty flat $G < F$, the following diagram commutes:
\[
\begin{tikzcd}
\uCH(\M_{F})\otimes \uCH(\M^{F}) \arrow{r}{\id\otimes \uvarphi_{\M^F}^{G}} \arrow{d}{\upsi^{F}_\M} &\uCH(\M_{F})\otimes \uCH(\M^{F}_{G})\otimes \uCH(\M^{G})\arrow{d}{\upsi^{F\setminus G}_{\M_{G}}\otimes \id}\arrow{r}{\id\otimes \upsi_{\M^F}^{G}}  & \uCH(\M_{F})\otimes \uCH(\M^{F})\ar{d}{\upsi^{F}_\M}\\
 \uCH(\M)\arrow{r}{\uvarphi_\M^{G}} & \uCH(\M_{G})\otimes \uCH(\M^{G})\ar{r}{\upsi^{G}_\M} & \uCH(\M).
\end{tikzcd}
\]
\item For any flat $F \le G$, the following diagram commutes:
\[
\begin{tikzcd}
\uCH(\M_G)\otimes \CH(\M^G)  \ar[d,"\psi^G_{\M}"] \ar[r,"\id \otimes \varphi_F^{\M^G}"] & \uCH(\M_G) \otimes \CH(\M^G_F) \ar[d,"\psi^{G\setminus F}_{\M_F}"] \ar[r,"\id \otimes \psi_F^{\M^G}"] & \uCH(\M_G)\otimes \CH(\M^G) \ar[d,"\psi^G_{\M}"]\\
\CH(\M) \ar[r,"\varphi^\M_F"] & \CH(\M_F) \ar[r,"\psi_F^\M"] & \CH(\M).
\end{tikzcd}
\]
\end{enumerate}
\end{lemma}

We omit the proof, which is a straightforward computation.

\subsection{Hodge theory of the Chow ring and the augmented Chow ring}

The following results appear in \cite[Theorem 1.6]{BHMPW}.  They will be used in conjunction with Proposition \ref{why hancock?} to deduce that $\uCH(\M)$ and $\CH(\M)$ satisfy the Hancock condition of Section \ref{sec:hancock}.

\begin{theorem}\label{TheoremChowKahlerPackage}
Let $\mathrm{M}$ be a matroid on $E$.   There is a nonempty subset $\mathscr{K}(\mathrm{M})\subseteq\CH^1(\M)$, defined by a finite collection of linear inequalities, with the property that, for any $\ell\in\mathscr{K}(\mathrm{M})$, the following statements hold.
\begin{enumerate}[(1)]\itemsep 5pt
\item (Poincar\'e duality theorem) For every nonnegative integer $k \le d/2$, the bilinear pairing
\[
\mathrm{CH}^k(\mathrm{M})  \times \mathrm{CH}^{d-k}(\mathrm{M})  \longrightarrow \mathbb{Q}, \quad (\eta_1,\eta_2) \longmapsto \deg_\M(\eta_1 \eta_2)
\]
is non-degenerate.
\item  (Hard Lefschetz theorem)  For every nonnegative integer $k \le d/2$, the multiplication map
\[
\mathrm{CH}^k(\mathrm{M})  \longrightarrow  \mathrm{CH}^{d-k}(\mathrm{M}), \quad \eta \longmapsto \ell^{d-2k} \eta
\]
is an isomorphism.
\item (Hodge--Riemann relations) For every nonnegative integer $k \le d/2$, the bilinear form
\[
\mathrm{CH}^k(\mathrm{M})  \times \mathrm{CH}^{k}(\mathrm{M})  \longrightarrow \mathbb{Q}, \quad (\eta_1,\eta_2) \longmapsto (-1)^k \deg_\M( \ell^{d-2k} \eta_1 \eta_2)
\]
is positive definite on the kernel of the multiplication by $\ell^{d-2k+1}$.
\end{enumerate}
\end{theorem}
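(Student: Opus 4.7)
The plan is to prove Poincar\'e duality, the hard Lefschetz theorem, and the Hodge--Riemann relations as a single package by simultaneous induction on the rank $d$ of $\mathrm{M}$. Separating the three statements is not feasible, since HR in codimension $k-1$ is what supplies HL in codimension $k$ (via a standard linear-algebra argument relating the signature of a Lefschetz form to injectivity of the Lefschetz operator), and HL in turn supplies PD. The base case $d \le 1$ is immediate because $\CH(\mathrm{M})$ is then concentrated in degrees $0$ and $1$, where all three statements reduce to nondegeneracy of a single pairing.

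For the inductive step, I would fix an element $i \in E$ and exploit a \emph{semi-small orthogonal decomposition}
\[
\CH(\mathrm{M}) \;=\; \psi^{\mathrm{M}}_i\bigl(\CH(\mathrm{M}\setminus i)\bigr) \,\oplus\, \bigoplus_{F} \psi^{F}_{\mathrm{M}}\bigl(\uCH(\mathrm{M}_F) \otimes \CH(\mathrm{M}^F)\bigr),
\]
where $F$ ranges over a specific list of flats of $\mathrm{M}$ distinguished by $i$. Using Lemma \ref{we need this} together with Lemma \ref{lemma_incomparable} and Proposition \ref{lemma_xdegree}, one verifies that distinct summands are orthogonal under the Poincar\'e pairing, and that within each summand the pairing is controlled by the Poincar\'e pairings of Chow rings of matroids of rank strictly less than $d$. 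The inductive hypothesis then supplies the K\"ahler package on each piece, and the sign $-1$ appearing in the formula for $\varphi^F_\M(x_F) = -\ub_{\M_F}\otimes 1 - 1\otimes \alpha_{\M^F}$ is exactly what is needed so that the HR sign $(-1)^k$ on $\CH^k(\M)$ matches the HR sign $(-1)^{k-1}$ on the shifted summand $\uCH(\M_F)\otimes\CH(\M^F)[-1]$.

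Having assembled the K\"ahler package for one distinguished Lefschetz class $\ell_0 \in \mathscr{K}(\mathrm{M})$ adapted to this decomposition, I would extend to arbitrary $\ell \in \mathscr{K}(\mathrm{M})$ by a deformation argument. The locus in $\CH^1(\mathrm{M})$ where HL holds is open, since an isomorphism between finite-dimensional vector spaces is an open condition; moreover, within that open set the locus where HR holds is both open (by nondegeneracy of positive-definite forms) and closed (by continuity of the signature of the Lefschetz form on the kernel of $\ell^{d-2k+1}$). Since $\mathscr{K}(\mathrm{M})$ is a nonempty open convex cone and hence connected, verification at the single point $\ell_0$ propagates the full K\"ahler package to every class in $\mathscr{K}(\mathrm{M})$, and PD follows immediately from HL.

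The main obstacle is the construction of the adapted class $\ell_0$ and the verification that the semi-small decomposition above is genuinely orthogonal with the correct numerics. Concretely, one needs a Lefschetz class whose pullback along the appropriate pushforward maps lies in the interior of $\mathscr{K}(\mathrm{M}\setminus i)$ and of the product cone $\underline{\mathscr{K}}(\mathrm{M}_F) \times \mathscr{K}(\mathrm{M}^F)$ for every $F$ in the decomposition; this requires a careful fan-theoretic analysis of the augmented Bergman fan $\Pi_\M$ in terms of star subdivisions from $\Pi_{\M\setminus i}$, and without this geometric input the sign-matching for HR cannot be certified. A secondary but routine difficulty is organizing the simultaneous induction so that at each stage the needed case of HR on a smaller matroid is strictly earlier in the induction than the case of HL being deduced on $\mathrm{M}$; this is handled by inducting on the pair $(d,k)$ lexicographically.
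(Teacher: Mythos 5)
The first thing to note is that the paper does not prove Theorem \ref{TheoremChowKahlerPackage} at all: it is imported as an external input, quoted from \cite{BHMPW} (with the non-augmented analogue, Theorem \ref{theorem_underlineKahler}, originally due to \cite{AHK}). Your sketch does follow the broad architecture of the proof in that reference -- a simultaneous induction establishing PD, HL, and HR together, a semi-small orthogonal decomposition attached to the deletion of an element $i$, and a deformation of Lefschetz classes -- so the strategy is the right one. But as written it has two genuine gaps.

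First, the induction is not well-founded. You induct on the rank $d$, but the principal summand of the semi-small decomposition is $\CH_{(i)}\cong\CH(\Mi)$, and when $i$ is not a coloop the deletion $\Mi$ has the \emph{same} rank as $\M$. The correct induction is on the cardinality of the ground set $E$, with the Boolean matroid (where every element is a coloop) as a separate base case; only the tensor-factor summands $\uCH(\M_{F\cup i})\otimes\CH(\M^F)$ drop in rank. Relatedly, the decomposition you write is not via the pushforwards $\psi^F_\M$ but via the ring map $\theta_i^\M$ and the ideals $x_{F\cup i}\CH_{(i)}$ for $F\in\mathscr{S}_i$, and you silently need the Kähler package for $\uCH$ of smaller matroids (and a Hodge--Riemann-for-tensor-products lemma) in the same induction. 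Second, and more seriously, the distinguished class $\ell_0$ you propose does not exist: a class adapted to the decomposition is pulled back from $\Mi$, hence lies on the boundary of the ample cone of $\M$ and does \emph{not} satisfy hard Lefschetz on $\CH(\M)$ -- the shifted summands $x_{F\cup i}\CH_{(i)}\cong\uCH(\M_{F\cup i})\otimes\CH(\M^F)[-1]$ obstruct it precisely because the decomposition is semi-small rather than small. So one cannot ``assemble the Kähler package at $\ell_0\in\mathscr{K}(\M)$'' and then spread it by connectedness. The actual argument proves only a degenerate (positive semi-definite) Hodge--Riemann statement at the boundary class, establishes HL for the perturbations $\ell_0+t\ell$ with $t>0$ small and $\ell$ ample, and deduces strict HR for those perturbations from the semi-definite limit; only then does the open-closed-connected argument over $\mathscr{K}(\M)$ apply. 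You correctly identify the construction of $\ell_0$ as the main obstacle, but the resolution you propose (finding a single class simultaneously ample on $\M$ and adapted to the decomposition) is not available.
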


\begin{theorem}\label{theorem_underlineKahler}
Let $\mathrm{M}$ be a matroid on $E$.   There is a nonempty subset $\underline{\mathscr{K}}(\mathrm{M})\subseteq\uCH^1(\M)$, defined by a finite collection of linear inequalities, with the property that, for any $\underline{\ell}\in\underline{\mathscr{K}}(\mathrm{M})$, the following statements hold.
\begin{enumerate}[(1)]\itemsep 5pt
\item (Poincar\'e duality theorem) For every nonnegative integer $k < d/2$, the bilinear pairing
\[
\underline{\mathrm{CH}}^k(\mathrm{M})  \times \underline{\mathrm{CH}}^{d-k-1}(\mathrm{M})  \longrightarrow \mathbb{Q}, \quad (\eta_1,\eta_2) \longmapsto \udeg_\M(\eta_1 \eta_2)
\]
is non-degenerate.
\item (Hard Lefschetz theorem)  For every nonnegative integer $k < d/2$, the multiplication map
\[
\underline{\mathrm{CH}}^k(\mathrm{M})  \longrightarrow  \underline{\mathrm{CH}}^{d-k-1}(\mathrm{M}), \quad \eta \longmapsto \underline{\ell}^{d-2k-1}  \eta
\]
is an isomorphism.
\item (Hodge--Riemann relations) For every nonnegative integer $k <d/2$, the bilinear form
\[
\underline{\mathrm{CH}}^k(\mathrm{M})  \times \underline{\mathrm{CH}}^{k}(\mathrm{M})  \longrightarrow \mathbb{Q}, \quad (\eta_1,\eta_2) \longmapsto (-1)^k \udeg_\M(\underline{\ell}^{d-2k-1} \eta_1 \eta_2)
\]
is positive definite on the kernel of the multiplication by $\underline{\ell}^{d-2k}$.
\end{enumerate}
\end{theorem}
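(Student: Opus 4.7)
The plan is to prove the three statements simultaneously by induction on the rank $d$ of $\M$, since (as the paper itself emphasizes in Remark \ref{unification}) attempting to prove hard Lefschetz in isolation seems infeasible: the inductive step for HL typically requires the Hodge--Riemann relations in rank $<d$, while HR in turn feeds back into Poincar\'e duality. The base case $d=1$ is trivial (both $\uCH^0$ and its Poincar\'e-dual piece are one-dimensional). For the inductive step, I would assume the full K\"ahler package has been established for $\uCH(\M')$ whenever $\rk \M' < d$, and in particular for the localizations $\M^F$ and contractions $\M_F$ at every nonempty proper flat $F$.

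The core mechanism would be a \emph{semi-small decomposition} of $\uCH(\M)$ as a graded module over itself, obtained by deleting a chosen element $i \in E$ from $\M$. Using the pullback and pushforward maps $\uvarphi^F_\M$, $\upsi^F_\M$ from Definition \ref{DefinitionUnderlinedPullPush} together with their compatibility with degree maps (Proposition \ref{lemma_uab} and Lemma \ref{underline adjoint}), one decomposes $\uCH(\M)$ into a sum of $\uCH(\M\setminus i)$ and pieces of the form $\upsi^F_\M\bigl(\uCH(\M_F)\otimes \uCH(\M^F)\bigr)$ indexed by those flats $F$ containing $i$ whose behavior changes under the deletion. Lemma \ref{we need this}(2) gives an explicit formula for how the Poincar\'e pairing interacts with these pushforwards, and Lemma \ref{lemma_incomparable} ensures that pieces indexed by incomparable flats are orthogonal under the pairing after multiplying by a suitable Lefschetz power. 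This reduces PD and HR on $\uCH(\M)$ to a combination of PD, HL, and HR on smaller Chow rings, all of which are available by induction.

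The final step is to promote the K\"ahler package from a single, combinatorially preferred Lefschetz class (built from the decomposition above, where orthogonality is transparent) to an arbitrary class $\underline{\ell} \in \underline{\mathscr{K}}(\M)$. The standard deformation argument applies: HL and HR are open conditions on $\uCH^1(\M)$, the relevant signature is discrete under continuous deformation so long as the form remains non-degenerate, and non-degeneracy is exactly PD, which by the inductive decomposition persists throughout $\underline{\mathscr{K}}(\M)$ (a convex, hence connected, open cone). Therefore HR holds on all of $\underline{\mathscr{K}}(\M)$ as soon as it holds at one point, and HR formally implies HL and PD on $\uCH^\bullet(\M)$.

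The main obstacle is the verification that the decomposition is truly ``semi-small'' with respect to a distinguished boundary Lefschetz class, i.e.\ that after multiplication by the appropriate power of that class the summands $\upsi^F_\M\bigl(\uCH(\M_F)\otimes \uCH(\M^F)\bigr)$ are mutually orthogonal and that the induced form on each summand matches, up to sign, the Poincar\'e form coming from the inductive K\"ahler package on $\uCH(\M_F)\otimes\uCH(\M^F)$. This is where the specific structure of the Bergman fan $\underline{\Pi}_\M$ and the explicit formulas of Lemma \ref{we need this} must be used carefully; once that computation is in hand, everything else is a formal consequence of the inductive hypothesis and the deformation argument.
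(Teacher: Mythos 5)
Note first that the paper does not actually prove Theorem \ref{theorem_underlineKahler}: it is quoted as the main result of \cite{AHK}, with the surrounding text recording that it (together with Theorem \ref{TheoremChowKahlerPackage}) is also proved in \cite{BHMPW}. Your outline is essentially the strategy of \cite{BHMPW} --- a semi-small decomposition obtained by deleting an element --- which is a legitimate alternative to the flip-based argument of \cite{AHK}. As written, however, the sketch has two genuine gaps.

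The first is the shape of the induction. You induct on the rank $d$ and assume the K\"ahler package for all matroids of rank $<d$. But when $i$ is not a coloop, the deletion $\M\setminus i$ has the \emph{same} rank as $\M$, so the principal summand of the semi-small decomposition (the copy of $\uCH(\M\setminus i)$) is not covered by your inductive hypothesis. The induction must be on the cardinality of the ground set, with the Boolean matroid (every element a coloop) as the base case; then the deletion and the tensor-product summands $\uCH(\M_{F\cup i})\otimes\uCH(\M^F)$ all live on strictly smaller ground sets.

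The second, more serious, gap is in the deformation step. You claim that the non-degeneracy needed to keep the signature constant along a path in $\underline{\mathscr{K}}(\M)$ ``is exactly PD.'' It is not: PD is the non-degeneracy of the pairing between $\uCH^k(\M)$ and $\uCH^{d-k-1}(\M)$ and does not involve $\underline{\ell}$ at all, whereas non-degeneracy of the Hodge--Riemann form $(\eta_1,\eta_2)\mapsto\udeg_\M(\underline{\ell}^{\,d-2k-1}\eta_1\eta_2)$ on $\uCH^k(\M)$ is equivalent (given PD) to the hard Lefschetz theorem \emph{for that particular $\underline{\ell}$}. So before the signature argument can transport HR from your one distinguished class to an arbitrary $\underline{\ell}\in\underline{\mathscr{K}}(\M)$, you must already know HL for every class in the cone; openness of HL plus connectedness of the cone does not give this, since an open condition can perfectly well fail on a nonempty open subset of a connected set. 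The missing ingredient is the local-to-global step present in both \cite{AHK} and \cite{BHMPW}, and mirrored in this paper by the argument of Proposition \ref{dCM}: one uses the Hodge--Riemann relations for the smaller objects attached to the rays of $\underline{\Pi}_\M$ (the summands $\uCH(\M_F)\otimes\uCH(\M^F)$, available by induction) to show that any $\underline{\ell}^{\,d-2k}$-primitive class in $\uCH^k(\M)$ is annihilated by every generator $x_F$ and hence vanishes. Only after HL is established for all of $\underline{\mathscr{K}}(\M)$ does constancy of signature yield HR throughout the cone.
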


Theorem \ref{theorem_underlineKahler} was first proved as the main result of \cite{AHK}.

\section{The intersection cohomology of a matroid}\label{sec:IC of a matroid}

The purpose of this section is to define the $\H(\M)$-module $\IH(\M)$ along with various related
objects, and to state the litany of results that will be proved in our inductive argument.

\subsection{Definition of $\uIH(\M)$}\label{sec:ihdefs}


Before defining $\IH(\M)$, we first define an ``underlined version" inside $\uCH(\M)$.  Let $\uH(\M)$ be the unital subalgebra of $\uCH(\M)$ generated by the element $\ub$, introduced in Section \ref{subsec:therings}. 

For any subspace $V$ of $\uCH(\M)$,
we set 
\[
V^\perp \coloneq \Big\{ \eta \in \uCH(\M) \ | \ \udeg_\M(v\eta)=0 \ \ \text{for all $v \in V$}\Big\}.
\]
Note that
$V$ is an $\uH(\M)$-submodule  if and only if
$V^\perp$ is an $\uH(\M)$-submodule.

We recursively construct subspaces
 $\uK{F}{\M}$, $\uIH(\M)$, and $\uJ(\M)$ of $\uCH(\M)$ as follows. 
 See the end of Section \ref{sec:canonical decomposition} for a discussion of the motivation behind the definition of $\uJ(\M)$.

\begin{definition}\label{def:thesubspaces}
Let $\M$ be a matroid of positive rank $d$. 
\begin{enumerate}[(1)]\itemsep 5pt
\item For a nonempty proper flat $F$ of $\M$, we define
\[\uK{F}{\M} \coloneq \upsi^F_\Mo\! \left( \uJ(\M_F)\otimes \uCH(\M^F)\right).\]
Proposition \ref{lemma_uab} shows that this is an $\uH(\M)$-submodule of $\uCH(\M)$. 
\item We define the $\uH(\M)$-submodule $\uIH(\M)$ of $\uCH(\M)$ by
\[
\uIH(\M)\coloneq\left(\sum_{\varnothing < F < E} \ \uK{F}{\M} \right)^\perp,
\]
where the sum is over all nonempty proper flats $F$ of $\M$.
\item We define the graded subspace $\uJ(\M)$ of $\uCH(\M)$ by setting
\[
\uJ^k(\M) \coloneq 
\begin{cases}
\uIH^k(\M) & \text{if $k\leq (d-2)/2$,}\\
\ub^{2k-d+2} \ \uIH^{d - k - 2}(\M) & \text{if $k\geq (d-2)/2$.}
\end{cases}
\]
\end{enumerate}
Note that $\uJ^k(\M) = 0$ if $k > d-2$, since $d - k - 2$ is negative.
\end{definition}

For example, when $\M$ is a rank $1$ matroid, we have
\[
\uIH(\M)=\uCH(\M)=\Q 1 \ \ \text{and} \ \ \uJ(\M)=0,
\]
and when $\M$ is a rank $2$ matroid, we have
\[
\uIH(\M)=\uCH(\M)=\Q 1  \oplus \Q\ub \ \ \text{and} \ \ \uJ(\M)=\Q 1.
\]

\begin{example}\label{ex:boolean1}
Let $\M$ be the rank $3$ Boolean matroid on $\{1,2,3\}$.  By definition, $\uCH^0(\M)=\Q 1$, $\uCH^1(\M)$ is generated by $x_1, x_2, x_3, x_{12}, x_{13}, x_{23}$ subject to the relations
\[
x_1+x_{12}+x_{13}=x_2+x_{12}+x_{23}=x_3+x_{13}+x_{23},
\]
and $\uCH^2(\M)$ is generated by $x_1x_{12}=\beta^2$. 
Using the above calculations for the rank 1 matroid $\M^1$ and the rank 2 matroid $\M_1$, we compute
\[
\uK{1}{\M}=\underline{\psi}^F(\Q 1\otimes \Q 1)=\Q x_1.
\]
Meanwhile, since $\uJ(\M_{12})=0$, it follows that $\uK{12}{\M}=0$. Therefore, we have
\[
\uK{1}{\M}=\mathbb{Q}x_1,\ \ \uK{2}{\M}=\mathbb{Q}x_2,\ \ \uK{3}{\M}=\mathbb{Q}x_3, \ \ \text{and} \ \ \uK{12}{\M}=\uK{13}{\M}=\uK{23}{\M}=0.
\]
Note that $\uCH^1(\M)$ is a $4$-dimensional vector space spanned by $\ub, x_1,x_2,x_3$. 
Since $\ub x_1=\ub x_2=\ub x_3=0$ in the Chow ring of $\M$, we have a decomposition of modules over $\uH(\M) = \Q[\ub]$ of the following form:
\[
\uCH(\M)=\uH(\M) \oplus \uK{1}{\M} \oplus \uK{2}{\M} \oplus \uK{3}{\M}.
\]
Thus, in this case, $\uIH(\M)=\uH(\M)=\mathbb{Q}1 \oplus \mathbb{Q} \ub\oplus \mathbb{Q} \ub^2$ and $\uJ(\M)=\mathbb{Q}1 \oplus \mathbb{Q} \ub$.
\end{example}

\begin{example}\label{ex:uniform1}
Let $\M$ be the rank $3$ uniform matroid on $\{1,2,3,4\}$. As above,
\[
\uK{i}{\M}=\mathbb{Q}x_i \ \ \text{for all $i$} \ \ \text{and} \ \ \uK{ij}{\M}=0 \ \ \text{for all distinct $i$ and $j$}.
\]
Note that $\uCH^1(\M)$ is a $7$-dimensional vector space spanned by $\ua,x_{12},x_{13},x_{14},x_{23},x_{24},x_{34}$.
By definition, the orthogonal complement of the span of $x_1,x_2,x_3,x_4$ in $\uCH^1(\M)$ is the $3$-dimensional subspace $\uIH^1(\M)$. 
In fact, we have a decomposition of $\uCH(\M)$ into $\uH(\M)$-modules
\[
\uCH(\M)=\uIH(\M)\oplus  \uK{1}{\M} \oplus \uK{2}{\M} \oplus \uK{3}{\M} \oplus \uK{4}{\M}.
\]
The validity of the displayed decomposition is equivalent to the statement that the restriction of the Poincar\'e pairing of $\uCH(\M)$ to the span of $x_1,x_2,x_3,x_4$ is non-degenerate. This follows from the relations $x_ix_j=0$ for $i \neq j$, together with the computation $\udeg_\M(x_i^2)=2$.
In this case, $\uIH^0(\M)$, $\uIH^1(\M)$, and $\uIH^2(\M)$ have dimensions 1, 3, and 1, respectively. Meanwhile, the subspace $\uH(\M)=\Q1\oplus \Q\ub\oplus \Q\ub^2$ has codimension 2 in $\uIH(\M)$. In this example, though not in general, $\uJ(\M)$ is contained in $\uH(\M)$, and in fact $\uJ(\M)=\Q1\oplus \Q\ub$.
\end{example}

In Section \ref{SectionProof}, we will prove that $\uIH(\M)$ satisfies the hard Lefschetz theorem with respect to $\ub$:
For every nonnegative integer $k < d/2$, the multiplication map
\[
\uIH^k(\M)  \longrightarrow  \uIH^{d-k-1}(\M), \quad \eta \longmapsto \ub^{d-2k-1}  \eta
\]
is an isomorphism. 
Equivalently,
 $\uIH(\M)$ is the unique representation of the Lie algebra
 \[
 \mathfrak{sl}_2=\operatorname{Span}_\Q\Bigg\{\left(\begin{array}{cc} 0&1\\0&0\end{array}\right),\left(\begin{array}{cr} 1&0\\0&-1\end{array}\right),\left(\begin{array}{cc} 0&0\\1&0\end{array}\right)\Bigg\}
 \]
 such that the first matrix acts via multiplication by $\ub$ 
 and the second matrix acts on $\uIH^k(\M)$ via multiplication by $2k-d+1$.
In terms of the $\mathfrak{sl}_2$-action, we have
\[
\uJ(\M)=\left(\begin{array}{cc} 0&0\\1&0\end{array}\right) \cdot \uIH(\M).
\] 
Some further intuitive justification for this definition can be found in Section \ref{sec:canonical decomposition}.



\subsection{Definition of $\IH(\M)$}
We now consider the graded algebras
\begin{align*}
\H(\M) & \coloneq  \text{the unital subalgebra of $\CH(\M)$ generated by $y_i$ for $i \in E$, and }\\
\H_\o(\M)& \coloneq \text{the unital subalgebra of $\CH(\M)$ generated by $y_i$ for $i\in E$ and $x_\varnothing$.}
\end{align*}
As mentioned before, the subalgebra $\H(\M)$ can be identified with the graded M\"obius algebra of $\M$ defined in the introduction \cite[Proposition 2.18]{BHMPW}.
If $E$ is the empty set,
then $x_\varnothing$ does not exist, and we do not define $\H_\o(\M)$.
Note that since the homomorphism  $\varphi^\varnothing\colon \CH(\M) \to \uCH(\M)$ sends $-x_\varnothing$ to $\ub$, it sends $\H_\o(\M)$ to $\uH(\M)$.
For a subspace $V$ of $\CH(\M)$,
we set 
\[
V^\perp \coloneq \Big\{ \eta \in \CH(\M) \ | \ \deg_\M(v\eta)=0 \ \ \text{for all $v \in V$}\Big\}.
\]
If $V$ is an $\H(\M)$-submodule or an $\H_\o(\M)$-submodule, then so is $V^\perp$.


\begin{definition}\label{def:ihdef}
Let $\M$ be a matroid.
\begin{enumerate}[(1)]\itemsep 5pt
\item For any proper flat $F$ of $\M$, we let
\[\K{F}{\M} \coloneq \psi^F_\Mo\!\left(\uJ(\M_F)\otimes \CH(\M^F)\right).\]	
Proposition \ref{lemma_xdegree} shows that this is an $\H(\M)$-submodule of $\CH(\M)$. Moreover, when $F$ is nonempty, Definition \ref{DefinitionXPullback} implies that $\K{F}{\M}$ is in fact an $\H_\circ(\M)$-submodule of $\CH(\M)$.

\item We define the $\H(\M)$-submodule $\IH(\M)$ of $\CH(\M)$ by
\[
\IH(\M)\coloneq \left(\sum_{F<E}\ \K{F}{\M}\right)^\perp,
\]
where the sum is over all proper flats $F$ of $\M$. Note that $1\in \IH(\M)$ for degree reasons, and thus $\H(\M)\subseteq \IH(\M)$. 
\item If $E$ is nonempty, we define the $\H_\circ(\M)$-submodule $\IH_\circ(\M)$ of $\CH(\M)$ by
\[
\IH_\circ(\M) \coloneq \left(\sum_{\varnothing<F<E}\ \K{F}{\M}\right)^\perp,
\]
where the sum is over all nonempty proper flats $F$ of $\M$.
\end{enumerate}
\end{definition}

In Proposition~\ref{summands} below, we will show that the subspaces $\K{F}{\M}$ are mutually orthogonal in $\CH(\M)$, so that $\CH(\M) = \IH(\M) + \sum_{F<E} \K{F}{\M}$ is an orthogonal decomposition of $\CH(\M)$ into $\H(\M)$-submodules.

\begin{example}\label{ex:rank2_2}
Let $\M$ be the rank $2$ uniform matroid on $\{1,2,3\}$.
Since  $\uJ(\N)=0$ for any  rank $1$ matroid $\N$, we have
$\K{1}{\M}=\K{2}{\M}=\K{3}{\M}=0$. 
Similarly, since $\uJ(\N)=\mathbb{Q}1$ for any  rank $2$ matroid $\N$ and $\CH(\N)=\mathbb{Q}1$ for the rank $0$ matroid $\N$, we have
$\K{\varnothing}{\M}=\mathbb{Q}x_\varnothing$.
 By Example \ref{ex:rank2_1b}, we have an orthogonal decomposition 
into indecomposable $\H(\M)$-modules
$\CH(\M)=\H(\M) \oplus \mathbb{Q}x_\varnothing$.
Thus, in this case, $\IH(\M)=\H(\M)$ and $\IH_\circ(\M)=\CH(\M)$.
\end{example}

\begin{example}\label{ex:boolean2}
Let $\M$ be the rank $3$ Boolean matroid on $\{1,2,3\}$. In this case, we have
\[
\K{12}{\M}=\K{13}{\M}=\K{23}{\M}=0,
\]
because $\uJ(\N)=0$ for any rank $1$ matroid $\N$. 
Similarly, since $\uJ(\N)=\mathbb{Q}1$ for any  rank $2$ matroid $\N$ and $\CH(\N)=\mathbb{Q}1\oplus \mathbb{Q}y$ for any rank $1$ matroid $\N$,  we have
\[
\K{1}{\M}=\mathbb{Q}x_1 \oplus \mathbb{Q}x_1 y_1, \ \ 
\K{2}{\M}=\mathbb{Q}x_2 \oplus \mathbb{Q}x_2 y_2,  \ \ 
\K{3}{\M}=\mathbb{Q}x_3 \oplus \mathbb{Q}x_3 y_3.
\]
Lastly, by Example \ref{ex:boolean1}, we have $\uJ(\M)=\mathbb{Q}1 \oplus \mathbb{Q}\ub$, and hence
$\K{\varnothing}{\M}=\mathbb{Q} x_\varnothing \oplus \mathbb{Q} x_\varnothing^2$.
One can check by direct computation that the augmented Chow ring of $\M$ admits the vector space decomposition
\begin{align*}
\CH^3(\M)&=\mathbb{Q}y_1y_2y_3,\\
\CH^2(\M)&=\mathbb{Q}y_1y_2 \oplus \mathbb{Q}y_1y_3 \oplus \mathbb{Q}y_2y_3 \oplus \mathbb{Q}x_\varnothing^2 \oplus \mathbb{Q}x_1y_1 \oplus \mathbb{Q}x_2y_2 \oplus \mathbb{Q}x_3y_3,\\
\CH^1(\M)&=\mathbb{Q}y_1 \oplus \mathbb{Q}y_2 \oplus \mathbb{Q}y_3 \oplus \mathbb{Q}x_\varnothing \oplus \mathbb{Q}x_1 \oplus \mathbb{Q}x_2 \oplus \mathbb{Q}x_3,\\
\CH^0(\M)&=\mathbb{Q}1.
\end{align*}
It follows that $\CH(\M)$ admits an orthogonal decomposition into $\H(\M)$-submodules
\begin{align*}
\CH(\M)&=\H(\M) \oplus (\mathbb{Q}x_\varnothing \oplus \mathbb{Q}x_\varnothing^2) \oplus (\mathbb{Q}x_1 \oplus \mathbb{Q}x_1y_1) \oplus (\mathbb{Q}x_2 \oplus \mathbb{Q}x_2y_2) \oplus (\mathbb{Q}x_3 \oplus \mathbb{Q}x_3y_3)\\
&=\H(\M) \oplus \K{\varnothing}{\M} \oplus \K{1}{\M} \oplus \K{2}{\M} \oplus \K{3}{\M}. 
\end{align*}
Thus, in this case, $\IH(\M)=\H(\M)$ and $\IH_\circ(\M)=\H(\M) \oplus \K{\varnothing}{\M} $. 
\end{example}

\begin{example}\label{ex:uniform2}
Let $\M$ be the rank $3$ uniform matroid on $\{1,2,3,4\}$. Both $\CH^1(\M)$ and $\CH^2(\M)$ are 11-dimensional, and a direct computation shows that the elements $y_iy_j$, $x_iy_i$, and $x_\varnothing^2$ form a basis of $\CH^2(\M)$. 
As in the earlier examples, we have
\[
\K{ij}{\M}=0 \ \ \text{for any distinct $i$, $j$}, \ \ \K{i}{\M}=\mathbb{Q}x_i \oplus \mathbb{Q}x_iy_i \ \ \text{for any $i$}, \ \ \text{and} \ \ \K{\varnothing}{\M}= \mathbb{Q}x_\varnothing \oplus \mathbb{Q}x_\varnothing^2.
\]
Hence $\dim \IH^2(\M)=6$. Note that $\dim \H^2(\M)=6$ and $\H^2(\M)\subseteq \IH^2(\M)$, and therefore $\H^2(\M)= \IH^2(\M)$. In contrast, $\IH^1(\M)$ contains $\H^1(\M)$ as a codimension $2$ subspace. In fact, $\CH(\M)$ admits decompositions into $\H(\M)$-submodules
\[
\CH(\M)=\IH_\circ(\M) \oplus \bigoplus_{i=1}^4 \K{i}{\M}=\IH(\M) \oplus \K{\varnothing}{\M}\oplus  \bigoplus_{i=1}^4  \K{i}{\M}.
\]
Since the summands are mutually orthogonal, the validity of the first decomposition is equivalent to the statement that the restriction of the Poincar\'e pairing of $\CH(\M)$ to each  $\mathbb{Q}x_i \oplus \mathbb{Q}x_iy_i$ is non-degenerate, which can be verified directly.  
On the other hand, the validity of the decomposition 
\[
\IH_\circ(\M)=\IH(\M) \oplus \K{\varnothing}{\M}
\]
 is equivalent to the fact that $x_\varnothing^3 \neq 0$ in $\CH^3(\M)$, and hence to the fact that 
$\ub^2 \neq 0$ in $\uCH^2(\M)$.
In general, by \cite[Proposition 9.5]{AHK}, the degree of the top self-intersection of $\ub$ in the Chow ring of $\M$ is the absolute value of the M\"obius number of the lattice of flats of $\M$, which is $3$ in this example, and is nonzero in general \cite[Theorem 7.1.8]{Zaslavsky}.
\end{example}

\subsection{Pulling and pushing the intersection cohomology modules}
We now state some basic properties of the pullbacks and pushforwards for  the subspaces
we have defined.

\begin{lemma}\label{push uIH}
For any proper flats $F < G$ of $\M$, we have
\begin{enumerate}[(1)]\itemsep 5pt
	\item $\psi^F\!\big( \uK{G\setminus F}{\M_F} \otimes \CH(\M^F)\big) \subseteq \K{G}{\M}$ and
	$\varphi^F(\K{G}{\M}) = \uK{G\setminus F}{\M_F} \otimes \CH(\M^F)$,
	\item $\upsi^F\!\big( \uK{G\setminus F}{\M_F} \otimes \uCH(\M^F)\big) \subseteq \uK{G}{\M}$ and $\uvarphi^F(\uK{G}{\M}) = \uK{G\setminus F}{\M_F} \otimes \uCH(\M^F)$,
	\item $\varphi_\Mo^F \IH_\o(\M) \subseteq \uIH(\M_F) \otimes \CH(\M^F)$ and $\uvarphi_\Mo^F \uIH(\M) \subseteq \uIH(\M_F) \otimes \uCH(\M^F).$
\end{enumerate}
\end{lemma}
\begin{proof}
For the first part of statement (1), we use the right square of Lemma \ref{lemma_diagrams} (2):
\begin{align*}
\psi^F\big(\uK{G\setminus F}{\M_F} \otimes \CH(\M^F)\big) 
& = \psi^F_\Mo \!\big(\upsi^{G\setminus F}_{\M_F}\! \left(\uJ(\M_G) \otimes \uCH(\M^G_F)\right) \otimes \CH(\M^F)\big)\\
&= \psi^G\!\left(\uJ(\M_G) \otimes \psi^F_{\M^G}\!\left(\uCH(\M^G_F)\otimes \CH(\M^F)\right)\right)\\
&\subseteq \psi^G\!\left(\uJ(\M_G) \otimes \CH(\M^G)\right) = \K{G}{\M}.
\end{align*}	
The second part follows similarly, using the left square of Lemma \ref{lemma_diagrams} (2) and the surjectivity of $\varphi^F_{\M^G}$.	 Statement (2) follows by the same arguments, using Lemma \ref{lemma_diagrams} (3).
	
For the first part of statement (3), we need to show that, for any proper flat $G$ of $\M$ properly containing $F$,
 $\varphi^F_\Mo\IH_\o(\M)$ is orthogonal to $\uK{G\setminus F}{\M_F} \otimes \CH(\M^F)$ in  $\uCH(\M_F) \otimes \CH(\M^F)$.
By Lemma \ref{underline adjoint}, this is equivalent to the statement that $\IH_\o(\M)$ is orthogonal to
$\psi^F(\uK{G\setminus F}{\M_F} \otimes \CH(\M^F))$ in $\CH(\M)$.  But this follows from the first part of statement (1).  The second part of (3) follows similarly, using the first part of statement (2). \qedhere


\end{proof}

\begin{lemma}\label{lem_imageofIH}
The following holds for any matroid $\M$. 
\begin{enumerate}[(1)]\itemsep 5pt
\item For any nonempty proper flat $F$ of $\M$, we have 
$\varphi^\Mo_F \IH_\o(\M)\subseteq \IH(\M_F)$.
\item For any proper flats $F \le G$ of $\M$, we have 
$\varphi^\Mo_F \K{G}{\M}= \K{G\setminus F}{\M_F}$.
\end{enumerate}
\end{lemma}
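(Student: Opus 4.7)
The plan is to deduce both parts directly from the commutative diagrams in Lemma \ref{lemma_diagrams}(4), combined with the adjunction formula in Lemma \ref{subscript adjoint}.

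For part (2), I would apply the left square of Lemma \ref{lemma_diagrams}(4), which asserts the identity $\varphi^\M_G \circ \psi^F_\M = \psi^{F\setminus G}_{\M_G} \circ (\id \otimes \varphi^{\M^F}_G)$ whenever $G \le F$. Evaluating both sides on the subspace $\uJ(\M_F) \otimes \CH(\M^F)$ gives
\[
\varphi^\M_G \psi^F_\M\bigl(\uJ(\M_F) \otimes \CH(\M^F)\bigr) = \psi^{F\setminus G}_{\M_G}\bigl(\uJ(\M_F) \otimes \varphi^{\M^F}_G(\CH(\M^F))\bigr).
\]
Since $\varphi^{\M^F}_G$ is a surjective graded algebra homomorphism by Definition \ref{DefinitionYPullPush}, its image is all of $\CH(\M^F_G)$, which is exactly the right-hand side of the claimed equality. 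This step should be essentially immediate once the right square is located.

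For part (1), I would unfold the defining orthogonal for $\IH(\M_F)$. A general summand has the form $\psi^{F'}_{\M_F}\bigl(\uJ((\M_F)_{F'}) \otimes \CH((\M_F)^{F'})\bigr)$ for a proper flat $F'$ of $\M_F$, and such flats are exactly $F' = H \setminus F$ for proper flats $H$ of $\M$ with $F \le H < E$. Under the natural identifications $(\M_F)_{H\setminus F} = \M_H$ and $(\M_F)^{H\setminus F} = \M^H_F$, an element of this summand takes the form $\xi = \psi^{H\setminus F}_{\M_F}(\mu)$ with $\mu \in \uJ(\M_H) \otimes \CH(\M^H_F)$. By Lemma \ref{subscript adjoint}, pairing $\varphi^\M_F(\eta)$ against $\xi$ under $\deg_{\M_F}$ equals pairing $\eta$ against $\psi^\M_F(\xi)$ under $\deg_\M$; the problem therefore reduces to verifying that $\psi^\M_F(\xi)$ lies in $\psi^H_\M\bigl(\uJ(\M_H) \otimes \CH(\M^H)\bigr)$, since $\eta \in \IH_\circ(\M)$ is orthogonal to every such subspace (note that $F \neq \varnothing$ forces $H \neq \varnothing$, so the summand does appear in the definition of $\IH_\circ(\M)$). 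To obtain this containment I would invoke the right square of Lemma \ref{lemma_diagrams}(4), now with the diagram's $F$ and $G$ playing the roles of $H$ and $F$ respectively (the required inequality $F \le H$ holds by construction), giving
\[
\psi^\M_F \circ \psi^{H\setminus F}_{\M_F} = \psi^H_\M \circ (\id \otimes \psi^{\M^H}_F),
\]
so that $\psi^\M_F(\xi) = \psi^H_\M\bigl((\id \otimes \psi^{\M^H}_F)(\mu)\bigr)$ sits inside $\psi^H_\M\bigl(\uJ(\M_H) \otimes \CH(\M^H)\bigr)$ as required.

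I do not foresee a serious obstacle: the whole argument is essentially bookkeeping once the correct square in Lemma \ref{lemma_diagrams}(4) is identified for each part. The only care needed is in the matroid identifications $(\M_F)_{H\setminus F} = \M_H$ and $(\M_F)^{H\setminus F} = \M^H_F$ used to parse the source of $\mu$, in confirming that the hypothesis $F \le H$ of the diagram is automatic from writing $F' = H \setminus F$, and in checking the harmless edge case $H = F$ (where $F' = \varnothing$ and the summand in $\IH(\M_F)$ corresponds under $\psi^\M_F$ to the summand of $\IH_\circ(\M)^\perp$ indexed by $F$ itself).
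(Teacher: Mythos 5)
Your proposal is correct and follows essentially the same route as the paper: part (2) via the left square of Lemma \ref{lemma_diagrams}(4) together with the surjectivity of $\varphi_G^{\M^F}$, and part (1) by unwinding the defining orthogonality of $\IH(\M_F)$, transporting it through Lemma \ref{subscript adjoint}, and identifying $\psi_F^\M\psi^{H\setminus F}_{\M_F}\bigl(\uJ(\M_H)\otimes\CH(\M^H_F)\bigr)$ inside $\psi^H_\M\bigl(\uJ(\M_H)\otimes\CH(\M^H)\bigr)$ via the right square of the same diagram. The bookkeeping of the identifications $(\M_F)_{H\setminus F}=\M_H$ and $(\M_F)^{H\setminus F}=\M^H_F$ and the edge case $H=F$ are handled as in the paper's argument.
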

\begin{proof}
To prove (1), it suffices to show that for any flat $G$ containing $F$,
\[
\varphi^\Mo_F \IH_\o(\M) \ \text{and}\ \K{G\setminus F}{\M_F} \ \text{are orthogonal in} \ \CH(\M_F).
\]
Note that we have 
\begin{align*}
\psi_F\!\left(\K{G\setminus F}{\M_F}\right) & = \psi_F^\Mo \psi^{G\setminus F}_{\M_F}\big(\uJ(\M_G)\otimes \CH(\M_F^G)\big) \\
& = \psi^G_\Mo\big(\uJ(\M_G)\otimes \psi^{\M^G}_F\CH(\M_F^G)\big)\\
& \subseteq \K{G}{\M}.
\end{align*} 
By Lemma \ref{subscript adjoint} and the right commutative square of Lemma \ref{lemma_diagrams} (4), the orthogonality statement that we need is equivalent to the statement that $\IH_\o(\M)$ and
$\psi_F\!\left(\K{G\setminus F}{\M_F}\right)$ are orthogonal in $\CH(\M)$.  But $\IH_\o(\M)$ is by definition orthogonal to all of $\K{G}{\M}$.
The second statement follows similarly using the left square of Lemma \ref{lemma_diagrams} (4) and the surjectivity of $\varphi_F^{\M^G}$. 
\end{proof}

\begin{proposition}\label{summands}
The graded linear subspaces 
\[
\uK{F}{\M}\subseteq \uCH(\M),
\]
where $F$ varies through all nonempty  proper flats of $\M$, are mutually orthogonal in $\uCH(\M)$.  
Similarly, the graded linear subspaces
\[
\K{F}{\M}\subseteq \CH(\M),
\]
where $F$ varies through all proper flats of $\M$, are mutually orthogonal in $\CH(\M)$.
\end{proposition}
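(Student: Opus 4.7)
The plan is to establish the pairwise orthogonality by the standard strategy of converting the Poincar\'e pairing into a pullback via the adjoint formulas (Lemmas~\ref{underline adjoint} and~\ref{adjoint}) and then either killing that pullback (incomparable case) or routing it through the defining orthogonality property of $\uIH$ of a smaller matroid (nested case).

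A useful preliminary observation I would record first is that $\uJ(\M_H) \subseteq \uIH(\M_H)$ for every matroid $\M_H$ of positive rank appearing in the argument. Indeed, $\uIH(\M_H)$ is the orthogonal complement of a sum of $\uH(\M_H)$-submodules (by Proposition~\ref{lemma_uab}), hence is itself an $\uH(\M_H)$-submodule of $\uCH(\M_H)$ and in particular closed under multiplication by $\ub_{\M_H}$; the two-case definition of $\uJ^k(\M_H)$ as either $\uIH^k(\M_H)$ or $\ub^c\uIH^{k-c}(\M_H)$ then yields the inclusion.

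For the $\uCH$ statement, fix distinct nonempty proper flats $F, G$ and take $\mu \in \uJ(\M_F)\otimes \uCH(\M^F)$, $\nu \in \uJ(\M_G)\otimes \uCH(\M^G)$. Lemma~\ref{underline adjoint} gives
\[
\udeg_\M\bigl(\upsi^F_\M \mu \cdot \upsi^G_\M \nu\bigr) = \bigl(\udeg_{\M_G}\otimes \udeg_{\M^G}\bigr)\bigl(\uvarphi^G_\M \upsi^F_\M \mu \cdot \nu\bigr).
\]
If $F$ and $G$ are incomparable, Lemma~\ref{lemma_incomparable} makes $\uvarphi^G_\M \upsi^F_\M = 0$, killing the pairing. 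Otherwise, without loss of generality $G < F$, and Lemma~\ref{lemma_diagrams}(3) factors
\[
\uvarphi^G_\M \circ \upsi^F_\M = \bigl(\upsi^{F\setminus G}_{\M_G}\otimes \id\bigr)\circ \bigl(\id \otimes \uvarphi^G_{\M^F}\bigr),
\]
so $\uvarphi^G_\M \upsi^F_\M \mu$ lies in $\upsi^{F\setminus G}_{\M_G}\bigl(\uJ(\M_F)\otimes \uCH(\M^F_G)\bigr)\otimes \uCH(\M^G)$. Since $F\setminus G$ is a nonempty proper flat of $\M_G$, the defining orthogonality of $\uIH(\M_G)$, combined with $\uJ(\M_G)\subseteq \uIH(\M_G)$, makes the pairing vanish on the first tensor factor. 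The $\CH$ statement follows the same template, replacing Lemmas~\ref{underline adjoint} and~\ref{lemma_diagrams}(3) by Lemmas~\ref{adjoint} and~\ref{lemma_diagrams}(2); the only new case is when one of $F, G$ is $\varnothing$, say $F = \varnothing < G$, where the diagram degenerates (using $\M_\varnothing = \M$ and $\CH(\M^\varnothing) = \Q$) to $\varphi^\varnothing_\M \circ \psi^G_\M = \upsi^G_\M\circ (\id\otimes \varphi^\varnothing_{\M^G})$, and the pairing reduces to the orthogonality in $\uCH(\M)$ between $\uJ(\M)\subseteq \uIH(\M)$ and $\upsi^G_\M\bigl(\uJ(\M_G)\otimes \uCH(\M^G)\bigr)$, which is built into the definition of $\uIH(\M)$.

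The main technical hurdle is not conceptual but notational: one must keep the matroid identifications $(\M^F)_G = \M^F_G$, $(\M_G)_{F\setminus G} = \M_F$, $(\M_G)^{F\setminus G} = \M^F_G$ straight in order to apply Lemma~\ref{lemma_diagrams} correctly, and verify in each case that the flat used to index the $\uIH$-orthogonality (namely $F\setminus G$ as a flat of $\M_G$, or $G$ itself as a flat of $\M$) satisfies the nonempty-proper hypothesis.
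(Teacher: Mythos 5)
Your proof is correct and follows essentially the same route as the paper's: adjunction via Lemmas~\ref{adjoint} and~\ref{underline adjoint}, Lemma~\ref{lemma_incomparable} for incomparable flats, the commutative diagrams of Lemma~\ref{lemma_diagrams} for nested flats, and finally the defining orthogonality of $\uIH(\M_G)$ together with the inclusion $\uJ(\M_G)\subseteq\uIH(\M_G)$. Your explicit treatment of the case $F=\varnothing$ via Lemma~\ref{lemma_diagrams}(1), and your justification of $\uJ\subseteq\uIH$, are points the paper's written proof passes over silently, but the substance is the same.
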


\begin{proof}
We only prove the second statement. The first statement follows from the same arguments.

Let $F$ and $G$ be distinct nonempty proper flats.
We want to show that $\K{F}{\M}$ 
is orthogonal to
$\K{G}{\M}$ in $\CH(\M)$.  By Lemma \ref{adjoint} and the fact that $\K{F}{\M}  = \psi^F_\Mo\!\left( \uJ(\M_F)\otimes \CH(\M^F)\right) $, this is equivalent to showing that
\[
\varphi_\Mo^F\K{G}{\M}\ \ \text{is orthogonal to}\ \ \uJ(\M_F)\otimes \CH(\M^F) \ \ \text{in} \ \ \uCH(\M_F)\otimes \CH(\M^F) .
\]
If $F$ and $G$ are incomparable, this follows from Lemma \ref{lemma_incomparable}, so we may assume
without loss of generality that $F<G$.  But then by Lemma \ref{push uIH} (1), we have $\varphi^F(\K{G}{\M}) = \uK{G\setminus F}{\M_F} \otimes \CH(\M^F)$.
Since $\uJ(\M_F)$ is contained in $\uIH(\M_F)$, which is orthogonal to $\uK{G\setminus F}{\M_F}$, the result follows.
\end{proof}

Finally, we need one more variant of the module $\uIH(\M)$, which treats one element $i\in E$ differently than the others.
Let 
$\uH_i(\M)$ be the unital subalgebra of $\uCH(\M)$ generated by $\ub$ and $x_{\{i\}}$, with the convention that $x_{\{i\}}=0$ when $\{i\}$ is not a flat.


As before, $V$ is an $\uH_i(\M)$-submodule 
if and only if
$V^\perp$ is an $\uH_i(\M)$-submodule. 
Proposition \ref{lemma_uab} shows that 
$\uK{F}{\M}$ is an $\uH_i(\M)$-submodule of $\uCH(\M)$
for every nonempty proper flat $F$ different from $\{i\}$.

\begin{definition}\label{def:uIHi}
	We define the $\uH_i(\M)$-submodule $\uIH_i(\M)$ of $\uCH(\M)$ by
	\[
	\uIH_i(\M) \coloneq \left(\sum_{F\neq \{i\}} \ \uK{F}{\M}\right)^\perp,
	\]
	where the sum is over all nonempty proper flats $F$ of $\M$ different from $\{i\}$.\footnote{Note that  $\uIH_i(\M)=\uIH(\M)$ when $\{i\}$ is not a flat.}
\end{definition}

In general, $\uIH_i(\M)$ is a graded $\uH(\M \setminus i)$-module, which will be shown to decompose as
\[
\uIH_i(\M)=\uIH(\M) \oplus  \uK{i}{\M}. 
\]
For example, in Example~\ref{ex:uniform1}, we have $\uIH_4(\M)=\uIH(\M) \oplus  \uK{4}{\M}$. 
The submodule $\uIH_i(\M)$ will appear in a crucial step of our inductive argument in Section \ref{sec:underlined deletion induction}. 
See also Section \ref{sec:udel} in our guide to the proof.\footnote{In contrast, $\uIH(\M)$ does not have a natural structure of a graded $\uH(\M \setminus i)$-module. See Example~\ref{ex:deletion1}.}

\subsection{The statements}\label{sec:statements}

Let $\N=\bigoplus_{k\geq 0}\N^k$ be a finite-dimensional graded $\mathbb{Q}$-vector space endowed with a bilinear form 
\[
\langle -,- \rangle \colon \N\times \N\to \Q
\]
and  a linear operator $\oL \colon \N\to \N$  of degree $1$ that satisfies 
$\langle \oL(\eta), \xi \rangle=\langle \eta, \oL(\xi) \rangle$ for all $\eta, \xi\in \N$. 

\begin{definition}
Using the notation above, we define three properties for $\N$.
\begin{enumerate}[(1)]\itemsep 5pt
\item We say that $\N$ satisfies {\bf Poincar\'e duality of degree \boldmath{$d$}}  if the bilinear form $\langle -,- \rangle$ is non-degenerate, and for  $\eta\in \N^j$ and $\xi\in \N^k$, the pairing $\langle \eta, \xi \rangle$ is nonzero only when $j+k=d$. 
\item We say that $\N$ satisfies the {\bf hard Lefschetz theorem of degree \boldmath{$d$}}  if the linear map 
\[
\oL^{d-2k} \colon \N^k\to \N^{d-k}
\]
is an isomorphism for all $k\leq d/2$. 
\item We say that $\N$ satisfies the {\bf Hodge--Riemann relations of degree \boldmath{$d$}}  if the restriction of 
\[
\N^k\times \N^k\longrightarrow \Q, \qquad (\eta, \xi)\longmapsto (-1)^k\langle \oL^{d-2k} (\eta), \xi \rangle
\]
to the kernel of 
$
\oL^{d-2k+1} \colon \N^k\to \N^{d-k+1}
$
is positive definite for all $k\leq d/2$. Elements of $\ker \oL^{d-2k+1}$ are called {\bf primitive classes}.
\end{enumerate}
\end{definition}


We now define  the central statements that appear in the induction. 

Our first group of statements says that the augmented Chow ring admits canonical decompositions into $\H(\M)$-modules, and the Chow ring admits canonical decompositions into $\uH(\M)$-modules. 

\begin{definition}[Canonical decompositions]\label{def:canonical decomps}
\
\begin{enumerate}[]\itemsep 5pt
\item \hypertarget{cd}{$\CD(\M)$}: We have the direct sum decomposition
\[
\CH(\M) = \IH(\M) \oplus \bigoplus_{F<E}\ \K{F}{\M},
\]
where the sum is over all proper flats $F$ of $\M$.
\item \hypertarget{cdo}{$\CD_\circ(\M)$}: We have the direct sum decomposition
\[
\CH(\M) = \IH_\circ(\M) \oplus \bigoplus_{\varnothing<F<E}\ \K{F}{\M},
\]
where the sum is over all nonempty proper flats $F$ of $\M$.
\item \hypertarget{ucd}{$\uCD(\M)$}: We have the direct sum decomposition
\[
\uCH(\M) = \uIH(\M) \oplus \bigoplus_{\varnothing<F<E}\ \uK{F}{\M},
\]
where the sum is over all nonempty proper flats $F$ of $\M$.
\end{enumerate}
\end{definition}

\begin{convention}
We will use a superscript to denote that the decompositions hold in certain degrees. For example, \hyperlink{cd}{$\CD^{\leq k}(\M)$} means that the direct sum decomposition holds in degrees less than or equal to $k$. 
\end{convention}

\begin{remark}\label{remark_cd}
Let $V$ and $W$ be finite-dimensional $\Q$-vector spaces with subspaces $V_1\subseteq V$ and $W_1\subseteq W$. Given a non-degenerate pairing 
$V\times W\to \mathbb{Q}$,
we can define the orthogonal subspaces $W_1^\perp \subseteq V$ and $V_1^\perp\subseteq W$. It is straightforward to check that
$W=W_1\oplus V_1^\perp$
if and only if 
$V=V_1\oplus W_1^\perp$.
Applying this fact repeatedly, we have
\[
\hyperlink{cd}{\CD^{k}(\M)}  \Longleftrightarrow  \hyperlink{cd}{\CD^{d-k}(\M)}, \;\;\hyperlink{cd}{\CD(\M)}  \Longleftrightarrow  \hyperlink{cd}{\CD^{\leq \frac{d}{2}}(\M)}, \;\;\text{and}\;\; \hyperlink{cdo}{\CD_\o(\M)}  \Longleftrightarrow  \hyperlink{cdo}{\CD_\o^{\leq \frac{d}{2}}(\M)}.
 \]
Similarly, we have
$\hyperlink{ucd}{\uCD(\M)} \Longleftrightarrow  \hyperlink{ucd}{\uCD^{\leq \frac{d-1}{2}}(\M)}$. 
\end{remark}

\begin{definition}[Poincar\'e dualities]\label{def:pds}
	\
	\begin{enumerate}[]\itemsep 5pt
		\item \hypertarget{pd}{$\PD(\M)$}: The graded vector space $\IH(\M)$ satisfies Poincar\'e duality of degree $d$ with respect to  the Poincar\'e pairing on $\CH(\M)$.
		\item \hypertarget{pdo}{$\PD_\o(\M)$}: The graded vector space $\IH_\o(\M)$ satisfies Poincar\'e duality of degree $d$ with respect to the Poincar\'e pairing on $\CH(\M)$.
		\item \hypertarget{upd}{$\uPD(\M)$}: The graded vector space $\uIH(\M)$ satisfies Poincar\'e duality of degree $d-1$ with respect to  the Poincar\'e pairing on $\uCH(\M)$.
	\end{enumerate}
\end{definition}

\begin{remark}\label{CDPD}
Let $V$ be a finite-dimensional $\Q$-vector space equipped with a non-degenerate symmetric bilinear form,
and let $W\subseteq V$ be a subspace.  Then the restriction of the form to $W$ is non-degenerate if and only if $V = W\oplus W^\perp$.
In light of Remark \ref{remark_cd}, this implies that $$\hyperlink{cd}{\CD^k(\M)}\Longleftrightarrow\hyperlink{pd}{\PD^k(\M)},\;\;\hyperlink{cdo}{\CD_\circ^k(\M)}\Longleftrightarrow\hyperlink{pdo}{\PD_\circ^k(\M)},
\;\;\text{and}\;\;\hyperlink{ucd}{\uCD^k(\M)}\Longleftrightarrow\hyperlink{upd}{\uPD^k(\M)}.$$
\end{remark}

Let $R$ be a graded $\Q$-algebra with degree zero part equal to $\Q$, and let $\mm\subseteq R$
denote the unique graded maximal ideal.  For any graded $R$-module $\N$, the {\bf socle} of $\N$ is defined to be the graded submodule 
\[
\operatorname{soc}(\N) \coloneq \{n\in \N\ | \ \mm\cdot n = 0\}.
\]
Equivalently, it is the direct sum of all simple submodules of $\N$.
The next conditions assert that the socles of the intersection cohomology modules 
defined in Section~\ref{sec:ihdefs} 
vanish in low degrees.  As before, the symbol $d$ stands for the rank of the matroid $\M$.

\begin{definition}[No socle conditions]\label{def:ns}
\
\begin{enumerate}[]\itemsep 5pt
\item \hypertarget{ns}{$\NS(\M)}$: The socle of the $\H(\M)$-module $\IH(\M)$ vanishes in degrees less than or equal to $d/2$.
\item \hypertarget{nso}{$\NS_\o(\M)$}: The socle of the $\H_\circ(\M)$-module $\IH_\circ(\M)$ vanishes in degrees less than or equal to $d/2$.
\item \hypertarget{uns}{$\uNS(\M)$}: The socle of the $\uH(\M)$-module $\uIH(\M)$ vanishes in degrees less than or equal to $(d-2)/2$.
\end{enumerate}
\end{definition}

In particular, for  even $d$, the no socle condition for $\IH(\M)$ says that the socle of the $\H(\M)$-module $\IH(\M)$ 
is concentrated in degrees strictly larger than the middle degree $d/2$.
On the other hand, for an odd number $d$, the socle of the $\uH(\M)$-module $\uIH(\M)$ 
may be nonzero in the middle degree $(d-1)/2$.

Recall that we have Poincar\'e pairings on $\CH(\M)$ and $\uCH(\M)$ defined by
\[
\langle \eta, \,\xi \rangle_{\CH(\M)} \coloneq \deg_{\M}(\eta\,\xi) 
\;\;\text{and}\;\;
\langle \underline\eta, \, \underline\xi \rangle_{\uCH(\M)}\coloneq \udeg_{\M}(\underline\eta\,\underline\xi). 
\]
Moreover, with respect to the above bilinear forms, $\CH(\M)$ satisfies Poincar\'e duality of degree $d$ and $\uCH(\M)$ satisfies Poincar\'e duality of degree $d-1$,
by Theorems \ref{TheoremChowKahlerPackage} and \ref{theorem_underlineKahler}.

\begin{definition}[Hard Lefschetz theorems]\label{def:hls}
\
\begin{enumerate}[]\itemsep 5pt
\item \hypertarget{hl}{$\HL(\M)$}: For any positive linear combination $y=\sum_{j \in E} c_jy_j$, the graded vector space $\IH(\M)$ satisfies the hard Lefschetz theorem of degree $d$ with respect to multiplication by $y$. 
\item \hypertarget{hlo}{$\HL_\circ(\M)$}: For any positive linear combination $y=\sum_{j \in E} c_jy_j$, there is a positive $\epsilon$ such that the graded vector space $\IH_\o(\M)$ satisfies the hard Lefschetz theorem of degree $d$ with respect to multiplication by $y-\epsilon x_\varnothing$. 
\item \hypertarget{hli}{$\HL_i(\M)$}: For any positive linear combination $y'=\sum_{j \in E\setminus i} c_jy_j$, the graded vector space $\IH(\M)$ satisfies the hard Lefschetz theorem of degree $d$ with respect to multiplication by $y'$. 
\item  \hypertarget{uhl}{$\uHL(\M)$}: The graded vector space $\uIH(\M)$ satisfies the hard Lefschetz theorem of degree $d-1$ with respect to multiplication by $\b$.
\item \hypertarget{uhli}{$\uHL_i(\M)$}: The graded vector space $\uIH_i(\M)$ satisfies the hard Lefschetz theorem of degree $d-1$ with respect to multiplication by $\b-x_{\{i\}}$. Here we recall our convention that $x_{\{i\}}=0$ if $\{i\}$ is not a flat. 
\end{enumerate}
\end{definition}

\begin{definition}[Hodge--Riemann relations]\label{def:hrs}
\
\begin{enumerate}[]\itemsep 5pt
\item \hypertarget{hr}{$\HR(\M)$}: For any positive linear combination $y=\sum_{j \in E} c_jy_j$, the graded vector space $\IH(\M)$ satisfies the Hodge--Riemann relations of degree $d$ with respect to the Poincar\'e pairing on $\CH(\M)$ and the multiplication by $y$. 
\item \hypertarget{hro}{$\HR_\o(\M)$}: For any positive linear combination $y=\sum_{j \in E} c_jy_j$, there is a positive $\epsilon$ such that the graded vector space $\IH_\o(\M)$ satisfies the Hodge--Riemann relations of degree $d$ with respect to the Poincar\'e pairing on $\CH(\M)$ and the multiplication by $y-\epsilon x_\varnothing$. 
\item \hypertarget{hri}{$\HR_i(\M)$}: For any positive linear combination $y'=\sum_{j \in E\setminus i} c_jy_j$, the graded vector space $\IH(\M)$ satisfies the Hodge--Riemann relations of degree $d$ with respect to the Poincar\'e pairing on $\CH(\M)$ and the multiplication by $y'$. 
\item \hypertarget{uhr}{$\uHR(\M)$}: The graded vector space $\uIH(\M)$ satisfies the Hodge--Riemann relations of degree $d-1$ with respect to the Poincar\'e pairing on $\uCH(\M)$ and the multiplication by $\b$. 
\item \hypertarget{uhri}{$\uHR_i(\M)$}: The graded vector space $\uIH_i(\M)$ satisfies the Hodge--Riemann relations of degree $d-1$ with respect to the Poincar\'e pairing on $\uCH(\M)$ and the multiplication by $\b-x_{\{i\}}$. 
\end{enumerate}
\end{definition}

As before, we will use a superscript to denote that the conditions hold in certain degrees. For example, $\hyperlink{pd}{\PD^k(\M)}$ means the Poincar\'e pairing on $\CH(\M)$ induces a non-degenerate pairing between $\IH^k(\M)$ and $\IH^{d-k}(\M)$, and $\hyperlink{hl}{\HL^k(\M)}$ means the hard Lefschetz map 
from $\IH^k(\M)$ to $\IH^{d-k}(\M)$ is an isomorphism. 

Now we state the main result of this paper, which will be proved using induction on the cardinality of the ground set $E$. 

\begin{theorem}\label{theorem_all}
Let $\M$ be a matroid on $E$.  If $E$ is nonempty, the following statements hold:
\begin{center}
{\renewcommand{\arraystretch}{1.3}%
\begin{tabular}{lllll}
$\hyperlink{cd}{\CD(\M)}$, \; \; \; & $\hyperlink{ns}{\NS(\M)}$, \; \; \; & $\hyperlink{pd}{\PD(\M)}$, \; \; \; & $\hyperlink{hl}{\HL(\M)}$, \; \; \; & $\hyperlink{hr}{\HR(\M)}$, \; \; \; \\
$\hyperlink{cdo}{\CD_\o(\M)}$, \; \; \;& $\hyperlink{nso}{\NS_\o(\M)}$, \; \; \; & $\hyperlink{pdo}{\PD_\o(\M)}$, \; \; \; & $\hyperlink{hlo}{\HL_\o(\M)}$, \; \; \; & $\hyperlink{hro}{\HR_\o(\M)}$, \; \; \; \\
$\hyperlink{ucd}{\uCD(\M)}$, \; \; \; & $\hyperlink{uns}{\uNS(\M)}$, \; \; \; & $\hyperlink{upd}{\uPD(\M)}$, \; \; \; & $\hyperlink{uhl}{\uHL(\M)}$, \; \; \; & $\hyperlink{uhr}{\uHR(\M)}$.
\end{tabular}}
\end{center}
\end{theorem}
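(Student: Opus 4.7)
The proof proceeds by induction on $|E|$, treating all fifteen statements as a single package. The base case (when $|E|$ is small, so that the matroid has rank at most $1$) follows by direct inspection: for rank $1$, $\CH(\M)=\H(\M)=\IH(\M)$, $\uCH(\M)=\uIH(\M)=\Q$, and every assertion is trivial. For the inductive step on a matroid $\M$ with $|E|>1$, every contraction $\M_F$ with $F\neq\varnothing$, every localization $\M^F$ with $F<E$, and every single-element deletion $\M\setminus i$ has strictly smaller ground set, so by induction all fifteen statements hold for these matroids. These pieces feed into $\CH(\M)$ and $\uCH(\M)$ through the pushforward/pullback formalism of Section \ref{sec:Chow} and through the semi-small decomposition of \cite{BHMPW}, which is what makes it feasible to deduce the statements for $\M$ itself.

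Inside the inductive step, the fifteen conditions are not proved in parallel; they must be established in a carefully choreographed order, because many of them are essentially equivalent to earlier ones. First I would exploit the formal equivalences $\CD^k\Leftrightarrow\PD^k$ and $\PD\Leftrightarrow\CD^{\le d/2}$ from Remarks \ref{remark_cd} and \ref{CDPD}, which reduce the canonical-decomposition statements to Poincar\'e duality statements in the appropriate ranges. Next, by the orthogonality of summands established in Proposition \ref{summands}, together with $\PD$-type information coming from Theorems \ref{TheoremChowKahlerPackage} and \ref{theorem_underlineKahler} on the ambient rings $\CH(\M)$ and $\uCH(\M)$, one obtains $\uPD(\M)$, $\PD_\circ(\M)$, and $\PD(\M)$ from socle-vanishing information. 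The socle vanishing $\NS(\M)$, $\NS_\circ(\M)$, and $\uNS(\M)$ is the content of the Rouquier-complex argument of Section \ref{sec:Rouquier complexes}: following Elias--Williamson \cite{EW}, one builds a small complex of $\H(\M)$-modules whose multiplicity spaces compute the relevant socles, and uses the inductive hypotheses on $\M^F$ and $\M_F$ to show these vanish in the required degrees. With PD and NS in hand, HL and HR are pushed through by a deformation argument: the hard Lefschetz and Hodge--Riemann properties for one Lefschetz class in each cone (which one gets from the semi-small decomposition of $\CH(\M)$ relating to $\CH(\M\setminus i)\oplus\bigoplus_F\uCH(\M_F)\otimes\CH(\M^F)$, using the inductive hypothesis on $\M\setminus i$ and the contractions) are propagated to the whole positive cone via the standard continuity argument of Section \ref{sec:deform}. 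The $i$-subscripted statements $\uHL_i,\uHR_i,\HL_i,\HR_i$ serve as the bridge: they are the precise hypotheses needed to run the deletion induction of Sections \ref{sec:deletion induction}--\ref{sec:underlined deletion induction}, and they are proved together with their unsubscripted counterparts by running a parallel deformation in a family of Lefschetz classes that interpolates between $\b$ and $\b-x_{\{i\}}$ (respectively between $y$ and $y-\eps x_\varnothing$).

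The hardest step, and the true combinatorial heart of the argument, is the propagation of the K\"ahler package from smaller matroids to $\M$ itself through the deletion $\M\setminus i$, because this is the only operation that decreases $|E|$ by exactly one and so drives the induction. The obstacle is that the semi-small decomposition expresses $\CH(\M)$ as $\CH(\M\setminus i)$ plus a sum of modules built from $\IH$-type pieces of proper contractions and localizations, and one must show that the decomposition is orthogonal with respect to the Poincar\'e form up to a controlled defect, that the Lefschetz operator on $\M$ restricts compatibly to each summand, and that the Hodge--Riemann signs on the summands add up correctly. This is exactly where one needs not just HL but the full sign information of HR on the smaller pieces, and this is the reason the three statements (PD, HL, HR) must be rolled into one grand induction as anticipated in Remark \ref{why HR?}. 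Once this deletion step is executed, the remaining statements (the non-underlined HL/HR for $\M$, the $\circ$-versions, and the equivariant refinements needed for Theorem \ref{thm:equivariantKL}) follow by deformation within the Lefschetz cone and by running every argument $\Gamma$-equivariantly as in the appendix.
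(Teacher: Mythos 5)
Your overall architecture --- a single grand induction on $|E|$, driven by the deletion $\M\setminus i$, with the fifteen statements established in a choreographed order and the $i$-subscripted statements serving as the bridge --- matches the paper. But there are two structural gaps that would prevent the induction from closing.

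First, the base case. You take it to be matroids of rank at most $1$, but that is not where the induction bottoms out. The deletion step requires an element $i$ that is not a coloop (and, after a harmless reduction, has no parallel element). When every element of $E$ is a coloop, $\M$ is the Boolean matroid on $E$, which can have arbitrarily large rank and ground set; it is not reachable by deletion from anything and is not covered by the rank $\le 1$ case. The paper must treat the Boolean case by a separate direct computation (Section \ref{sec:boo}): one shows $\uIH(\M)=\uH(\M)=\Q[\ub]/(\ub^d)$ using the involution $\tau\colon x_F\mapsto x_{E\setminus F}$ applied to a known decomposition (equivalently, the Feichtner--Yuzvinsky basis), deduces $\uHL$ and $\uHR$ from $\udeg_\M(\ub^{d-1})=1$, and then identifies $\IH(\M)$ with the Poincar\'e duality algebra $\Q[y_1,\dots,y_d]/(y_1^2,\dots,y_d^2)$. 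Without this your induction has no anchor.

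Second, and more seriously, you never address the middle degree. Your mechanism for $\HR$ is deformation of Lefschetz classes, but deformation only preserves the signature of the Hodge--Riemann forms in degrees where hard Lefschetz is known for the entire family, and it only transports $\HR$ from a reference class for which $\HR$ is already known; the deletion induction supplies such a reference class only in degrees strictly below $d/2$ (because the direct summand decomposition of $\IH(\M)$ as an $\H(\M\setminus i)$-module is only available for the modified module $\tIH(\M)$, which differs from $\IH(\M)$ exactly in degree $d/2$). The paper closes this gap with the signature (``Hancock'') argument: one shows that the signature of the full Poincar\'e pairing on $\IH_\o^{d/2}(\M)$, and later on $\IH^{d/2}(\M)$, equals $P_{\N}(-1)$ by additivity of signatures over the orthogonal canonical decomposition, using that $\CH(\M)$ and each summand $\psi^F_\M\uJ(\M_F)\otimes\CH(\M^F)$ are Hancock (Propositions \ref{why hancock?}, \ref{sig}, \ref{other sig}). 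This then feeds back through the chain $\HR_\o(\M)\Rightarrow\NS_\o(\M)\Rightarrow\uNS(\M)\Rightarrow\uHL(\M)\Rightarrow\CD(\M)\Rightarrow\HR(\M)$ to establish all statements in middle degree. Relatedly, your claim that the Rouquier complex delivers $\NS(\M)$, $\NS_\o(\M)$, and $\uNS(\M)$ outright is too strong: it only yields $\uNS^{<\frac{d-2}{2}}(\M)$, and the remaining socle vanishing is exactly what the Hancock loop above is needed for.
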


As intermediate steps in the induction, we will also prove the statements $\hyperlink{hli}{\HL_i(\M)}$, $\hyperlink{hri}{\HR_i(\M)}$, $\hyperlink{uhli}{\uHL_i(\M)}$, and $\hyperlink{uhri}{\uHR_i(\M)}$. 
However, we will not use these statements in our applications, and we do not need them in the main inductive hypothesis.

\begin{remark}
If $E$ is the empty set, the statements $\hyperlink{cd}{\CD(\M)}$, $\hyperlink{pd}{\PD(\M)}$, $\hyperlink{hl}{\HL(\M)}$, and $\hyperlink{hr}{\HR(\M)}$ hold tautologically.
The statement $\hyperlink{ns}{\NS(\M)}$ fails, as we have $\H(\M) = \CH(\M) = \IH(\M) = \Q$, so the socle is nonvanishing in degree $0$.
This is directly related to the fact that the Kazhdan--Lusztig polynomial of the rank zero matroid has larger than expected degree.
The remaining statements do not make sense because $\IH_\o(\M)$ and $\uIH(\M)$ are not defined when $E$ is empty. 
\end{remark}







\section{Guide to the proof}\label{sec:guide}

\tikzset{
block/.style={
    rectangle,
    draw, thick,
    text width=10em,
    text centered,
    rounded corners
},
blocka/.style={
    rectangle,
    draw=none,
    text width=10em,
    minimum height=1.2em
},
line/.style={>=latex,->,thick},
liney/.style={>=latex,-,thick},
}

\begin{figure}[!htpb]
\begin{tikzpicture}
\matrix (m)[matrix of nodes, column  sep=1.75cm, row  sep=1.6cm, align=center, nodes={rectangle,draw, anchor=center} ]
{
\node[blocka](top1) {}; & \node[blocka](top2) {}; & \node[blocka](top3) {};\\
|[block]| {$\underline{\mathsf{NS}}^{<\frac{d-2}{2}}(\mathrm{M})$}  & |[block]| {$\PD_\o(\M)$, $\mathsf{CD}_\o(\mathrm{M})$ \\ $\uPD(\M)$, $\uCD(\M)$}       & |[block]| {$\uHL_\A(\M), \underline{\mathsf{HR}}_\A(\mathrm{M})$} \\
 |[block]| {$\underline{\mathsf{HL}}^{<\frac{d-2}{2}}(\mathrm{M})$}  & |[block]| {$\mathsf{CD}^{<\frac{d}{2}}(\mathrm{M})$, $\mathsf{PD}^{<\frac{d}{2}}(\mathrm{M})$}  &  |[block]| {$\mathsf{NS}^{<\frac{d}{2}}(\mathrm{M})$}                                \\
 |[block]| {$\underline{\mathsf{HR}}^{<\frac{d-2}{2}}(\mathrm{M})$}          &  |[block]| {$\HL_i(\M), \HR_i^{<\frac d 2}(\M)$}          &  |[block]| {$\mathsf{HL}(\mathrm{M})$}                               \\
 & |[block]| {$\mathsf{HR}^{<\frac{d}{2}}(\mathrm{M})$} & 
 |[block]| {$\mathsf{HL}_\o(\mathrm{M})$} \\
 |[block]| {$\mathsf{HR}_\circ^{<\frac{d}{2}}(\mathrm{M})$}  &       &  |[block]| {$\mathsf{HR}_\circ 
 (\mathrm{M})$}                         \\
 |[block]| {$\uNS(\M)$} &  & |[block]| {$\mathsf{NS}_\circ 
  (\mathrm{M})$}             \\
 |[block]| {$\underline{\mathsf{HL}} 
 (\mathrm{M})$}    &   |[block]| {$\mathsf{CD} 
 (\mathrm{M})$, $\mathsf{PD}(\mathrm{M})$}         &  |[block]| {$\mathsf{NS} 
 (\mathrm{M})$}                     \\
 |[block]| {$\underline{\mathsf{HR}} 
 (\mathrm{M})$}         &  & |[block]| {$\mathsf{HR} 
 (\mathrm{M})$}    &      \\
};

\node[block] (outer) [fit=(top1)(top2)(top3)] {\raisebox{-.6em}{All statements for matroids on fewer elements}};

\draw [line] ([yshift=-.14cm]top2.south) -- node[midway,left]{\ref{CD for free}}(m-2-2);

\draw [line] (m-2-2) -- node[midway,above]{\ref{uHL_i}}node[midway,below]{\ref{uHR_i}}(m-2-3);

\draw [line] (m-2-2) -- node[midway,above]{\ref{uNS1}}(m-2-1);

\draw [liney] (m-2-1) -- ([yshift=-1cm]m-2-1.south);
\draw [liney] (m-2-3) |- ([yshift=-0.8cm]m-2-1.south);
\draw [line] ([yshift=-0.78593cm]m-2-1.south) -- node[pos=0,left]{\ref{NS under half}} (m-3-1);

\draw [line] (m-3-1) -- node[midway,above]{\ref{uHL1CD}} (m-3-2);
\draw [line] (m-3-2) -- node[midway,above]{\ref{CDNS}} (m-3-3);

\draw [line] (m-3-1) -- node[midway,left]{\ref{alt}} (m-4-1);
\draw [line] (m-3-2) -- node[midway,left]{\shortstack{\ref{HLi}\\ \\ \ref{HRi}}} 
(m-4-2);
\draw [liney] (m-3-3) |- ([yshift=1cm]m-4-2.north);

\draw [line] ([xshift=.5cm]m-3-3.south) -- node[midway,right]{\ref{dCM}} ([xshift=.5cm]m-4-3.north);


\draw [line] (m-4-1) -- node[pos=0.79,left]{\ref{block1}} (m-6-1);
\draw [line] (m-4-2) -- node[midway,left]{\ref{delnodel}} (m-5-2);
\draw [liney] (m-4-3) |- ([yshift=1cm]m-5-2.north);

\draw [line] ([xshift=.5cm]m-4-3.south) -- node[midway,right]{\ref{block_HL}} ([xshift=.5cm]m-5-3.north);

\draw [liney] (m-5-2) -- ([xshift=-3.81cm]m-5-2.west);

\draw [liney] (m-5-3) |- ([yshift=.78593cm]m-6-3.north);

\draw [liney] ([yshift=.8cm]m-6-3.north) -| ([xshift=-3.75cm]m-6-3.west);

\draw [line] (m-6-1) -- node[midway,below]{\ref{sig}} (m-6-3);
\draw [line] (m-6-3) -- node[midway,left]{\ref{HR1NS1}} (m-7-3);

\draw [line] (m-7-3) --  node[midway,above]{\ref{NS1uNS1}} (m-7-1);

\draw [line] (m-7-1) -- node[midway,left]{\ref{NS1uHL}} (m-8-1);

\draw [line] (m-8-1) -- node[midway,above]{\ref{uHL1CD}} (m-8-2);
\draw [line] (m-8-2) -- node[midway,above]{\ref{CDNS}} (m-8-3);

\draw [line] (m-8-1) -- node[midway,left]{\ref{alt}} (m-9-1);

\draw [liney] (m-8-2) -- ([xshift=-3.81cm]m-9-3.west);

\draw [line] (m-9-1) -- node[midway,below]{\ref{other sig}} (m-9-3);
\end{tikzpicture}
\caption{Diagram of the proof}\label{Proof diagram}
\end{figure}
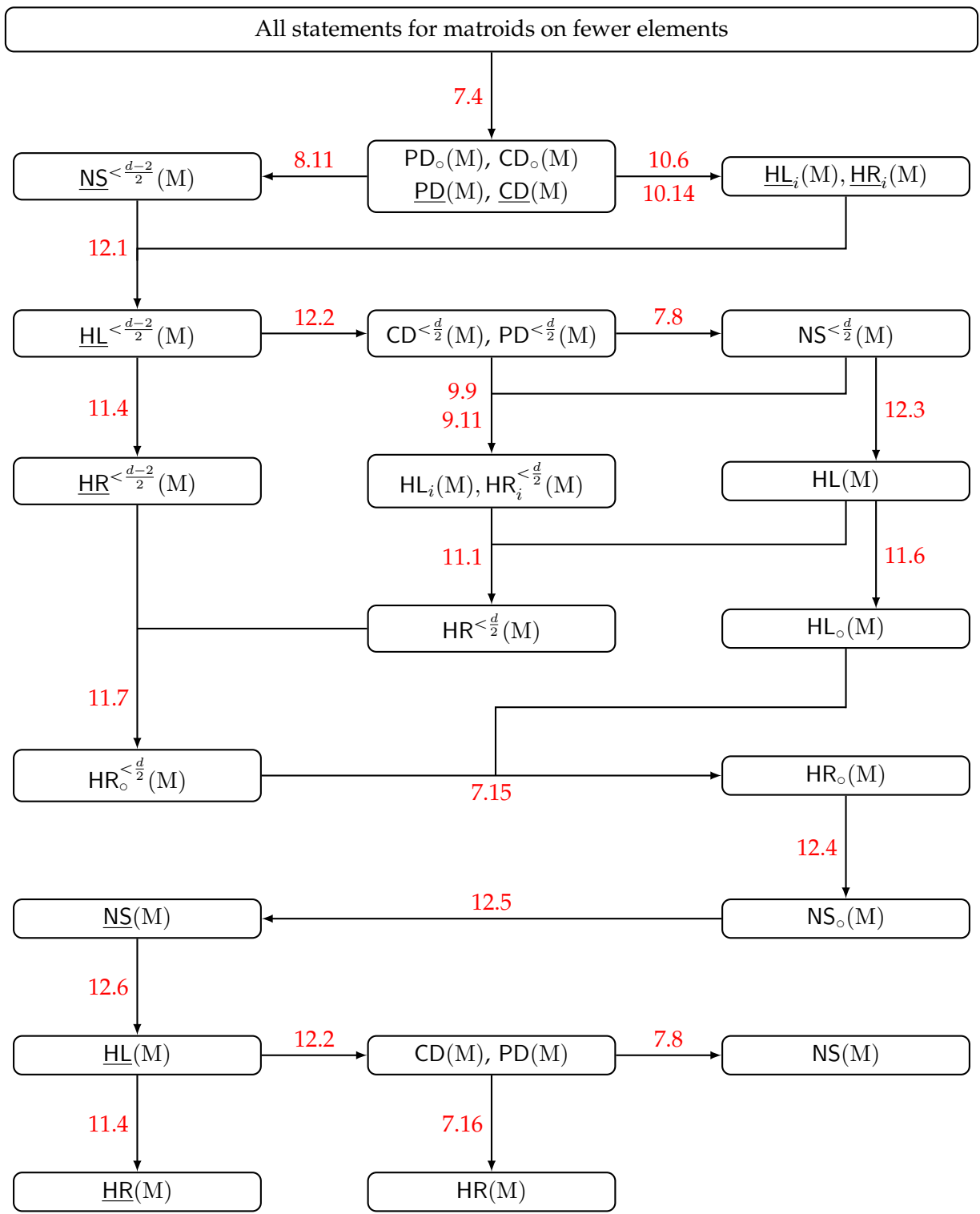

\excise{

\tikzset{
block/.style={
    rectangle,
    draw, thick,
    text width=9em,
    text centered,
    rounded corners
},
line/.style={>=latex,->,thick},
liney/.style={>=latex,-,thick},
}

\begin{figure}[!htpb]
\begin{tikzpicture}
\matrix (m)[matrix of nodes, column  sep=3cm, row  sep=2cm, align=center, nodes={rectangle,draw, anchor=center} ]
{
 |[block]| {$\underline{\mathsf{NS}}^{<\frac{d-2}{2}}(\mathrm{M})$}       & |[block]| {$\underline{\mathsf{HR}}_\A^{\le\frac{d-2}{2}}(\mathrm{M})$}  & \\
 |[block]| {$\underline{\mathsf{HL}}^{<\frac{d-2}{2}}(\mathrm{M})$}  & |[block]| {$\mathsf{CD}^{<\frac{d}{2}}(\mathrm{M})$}  &  |[block]| {$\mathsf{NS}^{<\frac{d}{2}}(\mathrm{M})$}                                \\
 |[block]| {$\underline{\mathsf{HR}}^{<\frac{d-2}{2}}(\mathrm{M})$}          &  |[block]| {$\mathsf{HL}_i^{<\frac{d}{2}}(\M), \mathsf{HR}_{i}^{<\frac{d}{2}}(\mathrm{M})$}          &  |[block]| {$\mathsf{HL}^{<\frac{d}{2}}(\mathrm{M})$}                               \\
 & |[block]| {$\mathsf{HR}^{<\frac{d}{2}}(\mathrm{M})$} & \\
 |[block]| {$\mathsf{HR}_\circ^{<\frac{d}{2}}(\mathrm{M})$}         &  |[block]| {$\mathsf{HR}_\circ^{\le\frac{d}{2}}(\mathrm{M})$}             &  |[block]| {$\mathsf{NS}_\circ^{\le\frac{d}{2}}(\mathrm{M})$}                         \\
 |[block]| {$\underline{\mathsf{HL}}^{\le\frac{d-2}{2}}(\mathrm{M})$}    &   |[block]| {$\mathsf{CD}^{\le\frac{d}{2}}(\mathrm{M})$}         &  |[block]| {$\mathsf{NS}^{\le\frac{d}{2}}(\mathrm{M})$}                     \\
 |[block]| {$\underline{\mathsf{HR}}^{\le\frac{d-2}{2}}(\mathrm{M})$}         & |[block]| {$\mathsf{HR}^{\le\frac{d}{2}}(\mathrm{M})$}    &                       \\
};

\draw [line] (m-1-1) -- node[midway,left]{7.6} (m-2-1);
\draw [liney] (m-1-2) |- ([yshift=1cm]m-2-1.north);

\draw [line] (m-2-1) -- node[midway,above]{7.7} (m-2-2);
\draw [line] (m-2-2) -- node[midway,above]{7.8} (m-2-3);

\draw [line] (m-2-1) -- node[midway,left]{7.14} (m-3-1);
\draw [line] (m-2-2) -- node[midway,left]{7.6} node[midway,right]{7.8} (m-3-2);
\draw [line] (m-2-3) -- node[midway,left]{7.10} (m-3-3);

\draw [line] (m-3-1) -- node[pos=0.79,left]{7.16} (m-5-1);
\draw [line] (m-3-2) -- node[midway,left]{7.11} (m-4-2);
\draw [liney] (m-3-3) |- ([yshift=1cm]m-4-2.north);

\draw [liney] (m-4-2) |- ([yshift=1cm]m-5-1.north);

\draw [line] (m-5-1) -- node[midway,above]{7.17} (m-5-2);
\draw [line] (m-5-2) -- node[midway,above]{7.19} (m-5-3);

\draw [liney,shorten >=-.4pt] (m-5-3) |- ([yshift=1cm]m-6-1.north);
\draw [line] ([yshift=-1cm]m-5-1.south) -- node[pos=0,left]{7.21} (m-6-1);

\draw [line] (m-6-1) -- node[midway,above]{7.7} (m-6-2);
\draw [line] (m-6-2) -- node[midway,above]{7.8} (m-6-3);

\draw [line] (m-6-1) -- node[midway,left]{7.14} (m-7-1);
\draw [line] (m-6-2) -- node[midway,left]{7.18} (m-7-2);
\end{tikzpicture}
\caption{Diagram of proof}
\end{figure}
}

The proof of our main result, Theorem \ref{theorem_all}, is a complex induction involving all of the statements introduced in the previous section.  A more or less complete diagram of the steps of the induction appears in Figure~\ref{Proof diagram}.  The purpose of this section is to highlight the main steps in the proof, to explain what these steps mean in the geometric setting when $\M$ is realizable, and to make some comparisons with the structure of the proofs of Karu \cite{Karu} and Elias--Williamson \cite{EW}.  

We hope that readers will benefit from flipping back to this section frequently as they read the rest of the paper.  However, this section is not needed for establishing the results in this paper; it is included only to communicate the overall structure and geometric insight behind the main ingredients of the proof.  It may be skipped in full by readers who would like to stick to a purely formal treatment.

\subsection{Canonical decomposition}\label{sec:canonical decomposition}
As discussed in Section~\ref{sec:realizable}, when the matroid $\M$ is realizable, $\CH(\M)$ is the cohomology ring of a resolution $X$ of the arrangement Schubert variety $Y$.  Applying the Beilinson--Bernstein--Deligne--Gabber decomposition theorem to the proper map $\pi\colon X \to Y$ gives a decomposition of $\pi_*\underline{\Q}_X$ as a direct sum of shifted intersection complexes on $Y$.  Since $\pi$
is constructible for the stratification of $Y$ by affine spaces $U^F$, these intersection complexes are all of the form $\IC^\bullet(\overline{U^F}; \Q)$ extended by zero to $Y$.  Furthermore, the closure $\overline{U^F}$ is isomorphic to the arrangement Schubert variety $Y^F$ associated with the realization $\sigma^F$ of the matroid $\M^F$.
Taking cohomology, $\CH(\M) \cong \H^\bullet(\pi_*\underline{\Q}_X)$ is a direct sum of graded $\H(\M)$-submodules, each isomorphic to a shift of $\IH(\M^F)$ for some flat $F$.\footnote{The surjection $\H(\M) \to \H(\M^F)$ defined by setting $y_G = 0$ unless $G \le F$ makes $\IH(\M^F)$ an $\H(\M)$-module.}  As was noted in Section \ref{sec:realizable}, an argument of Ginzburg \cite{Ginsburg-perverse} implies that these modules are indecomposable, so the summands and their multiplicities are well-defined by Krull--Schmidt.

In our proof, we obtain such a decomposition as a consequence of the coarser decomposition $\hyperlink{cd}{\CD(\M)}$ (Definition~\ref{def:canonical decomps}).  The summand in $\hyperlink{cd}{\CD(\M)}$ indexed by the proper flat $F$ is isomorphic as an $\H(\M)$-module to a direct sum of shifts of copies of $\CH(\M^F)$, so it can be further decomposed using the same formula.  Iterating this, one can obtain a decomposition of $\CH(\M)$ into shifted copies of $\IH(\M^F)$ for various flats $F$. We prove that these modules $\IH(\M^F)$ are indecomposable in Proposition \ref{prop:indecomposable}.

The decomposition $\hyperlink{cd}{\CD(\M)}$ has several properties which make proving it easier than proving the full decomposition into indecomposable modules directly.  First, the summands $\K{F}{\M}$ in $\hyperlink{cd}{\CD(\M)}$ are canonical, since the definition
of $\uJ(\M)$ does not involve any choices (Definition~\ref{def:thesubspaces}).
Second, these summands are orthogonal to each other with respect to the Poincar\'e pairing on $\CH(\M)$ (Proposition~\ref{summands}), and we define $\IH(\M)$ to be the perpendicular space to them (Definition~\ref{def:ihdef}).

The problem then is to show that the terms actually do form a direct sum.  Note that, because the Poincar\'e pairing on $\CH(\M)$ is non-degenerate, to prove $\hyperlink{cd}{\CD(\M)}$  it is enough to show that the restriction of the pairing to each summand $\K{F}{\M}$ is non-degenerate.  In Corollary~\ref{CD for free}, we use the formal properties of our push/pull operators to show that this holds when $F$ is a \emph{nonempty} proper flat.  This is possible because the matroid $\M_F$ has a smaller ground set than $\M$, and so our inductive assumption says that all of our results hold for $\M_F$.  
Similarly, the pairing on $\uCH(\M)$ restricts to a non-degenerate pairing on the summands of the decomposition $\hyperlink{ucd}{\uCD(\M)}$.  Thus at the very beginning of our induction we are able to deduce $\hyperlink{cdo}{\CD_\o(\M)}$ and $\hyperlink{ucd}{\uCD(\M)}$.  The Poincar\'e duality statements $\hyperlink{pdo}{\PD_\o(\M)}$ and $\hyperlink{upd}{\uPD(\M)}$ then follow, as noted in Remark \ref{CDPD}.

In order to prove $\hyperlink{cd}{\CD(\M)}$, we need to show that the pairing on $\IH(\M)$ restricts to a non-degenerate pairing on the subspace $\K\varnothing\M = \psi^\varnothing(\uJ(\M))$.  In fact, $\uJ(\M)$ is a maximal subspace for which this will be true.  We show in Proposition \ref{uHL1CD} that this is a consequence of the hard Lefschetz property $\hyperlink{uhl}{\uHL(\M)}$ for $\uIH(\M)$.  The idea is the following.
Let $k \le d/2$, and take elements $\nu$ in $\uJ^{k-1}(\M) = \uIH^{k-1}(\M)$ and $\mu$ in $\uJ^{d-k-1}(\M) = \ub^{d-2k}\uIH^{k-1}(\M)$. (The degrees are chosen so that $\psi^\varnothing(\mu)$ and $\psi^\varnothing(\nu)$ are in complementary degrees.)  
Then the adjointness of the operators $\psi^\varnothing$ and $\varphi^\varnothing$ implies that the pairing of 
$\psi^\varnothing(\mu)$ and $\psi^\varnothing(\nu)$ is 
\begin{equation}\label{eqn:fiber pairing}
\deg_\M(\psi^\varnothing(\mu)\cdot \psi^\varnothing(\nu)) = -\udeg_{\M}(\ub\mu \cdot \nu).
\end{equation}
The non-degeneracy of this pairing on $\K\varnothing\M$ then follows because $\hyperlink{uhl}{\uHL(\M)}$ and $\hyperlink{upd}{\uPD(\M)}$ imply that the pairing of $\uIH^{k-1}(\M)$ and $\ub^{d-2k+1}\uIH^{k-1}(\M) = \uIH^{d-k}(\M)$ is non-degenerate.

\subsection{Geometric interpretation}\label{CD geometry}
Let us explain the geometry behind these definitions and statements when $\M$ is realizable over the field $\mathbb{C}$, as in Section \ref{sec:realizable}.  Recall that the augmented wonderful variety $X$ is obtained from the arrangement Schubert variety $Y$ by blowing up the proper transforms of the closures $\overline{U^F}$ 
of strata $U^F$ in order of increasing dimension, and in particular the exceptional divisor has a component $D_F$ for any proper flat $F$.    The divisor $D_\varnothing$ is the fiber of the resolution $\pi\colon X \to Y$ over the point stratum $U^\varnothing$; it is equal to the wonderful variety from Section \ref{sec:realizable}.
Its cohomology ring is identified with $\uCH(\M)$, and the restriction $\H^\bullet(X) \to \H^\bullet(D_\varnothing)$ is identified with the pullback $\varphi_\Mo^\varnothing\colon\CH(\M) \to \uCH(\M)$ of Definition~\ref{DefinitionXPullback}, while the Gysin pushforward $\H^\bullet(\underline{X}) \to \H^\bullet(X)$ is identified with $\varphi^\varnothing$. 
(In this section, all cohomology and intersection cohomology groups are taken with $\Q$ coefficients.)

More generally, for an arbitrary proper flat $F$, the map $\psi^F_\Mo$ of Definition~\ref{DefinitionXPullback} is the Gysin pushforward for the divisor $D_F$, and the map $\varphi^F$ is restriction to $D_F$.  The divisor $D_F$ is isomorphic to the product $\underline{X}_F \times X^F$, where $\underline{X}_F$ is the fiber of the resolution $X_F$ of the arrangement Schubert variety $Y_F$ over the point stratum, and
$X^F$ is the resolution of $\overline{U^F}$.  This gives the tensor product decomposition
\[\H^\bullet(D_F) = \H^\bullet(\underline{X}_F) \otimes_\Q \H^\bullet(X^F) = \uCH(\M_F) \otimes_\Q \CH(\M^F)\]
on the domain of $\psi^F$.

The ``lower" push/pull maps $\varphi_F$ and $\psi_F$, which factor multiplication by $y_F$, also have a simple geometric description. 
Recall from Section \ref{sec:realizable} that 
the arrangement Schubert variety $Y_F$ defined by the subspace $V_F \subseteq V$
embeds into $Y$ as a normally nonsingular slice to the stratum $U^F$. 
The closed embedding $Y_F\to Y$ lifts to an embedding of the wonderful models $X_F\to X$
and the maps $\varphi_F$ and $\psi_F$ are the restriction and Gysin pushforward with respect to this embedding.  This explains why these maps play an essential role in our inductive proof: knowing that our results hold for all matroids on smaller ground sets means that we understand the slices $Y_F$ and their resolutions $X_F$ for all $F$ of positive rank. 

To close the inductive loop, we must understand what happens at the point stratum $U^\varnothing$, which is not in any of the smaller slices.  The resolution $\pi\colon X \to Y$ factors as $X \stackrel{p}{\longrightarrow} Y_\o \stackrel{q}{\longrightarrow} Y$, where $q$ is the blow-up of $Y$ at   $U^\varnothing$.  The cohomology class of the exceptional divisor pulls back to the element $x_\varnothing$ in $\CH(\M)$, and the cohomology ring of $Y_\o$ is the ring $\H_\o(\M)$ obtained by adjoining $x_\varnothing$ to $\H(\M)$.  There is a natural stratification $Y_\o = \coprod_{\varnothing < F} U^F_\o$, and the stratum closure $\overline{U^F_\o}$ is isomorphic to the blow-up of $\overline{U^F}$ at the point stratum.
Applying the BBDG decomposition theorem to $p\colon X \to Y_\o$ gives an isomorphism between $p_*\underline{\mathbb{Q}}_X$ and a direct sum of shifts of intersection complexes $\IC^\bullet(\overline{U^F_\o})$, and taking cohomology gives a (non-canonical) expression of $\CH(\M)$ as a direct sum of shifts of modules $\IH_\o(\M^F)$.  Our decomposition $\hyperlink{cdo}{\CD_\o(\M)}$ is then a (canonical) coarsening of this direct sum decomposition.  In particular, $\IH_\o(\M)$ is isomorphic to the intersection cohomology of $Y_\o$ as
a module over $\H_\o(\M) = \H^\bullet(Y_\o)$. 

Applying the decomposition theorem to $q\colon Y_\o \to Y$ tells us that there exists an isomorphism
\begin{equation}\label{eqn:DT for blow-up}
q_*\IC(Y_\o) \cong \IC(Y) \oplus \bigoplus_{j\in \Z} i_*\underline{\Q}_{U^\varnothing}[j]^{\oplus m_j},
\end{equation}
where $i\colon U^\varnothing\to Y$ denotes the inclusion and $m_j \in \Z_{\ge 0}$.  We can understand 
the multiplicities $m_j$ of these skyscraper summands inside $q_*\IC(Y_\o)$ in the following way.  We have a 
bilinear pairing
\begin{equation}\label{eqn:sheaf pairing}
\Hom(i_*\underline{\Q}_{U^\varnothing}[j], q_*\IC(Y_\o)) \times \Hom(q_*\IC(Y_\o), i_*\underline{\Q}_{U^\varnothing}[j]) \to \Hom(i_*\underline{\Q}_{U^\varnothing}[j],i_*\underline{\Q}_{U^\varnothing}[j]) \cong \Q
\end{equation} 
given by composition, and the multiplicity $m_j$ is the rank of this pairing.  In other words, the skyscraper summands in the decomposition \eqref{eqn:DT for blow-up} can be identified with a maximal subspace on which this pairing is non-degenerate.  
A discussion of this fact can be found for instance in \cite[Section 3]{JMW}, see particularly Proposition 3.2. (Note that there it is assumed that the blow-up results in a smooth variety, but the argument still applies to the pushforward of the IC complex when the blow-up is singular.)

This pairing can be identified with the pairing \eqref{eqn:fiber pairing}, which explains why we are able to show that the subspace $\K\varnothing\M = \psi^\varnothing(\uJ(\M))$ is a maximal direct sum of trivial $\H(\M)$-modules appearing inside $\IH_\o(\M)$ as a direct summand.  Indeed, the fact that the generators $y_i$ of $\H(\M)$ must act trivially implies that the subspace is in the image of $\psi^\varnothing$, and the non-degeneracy of the pairing on this subspace implies that it is a direct summand.

To identify the pairings, we use adjunction and base change to get
\[\Hom(i_*\underline{\Q}_{U^\varnothing}[j], q_*\IC^\bullet(Y_\o)) \cong \H^{-j}(i^!q_*\IC^\bullet(Y_\o)) = \IH^{-j}(Y_\o, Y_\o \setminus \underline{Y}) \cong \IH^{-j-2}(\underline{Y}),\]
where $\underline{Y} = \pi_\o^{-1}(U^\varnothing)$ is the exceptional fiber of the blow-up $q$.  The last isomorphism holds by Poincar\'e-Verdier duality, because a neighborhood of $\underline{Y}$ in $Y_\o$ is isomorphic to a line bundle $L$ over $\underline{Y}$.  Applying adjunction and base change to the second term gives
\[\Hom( q_*\IC^\bullet(Y_\o),i_*\underline{\Q}_{U^\varnothing}[j]) \cong \H^{-j}(i^*q_*\IC(Y_\o))^* \cong \IH^{-j}(\underline{Y})^*,\]
and with these identifications the pairing \eqref{eqn:sheaf pairing} is given by multiplication 
\[\IH^{-j-2}(\underline{Y}) \to \IH^{-j}(\underline{Y})\]
 by the first Chern class $c_1(L)$, followed by the evaluation map.
But $\IH^\bullet(\underline{Y})$ is isomorphic to the module $\uIH(\M)$, and $c_1(L)$ acts as multiplication by $\varphi^\varnothing(x_\varnothing) = -\ub$, so the pairing \eqref{eqn:sheaf pairing} is identified with the pairing \eqref{eqn:fiber pairing}.

The variety $\underline{Y}$ can be viewed as a ``local" counterpart to $Y$, since the singularity of $Y$ at the point stratum is the affine cone over the projective variety $\underline{Y}$.  One of the reasons for the complexity of our inductive argument is the need to prove statements in both the ``local" and ``global" setting: we prove a canonical decomposition $\hyperlink{ucd}{\uCD(\M)}$ of $\uCH(\M)$ analogous to $\hyperlink{cd}{\CD(\M)}$, we prove the Hodge--Riemann relations $\hyperlink{uhr}{\uHR(\M)}$ for $\uIH(\M)$, and so on.  This is in contrast to Karu's proof for the combinatorial intersection cohomology of fans \cite{Karu}, where an important role is played by the fact that any affine toric variety is a (weighted) cone over a projective toric variety of dimension one less.

\subsection{Rouquier complexes}\label{sec:introduction-Rouquier}

As an intermediate step to proving $\hyperlink{uhl}{\uHL(\M)}$, 
we prove the weaker statement $\hyperlink{uns}{\uNS(\M)}$ (Definition~\ref{def:ns}).
When $d$ is even, the statement that there is no socle in degree exactly $(d-2)/2$ is equivalent to hard Lefschetz in that degree, since $\uIH^{\frac{d-2}{2}}(\M)$ and $\uIH^{\frac{d}{2}}(\M)$ have the same dimension by Poincar\'e duality.  
The no socle condition in this middle degree requires a more elaborate argument (discussed in Section~\ref{sec:middeg}), and our first step is to prove that $\uIH(\M)$ has no socle in degrees strictly less than $(d-2)/2$ (Proposition~\ref{uNS1}).

We do this by constructing a map of graded $\uH(\M)$-modules of the form
\begin{equation}\label{eq:map for uNS}
\uIH(\M) \to \bigoplus_{F} \uIH(\M^F)^{\oplus {m_F}}\left[\frac{1-\crk F}{2}\right],
\end{equation}
where $F$ runs over nonempty flats of odd corank, and $\{m_F\}$ are some nonnegative integers.  We show that this map is injective except in the top degree $d-1$, so except in that degree the socle of $\uIH(\M)$ is contained in the socle of the right hand side.  Because the maximal flat $E$ has even corank, all of the matroids $\M^F$ on the right side have smaller ground sets, so we can assume $\hyperlink{uns}{\uNS(\M^F)}$ holds by induction. This means that the socle of the summand indexed by $F$ vanishes in degrees less than or equal to
\[\frac{\rk F - 2}{2} + \frac{\crk F -1}{2} = \frac{d-3}{2},\]
and so we can conclude that $\hyperlink{uns}{\uNS^{<\frac{d-2}{2}}(\M)}$ holds.

The map \eqref{eq:map for uNS} arises by taking the stalk at the flat $\varnothing$ of the first differential of a complex $\bar{\C}^\bullet_\o(\M)$ of
graded $\H_\o(\M)$-modules, which we call the reduced {\bf Rouquier complex}.  It has the form
\begin{equation}\label{eqn:minimal Rouquier}
\IH_\o(\M) \to \bigoplus_F \IH_\o(\M^F)^{\oplus m_F}\left[\frac{1-\crk F}{2}\right] \to \bigoplus_G \IH_\o(\M^G)^{\oplus n_G}\left[\frac{2-\crk G}{2}\right] \to \cdots
\end{equation}
where the sums are over nonempty flats $F$, $G$, etc.\ for which the indicated shifts are nonpositive integers, and the first term $\IH_\o(\M)$ is placed in cohomological degree $0$.  

We find the complex $\bar{\C}^\bullet_\o(\M)$ as a minimal subcomplex of a larger but combinatorially simpler complex $\C^\bullet_\o(\M)$ defined as follows.  We put $\C^0_\o(\M) \coloneq \CH(\M)$, and for positive $k$, we put 
\begin{equation}\label{eqn:big Rouquier}
\C_{\o}^k(\M) \coloneq \bigoplus_{\varnothing < F_1 < \cdots < F_k<E} x_{F_1}\cdots x_{F_k}\CH(\M)[k].
\end{equation}
The entries of the differential are multiplication by monomials $x_F$, up to sign.  This complex will contain a number of acyclic two-step complexes $\dots \to 0 \to \N \stackrel{\sim}\to \N \to 0 \to \cdots$ as direct summands, and taking a complementary summand to all of them gives the complex $\bar{\C}^\bullet_\o(\M)$.  It is well-defined up to isomorphism of complexes of graded $\H_\o(\M)$-modules.

The modules $\C^k_\o(\M)$ are isomorphic to direct sums of graded $\H_\o(\M)$-modules of the form $\CH(\M^F)[\ell]$ (Lemma \ref{terms of Rouquier}). 
 We call $\H_\o(\M)$-modules of this form, and more generally direct summands of such modules,   
  {\bf pure} $\H_\o(\M)$-modules, in analogy with pure mixed Hodge modules and pure $l$-adic complexes in algebraic geometry.
Using the canonical decompositions $\hyperlink{cdo}{\CD_\o(\M^G)}$ for all nonempty flats $G$, we show that an $\H_\o(\M)$-module is pure if and only if it is a direct sum of modules of the form $\IH_\o(\M^G)[\ell]$ (Corollary \ref{cor:characterizing purity}).  An important step to proving this is showing that $\IH_\o(\M^F)$ is indecomposable as an $\H_\o(\M)$-module (Proposition \ref{prop:indecomposable}).

The fact that the summands in the minimal complex $\bar\C^\bullet_\o(\M)$ appear with shifts as in \eqref{eqn:minimal Rouquier} follows from the fact that the complex $\C^\bullet_\o(\M)$ is {\bf $\circ$-perverse} (Definition \ref{def:perverse complex}).  This condition is an algebraic analogue of perversity for constructible complexes on $Y_\o$, and it is defined in terms of stalk and costalk functors 
\[(\;)_{F}, (\;)_{[F]}\colon \H(\M)\mmod \to \Q\mmod\]
which are defined for any $F \in \cL(\M)$ (Definition \ref{def:stalk/costalk}).   A pure module $\N$ has a filtration whose subquotients give all costalks $\N_{[F]}$ and another filtration whose subquotients are the stalks $\N_F$, up to a shift (Proposition \ref{pure}).

 Applying these functors to a complex $\C^\bullet$ of pure graded $\H_\o(\M)$-modules gives complexes of graded vector spaces $\C^\bullet_F$, $\C^\bullet_{[F]}$.  The complex $\C^\bullet$ is said to be $\o$-perverse if, for every \emph{nonempty} flat $F$, the cohomology $\HH^i(\C^\bullet_F)$ vanishes in all grading degrees $j$ for which $i + 2j > \crk F$ and $\HH^i(\C^\bullet_{[F]})$ vanishes in all degrees $j$ with $i + 2j < \crk F$.   

Our main result about perverse complexes is Theorem \ref{thm:perverse complexes}, which says that if $\C^\bullet$ is a complex of pure $\H_\o(\M)$-modules which is $\o$-perverse and minimal, meaning that it does not contain any acyclic direct summands, then for a direct summand $\IH_\o(\M^F)[k]$ of $\C^i$, the shift must be $k = (i-\crk F)/2$.  This result is a version of the ``diagonal miracle" for complexes of Soergel bimodules appearing in the work of Elias and Williamson \cite[Section 6.5]{EW} \cite[Theorem 19.47]{EMTW}.  Proving Theorem \ref{thm:perverse complexes} requires estimates on the vanishing of stalks and costalks of $\IH_\o(\M^F)$ at nonempty flats $G < F$ (Proposition \ref{zero map}).  These estimates in particular imply that any complex of the form \eqref{eqn:minimal Rouquier} is $\o$-perverse, even if all differentials are zero.  

We show by directly computing the stalks and costalks that $\C^\bullet_\o(\M)$ is $\o$-perverse (Proposition \ref{prop:Rouquier perversity}).  Since the complex $\bar{\C}_\o^\bullet(\M)$ is obtained by splitting off acyclic direct summands of $\C^\bullet_\o(\M)$, it has the same stalk and costalk cohomology, and so is also $\o$-perverse.  Theorem \ref{thm:perverse complexes} then shows that $\bar\C^\bullet_\o(\M)$ has the form \eqref{eqn:minimal Rouquier}, except for showing that the first term is isomorphic to $\IH_\o(\M)$, which requires a small additional argument.

\begin{remark}
	We also construct a ``non-reduced" Rouquier complex $\bar\C^\bullet(\M)$, which is a complex of graded $\H(\M)$-modules which are pure, meaning that they are isomorphic to direct sums of direct summands of $\H(\M)$-modules $\CH(\M^F)[k]$.   This complex has a form just like \eqref{eqn:minimal Rouquier}, but with summands $\IH(\M^F)[k]$ in place of $\IH_\o(\M^F)[k]$, and including summands for the flat $F = \varnothing$.  We define perversity for complexes of graded $\H(\M)$-modules analogously to $\o$-perversity, but now the restrictions on stalks and costalks are imposed at every flat, including the empty flat.  Just as $\bar{\C}_\o^\bullet(\M)$ is $\o$-perverse, the non-reduced complex $\bar\C^\bullet(\M)$ is perverse.
	
	The argument to construct $\bar\C^\bullet(\M)$ is essentially the same as for $\bar\C_\o^\bullet(\M)$, except that the indecomposability of $\IH(\M)$ and the stalk and costalk estimates at the empty flat $\varnothing$ require the statements $\hyperlink{cd}{\CD(\M)}$ and $\hyperlink{ns}{\NS(\M)}$, which are not established until the end of our induction loop.  As a result, this complex does not play a role in our main induction.  We include it because it is more natural than $\IH_\o(\M)$, and because it can be used to prove that the inverse Kazhdan--Lusztig polynomial of $\M$ has nonnegative coefficients (Theorem \ref{thm:inverse} and Proposition \ref{prop:Rouquier multiplicities}).
\end{remark}

\begin{remark}\label{rmk:mixed motivation}
	The natural setting for studying these complexes would be  
	$K^b(\operatorname{Pure}(\H_\o(\M)))$ and $K^b(\operatorname{Pure}(\H(\M)))$, the homotopy categories of bounded complexes of pure $\H_\o(\M)$-modules or $\H(\M)$-modules.  These will be triangulated categories equipped with $t$-structures whose hearts are the categories of perverse and $\o$-perverse complexes, and in the realizable case they should be mixed versions (in the sense of \cite[Section 4]{BGS}) of the derived categories of sheaves on $Y$ (respectively on $Y_\o$), constructible with respect to the stratification by $U^F$ (respectively $U^F_\o$).  This is analogous to the use of the homotopy categories of Soergel bimodules or parity sheaves on flag varieties to model mixed sheaves with modular coefficients in the works of Achar--Riche and Makisumi \cite{AR16,Mak17}.  
	
	However, developing this formalism would add an additional layer of machinery from homological algebra to this paper, and since the key properties of the $t$-structure rely on results (Propositions \ref{zero map} and \ref{prop:indecomposable}) which are only known to hold as a result of the main induction, doing so would not offer any significant simplifications.  
	So we have elected not to pursue this approach here.	
\end{remark}

\begin{remark}	
  When $\M$ is realizable, the complexes $\bar\C^\bullet(\M)$ and $\bar\C^\bullet_\o(\M)$ can be viewed as representing certain ``Verma-type" perverse sheaves on the varieties $Y$ and $Y_\o$, respectively.  We discuss the case of $\bar\C^\bullet(\M)$; the complex $\bar\C^\bullet_\o(\M)$ can be understood similarly.  
  
  Consider the proper pushforward $j_!\underline{\Q}_{U^E}$ of the constant sheaf along the inclusion 
  $j\colon U^E \to Y$ of the open stratum into $Y$.  Since $U^E$ is affine, this is a perverse sheaf, up to a shift in degree. 	It is naturally a mixed sheaf, using either Saito's mixed Hodge modules or mixed $l$-adic sheaves, so it carries a weight filtration whose graded pieces are semisimple perverse sheaves.  The modules $\bar{\C}^j(\M)$ are the cohomologies of these graded pieces, and the differentials are induced by the $\Ext^1$ classes between successive pieces.  
	
  The quasi-isomorphic complex $\C^\bullet(\M)$ has a similar description in terms of the resolution $p \colon X \to Y$.  The map $p$ restricts to an isomorphism from $U \coloneq p^{-1}(U^E_\o)$ to $U^E$, so we have
  	$j_!\underline{\Q}_{U^E} = p_!(j_U)_!\underline{\Q}_U$, where $j\colon U \to X$ is the inclusion.
  The complement $X \setminus U$ is a divisor with normal crossings, with one component for each proper flat, and the nonempty intersections of these divisors are indexed by chains of flats.  The $i$-th graded piece of the weight filtration of the perverse sheaf $(j_U)_!\underline{\Q}_U$  
  is (up to a shift) the direct sum of constant sheaves on all $i$-fold intersections of divisors.  Then $\C^i(\M)$ is the cohomology of this graded piece as a module over $\H^\bullet(Y;\Q) = \H(\M)$.
\end{remark}

\subsection{Hard Lefschetz for $\IH(\M)$}

The proof of the statement $\hyperlink{hl}{\HL(\M)}$ (Definition \ref{def:hls}) follows a standard argument similar to one which appears in \cite{Karu} and \cite{EW}, using restriction to divisors to deduce the hard Lefschetz theorem from the Hodge--Riemann relations for smaller matroids (Proposition~\ref{dCM}).  The key fact is that multiplication by $y_F$ factors
as the composition of the maps $\varphi_F^{\Mo}$ and $\psi_F^{\Mo}$ (Proposition \ref{Prop_ymult}).
We take a class $\ell = \sum_{F\in \cL^1(\M)} c_F y_F$ in $\H^1(\M)$ with all $c_F$ positive, as in the statement of Theorem \ref{prop:kahler}.
If we have a class $\eta \in \IH^k(\M)$ for $k < d/2$ for which $\ell^{d-2k}\eta = 0$, applying $\varphi_F^{\Mo}$ for any $F \in \cL^1(\M)$ gives
\[\varphi_F^{\Mo}(\ell)^{d-2k}\cdot \varphi_F^{\Mo}(\eta) = 0.\]
Since $\rk \M_F = d -1$, this says that $\varphi_F^{\Mo}(\eta)$ is a primitive class in $\IH^k(\M_F)$ with respect to the class $\ell' \coloneq \varphi_F^{\Mo}(\ell)$.
This class satisfies the hypotheses of Theorem \ref{prop:kahler} for the matroid $\M_F$, so we can assume inductively that the Hodge--Riemann relations hold for $\ell'$.
By Proposition \ref{lemma_degy} and Lemma \ref{lem_imageofIH}~(1), we have
\[
0  = \deg_\M(\ell^{d-2k}\eta^2) 
 = \sum_F c_F \deg_{\M_F}((\ell')^{d-2k-1}\varphi_F^{\Mo}(\eta)^2).
\]
The Hodge--Riemann relations for $\M_F$ imply that all of the degrees in the sum have the same sign.
Since the $c_F$ are all positive, these degrees must all vanish.  Since the Hodge--Riemann forms are non-degenerate, we must have $\varphi_F^{\Mo}(\eta) = 0$ for every $F$, and so $\eta$ is annihilated by every $y_F$.  In other words, $\eta$ is in the socle of the $\H(\M)$-module $\IH(\M)$.  However, we show in Proposition~\ref{CDNS} that the socle of $\IH(\M)$ vanishes in any degree less than or equal to $ d/2$ for which the canonical decomposition $\hyperlink{cd}{\CD(\M)}$ holds.  At this point in the induction, we only know that this decomposition holds outside of the middle degree $d/2$, but this is enough to conclude $\hyperlink{hl}{\HL(\M)}$.

\subsection{Deletion induction for $\IH(\M)$}

An important step of our argument is deducing the Hodge--Riemann relations $\hyperlink{hr}{\HR(\M)}$ and $\hyperlink{uhr}{\uHR(\M)}$ (Definition~\ref{def:hrs}), except possibly in the middle degree (postponed until Section~\ref{sec:middeg}), by inductively using the Hodge--Riemann relations for matroids on smaller sets.
The arguments for $\IH(\M)$ and $\uIH(\M)$ are somewhat parallel, but the case of $\IH(\M)$ is simpler, so we begin with it even though it appears later in the structure of the whole proof.

This step uses the relation between $\M$ and the {\bf deletion} $\M \setminus i$. 
This is a matroid on the set $E \setminus i$ whose independent sets are the independent sets of $\M$ which do not contain $i$.  The flats of $\Mi$ are precisely the sets of the form $F\setminus \{i\}$ where $F$ is a flat of $\M$ (see \cite[Proposition 3.3.1]{Oxley}, or Lemma~\ref{lemma:semi-small flats}).
We assume that $i$ is not a coloop of $\M$, which means that there is at least one basis which does not contain $i$, and so $\M$ and $\M \setminus i$ have the same rank.  If all elements of $E$ are coloops, then $\M$ is a Boolean matroid.  This is the base case of our induction; we prove Theorem~\ref{theorem_all} in this case by a direct calculation in Section~\ref{sec:boo}.
For simplicity, we assume in this section and in Section \ref{sec:udel} that all of the rank one flats are singletons, and in particular that $\{i\}$ is a flat.

There is a homomorphism $\theta_i^\Mo \colon \CH(\M \setminus i) \to \CH(\M)$ which takes $y_j$ to $y_j$ for each $j\ne i$, and so it sends $\H(\M\setminus i)$ injectively to $\H(\M)$ (Section~\ref{sec_deletion}).  This map plays a major role in the semi-small decomposition of $\CH(\M)$ obtained in \cite{BHMPW}.  In Section \ref{sec:deletion induction}, we prove the following result about the pullback $\theta^*_i\IH(\M)$ of our intersection cohomology module by this homomorphism (modulo a technical issue described in Remark \ref{rmk:IH tilde} below).  


\begin{theoremA} When considered as a complex of pure graded $\H(\Mi)$-modules placed in degree $0$, the module $\theta_i^*\IH(\M)$ is perverse.  As a consequence, $\theta_i^*\IH(\M)$ is isomorphic to a direct sum of graded  $\H(\M \setminus i)$-modules of the form 
\begin{equation}\label{eqn:shift}
\IH((\M\setminus i)^G)[-(\crk G)/2], \tag{$*$}
\end{equation}\renewcommand*{\theHequation}{notag.\theequation}where $G$ is a flat of $\M\setminus i$ of even corank.
\end{theoremA}

\begin{remark}\label{rmk:IH tilde}
At the stage of the induction at which this argument appears, we can only assume that the canonical decomposition $\hyperlink{cd}{\CD(\M)}$ holds in degrees outside of the middle degree when $d$ is even.  This means that we do not even know that $\IH(\M)$ is pure.  So we actually prove (Corollary \ref{centering}) that Theorem A holds for the pullback of a modified module $\tIH(\M)$, defined in Section~\ref{sec:delhl}, which we can prove is a direct summand of $\CH(\M)$ (Lemma~\ref{tilde PD}).  It equals $\IH(\M)$ except in the middle degree $d/2$, where it equals $\IH^{\frac{d}{2}}_\o(\M)$. Because of this, the argument below only gives the Hodge--Riemann relations for $\IH(\M)$ in degrees strictly less than $d/2$.  We need a separate argument later to handle the middle degree, which we highlight in Section~\ref{sec:middeg}.  Theorem A as stated above is true, but it can only be proved after the entire induction is finished.
\end{remark}

To prove that $\theta^*_i\tIH(\M)$ is a pure $\H(\Mi)$-module, we use the fact, proved in \cite{BHMPW},  that $\theta^*_i\CH(\M)$ is a direct sum of $\CH(\M\setminus i)$-modules of the form $\CH((\M\setminus i)^F)[k]$ for various flats $F \in \cL(\M\setminus i)$ and $k \in \Z$.  The proof that it is perverse relies on Proposition \ref{prop:semi-small stalk}, which says that the stalk $(\theta_i^*\N)_F$ of the pullback of a pure $\H(\M)$-module $\N$  at a flat $F \in \cL(\Mi)$ is isomorphic to the direct sum of the stalks of $\N$ at the flats $F$, $F\cup i \in \cL(\M)$ with certain shifts (if either $F$ or $F\cup i$ are not flats of $\M$, their contribution is zero).  Using the degree restrictions on the stalks of $\IH(\M)$ given by Proposition \ref{zero map}, the stalk conditions for perversity of $\theta_i^*\tIH(\M)$ follow.  Since stalks and costalks are interchanged by duality (Lemma~\ref{stalk-costalk}) and $\hyperlink{pd}{\PD(\M)}$ implies that $\tIH(\M)$ is self-dual, we also get the costalk conditions.

Because $\M\setminus i$ has a smaller ground set than $\M$, we can inductively assume that all of our statements hold for all of the matroids $(\M\setminus i)^G$ appearing in Theorem A.  In particular, $\IH((\M\setminus i)^G)$ satisfies hard Lefschetz and the Hodge--Riemann relations for any positive linear combination $\ell' = \sum_{j \ne i} c_j y_j \in \H(\M \setminus i)$.
The shift by $-(\crk G)/2$ in the summand \eqref{eqn:shift} ensures that each summand is centered at the same middle degree as $\IH(\M)$, so Theorem A implies that $\tIH(\M)$ satisfies hard Lefschetz for the class $\ell'$.  Since hard Lefschetz is vacuous in the middle degree, the same holds for $\IH(\M)$, proving the statement $\hyperlink{hli}{\HL_i(\M)}$ (Proposition~\ref{HLi}).
By keeping careful track of how the Poincar\'e pairing restricts to  the summand \eqref{eqn:shift} (Lemma~\ref{pair and compare}), we also deduce that the Hodge--Riemann inequalities hold for $\ell'$ acting on $\tIH(\M)$. Since $\IH(\M)$ and $\tIH(\M)$ are equal except for the middle degree, this shows that the statement $\hyperlink{hri}{\HR_i^{< \frac{d}{2}}(\M)}$ also holds (Corollary~\ref{HRi}).

Next we use a standard deformation argument to pass from the special class $\ell'$ to a class $\ell = \ell' + c_iy_i$ with positive $c_i$. We have already shown $\hyperlink{hl}{\HL(\M)}$, $\hyperlink{hli}{\HL_i(\M)}$, and $\hyperlink{hri}{\HR_i^{< \frac{d}{2}}(\M)}$; that is, $\IH(\M)$ satisfies hard Lefschetz for both $\ell$ and $\ell'$, and the Hodge--Riemann relations hold for $\ell'$.  But for a continuous family of classes all of which satisfy hard Lefschetz, the signature of the associated pairings cannot change, so the Hodge--Riemann relations for $\ell'$ imply them for $\ell$.  Hence, we have deduced the statement $\hyperlink{hr}{\HR^{< \frac{d}{2}}(\M)}$ (Proposition~\ref{delnodel}).

\begin{remark}\label{rmk:deletion geometry}
When $\M$ is realizable, the theorem above follows from a study of the properties of a map $q\colon Y \to Y'$, obtained as the restriction of the projection $(\mathbb{P}^1)^E \to (\mathbb{P}^1)^{E\setminus i}$ to $Y$.  The image $Y' = q(Y)$ is again an arrangement Schubert variety as considered in Section \ref{sec:realizable}, given by restricting the map $\sigma\colon E \to V^\vee$ to $E\setminus i$. 
The map is compatible with the stratifications: we have $q(U^F) = U^{F\setminus i}$ for each $F \in \cL(\M)$.  It is also clear that the fibers of $q$ are either points or rational curves $\mathbb{P}^1$.  An easy computation with stalks shows that $q_*\IC(Y)$ is perverse, and by the decomposition theorem, it is semisimple. These two properties together give the theorem.  We point to \cite[Section 1.1]{BV} for more geometric insight in this direction.

The map $q\colon Y \to Y'$ resembles a map
which naturally appears in the inductive computation of intersection cohomology of classical Schubert varieties.
Let $G \supset B \supset T$ be a reductive algebraic group along with a choice of Borel subgroup and maximal torus, and let $W$ be the associated Weyl group.  For any $y \in W$, the intersection cohomology complex of the classical Schubert variety $X_y \coloneq ByB/B \subseteq G/B$ corresponds to the Kazhdan--Lusztig basis element $C_y$ in the Hecke algebra of $W$.  If $s$ is a simple reflection and $ys>y$, then the map
\[X_y \ast X_s \coloneq ByB \times_B BsB/B \to BysB/B = X_{ys}\]
induced by multiplication has fibers that are either points or rational curves, and the source is 
a $\mathbb{P}^1$-bundle over $X_y$.  The pushforward of $\IC(X_y\ast X_s)$ along this map is perverse, and it is isomorphic to a direct sum of $\IC(X_{ys})$ and the IC complexes of smaller classical Schubert varieties, all with the appropriate perverse shifts.  This is reflected in the fact that in the formula
\[C_yC_s = C_{ys} + \sum_{\substack{x<y\\xs < x}} \mu(x,y)C_x\]
(see, for example, \cite[Section 1.5, Formula (2)]{Sp82}) the coefficients $\mu(x,y)$ are integers, not more general Laurent polynomials.

Despite these similarities, the roles of the source and target in the two situations are different.  In our case, the base $Y'$ is a simpler variety which we can assume inductively that we already understand.  In contrast, the classical Schubert variety map uses inductive knowledge about $X_y$ to deduce results about the base $X_{ys}$.
\end{remark}

\subsection{Deletion induction for $\uIH(\M)$}\label{sec:udel}
In Section \ref{sec:underlined deletion induction}, we use a similar argument to deduce hard Lefschetz and the Hodge--Riemann relations for $\uIH(\M)$ from the same statements for matroids on smaller ground sets.  There is one significant difficulty, however.  We would like to decompose $\uIH(\M)$ as a direct sum of terms of the form 
\begin{equation}\label{eqn:underlined deletion summand}
\uIH((\M \setminus i)^G)[-(\crk G)/2], \tag{$**$}
\end{equation}\renewcommand*{\theHequation}{notag2.\theequation}but these are not modules over the same ring.  The operators $\ub_\M$ and $\ub_{\M\setminus i}$ which act on these spaces are the images of $-x_\varnothing$ in $\CH(\M)$ and $\CH(\M\setminus i)$, respectively.  However, the natural map $\CH(\M\setminus i) \to \CH(\M)$ sends $x_\varnothing$ to $x_\varnothing + x_{\{i\}}$, so $\ub_{\M\setminus i}$ is sent to $\ub_{\M} - x_{\{i\}}$.  But $x_{\{i\}}$ does not act on $\uIH(\M)$, so we must consider the larger space $\uIH_i(\M)$ (Definition \ref{def:uIHi}).
It is this space that we decompose into a sum of terms of the form \eqref{eqn:underlined deletion summand} (Corollary~\ref{uIH_i in terms of uIH_1}). 

As a result, we can use the inductive assumptions for matroids $(\M\setminus i)^G$ to show that 
hard Lefschetz and Hodge--Riemann hold for the action of
$\ub_\M - x_{\{i\}}$ on $\uIH_i(\M)$ (Corollaries \ref{uHL_i} and \ref{uHR_i}). This statement, combined with $\hyperlink{uns}{\uNS(\M)}$, implies hard Lefschetz for $\ub_\M$ on $\uIH(\M)$ (Proposition~\ref{NS under half}).  By deforming $\ub_{\M} - x_{\{i\}}$ to $\ub_{\M}$, we get the Hodge--Riemann relations as well (Proposition~\ref{alt}). However, as noted in Section \ref{sec:introduction-Rouquier}, in our first pass we only prove $\hyperlink{uns}{\uNS(\M)}$ strictly below the critical degree $(d-2)/2$, so we only get hard Lefschetz and Hodge--Riemann in that range as well.  When $d$ is even, we need an additional chain of arguments to finish the proof in this degree.

\begin{remark}\label{rmk:underlined semismall geometry}
	Let us explain the geometric meaning of $\uIH_i(\M)$ when $\M$ is realizable.  The map $Y \to Y'$ from Remark \ref{rmk:deletion geometry} induces a rational map $\underline{Y} \dashrightarrow \underline{Y}'$ between the corresponding ``local" varieties.  Blowing up at the point $p$ where this map is not defined gives a regular map $\underline{q}\colon \operatorname{Bl}_p \underline{Y} \to \underline{Y}'$.  The module $\uIH_i(\M)$ is then the intersection cohomology of $\operatorname{Bl}_p \underline{Y}$.	
	Just as in Remark \ref{rmk:deletion geometry}, the fibers of $\underline{q}$ are again either points or rational curves, so a similar argument shows that $\underline{q}_*\IC(\operatorname{Bl}_p \underline{Y})$ is perverse. Together with the decomposition theorem this shows that $\uIH_i(\M)$ is a direct sum of terms of the form \eqref{eqn:underlined deletion summand}.
\end{remark}

\subsection{The middle degree}\label{sec:middeg}
Finally, we are left with the problem of proving the Hodge--Riemann relations in the middle degree $\IH^{\frac{d}{2}}(\M)$. 
Although the space of primitive classes depends on the choice of an ample class $\ell$, if we already know the Hodge--Riemann relations in degrees below $d/2$, then showing them in middle degree is equivalent to showing that the signature of the Poincar\'e pairing on the whole space $\IH^{\frac{d}{2}}(\M)$ is $\sum_{k\ge 0} (-1)^k \dim \IH^k(\M)$ (Proposition \ref{why hancock?}).

We say that a graded vector space with non-degenerate pairing that satisfies this condition on the pairing in middle degree is {\bf Hancock} (that is, ``has a nice signature").  This condition is preserved by taking tensor products and orthogonal direct sums (Lemma~\ref{Lemma_Hproduct}).  In \cite{BHMPW}, we showed that $\CH(\M)$ satisfies Hodge--Riemann, so in particular it is Hancock.  The fact that $\uIH(\M)$ satisfies hard Lefschetz and Hodge--Riemann implies that $\uJ(\M)$ does too, so we can deduce that each summand $\K{F}{\M}$ in the decomposition $\hyperlink{cd}{\CD(\M)}$ is Hancock (Corollary~\ref{Hancock piece}).
If every term but one in an orthogonal direct sum decomposition is Hancock, and the whole space is as well, then the remaining summand is Hancock (Lemma~\ref{lemma_Hadditive}). Thus, once we have the canonical decomposition $\hyperlink{cd}{\CD(\M)}$, we can deduce that $\IH(\M)$ is Hancock and thus satisfies Hodge--Riemann in middle degree (Proposition~\ref{other sig}).

At this point, our induction still has a gap because we have not proved the decomposition $\hyperlink{cd}{\CD(\M)}$ in the middle degree $d/2$.  To fix this, we first work with $\IH_\o(\M)$, which we do know is a direct summand of $\CH(\M)$.  Following the argument of the previous paragraph shows that $\IH_\o(\M)$ satisfies the Hodge--Riemann relations in all degrees (Propositions~\ref{sig} and~\ref{block1}), and this implies that $\IH_\o(\M)$ has no socle in degrees less than or equal to $d/2$ as an $\H_\o(\M)$-module (Proposition~\ref{HR1NS1}).  Because $\uIH(\M)$ is the quotient of $\IH_\o(\M)$ by the action of the generators of $\H(\M)$, this implies the full condition $\hyperlink{uns}{\uNS(\M)}$, including in the missing degree $(d-2)/2$ (Proposition~\ref{NS1uNS1}).  But the lack of socle in $\uIH^{\frac{d-2}{2}}(\M)$ is equivalent to hard Lefschetz in that degree (Proposition~\ref{NS1uHL}), which gives the final ingredient needed to close the induction loop and prove the full canonical decomposition $\hyperlink{cd}{\CD(\M)}$ (Proposition~\ref{uHL1CD}).

\section{Modules over the graded M\"obius algebra}\label{sec:modules}
Let $\M$ be a matroid on a nonempty ground set $E$.
In this section we study some basic constructions 
involving graded modules over the graded M\"obius algebra $\H(\M)$.
This section is entirely independent of Section \ref{sec:IC of a matroid}.

\subsection{Annihilators}\label{sec:annihilators}
We begin with a general lemma about annihilators of ideals in Poincar\'e duality algebras.
\begin{lemma}
Let $R$ be a finite-dimensional commutative algebra 
equipped with a degree map with respect to which $R$ satisfies 
Poincar\'e duality as in Theorems \ref{TheoremChowKahlerPackage} (1) and \ref{theorem_underlineKahler} (1). For any ideal $I$,
\[
\Ann(I)=I^\perp,
\]
where the perp is taken with respect to the Poincar\'e duality pairing of $R$.
\end{lemma}
\begin{proof}
The inclusion $\Ann(I)\subseteq I^\perp$ is obvious. Conversely, if $f\in R$ and $f\notin \Ann(I)$, then there exists $g\in I$ such that $fg\neq 0$. By Poincar\'e duality of $R$, there exists $h\in R$ such that $\deg(fgh)\neq 0$. Since $gh\in I$, it follows that $f\notin I^\perp$. Therefore, $I^\perp\subseteq \Ann(I)$.
\end{proof}

Using the preceding lemma, the next result follows directly from elementary linear algebra.
\begin{lemma}\label{PD annihilators}
Let $R$ be as in the preceding lemma. 
Let $I,J\subseteq R$ be ideals.
Let $\Ann(I)$ denote the annihilator of $I$ in $R$.  The following identities hold:
\begin{enumerate}[(1)]\itemsep 5pt
\item If $J = \Ann(I)$, then $I = \Ann(J)$;
\item $\Ann(I + J) = \Ann(I) \cap \Ann(J)$;
\item $\Ann(I \cap J) = \Ann(I) + \Ann(J)$.
\end{enumerate}
\end{lemma}


\begin{lemma}\label{mutual annihilators}
The ideals $\langle x_\varnothing\rangle$ and $\langle y_i\ | \ i\in E\rangle$
are mutual annihilators inside of $\CH(\M)$.
\end{lemma}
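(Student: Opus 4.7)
My strategy is to identify both annihilators with the kernel of the pullback $\varphi^\varnothing_\M\colon\CH(\M)\to\uCH(\M)$, and then to invoke Lemma \ref{PD annihilators}~(1), which applies because $\CH(\M)$ satisfies Poincar\'e duality by Theorem \ref{TheoremChowKahlerPackage}. Concretely, the plan is to prove $\Ann(\langle x_\varnothing\rangle)=\langle y_i\mid i\in E\rangle$ directly, and then conclude $\Ann(\langle y_i\mid i\in E\rangle)=\langle x_\varnothing\rangle$ formally.

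The easier half is the identification $\Ann(\langle x_\varnothing\rangle)=\ker\varphi^\varnothing_\M$. For a principal ideal in a commutative ring, the annihilator is simply the kernel of multiplication by a generator. By Proposition \ref{Prop_xmult}, multiplication by $x_\varnothing$ equals $\psi^\varnothing_\M\circ\varphi^\varnothing_\M$, and $\psi^\varnothing_\M$ is injective by Proposition \ref{lemma_xdegree}~(4). So $\Ann(\langle x_\varnothing\rangle)=\ker\varphi^\varnothing_\M$. The inclusion $\langle y_i\mid i\in E\rangle\subseteq\ker\varphi^\varnothing_\M$ follows from Proposition \ref{lemma_xdegree}~(2), applied with $F=\varnothing$, which ensures $\varphi^\varnothing_\M(y_i)=0$ for every $i\in E$.

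The main step is the reverse inclusion $\ker\varphi^\varnothing_\M\subseteq\langle y_i\mid i\in E\rangle$, which I would deduce by showing that the surjection $\varphi^\varnothing_\M$ descends to an isomorphism $\overline\varphi^\varnothing_\M\colon\CH(\M)/\langle y_i\mid i\in E\rangle\xrightarrow{\sim}\uCH(\M)$. Using the defining linear relation $y_i=\sum_{i\notin F}x_F$ of $\CH(\M)$ (with the sum over all proper flats, including $\varnothing$), the quotient $\CH(\M)/\langle y_i\mid i\in E\rangle$ acquires the extra relations $\sum_{i\notin F}x_F=0$ for each $i\in E$. Subtracting two such relations for distinct $i_1,i_2\in E$ makes the $x_\varnothing$ contribution cancel and yields precisely the generators of $\underline I_\M$, while any single relation expresses $x_\varnothing$ as $-\ub$ in terms of the remaining $x_F$. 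The quadratic relations not involving a $y_i$ are identical in the two presentations, since $\varnothing$ is comparable to every flat. This lets one construct an explicit inverse $\uCH(\M)\to\CH(\M)/\langle y_i\mid i\in E\rangle$ sending $x_F\mapsto x_F$ for every nonempty proper flat $F$; its composition with $\overline\varphi^\varnothing_\M$ is the identity on generators, establishing that $\overline\varphi^\varnothing_\M$ is an isomorphism and hence that $\ker\varphi^\varnothing_\M=\langle y_i\mid i\in E\rangle$.

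Combining these yields $\Ann(\langle x_\varnothing\rangle)=\langle y_i\mid i\in E\rangle$, and then Lemma \ref{PD annihilators}~(1) immediately gives $\Ann(\langle y_i\mid i\in E\rangle)=\langle x_\varnothing\rangle$, completing the proof. The only nontrivial obstacle is the presentation-theoretic step in the preceding paragraph, but this is a straightforward inspection of the two defining presentations, made clean by the fact that the differences of the relations $\sum_{i\notin F}x_F=0$ reduce to exactly $\underline I_\M$.
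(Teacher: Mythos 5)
Your proposal is correct and follows essentially the same route as the paper: identify $\Ann(\langle x_\varnothing\rangle)$ with $\ker\varphi^\varnothing_\M$ via Propositions \ref{Prop_xmult} and \ref{lemma_xdegree}, identify that kernel with $\langle y_i\mid i\in E\rangle$, and obtain the reverse annihilator statement from Poincar\'e duality together with Lemma \ref{PD annihilators}~(1). The only difference is that you spell out the presentation-theoretic verification that $\CH(\M)/\langle y_i\mid i\in E\rangle\cong\uCH(\M)$, a fact the paper records without proof in Section \ref{sec:Chow}; your verification of it is correct.
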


\begin{proof}
Since multiplication by $x_\varnothing$ factors as $\psi^\varnothing\varphi^\varnothing$ (Proposition \ref{Prop_xmult}) and 
$\psi^\varnothing$ is injective (Proposition \ref{lemma_xdegree}), 
the annihilator of $x_\varnothing$ is equal to the kernel of $\varphi_\Mo^\varnothing$.  By Lemma \ref{CH-to-uCH}, this kernel is exactly the ideal $\langle y_i\ | \ i\in E\rangle$.
The opposite statement follows from 
Theorem \ref{TheoremChowKahlerPackage} (1) and Lemma \ref{PD annihilators} (1).
\end{proof}

An upwardly closed subset $\Sigma \subseteq \cL(\M)$ is called an {\bf order ideal}. For any flat $F$ of $\M$, we will denote the order ideals $\{G\ | \ G\geq F\}$ and $\{G\ | \ G> F\}$ by $\Sigma_{\geq F}$ and $\Sigma_{>F}$, respectively. 

\begin{definition}\label{ideal order filter}
For any order ideal $\Sigma$, we define an ideal of the graded M\"obius algebra 
$$\Upsilon_\Sigma \coloneq \operatorname{Span}_\Q \{ y_G \ | \ G\in \Sigma\} \subseteq\H(\M).$$
Note that $y_\varnothing = 1$ and $\Upsilon_{\cL(\M)} = \H(\M)$.
We will write 
\[
\Upsilon_{\geq F} \coloneq \Upsilon_{\Sigma_{\ge F}} \and \Upsilon_{> F} \coloneq \Upsilon_{\Sigma_{> F}}.
\]
Note that $\Upsilon_{\ge F}$ is the ideal $\langle y_F\rangle$, $\Upsilon_{>\varnothing}$ is the graded maximal ideal $\langle y_i\ | \ i\in E\rangle$ of $\H(\M)$, and 
\[\Upsilon_{>F} = \Upsilon_{\ge F}\Upsilon_{>\varnothing} = y_F\Upsilon_{>\varnothing}.\]
\end{definition}

The following lemma generalizes Lemma \ref{mutual annihilators}.

\begin{lemma}\label{annihilators}
For any order ideal $\Sigma$, the ideals $\CH(\M)\cdot \Upsilon_\Sigma$ and $\CH(\M)\cdot \{ x_F \ | \  F\notin \Sigma\}$ 
are mutual annihilators in  $\CH(\M)$.
\end{lemma}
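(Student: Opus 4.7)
The plan is to establish the identity $\Ann\bigl(\CH(\M)\cdot \Upsilon_\Sigma\bigr) = \CH(\M)\cdot\{x_F\mid F\notin \Sigma\}$; the dual identity will then follow automatically from Lemma \ref{PD annihilators} (1). First I would handle the easy inclusion: for any $G \in \Sigma$ and proper flat $F \notin \Sigma$, upward-closure of $\Sigma$ forces $F \not\supseteq G$, so $\varphi_G^\M(x_F)=0$ by Definition \ref{DefinitionYPullPush}, and hence $y_G\cdot x_F = \psi_G^\M(\varphi_G^\M(x_F)) = 0$ by Proposition \ref{Prop_ymult}. Combining this with Lemma \ref{PD annihilators} (2) yields $\CH(\M)\cdot\{x_F\mid F\notin\Sigma\}\subseteq \bigcap_{G\in\Sigma}\Ann(y_G) = \Ann\bigl(\CH(\M)\cdot\Upsilon_\Sigma\bigr)$.

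For the reverse inclusion I would induct on $|\cL(\M)\setminus\Sigma|$. The base case $\Sigma = \cL(\M)$ is trivial. In the inductive step, pick a maximal element $F_0$ of $\cL(\M)\setminus\Sigma$; since $E \in \Sigma$ by upward-closure, $F_0$ is automatically a proper flat of $\M$. Set $\Sigma' \coloneq \Sigma\cup\{F_0\}$, another order filter with one fewer missing flat, so that the inductive hypothesis gives $\Ann\bigl(\CH(\M)\cdot\Upsilon_{\Sigma'}\bigr)=\CH(\M)\cdot\{x_F\mid F\notin\Sigma'\}$. Given $\eta \in \Ann\bigl(\CH(\M)\cdot\Upsilon_\Sigma\bigr)$, my strategy is to produce some $\eta_2 \in \langle x_{F_0}\rangle$ with $\eta - \eta_2 \in \Ann(y_{F_0})$. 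Then $\eta - \eta_2$ sits in $\Ann\bigl(\CH(\M)\cdot\Upsilon_\Sigma\bigr)\cap\Ann(y_{F_0}) = \Ann\bigl(\CH(\M)\cdot\Upsilon_{\Sigma'}\bigr)$, both summands belong to $\CH(\M)\cdot\{x_F\mid F\notin\Sigma\}$ (using the induction and $\langle x_{F_0}\rangle\subseteq\CH(\M)\cdot\{x_F\mid F\notin\Sigma\}$), and the induction closes.

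The main obstacle is constructing the required $\eta_2$; this is the only step that draws on matroid-specific content. The key claim is that $\varphi^\M_{F_0}(\eta) \in x_\emptyset \cdot \CH(\M_{F_0})$. To prove it, observe that maximality of $F_0$ in $\cL(\M)\setminus\Sigma$ forces every flat $G > F_0$ to lie in $\Sigma$. A direct check from Definition \ref{DefinitionYPullPush} gives the transitivity $\varphi^\M_G = \varphi^{\M_{F_0}}_{G\setminus F_0} \circ \varphi^\M_{F_0}$ for any $G \geq F_0$, so $\xi\coloneq\varphi^\M_{F_0}(\eta)\in\CH(\M_{F_0})$ satisfies $\varphi^{\M_{F_0}}_{G'}(\xi) = 0$ for every nonempty flat $G'$ of $\M_{F_0}$. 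Equivalently, $\xi$ is annihilated by every $y_j$ in $\CH(\M_{F_0})$, and Lemma \ref{mutual annihilators}, applied now to the smaller matroid $\M_{F_0}$, identifies this common annihilator as precisely $\langle x_\emptyset\rangle \subseteq \CH(\M_{F_0})$.

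From here the rest is routine. Writing $\xi = x_\emptyset\zeta$ and using surjectivity of $\varphi^\M_{F_0}$ to choose $\tilde\zeta \in \CH(\M)$ with $\varphi^\M_{F_0}(\tilde\zeta)=\zeta$, I would set $\eta_2 \coloneq x_{F_0}\tilde\zeta \in \langle x_{F_0}\rangle$. Using $\varphi^\M_{F_0}(x_{F_0}) = x_\emptyset$ together with Proposition \ref{lemma_degy} (6), a single line gives $y_{F_0}\eta_2 = \psi^\M_{F_0}(x_\emptyset\zeta) = \psi^\M_{F_0}(\xi) = y_{F_0}\eta$, so $\eta - \eta_2 \in \Ann(y_{F_0})$, as required. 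Since $\eta_2 \in \langle x_{F_0}\rangle$ was already in $\Ann\bigl(\CH(\M)\cdot \Upsilon_\Sigma\bigr)$ by the easy direction, $\eta - \eta_2$ lies in the desired intersection and the inductive step is complete.
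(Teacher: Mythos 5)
Your proof is correct and is essentially the mirror image of the paper's: you compute $\Ann\bigl(\CH(\M)\cdot\Upsilon_\Sigma\bigr)$ directly and deduce the other identity from Lemma \ref{PD annihilators} (1), whereas the paper computes $\Ann\{x_F \mid F\notin\Sigma\}$ directly --- but both arguments run the same downward induction on the order filter by adjoining a maximal missing flat, use the same push-pull reduction to the contraction at that flat, and invoke the two dual halves of Lemma \ref{mutual annihilators} there. The only cosmetic difference is that your correction term $\eta_2 = x_{F_0}\tilde\zeta$ plays the role of the paper's term $y_H\xi$ (and you should, strictly speaking, note that $\varphi^\M_G(\eta)=0$ for $G>F_0$ follows from $y_G\eta=0$ via the injectivity of $\psi^\M_G$).
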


\begin{proof}
By Lemma~\ref{PD annihilators}~(1), it is sufficient to prove that  $\CH(\M)\cdot \Upsilon_\Sigma$
is the annihilator of the set $\{ x_F \ | \  F\notin \Sigma\}$.
If $F\not\in\Sigma$ and $G\in\Sigma$, then $G\not\leq F$, and hence \[
y_{G}x_F = 0.
\]
This proves that $\CH(\M)\cdot \Upsilon_\Sigma$ annihilates $\{ x_F \ | \  F\notin \Sigma\}$.
For the opposite inclusion, we use downward induction on the cardinality of $\Sigma$.  

Suppose that $\Sigma$ is an order ideal and that the statement is true for all order ideals strictly containing $\Sigma$.
Let $\eta$ be an element  of $\CH(\M)$ satisfying $\eta x_F=0$ for all $F\notin \Sigma$.  We need to show that
$\eta$ is in the ideal $\Upsilon_\Sigma\cdot \CH(\M)$.

The base case $\Sigma=\cL(\M)$ is trivial.  
Now assume that $\Sigma\neq\cL(\M)$, and
let $H$ be a maximal flat not in $\Sigma$. Then $\eta x_H=0$,
and applying our inductive hypothesis to the order ideal $\Sigma\cup\{H\}$, we find that
\[
\eta \in \Upsilon_{\Sigma\cup\{H\}}\cdot \CH(\M) = y_H\CH(\M) + \Upsilon_\Sigma\cdot \CH(\M).
\]
This means that there exist elements $\xi$ and $\{\xi_F\mid F\in\Sigma\}$ in $\CH(\M)$ such that
\[
\eta = y_H \xi + \sum_{F\in \Sigma} y_F \xi_F.
\]
Since $H\notin\Sigma$, we have  $x_H y_F = 0$ for all $F\in\Sigma$, and hence
\[
0 = x_H \eta = x_Hy_H\xi + \sum_{F\in \Sigma} x_H y_F \xi_F = x_Hy_H\xi
= x_H\psi^\Mo_H\varphi^\Mo_H(\xi) = \psi^\Mo_H\bigl(x_\varnothing\varphi^\Mo_H(\xi)\bigr).
\]
Since $\psi^\Mo_H$ is injective, we have $x_\varnothing\varphi^\Mo_H(\xi) = 0 \in \CH(\M_H)$.
By Lemma \ref{mutual annihilators}, it follows that $\varphi^\Mo_H(\xi)$ is in the ideal $\langle y_{K\setminus H} \ | \ K > H\rangle
\subseteq \CH(\M_H)$.
Applying $\psi^\Mo_H$, we see that $y_H\xi = \psi^\Mo_H\varphi^\Mo_H(\xi)$ is in the ideal
$\langle y_{K} \ | \ K > H\rangle\subseteq\CH(\M)$.  By the maximality of $H$, any flat $K$ strictly containing $H$ is in $\Sigma$. Thus, $y_H\xi$ is in $ \Upsilon_\Sigma\cdot \CH(\M)$, and we conclude that
$\eta$ is in  $\Upsilon_\Sigma\cdot \CH(\M)$.
\end{proof}

\subsection{Stalks and costalks}\label{sec:stalks}

For an order ideal $\Sigma$ and a graded $\H(\M)$-module $\N$, we define
\[
\N_\Sigma \coloneq \Upsilon_{\Sigma} \cdot\N \and \N^{\Sigma} \coloneq \{n\in \N\ | \ \Upsilon_\Sigma\cdot n=0\}.
\]
Note that, if $\Sigma'\subseteq \Sigma$, then
$\N_{\Sigma'}\subseteq \N_{\Sigma}$ and $\N^{\Sigma}\subseteq \N^{\Sigma'}$.
We will write 
\[
\N_{\geq F} \coloneq \N_{\Sigma_{\ge F}} \and \N^{\geq F} \coloneq \N^{\Sigma_{\ge F}},
\]
and similarly for the order ideal $\Sigma_{> F}$.  


\begin{definition}\label{def:stalk/costalk}
We define the {\bf stalk} of $\N$ at $F$ to be the quotient
$$\N_F \coloneq \frac{\N_{\geq F}[\rk F]}{\N_{> F}[\rk F]}.$$
Dually, we define the {\bf costalk} of $\N$ at $F$ to be the quotient
$$\N_{[F]} \coloneq \frac{\N^{>F}}{\N^{\geq F}}.$$
\end{definition}

The stalk and costalk of $\N$ are again $\H(\M)$-modules, but because $\Upsilon_{>F} = \Upsilon_{\ge F}\Upsilon_{>\varnothing}$, all of the generators $y_i$ act by zero.  So we will consider them as functors from graded $\H(\M)$-modules to graded $\Q$-modules.

The stalk and costalk at the smallest flat $\varnothing$ are particularly simple: $\N_{\ge F} = \N$ and $\N^{\ge F} = 0$, so $\N_\varnothing$ is the quotient $\N/\Upsilon_{>\varnothing}\N$ and 
$\N_{[\varnothing]}$ is the socle of $\N$.  Since the kernel of $\varphi^\varnothing$ is $\Upsilon_{>\varnothing}\CH(\M)$, we have the following result.

\begin{lemma}\label{empty (co)stalk}
If $\N$ is a direct summand of $\CH(\M)$ as an $\H(\M)$-module, then we 
have a natural isomorphism
$\N_\varnothing \cong \varphi_\Mo^\varnothing(\N)\subseteq\uCH(\M)$.
\end{lemma}

The stalk or costalk functors at a flat $F$ can be described in terms of the stalk or costalk functors at the empty flat for the contraction matroid $\M_F$, by the following result. 
Recall that, by Corollary \ref{cor:y_FN is an H(M_F) module}, the submodule $y_F\N$ can naturally be regarded as an $\H(\M_F)$-module.

\begin{lemma}\label{everything is empty}
For any graded $\H(\M)$-module $\N$, there are natural isomorphisms
$$\N_F \cong (y_F\N[\rk F])_{\varnothing} \and \N_{[F]} \cong (y_F\N[\rk F])_{[\varnothing]},$$
where the stalk and costalk are taken for the flat $\varnothing \in \cL(\M_F)$.
\end{lemma}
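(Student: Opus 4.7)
The plan is to reduce both statements to a single structural fact about the graded M\"obius algebra: the ideal $\Upsilon_{\ge F} \subseteq \H(\M)$ is principal, generated by $y_F$. More precisely, every $y_G$ with $G \ge F$ can be written as $y_H y_F$ for some flat $H$ satisfying $\rk H + \rk F = \rk G$, so in particular $H \ne \varnothing$ when $G > F$. To prove this sublemma, I would choose a basis $B_F$ of $F$, extend it to a basis $B_G$ of $G$, and let $H$ be the closure of $B_G \setminus B_F$ in $\M$; then $\rk H = \rk G - \rk F$ and $F \vee H = G$, which gives $y_H y_F = y_G$ by the multiplication rule in $\H(\M)$.

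Once this sublemma is in hand, the stalk isomorphism is almost formal. First I would observe that $\N_{\ge F} = \Upsilon_{\ge F} \cdot \N = y_F \N$ and that $\N_{>F} = \sum_{G > F} y_G \N = \mm \cdot (y_F \N)$, where $\mm$ denotes the graded maximal ideal of $\H(\M)$. The inclusion $\mm y_F \N \subseteq \N_{>F}$ is immediate since for $H \ne \varnothing$ the product $y_H y_F$ is either zero or of the form $y_K$ with $K > F$; the reverse inclusion uses the sublemma to rewrite each generator $y_G n$ of $\N_{>F}$ as $y_H (y_F n) \in \mm y_F \N$. With both computations the stalk becomes
\[
\N_F = \frac{\N_{\ge F}[\rk F]}{\N_{>F}[\rk F]} = \frac{y_F \N[\rk F]}{\mm \cdot y_F \N[\rk F]} = (y_F \N[\rk F])_{\varnothing}.
\]

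For the costalk I would dualize the same picture. The sublemma implies that $\N^{\ge F}$ is precisely $\{n \in \N \mid y_F n = 0\}$ and that $\N^{>F} = \{n \in \N \mid y_K n = 0 \text{ for all } K > F\}$. Consider the degree zero map $\N^{>F}[\rk F] \to y_F \N[\rk F]$ sending $n$ to $y_F n$. Its kernel is $\N^{\ge F}$ by definition, and I would show its image is exactly the socle $(y_F \N[\rk F])_{[\varnothing]}$: for every $K > F$ the sublemma yields $H \ne \varnothing$ with $y_K = y_H y_F$, so $y_H(y_F n) = y_K n = 0$ for $n \in \N^{>F}$, proving the map lands in the socle; conversely, if $v = y_F n$ satisfies $\mm v = 0$, then the same identity forces $y_K n = 0$ for all $K > F$, i.e.\ $n \in \N^{>F}$, so the socle is entirely hit. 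This gives the desired isomorphism $\N_{[F]} = \N^{>F}/\N^{\ge F} \cong (y_F \N[\rk F])_{[\varnothing]}$.

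The only substantive step is the combinatorial sublemma that $\Upsilon_{\ge F}$ is principally generated by $y_F$. Without it, there is no way to translate between the ``ideal'' definitions of $\N_{\ge F}$, $\N_{>F}$, $\N^{\ge F}$, $\N^{>F}$ and the much smaller operations of multiplying by $y_F$ or by elements of $\mm$. Once that matroid-theoretic input is secured by basis extension, the rest is bookkeeping with one short exact sequence and the degree shift $[\rk F]$ coming from the fact that $y_F$ has degree $\rk F$; naturality in $\N$ is manifest throughout.
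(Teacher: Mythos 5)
Your proof is correct and follows essentially the same route as the paper: the paper's argument is the chain of identities $\N_{\geq F}=(y_F\N)_{\geq\varnothing}=y_F\N$ and $\N_{>F}=(y_F\N)_{>\varnothing}$ (and dually $y_F\N^{>F}=(y_F\N)^{>\varnothing}$), which rests precisely on the fact that $\Upsilon_{\geq F}$ is principally generated by $y_F$ — the sublemma you state and prove explicitly via basis extension. You have simply made the implicit combinatorial input and the kernel/image bookkeeping explicit.
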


\begin{proof}
We have $\N_{\ge F} = y_F\N = (y_F\N)_{\ge \varnothing}$ and
$\N_{>F} = \Upsilon_{>F}\N = \Upsilon_{>\varnothing}(y_F\N)$, so we have
$$\N_F = \frac{\N_{\geq F}[\rk F]}{\N_{> F}[\rk F]} = \frac{(y_F\N)_{\geq\varnothing}[\rk F]}{(y_F\N)_{>\varnothing}[\rk F]} \cong (y_F\N)_\varnothing[\rk F]
\cong (y_F\N[\rk F])_\varnothing.$$ 
Note in the second equality that the image of the maximal ideal $\Upsilon_{>\varnothing} \subseteq \H(\M)$ under the homomorphism $\varphi_F\colon\H(\M) \to \H(\M_F)$ is $\Upsilon_{>\varnothing} \subseteq \H(\M_F)$.

For the second statement, note that $\N^{\ge F}$ is the kernel of multiplication by $y_F$, so we have
$$\N_{[F]} =  \frac{\N^{>F}}{\N^{\geq F}} \cong y_F (\N^{>F})[\rk F] = (y_F\N)^{>\varnothing}[\rk F] 
= (y_F\N)_{[\varnothing]}[\rk F]\cong (y_F\N[\rk F])_{[\varnothing]}. \qedhere$$  
\end{proof}

With these results, we can describe all of the stalks and costalks of 
$\CH(\M)$.

\begin{proposition}
	We have
	\[\CH(\M)_E \cong \CH(\M)_{[E]} \cong \Q.\]
	For a proper flat $F \in \cL(\M)$, we have isomorphisms
	\[\CH(\M)_F \cong \uCH(\M_F)\;\;\text{and}\;\;\CH(\M)_{[F]}\cong \uCH(\M_F)[-1].\]
\end{proposition}
\begin{proof}
	The first part follows immediately from the fact that $y_E\CH(\M) \cong \CH(\M_E)[-\rk E] \cong \Q[-\rk E]$, since $\M_E$ is the empty matroid.
	
	For the second part, we first consider the case $F = \varnothing$.  The isomorphism $\CH(\M)_\varnothing \cong \uCH(\M)$
	is a special case of Lemma \ref{empty (co)stalk}, while we have
	\[\CH(\M)_{[\varnothing]} = x_\varnothing\CH(\M) = \psi^\varnothing(\CH(\M)) \cong \uCH(\M)[-1],\]
	by Lemma \ref{mutual annihilators}.  The general result now follows using Lemma \ref{everything is empty} and the isomorphism $y_F\CH(\M) = \psi_F(\CH(\M)) \cong \CH(\M_F)[-\rk F]$, which holds by Proposition \ref{lemma_degy} and Proposition \ref{Prop_ymult}.
\end{proof}

Next we describe how the stalk and costalk functors interact with duality. For a graded $\H(\M)$-module $\N$, we write $\N^*$ for $\Hom_\Q(\N,\Q)$.
Note that $\N^*$  has a natural graded $\H(\M)$-module structure.

\begin{lemma}\label{stalk-costalk}
For any graded $\H(\M)$-module $\N$ and any flat $F$, there is a natural isomorphism of graded $\Q$-modules
\[
(\N_F)^* \cong (\N^*)_{[F]}.
\]
\end{lemma}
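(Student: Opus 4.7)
The plan is to prove the isomorphism directly from the definitions by identifying both sides with the linear dual of $\N_{\geq F}/\N_{>F}$, using restriction of linear functionals. The argument has three steps, all formal.

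First, I would identify the $\H(\M)$-submodules $(\N^*)^{\geq F}$ and $(\N^*)^{>F}$ of $\N^*$ with the vector-space annihilators $\mathrm{Ann}_{\N^*}(\N_{\geq F})$ and $\mathrm{Ann}_{\N^*}(\N_{>F})$, respectively. Indeed, by the definition of the dual $\H(\M)$-action on $\N^*$, for any $y \in \H(\M)$, $\phi \in \N^*$, and $n \in \N$, we have $(y \cdot \phi)(n) = \phi(yn)$. Hence $\Upsilon_{\geq F} \cdot \phi = 0$ if and only if $\phi$ vanishes on $\Upsilon_{\geq F} \cdot \N = \N_{\geq F}$, and the analogous statement holds for $\Sigma_{>F}$.

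Second, I would construct the key restriction map
\[
\rho \colon (\N^*)^{>F} \longrightarrow (\N_{\geq F}/\N_{>F})^*, \qquad \phi \longmapsto \phi|_{\N_{\geq F}},
\]
which is well defined because elements of $(\N^*)^{>F} = \mathrm{Ann}_{\N^*}(\N_{>F})$ automatically vanish on $\N_{>F}$. The kernel of $\rho$ is precisely $\mathrm{Ann}_{\N^*}(\N_{\geq F}) = (\N^*)^{\geq F}$, and $\rho$ is surjective: any functional on $\N_{\geq F}/\N_{>F}$ lifts to a functional on $\N_{\geq F}$ vanishing on $\N_{>F}$, which then extends to an element of $\N^*$ lying in $(\N^*)^{>F}$. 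This produces a canonical isomorphism of graded vector spaces
\[
(\N^*)_{[F]} \;=\; \frac{(\N^*)^{>F}}{(\N^*)^{\geq F}} \;\xrightarrow{\ \sim\ }\; (\N_{\geq F}/\N_{>F})^* \;\cong\; (\N_F)^*,
\]
where the last identification dualizes the definition $\N_F = (\N_{\geq F}/\N_{>F})[\rk F]$ using the convention $(V[k])^* = V^*[-k]$ that encodes the grading on $\Hom_\Q(-,\Q)$.

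Finally, I would check that $\rho$ is $\H(\M)$-equivariant: for $\eta \in \H(\M)$, $\phi \in (\N^*)^{>F}$, and $n \in \N_{\geq F}$, we have $\rho(\eta\phi)(n) = (\eta\phi)(n) = \phi(\eta n)$, which matches the formula for the dual action on $(\N_{\geq F}/\N_{>F})^*$ (noting that $\N_{\geq F}$ and $\N_{>F}$ are $\H(\M)$-submodules, so the quotient inherits an $\H(\M)$-structure). This is entirely routine. There isn't really a main obstacle here — the argument is a clean piece of linear-algebraic duality over a graded commutative ring; the only bookkeeping worth keeping honest is the degree shift by $\rk F$ in the stalk, which is absorbed by the standard dualization convention.
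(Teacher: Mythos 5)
Your argument is correct in substance, and it takes a genuinely different route from the paper. The paper reduces the general case to the case $F=\varnothing$: it first observes directly that $(\N_\varnothing)^*$ is the submodule of $\N^*$ of functionals vanishing on $\N_{>\varnothing}$, i.e.\ $(\N^*)_{[\varnothing]}$, and then handles general $F$ by invoking Lemma \ref{everything is empty} to write $\N_F \cong (y_F\N[\rk F])_\varnothing$ together with a compatibility of dualization with multiplication by $y_F$. Your proof instead works uniformly in $F$: the identifications $(\N^*)^{\geq F}=\mathrm{Ann}_{\N^*}(\N_{\geq F})$ and $(\N^*)^{>F}=\mathrm{Ann}_{\N^*}(\N_{>F})$ are exactly right, and the restriction map $\rho$ with its kernel computation and surjectivity gives the isomorphism $(\N^*)_{[F]}\cong(\N_{\geq F}/\N_{>F})^*$ cleanly, with the $\H(\M)$-equivariance check being routine as you say. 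This is arguably tidier than the paper's reduction, since it avoids the auxiliary lemma about $y_F\N$ entirely and makes transparent exactly where every degree lives.

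The one place you are too quick is the final sentence about the shift being ``absorbed.'' Your map $\rho$ is degree-preserving, so it identifies $(\N^*)_{[F]}$ with $(\N_{\geq F}/\N_{>F})^*$; since $\N_F=(\N_{\geq F}/\N_{>F})[\rk F]$, the convention $(V[k])^*=V^*[-k]$ gives $(\N_{\geq F}/\N_{>F})^*=(\N_F)^*[\rk F]$, not $(\N_F)^*$. So what you have actually proved is $(\N^*)_{[F]}\cong(\N_F)^*[\rk F]$, and the shift does not cancel for $F\neq\varnothing$ (a one-line check with $\N=\H(\M)$ for a rank-one matroid and $F=E$ confirms the two sides of the lemma as stated sit in different degrees). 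To be fair, the paper's own proof has the same wrinkle at the step $(y_F\N)^*[-\rk F]\cong y_F(\N^*)[\rk F]$, and the lemma is only ever applied with $F=\varnothing$, where $\rk F=0$ and the issue disappears. So your proof is as correct as the statement allows; just don't claim the shift vanishes when it is in fact part of the answer.
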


\begin{proof}
We first prove the lemma when $F = \varnothing$.  
The module $(\N_\varnothing)^*$ is equal to the submodule of $\N^*$ consisting of functions that vanish on $\N_{>\varnothing}$, 
which is the same as $(\N^*)_{[\varnothing]}$.  

Now consider an arbitrary flat $F$. By Lemma \ref{everything is empty} and the case
that we just proved,
we have 
\[(\N_F)^* \cong \bigl((y_F\N[\rk F])_\varnothing\bigr)^* \cong \bigl((y_F\N[\rk F])^*\bigr)_{[\varnothing]} \cong (y_F\N)^*[-\rk F]_{[\varnothing]}.
\]
Since multiplication by $y_F$ is an $\H(\M)$-module homomorphism of degree $\rk F$, we have 
\[
(y_F\N)^*[- \rk F]\cong y_F(\N^*)[\rk F].
\]
Therefore,  we have 
\[
(\N_F)^* \cong \big(y_F(\N^*)[\rk F]\big)_{[\varnothing]}\cong (\N^*)_{[F]},
\]
where the second isomorphism follows from Lemma \ref{everything is empty}.
\end{proof}

\subsection{Pure modules}\label{sec:pure}

In this section we introduce a special class of graded $\H(\M)$-modules called pure modules, which in a sense are the main objects of study in this paper.  In particular, once the big induction is complete, our main result Theorem \ref{theorem_all} implies that $\IH(\M)$ is pure.

\begin{definition}\label{def:pure}
We say that a graded $\H(\M)$-module (respectively a graded $\H_\o(\M)$-module) is {\bf pure} if it is a direct sum of modules which are isomorphic to direct summands of graded $\H(\M)$-modules (respectively of graded $\H_\o(\M)$-modules) of the form $\CH(\M^F)[k]$, where $F\in \cL(\M)$ and $k\in \Z$.	
\end{definition}

\begin{remark}
	It is clear that a pure $\H_\o(\M)$-module is also pure when considered as an $\H(\M)$-module, but a pure $\H(\M)$-module need not even admit an $\H_\o(\M)$-module structure.
\end{remark}

\begin{remark}
	The notion of pure modules is motivated by the notion of pure mixed Hodge modules or pure mixed $\ell$-adic sheaves.   More precisely, following the discussion of Remark \ref{rmk:mixed motivation}, there will be a mixed structure on the category of complexes of pure $\H(\M)$-modules (or $\H_\o(\M)$-modules), such that pure modules placed in cohomological degree zero would be pure of weight zero.
\end{remark}

\begin{proposition}\label{prop:purity characterization}
	Let $\N$ be a pure graded $\H(\M)$-module.  Then for any order ideals $\Sigma_1$, $\Sigma_2 \subseteq \cL(\M)$, we have
	\begin{enumerate}
		\item $\N_{\Sigma_1} \cap \N_{\Sigma_2} = \N_{\Sigma_1\cap\Sigma_2}$ and
		\item $\N^{\Sigma_1} + \N^{\Sigma_2} = \N^{\Sigma_1\cap \Sigma_2}$.
	\end{enumerate}
\end{proposition}

\begin{proof}
	The properties (1) and (2) are preserved under taking direct sums, passing to direct summands, and shifting  degree, so we may assume that $\N = \CH(\M^F)$ for some flat $F$.  Furthermore, the $\H(\M)$-module structure on $\CH(\M^F)$ factors through the quotient $\H(\M)\to \H(\M^F)$ induced by 
	the pullback map $\varphi_F$ (see Proposition \ref{lemma_degy}), and so we can further assume that $F = E$, or in other words that $\M^F = \M$.
	
	To see that (1) holds, we compute: 
	\begin{eqnarray*}
		\CH(\M)_{\Sigma_1}\cap \CH(\M)_{\Sigma_{2}} &=& \CH(\M) \cdot \Upsilon_{\Sigma_1} \cap \CH(\M) \cdot \Upsilon_{\Sigma_{2}}\\
		&=& \Ann\{x_G\ | \  G \notin \Sigma_1\} \cap  \Ann\{x_G\ | \  G \notin\Sigma_{2}\}\\
		&=& \Ann\{x_G\ | \  G \notin \Sigma_1\cap \Sigma_2\}\\
		&=& \CH(\M)\cdot \Upsilon_{\Sigma_1 \cap \Sigma_2}\\
		&=& \CH(\M)_{\Sigma_1\cap \Sigma_2}
	\end{eqnarray*}
    where the second and fourth equalities follow from Lemma \ref{annihilators}.  Similarly, we get (2) from
	\begin{eqnarray*}
		\CH(\M)^{\Sigma_{1}} + \CH(\M)^{\Sigma_2}
		&=& \Ann\Upsilon_{\Sigma_{1}} +  \Ann\Upsilon_{\Sigma_2}\\
		&=& \CH(\M)\cdot \{ x_G \ | \  G\notin \Sigma_{1}\} + \CH(\M)\cdot \{ x_G \ | \  G\notin\Sigma_2\}\\
		&=& \CH(\M)\cdot \{ x_G \ | \  G\notin \Sigma_1\cap\Sigma_2\}\\
		&=& \Ann \Upsilon_{\Sigma_1\cap\Sigma_2}\\
		&=& \CH(\M)^{\Sigma_1\cap \Sigma_2},
	\end{eqnarray*}
	again by Lemma \ref{annihilators}.
\end{proof}

\begin{remark}\label{rmk:IH of a matroid}
	In a forthcoming work \cite{BHMPW-IH}, we show that the converse of Proposition \ref{prop:purity characterization} also holds, so pure modules are characterized by the properties (1) and (2).  For a module $\N$ satisfying the property (2), the assignment $\Sigma \mapsto \N/\N^\Sigma$ defines a sheaf on the finite topological space $\cL(\M)$, with order ideals as open sets.  This sheaf is automatically flabby, so we refer to modules satisfying (2) as {\bf flabby}.  Dually, we call a module satisfying the property (1) {\bf co-flabby}, because it implies that $\Sigma \mapsto \N_\Sigma$ is a co-sheaf with the property that the sections on any open set inject into global sections.  It is easy to see that $\N$ is flabby if and only if $\N^*$ is co-flabby, and vice-versa.
The converse to Proposition \ref{prop:purity characterization} allows us to prove that $\IH(\M)$ is uniquely characterized by the following conditions: 
	\begin{itemize}
		\item $\IH(\M)$ is indecomposable and $y_E\IH(\M) \ne 0$,
		\item there is an isomorphism $\IH(\M)^* \cong \IH(\M)[\rk \M]$ of graded $\H(\M)$-modules, and
		\item $\IH(\M)$ is flabby (or co-flabby).
	\end{itemize} 
Furthermore, we show in Proposition \ref{prop:indecomposable} that the only endomorphisms of $\IH(\M)$ are multiplication by scalars, so the isomorphism is unique once it is fixed in degree zero. This also leads to an inductive construction of $\IH(\M)$ directly as an $\H(\M)$-module, without reference to an embedding into $\CH(\M)$.
\end{remark}

	
We defined the stalks and costalks of an $\H(\M)$-module $\N$ as subquotients associated to the ideals $\Upsilon_{>F} \subseteq \Upsilon_{\ge F}$.  The next proposition says that, when $\N$ is pure, any pair of monomial ideals differing by $y_F$ will give isomorphic subquotients.  This implies that $\N$ has a filtration whose successive quotients give its stalks at all of the flats, and another filtration that gives the costalks.  This is the main ingredient in the proof of Proposition \ref{prop:gr} from the introduction.

\begin{proposition}\label{pure}
Let $\N$ be a pure graded $\H(\M)$-module, and let $\Sigma,\Sigma' \subseteq \cL(\M)$ be order ideals with $\Sigma' = \Sigma \cup \{F\}$, so 
$F$ is a minimal element of $\Sigma'$.
\begin{enumerate}[(1)]\itemsep 5pt
\item the inclusions $\Upsilon_{> F}\subseteq \Upsilon_{\Sigma}$ and $\Upsilon_{\geq F}\subseteq \Upsilon_{\Sigma'}$ induce an isomorphism 
$$\N_{F} = \frac{\N_{\geq F}[\rk F]}{\N_{> F}[\rk F]}
\overset{\cong}{\longrightarrow}
\frac{\N_{\Sigma'}[\rk F]}{\N_{\Sigma}[\rk F]}.$$
\item For all $k$, the above inclusions induce an isomorphism 
$$
\frac{\N^{\Sigma}}{\N^{\Sigma'}}
\overset{\cong}{\longrightarrow}
\frac{\N^{>F}}{\N^{\geq F}}=\N_{[F]}.$$
\end{enumerate}
\end{proposition}

\begin{proof}

Since  $\Upsilon_{\Sigma'} = \Upsilon_{\Sigma} + \Upsilon_{\ge F}$, we have $\N_{\Sigma'}=\N_{\Sigma} + \N_{\geq F}$, and so the
map in (1) is surjective.  To see that it is injective, apply Proposition \ref{prop:purity characterization} to get \[\N_{\ge F} \cap \N_{\Sigma} = \N_{\Sigma_{\ge F} \cap \Sigma} = \N_{>F}.\]

Turning to the statement (2), note that $\N^{\Sigma} \cap \N^{\ge F} = 
\N^{\Sigma \cup \Sigma_{\ge F}} = \N^{\Sigma'}$, which shows injectivity.  Surjectivity follows from the second part of Proposition \ref{prop:purity characterization}.
\end{proof}

\begin{remark}	
	To see examples of modules for which the conclusions of Propositions \ref{prop:purity characterization} and \ref{pure} fail, let $\M$ be the Boolean matroid of rank two on the set $E = \{i,j\}$.  We denote the flats of $\cL(\M)$ by $\varnothing, i, j, ij$,
	omitting the braces for simplicity.  Then the quotient $\H(\M)/\langle y_i+y_j\rangle$ doesn't satisfy either part of Proposition \ref{prop:purity characterization}. Next, let $\N = \Upsilon_{>\varnothing}$ be the ideal in $\H(\M)$ generated by $y_i$ and $y_j$, or in other words the subspace spanned by $y_i$, $y_j$, and $y_{ij}$.  If $\Sigma = \{j,ij\}$ and $\Sigma' = \Sigma \cup \{i\}$, we have $\N_{\Sigma} = \N_{\Sigma'} = \Q y_{ij}$, so $\N_{\Sigma'}/\N_{\Sigma}=0$, but the stalk $\N_i$ is one-dimensional.  Thus part (1) of Proposition \ref{pure} fails for $\N$.  Part (2) will fail for the dual module $\N^\ast \cong \H(\M)/y_{ij}\H(\M)$.  
	 
	For another example, let $\M$ be the uniform matroid on $\{1,2,3,4\}$.  Then $\N = \H(\M)$, considered as a module over itself, satisfies statement (1) of Proposition \ref{pure} but not statement (2).  To see this, let $\Sigma'$ contain all flats of rank $2$ or $3$, and let $\Sigma = \Sigma'\setminus \{F\}$, where $F$ is any rank $2$ flat.  Then 
	$\N^\Sigma = \N^{\Sigma'} = \Upsilon_{\Sigma'}$, but $\N_{[F]}$ is one-dimensional.  The dual module $\H(\M)^*$ satisfies (2) but not (1).
\end{remark}

\subsection{Orlik--Solomon algebra}
Our second result about pure modules is Proposition \ref{costalk complex}, which gives a chain complex to compute the costalk $\N_{[\varnothing]}$ (and then, using Lemma \ref{everything is empty}, it can be used for any costalk).  It is only used once, in Section \ref{sec:Rouquier perversity}.  By definition, $\N_{[\varnothing]}$ is the kernel of the homomorphism
\[\N \to \bigoplus_{F \in \cL^1(\M)} y_F\N\]
given by multiplication by each generator $y_F \in \H^1(\M)$. 
Proposition \ref{costalk complex} says that if $\N$ is pure, this can be extended to the right to give a complex which is exact in all but the first place, whose $k$-th step is a direct sum of submodules $y_F\N$ for $\rk(F) = k$.
This complex can be viewed as performing a sort of inclusion-exclusion computation, but because the lattice $\cL(\M)$ is not Eulerian, the module $y_F\N$ may have to appear with multiplicity greater than one.  
More precisely, the multiplicity is $|\mu(\varnothing,F)|$, where $\mu$ is the M\"obius function of $\cL(\M)$.  The appropriate vector space we need with this dimension is the dual of a piece of the Orlik--Solomon algebra of $\M$, so we first recall a few facts about this algebra.  
We refer to \cite[Section 3.1]{Orlik-Terao} for more details. 

Let $\mathcal{E}^1$ be the vector space over $\Q$ with basis $\{e_i\mid i\in E\}$, and let $\mathcal{E}$ be the exterior algebra generated by $\mathcal{E}^1$. Define a degree $-1$ linear map $\partial_\cE \colon \cE\to \cE$ by setting $\partial_\cE 1=0$, $\partial_\cE e_i=1$, and
\[
\partial_\cE(e_{i_1}\cdots e_{i_l})=\sum_{k=1}^{l}(-1)^k e_{i_1}\cdots \widehat{e_{i_k}}\cdots e_{i_l} \ \ \text{for any $i_1, \ldots, i_l\in E$. }
\]
For any ordered subset $S=\{i_1, \ldots, i_l\}\subseteq E$, we denote $e_{i_1}\cdots e_{i_l}$ by $e_S$. 
The {\bf Orlik--Solomon} algebra of $\M$, denoted by $\OS(\M)$, is the quotient of $\cE$ by the ideal generated by $\partial_\cE e_S$ for all dependent sets $S$ of $\M$. The differential $\partial_\cE$ descends to a differential $\partial$ on $\OS(\M)$, and the complex $(\OS(\M), \partial)$ is acyclic whenever the rank of $ \M$ is positive.

For any flat $F$ of $\M$, we define a graded subspace $\cE_F$ of $\cE$ generated by those monomials $e_S$ 
for all subsets $S\subseteq E$ with closure $F$.
Then we have a direct sum decomposition 
\[
\cE=\bigoplus_{F\in \cL(\M)}\cE_F,
\]
which descends to a direct sum decomposition \cite[Proposition 2.10]{Orlik-Solomon}
\begin{equation}\label{eqn:Brieskorn}
\OS(\M)=\bigoplus_{F\in \cL(\M)}\OS_F(\M).
\end{equation}
Moreover, $\OS_F(\M)$ is concentrated in degree $\rk F$, and the natural ring map $\OS(\M^F)\to \OS(\M)$ induces an isomorphism of vector spaces
\[
\OS^{\rk F}(\M^F)\cong \OS_F(\M).
\]

\subsection{A complex to compute costalks}\label{sec:costalk}
In this section we present a chain complex which computes the costalk of a pure module at the empty flat.  This result is only used in Section \ref{sec:Rouquier complexes}.

Let $\N$ be a graded $\H(\M)$-module.  For all $0\leq k \leq d = \rk \M$, let
$$\N^k_! \coloneq \bigoplus_{F \in \cL^k(\M)} \OS_F(\M)^* \otimes y_F \N.$$
Note that since $\OS_F(\M)^*$ is concentrated in degree $-\rk F$, tensoring with $\OS_F(\M)^*$ and multiplying by $y_F$ has no net effect on degrees.

We define a differential $\delta^k \colon \N^k_!\to \N^{k+1}_!$ as follows.
If $F\in \cL^k(\M)$ and $G\in \cL^{k+1}(\M)$, then the $(F,G)$-component of $\delta^k$ is zero unless $F<G$.
If $F<G$, choose $i\in G\setminus F$ so that $y_G = y_i y_F$.  Then the $(F,G)$-component of $\delta^k$ is given on the first tensor factor
by the $(F,G)$-component of $\partial^* \colon \OS_F(\M)^*\to \OS_G(\M)^*$ and on the second tensor factor by multiplication by $y_i$.  The fact that $\delta\circ \delta = 0$ follows easily from the fact that $\partial^*$ is a differential.

This defines a functor $\N \mapsto \N^\bullet_!$ from graded $\H(\M)$-modules to complexes of graded $\Q$-vector spaces.   We have $\OS_\varnothing(\M) \cong \OS_F(\M) \cong \Q$ for any rank $1$ flat $F$,
so the degree $0$ and $1$ parts of this complex are isomorphic to
\[\N \to \bigoplus_{\rk F = 1} y_F\N,\]
where the differential is the sum of multiplication by $y_F$ for all rank $1$ flats $F$.  In particular the inclusion $\N_{[\varnothing]} \to \N$ gives a natural map of complexes
\[\N_{[\varnothing]} \to \N^\bullet_!,\]
where $\N_{[\varnothing]}$ is considered as a complex concentrated in degree zero. 

This is the first place where we need a notation for cohomology of a complex (the next will be in Section \ref{sec:Rouquier complexes}).  To avoid a clash with $\H(\M)$, we will denote cohomology by $\HH$.

\begin{proposition}\label{costalk complex}
If $\N$ is pure, then the above map of complexes induces an isomorphism on cohomology, so we have natural isomorphisms
$\HH^0(\N^\bullet_!) \cong \N_{[\varnothing]}$
and $\HH^m(\N^\bullet_!) = 0$ for all $m>0$.
\end{proposition}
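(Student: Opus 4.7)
The plan is to identify $H^0$ directly (this works for any $\H(\M)$-module), reduce the higher cohomology vanishing to the case $\N = \CH(\M)$ using purity, and handle this main case by comparison with the Orlik--Solomon complex. Since $\OS_G(\M)$ is one-dimensional for $G \in \cL^1(\M)$, the differential $\delta^0\colon \N^0_! = \N \to \N^1_!$ sends $\eta$ to the tuple of $y_i\eta$ indexed by rank-one flats; its kernel is the set of elements of $\N$ annihilated by every $y_i$, which is exactly the socle $\N_{[\varnothing]}$.

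To prove the higher cohomology vanishing using purity, I exploit that both $\N \mapsto \N^\bullet_!$ and cohomology are additive. Since $\N$ is pure, it decomposes (up to degree shifts, which are immaterial) as a direct sum of direct summands of modules of the form $\CH(\M^F)$. Writing $\CH(\M^F) = \N_1 \oplus \N_2$ and applying additivity, the problem reduces to proving the statement for $\N = \CH(\M^F)$. For such $\N$, we have $y_G\N = 0$ unless $G \le F$, and the canonical isomorphism $\OS_G(\M) \cong \OS_G(\M^F)$ for $G \le F$ identifies the complex with the corresponding costalk complex for $\CH(\M^F)$ viewed as an $\H(\M^F)$-module. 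Replacing $\M$ by $\M^F$, it remains to show that $H^m(\CH(\M)^\bullet_!) = 0$ for $m \ge 1$ and every loopless matroid $\M$.

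For this main case, I would form the auxiliary complex $\Omega^k \coloneq \OS^k(\M)^* \otimes \CH(\M)$ with differential $\partial^* \otimes \id$; it is acyclic when $\rk\M > 0$ because the Orlik--Solomon complex is. The natural map $\Psi\colon \Omega^\bullet \to \CH(\M)^\bullet_!$ sending $f \otimes \eta \mapsto f \otimes y_G\eta$ for $f \in \OS_G(\M)^*$ is surjective, and a direct verification (using that $y_i y_G = y_H$ when $H$ covers $G$ and $i \in H \setminus G$) shows that $\Psi$ is a chain map. The long exact sequence attached to $0 \to \ker\Psi \to \Omega^\bullet \to \CH(\M)^\bullet_! \to 0$ then reduces the desired vanishing to proving $H^m(\ker\Psi) = 0$ for $m \ge 2$.

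The main obstacle is establishing this vanishing for the kernel complex, whose degree $k$ term is $\bigoplus_{G\in\cL^k(\M)} \OS_G(\M)^* \otimes \ker\varphi^\M_G$; by Lemma \ref{annihilators} (applied to $\Sigma = \{H : H \ge G\}$) combined with Proposition \ref{Prop_ymult}, the ideal $\ker\varphi^\M_G$ is generated by $\{x_H : H \not\ge G\}$. The recursive nature of these annihilator ideals strongly suggests iterating the argument, establishing the vanishing by induction on $|E|$ and exploiting the pushforward/pullback maps of Section \ref{sec:pbpf} to relate $\ker\varphi^\M_G$ to Chow rings of smaller matroids; the base case $|E| = 0$ is trivial since the complex then collapses to $\N$ concentrated in degree $0$.
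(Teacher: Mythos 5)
Your identification of $\H^0$ with the socle and your reduction to $\N = \CH(\M^F)$ (via additivity of $(\cdot)^\bullet_!$ and of cohomology, plus the compatibility of $\OS_G(\M)$ with $\OS_G(\M^F)$ for $G \le F$) are both correct. The chain map $\Psi\colon \OS^\bullet(\M)^*\otimes\CH(\M) \to \CH(\M)^\bullet_!$ is also well defined and surjective, and the long exact sequence does reduce the claim to $\H^j(\ker\Psi)=0$ for $j\ge 2$. But that last step is precisely where your argument stops, and it is the entire content of the proposition: since $\H^{j}(\ker\Psi)\cong \H^{j-1}(\CH(\M)^\bullet_!)$ for all $j$ when $\rk\M>0$ (the middle complex being acyclic), the vanishing you need for $\ker\Psi$ is \emph{equivalent} to the vanishing you are trying to prove, transferred to a complex with terms $\bigoplus_{G}\OS_G(\M)^*\otimes\Ann(y_G)$ that carries less usable structure than the original. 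The proposed induction on $|E|$ is not sketched in any checkable way --- the ideals $\Ann(y_G)=\CH(\M)\cdot\{x_H : H\not\ge G\}$ do not obviously restrict to the analogous ideals for smaller matroids compatibly with the differential --- so this is a genuine gap rather than a routine omitted verification.

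For comparison, the paper avoids this circularity by filtering $\N^\bullet_!$ itself, using a linear extension of $\cL(\M)$ and the order filters $\Sigma_k$: the key point, which uses the purity hypothesis through Proposition \ref{pure}(2), is that the $k$-th graded piece of the filtered complex is isomorphic to $\OS(\M^{F_k})^*\otimes\N_{[F_k]}$ with differential $\partial^*\otimes\id$. Acyclicity of the Orlik--Solomon complex then kills every graded piece except the one for $F_k=\varnothing$, and the spectral sequence of the filtration finishes the proof. If you want to salvage your approach, you would need to supply an argument of comparable substance for $\ker\Psi$; as written, the hardest part of the proof is asserted rather than proved.
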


\begin{proof}
Fix a total order $\varnothing = F_0,\ldots,F_r$ of $\cL(\M)$ refining the natural partial order, so that for any $k$, the set 
	$$\Sigma_k \coloneq \{F_k,\ldots,F_r\}$$
	is an order ideal.  
Consider the filtration
\[0 = (\N^{\Sigma_0})^\bullet_! \subseteq \dots \subseteq  (\N^{\Sigma_r})^\bullet_! \subseteq (\N^{\Sigma_{r+1}})^\bullet_! = \N^\bullet_!\]
obtained by applying the functor $(\cdot)^\bullet_!$ to the filtration
$0 = \N^{\Sigma_0} \subseteq \dots \subseteq \N^{\Sigma_r}\subseteq \N^{\Sigma_{r+1}} = \N$.

We claim that the quotient complex 
$$\label{eq:quotient complex}
\frac{(\N^{\Sigma_{k+1}})^\bullet_!}{(\N^{\Sigma_k})^\bullet_!}
$$
is acyclic when $k\neq 0$, and when $k=0$ it is quasi-isomorphic to the module $\N_{[\varnothing]}$ concentrated in cohomological degree zero.  Given this claim, the desired result then follows from the spectral sequence relating the cohomology of a filtered complex to the cohomology
of its associated graded complexes.

To show the above claim, we consider the short exact sequence
\[
0\to \N^{\Sigma_k\cup \Sigma_{\geq F}} \to \N^{\Sigma_k}\overset{\cdot y_F}{\longrightarrow} y_F\N^{\Sigma_k}\to 0,
\]
for any $k$ and any flat $F$.
This sequence maps injectively into the same sequence with $k$ replaced by $k+1$, 
so the snake lemma gives a short exact sequence
$$0\to \frac{\N^{\Sigma_{k+1}\cup \Sigma_{\geq F}}}{\N^{\Sigma_{k}\cup \Sigma_{\geq F}}} \to 
\frac{\N^{\Sigma_{k+1}}}{\N^{\Sigma_{k}}} \to 
\frac{y_F\N^{\Sigma_{k+1}}}{y_F\N^{\Sigma_{k}}}\to 0.$$
By Proposition \ref{pure} (2), the middle term of this sequence is isomorphic to $\N_{[F_k]}$.
If $F\leq F_{k}$, then $\Sigma_{k+1}\cup \Sigma_{\geq F} = \Sigma_{k}\cup \Sigma_{\geq F}$, 
and the first term in our sequence is therefore zero.
On the other hand, if $F\not\leq F_{k}$, then Proposition \ref{pure} (2) implies that the first
term of our sequence is $\N_{[F_k]}$, and that the first map in our sequence is an isomorphism.  Putting
these two observations together, we get an isomorphism
$$\frac{y_F\N^{\Sigma_{k+1}}}{y_F\N^{\Sigma_{k}}} \cong
\begin{cases}
\N_{[F_k]} &\text{if $F\leq F_{k}$,}\\
0 &\text{otherwise.}
\end{cases}$$
Furthermore, if $F < G\le F_k$ and $y_G = y_Fy_i$, the natural map 
\[\frac{y_F\N^{\Sigma_{k+1}}}{y_F\N^{\Sigma_{k}}} \to \frac{y_G\N^{\Sigma_{k+1}}}{y_G\N^{\Sigma_{k}}}\] induced by multiplication by $y_i$ becomes the identity on $\N_{[F_k]}$ under this isomorphism.

It follows that there is an isomorphism of complexes
$$\frac{(\N^{\Sigma_{k+1}})^\bullet_!}{(\N^{\Sigma_k})^\bullet_!} \cong \OS(\M^{F_k})^*\otimes \N_{[F_k]},$$
where the right-hand side has the differential $\partial^*\otimes\id_{\N_{[F_k]}}$.
This complex is acyclic unless $\rk \M^{F_k} = 0$, which happens only when $k=0$.  We have $\N^{\Sigma_0}=0$ and $\N^{\Sigma_1} = \N_{[F_0]} = \N_{[\varnothing]}$, with all $y_F$ with $\rk F >0$ acting by zero.
So the map of complexes $\N_{[\varnothing]} \to \N^\bullet_!$ is injective with image $(\N^{\Sigma_1})^\bullet_!$.  The result follows.
\end{proof}

\section{Intersection cohomology as a module over the graded M\"obius algebra}\label{sec:ihmodulesmobius}
In this section, we apply some of the constructions from Section \ref{sec:modules} to the intersection cohomology module $\IH(\M)\subseteq\CH(\M)$.  In particular, we study its stalks and costalks, and prove that under suitable hypotheses it is indecomposable as an $\H(\M)$-module.  We also study the $\H_\o(\M)$-module $\IH_\o(\M)$ similarly.

For most of the remainder of the paper, we will prove very few absolute statements.  Most of what we prove
will be of the form ``If X holds, then so does Y.''  At the end, we will use all of these results in a modular way
to complete our inductive proof of Theorem \ref{theorem_all}.

\begin{remark}\label{ignore the first}
The main results of this section are Propositions \ref{zero map} and \ref{prop:indecomposable}, and Corollary~\ref{cor:characterizing purity}.
Each of these results has two parts, the first pertaining to the module $\IH(\M)$ and the second pertaining to the module $\IH_\o(\M)$.
We note that only the second parts of these three results will be used in our large induction.  The first parts require that we know $\hyperlink{cd}{\CD(\M)}$ (Definition \ref{def:canonical decomps}),
and will only be applied after the induction is complete.  This was alluded to earlier in Remark \ref{indecomposability is subtle}.
\end{remark}

Recall that by Corollary \ref{cor:y_FN is an H(M_F) module}, $y_F\N$ is an $\H(\M_F)$-module for any 
$\H(\M)$-module $\N$ and any flat $F$.

\subsection{Stalks and costalks of the intersection cohomology modules}
\begin{lemma}\label{mult by y}
Let $F$ be a nonempty flat such that $\hyperlink{cd}{\CD(\M_F)}$ holds.
\begin{enumerate}[(1)]\itemsep 5pt
\item If $\hyperlink{cd}{\CD^k(\M)}$ holds, then $\varphi^\Mo_F(\IH^k(\M)) = \IH^k(\M_F)$.  If this holds for all $k$, then $\psi_F$
restricts to a graded $\H(\M_F)$-module isomorphism 
$\IH(\M_F)[-\rk F]\cong y_F\IH(\M)$. 
\item If $\hyperlink{cdo}{\CD^k_\o(\M)}$ holds, then $\varphi^\Mo_F(\IH^k_\o(\M)) = \IH^k(\M_F)$.  If this holds for all $k$, then 
$\psi_F$ restricts to a graded $\H(\M_F)$-module isomorphism 
$\IH(\M_F)[-\rk F]\cong y_F\IH_\o(\M)$. 
\end{enumerate}
\end{lemma}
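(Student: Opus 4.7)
The plan is to prove both parts in parallel, since the only change between them is replacing $\IH^k(\M)$ by $\IH^k_\o(\M)$ and $\CD^k(\M)$ by $\CD^k_\o(\M)$. Fix a nonempty flat $F$.

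First I would determine how $\varphi^\M_F$ acts on each summand of the relevant decomposition. For a proper flat $F' < E$: if $F \le F'$, then Lemma~\ref{lemma_diagrams}(4), applied with its $F$ replaced by $F'$ and its $G$ replaced by $F$, yields
\[
\varphi^\M_F \circ \psi^{F'}_\M = \psi^{F'\setminus F}_{\M_F} \circ \bigl(\id \otimes \varphi^{\M^{F'}}_F\bigr),
\]
so by surjectivity of $\varphi^{\M^{F'}}_F$,
\[
\varphi^\M_F\bigl(\psi^{F'}_\M(\uJ(\M_{F'})\otimes \CH(\M^{F'}))\bigr) = \psi^{F'\setminus F}_{\M_F}\bigl(\uJ((\M_F)_{F'\setminus F}) \otimes \CH((\M_F)^{F'\setminus F})\bigr),
\]
which is precisely the summand indexed by $G = F'\setminus F$ in $\CD(\M_F)$. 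If instead $F \not\le F'$, then Proposition~\ref{Prop_xmult} identifies the image of $\psi^{F'}_\M$ with the principal ideal of $x_{F'}$ in $\CH(\M)$, and $\varphi^\M_F(x_{F'}) = 0$, so the image vanishes. In particular, the $F' = \varnothing$ summand (present only in $\CD(\M)$) is killed since $F$ is nonempty.

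Next, assuming $\CD^k(\M)$ (respectively $\CD^k_\o(\M)$) and applying $\varphi^\M_F$, the surviving summands are parametrized by flats $F'$ with $F \le F' < E$, or equivalently by $G = F' \setminus F$ ranging over all proper flats of $\M_F$. This gives
\[
\CH^k(\M_F) = \varphi^\M_F\IH^k(\M) + \sum_{G < E\setminus F} \psi^G_{\M_F}\bigl(\uJ((\M_F)_G)\otimes\CH((\M_F)^G)\bigr)^k
\]
(respectively with $\IH^k_\o(\M)$ in place of $\IH^k(\M)$). By Lemma~\ref{lem_imageofIH}(1) and the inclusion $\IH(\M) \subseteq \IH_\o(\M)$, we have $\varphi^\M_F\IH^k(\M) \subseteq \IH^k(\M_F)$. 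The hypothesis $\CD(\M_F)$ gives the analogous direct sum decomposition with $\IH^k(\M_F)$ in place of $\varphi^\M_F \IH^k(\M)$. For $b \in \IH^k(\M_F)$, write $b = a + c$ with $a \in \varphi^\M_F \IH^k(\M)$ and $c$ in the sum of the $\psi^G_{\M_F}$-summands; since $a \in \IH^k(\M_F)$, the difference $c = b - a$ also lies in $\IH^k(\M_F)$, and the direct sum forces $c = 0$. Thus $\varphi^\M_F\IH^k(\M) = \IH^k(\M_F)$, and similarly $\varphi^\M_F\IH^k_\o(\M) = \IH^k(\M_F)$ in part~(2).

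Finally, suppose $\varphi^\M_F \IH^k(\M) = \IH^k(\M_F)$ holds for every $k$. Proposition~\ref{Prop_ymult} gives $\psi^\M_F \varphi^\M_F = y_F \cdot$, so $\psi^\M_F(\IH(\M_F)) = y_F\IH(\M)$ (and $y_F\IH_\o(\M)$ for part~(2)). Since $\psi^\M_F$ is injective of degree $\rk F$ and is a homomorphism of $\CH(\M)$-modules by Proposition~\ref{lemma_degy}(6), hence of $\H(\M)$-modules, it restricts to an isomorphism $\IH(\M_F)[-\rk F] \xrightarrow{\sim} y_F\IH(\M)$ of graded $\H(\M)$-modules, where $\IH(\M_F)$ carries the $\H(\M)$-action induced by $\varphi^\M_F|_{\H(\M)}$. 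The only nontrivial step is the summand-by-summand analysis via Lemma~\ref{lemma_diagrams}(4); the remainder is formal linear algebra together with the basic properties of $\psi^\M_F$.
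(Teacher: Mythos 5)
Your proof is correct and follows essentially the same route as the paper's: compute the image of each summand of the canonical decomposition under $\varphi^\M_F$ (summands indexed by $F'\not\geq F$ die, those with $F'\geq F$ map onto the corresponding summands of $\CD(\M_F)$), combine the inclusion $\varphi^\M_F\IH(\M)\subseteq\IH(\M_F)$ from Lemma~\ref{lem_imageofIH}(1) with surjectivity of $\varphi^\M_F$ and the direct sum $\CD(\M_F)$ to force equality, then apply the injective module map $\psi_F^\M$. The only cosmetic difference is that you inline the content of Lemma~\ref{lem_imageofIH}(2) via Lemma~\ref{lemma_diagrams}(4) and derive the vanishing for $F\not\leq F'$ from Proposition~\ref{Prop_xmult} and $\varphi^\M_F(x_{F'})=0$, where the paper cites these facts as established lemmas.
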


\begin{proof}
We prove statement (1); the proof of (2) is identical.  For notational convenience,
we will assume that $\hyperlink{cd}{\CD^k(\M)}$ holds for all $k$, but in fact the argument makes sense one degree at a time.

For any nonempty proper flat $G$ of $\M$, we apply $\varphi^\Mo_F$ to the direct summand $\K{G}{\M}$. By \cite[Proposition 2.28]{BHMPW}, if $G\not\geq F$, then 
$\varphi^\Mo_F\K{G}{\M}=0$.  Thus applying Lemma \ref{lem_imageofIH} (2) gives 
\[
\varphi^\Mo_F\Big( \bigoplus_{G < E} \K{G}{\M} \Big)= \bigoplus_{F\leq G < E} \K{G\setminus F}{\M_F}.
\]
By Lemma \ref{lem_imageofIH} (1), we also have $\varphi^\Mo_F(\IH(\M)) \subseteq\varphi^\Mo_F(\IH_\o(\M)) \subseteq \IH(\M_F)$. Therefore, the map $\varphi^\Mo_F$ is compatible with the canonical decompositions in the sense that it maps $\IH(\M)$ to $\IH(\M_F)$ and it maps the sum of the smaller summands to the sum of the smaller summands. 
Since $\varphi^\Mo_F$ is surjective, it must restrict to a surjective map from $\IH(\M)$ to $\IH(\M_F)$, so
$\varphi^\Mo_F(\IH(\M)) = \IH(\M_F)$. Applying the injective map $\psi_F^\Mo$ to this equality, 
we obtain the second part of statement (1) from Proposition \ref{Prop_ymult}.
\end{proof}

\begin{proposition}\label{zero map}
Suppose that $F$ is a proper flat for which $\hyperlink{cd}{\CD(\M_F)}$, $\hyperlink{pd}{\PD(\M_F)}$, and $\hyperlink{ns}{\NS(\M_F)}$ hold.
\begin{enumerate}[(1)] \itemsep 5pt
\item If $\hyperlink{cd}{\CD(\M)}$ holds, then the costalk $\IH(\M)_{[F]}$ vanishes in degrees  less than or equal to $(\crk F)/2$ and 
the stalk $\IH(\M)_{F}$ vanishes in degrees greater than or equal to $(\crk F)/2$.  
\item If $F\neq\varnothing$ and $\hyperlink{cdo}{\CD_\o(\M)}$ holds, then 
the costalk $\IH_\o(\M)_{[F]}$ vanishes in degrees less than or equal to $(\crk F)/2$ and 
the stalk $\IH_\o(\M)_{F}$ vanishes in degrees greater than or equal to $(\crk F)/2$.  
\end{enumerate}
\end{proposition}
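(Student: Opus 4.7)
My strategy is to reduce the statements about the stalk $\IH(\M)_F$ and costalk $\IH(\M)_{[F]}$ to statements about $\IH(\M_F)_\varnothing$ and the socle of $\IH(\M_F)$, respectively, and then to invoke the inductive hypotheses $\PD(\M_F)$ and $\NS(\M_F)$ via the stalk--costalk duality supplied by Lemma \ref{stalk-costalk}.

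First, Lemma \ref{everything is empty} gives
\[
\IH(\M)_{F} \cong \bigl(y_F\IH(\M)[\rk F]\bigr)_{\varnothing} \and \IH(\M)_{[F]} \cong \bigl(y_F\IH(\M)[\rk F]\bigr)_{[\varnothing]}.
\]
Under the hypothesis $\CD(\M)$, Lemma \ref{mult by y}~(1) provides an $\H(\M)$-module isomorphism $y_F\IH(\M)\cong \IH(\M_F)[-\rk F]$, so $y_F\IH(\M)[\rk F]\cong \IH(\M_F)$. Combining this with Lemma \ref{empty (co)stalk}, I obtain the key identifications
\[
\IH(\M)_{[F]} \cong \operatorname{soc}\bigl(\IH(\M_F)\bigr) \and \IH(\M)_{F} \cong \IH(\M_F)_{\varnothing}.
\]

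The costalk vanishing then follows at once from $\NS(\M_F)$: since $\rk\M_F=\crk F$, the socle of $\IH(\M_F)$ is zero in degrees $\le \crk F/2$. To handle the stalk I would dualize. By Lemma \ref{stalk-costalk}, $(\IH(\M_F)_\varnothing)^{*}\cong (\IH(\M_F)^{*})_{[\varnothing]}$. The hypothesis $\PD(\M_F)$ makes the Poincar\'e pairing induce an isomorphism of graded $\H(\M_F)$-modules $\IH(\M_F)^{*}\cong \IH(\M_F)[\crk F]$; the pairing is $\H(\M_F)$-invariant because multiplication in $\CH(\M_F)$ is commutative, which makes the induced map a module homomorphism, and non-degeneracy makes it an isomorphism. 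Hence by Lemma \ref{empty (co)stalk} the right-hand side equals $\operatorname{soc}(\IH(\M_F))[\crk F]$, which by $\NS(\M_F)$ vanishes in degrees $\le -\crk F/2$. Dualizing back shows that $\IH(\M)_{F}\cong \IH(\M_F)_{\varnothing}$ vanishes in degrees $\ge \crk F/2$.

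Combining the two vanishings, the source of the natural costalk-to-stalk map $\IH(\M)_{[F]}\to \IH(\M)_{F}$ is zero in degrees $\le \crk F/2$ while its target is zero in degrees $\ge \crk F/2$, so the map vanishes in every degree. Part (2) follows by the identical argument with $\IH$ replaced throughout by $\IH_\o$, invoking Lemma \ref{mult by y}~(2) in place of (1); this explains the additional hypothesis $F\ne\varnothing$, which is precisely what is required in order to apply that version of the lemma. I do not anticipate any substantive obstacle here: the proof is essentially an assembly of the stalk/costalk formalism developed in Section \ref{sec:modules} together with Lemma \ref{mult by y}, and the only nontrivial ingredient is the observation that $\PD(\M_F)$ upgrades the abstract duality of Lemma \ref{stalk-costalk} into a self-duality of $\IH(\M_F)$ that swaps the stalk and the socle.
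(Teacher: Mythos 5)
Your proof is correct and follows essentially the same route as the paper's: reduce the (co)stalk at $F$ to the socle and cosocle of $\IH(\M_F)$ via Lemma \ref{everything is empty} and Lemma \ref{mult by y}, get the costalk bound from $\NS(\M_F)$, and get the stalk bound by combining $\PD(\M_F)$ with the duality of Lemma \ref{stalk-costalk}. The only cosmetic difference is that the paper treats $F=\varnothing$ in part (1) as a separate (trivial) case rather than through Lemma \ref{mult by y}, which is stated only for nonempty flats; for $F=\varnothing$ the identification you need is the identity, so nothing is lost.
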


\begin{proof}
For any nonempty proper flat $F$, it follows from Lemmas \ref{everything is empty} and  
\ref{mult by y} (2) that
\[
\IH_\o(\M)_{[F]} \cong \bigl(y_F\IH_\o(\M)[\rk F]\bigr)_{[\varnothing]} \cong \IH(\M_F)_{[\varnothing]}.
\]
Since the costalk at the empty flat is equal to the socle, 
$\hyperlink{ns}{\NS(\M_F)}$ says that this graded vector space vanishes in degrees less than or equal to $(\rk \M_F)/2 = (\crk F)/2$.
Similarly, we have
\[
\IH_\o(\M)_F \cong \bigl(y_F\IH_\o(\M)[\rk F]\bigr)_{\varnothing} \cong \IH(\M_F)_{\varnothing}.
\]
By $\hyperlink{pd}{\PD(\M_F)}$, there is a natural isomorphism $\IH(\M_F)^* \cong \IH(\M_F)[\crk F]$ of $\H(\M)$-modules. 
Then by Lemma \ref{stalk-costalk}, we have
\[
\IH(\M_F)_\varnothing \cong \bigl((\IH(\M_F)^*)_{[\varnothing]}\bigr)^* \cong \bigl(\IH(\M_F)_{[\varnothing]}[\crk F]\bigr)^*.
\]
By $\hyperlink{ns}{\NS(\M_F)}$, it follows that $\IH(\M_F)_{[\varnothing]}$ vanishes in degrees less than or equal to $(\crk F)/2$, 
and hence $\IH(\M_F)_{[\varnothing]}[\crk F]$ vanishes in degrees less than or equal to $-(\crk F)/2$. 
Thus, $\IH_\o(\M)_F \cong\IH(\M_F)_{[\varnothing]}[\crk F]^*$ vanishes in degrees greater than or equal to $(\crk F)/2$.

This concludes the proof of statement (2).
When $F$ is a nonempty flat, the proof of (1) is identical. When $F=\varnothing$, $\hyperlink{ns}{\NS(\M)}$ implies that $\IH(\M)_{[\varnothing]}$ vanishes in degrees 
less than or equal to $d/2$. By $\hyperlink{pd}{\PD(\M)}$, $\IH(\M)_{\varnothing}$ vanishes in degrees greater than or equal to $d/2$.
\end{proof}

\subsection{Indecomposability of $\IH(\M)$ and $\IH_\o(\M)$ and pure modules}
The next result concerns the endomorphisms and indecomposability of the graded $\H(\M)$-module $\IH(\M)$ and the graded $\H_\o(\M)$-module $\IH_\o(\M)$.  
\begin{proposition}\label{prop:indecomposable}
Let $\M$ be a matroid with ground set $E$.
\begin{enumerate}[(1)]\itemsep 5pt
\item  Suppose that $\hyperlink{cd}{\CD(\M_F)}$, $\hyperlink{pd}{\PD(\M_F)}$, and $\hyperlink{ns}{\NS(\M_F)}$ hold for all proper flats $F$.  
Any endomorphism of the graded $\H(\M)$-module $\IH(\M)$ that induces the zero map on the stalk $\IH(\M)_E$
is in fact the zero endomorphism of $\IH(\M)$.
In particular, $\IH(\M)$ has only scalar endomorphisms, and is therefore indecomposable as an $\H(\M)$-module.
\item  Suppose that $E$ is nonempty, $\hyperlink{cdo}{\CD_\o(\M)}$ holds, and $\hyperlink{cd}{\CD(\M_F)}$, $\hyperlink{pd}{\PD(\M_F)}$, and $\hyperlink{ns}{\NS(\M_F)}$ hold for all nonempty proper flats $F$.  
Any endomorphism of the graded $\H_\o(\M)$-module $\IH_\o(\M)$ that induces an automorphism of the stalk $\IH_\o(\M)_E$
is in fact an automorphism of $\IH_\o(\M)$.  In particular, $\IH_\o(\M)$ is indecomposable as an $\H_\o(\M)$-module.
\end{enumerate}
\end{proposition}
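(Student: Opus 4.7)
The plan is to prove both parts by induction on $|E|$, using the inductive hypothesis of part (1) applied to the contractions $\M_F$ (for $F$ nonempty) in both arguments. The common opening step exploits the isomorphism $y_F \IH(\M) \cong \IH(\M_F)[-\rk F]$ (respectively $y_F \IH_\o(\M) \cong \IH(\M_F)[-\rk F]$) from Lemma \ref{mult by y}: restricting $\rho$ to the submodule $y_F\IH(\M)$ or $y_F\IH_\o(\M)$ turns it into an $\H(\M_F)$-endomorphism of $\IH(\M_F)$. By the inductive hypothesis, each such restriction is a scalar $\lambda_F$. A chain argument, based on the fact that $y_G \in \H(\M) \cdot y_F$ when $\varnothing < F \le G$ (obtained by extending an independent set realizing $F$ to one realizing $G$), shows $y_G\IH(\M) \subseteq y_F\IH(\M)$, forcing all $\lambda_F$ to agree with $\lambda_E$.

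For part (1), the hypothesis gives $\lambda_E = 0$, so $\rho$ annihilates $\mm \IH(\M)$ and maps into $\operatorname{soc}(\IH(\M))$; thus $\rho$ factors as $\IH(\M) \twoheadrightarrow \operatorname{cosoc}(\IH(\M)) \to \operatorname{soc}(\IH(\M)) \hookrightarrow \IH(\M)$. The $F = \varnothing$ case of the ambient hypothesis (noting $\M_\varnothing = \M$) supplies $\CD(\M)$ and $\NS(\M)$, and $\CD(\M)$ yields $\PD(\M)$ via Remark \ref{CDPD}. Then $\NS(\M)$ kills $\operatorname{soc}(\IH(\M))^k$ for $k \le d/2$, while Poincar\'e duality identifies $\operatorname{cosoc}(\IH(\M))^k \cong (\operatorname{soc}(\IH(\M))^{d-k})^*$, which therefore vanishes for $k \ge d/2$. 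In every degree one of the two vanishes, so $\rho = 0$.

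For part (2), the same opening gives $\rho|_{y_F \IH_\o(\M)} = \lambda \id$ for every nonempty $F$, where $\lambda \ne 0$ is the top-stalk automorphism. Setting $\sigma = \rho - \lambda \id$, the key identity I intend to establish is
\[\operatorname{soc}_{\H(\M)}(\IH_\o(\M)) = x_\varnothing \IH_\o(\M).\]
The inclusion ``$\supseteq$'' uses $y_i x_\varnothing = 0$; for ``$\subseteq$'', Lemma \ref{mutual annihilators} writes any socle element as $x_\varnothing \mu$ with $\mu \in \CH(\M)$, then $\CD_\o(\M)$ decomposes $\mu$, and the projection formula together with $\varphi_\M^F(x_\varnothing) = 1 \otimes x_\varnothing$ for nonempty $F$ shows that the non-$\IH_\o(\M)$ components of $\mu$ contribute only to other summands of $\CD_\o(\M)$; injectivity of $\psi^F_\M$ and the hypothesis $x_\varnothing\mu \in \IH_\o(\M)$ then force these components to vanish. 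Establishing this identity is the main technical obstacle. With it in hand, $\H_\o(\M)$-linearity of $\sigma$ allows iteration: $\sigma^n(\IH_\o(\M)) \subseteq x_\varnothing^n\IH_\o(\M)$, which vanishes for $n > d$ because $\CH(\M)$ has top degree $d$. Therefore $\sigma$ is nilpotent and $\rho = \lambda \id + \sigma$ is invertible. The ``in particular'' indecomposability assertions in both parts follow from the standard observation that any nontrivial idempotent $e$ induces an idempotent on the one-dimensional top stalk, which is $0$ or $1$; in the latter case, part (2) (or the scalar conclusion of part (1)) forces $e$ to be the identity.
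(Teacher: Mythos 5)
Your proof is correct. Part (1) is essentially the paper's own argument: both reduce to the submodules $y_F\IH(\M)\cong\IH(\M_F)[-\rk F]$ of Lemma \ref{mult by y}, use the inductive hypothesis to see that the endomorphism acts there by the scalar it induces on the top stalk (the paper only needs rank-one flats and skips your chain argument, since one can compare each $\lambda_F$ with $\lambda_E$ directly), and conclude that the endomorphism factors as $\IH(\M)_\varnothing\to\IH(\M)_{[\varnothing]}$; your $\PD$/$\NS$ degree count is precisely the content of Proposition \ref{zero map}~(1), which the paper cites at this point.

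Part (2) is where you take a genuinely different route. The paper argues by descent: it picks a nonzero homogeneous $\eta\in\ker f$ of minimal degree, shows $\eta=x_\varnothing\xi$ with $\xi\in\IH_\o(\M)$, observes $f(\xi)\in\IH_\o(\M)_{>\varnothing}$, and produces the lower-degree kernel element $\xi-f(\xi)/c$, a contradiction. You instead prove outright that $\sigma=\rho-\lambda\id$ is nilpotent: $\sigma$ kills $\mm\IH_\o(\M)$, hence lands in the $\H(\M)$-socle of $\IH_\o(\M)$, which equals $x_\varnothing\IH_\o(\M)$, and $\H_\o(\M)$-linearity then gives $\sigma^n(\IH_\o(\M))\subseteq x_\varnothing^n\IH_\o(\M)=0$ for $n>d$. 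This is clean and, if anything, more transparent than the descent. Note, however, that the identity you single out as ``the main technical obstacle,''
\[
\operatorname{soc}_{\H(\M)}\bigl(\IH_\o(\M)\bigr)=x_\varnothing\,\IH_\o(\M),
\]
is already available in the paper with a one-line proof: it is Lemma \ref{mutual annihilators} (the socle is $\IH_\o(\M)\cap\langle x_\varnothing\rangle$) combined with Proposition \ref{summand annihilator} (which deduces $\langle x_\varnothing\rangle\cap\IH_\o(\M)=x_\varnothing\IH_\o(\M)$ directly from $\CD_\o(\M)$), so your projection-formula detour is not needed. Both proofs of part (2) thus rest on the same two inputs --- the scalar action on $\IH_\o(\M)_{>\varnothing}$ coming from part (1), and the annihilator identity --- and differ only in whether the conclusion is packaged as nilpotency of the off-scalar part or as an infinite descent. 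Your closing idempotent argument for indecomposability is standard and correct.
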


\begin{proof}
For statement (1), we proceed by induction on the cardinality of the ground set $E$.  When $E$ is empty or consists of a singleton, the proposition is trivial.
Let $f$ be an endomorphism of $\IH(\M)$ that induces the zero map on $\IH(\M)_E$.  For each rank one flat $G$, Lemma \ref{mult by y}~(1)
implies that $y_G\IH(\M)\cong\IH(\M_G)[-1]$.  
Since $f$ restricts to an endomorphism of the graded $\H(\M_G)$-module $\IH(\M_G)$
that induces the zero map on the stalk $\IH(\M_G)_{E\setminus G} \cong \IH(\M)_E$, the inductive hypothesis implies that $f$
restricts to zero on each submodule $y_G\IH(\M)$.  Thus, the map $f \colon \IH(\M)\to\IH(\M)$ factors through the stalk
$\IH(\M)_\varnothing$ of $\IH(\M)$
and has image contained in the costalk $\IH(\M)_{[\varnothing]}$ of $\IH(\M)$.  Proposition~\ref{zero map}
tells us that there is no degree in which the stalk and the costalk are both nonzero, hence $f$ must be the zero map.
By Definition \ref{def:ihdef}, for every proper flat $F$, every element in $\K{F}{\M}$ has positive degree. As the orthogonal complement of these submodules, $\IH(\M)$ contains the top-degree subspace of $\CH(\M)$
which is spanned by $y_E$. Therefore, the stalk 
$$\IH(\M)_E = y_E\IH(\M)[\rk\M] = \Q y_E[\rk\M]$$
is one-dimensional, and $\IH(\M)$ has only scalar endomorphisms. 


Next, we prove statement (2).  Suppose that $f$ is an endomorphism, but not an automorphism, of $\IH_\o(\M)$ that induces an automorphism of the stalk $\IH_\o(\M)_E$. Since $\IH_\o(\M)_E=\Q y_E[\rk\M]$ is one-dimensional, the induced automorphism of $f$ on the stalk $\IH_\o(\M)_E$ must be multiplication by a nonzero scalar, which we denote by $c$. 

By Lemma \ref{mult by y} (2), we have $y_F\IH_\o(\M)\cong \IH(\M_F)[-\rk F]$ for any nonempty flat $F$.  By statement~(1),
the restriction of $f$ to $\IH_\o(\M)_{>\varnothing} = \sum_{F\neq \varnothing} y_F\IH_\o(\M)$ is equal to multiplication by $c$.
Since $f$ is not an automorphism and $\IH_\o(\M)$ is finite-dimensional, $f$ must not be injective.
Choose a nonzero homogeneous element $\eta$ of minimal degree in the kernel of $f$.
For any nonempty flat $F$, we have 
\[
cy^{}_F\eta = f(y^{}_F\eta) = y^{}_Ff(\eta) = y^{}_F\cdot 0 = 0.
\] 
Thus, $y^{}_F\eta=0$ for any nonempty flat $F$.
By Lemma \ref{mutual annihilators}, this implies that $\eta$ is a multiple of $x_\varnothing$ in $\CH(\M)$.  
By $\hyperlink{cdo}{\CD_\o(\M)}$, $\IH_\o(\M)$ is a direct summand of $\CH(\M)$ as an $\H_\o(\M)$-module. Hence, $\eta = x_\varnothing\xi$ for some $\xi\in\IH_\o(\M)$.  We have
$$0 = f(\eta) = f(x_\varnothing\xi) = x_\varnothing f(\xi).$$
Thus $f(\xi)$ is in the intersection of the annihilator of $x_\varnothing$ and 
$\IH_\o(\M)$, which is equal to $\IH_\o(\M)_{>\varnothing}$.

Let $\xi' = f(\xi)/c$.  Since $\xi'\in \IH_\o(\M)_{>\varnothing}$, we have $f(\xi') = c\xi' = f(\xi)$, and hence $f(\xi-\xi')=0$.  Since 
\[x_\varnothing(\xi-\xi') = x_\varnothing\xi - x_\varnothing f(\xi)/c =  x_\varnothing\xi = \eta \neq 0,\]
we have $\xi-\xi' \neq 0$.  This contradicts the minimality of the degree of $\eta$.
\end{proof}

\begin{remark}
	It is also true that the only endomorphisms of $\IH_\o(\M)$ as a graded $\H_\o(\M)$-module are multiplication by scalars.  We will prove this later, as Lemma \ref{lem:IHo endomorphisms}.  Although the statement would fit as part of Proposition \ref{prop:indecomposable}, the proof needs some results from the next section, so we postpone it until the section where it is used.
\end{remark}

Using Proposition \ref{prop:indecomposable}, we get the following basic characterization of pure $\H(\M)$-modules and pure $\H_\o(\M)$-modules.	
	
\begin{corollary}\label{cor:characterizing purity}
	Let $\M$ be a matroid with ground set $E$.
	\begin{enumerate}[(1)]\itemsep 5pt
		\item  Suppose that $\hyperlink{cd}{\CD(\M^G_F)}$, $\hyperlink{pd}{\PD(\M^G_F)}$, and $\hyperlink{ns}{\NS(\M^G_F)}$ hold for all flats $F < G$.  
		Then a graded $\H(\M)$-module is pure if and only if it is isomorphic to a direct sum of modules of the form
		$\IH(\M^G)[k]$ for $G \in \cL(\M)$ and $k \in \Z$.
		\item  Suppose that $E$ is nonempty, $\hyperlink{cdo}{\CD_\o(\M^G)}$ holds for all nonempty flats $G$, and $\hyperlink{cd}{\CD(\M^G_F)}$, $\hyperlink{pd}{\PD(\M^G_F)}$, and $\hyperlink{ns}{\NS(\M^G_F)}$ hold for all flats $\varnothing < F < G$.  Then a graded $\H_\o(\M)$-module is pure if and only if it is isomorphic to a direct sum of modules of the form
		$\IH_\o(\M^F)[k]$ for $F \in \cL(\M)\setminus \{\varnothing\}$ and $k \in \Z$.
	\end{enumerate}
\end{corollary}
\begin{proof}
	To prove (1), note that the decomposition $\hyperlink{cd}{\CD(\M)}$ expresses $\CH(\M)$ as a direct sum of $\IH(\M)$ and $\H(\M)$-submodules isomorphic to $\CH(\M^G)[k]$.  So using the decompositions $\hyperlink{cd}{\CD(\M^G)}$ inductively, we can write $\CH(\M)$ as a direct sum of modules of the form $\IH(\M^G)[k]$.  Proposition \ref{prop:indecomposable} then shows that these summands are all indecomposable as $\H(\M)$-modules.  The result follows.  Statement (2) follows similarly.
\end{proof}

\section{The submodules indexed by flats}\label{sec:starting induction}
In order to define the modules $\IH(\M)\subseteq\IH_\o(\M)\subseteq \CH(\M)$ and $\uIH(\M)\subseteq \uCH(\M)$,
we made use of the submodules $$\K{F}{\M} = \psi^F_\Mo\!\left(\uJ(\M_F)\otimes\CH(\M^F)\right)\subseteq \CH(\M)$$
for all proper flats $F$, and the submodules 
$$\uK{F}{\M} = \upsi^F_\Mo\!\left(\uJ(\M_F)\otimes\uCH(\M^F)\right)\subseteq \uCH(\M)$$
for all nonempty proper flats $F$.  The purpose of this section is to understand the 
relationship between the intrinsic Poincar\'e pairings on these pieces and the pairings induced by the inclusions
into the Chow ring and augmented Chow ring of $\M$.

\subsection{The Poincar\'e pairing on $\K{F}{\M}$}\label{sec:bigraded}
Suppose that
\[
\N = \bigoplus_{0\leq i,j\leq d} \N^{i,j}
\] is a finite-dimensional bigraded $\Q$-vector space.
Suppose further that $\N$ is equipped with a bilinear pairing $\langle-,-\rangle$
such that, if $\mu\in \N^{i,j}$ and $\nu\in \N^{k,l}$, then $\langle \mu, \nu\rangle \neq 0$ only when $i+j+k+l = d$.
Let $r\in \mathbb{N}$ be given.  We say that the pairing is {\bf adapted to \boldmath{$r$}} 
if it satisfies the following properties:
\begin{enumerate}[(1)]\itemsep 5pt
\item $\dim \N^{i, j}=\dim \N^{r-i, d-r-j}$ for any $0\leq i\leq r$ and $0\leq j\leq d-r$;
\item if $\mu\in \N^{i,j}$, $\nu\in \N^{k,l}$, and $i+k< r$, then $\langle\mu,\nu\rangle = 0$.
\end{enumerate}
Assuming that the original pairing is adapted to $r$, we define its {\bf \boldmath{$r$}-reduction} by
$$\langle\mu,\nu\rangle_r \coloneq \sum_{\substack{i,j,k,l\\ i+k=r}}\langle\mu_{ij},\nu_{kl}\rangle,$$
where $\mu_{ij}$ is the projection of $\mu$ to $\N^{i,j}$, and similarly for $\nu_{kl}$.

\begin{lemma}\label{triangular}
Suppose that the bilinear form $\langle-,-\rangle$ is adapted to $r$. Then $\langle-,-\rangle_r$ is non-degenerate if and only if $\langle-,-\rangle$ is non-degenerate. 
\end{lemma}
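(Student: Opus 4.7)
The plan is to recognize $\langle-,-\rangle$ as a block anti-triangular matrix whose square anti-diagonal blocks coincide with the matrix of $\langle-,-\rangle_r$, and then to apply the standard determinant formula for block triangular matrices.

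First I collapse the bigrading by setting $V_i \coloneq \bigoplus_j \N^{i,j}$, so that $\N = \bigoplus_i V_i$. Property (2) of the adapted condition says that the block $M_{ik} \colon V_i \otimes V_k \to \Q$ of $\langle-,-\rangle$ vanishes whenever $i+k<r$, while property (1), summed over $j$, gives $\dim V_i = \dim V_{r-i}$ for $0 \le i \le r$. I would carry out the argument on the range $i \in [0, r]$, which is the one that arises in the applications; a component $V_i$ with $i>r$ automatically lies in the radical of $\langle-,-\rangle_r$ because its matrix has no block in that row, and the symmetric statement of property (2) (pairing with some $V_k$, $k<0$) places it in the radical of $\langle-,-\rangle$ as well, so such components can be peeled off without affecting the equivalence.

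Next, reversing the row order transforms the block anti-triangular matrix $(M_{ik})_{i,k = 0}^r$ into a block upper triangular matrix whose diagonal blocks are precisely the $M_{i, r-i}$; these are square by the dimension identity, so the usual determinant formula for block triangular matrices yields
\[
\det M \;=\; \pm \prod_{i=0}^{r} \det M_{i, r-i}.
\]
By construction, the matrix of $\langle-,-\rangle_r$ has $M_{i, r-i}$ on the anti-diagonal and zeros elsewhere, and after the same row reversal it becomes block diagonal with determinant $\pm \prod_{i=0}^{r} \det M_{i, r-i}$ as well. Thus both pairings are non-degenerate if and only if every $M_{i, r-i}$ is invertible, which is exactly the claimed equivalence.

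The main obstacle is essentially bookkeeping: one must verify that the finer $j$-grading plays no role in the block anti-triangular structure (it sits internally within each $V_i$), and handle the boundary case of components $V_i$ with $i>r$ cleanly by checking that they sit in the radicals of both forms. Once these minor points are in place, the conclusion follows immediately from the linear-algebraic determinant formula.
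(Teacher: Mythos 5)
Your main argument is correct and is essentially the paper's own proof: the paper disposes of the lemma with the one-line observation that a block (anti-)triangular matrix whose diagonal blocks are nonsingular is itself nonsingular, which is exactly your determinant computation after reversing the row order. One caveat on your boundary remark: the claim that a component $V_i$ with $i>r$ lies in the radical of $\langle-,-\rangle$ is not justified and is false in general --- property (2) gives vanishing only when $i+k<r$, which never happens once $i>r$, so such a component can pair nontrivially (e.g.\ with another high-index component), and if such components were allowed the biconditional could genuinely fail, since they are invisible to $\langle-,-\rangle_r$ but not to $\langle-,-\rangle$. This does not matter for the paper, because in every application the bigraded space is supported in $0\le i\le r$, $0\le j\le d-r$ (and the paper's own proof silently assumes this too); the honest fix is to state that support hypothesis rather than to argue that the extra components sit in both radicals.
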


\begin{proof}
This translates to the statement that if a matrix is block upper triangular and its block diagonal part is nonsingular, then the original matrix is nonsingular. 
\end{proof}

We define a bilinear pairing on the space $\uJ(\M)[-1]$ by
\[
\langle \eta, \xi \rangle =-\udeg_{\M}(\ub\,\eta\,\xi).
\]
Notice that elements of degree $k$ can only pair non-trivially with elements of degree $d-k$.

\begin{lemma}\label{lemma_J}
Suppose that $\hyperlink{upd}{\uPD(\M)}$ and $\hyperlink{uhl}{\uHL(\M)}$ hold.
Then $\uJ(\M)[-1]$ satisfies Poincar\'e duality of degree $d$ with respect to this pairing.
\end{lemma}
\begin{proof}
	Take $k \le d/2$.  Then $k-1 \le (d-2)/2$, so
	\[\uJ(\M)[-1]^k = \uJ^{k-1}(\M) = \uIH^{k-1}(\M).\]
By $\hyperlink{upd}{\uPD(\M)}$ and $\hyperlink{uhl}{\uHL(\M)}$, this space is dually paired (under the pairing 
$(\eta,\xi) \mapsto \udeg_\M(\eta\xi)$) with
\[\uIH^{d-k}(\M) = \ub^{d-2k+1}\uIH^{k-1}(\M) = \ub \big(\uJ^{d-k-1}(\M)\big) = \ub \big( \uJ(\M)[-1]^{d-k}\big). \]	
It follows immediately that the pairing $\langle -,- \rangle$ is non-degenerate on $\uJ(\M)[-1]$.
\end{proof}

Let $F$ be a proper flat.  To understand the pairing on $\K{F}{\M}$, we will apply Lemma \ref{triangular} to the bigraded vector space $\uJ(\M_F)[-1]\otimes \CH(\M^F)$.
This vector space has two natural bilinear pairings.  The first, which we denote 
$\langle\cdot\,, \cdot\rangle_{F}$, is the tensor product of the Poincar\'e pairings on $\uJ(\M_F)[-1]$ and $\CH(\M^F)$.  The second, which we denote
$\langle\cdot\,, \cdot\rangle_{\CH(\M)}$,  is the restriction of the Poincar\'e pairing on $\CH(\M)$ via the inclusion
$$\uJ(\M_F)[-1]\otimes \CH(\M^F)\to \CH(\M)$$
induced by $\psi^F_\Mo$, which matches the total grading on the source with the grading on the target.
Similarly, the bigraded vector space $\uJ(\M_F)[-1]\otimes \uCH(\M^F)$
has two natural bilinear pairings.  The first, which we denote 
$\langle\cdot\,, \cdot\rangle_{{\underline{F}}}$, is the tensor product of the Poincar\'e pairings on $\uJ(\M_F)[-1]$ and $\uCH(\M^F)$.  The second, which we denote
$\langle\cdot\,, \cdot\rangle_{\uCH(\M)}$,  is the restriction of the Poincar\'e pairing on $\uCH(\M)$ via the inclusion 
$$ \uJ(\M_F)[-1]\otimes \uCH(\M^F)\to \uCH(\M)$$
induced by $\upsi^F_\Mo$.

\begin{proposition}\label{reduction}
Suppose that $\hyperlink{upd}{\uPD(\M)}$ and $\hyperlink{uhl}{\uHL(\M)}$ hold. Let $r = \crk F$.
\begin{enumerate}[(1)]\itemsep 5pt
\item
The pairing $\langle\cdot\,, \cdot\rangle_{F}$ 
on $\uJ(\M_F)[-1]\otimes \CH(\M^F)$ is adapted to $r$, and its $r$-reduction is
equal to the pairing $\langle\cdot\,, \cdot\rangle_{\CH(\M)}$.
\item
The pairing $\langle\cdot\,, \cdot\rangle_{{\underline{F}}}$ 
on $\uJ(\M_F)[-1]\otimes \uCH(\M^F)$ is adapted to $r$, and its $r$-reduction is
equal to the pairing $\langle\cdot\,, \cdot\rangle_{\uCH(\M)}$.
\end{enumerate}
\end{proposition}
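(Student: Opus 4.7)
The plan is a direct bigraded unpacking of Lemma \ref{we need this}. Parts (1) and (2) admit the same argument, so I will sketch part (1) and indicate the straightforward modifications for part (2), which amount to replacing $\psi^F_\M$, Lemma \ref{we need this} (1), and $\alpha_{\M^F}$ by $\upsi^F_\M$, Lemma \ref{we need this} (2), and $\ua_{\M^F}$, respectively.

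First I would take pure tensors $\mu = \eta_1 \otimes \zeta_1$ and $\nu = \eta_2 \otimes \zeta_2$ of bidegrees $(i,j)$ and $(k,l)$ in $\uJ(\M_F)[-1] \otimes \CH(\M^F)$, so $\eta_1 \in \uJ^{i-1}(\M_F) \subseteq \uCH^{i-1}(\M_F)$ and $\zeta_1 \in \CH^j(\M^F)$, and analogously for $\nu$. Viewed inside $\uCH(\M_F) \otimes \CH(\M^F)$, the product $\mu\nu$ sits in bidegree $(i+k-2,\,j+l)$. Applying Lemma \ref{we need this} (1) and splitting the output according to the two summands of the operator $\ub_{\M_F} \otimes 1 + 1 \otimes \alpha_{\M^F}$ gives
\[
\langle \mu, \nu\rangle_{\CH(\M)} = -\udeg_{\M_F} \otimes \deg_{\M^F}\bigl((\ub_{\M_F} \otimes 1)\,\mu\nu\bigr) - \udeg_{\M_F} \otimes \deg_{\M^F}\bigl((1 \otimes \alpha_{\M^F})\,\mu\nu\bigr).
\]
The first term lives in bidegree $(i+k-1,\,j+l)$ and can be nonzero only when $i+k = r$ and $j+l = \rk F$ (matching the top degrees of $\uCH(\M_F)$ and $\CH(\M^F)$), while the second lives in bidegree $(i+k-2,\,j+l+1)$ and requires $i+k = r+1$ and $j+l = \rk F - 1$. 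Either way $i+k \geq r$, which is condition (2) of being adapted to $r$.

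For condition (1), the dimension equality factors as $\dim \uJ(\M_F)[-1]^{\,i} = \dim \uJ(\M_F)[-1]^{\,r-i}$ and $\dim \CH^j(\M^F) = \dim \CH^{\rk F - j}(\M^F)$. The second is Poincar\'e duality for $\CH(\M^F)$ from Theorem \ref{TheoremChowKahlerPackage}; the first is the Poincar\'e symmetry of $\uJ(\M_F)[-1]$ in degree $r$ provided by Lemma \ref{lemma_J}, which invokes the inductive hypotheses $\uPD(\M_F)$, $\uHL(\M_F)$, and $\uHR(\M_F)$ at the strictly smaller matroid $\M_F$.

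Finally, the identification of the $r$-reduction with the tensor-product pairing is immediate: restricting to $i+k = r$ leaves only the $(\ub_{\M_F} \otimes 1)$-summand, which evaluates to $-\udeg_{\M_F}(\ub_{\M_F}\eta_1\eta_2) \cdot \deg_{\M^F}(\zeta_1\zeta_2)$, and by Lemma \ref{lemma_J} this factors as $\langle \eta_1, \eta_2\rangle_{\uJ(\M_F)[-1]} \cdot \langle \zeta_1, \zeta_2\rangle_{\CH(\M^F)}$. I do not expect any real obstacle beyond the bigraded accounting; the only conceptual point, rather than pure bookkeeping, is the observation that the $(1 \otimes \alpha_{\M^F})$ contribution, which measures the failure of the two factor-wise pairings to glue strictly, lives in the super-diagonal band $i+k = r+1$ and is therefore invisible to the $r$-reduction, making the pairing block upper-triangular in the precise sense of Lemma \ref{triangular} rather than block diagonal.
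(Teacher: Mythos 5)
Your proof is correct and follows essentially the same route as the paper's: both arguments reduce everything to Lemma \ref{we need this}, observe that the $\ub_{\M_F}\otimes 1$ and $1\otimes\alpha_{\M^F}$ contributions can only survive the degree map when $i+k=r$ and $i+k=r+1$ respectively, and identify the surviving diagonal term with the tensor-product pairing via Lemma \ref{lemma_J}, with the dimension condition coming from Lemma \ref{lemma_J} and Theorem \ref{TheoremChowKahlerPackage}. The only difference is cosmetic: the paper checks the cases $i+k<r$ and $i+k=r$ directly, whereas you locate the exact bidegree bands where each summand is visible, which amounts to the same computation.
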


\begin{proof}
We prove only part (1); the proof of part (2) is identical.
The first condition for adaptedness follows from the Poincar\'e duality statements of Lemma \ref{lemma_J} and Theorem \ref{TheoremChowKahlerPackage}.
For the second condition, let 
\[
\mu\in \uJ(\M_F)[-1]^{i}\otimes \CH^j(\M^F) = \uJ^{i-1}(\M_F)\otimes \CH^j(\M^F)
\] and 
\[\nu\in \uJ(\M_F)[-1]^{k}\otimes \CH^l(\M^F)= \uJ^{k-1}(\M_F)\otimes \CH^l(\M^F).
\]
By Lemma \ref{we need this} (1), 
we have
$$\langle\mu,\nu\rangle_{\CH(\M)}=
\deg_\M\left(\psi^F_\Mo(\mu)\cdot\psi^F_\Mo(\nu)\right)=
-\udeg_{\M_F}\otimes \deg_{\M^F}\bigl((\ub_{\M_F}\otimes 1+1\otimes \alpha_{\M^F})\mu\nu\bigr).$$
If $i+k<r$, then 
\[
(\ub_{\M_F}\otimes 1+1\otimes \alpha_{\M^F})\mu\nu\in \uCH^{<\crk(F)-1}(\M_F)\otimes \CH(\M^F)
\]
and hence $\langle\mu,\nu\rangle_{\CH(\M)}=0$.  This proves that the first pairing is adapted to $r$.
If $i+k=r$, then 
\[
(1\otimes \alpha_{\M^F})\mu\nu\in \uCH^{r-2}(\M_F)\otimes \CH(\M^F),
\]
hence we have
\[
\langle\mu,\nu\rangle_{\CH(\M)}=-\udeg_{\M_F}\otimes \deg_{\M^F}\bigl((\ub_{\M_F}\otimes 1)\mu\nu\bigr)
= \langle\mu, \nu\rangle_{F}.
\]
This completes the proof.
\end{proof}

\subsection{Beginning the induction: the coarse canonical decomposition}
In this section, we use Proposition \ref{reduction} and the assumption
that Theorem \ref{theorem_all} holds for smaller matroids to show that the decomposition $\hyperlink{cdo}{\CD_\o(\M)}$ 
holds.  We then show that $\hyperlink{cdo}{\CD_\o(\M)}$ implies some useful relations between our modules $\IH_\o(\M)$ and $\uIH(\M)$ under the push and pull operators $\psi^\varnothing$, $\varphi^\varnothing$.

Assume throughout the section that $E$ is nonempty.

\begin{corollary}\label{CD for free}
Assume that all of the statements of Theorem \ref{theorem_all} hold for $\M_F$ for every nonempty proper flat $F$.
Then the statements $\hyperlink{pdo}{\PD_\o(\M)}$, $\hyperlink{upd}{\uPD(\M)}$, $\hyperlink{cdo}{\CD_\o(\M)}$, and $\hyperlink{ucd}{\uCD(\M)}$ hold.
\end{corollary}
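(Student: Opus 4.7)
The plan is to prove all four statements in one stroke by verifying that the proposed direct sum decompositions of $\CH(\M)$ and $\uCH(\M)$ are genuine and that the ambient Poincaré pairings restrict non-degenerately to each summand. By Remark~\ref{CDPD} the canonical decomposition statements $\CD_\o(\M)$ and $\uCD(\M)$ are equivalent to the Poincaré duality statements $\PD_\o(\M)$ and $\uPD(\M)$, so it suffices to establish the former.

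I would treat $\CD_\o(\M)$ first; the case of $\uCD(\M)$ is handled by an identical argument. For each nonempty proper flat $F$, the inductive hypothesis gives $\uPD(\M_F)$, $\uHL(\M_F)$, and $\uHR(\M_F)$, which by Lemma~\ref{lemma_J} endows $\uJ(\M_F)[-1]$ with a non-degenerate Poincaré pairing. Since $\CH(\M^F)$ satisfies Poincaré duality unconditionally by Theorem~\ref{TheoremChowKahlerPackage}, the external tensor-product pairing on $\uJ(\M_F)[-1]\otimes\CH(\M^F)$ is non-degenerate. Proposition~\ref{reduction}(1) identifies the pairing inherited through the injective map $\psi^F_\M$ (Proposition~\ref{lemma_xdegree}(4)) with the $\crk F$-reduction of this tensor-product pairing, and Lemma~\ref{triangular} then transfers non-degeneracy to the reduction. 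Hence each summand $\psi^F_\M\bigl(\uJ(\M_F)\otimes\CH(\M^F)\bigr)$ inherits a non-degenerate restricted pairing from $\CH(\M)$.

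Proposition~\ref{summands} asserts that these subspaces are pairwise orthogonal in $\CH(\M)$ as $F$ ranges over nonempty proper flats. Non-degeneracy on each piece together with mutual orthogonality forces the sum to be internal (any relation $\sum_F v_F = 0$ can be tested against a single summand to conclude $v_{F_0}=0$), so the subspace $W \coloneq \bigoplus_{\varnothing<F<E}\psi^F_\M\bigl(\uJ(\M_F)\otimes\CH(\M^F)\bigr)$ of $\CH(\M)$ carries a non-degenerate restricted pairing. Since the ambient pairing on $\CH(\M)$ is itself non-degenerate by Theorem~\ref{TheoremChowKahlerPackage}, the standard linear-algebra fact that a non-degenerate subspace of a non-degenerate pairing space splits off its perpendicular complement yields $\CH(\M) = W \oplus W^{\perp}$. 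By Definition~\ref{def:ihdef}, $W^{\perp} = \IH_\o(\M)$, which establishes $\CD_\o(\M)$ and $\PD_\o(\M)$ simultaneously.

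The statements $\uCD(\M)$ and $\uPD(\M)$ follow from the same template with Proposition~\ref{reduction}(2), the injectivity of $\upsi^F_\M$ from Proposition~\ref{lemma_uab}(2), and Theorem~\ref{theorem_underlineKahler} for Poincaré duality on $\uCH(\M)$ replacing their augmented counterparts. I expect no significant obstacle here: all the substantive content is already packaged upstream in Propositions~\ref{reduction} and~\ref{summands} and in Lemma~\ref{lemma_J}, and the corollary is essentially a bookkeeping assembly. Note that the same template does \emph{not} yet produce $\CD(\M)$ or $\PD(\M)$, because those decompositions include a summand at $F = \varnothing$ involving $\uJ(\M_\varnothing) = \uJ(\M)$ itself, which is precisely the object whose Kähler package we are ultimately trying to construct.
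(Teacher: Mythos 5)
Your proof is correct and follows essentially the same route as the paper's: non-degeneracy on each summand $\psi^F_\M\bigl(\uJ(\M_F)\otimes\CH(\M^F)\bigr)$ via Lemma~\ref{lemma_J}, Proposition~\ref{reduction}, Lemma~\ref{triangular}, and Theorem~\ref{TheoremChowKahlerPackage}, combined with the mutual orthogonality of Proposition~\ref{summands}, to get the orthogonal decomposition and hence both $\CD_\o(\M)$ and $\PD_\o(\M)$ at once (and likewise in the underlined case). The paper's proof is the same bookkeeping assembly, stated slightly more tersely.
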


\begin{proof}
By Proposition \ref{summands}, the subspaces $\K{F}{\M}$ are mutually orthogonal as $F$ varies through all nonempty proper flats of $\M$. 
By Lemmas \ref{triangular} and \ref{lemma_J}, Proposition \ref{reduction}, and Theorem \ref{TheoremChowKahlerPackage} (1),
the restriction of the Poincar\'e pairing on $\K{F}{\M} = \psi^F_\Mo\!\left(\uJ(\M_F)\otimes \CH(\M^F)\right)\subseteq \CH(\M)$ is non-degenerate. 
These statements imply that the sum of these subspaces of $\CH(\M)$ is a direct sum and the restriction of the Poincar\'e pairing to this direct sum is non-degenerate. Since $\IH_\o(\M)$ is defined to be the orthogonal complement of the above direct sum, 
we have an orthogonal decomposition
\[
\CH(\M)=\IH_\o(\M)\oplus \bigoplus_{\varnothing < F < E}\K{F}{\M}
\]
and the restriction of the Poincar\'e pairing to $\IH_\o(\M)$ is also non-degenerate. Thus, $\hyperlink{pdo}{\PD_\o(\M)}$ and $\hyperlink{cdo}{\CD_\o(\M)}$ hold. The statements $\hyperlink{upd}{\uPD(\M)}$ and $\hyperlink{ucd}{\uCD(\M)}$ follow from the same arguments.
\end{proof}

\begin{proposition}\label{summand annihilator}
If $\hyperlink{cdo}{\CD_\o(\M)}$ holds, then $\langle x_\varnothing\rangle \cap \IH_\o(\M) = x_\varnothing
 \IH_\o(\M)$.
\end{proposition}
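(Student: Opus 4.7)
The inclusion $x_\varnothing \cdot \IH_\o(\M) \subseteq \langle x_\varnothing\rangle \cap \IH_\o(\M)$ is immediate, since $x_\varnothing \in \H_\o(\M)$ and $\IH_\o(\M)$ is an $\H_\o(\M)$-submodule of $\CH(\M)$, so it is stable under multiplication by $x_\varnothing$.

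For the reverse inclusion, the plan is to show that the direct sum decomposition $\CD_\o(\M)$ is preserved by multiplication by $x_\varnothing$. Once this is established, the argument concludes very quickly: given $\eta \in \langle x_\varnothing\rangle \cap \IH_\o(\M)$, write $\eta = x_\varnothing \xi$ for some $\xi \in \CH(\M)$, and decompose $\xi = \xi_0 + \sum_{\varnothing < F < E} \xi_F$ according to $\CD_\o(\M)$, with $\xi_0 \in \IH_\o(\M)$ and $\xi_F \in \psi^F_\M\bigl(\uJ(\M_F) \otimes \CH(\M^F)\bigr)$. Then $\eta = x_\varnothing \xi_0 + \sum_F x_\varnothing \xi_F$, and if each summand $x_\varnothing \xi_F$ stays in its respective piece, comparing with $\eta \in \IH_\o(\M)$ and using directness of the sum forces $x_\varnothing \xi_F = 0$ for all $F \neq \varnothing$, so $\eta = x_\varnothing \xi_0 \in x_\varnothing \cdot \IH_\o(\M)$.

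The key step, then, is verifying that $x_\varnothing \cdot \psi^F_\M\bigl(\uJ(\M_F) \otimes \CH(\M^F)\bigr) \subseteq \psi^F_\M\bigl(\uJ(\M_F) \otimes \CH(\M^F)\bigr)$ for every nonempty proper flat $F$. By Proposition~\ref{lemma_xdegree}~(6), $\psi^F_\M$ is a homomorphism of $\CH(\M)$-modules, so for any $\mu \otimes \nu \in \uJ(\M_F) \otimes \CH(\M^F)$ we have
\[
x_\varnothing \cdot \psi^F_\M(\mu \otimes \nu) = \psi^F_\M\bigl(\varphi^F_\M(x_\varnothing)\,(\mu \otimes \nu)\bigr).
\]
Since $\varnothing$ is properly contained in $F$, Definition~\ref{DefinitionXPullback} gives $\varphi^F_\M(x_\varnothing) = 1 \otimes x_\varnothing$, so the right-hand side equals $\psi^F_\M\bigl(\mu \otimes (x_\varnothing \nu)\bigr)$, which visibly lies in $\psi^F_\M\bigl(\uJ(\M_F) \otimes \CH(\M^F)\bigr)$. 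Similarly, $x_\varnothing \cdot \IH_\o(\M) \subseteq \IH_\o(\M)$ as noted above. This is the only subtle point; there is no real obstacle, but it is worth noting that the argument hinges on the tensor-product structure of the summands: the action of $x_\varnothing$ touches only the second tensor factor $\CH(\M^F)$, never disturbing membership in the first factor $\uJ(\M_F)$.
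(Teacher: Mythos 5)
Your proof is correct and is essentially the paper's argument, just unwound: both reduce to the observation that multiplication by $x_\varnothing$ preserves each summand of the decomposition $\CD_\o(\M)$ (for the pieces $\psi^F_\M\bigl(\uJ(\M_F)\otimes\CH(\M^F)\bigr)$ this is already recorded in Section~\ref{sec:IC of a matroid}, where they are noted to be $\H_\o(\M)$-submodules), so that intersecting $x_\varnothing\CH(\M)$ with $\IH_\o(\M)$ yields exactly $x_\varnothing\IH_\o(\M)$. Your verification via $\varphi^F_\M(x_\varnothing)=1\otimes x_\varnothing$ and Proposition~\ref{lemma_xdegree}~(6) is a valid way to supply that detail.
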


\begin{proof}
By $\hyperlink{cdo}{\CD_\o(\M)}$, we have
\begin{multline*}
\langle x_\varnothing\rangle \cap \IH_\o(\M) = x_\varnothing \CH(\M)\cap \IH_\o(\M) \\
= \Big(x_\varnothing\IH_\o(\M) \oplus \bigoplus_{\varnothing<F<E}\ x_\varnothing \K{F}{\M} \Big)\cap\IH_\o(\M) = x_\varnothing
\IH_\o(\M). \qedhere
\end{multline*}
\end{proof}

\begin{corollary}\label{underline it}
If $\hyperlink{cdo}{\CD_\o(\M)}$ holds, then
$\varphi_\Mo^\varnothing\!\left( \IH_\o(\M)\right) =  \uIH(\M)$.
\end{corollary}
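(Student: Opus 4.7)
The plan is to prove two symmetric inclusions, $\varphi_\M^\varnothing \IH_\o(\M) \subseteq \uIH(\M)$ and $\psi_\M^\varnothing \uIH(\M) \subseteq \IH_\o(\M)$, and then squeeze them into an equality using Propositions \ref{Prop_xmult} and \ref{summand annihilator}. The hypothesis $\CD_\o(\M)$ will enter only through Proposition \ref{summand annihilator}.

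For the first inclusion, I would test $\varphi_\M^\varnothing(\eta)$ against an arbitrary generator $\upsi_\M^G(\nu)$ of the summands used to cut out $\uIH(\M)$, where $G$ is a nonempty proper flat and $\nu \in \uJ(\M_G)\otimes\uCH(\M^G)$. Lemma \ref{adjoint} (with $F=\varnothing$) rewrites the pairing $\udeg_\M(\varphi_\M^\varnothing(\eta)\cdot\upsi_\M^G(\nu))$ as $\deg_\M(\eta\cdot\psi_\M^\varnothing\upsi_\M^G(\nu))$, and the right square of Lemma \ref{lemma_diagrams}(1) identifies the composite $\psi_\M^\varnothing\circ\upsi_\M^G$ with $\psi_\M^G\circ(\id\otimes\psi_{\M^G}^\varnothing)$. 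This places $\psi_\M^\varnothing\upsi_\M^G(\nu)$ inside $\psi_\M^G(\uJ(\M_G)\otimes\CH(\M^G))$, which is orthogonal to $\IH_\o(\M)$ by the very definition of that module, so the pairing vanishes.

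The second inclusion is the same calculation with the roles of $\varphi_\M^\varnothing$ and $\psi_\M^\varnothing$ reversed. For $\eta\in\uIH(\M)$ and $\nu\in\uJ(\M_G)\otimes\CH(\M^G)$, Lemma \ref{adjoint} turns $\deg_\M(\psi_\M^G(\nu)\cdot\psi_\M^\varnothing(\eta))$ into $\udeg_\M(\varphi_\M^\varnothing\psi_\M^G(\nu)\cdot\eta)$, and the left square of Lemma \ref{lemma_diagrams}(1) identifies $\varphi_\M^\varnothing\circ\psi_\M^G$ with $\upsi_\M^G\circ(\id\otimes\varphi_{\M^G}^\varnothing)$, placing $\varphi_\M^\varnothing\psi_\M^G(\nu)$ inside $\upsi_\M^G(\uJ(\M_G)\otimes\uCH(\M^G))$, a submodule orthogonal to $\uIH(\M)$ by construction. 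Thus $\psi_\M^\varnothing(\eta)\in\IH_\o(\M)$.

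To close, note that $\psi_\M^\varnothing$ is injective (Proposition \ref{lemma_xdegree}(4)) with image equal to the principal ideal $\langle x_\varnothing\rangle$, since $\psi_\M^\varnothing\circ\varphi_\M^\varnothing$ is multiplication by $x_\varnothing$ (Proposition \ref{Prop_xmult}) and $\varphi_\M^\varnothing$ is surjective. Combining the second inclusion with Proposition \ref{summand annihilator} gives
\[
\psi_\M^\varnothing\uIH(\M)\subseteq\langle x_\varnothing\rangle\cap\IH_\o(\M)=x_\varnothing\cdot\IH_\o(\M),
\]
while Proposition \ref{Prop_xmult} together with the first inclusion gives $x_\varnothing\cdot\IH_\o(\M)=\psi_\M^\varnothing(\varphi_\M^\varnothing\IH_\o(\M))\subseteq\psi_\M^\varnothing\uIH(\M)$. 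These force equality throughout, and cancelling the injection $\psi_\M^\varnothing$ yields $\uIH(\M)=\varphi_\M^\varnothing\IH_\o(\M)$.

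The only non-routine ingredient is the bookkeeping that identifies the composites $\psi_\M^\varnothing\circ\upsi_\M^G$ and $\varphi_\M^\varnothing\circ\psi_\M^G$ with the appropriate summands of the orthogonal decompositions defining $\IH_\o(\M)$ and $\uIH(\M)$; this is exactly the content of Lemma \ref{lemma_diagrams}(1), so I do not expect any genuine obstacle. The entire role of the hypothesis $\CD_\o(\M)$ is to supply Proposition \ref{summand annihilator}, which is what allows the concluding squeeze to go through.
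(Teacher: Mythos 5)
Your proof is correct and follows essentially the same route as the paper's: both inclusions are obtained from the two squares of Lemma~\ref{lemma_diagrams}~(1) together with the adjointness of $\varphi_\M^\varnothing$ and $\psi_\M^\varnothing$ (Lemma~\ref{adjoint}), and the final squeeze via Proposition~\ref{summand annihilator} and Proposition~\ref{Prop_xmult} is exactly the paper's concluding step.
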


\begin{proof}
Let $G$ be a nonempty proper flat of $\M$.  By Lemma \ref{push uIH} (1), we have  
\[\psi^\varnothing \uK{G}{\M} \subseteq \K{G}{\M}.\]
Therefore, $\IH_\o(\M)$ is orthogonal to $\psi^\varnothing_\Mo\uK{G}{\M}$ with respect to the Poincar\'e pairing on $\CH(\M)$. Then Proposition \ref{lemma_xdegree} (6) and (7) imply that $\varphi_\Mo^\varnothing\!\left(\IH_\o(\M)\right)$ is orthogonal to $\uK{G}{\M}$ with respect to the Poincar\'e pairing on $\uCH(\M)$.  Thus $\varphi_\Mo^\varnothing\!\left(\IH_\o(\M)\right)\subseteq\uIH(\M)$.

On the other hand, Lemma \ref{push uIH} (1) also gives 
\[\varphi_\Mo^\varnothing\K{G}{\M} = \uK{G}{\M}\]
for any nonempty proper flat $G$.
Hence $\uIH(\M)$ is orthogonal to $\varphi_\Mo^\varnothing\K{G}{\M}$ with respect to the Poincar\'e pairing on $\uCH(\M)$,
or equivalently (by Lemma~\ref{adjoint}) $\psi^\varnothing_\Mo\!\left(\uIH(\M)\right)$ is orthogonal to $\K{G}{\M}$ with respect to the Poincar\'e pairing on $\CH(\M)$.  
Thus $\psi^\varnothing_\Mo\!\left(\uIH(\M)\right)\subseteq \IH_\o(\M)$.

By the definition of $\psi_\Mo^\varnothing$, we have 
$\psi^\varnothing_\Mo\!\left(\uIH(\M)\right)\subseteq\langle x_\varnothing\rangle$.  Then by Proposition \ref{summand annihilator}, we have
$$\psi^\varnothing_\Mo\!\left(\uIH(\M)\right)\subseteq  \langle x_\varnothing\rangle\cap \IH_\o(\M)
= x_\varnothing\cdot \IH_\o(\M) = \psi^\varnothing_\Mo\varphi_\Mo^\varnothing\!\left(\IH_\o(\M)\right).$$
By the injectivity of $\psi^\varnothing_\Mo$, it follows that $\uIH(\M)\subseteq\varphi_\Mo^\varnothing\left(\IH_\o(\M)\right)$.
\end{proof}

\begin{corollary}\label{very convenient}
If $\hyperlink{cdo}{\CD_\o(\M)}$ holds, then $\langle x_\varnothing\rangle\cap \IH_\o(\M) = \psi^\varnothing_\Mo\!\left(\uIH(\M)\right)$.
\end{corollary}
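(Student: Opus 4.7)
The plan is to chain together the three preceding results in a straightforward way, since each ingredient is already in place. First I would invoke Proposition~\ref{summand annihilator}, which under the hypothesis $\CD_\o(\M)$ gives
\[
\langle x_\varnothing\rangle\cap \IH_\o(\M) \;=\; x_\varnothing\cdot \IH_\o(\M).
\]
Next I would rewrite the right-hand side using Proposition~\ref{Prop_xmult}, which identifies the multiplication by $x_\varnothing$ on $\CH(\M)$ with the composition $\psi^\varnothing_\M\circ\varphi^\varnothing_\M$; restricting this identity to $\IH_\o(\M)$ yields
\[
x_\varnothing\cdot \IH_\o(\M) \;=\; \psi^\varnothing_\M\bigl(\varphi^\varnothing_\M \IH_\o(\M)\bigr).
\]
Finally I would apply Corollary~\ref{underline it}, whose hypothesis $\CD_\o(\M)$ is already assumed, to replace $\varphi^\varnothing_\M \IH_\o(\M)$ by $\uIH(\M)$, obtaining
\[
\psi^\varnothing_\M\bigl(\varphi^\varnothing_\M \IH_\o(\M)\bigr) \;=\; \psi^\varnothing_\M \uIH(\M).
\]
Concatenating these three equalities gives the claim.

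There is no real obstacle here: the corollary is essentially a bookkeeping consequence of Proposition~\ref{summand annihilator} (which handles the intersection with the principal ideal), Proposition~\ref{Prop_xmult} (which factors multiplication by $x_\varnothing$ through $\uCH(\M)$), and Corollary~\ref{underline it} (which identifies the image of $\IH_\o(\M)$ under $\varphi^\varnothing_\M$). All of the conceptual content has already been carried out in the proofs of those three statements, so the only thing to verify here is that the compositions line up as written, which is immediate from Proposition~\ref{lemma_xdegree}.
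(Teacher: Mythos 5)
Your proof is correct and is essentially identical to the paper's argument: the paper chains Corollary~\ref{underline it}, Proposition~\ref{Prop_xmult}, and Proposition~\ref{summand annihilator} in exactly the same way, merely writing the equalities in the reverse order. Nothing further is needed.
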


\begin{proof}
By Corollary \ref{underline it} and Proposition \ref{Prop_xmult}, we have 
\[ \psi^\varnothing_\Mo\!\left(\uIH(\M)\right) = \psi^\varnothing_\Mo\varphi_\Mo^\varnothing \!\left(\IH_\o(\M)\right) = x_\varnothing\cdot \IH_\o(\M).
\]
The statement then follows from Proposition \ref{summand annihilator}.
\end{proof}

\begin{proposition}\label{uHL1CD}
	If $\hyperlink{cdo}{\CD_\o(\M)}$ holds, then for any $k\leq d/2$, we have
	\[
	\hyperlink{upd}{\uPD^{\leq k-1}(\M)} \;\;\text{and}\;\;\hyperlink{uhl}{\uHL^{\leq k-1}(\M)}\;\; \Longrightarrow\;\;\hyperlink{cd}{\CD^{\leq k}(\M)}.
	\]
\end{proposition}

\begin{proof}
	By $\hyperlink{cdo}{\CD_\o(\M)}$, the statement $\hyperlink{cd}{\CD^{\leq k}(\M)}$ is equivalent to the direct sum decomposition
	\[
	\IH_\o^{\leq k}(\M) = \IH^{\leq k}(\M) \oplus \psi^\varnothing_\Mo\big(\uJ^{\leq k-1}(\M)\big).
	\]
	By definition, $\IH(\M)$ is the orthogonal complement of $\psi^\varnothing_\Mo\big(\uJ(\M)\big)$ in $\IH_\o(\M)$. Thus, the above direct sum decomposition is equivalent to the statement that the Poincar\'e pairing of $\CH(\M)$ restricts to a non-degenerate pairing between $\psi^\varnothing_\Mo\big(\uJ^{\leq k-1}(\M)\big)$ and $\psi^\varnothing_\Mo\big(\uJ^{\geq d-k-1}(\M)\big)$.  Note that $\psi^\varnothing_\Mo\big(\uJ^{k-1}(\M)\big)$ and $\psi^\varnothing_\Mo\big(\uJ^{\leq d-k-1}(\M)\big)$ are in degrees $k$ and $d-k$ respectively, so they are in complementary degrees for the Poincar\'e pairing of $\IH_\o(\M)$. 
	
	By Lemma \ref{we need this} (1) with $F=\varnothing$ and the fact that $\a_{\M^\varnothing}=0$ for degree reasons, we have
	\[
	\deg_{\M}\big(\psi^\varnothing_\Mo(\mu)\cdot \psi^\varnothing_\Mo(\nu)\big)
	= -\udeg_{\M} \big(\ub\cdot \mu\nu\big)
	\]
	for $\mu, \nu \in \uCH(\M)$.
	Thus, by $\hyperlink{upd}{\uPD^{\leq k-1}(\M)}$ and $\hyperlink{uhl}{\uHL^{\leq k-1}(\M)}$, the Poincar\'e pairing of $\CH(\M)$ restricts to a non-degenerate pairing between $\psi^\varnothing_\Mo\big(\uJ^{\leq k-1}(\M)\big)$ and $\psi^\varnothing_\Mo\big(\uJ^{\geq d-k-1}(\M)\big)$.  
\end{proof}

\begin{proposition}\label{CDNS}
If $\hyperlink{cdo}{\CD_\o(\M)}$ holds, then for any $k\leq d/2$ we have
\[
\hyperlink{cd}{\CD^{k}(\M)}\;\; \Longrightarrow \;\;\hyperlink{ns}{\NS^{k}(\M)}.
\] 
\end{proposition}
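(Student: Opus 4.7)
The plan is a short chain of inclusions that corners any putative socle element into a summand other than $\IH(\M)$. Let $\eta\in\IH^k(\M)$ be annihilated by the maximal ideal $\mm\subseteq\H(\M)$; the goal is to show $\eta=0$. Since $\mm$ is spanned by the classes $y_F$ for nonempty flats $F$, and each such $y_F$ is a product of $y_i$'s, the element $\eta$ is annihilated by every $y_i$ with $i\in E$. By Lemma~\ref{mutual annihilators}, the annihilator of the ideal $\langle y_i\mid i\in E\rangle$ inside $\CH(\M)$ equals the principal ideal $\langle x_\varnothing\rangle$, so $\eta\in\langle x_\varnothing\rangle$.

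From the definitions of $\IH(\M)$ and $\IH_\o(\M)$ as orthogonal complements in $\CH(\M)$, the inclusion $\IH(\M)\subseteq\IH_\o(\M)$ holds (since $\IH_\o(\M)$ is the perp of a smaller collection of subspaces). Therefore $\eta\in\langle x_\varnothing\rangle\cap\IH_\o(\M)$. Invoking $\CD_\o(\M)$ and Corollary~\ref{very convenient}, this intersection equals $\psi^\varnothing_\M\uIH(\M)$, so we can write $\eta=\psi^\varnothing_\M(\xi)$ for some $\xi\in\uIH^{k-1}(\M)$.

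Now I use the hypothesis $k\le d/2$, which gives $k-1\le (d-2)/2$; by the case distinction in Definition~\ref{def:thesubspaces}~(2), this means $\uJ^{k-1}(\M)=\uIH^{k-1}(\M)$. Consequently $\xi\in\uJ^{k-1}(\M)$ and $\eta\in\psi^\varnothing_\M\uJ^{k-1}(\M)$. But $\psi^\varnothing_\M\uJ^{k-1}(\M)$ is precisely the $F=\varnothing$ summand (in degree $k$) of the decomposition
\[
\CH^k(\M)=\IH^k(\M)\oplus\psi^\varnothing_\M\uJ^{k-1}(\M)\oplus\bigoplus_{\varnothing<F<E}\bigl(\psi^F_\M\uJ(\M_F)\otimes\CH(\M^F)\bigr)^k
\]
asserted by $\CD^k(\M)$. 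Since this is a direct sum, $\IH^k(\M)\cap\psi^\varnothing_\M\uJ^{k-1}(\M)=0$, and hence $\eta=0$. I do not foresee any technical obstacle here; the only subtle point is bookkeeping the degree shift from $\psi^\varnothing_\M$ (which forces the relevant comparison to be between $k$ on the $\CH(\M)$ side and $k-1$ on the $\uCH(\M)$ side) and thereby justifying that the relevant range for $\uJ$ really is the ``low-degree'' branch where $\uJ=\uIH$.
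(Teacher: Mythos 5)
Your proof is correct and follows essentially the same route as the paper's: pass from socle membership to $\eta\in\langle x_\varnothing\rangle$ via Lemma~\ref{mutual annihilators}, identify $\langle x_\varnothing\rangle\cap\IH_\o(\M)$ with $\psi^\varnothing_\M\uIH(\M)$ via Corollary~\ref{very convenient}, note that $\uIH^{k-1}(\M)=\uJ^{k-1}(\M)$ in the relevant degree range, and conclude from the directness of the sum in $\CD^k(\M)$. Your explicit justification of the degree bookkeeping ($k-1\le(d-2)/2$) is a point the paper leaves implicit, but the argument is the same.
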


\begin{proof}
Suppose that $\eta\in \IH^k(\M)$ and $y_i\eta = 0$ for all $i\in E$.
By Lemma \ref{mutual annihilators}, $\eta$ is a multiple of $x_\varnothing$.
Thus, Corollary \ref{very convenient} implies that 
\[
\eta\in\psi^\varnothing_\Mo\big(\uIH^{k-1}(\M)\big) = \psi^\varnothing_\Mo\big(\uJ^{k-1}(\M)\big).
\]
However, $\hyperlink{cd}{\CD^{k}(\M)}$ implies that $\IH^k(\M) \cap \psi^\varnothing_\Mo\!\left(\uJ^{k-1}(\M)\right) = 0$. Therefore, we have $\eta=0$.
\end{proof}

\subsection{The Hancock condition}\label{sec:hancock}
Let 
$\N = \bigoplus_{k\geq 0}\N^k$ be a finite-dimensional graded $\Q$-vector space equipped with a 
symmetric bilinear form.  Let 
$$P_{\N}(t) \coloneq \sum_{k\geq 0} t^k \dim \N^k$$ be the Poincar\'e polynomial of $\N$.  
The {\bf signature} of $\N$ is defined to be the difference between the dimension of the largest space on which our pairing is positive definite and the dimension of the largest space on which it is negative definite.  Equivalently, it is the number of positive eigenvalues minus the number of negative eigenvalues.
We say that $\N$ is {\bf Hancock} if its signature is equal to $P_\N(-1)$.  

\begin{remark}\label{that's odd}
If the symmetric bilinear form on $\N$ satisfies Poincar\'e duality of degree $d$, then
its signature is equal to the signature of its restriction to its piece in degree $d/2$.
In particular, if $d$ is odd, then the signature is necessarily zero, as is $P_\N(-1)$.  
Thus when $d$ is odd, the Hancock condition follows automatically from Poincar\'e duality.
\end{remark}

The motivation for the Hancock condition is the following proposition.

\begin{proposition}\label{why hancock?}
Suppose that $\oL \colon \N\to \N$ is a linear operator of degree $1$ with respect to which
$\N$ satisfies 
the hard Lefschetz theorem of degree $d$.
Suppose that $d$ is even and that $\N$ satisfies the Hodge--Riemann
relations of degree $d$ in all but the middle degree.  Then $\N$ satisfies the Hodge--Riemann relations
in middle degree if and only if $\N$ is Hancock.
\end{proposition}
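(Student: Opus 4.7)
The plan is to compute the signature of the bilinear form on $\N$ by reducing to the middle degree $\N^{d/2}$ and analyzing the pieces of the primitive Lefschetz decomposition. First I would observe that the hard Lefschetz theorem forces the form to be non-degenerate and pairs $\N^k$ non-degenerately with $\N^{d-k}$; when $k < d-k$, choosing dual bases shows that the restriction of the form to $\N^k \oplus \N^{d-k}$ is a sum of hyperbolic planes and contributes zero to the signature. Hence the signature of $\N$ equals the signature of the restriction of the form to $\N^{d/2}$.

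Next I would introduce the primitive parts $P^j \coloneq \ker\bigl(\oL^{d-2j+1}|_{\N^j}\bigr)$ for $0 \leq j \leq d/2$ and recall that hard Lefschetz gives the primitive Lefschetz decomposition
\[
\N^{d/2} \;=\; \bigoplus_{j=0}^{d/2} \oL^{d/2-j} P^j.
\]
The first technical step is to verify that this decomposition is orthogonal for the bilinear form: for $\mu \in P^j$ and $\nu \in P^{j'}$ with $j < j'$, self-adjointness of $\oL$ gives
\[
\langle \oL^{d/2-j}\mu,\, \oL^{d/2-j'}\nu \rangle \;=\; \langle \mu,\, \oL^{d-j-j'}\nu\rangle,
\]
which vanishes because $d-j-j' \geq d-2j'+1$ and $\oL^{d-2j'+1}\nu = 0$ by primitivity of $\nu$. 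For each $j < d/2$, pulling the restriction of the form on $\oL^{d/2-j}P^j$ back along the isomorphism $\oL^{d/2-j}\colon P^j \to \oL^{d/2-j}P^j$ produces $(\mu,\nu) \mapsto \langle \oL^{d-2j}\mu,\nu\rangle$, which by the Hodge--Riemann relations in degree $j$ is $(-1)^j$ times a positive definite form. Consequently this summand contributes signature $(-1)^j \dim P^j$, and letting $s$ denote the signature of the restriction to $P^{d/2}$ itself, the signature of $\N$ equals $\sum_{j=0}^{d/2-1} (-1)^j \dim P^j + s$.

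Finally I would compute $P_\N(-1)$ using the analogous decomposition in every degree, $\dim \N^k = \sum_{j=0}^{\min(k,d-k)} \dim P^j$, which gives
\[
P_\N(-1) \;=\; \sum_{j=0}^{d/2} \dim P^j \sum_{k=j}^{d-j}(-1)^k \;=\; \sum_{j=0}^{d/2} (-1)^j \dim P^j,
\]
since the inner sum has $d-2j+1$ terms, which is odd because $d$ is even, so the alternating sum collapses to $(-1)^j$. Comparing the two expressions, $\N$ is Hancock if and only if $s = (-1)^{d/2} \dim P^{d/2}$. Since the form on $P^{d/2}$ is non-degenerate (by orthogonality of the decomposition and Poincar\'e duality on $\N^{d/2}$), this condition on $s$ is in turn equivalent to $(-1)^{d/2}\langle \cdot,\cdot \rangle$ being positive definite on $P^{d/2}$, which is exactly the Hodge--Riemann relations in the middle degree. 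The main obstacle I expect is verifying the orthogonality of the primitive decomposition cleanly; once that identity is in hand, everything else is a bookkeeping match between the signature computation and the formula for $P_\N(-1)$.
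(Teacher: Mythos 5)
Your proposal is correct and follows essentially the same route as the paper: reduce the signature of $\N$ to the middle degree, split $\N^{d/2}$ via the primitive Lefschetz decomposition, use the Hodge--Riemann relations in degrees below $d/2$ to pin down the signature of every summand except $P^{d/2}$, and match the total against $P_\N(-1)$ (your per-degree dimension count and the paper's telescoping sum compute the same quantity). The only differences are cosmetic: you verify the orthogonality of the decomposition explicitly, which the paper leaves implicit, and the non-degeneracy of the pairings $\N^k \times \N^{d-k}$ for $k<d/2$ should be credited to hard Lefschetz \emph{together with} the assumed Hodge--Riemann relations in those degrees rather than to hard Lefschetz alone.
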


\begin{proof}
The hard Lefschetz theorem induces a Lefschetz decomposition (see e.g., \cite{AHK}, Definitions 7.3, 7.4 and following)
$$\N^{d/2} = \bigoplus_{k=0}^{d/2} \oL^{(d/2) - k} \ker(\oL^{d-2k+1}).$$
For all $k\leq d/2$, the Hodge--Riemann relations in degree $k$ are equivalent to the statement that
the signature of the restriction of the bilinear form to $\oL^{(d/2) - k} \ker(\oL^{d-2k+1})$
is equal to $(-1)^k(\dim \N^k - \dim \N^{k-1})$.  If we assume the Hodge--Riemann relations in all but one degree,
this means that the Hodge--Riemann relations in the missing degree are equivalent to the statement
that the signature of the bilinear form is equal to
$$\sum_{k=0}^{d/2} (-1)^k(\dim \N^k - \dim \N^{k-1}).$$
By hard Lefschetz and the fact that $d$ is even, $$-(-1)^k\dim \N^{k-1} = (-1)^{d-k+1}\dim \N^{d-k+1},$$
thus the expected signature is $$\sum_{k=0}^{d/2} \left( (-1)^k\dim \N^k + (-1)^{d-k+1}\dim \N^{d-k+1}\right) = P_\N(-1).$$
This completes the proof.
\end{proof}

\begin{lemma}\label{Lemma_Hproduct}
If $\N$ and $\N'$ are both Hancock, then so are $\N\oplus\N'$ and $\N\otimes\N'$.
\end{lemma}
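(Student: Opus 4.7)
The plan is to verify both claims directly from the definition of Hancock, using the fact that both signatures and Poincar\'e polynomials behave well under direct sums and tensor products.

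For the direct sum $\N \oplus \N'$, I would first note that, by convention, the symmetric bilinear form on $\N \oplus \N'$ is the orthogonal sum of the two forms. Signatures are additive under orthogonal direct sums, so $\sig(\N \oplus \N') = \sig(\N) + \sig(\N')$. Likewise $P_{\N \oplus \N'}(t) = P_\N(t) + P_{\N'}(t)$, so in particular $P_{\N \oplus \N'}(-1) = P_\N(-1) + P_{\N'}(-1)$. Combining the two, the Hancock condition for $\N$ and $\N'$ implies it for $\N \oplus \N'$.

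For the tensor product $\N \otimes \N'$, I would use the multiplicativity of the Poincar\'e polynomial, $P_{\N \otimes \N'}(t) = P_\N(t) \cdot P_{\N'}(t)$, so that $P_{\N \otimes \N'}(-1) = P_\N(-1) \cdot P_{\N'}(-1)$. For the signatures, I would diagonalize each bilinear form: choose bases $\{v_a\}$ of $\N$ and $\{v'_b\}$ of $\N'$ with respect to which the forms are diagonal with values $\eps_a, \eps'_b \in \{+1,-1,0\}$. Then $\{v_a \otimes v'_b\}$ diagonalizes the tensor product form with values $\eps_a \eps'_b$, and summing gives $\sig(\N \otimes \N') = \sig(\N) \cdot \sig(\N')$. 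Therefore $\sig(\N \otimes \N') = P_\N(-1) \cdot P_{\N'}(-1) = P_{\N \otimes \N'}(-1)$, as desired.

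There is no real obstacle here; the statement is essentially a multiplicativity/additivity check for two numerical invariants. The only mild subtlety is to be explicit that for graded spaces the tensor product inherits the grading by total degree, which is precisely what makes the Poincar\'e polynomial multiplicative, and to note the possible presence of a degenerate (radical) part of the form, which contributes $0$ eigenvalues on both sides and therefore does not affect either identity.
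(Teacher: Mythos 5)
Your proposal is correct and follows the same route as the paper, which simply invokes the additivity of signature and Poincar\'e polynomial under direct sums and their multiplicativity under tensor products; your diagonalization argument just makes the multiplicativity of the signature explicit.
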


\begin{proof}
This follows from the fact that signature and Poincar\'e polynomial are both additive with respect to direct sum and multiplicative with respect to tensor product.
\end{proof}

\begin{lemma}\label{lemma_Hadditive}
Suppose that $\N$ is Hancock and $\N=\N_0\oplus \N_1\oplus\cdots\oplus \N_l$ is an orthogonal decomposition. 
If $\N_1, \ldots, \N_l$ are all Hancock, then so is $\N_0$. 
\end{lemma}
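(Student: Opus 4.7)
The plan is to exploit the additivity of both quantities that enter the Hancock condition. First I would observe that because the decomposition $\N = \N_0 \oplus \N_1 \oplus \cdots \oplus \N_l$ is orthogonal with respect to the symmetric bilinear form on $\N$, the restriction of the form to each $\N_i$ is itself a symmetric bilinear form, and the signature is additive:
\[
\sig(\N) \;=\; \sum_{i=0}^{l} \sig(\N_i).
\]
Meanwhile, since the decomposition is in particular a direct sum of graded vector spaces, the Poincar\'e polynomial is additive as well:
\[
P_{\N}(t) \;=\; \sum_{i=0}^{l} P_{\N_i}(t), \qquad \text{so} \qquad P_{\N}(-1) \;=\; \sum_{i=0}^{l} P_{\N_i}(-1).
\]

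Next I would combine these with the hypotheses. By assumption $\N$ is Hancock, so $\sig(\N) = P_{\N}(-1)$, and each of $\N_1, \ldots, \N_l$ is Hancock, so $\sig(\N_i) = P_{\N_i}(-1)$ for $i = 1, \ldots, l$. Subtracting yields
\[
\sig(\N_0) \;=\; \sig(\N) - \sum_{i=1}^{l} \sig(\N_i) \;=\; P_{\N}(-1) - \sum_{i=1}^{l} P_{\N_i}(-1) \;=\; P_{\N_0}(-1),
\]
which is precisely the statement that $\N_0$ is Hancock.

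Since the argument is a direct two-line bookkeeping computation, I do not anticipate a genuine obstacle; the only thing to be careful about is that orthogonality of the decomposition is really needed, as the additivity of signature can fail for a non-orthogonal direct sum decomposition of a bilinear space. The additivity of the Poincar\'e polynomial, by contrast, requires nothing beyond the direct sum of graded vector spaces.
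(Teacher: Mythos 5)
Your proof is correct and is exactly the paper's argument, which is stated in one line: the signature and the Poincar\'e polynomial are both additive with respect to the orthogonal decomposition, so one subtracts the known Hancock equalities for $\N$ and $\N_1,\ldots,\N_l$ to obtain the equality for $\N_0$. Your remark that orthogonality is what guarantees additivity of the signature is the right point to flag.
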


\begin{proof}
This follows from the fact that the signature and the Poincar\'e polynomial are both additive with respect to the orthogonal decomposition. 
\end{proof}

Now suppose that $\N$ has a bigrading $\N = \oplus_{i,j\ge 0} \N^{i,j}$ refining the given single grading, in the sense that $\N^k = \oplus_{i+j=k} \N^{i,j}$.
\begin{lemma}\label{signature triangular}
A graded bilinear form that is adapted to $r$ is Hancock if and only if its $r$-reduction is Hancock.
\end{lemma}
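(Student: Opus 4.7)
Since $P_{\N}(-1)$ depends only on the bigraded dimensions of $\N$, both sides of the asserted equivalence compare a signature against the \emph{same} rational number, so it is enough to prove that the signatures of $\langle-,-\rangle$ and $\langle-,-\rangle_r$ agree. The plan is a straight-line deformation inside the space of symmetric bilinear forms adapted to $r$ with fixed $r$-reduction, combined with Lemma~\ref{triangular} to keep non-degeneracy constant along the path.

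Introduce the family
\[
\langle \mu, \nu\rangle_t \coloneq \langle \mu, \nu\rangle_r \;+\; t\sum_{\substack{i,j,k,l \\ i+k>r}} \langle \mu_{ij}, \nu_{kl}\rangle, \qquad t\in[0,1].
\]
By adaptedness of $\langle-,-\rangle$, all contributions to the original pairing with $i+k<r$ vanish, so $\langle-,-\rangle_0 = \langle-,-\rangle_r$ and $\langle-,-\rangle_1 = \langle-,-\rangle$. Each $\langle-,-\rangle_t$ is symmetric, is adapted to $r$ (the $t$-correction is supported strictly above the antidiagonal $i+k=r$, so the vanishing condition is preserved), and has $r$-reduction equal to $\langle-,-\rangle_r$ for every $t$ (the correction contributes nothing at $i+k=r$). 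In particular, the whole homotopy stays inside the class of adapted forms with a common $r$-reduction.

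By Lemma~\ref{triangular} applied to each member of the family, $\langle-,-\rangle_t$ is non-degenerate for every $t\in[0,1]$ if and only if $\langle-,-\rangle_r$ is. In the applications of this lemma later in the paper (most notably to the summands $\psi^F_\M(\uJ(\M_F)\otimes \CH(\M^F))$ of the decomposition $\CD(\M)$), the $r$-reduction arises as a tensor product of Poincar\'e pairings that are non-degenerate by the inductive hypothesis, so this case is what matters. On a continuous path of non-degenerate real symmetric matrices the signature is locally constant, because eigenvalues depend continuously on $t$ and none can change sign without first passing through zero. Therefore $\operatorname{sig}(\langle-,-\rangle)=\operatorname{sig}(\langle-,-\rangle_r)$, and the two Hancock conditions coincide. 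The main obstacle is precisely to maintain non-degeneracy along the deformation, which is exactly what Lemma~\ref{triangular} guarantees; the combinatorial check that the homotopy preserves adaptedness and leaves the $r$-reduction fixed is immediate, and the signature comparison then follows from continuity of eigenvalues.
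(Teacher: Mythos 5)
Your deformation argument is correct and complete in the case where the $r$-reduction is non-degenerate: the family $\langle-,-\rangle_t$ stays adapted to $r$ with fixed $r$-reduction, is non-degenerate for every $t$ by Lemma~\ref{triangular}, and therefore has constant signature; since $P_\N(-1)$ depends only on the underlying graded space, the two Hancock conditions coincide. This is a genuinely different route from the paper's, which asserts in one line that the original matrix and its block (anti-)diagonal part have the same multiset of eigenvalues. That assertion really concerns the Gram matrix made block triangular by reordering one of the two indices, which is not a congruence, and taken literally for the two symmetric matrices it is false (compare $\left(\begin{smallmatrix}0&a\\a&b\end{smallmatrix}\right)$ with $\left(\begin{smallmatrix}0&a\\a&0\end{smallmatrix}\right)$). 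The honest justifications are either your continuity argument or a congruence by a block-unipotent change of basis (a Schur-complement argument, which likewise needs the anti-diagonal blocks to be invertible). So your proof is, if anything, more careful than the one in the paper.

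The caveat you flag about degeneracy is real, and your handling of it is the right call: the restriction to non-degenerate reductions is not a convenience but a necessity. When the reduction is degenerate, both forms are degenerate by Lemma~\ref{triangular}, but their signatures can genuinely differ: with $r=2$, $d=4$, and one-dimensional pieces in bidegrees $(0,2)$, $(1,1)$, $(2,0)$, the Gram matrix $\left(\begin{smallmatrix}0&0&0\\0&-1&c\\0&c&0\end{smallmatrix}\right)$ with $c\neq 0$ is adapted to $2$ and has signature $0$, while its reduction $\left(\begin{smallmatrix}0&0&0\\0&-1&0\\0&0&0\end{smallmatrix}\right)$ has signature $-1$; padding with hyperbolically paired anti-diagonal pieces in odd total degrees so that $P_\N(-1)=-1$ makes the reduction Hancock but not the original form. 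So the lemma as stated should carry a non-degeneracy hypothesis, which is satisfied in its only application (Corollary~\ref{Hancock piece}, where the reduction is a tensor product of non-degenerate Poincar\'e pairings). Your proof establishes exactly the statement that is true and that the paper uses; it would be cleaner to state that hypothesis up front rather than to defer to the applications mid-proof.
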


\begin{proof}
This follows from the fact that the matrix of the bilinear form and its block diagonal part with respect to the decomposition induced by the bigrading have the same multiset of eigenvalues.
\end{proof}

\begin{lemma}\label{lem:uJ is Hancock}
	Suppose that $\hyperlink{upd}{\uPD(\M)}$, $\hyperlink{uhl}{\uHL(\M)}$, and $\hyperlink{uhr}{\uHR(\M)}$ all hold.  Then $\uJ(\M)[-1]$ is Hancock with respect to the pairing $\langle -,-\rangle$.
\end{lemma}
\begin{proof}
	If $d$ is odd, the Hancock condition holds automatically by Lemma \ref{lemma_J}.  So suppose that $d = 2k$ is even, and let $\N = \uJ(\M)[-1]$.  Then the pairing in middle degree on 
	\[\N^k = \uJ^{k-1}(\M) = \uIH^{k-1}(\M)\]
	is precisely the Hodge--Riemann form $(\eta,\xi) \mapsto \udeg_\M(\ub \eta\xi)$ on $\uIH^{k-1}(\M)$ with respect to $\ub$.  So following the proof of Proposition \ref{why hancock?}, we have
	\[\uIH^{k-1}(\M) = \bigoplus_{j=0}^{k-1} \ub^{k-1-j}\, \ker\left((\beta^{d-2j}\cdot)\colon \uIH^j(\M)\to \uIH^{d-j}(\M)\right).\]
	By $\hyperlink{uhr}{\uHR(\M)}$, the signature of the pairing restricted to the summand indexed by $j$ is 
	\begin{align*}
	(-1)^{j+1}(\dim \uIH^j(\M) - \dim \uIH^{j-1}(\M)) &= (-1)^{j+1}\dim \N^{j+1} + (-1)^{j}\dim \N^j\\
	& = (-1)^{j+1} \dim \N^{j+1} + (-1)^{d-j}\dim \N^{d-j},
	\end{align*}
	using duality of the pairing on $\N$ and the fact that $d$ is even.  Adding up over all $0 \le j \le k-1$ gives $P_\N(-1)$, completing the proof that $\N$ is Hancock.
\end{proof}

\begin{remark}
	It is also possible to deduce Lemma \ref{lem:uJ is Hancock} from Proposition \ref{why hancock?}.  Although $\uJ(\M)$ is not closed under multiplication by $\ub$, multiplication by $\ub$ gives an isomorphism $\uJ(\M)[-1] \cong \ub\,\uIH(\M)$, and so it suffices to show $\ub\,\uIH(\M)$ is Hancock.  
By $\hyperlink{uhl}{\uHL(\M)}$, for $k<(d-1)/2$, the isomorphism $\cdot \ub^{d-1-2k}\colon \uIH^{k}(\M)\to \uIH^{d-1-k}(\M)$ induces an isomorphism 
\[
\cdot \ub^{d-2-2k}\colon \ub\,\uIH^{k}(\M)\to \uIH^{d-1-k}(\M)=\ub\,\uIH^{d-2-k}(\M).
\]
Therefore, the hard Lefschetz theorem of $\ub\,\uIH(\M)$ follows from $\hyperlink{uhl}{\uHL(\M)}$. Likewise, the Hodge--Riemann relations of $\ub\,\uIH(\M)$ follow from $\hyperlink{uhr}{\uHR(\M)}$, and hence $\ub\,\uIH(\M)$ is Hancock. 
\end{remark}

\begin{corollary}\label{Hancock piece}
Let $F$ be a nonempty proper flat of $\M$ such that $\hyperlink{upd}{\uPD(\M_F)}$,
$\hyperlink{uhl}{\uHL(\M_F)}$, and $\hyperlink{uhr}{\uHR(\M_F)}$ hold.  
The graded subspace $\K{F}{\M}$ is Hancock with respect to the Poincar\'e pairing on $\CH(\M)$,
and the graded subspace $\uK{F}{\M}$ is Hancock with respect to the Poincar\'e pairing on $\uCH(\M)$.
\end{corollary}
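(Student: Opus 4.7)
The plan is to assemble the Corollary directly from the tools already laid out in this section, with no new hard input: both tensor factors turn out to be Hancock, their tensor product is therefore Hancock, and the Poincaré pairing on $\CH(\M)$ (respectively $\uCH(\M)$) restricted to the $F$-summand is precisely the $r$-reduction of that tensor product pairing, so preservation of the Hancock property under $r$-reduction closes the argument.

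For the first statement, I would first feed the hypotheses $\uPD(\M_F)$, $\uHL(\M_F)$, and $\uHR(\M_F)$ into Lemma \ref{lemma_J} to obtain that $\uJ(\M_F)[-1]$, equipped with the pairing $\langle\eta,\xi\rangle = -\udeg_{\M_F}(\ub\,\eta\,\xi)$ and the Lefschetz operator given by multiplication by $\ub$, satisfies Poincaré duality, hard Lefschetz, and the Hodge--Riemann relations of degree $\crk F$. On the other factor, Theorem \ref{TheoremChowKahlerPackage} gives the same K\"ahler package for $\CH(\M^F)$ with degree $\rk F$ and the Poincaré pairing coming from $\deg_{\M^F}$.

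Next I would invoke Proposition \ref{why hancock?} to conclude that both $\uJ(\M_F)[-1]$ and $\CH(\M^F)$ are Hancock: when the relevant degree is even, this is the direct content of Proposition \ref{why hancock?}, and when it is odd, Remark \ref{that's odd} handles the case automatically since Poincaré duality alone forces signature and $P_\N(-1)$ both to vanish. By Lemma \ref{Lemma_Hproduct}, the tensor product $\uJ(\M_F)[-1]\otimes \CH(\M^F)$ is then Hancock with respect to the tensor product of the two pairings, namely $\langle -,-\rangle_{\uJ(\M_F)[-1]\otimes\CH(\M^F)}$ in the notation of Section \ref{sec:bigraded}.

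Finally, I would apply Proposition \ref{reduction}(1) with $r = \crk F$: this says the bigraded pairing $\langle -,-\rangle_{\uJ(\M_F)[-1]\otimes\CH(\M^F)}$ is adapted to $r$, and its $r$-reduction coincides with the pairing $\langle -,-\rangle_{\CH(\M)}$ induced on the summand via the injection $\psi^F_\M$. Lemma \ref{signature triangular} then transfers the Hancock property from the tensor pairing to its $r$-reduction, which is exactly the restriction of the Poincaré pairing of $\CH(\M)$ to $\psi^F_\M\bigl(\uJ(\M_F)\otimes \CH(\M^F)\bigr)$; note that the shift $[-1]$ affects only the labelling of degrees on the $\uJ$ factor and is absorbed by the degree-one shift of $\psi^F_\M$, so the underlying graded vector space matches the summand as it appears in the statement. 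The second statement is obtained by the verbatim same argument with $\CH(\M^F)$ replaced by $\uCH(\M^F)$, using Theorem \ref{theorem_underlineKahler} in place of Theorem \ref{TheoremChowKahlerPackage} and part (2) of Proposition \ref{reduction} in place of part (1). There is no genuine obstacle here; every ingredient is already established, so the only care needed is bookkeeping the parity of $\crk F$ (and $\rk F$) to apply Proposition \ref{why hancock?} versus Remark \ref{that's odd}, and matching the degree shift $[-1]$ against the degree-one shift of $\psi^F_\M$ (respectively $\upsi^F_\M$).
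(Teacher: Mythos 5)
Your proposal is correct and follows essentially the same route as the paper: Lemma \ref{lemma_J} and Theorem \ref{TheoremChowKahlerPackage} (resp.\ Theorem \ref{theorem_underlineKahler}) give the K\"ahler package on the two tensor factors, Proposition \ref{why hancock?} makes each factor Hancock, Lemma \ref{Lemma_Hproduct} handles the tensor product, and Proposition \ref{reduction} together with Lemma \ref{signature triangular} transfers the property to the restricted Poincar\'e pairing. Your explicit appeal to Remark \ref{that's odd} for the odd-degree case is a small point of care that the paper leaves implicit, but the argument is otherwise identical.
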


\begin{proof}
We prove the first statement; the proof of the second is the same.  Let $r = \crk F$.
By Proposition \ref{reduction} and Lemma \ref{signature triangular}, this is equivalent to the statement that
the graded vector space $\uJ(\M_F)[-1]\otimes \CH(\M^F)$ is Hancock with respect to the pairing 
$\langle\cdot\,, \cdot\rangle_{F}$.  
By Lemma \ref{Lemma_Hproduct}, it is sufficient to prove that 
 $\CH(\M^F)$ and $\uJ(\M_F)[-1]$ are both Hancock.
The first assertion follows from Theorem \ref{TheoremChowKahlerPackage} and Proposition \ref{why hancock?}.
The second assertion follows from Lemma \ref{lem:uJ is Hancock}.
\end{proof}

\begin{proposition}\label{sig}
Assume that $\hyperlink{upd}{\uPD(\M_F)}$, $\hyperlink{uhl}{\uHL(\M_F)}$, and $\hyperlink{uhr}{\uHR(\M_F)}$ hold for all nonempty proper flats $F$ of $\M$. Then 
\[
\hyperlink{cdo}{\CD_\o(\M)},\;\;\hyperlink{hlo}{\HL_\o(\M)},\;\;\text{and}\;\;\hyperlink{hro}{\HR_\o^{<\frac{d}{2}}(\M)}\;\; \Longrightarrow\;\;\hyperlink{hro}{\HR_\o(\M)}.
\]
\end{proposition}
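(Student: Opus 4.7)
The plan is to reduce the middle-degree Hodge--Riemann relations for $\IH_\o(\M)$ to the statement that $\IH_\o(\M)$ is Hancock, and then verify the Hancock condition using the canonical decomposition $\CD_\o(\M)$.

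When $d$ is odd, the range $k\le d/2$ coincides with $k<d/2$, so $\HR_\o^{<d/2}(\M)$ is already $\HR_\o(\M)$ and there is nothing to prove. Assume from now on that $d$ is even. I would fix a positive linear combination $y=\sum_{j\in E} c_j y_j$ and, using $\HR_\o^{<d/2}(\M)$, choose $\epsilon>0$ for which $L\coloneq y-\epsilon x_\varnothing$ makes $\IH_\o(\M)$ satisfy the Hodge--Riemann relations in every degree $k<d/2$. A standard signature argument then shows that this same $\epsilon$ also delivers hard Lefschetz for $L$: positive-definiteness of the Hodge--Riemann form on the primitive part of $\IH_\o^k(\M)$ forces $L^{d-2k}$ to be injective, and Poincar\'e duality $\PD_\o(\M)$ (which follows from $\CD_\o(\M)$ via Remark \ref{CDPD}) upgrades injectivity to an isomorphism $\IH_\o^k(\M)\to\IH_\o^{d-k}(\M)$. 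The case $k=d/2$ is trivial since $L^0$ is the identity, so hard Lefschetz for $L$ holds on $\IH_\o(\M)$ in every degree.

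With hard Lefschetz and the Hodge--Riemann relations in every degree strictly below middle secured for this single operator $L$, Proposition \ref{why hancock?} reduces the task to showing that $\IH_\o(\M)$ is Hancock with respect to the restriction of the Poincar\'e pairing of $\CH(\M)$. For this I would invoke the orthogonal decomposition
\[
\CH(\M) = \IH_\o(\M) \;\oplus\; \bigoplus_{\varnothing<F<E} \psi^F_\M\bigl(\uJ(\M_F)\otimes \CH(\M^F)\bigr)
\]
provided by $\CD_\o(\M)$, with mutual orthogonality of summands furnished by Proposition \ref{summands}. The ambient ring $\CH(\M)$ is Hancock by Theorem \ref{TheoremChowKahlerPackage} combined with Proposition \ref{why hancock?}, while each summand indexed by a nonempty proper flat $F$ is Hancock by Corollary \ref{Hancock piece}, which applies precisely because we are assuming $\uPD(\M_F)$, $\uHL(\M_F)$, and $\uHR(\M_F)$. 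Lemma \ref{lemma_Hadditive} then forces the remaining summand $\IH_\o(\M)$ to be Hancock, and Proposition \ref{why hancock?} closes the argument.

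The only substantive step is the Hancock extraction in the preceding paragraph; everything else is bookkeeping to exhibit a single $\epsilon$ for which both hard Lefschetz and the Hodge--Riemann relations below middle degree hold, so that Proposition \ref{why hancock?} can be invoked. I anticipate no genuine obstacle here, as all of the required ingredients---the orthogonal decomposition $\CD_\o(\M)$, the Hancock property of $\CH(\M)$ via the full K\"ahler package, and the Hancock property of each auxiliary summand via the inductive hypotheses on the $\M_F$---are already in place.
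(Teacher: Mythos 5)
Your proof is correct and follows essentially the same route as the paper: reduce to the Hancock condition via Proposition \ref{why hancock?}, then extract it from the orthogonal decomposition $\CD_\o(\M)$ using Proposition \ref{summands}, Corollary \ref{Hancock piece}, Theorem \ref{TheoremChowKahlerPackage}, and Lemma \ref{lemma_Hadditive}. Your extra care in producing a single $\epsilon$ for which both hard Lefschetz and the below-middle Hodge--Riemann relations hold (deriving the former from the latter together with $\PD_\o(\M)$) is a valid and slightly more explicit treatment of a point the paper's proof leaves implicit.
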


\begin{proof}
We may assume that $d$ is even, as otherwise the conditions $\hyperlink{hro}{\HR_\o^{<\frac{d}{2}}(\M)}$ and $\hyperlink{hro}{\HR_\o(\M)}$
are the same.
Proposition \ref{why hancock?} tells us that we need to show that $\IH_\o(\M)$ is Hancock.  

By Corollary \ref{Hancock piece}, $\K{F}{\M}$ is Hancock for all nonempty proper flats $F$ of $\M$. 
Theorem \ref{TheoremChowKahlerPackage} tells us that there exists some $\ell\in\CH^1(\M)$
with respect to which $\CH(\M)$ satisfies the hard Lefschetz theorem and the Hodge--Riemann relations.
By Proposition \ref{why hancock?}, this implies that $\CH(\M)$ is Hancock.
Finally, $\hyperlink{cdo}{\CD_\o(\M)}$ combines with Proposition~\ref{summands} and Lemma \ref{lemma_Hadditive}
to imply that the direct summand  $\IH_\o(\M)\subseteq \CH(\M)$ is also Hancock.
\end{proof}

\begin{proposition}\label{other sig}
Suppose that $E$ is nonempty and the following statements hold:
\[
\hyperlink{cd}{\CD(\M)}, \; \hyperlink{hl}{\HL(\M)}, \;\hyperlink{hr}{\HR^{<\frac{d}{2}}(\M)},\; \hyperlink{hlo}{\HL_\o(\M)}, \; \hyperlink{hro}{\HR_\o(\M)}, \;
\hyperlink{upd}{\uPD(\M)}, \; \hyperlink{uhl}{\uHL(\M)}, \;\text{and}\; \hyperlink{uhr}{\uHR(\M)}.
\]
Then $\hyperlink{hr}{\HR(\M)}$ also holds.
\end{proposition}

\begin{proof}
By Proposition \ref{why hancock?}, it suffices to show that $\IH(\M)$ is Hancock.  
By $\hyperlink{cd}{\CD(\M)}$, we have
\[
\IH_\o(\M) = \IH(\M) \oplus \K{\varnothing}{\M}.
\]
Since $\hyperlink{pdo}{\PD_\o(\M)}$, $\hyperlink{hlo}{\HL_\o(\M)}$, and $\hyperlink{hro}{\HR_\o(\M)}$ hold, Proposition \ref{why hancock?} implies that $\IH_\o(\M)$ is Hancock. 
Since $\K{\varnothing}{\M}=\psi^\varnothing_\Mo \uJ(\M)\cong \uJ(\M)[-1]$, Lemma \ref{lem:uJ is Hancock} implies that $\K{\varnothing}{\M}$ is Hancock. Then Lemma \ref{lemma_Hadditive} shows that $\IH(\M)$ is Hancock.
\end{proof}

\section{Rouquier complexes}\label{sec:Rouquier complexes}
The main result of this section is Proposition \ref{uNS1}, which deduces the no socle condition $\hyperlink{uns}{\uNS^{<\frac{d-2}{2}}(\M)}$ in all except the degree closest to the middle, assuming 
$\hyperlink{cdo}{\CD_\o(\M)}$ and all our statements for matroids on fewer elements.  The tool we use to prove this is the \textbf{Rouquier complex}, a complex of pure graded $\H_\o(\M)$-modules which plays a role analogous to the Rouquier complex of Soergel bimodules in \cite{EW}. 

Let $\C^\bullet$ be a complex of pure graded $\H(\M)$-modules.  Applying the stalk and costalk functors to each step of this complex gives complexes $\C^\bullet_F$, $\C^\bullet_{[F]}$ of graded $\Q$-modules.  We will refer to these as the stalk complex and costalk complex of $\C^\bullet$ at $F$, respectively.

\begin{definition}\label{def:perverse complex}
	We say that a complex $\C^\bullet$ of pure graded $\H(\M)$-modules is \textbf{perverse} if, for all flats $F \in \cL(\M)$, we have
	\begin{enumerate}[(a)]\itemsep 5pt
		\item For all $i$, the $i$th cohomology $\HH^i(\C^\bullet_F)$ of the stalk complex vanishes in degrees $j$ for which $i + 2j > \crk F$, and
		\item for all $i$, the $i$th cohomology $\HH^i(\C^\bullet_{[F]})$ of the costalk complex  vanishes in degrees $j$ for which $i + 2j < \crk F$.
	\end{enumerate}
	If $\C^\bullet$ is a complex of pure graded $\H_\o(\M)$-modules, we say that it is \textbf{$\o$-perverse} if the above conditions hold for all nonempty flats $F$.  Note that direct sums and direct summands of perverse
	(respectively $\o$-perverse) complexes are again perverse (respectively $\o$-perverse).
\end{definition}

\begin{remark}
	In the realizable case, the homotopy category $K^b(\operatorname{Pure}(\H(\M)))$ of complexes of pure $\H(\M)$-modules is a ``mixed" analogue of the derived category of sheaves on $Y$ constructible with respect to the stratification by cells $U^F$.  From that point of view, the perverse complexes form the heart of a $t$-structure on $K^b(\operatorname{Pure}(\H(\M)))$, and many of the structures and results in geometric representation theory that hold for mixed perverse sheaves on flag varieties will have analogues in this setting.  But for the purposes of this paper we only need to construct one particular complex, so we do not pursue this formalism here.
\end{remark}

\begin{remark}
	We are using a somewhat nonstandard convention on shifts and grading.  To match with the standard definitions of perverse sheaves in topology, it would be more natural to put the generators $x_F$, $y_i$ in degree two, so that $\H(\M)$ and $\CH(\M)$ have even gradings, and adjust the shifts in the definition of perversity so that $\IH(\M^F)[\rk F]$ placed in cohomological degree $0$ would be perverse.  Then taking the graded dual of a complex would preserve perversity, as would the ``twist" functor which shifts a complex up by $1$ in homological degree and down by $1$ in grading degree.  But the convention we use matches the polynomials $Z_\M(t)$ and $P_\M(t)$ more closely, and it makes the notation somewhat simpler in other parts of the paper.
\end{remark}

\subsection{Minimal subcomplexes and perversity}
We begin by stating the following standard lemma, sometimes described as ``Gaussian elimination'' for complexes, 
whose proof can be found in \cite[Lemma 4.2]{BN07}.

\begin{lemma}\label{basic fact}
	Suppose that $(\C^\bullet,\partial)$ is a complex in some additive category and we have direct sum decompositions of two consecutive objects
	$$\C^k = \mathrm{P}^k \oplus \mathrm{Q}^k \and \C^{k+1} = \mathrm{P}^{k+1} \oplus \mathrm{Q}^{k+1}$$
	for some $k$ with the property that the composition 
	$$\mathrm{P}^k\hookrightarrow\C^k\overset{\partial^k}{\longrightarrow}\C^{k+1}\twoheadrightarrow\mathrm{P}^{k+1}$$ is an isomorphism.
	Then $(\C^\bullet,\partial)$ has as a direct summand a two-step acyclic complex whose $k$-th and $(k+1)$-st graded pieces
	are isomorphic to $\mathrm{P}^k$.
\end{lemma}

If $\C^\bullet$ is a complex of finitely generated graded $\H(\M)$-modules (or $\H_\o(\M)$-modules), we can split off as many two-term acyclic complexes as possible
until there do not exist $k$, $\mathrm{P}^k\neq 0$, $\mathrm{P}^{k+1}$, $\mathrm{Q}^{k}$, and $\mathrm{Q}^{k+1}$ such that the hypotheses of Lemma \ref{basic fact} hold.  We call the resulting subcomplex 
$\bar{\C}^\bullet\subseteq \C^\bullet$ a \textbf{minimal subcomplex} of $\C^\bullet$.   Since $\C^\bullet$ is the direct sum of $\bar{\C}^\bullet$ and an acyclic complex, $\C^\bullet$ and $\bar{\C}^\bullet$ have the same stalk and costalk cohomology.  
In particular, if $\C^\bullet$ is perverse or $\o$-perverse, so is $\bar{\C}^\bullet$.

\begin{remark}
	Even though the subcomplex $\bar{\C}^\bullet$ of $\C^\bullet$ depends on the choices of splitting, its isomorphism class as a complex of $\H(\M)$-modules (or $\H_\o(\M)$-modules) is uniquely determined. In fact, the category of bounded complexes of finitely generated $\H(\M)$-modules is an abelian category in which every element has finite length. By the Krull--Schmidt theorem, the complex $\C^\bullet$ admits a decomposition into a direct sum of indecomposable complexes of $\H(\M)$-modules, and the summands are uniquely determined up to isomorphisms. Removing all two-term acyclic summands, we obtain $\bar{\C}^\bullet$.  
\end{remark}



For the next result and several additional results in this section, we will use the following conditions as hypotheses.  The first condition implies that the conclusions of Proposition \ref{zero map}, Proposition \ref{prop:indecomposable}, and Corollary \ref{cor:characterizing purity} hold for any module $\IH(\M^G)$, and the second condition does the same for modules $\IH_\o(\M^G)$. 

\begin{conditionA}
	$\hyperlink{cd}{\CD(\M^G_F)}$, $\hyperlink{pd}{\PD(\M^G_F)}$, and $\hyperlink{ns}{\NS(\M^G_F)}$ hold for all flats $F < G$.
\end{conditionA}

\begin{conditionB}
	$E$ is nonempty, $\hyperlink{cdo}{\CD_\o(\M^G)}$ holds for all nonempty flats $G$, and $\hyperlink{cd}{\CD(\M^G_F)}$, $\hyperlink{pd}{\PD(\M^G_F)}$, and $\hyperlink{ns}{\NS(\M^G_F)}$ hold for all flats $\varnothing < F < G$.
\end{conditionB}

Note that condition A, and any results which rely on it, will only be known once the main induction loop is finished, while condition B holds at the beginning of our induction by Corollary \ref{CD for free}.
Under these hypotheses we have the following characterization of minimal perverse complexes of $\H(\M)$-modules and minimal $\o$-perverse complexes of $\H_\o(\M)$-modules.

\begin{theorem}\label{thm:perverse complexes}
	Let $\C^\bullet$ be a minimal complex of pure $\H(\M)$-modules (resp. a minimal complex of pure $\H_\o(\M)$-modules) and assume that condition A (resp. condition B) holds.  Then the following are equivalent:
	
	\begin{enumerate}[(a)]\itemsep 5pt
		\item Each $\C^i$ is isomorphic to a direct sum of modules of the form 
		\[\IH(\M^F)[k] \;\;\; \mbox{(resp.\  $\IH_\o(\M^F)[k]$)},\]
		where $F \in \cL(\M)$ (resp. $F \in \cL(\M) \setminus \varnothing$) and $k = (i-\crk F)/2$.
		\item Each module $\C^i$ is perverse (resp.\ $\o$-perverse) when considered as a complex placed in degree $i$.
		\item The complex $\C^\bullet$ is perverse (resp.\ $\o$-perverse).
	\end{enumerate}
%
%
%
\end{theorem}

\begin{proof}
	Suppose that $\C^\bullet$ is a complex of pure $\H(\M)$-modules and condition A holds; the other case is proved in the same way.
	
	Suppose that (a) holds; we will show that (b) holds as well.  Fix an integer $i$, and consider $\C^i$ as a complex placed in degree $i$.  Since conditions (a) and (b) are preserved under taking direct sums and direct summands, we can assume that $\C^i = \IH(\M^F)[k]$, where $k=(i-\crk F)/2$.  Take a flat $G\in \cL(\M)$.  If $G \not \le F$, the stalk and costalk of $\C^i$ at $G$ vanish, so the conditions of Definition \ref{def:perverse complex} hold for $G$.  If $G = F$, then  
	$\C^i_F = \C^i_{[F]} \cong \Q[k]$, which is only nonzero in degree $j = -k$.  Since $i+2j = \crk F$, the conditions of Definition \ref{def:perverse complex} again hold.   If $G < F$, then by Proposition \ref{zero map}, the costalk $\C^i_{[G]} = (\IH(\M^F)[k])_{[G]}$ vanishes in degrees less than or equal to 
	\[(\rk F - \rk G)/2 - k = (\rk F -\rk G + \crk F - i)/2 = (\crk G - i)/2,\]
	and the stalk at $G$ vanishes in degrees greater than or equal to $(\crk G - i)/2$.  Combining these three cases, we see that statement (b) holds.  Note that we have shown a stronger statement for flats $G \ne F$ when $\C^i=\IH(\M^F)[k]$: the strict inequalities in Definition \ref{def:perverse complex} can be replaced by non-strict inequalities.  
	
	If statement (b) holds, it means that the complex $(\C^\bullet, \partial=0)$ with zero differential is perverse.  Since setting the differentials to zero can only make the cohomology larger, this immediately implies statement (c).
	
	Finally, let us suppose (c) holds, so that $\C^\bullet$ is minimal and perverse. By Corollary \ref{cor:characterizing purity}, each $\C^i$ is isomorphic to a direct sum of modules of the form $\IH(\M^F)[k]$, so we need to show that this module can only appear in $\C^i$ with shift $k = (i-\crk F)/2$.  We prove this by induction on $\crk F$.  As the base case we take $\crk F = -1$; there are no such flats, so the statement is trivial.  Now suppose that $\crk F \ge 0$ and the statement holds for all flats of smaller corank.
	
	Let us suppose that $k > (i-\crk F)/2$, and so $k \ge (i +1 - \crk F)/2$.  Then the fact that $\IH(\M^F)_F \cong \Q$ implies that $\bar{\C}^i_F$ is nonzero in degree $j=-k$.  Since $i+2j > \crk F$, the assumption that $\C^\bullet$ is perverse implies that the cohomology $\HH^i(\C^\bullet_F)$ is zero in degree $j$, so either ${\C}^{i-1}_F$ or 
	${\C}^{i+1}_F$ must be nonzero in degree $j$.  Suppose $\IH(\M^G)[\ell]$ is a direct summand of ${\C}^{i+a}$, where $a = \pm 1$, whose stalk at $F$ is nonzero in degree $j$.  The nonvanishing of this stalk implies that $G \ge F$.  If $G > F$, then by our inductive hypothesis $\ell = (i+a - \crk G)/2$, and then Proposition \ref{zero map} implies that the stalk at $F$ of this summand vanishes in degrees greater than or equal to
	\[(\rk G - \rk F)/2 - \ell = (\crk F - (i+a))/2 \ge j.\] 
	In particular the stalk would vanish in degree $j$, contrary to assumption.  So we must have $G = F$.  But then in order for the map between the summands $\IH(\M^G)[\ell]_F$ and $\IH(\M^F)[k]_F$ to be nonzero we must have $\ell = k$, and by Proposition \ref{prop:indecomposable} the map must be an isomorphism, contradicting the minimality of our complex ${\C}^\bullet$.  So $k \le (i - \crk F)/2$.
	
	On the other hand, suppose that $k < (i-\crk F)/2$, so $k \le (i-1-\crk F)/2$.  Now the costalk 
	${\C}^i_{[F]}$ is nonzero in degree $j = -k$, and since $i + 2j < \crk F$, either
	${\C}^{i-1}_{[F]}$ or ${\C}^{i+1}_{[F]}$ must be nonzero in degree $j$.  Take a summand 
	$\IH(\M^G)[\ell]$ of ${\C}^{i+a}$, where $a = \pm 1$, and assume that $G > F$.  Then as before we have $\ell = (i+a - \crk G)/2$, so by Proposition \ref{zero map} the costalk of this summand vanishes in degrees less than or equal to 
	\[(\rk G - \rk F)/2 - \ell = (\crk F - (i+a))/2 \le j.\]
	This is impossible, so we must have $G = F$, which gives the same contradiction as before.  Thus we have $k = (i-\crk F)/2$, as desired.  
\end{proof}

\subsection{Perversity and chain homotopies}

Proposition \ref{prop:no chain homotopies} below will only be needed in Section \ref{sec:multiplicities and inverse KL}, in the proof of the nonnegativity of equivariant inverse Kazhdan--Lusztig polynomials.

\begin{lemma}
	If condition A holds, $\mathrm{P}^\bullet$ is a minimal perverse complex of pure $\H(\M)$-modules, and $F$ is any flat, then 
	$y_F\mathrm{P}^\bullet[\rk F]$ is a minimal perverse complex of pure $\H(\M_F)$-modules.
\end{lemma}
\begin{proof}
	Since $\mathrm{P}^\bullet$ satisfies the criterion (c) of Theorem \ref{thm:perverse complexes} we know that for every $i$, $\mathrm{P}^i$ is a direct sum of modules of the form $\IH(\M^G)[(i-\crk G)/2]$.  By condition A, we know that $\CD(\M)$ and $\CD(\M_F)$ hold, and so Lemma \ref{mult by y} part (1) implies that 
	$y_F\mathrm{P}^i[\rk F]$ is a direct sum of $\H(\M_F)$-modules of the form $\IH(\M^G_F)[(i-\crk G)/2]$ for flats $G \ge F$.  But $\M^G_F = (\M_F)^{G\setminus F}$, and the corank of $G \setminus F$ in $\M_F$ is equal to the corank of $G$ in $\M$, so another application of Theorem \ref{thm:perverse complexes} shows that $y_F\mathrm{P}^\bullet[\rk F]$  is a minimal perverse complex.
\end{proof}

\begin{proposition}\label{prop:no chain homotopies} Suppose that condition A holds. 
	Let $\mathrm{P}^\bullet$, $\mathrm{Q}^\bullet$ be minimal perverse complexes of pure $\H(\M)$-modules.  Then $\Hom(\mathrm{P}^i,\mathrm{Q}^{i-1}) = 0$ for all $i$, so all chain homotopies from $\mathrm{P}^\bullet$ to $\mathrm{Q}^\bullet$ vanish.
\end{proposition}
\begin{proof}
	The proof is similar to the proof of Proposition \ref{prop:indecomposable} (1).  We use induction on the rank of $\M$.  If $\rk \M = 0$, there is only one flat, of rank $0$, and $\H(\M) \cong \Q$.  So $\mathrm{P}^i$ and $\mathrm{Q}^{i-1}$ are just graded $\Q$-vector spaces, and by condition (b) of Theorem \ref{thm:perverse complexes}, $\mathrm{P}^i$ vanishes in all degrees except $-i/2$, and $\mathrm{Q}^{i+1}$ vanishes in all degrees except $-(i+1)/2$.  Thus $\Hom(\mathrm{P}^i,\mathrm{Q}^{i-1}) = 0$.
	
	Now suppose that $\rk \M>0$, and the statement holds for all matroids of smaller rank. Take a map 
	$f\colon \mathrm{P}^i \to \mathrm{Q}^{i-1}$ of graded $\H(\M)$-modules. 
	Take a nonempty flat $F$.  By the previous lemma, $y_F\mathrm{P}^\bullet[\rk F]$ and $y_F\mathrm{Q}^\bullet[\rk F]$ are minimal perverse complexes of pure $\H(\M_F)$-modules.  
	So by the inductive hypothesis $f$ induces the zero map
	$y_F\mathrm{P}^i \to y_F\mathrm{Q}^{i-1}$.  It follows that $f$ factors as
	\[\mathrm{P}^i \to \mathrm{P}^i_\varnothing \stackrel{\bar{f}}\longrightarrow \mathrm{Q}^{i-1}_{[\varnothing]} \to \mathrm{Q}^{i-1}.\]  But by Theorem~\ref{thm:perverse complexes} condition (b), $\mathrm{P}^i_\varnothing$ vanishes in degrees 
	$j$ for which $i + 2j > \rk \M$, and $\mathrm{Q}^{i-1}_{[\varnothing]}$ vanishes in degrees $j$ with 
	$i-1+2j < \rk \M$.  It follows that $\bar{f} = 0$, and so $f = 0$, as required.	
\end{proof}

\subsection{The big complexes}\label{sec:bigcomplexes}
Our Rouquier complexes will be defined as minimal subcomplexes of larger complexes $\C^\bullet(\M)$, $\C^\bullet_\o(\M)$ which we define in this section.
Consider the graded $\CH(\M)$-module 
\[
\C^i(\M) \coloneq \bigoplus_{\varnothing \leq F_1 < \cdots < F_i<E} x_{F_1}\cdots x_{F_i}\CH(\M)[i]
\]
for $i> 0$ and $\C^0(\M)\coloneq \CH(\M)$, along with the module homomorphism 
\[\partial^i \colon \C^i(\M)\to\C^{i+1}(\M)\]
defined component-wise by multiplication by a variable:
\[
x_{F_1} \cdots \widehat{x_{F_j}} \cdots x_{F_{i+1}}\CH(\M)[i] \xrightarrow{(-1)^j x_{F_j}} x_{F_1} \cdots x_{F_{i+1}}\CH(\M)[i+1].
\]
It is straightforward to check that $\partial^{i+1}\circ\partial^i = 0$, and hence $(\C^\bullet(\M),\partial)$
is a complex of graded $\CH(\M)$-modules.  

If $E$ is nonempty, we define 
$\C^\bullet_\o(\M)$ to be the quotient of $\C^\bullet(\M)$ by the subcomplex
consisting of terms with $F_1=\varnothing$.  In other words, it is defined by
\[
\C_{\o}^i(\M) \coloneq \bigoplus_{\varnothing < F_1 < \cdots < F_i<E} x_{F_1}\cdots x_{F_i}\CH(\M)[i],
\]
for $i>0$ and $\C_{\o}^0(\M)\coloneq \CH(\M)$. The differential of $\C^\bullet_\o(\M)$ is given by the same formula as in $\C^\bullet(\M)$.  

Both $\C^\bullet(\M)$ and $\C^\bullet_\o(\M)$ are complexes of $\CH(\M)$-modules, but we will consider $\C^\bullet(\M)$ as a complex of $\H(\M)$-modules and $\C^\bullet_\o(\M)$ as a complex of $\H_\o(\M)$-modules by restriction.

\begin{lemma}\label{terms of Rouquier}
	For all $i>0$ and proper flats $F_1<\cdots <F_i$, 
	$x_{F_1}\cdots x_{F_i}\CH(\M)[i]$ is isomorphic as an $\H(\M)$-module to a direct sum of shifted copies of $\CH(\M^{F_1})$, and if $F_1 \ne \varnothing$, this isomorphism can be taken to be an isomorphism of $\H_\o(\M)$-modules.
	In particular, for all $i$, $\C^i(\M)$ is a pure $\H(\M)$-module and $\C_\o^i(\M)$ is a pure $\H_\o(\M)$-module.
\end{lemma}

\begin{proof}
	By \cite[Proposition 2.23]{BHMPW}, for any proper flat $F$ the map $\psi^F$ gives an isomorphism
	\begin{equation}\label{eqn:psiiso}
	\uCH(\M_F) \otimes \CH(\M^F) \cong x_F \CH(\M)[1].
	\end{equation}
	This is an isomorphism of $\H(\M)$-modules, where the module structure on the left side is given by letting the generators $y_i$ act on $\uCH(\M_F)$ trivially and on $\CH(\M^F)$ by multiplication by $y_i$ if $i \in F$ and by zero if $i\notin F$.  In other words, the action on $\CH(\M^F)$ is via the homomorphism $\H(\M) \to \H(\M^F)$ obtained by restricting the pullback $\varphi^F$.  Furthermore, for any flat $G < F$, multiplication by $x_G$ on the right side of \eqref{eqn:psiiso} is given by multiplication by $1 \otimes x_G$ on the left side.
	
	Applying the isomorphism \eqref{eqn:psiiso} repeatedly, we have an isomorphism of $\H(\M)$-modules
	\[
	\underline{\CH}(\M_{F_i}) \otimes \underline{\CH}(\M^{F_i}_{F_{i-1}}) \otimes \cdots\otimes \underline{\CH}(\M^{F_2}_{F_1})  \otimes \CH(\M^{F_1}) \cong x_{F_1}\cdots x_{F_i}\CH(\M)[i],
	\]
where the action of $\H(\M)$ on the left-hand side is only on the last tensor factor.
If $F_1 \ne \varnothing$, it is even an isomorphism of $\H_\o(\M)$-modules.   
\end{proof}

Let $\bar{\C}^\bullet(\M)$ and $\bar{\C}^\bullet_\o(\M)$ be minimal subcomplexes of $\C^\bullet(\M)$ and $\C^\bullet_\o(\M)$, respectively.  These complexes are well-defined up to isomorphism; we call them the \textbf{Rouquier complex} and \textbf{reduced Rouquier complex} of $\M$, respectively. 

\subsection{Proving ${\uNS^{<\frac{d-2}{2}}(\M)}$}

The main result of this section is Proposition \ref{uNS1} below, which provides one of the first key steps of our main induction loop.  We deduce it from the following three important properties of the reduced Rouquier complex $\bar{\C}^\bullet_\o(\M)$; each of Sections~\ref{sec:Rouquier perversity},~\ref{sec:empty stalk}, and~\ref{sec:Leading term of Rouquier} is dedicated to proving one of these properties.  Two of these propositions also include corresponding statements about $\C^\bullet(\M)$ and  
$\bar{\C}^\bullet(\M)$; we do not need them for our main induction, but we use them later in Section \ref{sec:multiplicities and inverse KL} to prove the nonnegativity of inverse Kazhdan--Lusztig polynomials of matroids.

\begin{proposition}\label{prop:Rouquier perversity}
	The complexes $\C^\bullet(\M)$ and $\C^\bullet_\o(\M)$ are perverse, hence so are $\bar{\C}^\bullet(\M)$ and $\bar{\C}^\bullet_\o(\M)$. 
\end{proposition}

\begin{proposition}\label{empty stalk}
	Suppose that $E \ne \varnothing$, so $\rk \M > 0$. Then for every $i$, the graded $\H_\o(\M)$-module $\HH^i(\bar{\C}_\o^\bullet(\M)_\varnothing) \cong \HH^i({\C}_\o^\bullet(\M)_\varnothing)$ is concentrated in degree $d - 1 - i$.
\end{proposition}

\begin{proposition}\label{prop:Leading term of Rouquier}
	\
	\begin{enumerate}[(1)] \itemsep 5pt
		\item Suppose that condition A holds for $\M$.
		Then $\bar{\C}^0(\M) \cong \IH(\M)$.
		\item Suppose that condition B holds for $\M$.
		Then $\bar{\C}_\o^0(\M) \cong \IH_\o(\M)$.
	\end{enumerate}
\end{proposition}

Assuming these for the moment, we can now prove $\hyperlink{uns}{\uNS^{<\frac{d-2}{2}}(\M)}$.

\begin{proposition}\label{uNS1}
	Suppose that condition B holds for $\M$, and $\hyperlink{uns}{\uNS(\M^F)}$ holds for all proper nonempty flats $F$.
	Then $\hyperlink{uns}{\underline{\NS}^{<\frac{d-2}{2}}(\M)}$ holds.
\end{proposition}

\begin{proof}
Consider the differential   
	\[
\partial^0_\varnothing \colon \bar\C_\o^0(\M)_\varnothing\to \bar\C_\o^1(\M)_\varnothing\]
of the complex $\bar\C_\o^\bullet(\M)_\varnothing$.  
By Proposition \ref{prop:Leading term of Rouquier} (2), the stalk $\bar\C_\o^0(\M)_\varnothing$ is isomorphic to $\IH_\circ(\M)_{\varnothing}$,
which in turn is isomorphic to $\underline{\IH}(\M)$ by Lemma \ref{empty (co)stalk} and Corollary \ref{underline it}.
Note that the differential is an $\uH(\M)$-module map, so it sends the socle of $\bar\C_\o^0(\M)_\varnothing$ to the socle of $\bar\C_\o^1(\M)_\varnothing$.
Applying Proposition \ref{empty stalk} with $i=0$, we see that the kernel of
$\partial^0_\varnothing$ is concentrated in degree $d - 1 > (d-2)/2$.
So to show that $\uNS^{<\frac{d-2}{2}}$ holds it suffices to show that each summand $\underline{\IH}(\M^F)[k]$ of $\bar{\C}^1_\o(\M)$ has no socle in degrees less than $(d-2)/2$.

Theorem \ref{thm:perverse complexes} and Proposition \ref{prop:Rouquier perversity} together imply that 
$\bar\C_\o^1(\M)_\varnothing$ is a direct sum of modules of the form 
	\[
	\IH_\o(\M^F)[k]_\varnothing \cong \underline{\IH}(\M^F)[k],
	\]
	where $F$ is nonempty
	and $k=(1-\crk F)/2 \leq 0$, which implies in particular that $F\ne E$.	
	The hypothesis $\hyperlink{uns}{\underline{\NS}(\M^F)}$ implies that the socle of $\underline{\IH}(\M^F)$ vanishes in degrees
	less than or equal to $(\rk F - 2)/2$, and therefore the socle of $\underline{\IH}(\M^F)[k]$ vanishes in degrees
	less than or equal to 
	\[
	\frac{\rk F - 2}{2} - \frac{1 - \crk F}{2} = \frac{d-3}{2} = \frac{d-2}{2} - \frac{1}{2}.
	\]
	We can therefore conclude 
	$\hyperlink{uns}{\underline{\NS}^{<\frac{d-2}{2}}(\M)}$.
\end{proof}

\subsection{Perversity of $\C^\bullet(\M)$ and $\C^\bullet_\o(\M)$}\label{sec:Rouquier perversity}

Next we turn to proving Proposition \ref{prop:Rouquier perversity}.  By Lemma \ref{terms of Rouquier}, $\C^\bullet(\M)$ is a complex of pure $\H(\M)$-modules and $\C^\bullet_\o(\M)$ is a complex of pure $\H_\o(\M)$-modules, so what remains is to prove the vanishing of the cohomology of the stalk and costalk complexes in the appropriate degrees.  Our first lemma uses the map $\psi_F$, which factors the map of multiplication by $y_F$ (Proposition \ref{Prop_ymult}), to reduce these questions to studying stalks and costalks at the empty flat.

\begin{lemma}\label{complex}
Let $F$ be a flat of a matroid $\M$.
\begin{enumerate}[(1)]\itemsep 5pt
\item
The map $\psi_F$ induces an isomorphism
$$\C^\bullet(\M_F)[-\rk F] \cong y_F\C^\bullet(\M)$$
of complexes of graded $\CH(\M)$-modules, where $\CH(\M)$ acts on the left-hand side
via the graded algebra homomorphism $\varphi^\Mo_F \colon \CH(\M)\to\CH(\M_F)$.
\item If $F$ is nonempty, $\psi_F$ also induces an isomorphism
$$\C^\bullet(\M_F)[-\rk F] \cong y_F\C_\o^\bullet(\M)$$
of complexes of graded $\CH(\M)$-modules.
\end{enumerate}
\end{lemma}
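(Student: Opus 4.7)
The plan is to construct the isomorphism summand by summand using the pushforward $\psi_F^\M\colon \CH(\M_F)\to \CH(\M)$ from Definition \ref{DefinitionYPullPush}. First I would determine which summands of $\C^i(\M)$ survive multiplication by $y_F$. By Proposition \ref{Prop_ymult}, $y_F\eta = \psi_F^\M(\varphi_F^\M(\eta))$, and by Definition \ref{DefinitionYPullPush}, $\varphi_F^\M(x_{F_1}) = 0$ whenever $F\not\le F_1$. Hence $y_F x_{F_1}\cdots x_{F_i}\CH(\M) = 0$ unless $F\le F_1$, and when $F\le F_1$ Definition \ref{DefinitionYPullPush} gives $y_F x_{F_1}\cdots x_{F_i} = \psi_F^\M(x_{F_1\setminus F}\cdots x_{F_i\setminus F})$. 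The surviving chains $F\le F_1<\cdots<F_i<E$ in $\cL(\M)$ correspond bijectively, via $G_j = F_j\setminus F$, to the chains $\varnothing\le G_1<\cdots<G_i<E\setminus F$ in $\cL(\M_F)$ indexing the summands of $\C^i(\M_F)$.

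On each such summand I would define $\Phi^i$ to be the restriction of $\psi_F^\M$,
\[
\Phi^i\colon x_{F_1\setminus F}\cdots x_{F_i\setminus F}\CH(\M_F)[i-\rk F] \longrightarrow y_F x_{F_1}\cdots x_{F_i}\CH(\M)[i], \qquad \xi \longmapsto \psi_F^\M(\xi).
\]
The $\CH(\M)$-module property of $\psi_F^\M$ (Proposition \ref{lemma_degy}(6)) ensures the image lies in the claimed principal ideal. Surjectivity follows from surjectivity of $\varphi_F^\M$, and injectivity from Proposition \ref{lemma_degy}(4); the shift $[-\rk F]$ compensates for $\psi_F^\M$ having degree $\rk F$. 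Compatibility of $\Phi^\bullet$ with the $\CH(\M)$-module structure is another instance of Proposition \ref{lemma_degy}(6), since the $\CH(\M)$-action on the source is by definition through $\varphi_F^\M$. Compatibility with differentials uses the same proposition together with the identity $\varphi_F^\M(x_{F_j}) = x_{F_j\setminus F}$, valid when $F\le F_j$: multiplication by $x_{F_j}$ on the target corresponds, under $\psi_F^\M$, to multiplication by $x_{F_j\setminus F}$ on the source, with matching signs since both are dictated by the same chain ordering.

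For part (2), when $F$ is nonempty Definition \ref{DefinitionYPullPush} gives $\varphi_F^\M(x_\varnothing) = 0$, so $y_F x_\varnothing = 0$ in $\CH(\M)$. Hence $y_F$ annihilates precisely the summands of $\C^i(\M)$ with $F_1 = \varnothing$, which are exactly the terms quotiented out to form $\C_\o^\bullet(\M)$. Therefore $y_F\C^\bullet(\M) = y_F\C_\o^\bullet(\M)$, and part (2) is immediate from part (1). I do not anticipate any real obstacle beyond careful bookkeeping of indices, degree shifts, and signs.
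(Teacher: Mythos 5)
Your proof is correct and follows essentially the same route as the paper: the paper simply compresses the summand-by-summand argument by citing the fact (from \cite{BHMPW}) that $\psi_F^\M\colon\CH(\M_F)[-\rk F]\to y_F\CH(\M)$ is an isomorphism of graded $\CH(\M)$-modules, and handles part (2) exactly as you do, via $x_\varnothing y_F=0$ for nonempty $F$. Your more explicit bookkeeping of chains, shifts, and signs is a valid unpacking of that citation and introduces no gaps.
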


\begin{proof}
The pushforward map $\psi_F^\Mo \colon \CH(\M_F)[-\rk F] \to y_F\CH(\M)$ is an isomorphism
of graded $\CH(\M)$-modules by \cite[Proposition 2.31]{BHMPW}, where the action of $x_G \in \CH(\M)$ on the left side is multiplication by 
$\varphi_F(x_G) = x_{G\setminus F}$ if $G \ge F$ and is zero otherwise. 
It follows that we have an isomorphism
\[x_{F_1 \setminus F}\dots x_{F_i \setminus F}\CH(\M_F)[-\rk F] \cong y_Fx_{F_1}\dots x_{F_i}\CH(\M)\] for any chain of flats $\varnothing \le F_1 < \dots < F_i < E$, where both sides are zero unless $F \le F_1$.  Adding up over all such chains proves the statement (1).  
 
Since $x_\varnothing y_F = 0$ for any nonempty flat $F$, the projection from $\C^\bullet(\M)$ to $\C_\o^\bullet(\M)$ becomes an isomorphism after multiplying by $y_F$, and hence the second statement follows from the first one.
\end{proof}


Next we show that the stalk cohomology of $\C^\bullet(\M)$ and $\C^\bullet_\o(\M)$ at a proper flat actually vanishes in all degrees.  This is stronger than what we need for perversity, but we will need the full strength later when we prove Proposition \ref{prop:Leading term of Rouquier}.

\begin{lemma}\label{stalk acyclic}
\
\begin{enumerate}[(1)]\itemsep 5pt
\item If $F$ is a proper flat, then the stalk complex $\C^\bullet(\M)_F$ is acyclic.  The stalk complex $\C^\bullet(\M)_E$
is isomorphic to $\Q$ concentrated in degree zero.
\item If $F$ is a nonempty proper flat, the stalk complex $\C_\o^\bullet(\M)_F$ is acyclic.  If $E$ is nonempty, the stalk complex $\C_\o^\bullet(\M)_E$
is isomorphic to $\Q$ concentrated in degree zero.
\end{enumerate}
\end{lemma}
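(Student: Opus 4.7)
My plan is to reduce both parts of the lemma to the single claim that for a loopless matroid $\M$ of positive rank, the stalk complex $\C^\bullet(\M)_\varnothing$ is acyclic. The reduction uses Lemma \ref{complex} together with Lemma \ref{everything is empty}: for any flat $F$, we have
\[\C^\bullet(\M)_F \;\cong\; \bigl(y_F\C^\bullet(\M)[\rk F]\bigr)_\varnothing \;\cong\; \C^\bullet(\M_F)_\varnothing,\]
and analogously for $\C^\bullet_\o$ using Lemma \ref{complex}(2) (which is applicable because $F$ is assumed nonempty throughout part (2)). When $F$ is a proper flat of $\M$, the matroid $\M_F$ has positive rank and the reduced claim applies; when $F=E$, the matroid $\M_E$ has empty ground set, so $\C^\bullet(\M_E) = \Q$ is concentrated in cohomological degree zero and coincides with its own stalk at $\varnothing$.

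To establish the reduced claim, I construct an explicit contracting homotopy $h\colon \C^i(\M)_\varnothing \to \C^{i-1}(\M)_\varnothing$, modeled on the standard cone-contraction of the order complex of a poset with a minimum. Since $\rk\M>0$, the empty flat $\varnothing$ is itself a proper flat of $\M$. Define $h$ on the summand indexed by a chain $(\varnothing, F_2, \ldots, F_i)$ by sending $x_\varnothing \beta$ (with $\beta\in x_{F_2}\cdots x_{F_i}\CH(\M)$) to $-[\beta]$ in the summand indexed by $(F_2, \ldots, F_i)$, where $[\beta]$ denotes its class in the stalk. On summands indexed by chains with $F_1\neq\varnothing$, set $h=0$.

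The one nontrivial technical point is that $h$ is well-defined on the stalk, which amounts to the equality
\[\Ann_{\CH(\M)}(x_\varnothing)\cap x_{F_2}\cdots x_{F_i}\CH(\M) \;=\; \mm\cdot x_{F_2}\cdots x_{F_i}\CH(\M).\]
Iterating the $\CH(\M)$-module isomorphism $\psi^F_\M\colon \uCH(\M_F)\otimes\CH(\M^F)\xrightarrow{\sim}x_F\CH(\M)$ coming from Proposition \ref{Prop_xmult} and Proposition \ref{lemma_xdegree}, together with the formula $\varphi^{F_2}_\M(x_\varnothing)=1\otimes x_\varnothing$, one sees that multiplication by $x_\varnothing$ on $x_{F_2}\cdots x_{F_i}\CH(\M)$ corresponds to multiplication by $x_\varnothing \in \CH(\M^{F_2})$ on the last factor of the tensor-product decomposition, while the action of $\mm$ corresponds to the action of the maximal ideal of $\H(\M^{F_2})$ on the last factor. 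The equality then reduces to Lemma \ref{mutual annihilators} applied to $\M^{F_2}$ (with the case $i=1$ handled directly by Lemma \ref{mutual annihilators} for $\M$).

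Given well-definedness, verifying $dh+hd=\id$ is mechanical sign bookkeeping. On a summand starting with $\varnothing$, the contributions from inserting a nonempty flat cancel in pairs between $dh$ and $hd$ (the same flat gets inserted at positions shifted by one, producing opposite signs), leaving in $dh$ only the insertion of $\varnothing$ at position $1$, which recovers the original element. On a summand $(F_1,\ldots,F_i)$ with $F_1\neq\varnothing$, we have $dh=0$ while $hd$ receives only one surviving contribution, from $d$ inserting $\varnothing$ at the front, which after $h$ returns the original element. I expect the main obstacle to be the well-definedness of $h$; the sign verification is routine.
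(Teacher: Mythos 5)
Your proposal is correct and is essentially the paper's proof in lightly different clothing: the reduction of both parts to the acyclicity of $\C^\bullet(\M)_\varnothing$ for positive rank via Lemmas \ref{everything is empty} and \ref{complex} is exactly what the paper does (the paper handles part (2) by noting $y_Fx_\varnothing=0$ for nonempty $F$, which is also how Lemma \ref{complex}(2) is proved). For the core acyclicity, the paper exhibits $\C^\bullet(\M)$ as the cone of multiplication by $x_\varnothing\colon\C_\o^\bullet(\M)\to x_\varnothing\C_\o^\bullet(\M)[1]$ and checks that this map is a termwise isomorphism on stalks at $\varnothing$ using Lemmas \ref{mutual annihilators} and \ref{terms of Rouquier}; the injectivity needed there is precisely your identity $\Ann_{\CH(\M)}(x_\varnothing)\cap x_{F_2}\cdots x_{F_i}\CH(\M)=\mm\cdot x_{F_2}\cdots x_{F_i}\CH(\M)$, and your contracting homotopy is the standard contraction of the cone of that isomorphism, so the two arguments coincide.
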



\begin{proof}
We begin by proving statement (1) when $F$ is the empty flat.
We observe that multiplication by $x_\varnothing$ defines a map of complexes
$$\C_\o^\bullet(\M)\to  x_\varnothing\C_\o^\bullet(\M)[1],$$
and (after shifting by $1$ in cohomological degree) the cone of this map is isomorphic to $\C^\bullet(\M)$.
To prove that $\C^\bullet(\M)_{\varnothing}$ is acyclic, it is therefore sufficient to prove that for all $i$, the map
from $\C_\o^i(\M)$ to $x_\varnothing\C_\o^i(\M)[1]$ induces an isomorphism on stalks at the empty flat.  By Lemma \ref{terms of Rouquier}, it is enough to show that multiplication by $x_\varnothing$ induces an isomorphism between the stalks at $\varnothing$ of $\CH(\M^F)$ and $x_\varnothing\CH(\M^F)$.
Because $x_\varnothing$ is annihilated by all $y_F$, $F\ne \varnothing$, we have 
$\left(x_\varnothing\CH(\M^F)\right)_\varnothing = x_\varnothing\CH(\M^F)$.  Then Lemma \ref{mutual annihilators} gives an exact sequence
\[0\to \sum_{i\in E} y_i \CH(\M^F) \to \CH(\M^F) \stackrel{x_\varnothing \cdot}{\longrightarrow} x_\varnothing\CH(\M^F)[1]\to 0.\]
Since $\CH(\M^F)_\varnothing$ is the cokernel of the first map, the result follows.

Next we prove statement (1) for arbitrary proper flats.  By Lemmas \ref{everything is empty} and \ref{complex} (1), 
$$\C^\bullet(\M)_F \cong (y_F\C^\bullet(\M)[\rk F])_\varnothing \cong \C^\bullet(\M_F)_\varnothing.$$
Since $F$ is proper, $\M_F$ has positive rank, and the statement follows from the previous paragraph.

It follows from the definition of $\C^\bullet(\M)$ that $\C^k(\M)_E = y_E\C^k(\M)[d]$ vanishes for $k\ne 0$, and for $k=0$, it is isomorphic to $\Q$ placed in grading degree zero.  This implies the second sentence of (1).

For any nonempty flat $F$, we have $y_F x_\varnothing = 0$. Therefore, the natural quotient $\C^\bullet(\M)\to \C_\o^\bullet(\M)$
induces an isomorphism on the stalk at $F$.  Thus, statement (2) follows from statement (1).
\end{proof}

\begin{proposition}\label{acyclic}
The complex $\C^\bullet(\M)$ has no cohomology except in degree zero, and $\HH^0(\C^\bullet(\M)) \cong \Q[-d]$.
\end{proposition}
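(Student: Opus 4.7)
The strategy is to build a filtration on $\C^\bullet(\M)$ by subcomplexes indexed by the flats of $\M$, identify each associated graded piece with a shifted stalk complex, and then appeal to Lemma~\ref{stalk acyclic}~(1) to isolate the only non-vanishing contribution.

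More precisely, I would fix a total ordering $F_1, \ldots, F_r$ of $\cL(\M)$ refining the partial order so that each $\Sigma_k = \{F_k, \ldots, F_r\}$ is an order filter, as in Section~\ref{sec:pure}. This yields a filtration
\[
0 = \C^\bullet(\M)_{\Sigma_{r+1}} \subseteq \C^\bullet(\M)_{\Sigma_r} \subseteq \cdots \subseteq \C^\bullet(\M)_{\Sigma_1} = \C^\bullet(\M),
\]
where $\C^\bullet(\M)_{\Sigma_k} \coloneq \Upsilon_{\Sigma_k} \cdot \C^\bullet(\M)$. This is indeed a filtration by \emph{subcomplexes}, because the differential of $\C^\bullet(\M)$ is given by multiplication by classes $x_{F_j} \in \CH(\M)$, which commute with the $\H(\M)$-action defining the filtration.

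By Lemma~\ref{terms of Rouquier}, each term $\C^i(\M)$ is a pure graded $\H(\M)$-module, so Proposition~\ref{pure}~(1) applies termwise to yield a canonical isomorphism of complexes
\[
\C^\bullet(\M)_{\Sigma_k}/\C^\bullet(\M)_{\Sigma_{k+1}} \;\cong\; \C^\bullet(\M)_{F_k}[-\rk F_k].
\]
I would then induct downward on $k$ using the short exact sequences
\[
0 \longrightarrow \C^\bullet(\M)_{\Sigma_{k+1}} \longrightarrow \C^\bullet(\M)_{\Sigma_k} \longrightarrow \C^\bullet(\M)_{F_k}[-\rk F_k] \longrightarrow 0
\]
and the associated long exact sequences in cohomology. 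By Lemma~\ref{stalk acyclic}~(1), the right-hand complex is acyclic whenever $F_k$ is a proper flat, so the inclusion on the left becomes a quasi-isomorphism at each such step. For the unique index with $F_k = E$, the right-hand complex is quasi-isomorphic to $\Q[-d]$ concentrated in cohomological degree zero. Collecting these contributions along the filtration produces $\H^0(\C^\bullet(\M)) \cong \Q[-d]$ and $\H^i(\C^\bullet(\M)) = 0$ for $i \neq 0$.

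The heavy lifting is carried by the stalk computation in Lemma~\ref{stalk acyclic}; the only subtleties are verifying that the order-filter filtration is compatible with the $\CH(\M)$-linear differential and correctly tracking the grading shift $[-\rk E] = [-d]$ in the unique non-trivial step.
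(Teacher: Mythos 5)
Your proposal is correct and follows essentially the same route as the paper: filter $\C^\bullet(\M)$ by the order filters $\Sigma_k$, identify the associated graded pieces with the shifted stalk complexes $\C^\bullet(\M)_{F_k}[-\rk F_k]$ via Proposition~\ref{pure}, and conclude from Lemma~\ref{stalk acyclic}~(1). The paper assembles the pieces with the spectral sequence of the filtered complex rather than your downward induction on long exact sequences, but these are the same argument in different packaging.
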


\begin{proof}
Fix a total order $F_1,\ldots,F_r$ of $\cL(\M)$ refining the natural partial order, so that, for any $k$, the set 
$$\Sigma_k \coloneq \{F_k,\ldots,F_r\}$$
is an order ideal.  
By Proposition \ref{pure}, we have
\[
\C^\bullet(\M)_{\Sigma_k}/\C^\bullet(\M)_{\Sigma_{k+1}} \cong \C^\bullet(\M)_{F_k}[-\rk F_k],
\]
which is acyclic for all $1\le k<r$ and quasi-isomorphic to $\Q[-d]$ in degree zero when $k=r$ by Lemma~\ref{stalk acyclic} (1).
The result then follows from the spectral sequence relating the cohomology of a filtered complex to the cohomology
of its associated graded complex.
\end{proof}

Next we turn to the cohomology of the costalk complex of $\C^\bullet(\M)$, starting with the empty flat.

\begin{proposition}\label{rouquier empty costalk}
For any $i$, we have $\HH^i\bigl(\C^\bullet(\M)_{[\varnothing]}\bigr) \cong \OS^i(\M)^*[-d].$  In particular, it is nonzero only in degree $d-i$.
\end{proposition}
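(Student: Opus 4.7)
The plan is to form a double complex whose total cohomology can be computed in two different ways, yielding $\H^\bullet(\C^\bullet(\M)_{[\varnothing]})$ from one spectral sequence and $\OS^\bullet(\M)^*[-d]$ from the other.

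Concretely, I would set
\[
D^{p,q} \coloneq (\C^p(\M))^q_! = \bigoplus_{F \in \cL^q(\M)} \OS_F(\M)^* \otimes y_F \, \C^p(\M),
\]
with vertical differential given by the Rouquier differential $\partial$ acting on the $y_F \C^p(\M)$ factor, and horizontal differential given by the costalk-complex differential $\delta$ from Section~\ref{sec:costalk}. These commute (and so anticommute after the usual sign twist) because $\delta$ is built from $\partial^*$ on the $\OS^*$-factor together with multiplication by elements $y_i \in \CH(\M)$ on the Rouquier factor, and the Rouquier differential is $\CH(\M)$-linear.

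For the first spectral sequence I would take horizontal cohomology first. By Lemma~\ref{terms of Rouquier} each $\C^p(\M)$ is a pure $\H(\M)$-module, so Proposition~\ref{costalk complex} gives $\H^q_h(D^{p,\bullet}) \cong (\C^p(\M))_{[\varnothing]}$ for $q=0$ and zero otherwise. This forces degeneration at $E_2$ and yields $\H^n(\operatorname{Tot} D) \cong \H^n(\C^\bullet(\M)_{[\varnothing]})$.

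For the second spectral sequence I would take vertical cohomology first. Lemma~\ref{complex}~(1) identifies $y_F \C^\bullet(\M)$ with $\C^\bullet(\M_F)[-\rk F]$ as complexes of graded $\CH(\M)$-modules, and applying Proposition~\ref{acyclic} to the (loopless) matroid $\M_F$ gives $\H^0(\C^\bullet(\M_F)) \cong \Q[-(d-\rk F)]$ and vanishing in higher degrees. Hence $\H^p_v(D^{\bullet,q}) = 0$ for $p \ne 0$, and
\[
\H^0_v(D^{\bullet,q}) \cong \bigoplus_{F \in \cL^q(\M)} \OS_F(\M)^* \otimes \Q[-d] \cong \OS^q(\M)^*[-d].
\]
The induced horizontal differential on $\H^0_v$ has $(F,G)$-component $\partial^*_{FG} \otimes (\cdot y_i)$; however, $\H^0(y_F \C^\bullet(\M))$ is a one-dimensional subspace sitting in grading $d$ of $\CH(\M)$, and multiplication by the degree-one element $y_i$ carries it into $\CH^{d+1}(\M) = 0$. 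Thus the induced horizontal differential on $\H^0_v$ is identically zero, the spectral sequence degenerates at $E_2$, and $\H^n(\operatorname{Tot} D) \cong \OS^n(\M)^*[-d]$.

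Comparing the two identifications of $\H^n(\operatorname{Tot} D)$ yields the proposition. The main subtlety is the vanishing of the horizontal differential on $\H^0_v$, but this reduces to the clean observation that a top class in $y_F \CH(\M)$ (a scalar multiple of $\psi^\M_F(y_{E\setminus F})$) is annihilated by any element of positive degree in $\CH(\M)$, so no further computation with the explicit isomorphisms is needed.
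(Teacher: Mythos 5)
Your proposal is correct and is essentially the paper's own proof: the paper forms exactly this double complex $\C^\bullet(\M)_!^\bullet$, uses Proposition~\ref{costalk complex} on the columns to identify the total cohomology with $\H^\bullet(\C^\bullet(\M)_{[\varnothing]})$, and uses Lemma~\ref{complex}~(1) together with Proposition~\ref{acyclic} on the rows to identify it with $\OS^\bullet(\M)^*[-d]$. Your observation that the induced differential on the row cohomology vanishes because the top class of $y_F\CH(\M)$ is killed by positive-degree elements is the same degree argument the paper invokes.
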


\begin{proof}
Let $\C^\bullet(\M)_!^\bullet$ be the double complex obtained by applying the functor $(\cdot)^\bullet_!$ of Section \ref{sec:costalk}
to each term in the complex $\C^\bullet(\M)$, so the $(i,j)$ term is 
\[\C^i(\M)^j_! = \bigoplus_{F \in \cL^j(\M)} \OS_F(\M)^* \otimes y_F\C^i(\M).\] 
Since each $\C^i(\M)$ is pure, Proposition \ref{costalk complex} 
implies that the map $\C^\bullet(\M)_{[\varnothing]} \to \C^\bullet(\M) = \C^\bullet(\M)^0_!$ induces a quasi-isomorphism between $\C^\bullet(\M)_{[\varnothing]}$ and the total complex of $\C^\bullet(\M)_!^\bullet$.

On the other hand, by Lemma \ref{complex} the $j$-th row $\C^\bullet(\M)^j_!$ of the double complex is equal to the direct sum over all rank $j$ flats $F$ of the complex
$$\OS_F(\M)^*\otimes y_F\C^\bullet(\M) \cong \OS_F(\M)^* \otimes \C^\bullet(\M_F)[-\rk F].$$
Proposition \ref{acyclic} and equation \eqref{eqn:Brieskorn} then imply that 
\[\HH^i(\C^\bullet(\M)^j_!) \cong \begin{cases}
\bigoplus_{F\in \cL^j(\M)} \OS_F(\M)^*[-d] = \OS^j(\M)^*[-d]  & \mbox{if} \; i = 0\\
0 & \mbox{if}\; i \ne 0. 
\end{cases}\]
Note that for $i=0$ this graded vector space is concentrated in (grading) degree $d - j$, which means that the differential $\HH^0(\C^\bullet(\M)^j_!) \to \HH^0(\C^\bullet(\M)^{j+1}_!)$ vanishes for degree reasons.  In particular, we see that 
the complex $\OS^\bullet(\M)^*[-d]$ with zero differential is quasi-isomorphic to the total complex of $\C^\bullet(\M)_!^\bullet$.

Putting together the two paragraphs above, we can conclude the proof.
\end{proof}

\begin{corollary}\label{rouquier costalk}
Let $F$ be a flat, and let $i$ be a nonnegative integer.
\begin{enumerate}[(1)] \itemsep 5pt
\item  We have 
$
\HH^i\!\left(\C^\bullet(\M)_{[F]}\right) \cong \OS^i(\M_F)^*[-\crk F].
$
\item If $F$ is nonempty, then 
$
\HH^i\!\left(\C_\o^\bullet(\M)_{[F]}\right) \cong \OS^i(\M_F)^*[-\crk F].
$
\end{enumerate}
\end{corollary}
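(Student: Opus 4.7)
The plan is to reduce both parts of the corollary to Proposition~\ref{rouquier empty costalk}, applied to the contraction $\M_F$ in place of $\M$, by identifying the costalk at $F$ of the Rouquier complex of $\M$ with the costalk at $\varnothing$ of the Rouquier complex of $\M_F$.

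First I would note that the costalk construction $\N \mapsto \N_{[F]}$ is a functor from graded $\H(\M)$-modules to graded $\Q$-vector spaces, and the differentials of $\C^\bullet(\M)$ and $\C_\o^\bullet(\M)$ are $\CH(\M)$-linear, hence in particular $\H(\M)$-linear. Thus applying the functor $(-)_{[F]}$ termwise yields complexes $\C^\bullet(\M)_{[F]}$ and $\C_\o^\bullet(\M)_{[F]}$, and the natural isomorphism $\N_{[F]} \cong (y_F \N[\rk F])_{[\varnothing]}$ of Lemma~\ref{everything is empty} promotes to an isomorphism of complexes
\[
\C^\bullet(\M)_{[F]} \;\cong\; (y_F \C^\bullet(\M)[\rk F])_{[\varnothing]} \quad \text{and} \quad \C_\o^\bullet(\M)_{[F]} \;\cong\; (y_F \C_\o^\bullet(\M)[\rk F])_{[\varnothing]}.
\]

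Next, Lemma~\ref{complex}~(1) supplies an isomorphism $y_F\C^\bullet(\M) \cong \C^\bullet(\M_F)[-\rk F]$ of complexes of graded $\CH(\M)$-modules, and similarly Lemma~\ref{complex}~(2) gives $y_F\C_\o^\bullet(\M) \cong \C^\bullet(\M_F)[-\rk F]$ when $F$ is nonempty. Shifting by $[\rk F]$ and combining with the previous paragraph produces isomorphisms of complexes
\[
\C^\bullet(\M)_{[F]} \;\cong\; \C^\bullet(\M_F)_{[\varnothing]} \;\cong\; \C_\o^\bullet(\M)_{[F]}
\]
(the last one for nonempty $F$). Taking $j$th cohomology and applying Proposition~\ref{rouquier empty costalk} to the matroid $\M_F$, whose rank equals $\crk F$, yields
\[
\H^j\bigl(\C^\bullet(\M)_{[F]}\bigr) \;\cong\; \H^j\bigl(\C^\bullet(\M_F)_{[\varnothing]}\bigr) \;\cong\; \OS^j(\M_F)^*[-\crk F],
\]
which is the desired identity, and simultaneously yields the corresponding identity for $\C_\o^\bullet(\M)_{[F]}$.

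There is no genuine obstacle here; the only point to check carefully is that the isomorphism $\N_{[F]} \cong (y_F\N[\rk F])_{[\varnothing]}$ is natural in morphisms of $\H(\M)$-modules of nonzero degree (such as multiplication by an $x_{F'}$), so that it commutes with the Rouquier differentials termwise. This is immediate from the explicit form of the isomorphism as multiplication by $y_F$, which is central in $\CH(\M)$.
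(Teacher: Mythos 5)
Your proposal is correct and follows essentially the same route as the paper: both reduce to Proposition~\ref{rouquier empty costalk} for $\M_F$ via the identifications $\C^\bullet(\M)_{[F]} \cong (y_F\C^\bullet(\M)[\rk F])_{[\varnothing]} \cong \C^\bullet(\M_F)_{[\varnothing]}$ from Lemma~\ref{everything is empty} and Lemma~\ref{complex}. Your added check that the costalk isomorphism is natural with respect to the Rouquier differentials is a reasonable point of care that the paper leaves implicit.
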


\begin{proof}
By Lemma \ref{everything is empty} and Lemma \ref{complex} (1),
$$\C^\bullet(\M)_{[F]} \cong (y_F\C^\bullet(\M)[\rk F])_{[\varnothing]} \cong \C^\bullet(\M_F)_{[\varnothing]}.$$
Statement (1) then follows from Proposition \ref{rouquier empty costalk}. Similarly, we can deduce statement (2) using Lemma~\ref{complex},
which says that $y_F\C^\bullet(\M) \cong y_F\C_\o^\bullet(\M)$ when $F$ is nonempty.
\end{proof}

Corollary \ref{rouquier costalk} combines with Lemma \ref{stalk acyclic} to complete the proof of Proposition \ref{prop:Rouquier perversity}, because the only possible degree in which $\HH^i(\C^\bullet(\M)_{[F]})$ can be nonzero is $j = \crk F - i$.  That means that 
$i + 2j = \crk F + j \ge \crk F$, because our complex vanishes in negative grading degrees.

\subsection{Proof of Proposition \ref{empty stalk}}\label{sec:empty stalk}
Throughout this section, we assume that $E$ is nonempty. Our goal is to give a degree bound on the cohomology of the complex $\C_\o^\bullet(\M)_\varnothing$.  

Given any graded $\H(\M)$-module $\N$, the stalk $\N_\varnothing$ is a quotient of $\N$ and the costalk $\N_{[\varnothing]}$ is a subspace of $\N$.  As a result we have a natural transformation $\N_{[\varnothing]} \to \N_\varnothing$ from the costalk to the stalk.  Given a complex $\mathrm{Q}^\bullet$ of graded $\H(\M)$-modules, we denote by $\Delta(\mathrm{Q}^\bullet)$ the cone of the natural map $\mathrm{Q}^{\bullet-1}_{[\varnothing]}\to\mathrm{Q}^{\bullet-1}_{\varnothing}$. In particular, $\Delta(\mathrm{Q}^\bullet)^k=\mathrm{Q}^k_{[\varnothing]}\oplus \mathrm{Q}^{k-1}_{\varnothing}$,
and we have a distinguished triangle
\[
\mathrm{Q}^{\bullet-1}_{[\varnothing]}\to\mathrm{Q}^{\bullet-1}_{\varnothing} \to \Delta(\mathrm{Q}^\bullet)\to \mathrm{Q}^\bullet_{[\varnothing]}.
\]

\begin{lemma}\label{first}
The natural map $\Delta(\C^\bullet(\M))\to \C^\bullet(\M)_{[\varnothing]}$ is a quasi-isomorphism.
\end{lemma}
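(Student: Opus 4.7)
The plan is to identify the cone $\Delta(\C^\bullet(\M))$ as fitting into a short exact sequence of complexes with $\C^\bullet(\M)_{[\varnothing]}$, and then to reduce the lemma to the already-established acyclicity of the stalk complex $\C^\bullet(\M)_\varnothing$.

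First I would unwind the definition: $\Delta(\mathrm{Q}^\bullet)^n = \mathrm{Q}^n_{[\varnothing]} \oplus \mathrm{Q}^{n-1}_\varnothing$, equipped with the standard mapping cone differential for the costalk-to-stalk map $\mathrm{Q}^{\bullet-1}_{[\varnothing]} \to \mathrm{Q}^{\bullet-1}_\varnothing$. Under this identification the natural map to $\mathrm{Q}^\bullet_{[\varnothing]}$ is projection onto the first summand, while the second summand is a subcomplex, isomorphic to $\mathrm{Q}^{\bullet-1}_\varnothing$, on which the cone differential restricts to the differential of $\mathrm{Q}^\bullet_\varnothing$ (up to the usual cone sign). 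This produces a short exact sequence of complexes
\[
0 \longrightarrow \mathrm{Q}^{\bullet-1}_\varnothing \longrightarrow \Delta(\mathrm{Q}^\bullet) \longrightarrow \mathrm{Q}^\bullet_{[\varnothing]} \longrightarrow 0,
\]
which is simply a rotation of the distinguished triangle recalled at the beginning of this subsection. Passing to the long exact sequence in cohomology, the map $\Delta(\mathrm{Q}^\bullet) \to \mathrm{Q}^\bullet_{[\varnothing]}$ is a quasi-isomorphism precisely when $\mathrm{Q}^\bullet_\varnothing$ is acyclic.

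Specializing to $\mathrm{Q}^\bullet = \C^\bullet(\M)$, the standing hypothesis that $E$ is nonempty makes $\varnothing$ a proper flat of $\M$, and so Lemma~\ref{stalk acyclic}~(1) provides exactly the acyclicity of $\C^\bullet(\M)_\varnothing$ that we need. The only mild hazard in writing this out is verifying the signs and indexing in the cone construction to ensure that the second summand is genuinely the shifted stalk complex $\C^{\bullet-1}(\M)_\varnothing$ as a subcomplex, but this is a routine check with no genuine obstruction.
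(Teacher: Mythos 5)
Your argument is correct and is exactly the paper's: the distinguished triangle $\mathrm{Q}^{\bullet-1}_{[\varnothing]}\to\mathrm{Q}^{\bullet-1}_{\varnothing}\to\Delta(\mathrm{Q}^\bullet)\to\mathrm{Q}^\bullet_{[\varnothing]}$ set up at the start of the subsection reduces the claim to the acyclicity of $\C^\bullet(\M)_\varnothing$, which is Lemma~\ref{stalk acyclic}~(1) since $\varnothing$ is a proper flat when $E$ is nonempty. The paper states this in one line; you have merely unwound the cone construction explicitly.
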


\begin{proof}
The first part of Lemma \ref{stalk acyclic} says that $\C^\bullet(\M)_{\varnothing}$ is acyclic, so the desired statement follows 
from the long exact sequence in cohomology associated with the distinguished triangle above.
\end{proof}

\begin{lemma}\label{second}
The map $\C^\bullet(\M)\to\C_\o^\bullet(\M)$ induces a quasi-isomorphism $\Delta(\C^\bullet(\M))\to\Delta(\C_\o^\bullet(\M))$.
\end{lemma}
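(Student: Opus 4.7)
The plan is to use the short exact sequence of complexes
\[
0 \to K^\bullet \to \C^\bullet(\M) \to \C_\o^\bullet(\M) \to 0,
\]
where $K^\bullet$ denotes the subcomplex of $\C^\bullet(\M)$ consisting of the summands indexed by flags $F_1 < \cdots < F_i$ with $F_1 = \varnothing$. The crucial observation is that every element of $K^\bullet$ is a multiple of $x_\varnothing$ and hence, by Lemma~\ref{mutual annihilators}, is annihilated by the maximal ideal $\mm$ of $\H(\M)$. Consequently, $K^\bullet_{[\varnothing]} = K^\bullet = K^\bullet_\varnothing$, and the natural map $K^\bullet_{[\varnothing]} \to K^\bullet_\varnothing$ is the identity, so $\Delta(K^\bullet)$ is the mapping cone of the identity on $K^\bullet$ and is therefore acyclic.

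The main step will be to show that applying $\Delta$ to the short exact sequence above yields a short exact sequence of complexes
\[
0 \to \Delta(K^\bullet) \to \Delta(\C^\bullet(\M)) \to \Delta(\C_\o^\bullet(\M)) \to 0.
\]
This is the main obstacle, since neither $(-)_{[\varnothing]}$ nor $(-)_\varnothing$ is exact in general. To handle it, I will observe that in each cohomological degree $k$ the module $\C^k(\M)$ splits as an $\H(\M)$-module as $\C^k(\M) = K^k \oplus \C_\o^k(\M)$, where $\C_\o^k(\M)$ is identified with the complementary direct summand indexed by flags with $F_1 > \varnothing$. Since multiplication by each $y_j$ preserves every summand $x_{F_1}\cdots x_{F_k}\CH(\M)[k]$, the $\H(\M)$-action is compatible with this decomposition. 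Combined with $\mm K^k = 0$, this immediately yields splittings
\[
\C^k(\M)_{[\varnothing]} = K^k \oplus \C_\o^k(\M)_{[\varnothing]} \quad \text{and} \quad \C^k(\M)_\varnothing = K^k \oplus \C_\o^k(\M)_\varnothing,
\]
and hence the exactness of both the costalk and stalk sequences. These module-level splittings are compatible with the differentials induced from the original short exact sequence, so after applying $\Delta$ we obtain a short exact sequence of complexes as desired.

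Once this short exact sequence of complexes is established, the long exact sequence in cohomology combined with the acyclicity of $\Delta(K^\bullet)$ immediately yields that $\Delta(\C^\bullet(\M)) \to \Delta(\C_\o^\bullet(\M))$ is a quasi-isomorphism. The entire argument thus reduces to the elementary fact that $x_\varnothing$ is annihilated by the generators of $\mm$, which forces the kernel subcomplex to sit entirely in the socle and at the same time makes it a direct summand at the level of modules, allowing the costalk and stalk functors to be applied exactly to the sequence.
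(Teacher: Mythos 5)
Your proof is correct and follows essentially the same route as the paper: both arguments reduce to the observation that the kernel subcomplex (the summands with $F_1=\varnothing$) is annihilated by the maximal ideal, so its costalk and stalk coincide and applying $\Delta$ to it yields an acyclic complex. The only difference is cosmetic bookkeeping — you package the reduction as a degreewise-split short exact sequence plus the long exact sequence, whereas the paper phrases it via mapping cones and a chain homotopy equivalence with $\C_-^{\bullet+1}(\M)$.
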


\begin{proof}
Let $\C^\bullet_{-}(\M)$ be the kernel of $\C^\bullet(\M)\to\C_\o^\bullet(\M)$. In other words, the complex $\C^\bullet_{-}(\M)$ is defined by
\[
\C_{-}^i(\M) \coloneq \bigoplus_{\varnothing = F_1 < \cdots < F_i<E} x_{F_1}\cdots x_{F_i}\CH(\M)[i],
\]
and has differential defined by the same component-wise formula as in the definition of $\C^\bullet(\M)$. 
Since the exact sequences
\[0\to \C^i_-(\M) \to \C^i(\M)\to \C^i_\o(\M)\to 0\]
are split for every $i$, applying the stalk and costalk functors gives exact sequences, so we have an exact sequence
\[0 \to \Delta(\C^\bullet_-(\M)) \to \Delta(\C^\bullet(\M)) \to \Delta(\C^\bullet_\o(\M)) \to 0\]
of complexes.


Since $\C_{-}^{\bullet}(\M)$ is annihilated by $\Upsilon_{>\varnothing}$, we have 
$\C_{-}^{\bullet}(\M)_{[\varnothing]} = \C_{-}^{\bullet}(\M) = \C_{-}^{\bullet}(\M)_{\varnothing}$ and therefore $\Delta(\C_{-}^{\bullet}(\M))$ is acyclic. Now the long exact sequence in cohomology associated to the displayed exact sequence of complexes implies that the map $\Delta(\C^\bullet(\M))\to\Delta(\C_\o^\bullet(\M))$ is a quasi-isomorphism. 
\end{proof}

\begin{lemma}\label{third}
The complex $\Delta(\C_\o^\bullet(\M))$ is quasi-isomorphic to the cone of the map of complexes
$\C_\o^{\bullet-1}(\M)_{\varnothing}[-1]\to \C_\o^{\bullet-1}(\M)_{\varnothing}$
given by multiplication by $x_\varnothing$.
\end{lemma}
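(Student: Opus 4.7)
The plan is to reinterpret the lemma via the definition of $\Delta$.  By construction, $\Delta(\C_\o^\bullet(\M))$ is the mapping cone of the costalk-to-stalk map
$f\colon \C_\o^{\bullet-1}(\M)_{[\varnothing]} \to \C_\o^{\bullet-1}(\M)_\varnothing$,
so it will suffice to produce a quasi-isomorphism of complexes
$\tilde\mu\colon \C_\o^\bullet(\M)_\varnothing[-1] \to \C_\o^\bullet(\M)_{[\varnothing]}$
for which $f\circ\tilde\mu$ coincides with the endomorphism of the stalk complex induced by multiplication by $x_\varnothing$.  The map of mapping cones induced by the pair $(\tilde\mu,\operatorname{id})$ will then be the desired quasi-isomorphism, by the two-out-of-three property for cones.

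The map $\tilde\mu$ will be induced by the multiplication-by-$x_\varnothing$ endomorphism of $\C_\o^\bullet(\M)$.  Since $y_i\, x_\varnothing = 0$ for every $i\in E$ by Lemma~\ref{mutual annihilators}, this multiplication sends $\C_\o^\bullet(\M)$ into its socle $\C_\o^\bullet(\M)_{[\varnothing]}$ and vanishes on $\mathfrak{m}\,\C_\o^\bullet(\M)$, so it descends to a well-defined graded map from the shifted stalk complex.  It commutes with the Rouquier differential, which is a signed sum of multiplications by classes $x_{F_j}$, hence defines a map of complexes, and a direct unwinding of the definitions shows that $f\circ\tilde\mu$ is exactly multiplication by $x_\varnothing$ on the stalk complex.

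What remains is to prove $\tilde\mu$ is a quasi-isomorphism; in fact I plan to show it is a termwise isomorphism of graded modules.  Each summand of $\C_\o^i(\M)$ has the form $N = x_{F_1}\cdots x_{F_i}\CH(\M)[i]$ with $\varnothing < F_1 < \cdots < F_i < E$.  Using the identity $\psi^{F_1}_\M\varphi^{F_1}_\M = x_{F_1}\cdot$ from Proposition~\ref{Prop_xmult} and the explicit evaluations of $\varphi^{F_1}_\M$ in Definition~\ref{DefinitionXPullback}, one identifies $N$ with $\psi^{F_1}_\M\bigl(A_{\vec F}\otimes \CH(\M^{F_1})\bigr)$, where $A_{\vec F} = x_{F_2\setminus F_1}\cdots x_{F_i\setminus F_1}\,\uCH(\M_{F_1})$.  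Proposition~\ref{lemma_xdegree} gives $\varphi^{F_1}_\M(y_i) = 1\otimes y_i$ for $i\in F_1$ and $0$ otherwise, while Definition~\ref{DefinitionXPullback} gives $\varphi^{F_1}_\M(x_\varnothing) = 1\otimes x_\varnothing$, so both $\mathfrak{m}$ and $x_\varnothing$ act only on the $\CH(\M^{F_1})$-factor.  Consequently $N_\varnothing \cong A_{\vec F}\otimes\uCH(\M^{F_1})$ and $N_{[\varnothing]} \cong A_{\vec F}\otimes x_\varnothing \CH(\M^{F_1})$ (up to a common shift coming from $\psi^{F_1}_\M$), and $\tilde\mu|_N$ becomes $\operatorname{id}_{A_{\vec F}}$ tensored with multiplication by $x_\varnothing$ on $\CH(\M^{F_1})$.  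That last map equals the pushforward $\psi^\varnothing_{\M^{F_1}}$ by Proposition~\ref{Prop_xmult}, which is an isomorphism onto $x_\varnothing \CH(\M^{F_1})$ by Proposition~\ref{lemma_xdegree}(4).  The main obstacle is simply the careful bookkeeping of internal grading shifts introduced by the degree-one maps $\psi^F_\M$ and the shift $[-1]$; no additional input beyond the module-theoretic structure of $\CH(\M)$ should be needed.
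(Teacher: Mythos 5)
Your proposal is correct and follows essentially the same route as the paper: both identify $\C_\o^{\bullet-1}(\M)_{[\varnothing]}$ with $\C_\o^{\bullet-1}(\M)_{\varnothing}[-1]$ via multiplication by $x_\varnothing$, using the fact (Lemma~\ref{mutual annihilators}) that the annihilator of $\Upsilon_{>\varnothing}$ in $\CH(\M^F)$ is $x_\varnothing\CH(\M^F)$, applied termwise. The only difference is presentational: the paper invokes Lemma~\ref{terms of Rouquier} to reduce to summands of the form $\CH(\M^{F_1})[k]$ and leaves the cone compatibility as ``follows from the definition of $\Delta$,'' whereas you re-derive the tensor-product structure of the terms and verify the compatibility with the differential and with the costalk-to-stalk map explicitly.
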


\begin{proof}
By Lemma \ref{mutual annihilators}, the annihilator of $\Upsilon_{>\varnothing}$ in $\CH(\M^F)$ is equal to $x_\varnothing\CH(\M^F)$ for all nonempty flats $F$. It follows that the natural maps $\CH(\M^F)_{[\varnothing]} \to \CH(\M^F) \to \CH(\M^F)_\varnothing$ are identified with
\[
x_\varnothing\CH(\M^F) \to \CH(\M^F) \to x_\varnothing\CH(\M^F)[1],
\]
where the first map is the inclusion and the second is multiplication by $x_\varnothing$.  This identification is natural with respect to maps between shifts of modules of the form $\CH(\M^F)$.
By Lemma \ref{terms of Rouquier}, each $\C_\o^i(\M)$ is isomorphic to a direct sum of shifts of such modules,
therefore 
\[
\C_\o^{\bullet-1}(\M)_{[\varnothing]} \cong \C_\o^{\bullet-1}(\M)_{\varnothing}[-1],
\]
and under this isomorphism the natural map $\C_\o^{\bullet-1}(\M)_{[\varnothing]} \to \C_\o^{\bullet-1}(\M)_{\varnothing}$ is multiplication by $x_\varnothing$.  The lemma follows.
\end{proof}

\begin{proof}[Proof of Proposition \ref{empty stalk}]
Combining Lemmas \ref{first}, \ref{second}, and \ref{third}, we find that $\C^\bullet(\M)_{[\varnothing]}$ is quasi-isomorphic
to the cone of the map $\C_\o^{\bullet-1}(\M)_{\varnothing}[-1]\to \C_\o^{\bullet-1}(\M)_{\varnothing}$
given by multiplication by $x_\varnothing$.  This induces a long exact sequence
\[
\cdots \to \HH^{i}\bigl(\C^\bullet(\M)_{[\varnothing]}\bigr) \to \HH^{i}\bigl(\C_\o^\bullet(\M)_\varnothing\bigr)[-1] \xrightarrow{\cdot x_\varnothing}\HH^{i}\bigl(\C_\o^\bullet(\M)_\varnothing\bigr) \to \HH^{i+1}\bigl(\C^\bullet(\M)_{[\varnothing]}\bigr) \to\cdots. \]
If $\HH^i(\C_\o^\bullet(\M)_\varnothing) \ne 0$, 
let $k$ be the smallest degree in which it does not vanish.  A nonzero element in degree $k$ is not in the image of multiplication by $x_\varnothing$, so the long exact sequence implies that $\HH^{i+1}(\C^\bullet(\M)_{[\varnothing]})$ is nonzero in degree $k$.  But that implies that $k = d - (i+1)$ by Proposition~\ref{rouquier empty costalk}.  Dually, if $k$ is the largest nonvanishing degree, then an element in degree $k$ is killed by multiplication by $x_\varnothing$, and our exact sequence implies that 
$\HH^{i}(\C^\bullet(\M)_{[\varnothing]})$ is nonzero in 
degree $k+1$, so we get $k+1 =d - i$ again by Proposition~\ref{rouquier empty costalk}.  Thus, the proposition follows.
\end{proof}

\subsection{Proof of Proposition \ref{prop:Leading term of Rouquier}}\label{sec:Leading term of Rouquier}

We will prove the first part of the proposition; the proof of the second part is identical.  By our construction in Section~\ref{sec:bigcomplexes}, $\bar{\C}^\bullet(\M)$ is a minimal subcomplex of $\C^\bullet(\M)$.  By Corollary \ref{cor:characterizing purity}, each module $\C^i(\M)$ is isomorphic to a direct sum of $\H(\M)$-modules of the form $\IH(\M^G)[k]$ for $G\in \cL(\M)$ and $k\in \Z$.  Furthermore, since condition A includes $\hyperlink{cd}{\CD(\M^G)}$ for all flats $G$, the $\H(\M)$-module $\C^0(\M) = \CH(\M)$ contains $\IH(\M)$ as a direct summand with multiplicity one.  By Lemma \ref{terms of Rouquier}, $\C^i(\M)$ does not contain $\IH(\M)$ as a direct summand if $i > 0$.  
So the first term $\bar{\C}^0(\M)$ of the minimal subcomplex must contain exactly one summand isomorphic to $\IH(\M)$.

Now take any proper flat $G$, and suppose that $\bar{\C}^0(\M)$ contains a direct summand isomorphic to $\IH(\M^G)[k]$.  By Proposition \ref{prop:Rouquier perversity} and Theorem \ref{thm:perverse complexes}, we must have $k = (-\crk G)/2$.  So the stalk $\bar{\C}^0(\M)_G$ is nonzero in degree $-k = (\crk G)/2$.  But any summand of $\bar{\C}^1(\M)$ is isomorphic to a module $\IH(\M^F)[\ell]$ with $\ell = (1-\crk F)/2$.  The stalk at $G$ of this module is zero unless $F > G$ (the case $F = G$ is impossible since $\crk F$ and $\crk G$ must have opposite parity), 
in which case Proposition \ref{zero map} says that this stalk vanishes in degrees greater than or equal to 
\[(\rk F - \rk G)/2 - \ell = (\crk G - 1)/2 < -k,\]
and so $\bar{\C}^1(\M)_G$ vanishes in degree $-k$.  This is a contradiction, since  Lemma \ref{stalk acyclic} says that the stalk complex $\bar{\C}^\bullet(\M)_G$ is acyclic.  So $\IH(\M)$ is the only direct summand of $\bar{\C}^0(\M)$.

\subsection{Multiplicities and inverse Kazhdan--Lusztig polynomials}\label{sec:multiplicities and inverse KL}
In this section we show that, under the assumption that Theorem~\ref{theorem_all} holds, the multiplicities of the modules $\IH(\M^F)$ in the complex $\bar{\C}^\bullet(\M)$ are given by coefficients of inverse Kazhdan--Lusztig polynomials of matroids.  This implies that these coefficients are nonnegative, providing a proof of Theorem \ref{thm:inverse}.

For a matroid $\M$, define a polynomial $\tilde{Q}_\M(t) = \sum_k \tilde{q}_kt^k \in \mathbb{N}[t]$, where $\tilde{q}_k$ is the multiplicity of the module 
$\IH(\M^\varnothing)[-k]$ in $\bar{\C}^{d - 2k}(\M)$.  Since the modules $\IH(\M^F)[-k]$ are indecomposable by Proposition \ref{prop:indecomposable}, and they are pairwise non-isomorphic, this number is well-defined.  We note that $\bar{\C}^i(\M) = 0$ for all $i>d$, so $\tilde{q}_k$ can only be nonzero when $k\ge 0$.

\begin{proposition}\label{prop:Rouquier multiplicities}
	The inverse Kazhdan--Lusztig polynomial $Q_\M(t)$ is equal to
 $\tilde Q_\M(t)$, so in particular it has nonnegative coefficients.
\end{proposition}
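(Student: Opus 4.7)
The plan is to compute the generating function for the multiplicities of each summand $\IH(\M^F)$ in $\bar{\C}^\bullet(\M)$ by taking Euler characteristics of stalk Hilbert series, and then to identify the resulting triangular system with the defining equation for the inverse in the incidence algebra of $\cL(\M)$. Throughout, we use the standing hypothesis that Theorem~\ref{theorem_all} holds for all matroids encountered, so that Theorem~\ref{main rouquier}(1), Proposition~\ref{prop:indecomposable}(1), and the Hilbert-series identity $\mathrm{Hilb}(\IH(\M)_\varnothing) = P_\M(t)$ (the theorem stated just after Proposition~\ref{prop:gr}) are all at our disposal.

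For each flat $F$, write $c_F^i$ for the multiplicity of $\IH(\M^F)[(i-\crk F)/2]$ in the decomposition of $\bar{\C}^i(\M)$ guaranteed by Theorem~\ref{main rouquier}(1), and set
$$g_F(t) \coloneq \sum_i (-1)^i c_F^i\, t^{-(i-\crk F)/2} = (-1)^{\crk F} \sum_{k \le 0} c_F^{\crk F + 2k}\, t^{-k},$$
so that $g_\varnothing(t) = (-1)^d\, \tilde Q_\M(t)$. I first pin down the top entry $g_E(t) = 1$: Corollary~\ref{shifted simples}(1) shows that $\IH(\M) = \IH(\M^E)$ appears in $\C^\bullet(\M)$ only in $\C^0$, exactly once and without shift, and Proposition~\ref{prop:indecomposable}(1) guarantees that this copy survives the reduction to $\bar{\C}^\bullet(\M)$. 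Thus $c_E^0 = 1$ and $c_E^i = 0$ for $i \ne 0$.

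Next, for each flat $G \in \cL(\M)$, I apply the stalk functor $(-)_G$ to the quasi-isomorphism $\bar{\C}^\bullet(\M) \simeq \C^\bullet(\M)$. Lemma~\ref{stalk acyclic}(1) says that the alternating sum of Hilbert series of $\C^\bullet(\M)_G$ equals $\delta_{G,E}$, and hence the same holds for $\bar{\C}^\bullet(\M)_G$. To compute the left-hand side, I combine the decomposition of $\bar{\C}^i(\M)$ with Lemmas~\ref{mult by y}(1) and~\ref{everything is empty}: for $G \le F$ one has $\IH(\M^F)_G \cong \IH(\M^F_G)_\varnothing$ with Hilbert series $P_{\M^F_G}(t)$, while the stalk vanishes when $G \not\le F$. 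Assembling everything yields the triangular system
$$\sum_{F \in [G, E]} P_{\M^F_G}(t)\, g_F(t) = \delta_{G, E} \text{ for every flat } G \in \cL(\M).$$

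Finally, I interpret this in the incidence algebra $I(\cL(\M))$: it is precisely the equation $f \cdot g = \delta_{-, E}$, where $f$ is the invertible element with $f_{GF}(t) = P_{\M^F_G}(t)$ and $g = (g_F)$ is viewed as the $E$-th column. Since $f_{FF}(t) = 1$ for every $F$, back-substitution from $F = E$ downward determines $g$ uniquely given the top entry; as both $g$ and the $E$-th column of $f^{-1}$ solve the system with $g_E = 1$, I conclude $g_F(t) = f^{-1}_{FE}(t)$ for all $F$. Specializing to $F = \varnothing$ and invoking the identity $Q_\M(t) = (-1)^d f^{-1}_{\varnothing E}(t)$ recalled in Section~\ref{sec:antecedents} gives $\tilde Q_\M = Q_\M$, and nonnegativity is then immediate since the coefficients of $\tilde Q_\M$ are by construction multiplicities of summands. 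The main technical obstacle is the stalk calculation: one must carefully track the shifts $[(i-\crk F)/2]$ from the perverse decomposition of $\bar{\C}^i(\M)$ against the Hilbert series $P_{\M^F_G}(t)$, and verify that the sign $(-1)^{\crk F}$ packaged into $g_F$ is exactly the one needed to match the incidence-algebra inverse.
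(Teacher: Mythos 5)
Your argument is correct, and it is recognizably the same Euler--characteristic computation on stalks of the small Rouquier complex that the paper uses, but organized differently. The paper localizes only at $\varnothing$: it first proves a contraction-compatibility statement (Lemma \ref{lem:inverse mult at general flat}, via $y_F\bar{\C}^\bullet(\M)\cong\bar{\C}^\bullet(\M_F)[-\rk F]$ and the indecomposability of the summands) identifying the multiplicity of $\IH(\M^F)[k]$ in $\bar{\C}^{\crk F+2k}(\M)$ with a coefficient of $\tilde{Q}_{\M_F}(t)$, and then the acyclicity of $\bar{\C}^\bullet(\M)_\varnothing$ yields the single recurrence $\sum_F(-1)^{\rk F}P_{\M^F}(t)\tilde{Q}_{\M_F}(t)=0$, which identifies $\tilde{Q}$ with $Q$ by induction on rank. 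You instead take stalks at every flat $G$ and extract the full triangular system $\sum_{F\ge G}P_{\M^F_G}(t)\,g_F(t)=\delta_{G,E}$, which you invert in the incidence algebra; this dispenses with the contraction-compatibility lemma and with the induction, at the cost of invoking the identity $Q_\M(t)=(-1)^{\rk\M}f^{-1}_{\varnothing E}(t)$ from Section \ref{sec:antecedents} --- which is equivalent to the recurrence of \cite{GX} that the paper cites, so nothing extra is being assumed. Your supporting steps are sound: the stalk identification $\IH(\M^F)_G\cong\IH(\M^F_G)_\varnothing$ follows from Lemmas \ref{everything is empty} and \ref{mult by y}(1) under the standing hypothesis, the Hilbert series of that stalk is $P_{\M^F_G}(t)$ by the theorem following Proposition \ref{prop:gr}, the parity observation $(-1)^i=(-1)^{\crk F}$ makes the signs match, and the Euler characteristic of the stalk complex is computed by Lemma \ref{stalk acyclic}(1) together with \eqref{eq:bar}. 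The only redundancy is that you separately pin down $g_E=1$ via Corollary \ref{shifted simples} and Proposition \ref{prop:indecomposable}; the $G=E$ equation of your system already forces this, since $P_{\M^E_E}(t)=1$.
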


\begin{lemma} \label{lem:inverse mult at general flat}
Suppose that Theorem \ref{theorem_all} holds.
For any flat $F$ and any integer $k$, the multiplicity of $\IH(\M^F)[-k]$ in $\bar{\C}^{\crk F - 2k}(\M)$ is
equal to the coefficient of $t^{k}$ in $\tilde{Q}_{\M_F}(t)$.
\end{lemma}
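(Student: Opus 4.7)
The plan is to transport the problem from $\M$ to $\M_F$ via multiplication by $y_F$. By Lemma~\ref{complex}~(1), $y_F\,\C^\bullet(\M) \cong \C^\bullet(\M_F)[-\rk F]$ as complexes of graded $\H(\M)$-modules, where $\H(\M)$ acts on the right via the natural map $\H(\M) \to \H(\M_F)$. Writing $\C^\bullet(\M) = \bar{\C}^\bullet(\M) \oplus \mathrm{A}^\bullet$ with $\mathrm{A}^\bullet$ a direct sum of two-term acyclic complexes, and observing that $y_F\,\mathrm{A}^\bullet$ retains this shape (differentials that are isomorphisms restrict to isomorphisms on $y_F$-summands), the Krull--Schmidt theorem together with the uniqueness of $\bar{\C}^\bullet(\M_F)$ as the reduction of $\C^\bullet(\M_F)$ yields
\[
y_F\,\bar{\C}^\bullet(\M) \;\cong\; \bar{\C}^\bullet(\M_F)[-\rk F] \,\oplus\, \mathrm{E}^\bullet,
\]
with $\mathrm{E}^\bullet$ itself a direct sum of two-term acyclic complexes of pure graded $\H(\M_F)$-modules, obtained by iteratively applying Lemma~\ref{basic fact} to split off the cancellable pairs in $y_F\,\bar{\C}^\bullet(\M)$.

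Next I would translate summand multiplicities through this identification. By Lemma~\ref{mult by y}~(1), a summand $\IH(\M^G)[l]$ of $\bar{\C}^i(\M)$ is sent to $\IH(\M_F^G)[l-\rk F]$ when $G\geq F$ and to zero otherwise; since $\IH(\M_F^G)$ is one-dimensional precisely when $\M_F^G$ has rank zero, the image is a shift of $\Q$ only when $G=F$, in which case it equals $\Q[l-\rk F]$. Consequently the multiplicity of $\IH(\M^F)[k]$ in $\bar{\C}^{\crk F + 2k}(\M)$ equals the multiplicity of $\Q[k-\rk F]$ in $y_F\,\bar{\C}^{\crk F + 2k}(\M)$. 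Via the decomposition above this splits as the multiplicity of $\Q[k-\rk F]$ in $\bar{\C}^{\crk F + 2k}(\M_F)[-\rk F]$ plus the multiplicity in $\mathrm{E}^{\crk F + 2k}$; the first term, using $\rk\M_F=\crk F$ and the definition of $\tilde{Q}_{\M_F}$, is precisely the coefficient of $t^{-k}$ in $\tilde{Q}_{\M_F}(t)$.

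The main obstacle is showing that the $\mathrm{E}^\bullet$-contribution vanishes, and I plan a parity argument for this. An indecomposable summand of $y_F\,\bar{\C}^j(\M)$ that is a shift of $\Q$ can only come from a $G=F$ summand $\IH(\M^F)[l]$ of $\bar{\C}^j(\M)$, and by Theorem~\ref{main rouquier}~(1) such an $\IH(\M^F)[l]$ occurs only when $l = (j-\crk F)/2$ is a nonpositive integer, forcing $j \equiv \crk F \pmod{2}$. Hence $\mathrm{E}^j$ contains no shift of $\Q$ when $j \not\equiv \crk F \pmod{2}$. Since $\mathrm{E}^\bullet$ is a direct sum of two-term acyclic complexes concentrated in consecutive positions, any summand isomorphic to $\Q[k-\rk F]$ in $\mathrm{E}^{\crk F + 2k}$ would require an isomorphic partner in either $\mathrm{E}^{\crk F + 2k - 1}$ or $\mathrm{E}^{\crk F + 2k + 1}$, whose indices have the opposite parity; no such partner can exist. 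This contradiction forces the $\mathrm{E}^\bullet$-multiplicity to vanish and completes the proof.
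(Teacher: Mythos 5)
Your proof is correct, and it follows the same basic transport strategy as the paper (multiply by $y_F$, invoke Lemma~\ref{complex}~(1) and Lemma~\ref{mult by y}~(1), and reduce to counting shifts of $\Q = \IH(\M^F_F)$), but it handles the key step---comparing $y_F\bar{\C}^\bullet(\M)$ with $\bar{\C}^\bullet(\M_F)[-\rk F]$---differently. The paper proves the exact isomorphism $y_F\bar{\C}^\bullet(\M) \cong \bar{\C}^\bullet(\M_F)[-\rk F]$: since Proposition~\ref{prop:indecomposable}~(1) shows that restriction gives an isomorphism $\End(\IH(\M^G)) \to \End(y_F\IH(\M^G))$ (both are scalars), a component of the differential between matching summands is an isomorphism \emph{if and only if} its $y_F$-image is, so minimality is preserved and the canceling pairs correspond exactly. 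You only use the easy direction of this (isomorphisms stay isomorphisms, so $y_F\mathrm{A}^\bullet$ remains a sum of two-term acyclics), accept a residual acyclic summand $\mathrm{E}^\bullet$, and then kill its contribution with the parity constraint from Theorem~\ref{main rouquier}~(1): shifts of $\Q$ can only occur in $y_F\bar{\C}^j(\M)$ when $j \equiv \crk F \pmod 2$, whereas a two-term acyclic summand would force an isomorphic copy in an adjacent cohomological degree of the wrong parity. Both routes are valid under the standing hypothesis that Theorem~\ref{theorem_all} holds. The paper's version buys the cleaner and stronger intermediate statement $y_F\bar{\C}^\bullet(\M) \cong \bar{\C}^\bullet(\M_F)[-\rk F]$ (used implicitly elsewhere in the multiplicity discussion), while yours trades that for an extra appeal to the perversity bounds of Theorem~\ref{main rouquier}, avoiding the harder ``non-isomorphisms stay non-isomorphisms'' half of the endomorphism argument.
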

\begin{proof}
Lemma \ref{mult by y} (1) gives an isomorphism
\[y_F\IH(\M^G)[\ell] \cong 
\begin{cases}
\IH(\M^G_F)[\ell-\rk F] & \text{if $F \le G$,} \\
0 & \text{otherwise.}
\end{cases}
\]
So, to find the multiplicity of $\IH(\M^F)$ with any shift in $\C^\bullet(\M)$, it is sufficient to find the multiplicity of $\IH(\M^F_F)$ in $y_F\C^\bullet(\M)$.  But $\M^F_F = (\M_F)^\varnothing$ has rank zero, so our result will follow if we can show that there is an isomorphism
\[y_F\bar{\C}^\bullet(\M) \cong \bar{\C}^\bullet(\M_F)[-\rk F].\]	

By Lemma \ref{complex} (1), we have an isomorphism
\[y_F{\C}^\bullet(\M) \cong {\C}^\bullet(\M_F)[-\rk F].\]  
We have a direct sum decomposition $\C^\bullet(\M) = \bar{\C}^\bullet(\M) \oplus \mathrm{P}^\bullet$ of complexes, where $\mathrm{P}^\bullet$ is chain-homotopy equivalent to zero.  It follows that the inclusion of $y_F\bar{\C}^\bullet(\M)$ into
$y_F\C^\bullet(\M) \cong {\C}^\bullet(\M_F)$ is a chain-homotopy equivalence.  On the other hand, by Proposition \ref{prop:Rouquier perversity} and Theorem \ref{thm:perverse complexes}, $y_F\bar{\C}^i(\M)$ can have $\IH(\M^G_F)[k]$ as a direct summand only if $k = (i-\crk G)/2-\rk F$.   Thus $y_F\bar{\C}^\bullet(\M)$ does not have any more two-step summands to which Lemma \ref{basic fact} would apply, so it is a minimal subcomplex of $y_F\C^\bullet(\M)$.  The result follows.
\end{proof}

\begin{proof}[Proof of Proposition \ref{prop:Rouquier multiplicities}, assuming Theorem \ref{theorem_all}]
If the rank $d$ of $\M$ is equal to zero, then $\C^\bullet(\M) = \bar{\C}^\bullet(\M)$ is a complex with only one nonzero term, which is $\IH(\M) = \IH(\M^\varnothing)$ placed in degree zero.  So
$Q_\M(t) = 1 = \tilde{Q}_\M(t)$ in this case.  

When $\rk \M > 0$, the inverse Kazhdan--Lusztig polynomial of $\M$ satisfies the following recursion \cite[Theorem 1.3]{GX}:
\begin{equation}\label{eqn:inverse KL}
\sum_{F \in \cL(\M)} (-1)^{\rk F}P_{\M^F}(t)Q_{\M_F}(t) = 0,
\end{equation}
or equivalently
$$Q_{\M}(t) = - \sum_{\varnothing\neq F\in\cL(\M)}(-1)^{\rk F}P_{\M^F}(t)Q_{\M_F}(t).$$
If we can show that $\tilde{Q}_\M(t)$ satisfies the same recursion, then the result will follow by induction on $\rk \M$.

Assume that $\rk \M > 0$.
By Lemma \ref{stalk acyclic} (1), the complex $\C^\bullet(\M)_\varnothing$ is acyclic, and since $\bar\C^\bullet(\M)_\varnothing$ is a direct summand of this complex, it is also acyclic.
By Proposition \ref{prop:Rouquier perversity} and Theorem \ref{thm:perverse complexes}, we have an isomorphism
\[\bar\C^i(\M)_\varnothing \cong \bigoplus_{k\ge 0}\,\bigoplus_{\crk F = i + 2k} \left(\IH(\M^F)_\varnothing[-k]\right)^{\oplus{q_k(\M_F)}},\]
where $q_k(\M_F)$ is the coefficient of $t^k$ in $\tilde{Q}_{\M_F}(t)$.
 Notice that for all terms of this sum, $i$ and $\crk F$ have the same parity. Since the Poincar\'e polynomial of $\IH(\M^F)_\varnothing$ is equal to $P_{\M^F}(t)$, the alternating sum of the Poincar\'e polynomials of $\bar\C^i(\M)_\varnothing$ for all $i$ is equal to
\[\sum_{F \in \cL(\M)} (-1)^{\crk F}P_{\M^F}(t)\tilde{Q}_{\M_F}(t)=(-1)^{\rk \M}\sum_{F \in \cL(\M)} (-1)^{\rk F}P_{\M^F}(t)\tilde{Q}_{\M_F}(t).\]
Since $\bar\C^\bullet(\M)_\varnothing$ is acyclic, the above sum is equal to zero. 
\end{proof}

Finally, we note the following explicit description of the second term $\bar{\C}^1(\M)$ of the Rouquier complex.  When the matroid $\M$ has odd rank $2\ell + 1$, the coefficient of $t^\ell$ in  \eqref{eqn:inverse KL} is nonzero only for $F = \varnothing$ and $F = E$, which implies that  
$P_\M(t)$ and $Q_\M(t)$ have the same coefficient of $t^\ell$.
Let $\tau(\M)$ denote this coefficient. (Note that it is only defined when the rank of $\M$ is odd.)

\begin{corollary}
	For any $\M$, there is an isomorphism 
	\[\bar{\C}^1(\M) \cong \bigoplus_{j\ge 0} \bigoplus_{\crk F = 2j+1} \IH(\M^F)[-j]^{\oplus \tau(\M_F)} .\]
\end{corollary}

\begin{proof}
	By Lemma \ref{lem:inverse mult at general flat}, $\bar{\C}^1(\M)$ is a direct sum of modules of the form $\IH(\M^F)[-j]$, where $2j+1$ is equal to the corank of $F$, and the multiplicity of this module
	is the coefficient of $t^j$ in $\tilde Q_{\M_F}(t)$.    By Proposition \ref{prop:Rouquier multiplicities}, this is the coefficient of $t^j$ in $Q_{\M_F}(t)$, which is by definition equal to $\tau(\M_F)$.
\end{proof}

\subsection{Equivariant inverse Kazhdan--Lusztig polynomials} As was mentioned in the introduction following Theorem \ref{thm:inverse}, the nonnegativity of coefficients of $Q_\M(t)$ extends to coefficients of equivariant inverse Kazhdan--Lusztig polynomials.  Let us explain the main ways the proof of Proposition \ref{prop:Rouquier multiplicities} must be modified to upgrade it from multiplicities to representations.

Suppose that a finite group $\Gamma$ acts on the matroid $\M$.  Then the big complex $\C^\bullet(\M)$ is $\Gamma$-equivariant, meaning that $\Gamma$ acts on it by chain maps, compatibly with the action on $\H(\M)$.  We wish to know that $\bar{\C}^\bullet(\M)$ is also $\Gamma$-equivariant.  Fixing a direct sum decomposition $\C^\bullet(\M) = \bar{\C}^\bullet(\M) \oplus \mathrm{P}^\bullet$, we can define the action of $g\in \Gamma$ on $\bar{\C}^\bullet(\M)$ by including into $\C^\bullet(\M)$, acting by $g$, and then projecting back.  Since the inclusion and projection are chain-homotopy equivalences, this defines an action up to chain homotopy.  But by Proposition \ref{prop:no chain homotopies}, all chain homotopies on $\bar{\C}^\bullet(\M)$ vanish, and so this defines an action by chain maps.  

\begin{remark}
	Although we have defined actions of $\Gamma$ on both $\bar{\C}^\bullet(\M)$ and $\C^\bullet(\M)$, the inclusion map is not necessarily $\Gamma$-equivariant, and the subcomplex $\bar{\C}^\bullet(\M)$ may not be invariant under the action of $\Gamma$ on $\C^\bullet(\M)$.  We do not need it for what follows, but we can require $\bar{\C}^\bullet(\M)$ to be $\Gamma$-invariant, by the following argument.  Let $\iota, \pi$ denote the inclusion and projection of $\bar{\C}^\bullet(\M)$ in $\C^\bullet(\M)$, and denote the action of $\Gamma$ on $\bar{\C}^\bullet(\M)$ defined above by $(g, x) \mapsto g\ast x = \pi(g \iota(x))$.  Then the map
	\[\tilde\iota\colon \bar{\C}^\bullet(\M) \to \C^\bullet(\M),\;\; \tilde\iota(x) = \frac{1}{|\Gamma|}\sum_{g\in \Gamma} g^{-1}(\iota(g\ast x)),\]
	is $\Gamma$-equivariant, and $\pi\tilde\iota$ is the identity on $\bar{\C}^\bullet(\M)$.  Thus the image of $\tilde\iota$ is a complementary summand to $\mathrm{P}^\bullet$ which is $\Gamma$-invariant.
\end{remark}

For a $\Gamma$-equivariant graded $\H(\M)$-module $\N$, the stalk $\N_\varnothing$ is a graded $\Gamma$-representation.  More generally, for any flat $F$, $\N_F$ is a graded representation of the stabilizer group $\Gamma_F$, and the sum $\bigoplus_{G\in \Gamma F} \N_G$ over the $\Gamma$-orbit of $F$ is a graded $\Gamma$-representation. Define a polynomial 
\[\tilde{Q}^\Gamma_\M(t) = \sum_{k\ge 0} \left[\bar{\C}^{d-2k}(\M)^k_\varnothing\right] t^k \in \VRep(\Gamma)[t],\]
where $\VRep(\Gamma)$ is the ring of virtual representations of $\Gamma$.
The first superscript $d-2k$ is the cohomological degree in the complex, and the second is the grading degree in the stalk.  Note that the only indecomposable summands of $\bar{\C}^{d-2k}(\M)$ which contribute to the stalk at $\varnothing$ in degree $k$ are the ones of the form $\IH(\M^\varnothing)[-k]$, so applying the dimension homomorphism $\VRep(\Gamma)[t] \to \Z[t]$ to $\tilde{Q}^\Gamma_\M(t)$ recovers the non-equivariant polynomial $\tilde{Q}_\M(t)$.
Then Lemma \ref{lem:inverse mult at general flat} generalizes as follows to the equivariant setting, with essentially the same proof.

\begin{lemma}
For any flat $F$, we have
\[\tilde{Q}^{\Gamma_F}_{\M_F}(t) = \sum_{k\ge 0} \left[\bar{\C}^{\crk F-2k}(\M)^k_F\right] t^k \in \VRep(\Gamma_F)[t].\]	
\end{lemma}

\begin{proposition}\label{prop:equivariant iso}
For any $i\ge 0$, there exists a $\Gamma$-equivariant isomorphism of $\H(\M)$-modules
\begin{equation}\label{eqn:equivariant Rouquier decomp}
\bar{\C}^i(\M) \cong \bigoplus_{F} \bar{\C}^i(\M)_F^{(\crk F - i)/2} \otimes \IH(\M^F)\left[\frac{i-\crk F}{2}\right],
\end{equation}
where the sum is over all flats such that $\crk F$ and $i$ have the same parity.  On the right side $\H(\M)$ acts on the second tensor factor only, and $g\in \Gamma$ acts by sending $\bar{\C}^i(\M)_F$ to $\bar{\C}^i(\M)_{gF}$ and sending $\IH(\M^F)$ to $\IH(\M^{gF})$. 
\end{proposition}
Assuming this for the moment, the fact that 
$\bar{\C}^\bullet(\M)_\varnothing$ is acyclic together with Lemma \ref{induced reps} gives
\[\sum_{F\in \cL(\M)} (-1)^{\rk F}\frac{|\Gamma_F|}{|\Gamma|}\Ind_{\Gamma_F}^\Gamma\left(P_{\M^F}^{\Gamma_F}(t)\tilde{Q}_{\M_F}^{\Gamma_F}(t)\right)=0.\]
This implies that $\tilde{Q}^\Gamma_\M(t)$ satisfies the recursion of Definition \ref{eqI}, and so $\tilde{Q}^\Gamma_\M(t) = {Q}^\Gamma_\M(t)$.  This immediately implies the following equivariant generalization of Theorem \ref{thm:inverse}.
\begin{theorem}
	The coefficients of $Q^\Gamma_\M(t)$ are honest $\Gamma$-representations.
\end{theorem}

To prove Proposition \ref{prop:equivariant iso}, first note that since $\bar{\C}^\bullet(\M)$ is a minimal perverse complex, Theorem \ref{thm:perverse complexes} gives an isomorphism
\[\bar{\C}^i(\M) \stackrel{\sim}\longrightarrow \bigoplus_F V_F \otimes \IH(\M^F)\left[\frac{i-\crk F}{2}\right]\]
for some finite-dimensional vector spaces $V_F$.  Taking the degree $(\crk F -i)/2$ part of the stalk at $F$ of both sides gives an isomorphism $\bar{\C}^i(\M)^{(\crk F-i)/2}_F \cong V_F$, since $\IH(\M^F)_F \cong \Q$ canonically, and the stalk at $F$ of all of the other summands vanishes in degree $(\crk F-i)/2$, by Proposition \ref{zero map}.
This gives an isomorphism of the form \eqref{eqn:equivariant Rouquier decomp}, but it may not be $\Gamma$-equivariant.  By construction, applying the stalk functor at a flat $F$ to this isomorphism and taking the 
 degree $(\crk F-i)/2$ part gives the identity isomorphism
\[\bar{\C}^i(\M)_F^{(\crk F - i)/2} \cong \bar{\C}^i (\M)_F^{(\crk F - i)/2} \otimes \Q,\]
which \emph{is} $\Gamma$-equivariant.  So averaging our isomorphism   
 over $g\in \Gamma$ gives a $\Gamma$-equivariant map which still induces the identity on the stalks at $F$ in degree $(\crk F - i)/2$.  The fact that the averaged map is an isomorphism now follows using Proposition \ref{prop:indecomposable} (2).


\section{Deletion induction for {$\IH(\M)$}}\label{sec:deletion induction}
Let $\M$ be a matroid of rank $d>0$ on the ground set $E$. 
The purpose of this section is to show that, 
if $\hyperlink{cd}{\CD^{<\frac{d}{2}}(\M)}$ holds, and all of the statements of Theorem \ref{theorem_all} hold for matroids whose ground
sets are proper subsets of $E$, then $\hyperlink{hli}{\HL_i(\M)}$ and $\hyperlink{hri}{\HR_i^{<\frac{d}{2}}(\M)}$ also hold.

Throughout this section, we assume the following hypotheses:
\begin{enumerate}[(1)] \itemsep 5pt
\item the element $i\in E$ is not a coloop and $\{i\}$ is a flat, that is, $i$ does not have any parallel element;
\item the statement $\hyperlink{cd}{\CD^{<\frac{d}{2}}(\M)}$ holds;
\item Theorem \ref{theorem_all} holds for any matroid whose ground set is a proper subset of $E$.
\end{enumerate}

In particular, $\hyperlink{pdo}{\PD_\o(\M)}$ and $\hyperlink{cdo}{\CD_\o(\M)}$ hold by Corollary \ref{CD for free}, and $\hyperlink{cd}{\CD^{>\frac{d}{2}}(\M)}$ holds by Remark \ref{remark_cd}. By Remark \ref{CDPD} and Proposition~\ref{CDNS}, the statement $\hyperlink{cd}{\CD^{<\frac{d}{2}}(\M)}$ implies $\hyperlink{pd}{\PD^{<\frac{d}{2}}(\M)}$ and $\hyperlink{ns}{\NS^{<\frac{d}{2}}(\M)}$.
Our goal is to show that these hypotheses imply $\hyperlink{hli}{\HL_i(\M)}$ and $\hyperlink{hri}{\HR_i^{<\frac{d}{2}}(\M)}$.

\subsection{The deletion map and the semi-small decomposition of $\CH(\M)$}\label{sec_deletion}
Fixing an element $i$ of $E$, there is an injective graded algebra homomorphism \cite[Section 3]{BHMPW}
\[
\theta_i = \theta_i^{\M} \colon \CH(\Mi)\to \CH(\M), \quad x_F\mapsto x_{F}+x_{F\cup i},
\]
where a variable in the target is set to zero if its label is not a flat of $\M$.  Just as we have done with the pushforward and pullback homomorphisms, we will omit the superscript when the ambient matroid is $\M$.  Then we have $\theta_i^{\Mo}(y_j)=y_j$ for any $j\in E\setminus i$. More generally, for any flat $G\in\cL(\M \setminus i)$, we have $\theta_i^{\Mo}(y_G)=y_{\bar{G}}$, where $\bar{G}$ is the closure of $G$ in $\M$. In particular, $\theta_i$ restricts to an injective homomorphism $\H(\M \setminus i) \to \H(\M)$.   

Let $\CHi$ be the image of the homomorphism $\theta_i^{\Mo}$, and let
\[
{\mathscr{S}}_i\coloneq \big\{F \ | \ \text{$F$ is a proper subset of $E \setminus i$ such that $F \in \mathscr{L}(\mathrm{M})$ and $F\cup i \in \mathscr{L}(\mathrm{M})$}\big\}.
\]
Note that, if $i$ is not a coloop, then $E\setminus i$ is not a flat of $\M$ and so $E \setminus i \notin \sS_i$ and $\rk (\M\setminus i) = \rk \M$. If $\{i\}$ is a flat, then $\varnothing \in \sS_i$. Moreover, for any $F\in {\mathscr{S}}_i$, $i$ is a coloop in the localization $\M^{F\cup i}$.
We will use the following result \cite[Theorem 1.5]{BHMPW}.  
\begin{theorem}\label{TheoremDecomposition}
If $i$ is not a coloop of $\mathrm{M}$,  there is  a direct sum decomposition of $\mathrm{CH}(\mathrm{M}) $ into indecomposable graded $\mathrm{CH}(\mathrm{M}\setminus i)$-modules
\[
\mathrm{CH}(\mathrm{M}) = \mathrm{CH}_{(i)} \oplus \bigoplus_{F \in \mathscr{S}_i} x_{F\cup i} \mathrm{CH}_{(i)}.
\]
All pairs of distinct summands are orthogonal for the Poincar\'e pairing of $\mathrm{CH}(\mathrm{M})$.
Moreover, for all $F \in\mathscr{S}_i$, we have 
\[x_{F\cup i}\CHi = \psi^{F\cup i}\!\left(\uCH(\M_{F\cup i})\otimes \theta^{\M^{F\cup i}}_i\CH(\Mip^F)\right),\]
where $\M^{F\cup i} \setminus i$ is identified with $\Mip^F$ because $i$ is a coloop in $\M^{F\cup i}$.  The homomorphism $\theta_i$ gives isomorphisms as $\H(\Mi)$-modules:
\[
\CHi\cong \CH(\Mi) \;\;\text{and}\;\; x_{F\cup i}\CHi\cong \uCH(\M_{F\cup i})\otimes \CH(\Mip^F)[-1],
\]
where the action on the first tensor factor is trivial.  If $F \ne \varnothing$, these are even isomorphisms of $\H_\o(\Mi)$-modules.
\end{theorem}

\begin{example}\label{ex:semismall U34}
Following Examples \ref{ex:rank2_2} and \ref{ex:uniform2}, we give explicit semi-small decompositions of $\CH(U_{2,3})$ and $\CH(U_{3,4})$. 

For the uniform matroid $U_{2,3}$, we have $\mathscr{S}_{3}=\{\varnothing\}$. Notice that $\{1\}, \{2\}\notin \mathscr{S}_{3}$, since $\{1, 3\}$ and $\{2, 3\}$ are not flats. 
For the deletion of the element $3$, the corresponding semi-small decomposition is
\[
\CH(U_{2,3})=\CH_{(3)}\oplus \, \Q x_3. 
\]
Here, $\Q x_3$ is a $\CH_{(3)}$-submodule, since $\CH^1(U_{2,2})$ is generated by $x_1$, $x_2$, and $y_1$, and evidently $\theta_3(x_1)\cdot x_3=\theta_3(x_2)\cdot x_3=\theta_3(y_1)\cdot x_3=0$. Therefore, the direct sum decomposition follows from the Poincar\'e duality of $\CH(U_{2,3})$ and 
\[
\deg_{\,U_{2,3}}(x_3^2)=\deg_{\,U_{2,3}}\!\big((y_1-x_\varnothing-x_2)x_3\big)=-\deg_{\,U_{2,3}}(x_\varnothing x_3)=-1.
\]
In this example, we see that the semi-small decomposition is in general different from the canonical decomposition $\CH(U_{2,3})=\H(U_{2,3})\oplus\Q x_\varnothing$ in  Example \ref{ex:rank2_2}.  

For the uniform matroid $U_{3,4}$, $\mathscr{S}_{4}=\{\varnothing, \{1\}, \{2\}, \{3\}\}$.  For the deletion of the element $4$, the corresponding semi-small decomposition of $\CH(U_{3,4})$ is 
\begin{align*}
\CH(U_{3,4})&=\CH_{(4)}\oplus \, x_{4}\CH_{(4)} \oplus \, x_{14}\CH_{(4)}\oplus\, x_{24}\CH_{(4)}\oplus\, x_{34}\CH_{(4)}\\
&=\CH_{(4)}\oplus \, (\Q x_{4}\oplus \Q x_4 x_{14})\oplus (\Q x_{14}\oplus \Q y_1 x_{14}) \oplus (\Q x_{24}\oplus \Q y_2 x_{24}) \oplus (\Q x_{34}\oplus \Q y_3 x_{34}).
\end{align*}
Since $\varphi^{4}(x_{14})=\varphi^{4}(x_{24})=\varphi^{4}(x_{34})$, we have $x_4 x_{14}=x_4 x_{24}=x_4 x_{34}$. Therefore, the above decomposition is invariant under permutations of $1$, $2$, and 3. 
Similar to the decomposition of $\CH(U_{2,3})$, it is also possible to directly check that each summand is a $\CH_{(4)}$-submodule and the restriction of the Poincar\'e pairing to every smaller summand is non-degenerate. The orthogonality between the smaller summands is obvious. 
For a more interesting example, the orthogonality between $\CH_{(4)}$ and $\Q x_{4}\oplus \Q x_4 x_{14}$ implies that $x_{14}(x_1+x_{14})^2 = 0$, which can also be verified by an explicit computation.
Indeed,
\[
x_{14}(x_1+x_{14})=x_{14}(x_\varnothing+x_1+x_{4}+x_{14})-x_{14}(x_\varnothing+x_4)=x_{14}y_{2}-x_{14}y_1=-x_{14}y_1.
\]
Therefore, 
\[
x_{14}(x_1+x_{14})^2=x_{14}y_1^2=0.
\]

Comparing this example with Example \ref{ex:uniform2}, we see that the semi-small decomposition and the canonical decomposition differ both in the number of terms and their dimensions. Moreover, although the summands $(\Q x_i\oplus y_i \Q x_{i4})$ for $i=1, 2, 3$ are $\H(\M)$-modules, the summand $(\Q x_4\oplus \Q x_4 x_{14})$ is not  an $\H(\M)$-module, since 
\[
x_4 x_{14}\notin \langle y_1, y_2, y_3, y_4\rangle \cdot x_4=\langle y_4 x_4\rangle=\langle x_\varnothing x_4\rangle.
\]
Thus, the semi-small decomposition in general is not a coarsening of the canonical decomposition, since the latter is a decomposition of $\H(\M)$-modules. 
\end{example}


\subsection{Pulling back to the deletion}

Let $\delta \colon \cL(\M)\to\cL(\M\setminus i)$ be the map given by $\delta(F) = F\setminus i$ for all flats $F$.  

\begin{lemma}\label{lemma:semi-small flats}
	The map $\delta$ is surjective and order-preserving.  For any flat $F \in \cL(\M\setminus i)$, we have
	\begin{itemize}[$\bullet$]\itemsep 5pt
		\item if $F \in \sS_i$, then
		$\delta^{-1}(F) = \{F, F\cup i\}$ and $\rk_{\M\setminus i} F = \rk_\M F = \rk_\M(F\cup i) - 1$, and
		\item if $F \notin \sS_i$, then $\delta^{-1}(F)$ is a single flat of $\M$ with the same rank as $F$.
	\end{itemize}
\end{lemma}
\begin{proof}
The deletion $\M\setminus i$ has the property that $\rk_{\M\setminus i}(S) = \rk_\M(S)$ for any subset $S$ of $E \setminus i$ (in fact, when matroids are defined using the rank function, this can be taken as the definition of $\M\setminus i$).  From this and the submodularity property of $\rk_\M$, it is easy to check that $F\setminus i$ is a flat of $\Mi$ for any flat $F$ of $\M$.  So the map $\delta$ is well-defined.

Given any flat $F \in \cL(\M\setminus i)$, we have $F\subseteq \overline{F}\setminus i$, where $\overline{F}$ is the closure of $F$ in $\M$, and 
\[
\rk_{\M\setminus i}\left(\overline{F}\setminus i\right)=\rk_{\M}\left(\overline{F}\setminus i\right)\leq \rk_{\M}\left(\overline{F}\right)=\rk_{\M}(F)=\rk_{\M\setminus i}(F).
\]
Since $F\subseteq \overline{F}\setminus i$ implies the opposite inequality, all of the ranks above must be equal.  If $\overline F$ contained an element $j \notin F \cup i$, then the fact that $F$ is a flat of $\Mi$ would imply
\[\rk_\M(\overline{F}) \ge \rk_\M(F \cup j) = \rk_{\Mi}(F\cup j) > \rk_{\Mi}(F),\]
which is a contradiction. It follows that $\overline{F}\setminus i=F$, and so  $\overline{F}$ equals either $F$ or $F\cup i$, and $\delta$ is surjective. Clearly, deleting a fixed element preserves the order. Finally, the two bullet points follow from the definition of $\sS_i$ and the above sequence of equalities of ranks. 
\end{proof}


Recall from Definition~\ref{ideal order filter} that, for any order ideal $\Sigma\subseteq\cL(\M)$, the ideal 
$\Upsilon_\Sigma\subseteq \H(\M)$ is equal to the $\Q$-vector space spanned by $\{y_G\mid G\in \Sigma\}$.
In this section, we will write $\Upsilon_\Sigma^\M$ for $\Sigma\subseteq\cL(\M)$ and $\Upsilon_\Sigma^{\Mi}$ for $\Sigma\subseteq\cL(\Mi)$
to make it clear which matroid we are working with at any given time.  The fact that for any $G \in \cL(\M\setminus i)$ we have $\theta_i^{\Mo}(y_G)=y_{\bar{G}}$, where $\bar{G}$ is the minimal element of $\delta^{-1}(G)$, immediately
implies the following lemma.

\begin{lemma}\label{deletion filters}
	For any order ideal $\Sigma$ in $\cL(\M\setminus i)$, we have
	\[
	\H(\M)\cdot \theta_i^{\Mo}(\Upsilon_{\Sigma}^{\M\setminus i})=\Upsilon_{\delta^{-1}(\Sigma)}^{\M}. 
	\]
\end{lemma}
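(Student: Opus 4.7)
The plan is to verify both inclusions separately, powered by a single structural observation: for every $G \in \cL(\Mi)$, the closure $\bar G$ of $G$ in $\M$ satisfies $\bar G \in \{G,\, G \cup \{i\}\}$, and in either case $\bar G \setminus i = G$; that is, $G \mapsto \bar G$ is a right inverse to $\delta$. This holds because any $j \ne i$ lying in $\bar G \setminus G$ would also lie in the closure of $G$ inside $\Mi$ (the rank functions of $\M$ and $\Mi$ agree on subsets of $E \setminus i$), contradicting the fact that $G$ is already a flat of $\Mi$.

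For the inclusion $\subseteq$, I would take a generator $y_G$ of $\Upsilon_\Sigma^{\Mi}$ with $G \in \Sigma$; then $\theta_i^\M(y_G) = y_{\bar G}$, and since $\delta(\bar G) = G \in \Sigma$ we have $\bar G \in \delta^{-1}(\Sigma)$, so $y_{\bar G} \in \Upsilon_{\delta^{-1}(\Sigma)}^\M$. Because the latter is an $\H(\M)$-ideal, multiplying by arbitrary elements of $\H(\M)$ keeps us inside it, finishing this direction.

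For the reverse inclusion $\supseteq$, it suffices to show that for each $F \in \delta^{-1}(\Sigma)$ the generator $y_F$ lies in $\H(\M) \cdot \theta_i^\M(\Upsilon_\Sigma^{\Mi})$. The starting point is $\theta_i^\M(y_{F\setminus i}) = y_{\overline{F\setminus i}}$, which already lies in the left-hand side because $F\setminus i \in \Sigma$. If $i \notin F$, then $F\setminus i = F$ is a flat of $\M$ and $\overline{F\setminus i} = F$, so $y_F$ is in the left-hand side. If $i \in F$, then $F$ itself is a flat of $\M$ containing $F\setminus i$, forcing $\overline{F\setminus i} \subseteq F$; combined with the structural observation applied to $G = F\setminus i$, this leaves only the two possibilities $\overline{F\setminus i} = F$ (done as before) or $\overline{F\setminus i} = F\setminus i$. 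In the latter subcase I would multiply by $y_i$ inside $\H(\M)$: the hypothesis that $i$ has no parallel element makes $\{i\}$ a rank-$1$ flat disjoint from the flat $F\setminus i$, and submodularity together with the strict inclusion $F\setminus i \subsetneq F$ pins down $\rk F = \rk(F\setminus i) + 1$, so that $y_i \cdot y_{F\setminus i} = y_F$ in the graded M\"obius algebra.

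The only mildly subtle moment is the last subcase, where $y_F$ must be recovered from $y_{F\setminus i}$ via multiplication by $y_i$; the decisive input is the hypothesis that $i$ has no parallel element, which guarantees that the rank defect in this product vanishes and $y_i \cdot y_{F\setminus i}$ is genuinely $y_F$ rather than zero.
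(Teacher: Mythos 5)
Your proof is correct and is exactly the unpacking of the paper's argument: the paper declares the lemma an immediate consequence of $\theta_i^{\M}(y_G)=y_{\bar G}$ and gives no further details, while you carefully verify both inclusions, including the one genuinely non-immediate point (when $i\in F$ and $F\setminus i$ is already a flat of $\M$, so that $y_F$ must be produced as $y_i\cdot\theta_i^{\M}(y_{F\setminus i})$, which is precisely why the ideal closure $\H(\M)\cdot(-)$ appears in the statement). No gaps.
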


For an $\H(\M)$-module $\N$, we let $\theta^*_i\N$ denote the $\H(\M\setminus i)$-module obtained by pulling back by the homomorphism $\theta^*_i$. 

\begin{proposition}\label{prop:semi-small stalk}
	Let $\N$ be a pure graded $\H(\M)$-module.  For any flat $F \in \cL(\M\setminus i)$, we have an isomorphism
	\[(\theta^*_i\N)_{F} \cong \bigoplus_{G\in \delta^{-1}(F)} \N_G[\rk_{\M\setminus i} F - \rk_\M G].\]
\end{proposition}

\begin{proof}
	By Lemma \ref{deletion filters} and Proposition \ref{pure}, we have
	\[(\theta^*_i\N)_F = \frac{\theta_i^{\Mo}\big(\Upsilon^{\Mi}_{\geq F}\big)\N[\rk F]}{\theta_i^{\Mo}\big(\Upsilon^{\Mi}_{> F}\big)\N[\rk F]}
	= \frac{\Upsilon^{\M}_{\delta^{-1}(\Sigma_{\geq F})}\N[\rk F]}{\Upsilon^{\M}_{\delta^{-1}(\Sigma_{>F})}\N[\rk F]}.\]
	If $\delta^{-1}(F)$ has a single element, then Proposition \ref{pure} immediately implies the result.  Otherwise, we can put $\delta^{-1}(F) = \{G_1, G_2\}$ with $G_1 < G_2$.  Let $\Sigma_j = \delta^{-1}(\Sigma_{>F}) \cup \{G_i \mid i>j\}$ for $j=0,1,2$.  Since $\delta$ is order-preserving, these are order ideals.  Then Proposition \ref{pure} gives isomorphisms
	\[\frac{\Upsilon^\M_{\Sigma_{j-1}}\N[\rk F]}{\Upsilon^\M_{\Sigma_j}\N[\rk F]} = \frac{\N_{\Sigma_{j-1}}[\rk F]}{\N_{\Sigma_j}[\rk F]} \cong \N_{G_j}[\rk F - \rk G_j]\]
	for $j = 1, 2$.   Since $\Sigma_0 = \delta^{-1}(\Sigma_{\ge F})$ and $\Sigma_2 = \delta^{-1}(\Sigma_{>F})$, the result follows.   
\end{proof}

\subsection{The hard Lefschetz theorem}\label{sec:delhl}
We would like to apply Proposition \ref{prop:semi-small stalk} to the pullback module $\theta^*_i\IH(\M)$, but since at this point in the induction we are not assuming that $\hyperlink{cd}{\CD(\M)}$ holds in middle degree, we do not yet know that $\IH(\M)$ is pure. Instead, we modify $\IH(\M)$ slightly to produce a module which we can show is a direct summand of $\CH(\M)$, and hence is pure.  This module will not be used outside of Section \ref{sec:deletion induction}.  Let 
\[
\tIH^k(\M) \coloneq 
\begin{cases}
\IH^k(\M) &\text{if $k\neq d/2$,}\\
\IH_\o^k(\M) &\text{if $k=d/2$.}
\end{cases}
\]
Equivalently, we can define
\[
\tuJ^k(\M) \coloneq 
\begin{cases}
\uJ^k(\M) &\text{if $k\neq d/2$,}\\
0 &\text{if $k= d/2$,}
\end{cases}
\]
and then define $\tIH(\M)$ to be the orthogonal complement to $\psi^\varnothing_\Mo\tuJ(\M)$ inside of $\IH_\o(\M)$. In particular, when $d$ is odd, $\tIH(\M)=\IH(\M)$ and $\tuJ(\M)=\uJ(\M)$.  Note that $\tIH(\M)$ satisfies hard Lefschetz for an element $y \in \H^1(\M)$ if and only if $\IH(\M)$ does, and the Hodge-Riemann relations with respect to $y$ are the same for $\tIH(\M)$ and $\IH(\M)$ except in degree $d/2$.

As in Example \ref{ex:rank2_2}, when $\M=U_{2,3}$, $\tIH(\M) =\CH(\M)$. Similarly, as in Example \ref{ex:uniform2}, when $\M=U_{3,4}$, the rank of $\M$ is odd, and hence $\tIH(\M) =\IH_\o(\M)=\IH(\M)$. 

\begin{lemma}\label{tilde PD}
The subspace $\tIH(\M)\subseteq\IH_\o(\M)$ is an $\H(\M)$-submodule.  
Moreover, $\tIH(\M)$ satisfies Poincar\'e duality and it is a direct summand of $\CH(\M)$.  In particular, it is pure.

\end{lemma}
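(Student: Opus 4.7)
The plan is to dispatch the odd-rank case and then verify the three assertions in order for $d = 2m$.

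When $d$ is odd, the condition $k = d/2$ has no integer solution, so $\tIH(\M) = \IH(\M)$. Moreover, Remark \ref{remark_cd} upgrades $\CD^{<d/2}(\M)$ to the full $\CD(\M)$; hence $\PD(\M)$ holds by Remark \ref{CDPD}, and $\IH(\M)$ is an $\H(\M)$-submodule of $\CH(\M)$ by construction. So I may assume $d = 2m$. From $\CD^{<m}(\M)$ and Remark \ref{remark_cd} I get $\CD^k(\M)$ (and hence $\PD^k(\M)$ by Remark \ref{CDPD}) for every $k \neq m$, while Corollary \ref{CD for free} supplies $\CD_\circ(\M)$ and $\PD_\circ(\M)$.

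For the submodule statement, I will check that $y_j \cdot \tIH^k(\M) \subseteq \tIH^{k+1}(\M)$ for every $j \in E$ and every $k$. Since $\IH(\M)$ and $\IH_\circ(\M)$ are each $\H(\M)$-submodules of $\CH(\M)$, this is automatic except in the single case $k = m$, where I must show that $y_j \eta \in \IH^{m+1}(\M)$ whenever $\eta \in \IH_\circ^m(\M)$. Using $\CD^{m+1}(\M)$ together with the pairwise orthogonality of summands provided by Proposition \ref{summands}, it suffices to verify that $y_j \eta$ pairs to zero against $\psi^F_\M\bigl(\uJ(\M_F) \otimes \CH(\M^F)\bigr)$ for every proper flat $F$. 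For nonempty $F$ this is automatic from the fact that $y_j\eta \in \IH_\circ^{m+1}(\M)$. For $F = \varnothing$, given $\nu \in \uJ^{m-2}(\M)$, Lemma \ref{adjoint} rewrites the pairing as $\udeg_\M\bigl(\varphi^\varnothing_\M(y_j) \cdot \varphi^\varnothing_\M(\eta) \cdot \nu\bigr)$, and this vanishes because $\varphi^\varnothing_\M(y_j) = 0$ by Proposition \ref{lemma_xdegree} (2). This one computation is the only substantive ingredient, and I expect it to be the main (albeit brief) obstacle.

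Poincar\'e duality for $\tIH(\M)$ is then immediate: for $k \neq m$ it is precisely $\PD^k(\M)$; for $k = m$ both copies of $\tIH^m(\M)$ equal $\IH_\circ^m(\M)$ and non-degeneracy is $\PD_\circ(\M)$. Finally, to exhibit $\tIH(\M)$ as an $\H(\M)$-module direct summand of $\CH(\M)$, I take its Poincar\'e-orthogonal complement $\tIH(\M)^\perp$ in $\CH(\M)$. Self-adjointness of multiplication by $\H(\M)$ together with the submodule property just established makes $\tIH(\M)^\perp$ itself an $\H(\M)$-submodule, and the non-degeneracy of the Poincar\'e pairing on both $\CH(\M)$ (Theorem \ref{TheoremChowKahlerPackage}) and on $\tIH(\M)$ (Poincar\'e duality, just proved) forces $\CH(\M) = \tIH(\M) \oplus \tIH(\M)^\perp$ via the general principle recorded in Remark \ref{CDPD}.
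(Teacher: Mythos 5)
Your proof is correct and rests on exactly the same ingredients as the paper's: the fact that the $y_j$ annihilate the image of $\psi^\varnothing_\M$ (your computation $\varphi^\varnothing_\M(y_j)=0$ is the paper's observation that $\Upsilon_{>\varnothing}$ kills $x_\varnothing$), Poincar\'e duality in degrees $\neq d/2$ from $\CD^{<\frac{d}{2}}(\M)$, and $\PD_\o(\M)$/$\CD_\o(\M)$ in the middle degree, followed by an orthogonal-complement argument. The only difference is presentational: the paper packages the argument through the complementary submodule $\psi^\varnothing_\M\tuJ(\M)$ inside $\IH_\o(\M)$, whereas you verify the three assertions for $\tIH(\M)$ degree by degree and take the complement inside $\CH(\M)$ directly.
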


\begin{proof}
The maximal ideal $\Upsilon_{>\varnothing}$ of $\H(\M)$ annihilates $x_\varnothing$, and hence annihilates the image of $\psi^\varnothing_\Mo$.  
Therefore, $\psi^\varnothing_\Mo\tuJ(\M)$ is an $\H(\M)$-submodule, and thus so is its orthogonal complement. The statement $\hyperlink{cd}{\CD^{<\frac{d}{2}}(\M)}$ implies that
$\psi^\varnothing_\Mo\tuJ(\M)$ satisfies Poincar\'e duality, and the statement $\hyperlink{cdo}{\CD_\o(\M)}$ implies that $\IH_\o(\M)$ satisfies Poincar\'e duality.
Therefore, $\tIH(\M)$ satisfies Poincar\'e duality and we have an orthogonal decomposition of $\H(\M)$-modules
\begin{equation}\label{ihcircdecomp}
\IH_\o(\M)=\tIH(\M)\oplus \psi^\varnothing_\Mo\tuJ(\M). 
\end{equation}
By $\hyperlink{cdo}{\CD_\o(\M)}$, $\IH_\o(\M)$ is a direct summand of $\CH(\M)$, and hence the lemma follows. 
\end{proof}


\begin{lemma}\label{tilde stalks}
The inclusion $\tIH(\M)\subseteq\IH_\o(\M)$
induces an isomorphism
\[
\tIH(\M)_F\cong \IH_\o(\M)_F
\]
for each nonempty flat $F$.
\end{lemma}

\begin{proof}
The isomorphism follows from multiplying Equation~\eqref{ihcircdecomp} by $y_F$, since the image of $\psi^\varnothing_\Mo$ is annihilated by $y_F$.  
\end{proof}


\begin{proposition}\label{prop:pullback perverse}
	The pullback module $\theta^*_i\tIH(\M)$ is a perverse $\H(\M\setminus i)$-module, when considered as a complex placed in degree $0$.
\end{proposition}

\begin{proof}
	Theorem \ref{TheoremDecomposition} implies that $\theta^*_i \CH(\M)$ is a pure $\H(\M\setminus i)$-module, and so the direct summand $\theta^*_i\tIH(\M)$ is pure.
	
	Take any flat $F\in \cL(\M\setminus i)$.  We will show that the stalk $(\theta^*_i\tIH(\M))_F$ vanishes in degrees strictly greater than $(\crk F)/2$.  By Proposition \ref{prop:semi-small stalk}, it is enough to prove that 
	\[\tIH(\M)_G[\rk_{\M\setminus i} F - \rk_\M G]\]
	vanishes in the same degrees for every $G\in \delta^{-1}(F)$.	
	
	The first case is $F = E \setminus i$.  Then $\delta^{-1}(F) = \{E\}$ and $\rk_\M E = \rk_{\Mi} F$, since $i$ is not a coloop in $\M$.  We have
	\[\tIH(\M)_E[\rk_{\M\setminus i} F - \rk_\M E] = \tIH(\M)_E \cong \Q,\]
	placed in degree $0 = (\crk F)/2$.  So the claimed vanishing holds in this case.
	
	Now suppose that $F$ is a proper flat of $\Mi$, and take any $G \in \delta^{-1}(F)$. Then 
	$\rk_\M G$ is either $\rk_{\Mi} F$ (if $F \notin \sS_i$ or $G = F$) or $\rk_{\Mi} F +1$ (if $F \in \sS_i$ and $G = F\cup i$).  Let us suppose first that $G \ne \varnothing$.  Then Lemma \ref{tilde stalks} and Proposition \ref{zero map} (2) show that
	\[\tIH(\M)_G[\rk_{\M\setminus i} F - \rk_\M G] \cong \IH_\o(\M)_G[\rk_{\M\setminus i} F - \rk_\M G]\]
	vanishes in degrees greater than or equal to 
	\[(\crk G)/2 + \rk_\M G - \rk_{\Mi} F = (\crk F)/2 + (\rk_\M G - \rk_{\Mi} F)/2 \le (\crk F)/2 + 1/2.\]
	In particular, it vanishes in degrees strictly greater than $(\crk F)/2$, as desired.
	
	Next, suppose that $G = F = \varnothing$.  Following the proof of Proposition \ref{zero map}, note that  
	Lemmas \ref{stalk-costalk} and \ref{tilde PD} give an isomorphism
	$\tIH(\M)_{\varnothing} \cong (\tIH(\M)_{[\varnothing]})^*[-d]$.
	On the other hand, the socles $\tIH(\M)_{[\varnothing]}$ and $\IH(\M)_{[\varnothing]}$ are clearly equal in all degrees except $d/2$, and so $\hyperlink{ns}{\NS^{<\frac{d}{2}}(\M)}$ implies that $\tIH(\M)_{[\varnothing]}$ vanishes in degrees below $d/2$.  Thus $\tIH(\M)_{\varnothing}$ vanishes in degrees above $d/2$.  
	
	Finally, to see that the costalk conditions hold, we note that Poincar\'e duality gives an isomorphism $\tIH(\M)^\ast \cong \tIH(\M)[d]$ and so Lemma \ref{stalk-costalk} implies that the costalk conditions follow from the stalk conditions.	
\end{proof}

Since all of our statements are true for $\M \setminus i$ by induction, Theorem \ref{thm:perverse complexes} allows us to deduce the following.

\begin{corollary}\label{centering}
The graded $\H(\Mi)$-module $\theta_i^*\tIH(\M)$ is isomorphic to a direct sum of modules of the form $\IH(\Mip^F)[-(\crk F)/2]$
for various flats $F\in\cL(\Mi)$ of even corank.
\end{corollary}

\begin{proposition}\label{HLi}
The statement $\hyperlink{hli}{\HL_i(\M)}$ holds.
\end{proposition}

\begin{proof}
	Let $y' = \sum_{j \in E \setminus i} c_jy_j$ where all $c_j > 0$.  Then for any flat $F \in \cL(\M\setminus i)$ of even corank, the statement $\HL((\M\setminus i)^F)$ holds because the ground set $F$ is a proper subset of $E$.  So for each $0 \le k \le (\rk F)/2$, multiplication by 
	$(y')^{\rk F - 2k}$ on $\IH(\Mip^F)[-(\crk F)/2]$ gives an isomorphism between degrees 
	\[k + \frac{\crk F}{2} \;\; \mbox{and}\;\; (\rk F - k) + \frac{\crk F}{2} = d - \left(k + \frac{\crk F}{2}\right).\]
	Since Corollary \ref{centering} says that $\theta_i^*\tIH(\M)$ is isomorphic to a direct sum of such modules, and since $y'$ is in the image of $\theta_i$, this shows that $y'$ acts as a degree $d$ Lefschetz operator on $\tIH(\M)$.  Since $\tIH(\M) = \IH(\M)$ except in the middle degree $d/2$, where the hard Lefschetz property is trivial, this proves the statement $\hyperlink{hli}{\HL_i(\M)}$.
\end{proof}


\subsection{The Hodge--Riemann relations away from middle degree}\label{sec:first HR}
Next we prove the statement $\hyperlink{hri}{\HR_i^{<\frac{d}{2}}(\M)}$, which says that the Hodge--Riemann inequalities hold for $\IH(\M)$ with respect to multiplication by an element $y' = \sum_{j\in E\setminus i} c_jy_j$ with all $c_j > 0$.  Since $y'$ can also be considered as an element of $\H(\Mi)$, we can show this by checking that this holds for each summand in the decomposition of $\theta_i^*\tIH(\M)$ provided by Corollary \ref{centering}. 

We will need a lemma comparing two natural pairings on these summands.
Let $F$ be a nonempty flat of $\Mi$ of even corank, and suppose we have any inclusion of $\H(\Mi)$-modules
\[f \colon \IH(\Mip^F)[-(\crk F)/2]\hookrightarrow \theta_i^*\tIH(\M),\]
such as the inclusions of direct summands in the decomposition of Corollary \ref{centering}. 
There are two pairings on $\IH(\Mip^F)$ that are \textit{a priori} different:  the one induced by the inclusion of $\IH(\Mip^F)$ into $\CH(\Mip^F)$,
and the one induced by the inclusion $f$ and the pairing on $\CH(\M)$.  Note that the shift by $(\crk F)/2$ ensures that these pairings have the same degree.

\begin{lemma}\label{pair and compare}
These two pairings are related by a constant factor $c\in\Q$ with $(-1)^{\frac{\crk F}{2}}c > 0$.
\end{lemma}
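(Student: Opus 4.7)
The plan is to establish the proportionality constant $c$ via a Schur-type argument, and then to determine its sign by evaluating both pairings on an explicit pair of classes.

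For the proportionality, both pairings are symmetric, $\H(\Mi)$-invariant bilinear forms of total degree $d$ on $\IH(\Mip^F)[-\crk F/2]$. Proposition \ref{prop:indecomposable}(1) applied to $\Mip^F$ (valid by the inductive hypothesis, as $\Mip^F$ has ground set contained in $E \setminus i$) shows that $\IH(\Mip^F)$ has only scalar endomorphisms as an $\H(\Mip^F)$-module; since the $\H(\Mi)$-action factors through $\H(\Mip^F)$, the same holds for $\H(\Mi)$-module endomorphisms of degree zero. Combined with Poincar\'e duality $\PD(\Mip^F)$, the space of symmetric $\H(\Mi)$-invariant bilinear forms of total degree $d$ on $\IH(\Mip^F)[-\crk F/2]$ is one-dimensional, so the induced pairing must be a scalar multiple $c$ of the intrinsic one.

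For the sign, I will evaluate the proportionality on the bottom class $1 \in \IH^0(\Mip^F)$ and the top class $y_F \in \IH^r(\Mip^F)$ (where $r = \rk F$). Using $\H(\Mi)$-linearity of $f$ together with the identification $\theta_i^\M(y_F) = y_{\bar F}$ (where $\bar F$ is the closure of $F$ in $\M$), and then Propositions \ref{Prop_ymult} and \ref{lemma_degy}, this gives
\[
c = \deg_\M\bigl(f(1)\,f(y_F)\bigr) = \deg_\M\bigl(y_{\bar F}\,f(1)^2\bigr) = \deg_{\M_{\bar F}}\bigl(\varphi_{\bar F}^\M(f(1))^2\bigr).
\]
By Lemma \ref{lem_imageofIH}(1), $\varphi_{\bar F}^\M(f(1))$ lies in $\IH^{\crk F/2}(\M_{\bar F})$, the middle degree of $\IH(\M_{\bar F})$ (since $\rk \M_{\bar F} = \crk F$). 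I plan to finish by locating the summand $\IH(\Mip^F)[-\crk F/2]$ inside the semi-small decomposition $\CH(\M) = \CHi \oplus \bigoplus_{G \in \mathscr{S}_i} x_{G\cup i}\CHi$ of Theorem \ref{TheoremDecomposition} and tracing the pairing through the nested $\H(\Mi)$-refinements of each $\CH(\Mi)$-summand (using $\CD(\Mi)$, $\CD(\M^G)$, and $\uCD(\M_{G\cup i})$ inductively), picking up sign contributions from Lemma \ref{we need this}(1) and Lemma \ref{lemma_J} at each pushforward step. In the base case $F = E\setminus i$, the inclusion $f$ is just $\theta_i^\M|_{\IH(\Mi)}$ and $c = \deg_\M(\theta_i^\M(y_{E\setminus i})) = \deg_\M(y_E) = 1 > 0$, matching $(-1)^0 c > 0$.

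The hard part will be the sign bookkeeping in the inductive step: each application of a pushforward $\psi^G$ or $\upsi^G$ in the factorization of $f$ contributes a factor of $-1$ via Lemma \ref{we need this}(1), and the total count must be verified to equal $\crk F/2$ modulo $2$. A key subtlety is that the claim $(-1)^{\crk F/2} c > 0$ is required to hold for \emph{any} direct summand inclusion $f$, which will force us to show that the bilinear form on each isotypic component of $\tIH(\M)$ (in the decomposition of Proposition \ref{centering}) is $(-1)^{\crk F/2}$-definite; this is the technical core of the sign determination.
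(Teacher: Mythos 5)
Your proportionality argument is the same as the paper's, and your reduction of the sign to the quantity $\deg_{\M_{\bar F}}\bigl(\varphi_{\bar F}^\M(f(1))^2\bigr)$ is exactly where the paper's proof also arrives. But from that point on you leave the actual content of the lemma unproved: you propose to determine the sign by tracing $f$ through the semi-small decomposition of Theorem \ref{TheoremDecomposition} and counting $-1$'s from Lemma \ref{we need this}, and you explicitly defer the ``hard part'' of verifying that the count is $\crk F/2 \bmod 2$. That bookkeeping is not carried out, and it is also not how the sign is controlled: the summand inclusion $f$ is an abstract splitting coming from the Krull--Schmidt theorem, not an explicit composite of pushforward maps, so there is no canonical ``factorization of $f$'' whose steps you could count.

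The missing idea is much simpler. Since $f$ is $\H(\Mi)$-linear and $y_j\cdot 1 = 0$ in $\IH(\Mip^F)$ for every $j$ outside $F$, the class $\varphi_{\bar F}^\M(f(1))$ is annihilated by every $y_j$ with $j\in E\setminus\bar F$; hence it is a \emph{primitive} class in the middle degree $\crk F/2$ of $\IH(\M_{\bar F})$ (here one uses Lemma \ref{mult by y} (1) to identify $\varphi_{\bar F}^\M\IH^{\crk F/2}(\M)$ with $\IH^{\crk F/2}(\M_{\bar F})$, noting $\crk F/2 < d/2$ since $F\neq\varnothing$ so that $\tIH$ agrees with $\IH$ in that degree). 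It is nonzero because $c\neq 0$. The Hodge--Riemann relations $\HR(\M_{\bar F})$ --- available by induction, as $\M_{\bar F}$ has ground set $E\setminus\bar F\subsetneq E$ --- then say that the self-pairing of a nonzero primitive class in degree $\crk F/2$ has sign $(-1)^{\crk F/2}$, which is the claim. Without invoking $\HR$ for the smaller matroid at this step, your argument does not close.
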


\begin{proof}
Both pairings are compatible with the $\H(\Mi)$-module structure in the sense that $\langle \eta\xi,\sigma\rangle = \langle \xi,\eta\sigma\rangle$ for any $\eta\in\H(\Mi)$ and $\xi,\sigma\in \IH(\Mip^F)$. Thus, both are given by isomorphisms 
\[
\IH(\Mip^F)^*\cong \IH(\Mip^F)[\rk F]
\]
of graded $\H(\Mi)$-modules.
Proposition \ref{prop:indecomposable} (1) implies that $\IH(\Mip^F)$ has only scalar endomorphisms, and hence any two such isomorphisms must be related by a nonzero scalar factor $c\in\Q$.  

To compute the sign of $c$, we pair $1\in\IH(\Mip^F)$ with $y_F = y_F\cdot 1 \in\IH(\Mip^F)$.
Inside $\CH(\Mip^F)$, they pair to $1$.  The second pairing equals the pairing 
of $f(1)$ and 
\[f(y_F) = \theta_i(y_F) f(1) = y_{\bar F} f(1)\]
inside $\tIH(\M) \subseteq \CH(\M)$, where $\bar{F}$ is the closure of $F$ in $\M$.
By Proposition \ref{lemma_degy} and Proposition \ref{Prop_ymult}, this pairing is equal to the Poincar\'e pairing of $\varphi^\Mo_{\bar F}(f(1))$ with itself inside $\varphi^\Mo_{\bar F}\tIH^{\frac{\crk F}{2}}(\M)\subseteq \CH(\M_{\bar F})$.
Since $F$ is nonempty, $(\crk F)/2$ is strictly less than $d/2$, so $\tIH^{\frac{\crk F}{2}}(\M) = \IH^{\frac{\crk F}{2}}(\M)$.
Applying Lemma~\ref{mult by y} (1), we see that $\varphi^\Mo_{\bar F}\IH^{\frac{\crk F}{2}}(\M)$ is equal to 
$\IH^{\frac{\crk F}{2}}(\M_{\bar F})$. Since $\varphi^\Mo_{\bar F}(f(1))$ is annihilated by $y_j$ for all $j\in E\setminus \bar F$, it is a primitive class in $\IH^{\frac{\crk F}{2}}(\M_{\bar F})$ with respect to multiplication by any positive sum $\sum_{j\in E \setminus \bar{F}} c_jy_j$. Therefore, the sign of its pairing with itself is equal to $(-1)^{\frac{\crk F}{2}}$ by $\hyperlink{hr}{\HR(\M_{\bar F})}$.
\end{proof}

\begin{example}\label{ex:Poincare U34}
For a rank 2 matroid $\M$, Lemma \ref{pair and compare} says that $\deg_\M(x_i^2)<0$. In fact, for any $j\neq i$, 
\[
0=y_jx_i=(x_\varnothing+x_i)x_i,
\]
and hence $\deg_\M(x_i^2)=-\deg_\M(x_\varnothing x_i)=-1$. 

As in the more interesting Example \ref{ex:semismall U34}, when $\M=U_{3,4}$ and $i=4$, Lemma \ref{pair and compare} asserts that the Poincar\'e pairing between $x_{4}$ and $x_{4}x_{14}$ is negative, and the Poincar\'e pairing between $x_{i4}$ and $y_{i}x_{i4}$ is also negative for $i=1, 2, 3$. For a more concrete illustration of the mechanism behind the abstract lemma, we verify these assertions directly from the definition. By the definition of augmented Chow ring, we have
\[
y_1=x_\varnothing+x_2+x_3+x_4+x_{23}+x_{24}+x_{34}.
\]
Multiplying the above equation by $x_4x_{14}$ and using the defining relations of the augmented Chow ring, we have
\[
0=x_{\varnothing}x_4 x_{14}+x_4^2 x_{14}. 
\]
By the definition of degree map (Definition \ref{DefinitionDegreemap}), we have $\deg_\M(x_{\varnothing}x_4 x_{14})=1$. Thus, $\deg_\M(x_4^2 x_{14})=-1$, which confirms the first assertion. Multiplying the above expression of $y_1$ by $y_2 x_{24}$, we have
\[
0=y_2x_2x_{24}+y_2x_{24}^2.
\]
Using the defining formula for $y_2$, we can deduce that $y_2 x_2=x_\varnothing x_2$. Thus, 
\[
\deg_\M(y_2x_2x_{24})=\deg_\M(x_\varnothing x_2x_{24})=1,
\]
and hence $\deg_\M(y_2 x_{24}^2)=-1$. By symmetry, we have also confirmed the second assertion. 

If we rescale the map $f$ by a nonzero constant $\lambda$, then the constant $c$ is multiplied by $\lambda^2$, and hence its sign remains unchanged.
\end{example}

\begin{corollary}\label{HRi}
The statement $\hyperlink{hri}{\HR_i^{<\frac{d}{2}}(\M)}$ holds.
\end{corollary}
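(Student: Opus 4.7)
The plan is to transfer the Hodge--Riemann relations from the summands $\IH((\M\setminus i)^F)$ of Proposition \ref{centering} up to $\tIH(\M)$, using the pairing identification in Lemma \ref{pair and compare}. Since $\tIH^k(\M) = \IH^k(\M)$ for $k \ne d/2$, it is enough to establish the Hodge--Riemann relations of degree $d$ on $\tIH(\M)$ with respect to $y'$ in all degrees $k < d/2$.

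My first step is to upgrade the $\H(\M\setminus i)$-module isomorphism of Proposition \ref{centering} to an orthogonal decomposition of $\tIH(\M)$. Pick a nonempty flat $F_1 \in \cL(\M\setminus i)$ of even corank appearing as a summand, and fix an embedding of $\N_{F_1} \coloneq \IH((\M\setminus i)^{F_1})[-(\crk F_1)/2]$ as a direct summand of $\tIH(\M)$. By Lemma \ref{pair and compare}, the Poincar\'e pairing restricts non-degenerately to $\N_{F_1}$, so $\tIH(\M) = \N_{F_1} \oplus \N_{F_1}^\perp$ is an orthogonal direct sum of $\H(\M\setminus i)$-submodules, the second of which again carries a non-degenerate pairing. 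The Krull--Schmidt theorem identifies $\N_{F_1}^\perp$ with the direct sum of the remaining terms in Proposition \ref{centering}. Iterating over all nonempty flats of even corank, I obtain an orthogonal decomposition $\tIH(\M) = \bigoplus_F \N_F$ with $\N_F \cong \IH((\M\setminus i)^F)[-(\crk F)/2]$. The single $F = \varnothing$ summand is concentrated in middle degree $d/2$ and plays no role below that degree.

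Next, I will verify the Hodge--Riemann relations on each $\N_F$ for $F$ nonempty. The element $y'$ acts on $\IH((\M\setminus i)^F)$ as $y'|_F = \sum_{j \in F} c_j y_j$, which is a positive linear combination in $\H((\M\setminus i)^F)$, so the inductive hypothesis $\HR((\M\setminus i)^F)$ applies. Setting $k' \coloneq k - (\crk F)/2$, the identity $d - 2k = \rk F - 2k'$ matches primitive classes in degree $k'$ of $\IH((\M\setminus i)^F)$ with primitive classes in degree $k$ of $\N_F$. Combining this with Lemma \ref{pair and compare}, for $\alpha \in \N_F^k$ one has
\[
(-1)^k \langle (y')^{d-2k}\alpha, \alpha\rangle_{\tIH(\M)} = \bigl[(-1)^{(\crk F)/2} c\bigr] \cdot (-1)^{k'} \deg_{(\M\setminus i)^F}\bigl((y'|_F)^{\rk F - 2k'}\alpha^2\bigr),
\]
and both bracketed factors are positive on nonzero primitive $\alpha$: the first by Lemma \ref{pair and compare}, the second by the inductive Hodge--Riemann relations. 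Summing over $F$ and using orthogonality of the $\N_F$'s yields the Hodge--Riemann relations for $\tIH(\M)$ in all degrees $k < d/2$, which is precisely $\HR_i^{<\frac{d}{2}}(\M)$.

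I expect the main obstacle to be the orthogonalization step, which depends on the uniform non-degeneracy statement of Lemma \ref{pair and compare} being applicable at every stage of a Krull--Schmidt peeling off of summands. Once that is in place, the sign juggling through the factor $(-1)^{(\crk F)/2}$ is exactly what Lemma \ref{pair and compare} was designed to cancel, and the translation of primitives under the shift $[-(\crk F)/2]$ is an arithmetic identity.
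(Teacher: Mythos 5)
Your proof is correct and follows essentially the same route as the paper: decompose $\tIH(\M)$ via Proposition \ref{centering}, compare the restricted Poincar\'e pairing on each summand with the intrinsic one via Lemma \ref{pair and compare}, and invoke $\HR((\M\setminus i)^F)$ inductively. The only difference is that you spell out the orthogonalization of the summands (by peeling them off one at a time using the non-degeneracy from Lemma \ref{pair and compare} and Krull--Schmidt), a step the paper's proof leaves implicit when it asserts that it suffices to check the Hodge--Riemann relations on each summand.
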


\begin{proof}
Since the statement does not involve the middle degree, we can replace $\IH(\M)$ with $\tIH(\M)$.  By Corollary \ref{centering}, it suffices to prove that each summand $\IH(\Mip^F)[-(\crk F)/2]$ of $\theta_i^*\tIH(\M)$ satisfies the Hodge--Riemann relations.  Again, since the statement does not involve the middle degree, we can assume that $F$ is nonempty.
Then the statement follows from Lemma \ref{pair and compare} and $\hyperlink{hr}{\hyperlink{hr}{\HR(\Mip^F)}}$. 
\end{proof}

\section{Deletion induction for {$\uIHi$}}\label{sec:underlined deletion induction}
Let $\M$ be a matroid on the ground set $E$. The purpose of this section is to show that $\hyperlink{uhli}{\uHL_i(\M)}$ and $\hyperlink{uhri}{\uHR_i(\M)}$ hold under the following hypotheses, which we assume throughout this section:

\begin{enumerate}[(1)] \itemsep 5pt
\item the element $i\in E$ is not a coloop and $\{i\}$ is a flat, that is, $i$ does not have any parallel element;
\item $\hyperlink{cdo}{\CD_\o(\M)}$ and $\hyperlink{ucd}{\uCD(\M)}$ are true; and
\item Theorem \ref{theorem_all} holds for any matroid whose ground set is a proper subset of $E$. 
\end{enumerate}


The argument is similar to the one in the previous section.  
The homomorphism $\theta_i\colon \CH(\Mi) \to \CH(\M)$ induces a homomorphism $\underline{\theta}_i\colon \uCH(\Mi) \to \uCH(\M)$ which sends $\uH(\Mi)$ to $\uH_i(\M)$, and in particular sends $\ub_{\Mi} = \varphi^\varnothing_{\Mi}(-x_\varnothing)$ to \[\varphi^\varnothing_\M(\theta_i(-x_\varnothing)) = \varphi^\varnothing_\M(-x_\varnothing - x_{\{i\}}) = \ub_\M - x_{\{i\}}.\]
We show in Corollary \ref{uIH_i in terms of uIH_1} that pulling back $\uIH_i(\M)$ by $\underline{\theta}_i$ gives an $\uH(\Mi)$-module which is isomorphic to a direct sum of modules of the form $\uIH((\M \setminus i)^F)[-(\crk F)/2]$.  Since the matroids $(\M \setminus i)^F$ have smaller ground sets than $\M$, we know $\hyperlink{uhl}{\uHL((\M\setminus i)^F)}$ and $\hyperlink{uhr}{\uHR((\M\setminus i)^F)}$ by induction,
and we use this to deduce $\hyperlink{uhli}{\uHL_i(\M)}$ and $\hyperlink{uhri}{\uHR_i(\M)}$.

The geometric motivation of this direct sum decomposition is explained in Remark \ref{rmk:underlined semismall geometry}. The divisor $\ub_\M - x_{\{i\}}$ is a lef class on the blowup of $\underline{Y}$ at a point in the sense of \cite{dCM02}.

\begin{example}\label{ex:deletion1}
Let $\M$ be the rank $3$ uniform matroid on $\{1,2,3,4\}$. By Examples~\ref{ex:boolean1} and \ref{ex:uniform1},
we have the canonical decompositions
\[
\uCH(\M \setminus 4)=\uIH(\M \setminus 4) \oplus \bigoplus_{i=1}^3 \uK{i}{\M \setminus 4} \ \ \text{and} \ \ 
\uCH(\M)=\uIH(\M)\oplus \bigoplus_{i=1}^4 \uK{i}{\M},
\]
where $\underline{\mathrm{K}}_i$ is the $1$-dimensional summand spanned by $x_i$. 
The homomorphism $\underline{\theta}_4$ is given by 
\[
\underline{\theta}_4(x_1)=x_1+x_{14}, \ \ 
\underline{\theta}_4(x_2)=x_2+x_{24}, \ \ \underline{\theta}_4(x_3)=x_3+x_{34}, \ \ \text{and} \ \ \underline{\theta}_4(x_{ij})=x_{ij}.
\]
Note that $\underline{\theta}_4$ does not map $\uIH^1(\M \setminus 4)$ into $\uIH^1(\M)$ because $\underline{\theta}_4(\ub)=\ub-x_4$ pairs nontrivially with $x_4$.
Since $\ub-x_4$ annihilates $x_1,x_2,x_3$, we see that $\underline{\theta}_4$ does map $\uIH(\M \setminus 4)$ into  $\uIH_4(\M)$, and in fact, 
\[
\uIH_4(\M)=\uIH(\M) \oplus  \uK{4}{\M}\cong \uIH(\M \setminus 4) \oplus W
\]
as graded $\uH(\M \setminus 4)$-modules, where $W$ is the $3$-dimensional trivial module in degree $1$.
\end{example}

Note that $\uIH((\M\setminus i)^F)$ is not indecomposable as an  $\uH(\Mi)$-module, so we cannot produce our decomposition of the pullback $\underline{\theta}_i^*\uIH_i(\M)$ by directly following the arguments in the previous section.  Instead, we deduce it from a decomposition of a certain $\H_\o(\Mi)$-module, which we now define.
Let $\H_i(\M)$ denote the subalgebra of $\CH(\M)$ generated by $\H_\o(\M)$ and $x_{\{i\}}$.  Then $\theta_i$ sends $\H_\o(\Mi)$ into $\H_i(\M)$.

\begin{definition}
	We define the $\H_i(\M)$-submodule $\IH_i(\M)$ of $\CH(\M)$ by
	\[
	\IH_i(\M)\coloneq \left(\sum_{F\neq \varnothing,\{i\}} \ \K{F}{\M}\right)^\perp,
	\]
	where the sum is over all nonempty proper flats $F$ of $\M$ different from $\{i\}$. 
\end{definition}

Since we are assuming that $\hyperlink{cdo}{\CD_\o(\M)}$ holds, we have
$\IH_i(\M) = \IH_\o(\M) \oplus \K{\{i\}}{\M}$.

\begin{lemma}\label{lemma:IHi stalks}
	The stalks of $\IH_i(\M)$ are given by
	\begin{enumerate}[(a)]\itemsep 5pt
		\item $\IH_i(\M)_\varnothing = \uIH_i(\M)$, as a subset of $\CH(\M)_\varnothing = \uCH(\M)$, 
		\item $\IH_i(\M)_{\{i\}} \cong \uIH(\M_{\{i\}})$, and
		\item $\IH_i(\M)_F \cong \IH_\o(\M)_F$ for any flat $F \ne \varnothing, \{i\}$.
	\end{enumerate}
Furthermore, the isomorphism (b) comes from the equality
\[y_i\IH(\M_{\{i\}}) = \psi_{\{i\}}(\IH_\o(\M_{\{i\}})).\]
\end{lemma}

\begin{proof}
For the first statement, Corollary \ref{underline it}, Lemma \ref{push uIH} (1), and $\hyperlink{ucd}{\uCD(\M)}$ give
\begin{align*}
\IH_i(\M)_\varnothing 	& = \varphi^\varnothing\!\left(\IH_i(\M)\right) \\
& = \varphi^\varnothing\!\left(\IH_\circ(\M)\right) + \varphi^\varnothing\K{\{i\}}{\M}\\
& = \uIH(\M) + \uK{\{i\}}{\M}\\
& = \uIH_i(\M).
\end{align*}	

Next, since multiplication by $y_i$ is $\psi_{\{i\}}\varphi_{\{i\}}$, we have
\begin{align*}
y_i \IH_i(\M) & = \psi_{\{i\}}\varphi_{\{i\}}\! \left(\IH_\o(\M)\right) \oplus \psi_{\{i\}}\varphi_{\{i\}}(\K{\{i\}}{\M})\\
&= \psi_{\{i\}}\left(\IH(\M_{\{i\}})\right) \oplus \psi_{\{i\}} \K{\varnothing}{\M_{\{i\}}}  \\
&=\psi_{\{i\}}(\IH_\o(\M_{\{i\}}))\\
&\cong \IH_\o(\M_{\{i\}})[-1],
\end{align*}
where we have used Lemma \ref{mult by y} (2) for the first summand and Lemma \ref{lem_imageofIH} (2) for the second summand.  
Thus, by Lemma \ref{everything is empty}, we have
\[
\IH_i(\M)_{\{i\}}\cong (y_i\IH_i(\M)[1])_\varnothing \cong \IH_\o(\M_{\{i\}})_\varnothing \cong \uIH(\M_{\{i\}}).
\]

Finally, for any flat $F \ne \varnothing,\{i\}$, multiplication by $y_F$ annihilates the image of $\psi^{\{i\}}$, so we have the isomorphism (c).
\end{proof}

\begin{example}\label{ex:deletion2}
We continue Examples~\ref{ex:uniform1}, \ref{ex:uniform2}, \ref{ex:semismall U34}, \ref{ex:Poincare U34}, and \ref{ex:deletion1} for the rank $3$ uniform matroid $\M$ on $\{1,2,3,4\}$.
In this case, $\IH_4(\M)$ is the orthogonal complement in $\CH(\M)$, with respect to its Poincar\'e pairing, of the graded subspace
\[
\K{1}{\M} \oplus \K{2}{\M} \oplus \K{3}{\M}=\mathbb{Q}x_1 \oplus \mathbb{Q}x_2 \oplus \mathbb{Q}x_3 \oplus \mathbb{Q}x_1y_1 \oplus \mathbb{Q}x_2y_2 \oplus \mathbb{Q}x_3y_3.
\]
Since $\K{1}{\M} \oplus \K{2}{\M} \oplus \K{3}{\M}$ is preserved under the multiplication by $y_1,y_2,y_3,y_4,x_\varnothing$ and $x_4$, the same is true for its orthogonal complement $\IH_4(\M)$. The degree $2$ component of $\IH_4(\M)$ has a basis $y_1y_2,y_1y_3,y_1y_4,y_2y_3,y_2y_4,y_3y_4,x_\varnothing^2,x_4y_4$, and the degree $1$ component of $\IH_4(\M)$ admits the decomposition
\[
\IH^1_4(\M)=\IH^1(\M) \oplus \mathbb{Q} x_\varnothing \oplus \mathbb{Q} x_4.
\]
The restriction of the Poincar\'e pairing of $\CH(\M)$ to $\IH_4(\M)$ is non-degenerate, because the same is true for its restriction to the subspace $\K{1}{\M} \oplus \K{2}{\M} \oplus \K{3}{\M}$.

By Lemma~\ref{CH-to-uCH}, the kernel of the pullback map $\varphi^{\varnothing}\colon \CH(\M)\to \uCH(\M)$ is the ideal $\langle y_i\rangle_{1\leq i\leq 4}$. Thus, 
\[
\varphi^{\varnothing}(\K{1}{\M} \oplus \K{2}{\M} \oplus \K{3}{\M})=\mathbb{Q}x_1 \oplus \mathbb{Q}x_2 \oplus \mathbb{Q}x_3=\uK{1}{\M} \oplus \uK{2}{\M} \oplus \uK{3}{\M}\subseteq \uCH(\M)
\]
which is consistent with Example \ref{ex:deletion1}. From this, one can further deduce that $\varphi^\varnothing(\IH_4(\M))=\uIH_4(\M)$, which is Lemma \ref{lemma:IHi stalks} (a). In fact, by the surjectivity of $\varphi^\varnothing$, it suffices to show that $\varphi^\varnothing(\IH_4(\M))\subseteq\uIH_4(\M)$. For any $\alpha\in \IH_4(\M)$ and $\beta\in \K{1}{\M} \oplus \K{2}{\M} \oplus \K{3}{\M}$, by Lemma \ref{adjoint}, Proposition \ref{Prop_xmult} (2), and the fact that $\K{1}{\M} \oplus \K{2}{\M} \oplus \K{3}{\M}$ is closed under multiplication by $x_\varnothing$, we have
\[
\udeg_\M(\varphi^\varnothing(\alpha) \varphi^\varnothing(\beta))=\deg_\M(\alpha \psi^\varnothing(\varphi^\varnothing(\beta)))=\deg_\M(\alpha(x_\varnothing \beta))=0. 
\]
Hence, $\varphi^\varnothing(\IH_4(\M))\subseteq\uIH_4(\M)$.
\end{example}

Part (a) of Lemma~\ref{lemma:IHi stalks} shows that we can use the module $\IH_i(\M)$ to study $\uIH_i(\M)$.  However, when we pull back by $\theta_i$, we can only take the stalk of the $\H(\Mi)$-module $\theta_i^*\IH_i(\M)$ at flats of the matroid $\M\setminus i$.  The stalk at $\varnothing \in \cL(\M\setminus i)$ will be too large for what we want, because it includes a contribution from the stalk $\IH_i(\M)_{\{i\} \in \cL(\M)}$, by Proposition \ref{prop:semi-small stalk}.  To get around this problem,  we consider an $\H_\o(\Mi)$-submodule $\IH_i'(\M)\subseteq \theta_i^*\IH_i(\M)$ defined as follows.

Let $\mathscr{S}_i$ be the collection of subsets of $E\setminus i$ defined in Section \ref{sec_deletion}. Let \[
R \coloneq \CHi \oplus\bigoplus_{F\in\mathscr{S}_i\setminus \{\varnothing\}}x_{F\cup i}\CHi
\and
P \coloneq x_{\{i\}} \CH_{(i)}\subseteq \CH(\M).
\]
By Theorem \ref{TheoremDecomposition}, we have an orthogonal decomposition of $\CH(\Mi)$-modules
\[
\CH(\M) = R \oplus P.
\]
Then we define our $\H_\o(\Mi)$-submodule by
\[\IH_i'(\M) \coloneq \IH_i(\M)\cap R.\]
See Remark \ref{rmk:IHi geometry} below for the geometric motivation behind this definition.

\begin{example}
When $\M=U_{3, 4}$ is the uniform matroid and $i=4$, by Example \ref{ex:deletion2}, 
\[
P=x_4\CH_{(4)}=(\Q x_{4}\oplus \Q x_4 x_{14}),
\]
and $\IH_4(\M)$ is the orthogonal complement of 
\[
\K{1}{\M} \oplus \K{2}{\M} \oplus \K{3}{\M}=\mathbb{Q}x_1 \oplus \mathbb{Q}x_2 \oplus \mathbb{Q}x_3 \oplus \mathbb{Q}x_1y_1 \oplus \mathbb{Q}x_2y_2 \oplus \mathbb{Q}x_3y_3.
\]
Since $x_jx_4x_{14}=x_jy_jx_4=0$ for any $j=1,2,3$, $P$ is orthogonal to $\K{1}{\M} \oplus \K{2}{\M} \oplus \K{3}{\M}$, and hence is contained in $\IH_4(\M)$. Therefore, $\IH_4'(\M)$ is the orthogonal complement of $P$ in $\IH_4(\M)$. In general, $P$ may not be contained in $\IH_i(\M)$. The intersections $\IH_i(\M)\cap P$ and $\K{F}{\M}\cap P$ will be discussed in Lemma \ref{lemma:P}. 
\end{example}

The next two propositions give the properties of this module that we need to deduce $\hyperlink{uhli}{\uHL_i(\M)}$ and $\hyperlink{uhri}{\uHR_i(\M)}$.  The first is analogous to Proposition \ref{prop:pullback perverse} and has a similar proof.

\begin{proposition}\label{prop:IH'_i is pure}
	$\IH'_i(\M)$ is a pure $\H_\o(\Mi)$-module, and 
	it is $\o$-perverse when considered as a complex placed in degree zero. 
\end{proposition}

Our second proposition describes the stalk of the $\H(\Mi)$-module $\IH'_i(\M)$ at the empty flat $\varnothing \in \cL(\Mi)$.  This is contained in the stalk of $\theta_i^*\IH_i(\M)$, which 
by Lemma \ref{deletion filters} is 
\[\left(\theta_i^*\IH_i(\M)\right)_{\varnothing} = \frac{\theta_i^*\IH_i(\M)}{\Upsilon_{>\varnothing}^{\Mi}\cdot \theta_i^*\IH_i(\M)} = \frac{\IH_i(\M)}{\Upsilon_\Sigma^\M \cdot \IH_i(\M)},\]
where \[\Sigma = \cL(\M) \,\setminus\, \{\varnothing, \{i\}\} = \delta^{-1}(\cL(\Mi)\setminus\{\varnothing\}).\]
In particular, we have a natural quotient map from $\left(\theta_i^*\IH_i(\M)\right)_{\varnothing}$ to 
\[\IH_i(\M)_\varnothing = \frac{\IH_i(\M)}{\Upsilon^\M_{>\varnothing}\cdot\IH_i(\M)}.\]

\begin{proposition}\label{prop:IHiprime stalk}
	 The composition
	\begin{equation}\label{eq:strange stalk}
	\IH'_i(\M)_{\varnothing} \to \left(\theta_i^*\IH_i(\M)\right)_{\varnothing} \to \IH_i(\M)_{\varnothing} \cong \uIH_i(\M)
	\end{equation}
	is an isomorphism of $\uH(\Mi)$-modules, where the module structure on the target is via the homomorphism $\underline{\theta}_i\colon \uH(\Mi) \to \uH_i(\M)$.
\end{proposition}

The proofs of these two propositions will occupy Sections \ref{sec:IHiprime purity}, \ref{sec:IHiprime perversity}, and \ref{sec:IHiprime stalk} below.  But first we assume that they hold and show that they imply $\hyperlink{uhli}{\uHL_i(\M)}$.  

\begin{corollary}\label{uIH_i in terms of uIH_1}
    When considered as an $\uH(\Mi)$-module, $\uIH_i(\M)$ is isomorphic to a direct sum of copies of modules of the form 
	\[
	\uIH\big(\Mip^F\big)[-(\crk F)/2]
	\]
	for various nonempty flats $F\in\cL(\Mi)$ of even corank.  
\end{corollary}

\begin{proof}
    Proposition \ref{prop:IH'_i is pure} and Theorem \ref{thm:perverse complexes} imply that $\IH'_i(\M)$ is isomorphic as an $\H_\o(\Mi)$-module to a direct sum of modules of the form 
    \[\IH_\o(\Mip^F)[-(\crk F)/2],\]
    where $F \in \cL(\Mi)$ is a nonempty flat with even corank.  Taking stalks at $\varnothing \in \cL(\Mi)$ and using Proposition \ref{prop:IHiprime stalk} and Corollary \ref{underline it} gives the result. 
\end{proof}

\begin{corollary}\label{uHL_i}
	The statement  $\hyperlink{uhli}{\uHL_i(\M)}$ holds.
\end{corollary}

\begin{proof}
	This follows from Corollary \ref{uIH_i in terms of uIH_1} and $\hyperlink{uhl}{\uHL((\Mi)^F)}$ for all nonempty flats $F\in \cL(\Mi)$.
\end{proof}

In order to prove $\hyperlink{uhri}{\uHR_i(\M)}$ using these results, we need to make a careful comparison of intersection pairings in the decomposition provided by Corollary \ref{uIH_i in terms of uIH_1}.  We postpone this until Section \ref{sec:uHRi}.

\begin{remark}\label{rmk:IHi geometry}
	Let us explain the geometry behind the definition of $\IH'_i(\M)$ when $\M$ is realizable.  Following the notation of Section \ref{sec:realizable}, we have the arrangement Schubert variety $Y$ corresponding to $\M$ and its blow-up $Y_\o$ at the point stratum
	corresponding to the flat $\varnothing$ of $\M$.  Recall from Section~\ref{CD geometry} that the exceptional divisor $\underline{Y}\subseteq Y_\o$ has intersection cohomology $\uIH(\M)$. 
	Let $Y_i$ be the blow-up of $Y_\o$ along the proper transform of $\overline{U^{\{i\}}}$, the closure of the stratum indexed by $\{i\}$, and let $\underline{Y}_i \subseteq Y_i$ be the inverse image of $\underline{Y}$.  It is the blow-up of $\underline{Y}$ along $\underline{Y}\cap \overline{U^{\{i\}}}$, and its intersection cohomology is $\uIH_i(\M)$.
	
	As explained in Remark \ref{rmk:deletion geometry}, the arrangement Schubert variety corresponding to $\M \setminus i$ is 
	the image $Y'$ of $Y$ under the projection $(\mathbb{P}^1)^E\to (\mathbb{P}^1)^{E\setminus i}$.  Let $Y'_\o$ be the blow-up of $Y'$ at the point stratum.  The projection $Y \to Y'$ does not lift to a map $Y_\o \to Y'_\o$, but it does lift to a map $Y_i \to Y'_\o$.  The preimage of the exceptional divisor $\underline{Y}'$ of $Y'_\o$ under this map has two components: $\underline{Y}_i$ and the exceptional divisor $D$ of $Y_i \to Y_\o$.  Taking the stalk of the $\H(\M\setminus i)$-module $\theta_i^*\IH_i(\M)$ at $\varnothing\in\cL(\M\setminus i)$ gives the cohomology of the restriction of the IC sheaf of $Y_i$ to the union of both components.  Restricting to the component $\underline{Y}_i$ gives $\uIH_i(\M)$, but there is also a contribution from $D$.  
	Proposition~\ref{prop:IHiprime stalk} says that passing to the submodule $\IH'_i(\M)$ allows us to get only the part of this stalk that we want.
	
	To motivate the appearance of the decomposition $\CH(\M) = R \oplus P$ in our definition of $\IH'_i(\M)$, consider the map $\pi\colon X \to X'$ of augmented wonderful varieties obtained by blowing up the proper transforms of all remaining strata of $Y_i$ and $Y'_\o$.  This map is semi-small, and it is birational away from the union of the boundary divisors $D_{F\cup i}$, $F \in \sS_i$.  Each of these divisors is mapped by $\pi$ to a codimension two subvariety of $X'$ with $\mathbb{P}^1$ fibers.  Then $\pi$ induces a decomposition
	\[\pi_*\underline{\Q}_X \cong \underline{\Q}_{X'} \oplus \bigoplus_{F\in \sS_i} \IC^\bullet(\pi(D_{F\cup i}))[-2],\]
	which is canonical since $\pi$ is semi-small. Taking cohomology gives the decomposition of Theorem~\ref{TheoremDecomposition}.   The summand $P$ is the contribution of the boundary divisor $D_{\{i\}} \subseteq X$, which is birational to $D$, and the orthogonal summand $R$ collects the remaining summands.
	The fact that the embedding of $\IH(\M)$ into $\CH(\M)$ is compatible with this splitting is a special feature of our canonical decomposition, and it is the key to the proof of Proposition \ref{prop:IHiprime stalk}.
\end{remark}

\subsection{Proof of Proposition \ref{prop:IH'_i is pure} part I: purity}\label{sec:IHiprime purity}
The orthogonal complement of $R$ is \[P = x_{\{i\}}\CH_{(i)} = \psi^{\{i\}}\!\left(\uCH(\M_{\{i\}})\otimes \theta_i^{\M^{\{i\}}}\!\CH(\M^\varnothing)\right).\]
Since $\M^\varnothing = \M^{\{i\}} \setminus i$ is the matroid on the empty set, $\theta_i^{\M^{\{i\}}}\!\CH(\M^\varnothing)$ is just the degree zero part of $\CH(\M^{\{i\}})$.  
This means that $P$ is the image of the injective map
\[\sigma\colon \uCH(\M_{\{i\}}) \to \CH(\M), \;\; a \mapsto \psi^{\{i\}}(a \otimes 1).\]
Applying this map to the canonical decomposition $\hyperlink{ucd}{\uCD(\M_{\{i\}})}$, we see that $P$ is the direct sum of

\begin{enumerate}[(1)]\itemsep 5pt
	\item $\sigma(\uIH(\M_{\{i\}}))$, and
	\item $\sigma\big(\uK{F\setminus i}{\M_{\{i\}}}\big)$ for each flat $F > \{i\}$ in $\cL(\M)$.
\end{enumerate}
Our next result says that these terms are compatible with the decomposition of $\CH(\M)$ into $\IH_i(\M)$ and its orthogonal summands.  

%
%

\begin{lemma}\label{lemma:P}
	We have 
	\begin{enumerate}[(1)]\itemsep 5pt
		\item $\sigma(\uIH(\M_{\{i\}})) = \IH_i(\M) \cap P$, and
		\item $\sigma\big(\uK{F\setminus i}{\M_{\{i\}}}\big)
		= \K{F}{\M}\cap P$ for all flats $F > \{i\}$ in $\cL(\M)$.
	\end{enumerate}
		
\end{lemma}

\begin{proof}
	For the first statement, take any $a \in \uCH(\M_{\{i\}})$.  Then $\sigma(a) = \psi^{\{i\}}(a \otimes 1)$ is in $\IH_i(\M)$ if and only if it is orthogonal to $\K{F}{\M}$
	for all flats $F \in \cL(\M)$ other than $\varnothing$, $\{i\}$.  By Lemma \ref{adjoint}, this is true if and only if $a \otimes 1$ is orthogonal to $\varphi^{\{i\}}\K{F}{\M}$.  By Lemma \ref{lemma_incomparable}, it is enough to check this when $F > \{i\}$.
	In that case, we have \[\varphi^{\{i\}}\K{F}{\M}= \uK{F\setminus i}{\M_{\{i\}}} \otimes  \CH(\M^{\{i\}})\]
	by Lemma \ref{push uIH} (1).  If $\{i\} < F$ then $G = F \setminus \{i\}$ is a nonempty flat of 
	$\cL(\M_{\{i\}})$, and all such flats occur this way, so $\sigma(a)$ is in $\IH_i(\M)$ if and only if $a$ is orthogonal to $\uK{G}{\M_{\{i\}}}$ for all nonempty flats $G \in \cL(\M_{\{i\}})$.  This happens exactly when $a \in \uIH(\M_{\{i\}})$.
	
	To see the second statement, use Lemma \ref{push uIH} (1) again to get
\[\sigma\big(\uK{F\setminus i}{\M_{\{i\}}}\big) =  \psi^{\{i\}}\big(\uK{F\setminus i}{\M_{\{i\}}} \otimes \Q\big) \subseteq \K{F}{\M}.\]
	This gives containment in one direction.  The other direction follows from the fact that $P$ is the sum of all the terms of type (1) and (2).
\end{proof}

Since $R$ is the perpendicular space to $P$ and the terms of the form (2) are all orthogonal to $\IH_i(\M)$, we see that 
\[\IH_i(\M)\cap R \; \mbox{is the perpendicular space to $\IH_i(\M)\cap P$ inside $\IH_i(\M)$}.\]

\begin{lemma}\label{lem:nondegen pairings}
	The Poincar\'e pairing on $\CH(\M)$ restricts to non-degenerate pairings on $\IH_i(\M)\cap P$ and on 
	$\IH_i(\M)\cap R = \IH_i'(\M)$.
\end{lemma}

\begin{proof}
	For $a, b \in \uIH(\M_{\{i\}})$, Lemma \ref{we need this} gives
	\begin{align*}
	\big\langle \sigma(a), \sigma(b) \big\rangle_{\CH(\M)} & = 
	\deg_\M(\psi^{\{i\}}(a \otimes 1) \cdot \psi^{\{i\}}(b \otimes 1))\\
	& = - \udeg_{\M_{\{i\}}}\otimes \deg_{\M^{\{i\}}}\big( (ab \otimes 1)( 1\otimes  \a_{\M^{\{i\}}} + \ub_{\M_{\{i\}}}\otimes 1)\big)\\
	& = -\udeg_{\M_{\{i\}}}(ab)\deg_{\M^{\{i\}}}(\a_{\M^{\{i\}}}).
	\end{align*}
	Since $\deg_{\M^{\{i\}}}(\a_{\M^{\{i\}}}) \ne 0$ and the pairing on $\uIH(\M_{\{i\}})$ is non-degenerate, the non-degeneracy of the pairing on $\IH_i(\M)\cap P$ follows.  From this we deduce that $\IH_i(\M)$ is the orthogonal direct sum of $\IH_i(\M)\cap P$ and $\IH_i(\M)\cap R$, and since the pairing on $\IH_i(\M)$ is non-degenerate, it follows that the restriction of the Poincar\'e pairing to $\IH_i(\M)\cap R$ is non-degenerate. 
\end{proof}

Because the Poincar\'e pairing also restricts to a non-degenerate pairing on $R$, we can conclude that $\IH'_i(\M)$ is an $\H_\o(\Mi)$-direct summand of $R$.  Theorem \ref{TheoremDecomposition} then implies that $R$ is a pure $\H_\o(\Mi)$-module, and so $\IH'_i(\M)$ is a pure $\H_\o(\Mi)$-module as well.

\subsection{Proof of Proposition \ref{prop:IH'_i is pure} part II: $\o$-perversity}\label{sec:IHiprime perversity}

The proof that $\IH'_i(\M)$ is a $\o$-perverse $\H_\o(\Mi)$-module follows the same basic plan as the proof of Proposition \ref{prop:pullback perverse}, using Proposition \ref{prop:semi-small stalk} to compute the stalks of $\IH'_i(\M)$ at a nonempty flat $F$ of $\Mi$ in terms of the stalks of $\IH_i(\M)$ at flats $G \in \delta^{-1}(F) \subseteq \cL(\M)$.  

%

We cannot apply Proposition \ref{prop:semi-small stalk} to $\IH'_i(\M)$ directly, because it is not closed under multiplication by $y_i$, so it is not an $\H(\M)$-module.\footnote{For instance, one can easily check that $1 \in R$ but $y_i\notin R$.} But we have shown that $\theta_i^*\IH_i(\M)$ is the direct sum of $\IH'_i(\M) = \IH_i(\M)\cap R$ and $\IH_i(\M) \cap P$, and $P$ is annihilated by all $y_j$ for $j\in E \setminus i$.  Thus for any nonempty flat $F$ of $\Mi$, we have
\[\IH'_i(\M)_F \cong (\theta_i^*\IH_i(\M))_F \cong \bigoplus_{G\in \delta^{-1}(F)} \IH_i(\M)_G[\rk_{\Mi} F - \rk_\M G]\]
by Proposition \ref{prop:semi-small stalk}.

Now the proof that $\IH'_i(\M)_F$ vanishes in degrees strictly greater than $(\crk F)/2$ follows exactly the proof of Proposition \ref{prop:pullback perverse}, using the above equation in place of Proposition \ref{prop:semi-small stalk}, and omitting the case $G = \varnothing$, since $F$ is assumed to be nonempty.

Thus $\IH'_i(\M)$ satisfies the stalk conditions for perversity.  To see that the costalk conditions hold, we note that Lemma \ref{lem:nondegen pairings} gives an isomorphism $\IH'_i(\M)^\ast \cong \IH'_i(\M)[d]$ of graded $\H_\o(\Mi)$-modules, and so Lemma \ref{stalk-costalk} implies that the costalk conditions follow from the stalk conditions.	

\subsection{Proof of Proposition \ref{prop:IHiprime stalk}}\label{sec:IHiprime stalk}
The proposition states that the composition of two maps is an isomorphism.  The first map is an injection and the second is a surjection, thus the composition is an isomorphism if and only if
the natural map from the kernel of the second map to the cokernel of the first map is an isomorphism.

Consider the following two short exact sequences containing our maps:
\[0 \to \IH'_i(\M)_\varnothing \to (\theta^*_i\IH_i(\M))_\varnothing \to (\IH_i(\M)\cap P)_{\varnothing} \to 0\]
\[0 \to \frac{{\Upsilon^\M_{>\varnothing}} \IH_i(\M)}{\Upsilon^\M_\Sigma \IH_i(\M)} \to (\theta^*_i\IH_i(\M))_\varnothing \to \IH_i(\M)_\varnothing \to 0.\]
The first sequence is exact because $\theta^*_i\IH(\M) \cong \IH_i'(\M) \oplus (\IH_i(\M)\cap P)$ as $\H(\Mi)$-modules.
By Proposition \ref{pure} and Lemma \ref{lemma:IHi stalks}, the first term of the second sequence is 
isomorphic to 
\[\frac{\Upsilon^\M_{\ge\{i\}} \IH_i(\M)}{\Upsilon^\M_{>\{i\}} \IH_i(\M)} = \frac{y_i \IH_i(\M)}{\Upsilon^\M_{>\{i\}} \IH_i(\M)} \cong \IH_\o(\M_{\{i\}})_\varnothing \cong \uIH(\M_{\{i\}}).\]
So Proposition \ref{prop:IHiprime stalk} is equivalent to showing that the lower row of the following diagram is an isomorphism.
\[
\begin{tikzcd}
\IH(\M_{\{i\}}) \arrow[r,"\psi_{\{i\}}"] \arrow[d, "\varphi^\varnothing_{\M_{\{i\}}}"] & \theta_i^*\IH_i(\M) \arrow[r, "\pi"] \arrow[d, "\varphi^\varnothing_{\Mi}"]               & \IH_i(\M)\cap P \arrow[d, "\cong"] \arrow[r, equal] &[-28pt] \sigma(\uIH(\M_{\{i\}})) \\
\uIH(\M_{\{i\}}) \arrow[r, "\zeta"]          & (\theta_i^*\IH_i(\M))_\varnothing \arrow[r, "\pi_\varnothing"] & (\IH_i(\M)\cap P)_\varnothing                            &                         
\end{tikzcd}\]
Here $\pi$ is the orthogonal projection onto $\IH_i(\M)\cap P$, since $\IH_i(\M)\cap P$ and $\IH_i(\M)\cap R$ are orthogonal complements inside $\IH_i(\M)$.  The map $\pi_\varnothing$ is the induced map on stalks at $\varnothing\in \cL(\Mi)$, and the third vertical map is an isomorphism because $y_jP = 0$ for $j \in E \setminus i$.  The map $\zeta$ is the unique map making the left square commute. It exists because the kernel of $\varphi^\varnothing_{\M_{\{i\}}}$ is generated by elements $y_ja$, where $a \in \IH(\M_{\{i\}})$ and $j \in E\setminus i$, and we have
	\[\varphi^\varnothing_{\Mi}(\psi_{\{i\}}(y_ja)) = \varphi^\varnothing_{\Mi}(y_j\,\psi_{\{i\}}(a))=0.\]
Since $\pi$ is an orthogonal projection, it is defined by the property that, if $c \in \theta^*_i\IH(\M)$, then 
\[\deg_{\M}(c \cdot \sigma(\underline{a})) = \deg_{\M}(\pi(c)\sigma(\underline{a}))\]
for any $\underline{a} \in \uIH(\M_{\{i\}})$.

Now take any $\underline{b} \in \uIH(\M_{\{i\}})$ and suppose that $\underline{b} = \varphi^\varnothing_{\M_{\{i\}}}\!(b)$ for $b \in \IH(\M_{\{i\}})$.  Then $\pi_\varnothing\zeta(\underline{b}) = \varphi^\varnothing_{\Mi}(\pi\psi_{\{i\}}(b))$, and so we want to show that the map sending $\underline{b}$ to $\pi(\psi_{\{i\}}(b))$ is an isomorphism.  The element $\pi(\psi_{\{i\}}(b))$ is characterized by the following equation, for every $\underline{a} \in \uIH(\M_{\{i\}})$:

\begin{align*}
\deg_{\M}\left(\pi(\psi_{\{i\}}(b))\cdot \sigma(\underline{a})\right) & = \deg_{\M}\left(\psi_{\{i\}}(b)\cdot \sigma(\underline{a})\right)\\
 & = \deg_{\M}\left(\psi_{\{i\}}(b)\cdot \psi^{\{i\}}(\underline{a}\otimes 1)\right)   && \text{definition of $\sigma$} \\ 
 & = \deg_{\M_{\{i\}}}\left(b\cdot \varphi_{\{i\}}\psi^{\{i\}}(\underline{a}\otimes 1)\right) && \text{by Lemma \ref{subscript adjoint}}  \\
 & = \deg_{\M_{\{i\}}}\left(b\cdot \psi^\varnothing_{\M_{\{i\}}}(\underline{a})\right) && \text{by Lemma \ref{lemma_diagrams} (4)}\\
 & = \udeg_{\M_{\{i\}}}\left(\underline{b}\cdot \underline{a}\right)  && \text{by Lemma \ref{adjoint}.}
\end{align*}
The fact that our map is an isomorphism follows from $\hyperlink{upd}{\uPD(\M_{\{i\}})}$.

\subsection{The Hodge--Riemann relations}\label{sec:uHRi}
To prove $\hyperlink{uhri}{\uHR_i(\M)}$, we need to understand the Poincar\'e pairing on the direct summands of $\uIH_i(\M)$ provided by Corollary \ref{uIH_i in terms of uIH_1}.  In order to do this, we first consider the Poincar\'e pairing on the summands of $\IH'_i(\M)$.  Our first result says that these summands are rigid, in the sense that their only endomorphisms as graded $\H_\o(\Mi)$-modules are multiplication by scalars.

\begin{lemma}\label{lem:IHo endomorphisms}
	Let $\M$ be a matroid on a nonempty ground set $E$.  Suppose that $\hyperlink{cd}{\CD(\M_F)}$, $\hyperlink{pd}{\PD(\M_F)}$, and $\hyperlink{ns}{\NS(\M_F)}$ hold for all proper flats $F$, and that $\hyperlink{uhl}{\uHL(\M)}$ holds.  Then an endomorphism of 
	$\IH_\o(\M)$ as a graded $\H_\o(\M)$-module that induces the zero map on the stalk $\IH_\o(\M)_E \cong \Q$ must be zero.  In particular, 	
	the only endomorphisms of $\IH_\o(\M)$ as a graded $\H_\o(\M)$-module are multiplication by scalars.
\end{lemma}

\begin{proof}
Take an endomorphism $f$ of $\IH_\o(\M)$ which induces the zero endomorphism on 
$\IH_\o(\M)_E$.  Then following the argument of Proposition \ref{prop:indecomposable}, but using Lemma \ref{mult by y}~(2) in place of Lemma \ref{mult by y}~(1), we see that $f$ vanishes on $y_F\IH_\o(\M)$ for any nonempty flat $F$, and so it induces a homomorphism
\[\uf\colon \IH_\o(\M)_{\varnothing} \to \IH_\o(\M)_{[\varnothing]}.\]
Corollary \ref{underline it} gives an isomorphism 
\[\IH_\o(\M)_\varnothing \cong \varphi^\varnothing(\IH_\o(\M)) = \uIH(\M).\]
By Lemma \ref{annihilators} and Corollary \ref{very convenient}, we have an isomorphism
\[\IH_\o(\M)_{[\varnothing]} = \langle x_\varnothing\rangle\cap \IH_\o(\M) =\psi^\varnothing(\uIH(\M))\cong \uIH(\M)[-1].\]
Using these isomorphisms, we can write $\uf$ as a map $\uIH(\M) \to \uIH(\M)[-1]$ satisfying 
$f = \psi^\varnothing \uf \varphi^\varnothing$.

Both $\varphi^\varnothing$ and $\psi^\varnothing$ are homomorphisms of $\H_\o(\M)$-modules, where $x_\varnothing$ acts on $\uIH(\M)$ as multiplication by $\varphi^\varnothing(x_\varnothing) = -\ub$.  
Since $\psi^\varnothing$ is injective and $\varphi^\varnothing(\IH_\o(\M)) = \uIH(\M)$ by Corollary \ref{underline it}, we see that $\uf$ commutes with multiplication by $\ub$, or in other words it is a homomorphism of $\uH(\M)$-modules.

Take an element $a \in \uIH^k(\M)$, and 
suppose that $a$ is primitive, so $\ub^{d-2k}a = 0$.  This gives
\[0 = \uf(\ub^{d-2k}a) = \ub^{d-2k} \cdot \uf(a).\]
But $\uf(a) \in \IH^{k-1}(\M)$, and so $\hyperlink{uhl}{\uHL(\M)}$ implies that
\[(\ub^{d-2k+1}\cdot\;) \colon \uIH^{k-1}(\M) \to \uIH^{d-k}(\M)\]
is an isomorphism, which gives $\uf(a) = 0$.  
Furthermore, we have $\uf(\beta^\ell a) = \beta^\ell \uf(a)=0$ for any $\ell \ge 0$.

By $\hyperlink{uhl}{\uHL(\M)}$, $\uIH^k(\M)$ is spanned by $\ub^{k-i}$ times primitive classes of degree $i$ for all $0\leq i\leq k$.  Since we have shown that $\uf$ vanishes on all such classes,
we can conclude that $\uf = 0$, and therefore that $f=0$, as well.
\end{proof}

Now we can proceed with an analysis analogous to the one at the beginning of Section \ref{sec:first HR}.
Let $F$ be a nonempty flat of $\Mi$ of even corank, and suppose we have an inclusion 
\begin{equation*}\label{eq_f}
f \colon \IH_\o\bigl(\Mip^F\bigr)[-(\crk F)/2]\hookrightarrow \IH'_i(\M)
\end{equation*}
of $\H_\o(\M)$-modules.
We have two pairings on $\IH_\o(\Mip^F)$ that are \textit{a priori} different:  the one induced by the inclusion of $\IH_\o(\Mip^F)$ into $\CH(\Mip^F)$,
and the one induced by the inclusion of $\IH_\o(\Mip^F)[-(\crk F)/2]$ into $\IH'_i(\M)$.  

\begin{lemma}\label{pair and compare again}
These two pairings on $\IH_\o(\Mip^F)$ are related by a constant factor $c\in\Q$ with $(-1)^{\frac{\crk F}{2}}c > 0$.  
\end{lemma}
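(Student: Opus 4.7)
The proof will follow the structure of Lemma \ref{pair and compare}.

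First, both pairings on $\IH_\o(\Mip^F)$ are compatible with the $\H_\o(\Mi)$-action: the intrinsic pairing is $\H_\o((\Mi)^F)$-compatible (and thus $\H_\o(\Mi)$-compatible via the natural ring map), while the second pairing is compatible because $f$ is $\H_\o(\Mi)$-linear and the Poincar\'e pairing on $\CH(\M)$ satisfies $\deg_\M(\eta\xi\cdot\sigma) = \deg_\M(\xi\cdot\eta\sigma)$. Hence both induce $\H_\o(\Mi)$-equivariant isomorphisms $\IH_\o(\Mip^F) \cong \IH_\o(\Mip^F)^*[\rk F]$. By Proposition \ref{prop:indecomposable}~(2) applied to $(\Mi)^F$ (via the inductive hypothesis, since $F \subsetneq E$), the module $\IH_\o((\Mi)^F)$ is indecomposable as an $\H_\o((\Mi)^F)$-module; a Schur-type argument (using that the top stalk is one-dimensional and that any endomorphism inducing a scalar there must be that scalar) then forces the two pairings to differ by a single nonzero scalar $c \in \Q^\times$.

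To determine the sign of $c$, I will pair $1 \in \IH_\o^0(\Mip^F)$ with $y_F \in \IH_\o^{\rk F}(\Mip^F)$. The first pairing yields $\deg_{\Mip^F}(y_F) = 1$. For the second, since the $\H(\Mi)$-action on $\IH_i(\M) \cap R$ factors through $\theta_i^\M$, we have $f(y_F) = y_{\bar F}\cdot f(1)$, where $\bar F$ denotes the closure of $F$ in $\M$. Using Propositions \ref{Prop_ymult} and \ref{lemma_degy}~(5),
\[
\deg_\M\bigl(f(1)\cdot f(y_F)\bigr) = \deg_\M\bigl(y_{\bar F}\cdot f(1)^2\bigr) = \deg_{\M_{\bar F}}(\eta^2),
\]
where $\eta \coloneq \varphi^\M_{\bar F}(f(1)) \in \IH^{(\crk F)/2}(\M_{\bar F})$; membership follows from Lemma \ref{last thing} (note $\bar F\ne\varnothing,\{i\}$), and the degree is the middle degree since $\rk \M_{\bar F} = \crk \bar F = \crk F$.

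Next I want to show $\eta$ is primitive with respect to some positive linear combination of $\{y_k : k \in E \setminus \bar F\}$. For any $k \in (E\setminus i)\setminus F$, the element $y_k$ kills $1 \in \IH_\o(\Mip^F)$ (as the ring map $\H(\Mi)\to\H((\Mi)^F)$ kills $y_k$ when $k\notin F$), so $y_{\bar k}f(1) = 0$ in $\CH(\M)$, and hence $y_{\bar k}\eta = 0$ in $\CH(\M_{\bar F})$. When $\bar F = F\cup i$ this covers all of $E\setminus \bar F$. When $\bar F = F$ (i.e.,\ $F$ is already a flat of $\M$), one additionally needs $y_i\eta = 0$: for this, one tracks which summand of the decomposition $R = \CHi \oplus \bigoplus_{F_0\in\mathscr{S}_i\setminus\{\varnothing\}} x_{F_0\cup i}\CHi$ from Theorem \ref{TheoremDecomposition} contains $f(1)$ and uses that $y_i$ times any element of this summand lies in $\ker\varphi^\M_F$ (since $i\notin F$ and $y_i$ acts trivially on the factors of the relevant summand of $R$ after applying $\varphi^\M_F$). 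Having established primitivity, we apply $\HR(\M_{\bar F})$ (which holds inductively since $|E\setminus \bar F| < |E|$), together with $\eta\ne 0$ (forced by $c\ne 0$), to conclude $(-1)^{(\crk F)/2}\deg_{\M_{\bar F}}(\eta^2) > 0$, giving $(-1)^{(\crk F)/2}c > 0$.

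The main obstacle in this plan is the verification that $y_i\eta = 0$ in the case $\bar F = F$. Unlike the situation in Lemma \ref{pair and compare}, the subspace $\IH_i(\M)\cap R$ is only an $\H_\o(\Mi)$-submodule rather than a full $\H(\M)$-submodule of $\CH(\M)$, so multiplication by $y_i$ can escape this summand; one must carefully unwind the $(R,P)$-decomposition of Proposition \ref{main decomp} and exploit the interaction of $\varphi^\M_F$ with the image of $\theta_i^\M$ in order to recover the vanishing. The remaining steps are routine adaptations of the corresponding arguments from Lemma \ref{pair and compare}.
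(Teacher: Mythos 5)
Your overall strategy is exactly the paper's: both pairings are compatible with the $\H_\o(\Mi)$-module structure, hence differ by a single scalar $c$ by Proposition \ref{prop:indecomposable}~(2); the sign of $c$ is computed by pairing $1$ against $y_F$, which via $\theta_i^\M(y_F)=y_{\bar F}$, Propositions \ref{lemma_degy} and \ref{Prop_ymult}, and Lemma \ref{last thing} reduces to the sign of $\deg_{\M_{\bar F}}(\eta^2)$ for the middle-degree class $\eta=\varphi^\M_{\bar F}(f(1))\in\IH^{\frac{\crk F}{2}}(\M_{\bar F})$; and one concludes from $\HR(\M_{\bar F})$ once $\eta$ is known to be primitive. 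All of this matches the paper's argument step for step.

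The problem is the one you flag yourself and do not resolve: the verification that $y_i\eta=0$ when $\bar F=F$. This cannot be waved away, because in middle degree the Hodge--Riemann form is definite only on the primitive subspace $\ker(y)$ for a \emph{positive} combination $y=\sum_j c_jy_j$ of all generators; on $y\cdot(\text{primitive classes from lower degree})$ it takes the opposite sign, so knowing only that $\eta$ is killed by the $y_j$ with $j\neq i$ gives no sign information. (The paper's own proof disposes of this with the bare assertion that $\varphi^\M_{\bar F}(f(1))$ is annihilated by $y_j$ for \emph{all} $j\in E\setminus\bar F$.) Note that the case $j=i$ reduces further than you indicate: one needs $y_Fy_if(1)=y_{\overline{F\cup i}}f(1)=0$, and whenever $\overline{F\cup i}\neq F\cup i$ there is some $j\in\overline{F\cup i}\setminus(F\cup i)$ with $\overline{F\cup j}=\overline{F\cup i}$, so $y_i=y_j$ in $\H(\M_F)$ and the vanishing follows from the cases you already handled. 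The only genuinely open case is therefore $F\in\mathscr{S}_i$, where one must show $\varphi^\M_{F\cup i}(f(1))=0$ outright. Your suggestion to ``track which summand of $R$ contains $f(1)$'' is the right instinct but is not carried out, and it is not routine: $f(1)$ need not lie in a single summand of $R$, and the class $\varphi^\M_{F\cup i}(f(1))$ lives in degree $(\rk\M_{F\cup i}+1)/2$, above the middle, where neither $\NS(\M_{F\cup i})$ nor Poincar\'e duality alone forces a socle element to vanish. As written, the proposal is incomplete at precisely this point.
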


\begin{proof} This proof is essentially the same as the proof of Lemma \ref{pair and compare}.
Both pairings are compatible with the $\H_\o(\Mi)$-module structure in the sense that $\langle \eta\xi,\sigma\rangle = \langle \xi,\eta\sigma\rangle$ for any $\eta\in\H_\o(\Mi)$
and $\xi,\sigma\in \IH_\o(\Mip^F)$. Thus both pairings are given by isomorphisms $\IH_\o(\Mip^F)^*\cong \IH_\o(\Mip^F)[\rk F]$ of graded $\H_\o(\Mi)$-modules.
By Lemma \ref{lem:IHo endomorphisms}, the $\H_\o(\Mi)$-module $\IH_\o(\Mip^F)$ has only scalar endomorphisms, so any two such isomorphisms
must be related by a scalar factor $c\in\Q$.  

To compute the sign of $c$, we pair the class $1\in\IH_\o(\Mip^F)$ with the class $y_F\in\IH_\o(\Mip^F)$.
Inside of $\CH(\Mip^F)$, they pair to $1$.  Since $\theta_i^{\Mo}(y_F)=y_{\bar{F}}$, by Proposition \ref{lemma_degy},  Proposition~\ref{Prop_ymult}, Lemma \ref{lemma:IHi stalks} (c), and Lemma \ref{mult by y} (2), the pairing of their images in $\IH'_i(\M)$, or equivalently in $\CH(\M)$, is equal to the Poincar\'e pairing of 
$\varphi_{\bar F}(f(1))$ with itself inside of $\IH^{\frac{\crk F}{2}}(\M_{\bar F})$.
The class $\varphi_{\bar F}(f(1))$ is annihilated by $y_j$ for all $j\in E\setminus \bar F$, so it is primitive, and therefore
the sign of its Poincar\'e pairing with itself is equal to $(-1)^{\frac{\crk F}{2}}$ by $\hyperlink{hr}{\HR(\M_{\bar F})}$.
\end{proof}

Taking stalks at the empty flat $\varnothing\in \cL(\Mi)$ and using Proposition \ref{prop:IHiprime stalk}, the inclusion $f$ induces an inclusion 
\[
\underline{f} \colon \uIH(\Mip^F)[-(\crk F)/2]\hookrightarrow \IH'_i(\M)_\varnothing \cong \uIH_i(\M)
\]
of $\uH(\Mi)$-modules.  All of the summands of $\uIH_i(\M)$ provided by Corollary \ref{uIH_i in terms of uIH_1} are images of maps of this form.

There are two pairings on $\uIH(\Mip^F)$ that are {\em a priori} different:  the one induced by the inclusion of 
$\uIH(\Mip^F)$ into $\uCH(\Mip^F)$,
and the one induced by the above inclusion $\underline{f}$.

\begin{lemma}\label{keep pairing and keep comparing}
These two pairings on $\uIH(\Mip^F)$ are related by the same constant factor $c\in\Q$ as in Lemma~\ref{pair and compare again} with $(-1)^{\frac{\crk F}{2}}c > 0$.
\end{lemma}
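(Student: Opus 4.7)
My plan is to reduce the lemma directly to Lemma \ref{pair and compare again} by expressing both pairings on $\uIH((\M\setminus i)^F)$ in terms of Poincar\'e pairings on the relevant Chow rings. The key identity is that, for any loopless matroid $\M'$ and any lifts $a,b\in\CH(\M')$ of $\bar a, \bar b\in\uCH(\M')$ under $\varphi^\varnothing_{\M'}$, one has
\[
\udeg_{\M'}(\bar a\, \bar b) \;=\; \deg_{\M'}\bigl(x^{\M'}_\varnothing \cdot a\cdot b\bigr),
\]
which follows from Proposition \ref{lemma_xdegree}(5) in the case $F=\varnothing$ (together with $\psi^\varnothing_{\M'}(\eta) = x^{\M'}_\varnothing\cdot\tilde\eta$ for any lift $\tilde\eta$). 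Applying this identity to $\M' = (\M\setminus i)^F$ for the intrinsic pairing, and to $\M' = \M$ for the pairing pulled back via $\underline f$, and using the commutative square preceding Corollary \ref{funny IH} to identify $\underline f(\bar a) = \varphi^\varnothing_\M(f(a))$, the lemma reduces to the equality
\[
\deg_\M\bigl(x^\M_\varnothing \cdot f(a)\cdot f(b)\bigr) \;=\; c\cdot\deg_{(\M\setminus i)^F}\bigl(x^{(\M\setminus i)^F}_\varnothing \cdot a\cdot b\bigr)
\]
for all $a,b\in\IH_\o((\M\setminus i)^F)$.

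Next I would exploit the $\H_\o(\M\setminus i)$-linearity of $f$. The pullback $\varphi^F_{\M\setminus i}$ sends $x^{\M\setminus i}_\varnothing$ to $1\otimes x^{(\M\setminus i)^F}_\varnothing$, so the action of $x^{\M\setminus i}_\varnothing$ on the source $\IH_\o((\M\setminus i)^F)$ is multiplication by $x^{(\M\setminus i)^F}_\varnothing$. On the other hand, $\H_\o(\M\setminus i)$ acts on $\CH(\M)$ through $\theta_i^\M$, and $\theta_i^\M(x^{\M\setminus i}_\varnothing) = x^\M_\varnothing + x_{\{i\}}$. Linearity of $f$ therefore gives $f(x^{(\M\setminus i)^F}_\varnothing\cdot a) = (x^\M_\varnothing + x_{\{i\}})\cdot f(a)$, and applying Lemma \ref{pair and compare again} to the pair $(x^{(\M\setminus i)^F}_\varnothing\cdot a,\,b)$ yields
\[
\deg_\M\bigl((x^\M_\varnothing + x_{\{i\}})\cdot f(a)\cdot f(b)\bigr) \;=\; c\cdot\deg_{(\M\setminus i)^F}\bigl(x^{(\M\setminus i)^F}_\varnothing \cdot a\cdot b\bigr).
\]

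The remaining task is the vanishing $\deg_\M\bigl(x_{\{i\}}\cdot f(a)\cdot f(b)\bigr) = 0$. I would prove this from the containment $x_{\{i\}}\cdot R\subseteq P$ combined with the orthogonality $R\perp P$ from Theorem \ref{TheoremDecomposition}, since $f(a), f(b)\in\IH_i(\M)\cap R$. On the summand $\CH_{(i)}\subseteq R$ the containment holds by definition of $P = x_{\{i\}}\CH_{(i)}$. On each summand $x_{H\cup i}\CH_{(i)}$ for $H\in\mathscr{S}_i\setminus\{\varnothing\}$, the identity $x_{H\cup i} = \theta_i^\M(x_H) - x_H$ combined with the vanishing $x_{\{i\}}\cdot x_H = 0$ (which holds because $\{i\}$ and $H$ are incomparable flats of $\M$, as $H\subseteq E\setminus i$ is a nonempty flat) gives $x_{\{i\}}\cdot x_{H\cup i} = x_{\{i\}}\cdot\theta_i^\M(x_H) \in x_{\{i\}}\CH_{(i)} = P$, and then multiplication by any element of $\CH_{(i)}$ stays inside $P$ since $\theta_i^\M(x_H)\in\CH_{(i)}$.

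The main technical obstacle is this combinatorial verification of $x_{\{i\}}\cdot R\subseteq P$, but it reduces cleanly to the standard orthogonality of variables associated to incomparable flats. Once this vanishing is established, the rest is bookkeeping between the two $\udeg/\deg$ formulas and the $\H_\o(\M\setminus i)$-linearity of $f$; notice that the proof produces exactly the same scalar $c$ as in Lemma \ref{pair and compare again}, rather than some \emph{a priori} different scalar $c'$, because Lemma \ref{pair and compare again} is invoked directly rather than through an indecomposability argument.
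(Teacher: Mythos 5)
Your proposal is correct and follows essentially the same route as the paper: both pairings are rewritten as Poincar\'e pairings against $x_\varnothing$ in the respective Chow rings, the discrepancy $\theta_i^{\M}(x_\varnothing)=x_\varnothing+x_{\{i\}}$ is absorbed by showing $\deg_{\M}\bigl(x_{\{i\}}\cdot f(a)\cdot f(b)\bigr)=0$ via $x_{\{i\}}R\subseteq P$ and the orthogonality $R\perp P$, and the conclusion follows from $\H_\o(\M\setminus i)$-linearity of $f$ together with Lemma~\ref{pair and compare again}. The only (harmless) deviation is that you verify the containment $x_{\{i\}}x_{H\cup i}\CH_{(i)}\subseteq x_{\{i\}}\CH_{(i)}$ directly from the incomparability of $\{i\}$ and $H$, whereas the paper cites this fact from \cite{BHMPW}.
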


\begin{proof}
We need to compare the Poincar\'e pairings in the Chow rings and the augmented Chow rings. 
Given two classes $\eta, \xi\in \IH_\o(\Mip^F)$, we denote their images in $\uIH(\Mip^F)$ by 
$\underline{\eta}, \underline{\xi}$.
By Propositions~\ref{lemma_xdegree} and~\ref{Prop_xmult}, we have
\[
\langle \underline{\eta}, \underline{\xi}\rangle_{\uCH((\Mi)^F)}
= \langle \varphi^\varnothing\eta, \varphi^\varnothing{\xi}\rangle_{\uCH((\Mi)^F)}
= \langle \eta, \psi^\varnothing\varphi^\varnothing\xi\rangle_{\CH((\Mi)^F)}
= \langle \eta, x_\varnothing\xi\rangle_{\CH((\Mi)^F)}
\]
and
\[
\langle \underline{f}(\underline{\eta}), \underline{f}(\underline{\xi})\rangle_{\uCH(\M)}
= \langle \varphi^\varnothing f(\eta), \varphi^\varnothing f(\xi))\rangle_{\uCH(\M)}
= \langle f(\eta), \psi^\varnothing\varphi^\varnothing f(\xi)\rangle_{\CH(\M)}
= \langle f(\eta), x_\varnothing f(\xi)\rangle_{\CH(\M)}.
\]
We further have
\[
\langle f(\eta), x_\varnothing f(\xi)\rangle_{\CH(\M)}
= \langle f(\eta), (\theta_i^{\Mo}(x_\varnothing)-x_{\{i\}})f(\xi)\rangle_{\CH(\M)}
=\langle f(\eta),  f(x_\varnothing\xi)\rangle_{\CH(\M)},
\]
where the last equality follows from the next lemma and $f$ being an $\H_\o(\Mi)$-module homomorphism.
Thus, the two pairings are related by the same constant factor $c$ as in Lemma \ref{pair and compare again}.
\end{proof}

\begin{lemma}
For any $\mu, \nu\in R$, we have
$
\langle \mu, x_{\{i\}} \nu \rangle_{\CH(\M)}=0.
$
\end{lemma}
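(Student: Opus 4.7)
The plan is to reduce the claim to the single statement $x_i R \subseteq P$ and then exploit the orthogonality $R \perp P$ from Theorem \ref{TheoremDecomposition}. More precisely, once we know $x_i \nu \in P$ whenever $\nu \in R$, the desired vanishing follows immediately, since for any $\mu \in R$ we have
\[
\langle \mu, x_i \nu \rangle_{\CH(\M)} = \deg_\M(\mu \cdot x_i \nu) = 0,
\]
because the decomposition $\CH(\M) = R \oplus P$ is orthogonal under the Poincar\'e pairing.

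To verify $x_i R \subseteq P$, recall that $R = \CHi \oplus \bigoplus_{F \in \mathscr{S}_i \setminus \{\varnothing\}} x_{F \cup i}\CHi$, and that $\CHi$ is closed under multiplication, so that $P = x_i \CHi$ is stable under multiplication by $\CHi$. The first summand causes no trouble: $x_i \cdot \CHi \subseteq x_i \CHi = P$ by definition. For a typical element $x_{F \cup i} \rho$ of the second family, with $F \in \mathscr{S}_i \setminus \{\varnothing\}$ and $\rho \in \CHi$, I would use the identity $\theta_i^{\M}(x_F) = x_F + x_{F \cup i}$ of Section \ref{sec_deletion} to rewrite
\[
x_i \cdot x_{F\cup i} \cdot \rho = x_i \bigl(\theta_i^{\M}(x_F) - x_F\bigr)\rho = x_i \theta_i^{\M}(x_F)\rho - x_i x_F \rho.
\]
The first term lies in $x_i \CHi \cdot \CHi = x_i \CHi = P$ because $\theta_i^{\M}(x_F) \in \CHi$, while the second term vanishes in $\CH(\M)$: since $F \in \mathscr{S}_i \setminus \{\varnothing\}$, the flats $\{i\}$ and $F$ of $\M$ are incomparable, so $x_i x_F = 0$ by the defining quadratic relations of the augmented Chow ring. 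Hence $x_i \cdot x_{F\cup i}\rho \in P$, and summing over the components of $\nu$ gives $x_i \nu \in P$ for every $\nu \in R$.

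There is no real obstacle here; the only subtle point is recognizing that the relations from Theorem \ref{TheoremDecomposition} (orthogonality of $R$ and $P$), the incomparability $\{i\} \not\lessgtr F$ for $F \in \mathscr{S}_i \setminus\{\varnothing\}$, and the defining formula $\theta_i^{\M}(x_F) = x_F + x_{F\cup i}$ fit together to express $x_i x_{F\cup i}$ as an element of $x_i \CHi$ rather than leaving it as an a priori unrelated product. Once these pieces are in place, the conclusion is immediate.
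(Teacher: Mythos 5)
Your proof is correct and follows the same route as the paper: both arguments reduce the claim to $x_i R \subseteq x_i\CHi = P$ and then invoke the orthogonality of $R$ and $P$ from Theorem \ref{TheoremDecomposition}. The only difference is that the paper cites \cite[Lemma 3.8]{BHMPW} for the containment $x_i x_{F\cup i}\CHi \subseteq x_i\CHi$, whereas you verify it directly via $x_{F\cup i} = \theta_i^{\M}(x_F) - x_F$ and the vanishing $x_i x_F = 0$ for incomparable flats, which is a perfectly valid (and self-contained) substitute.
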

\begin{proof}
By \cite[Lemma 3.9]{BHMPW}, for any $F\in \mathscr{S}_i\setminus \{\varnothing\}$, we have
\[
x_{\{i\}} x_{F\cup i} \CHi\subseteq x_{\{i\}}\CHi.
\]
Recall that we defined $R$ to be the direct sum of $\CHi$ and $x_{F\cup i}\CHi$ for all $F\in \mathscr{S}_i\setminus \{\varnothing\}$.  It therefore follows from the above equation
that $x_{\{i\}} R=x_{\{i\}}\CHi=P$, which is orthogonal to $R$ with respect to the Poincar\'e pairing of $\CH(\M)$. Thus, the lemma follows. 
\end{proof}

\begin{corollary}\label{uHR_i}
The statement $\hyperlink{uhri}{\uHR_i(\M)}$ holds.
\end{corollary}

\begin{proof}
This follows from Corollary \ref{uIH_i in terms of uIH_1}, Lemma \ref{keep pairing and keep comparing}, and $\hyperlink{uhr}{\uHR(\Mip^F)}$ for  $\varnothing\neq F\in \cL(\Mi)$.
\end{proof}

\section{Deformation arguments}\label{sec:deform}
This section is devoted to arguments that establish hard Lefschetz or Hodge--Riemann properties by considering families of Lefschetz arguments.
We continue to fix an element $i\in E$ satisfying the three assumptions stated at the beginning of Section \ref{sec:underlined deletion induction}.

\subsection{Establishing $\HR^{<\frac d 2}(\M)$}
\begin{proposition}\label{delnodel}
We have
\[
\hyperlink{hl}{\HL(\M)},\; \hyperlink{hli}{\HL_i(\M)}, \;\;\text{and}\;\; \hyperlink{hri}{\HR_i^{< \frac d 2}(\M)}\enspace \Longrightarrow\enspace \hyperlink{hr}{\HR^{<\frac d 2}(\M)}.
\]
\end{proposition}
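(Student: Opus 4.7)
The plan is a standard deformation-of-Lefschetz argument. Fix a positive combination $y = \sum_{j \in E} c_j y_j$ and the element $i \in E$ appearing in the hypotheses $\HL_i(\M)$ and $\HR_i^{<d/2}(\M)$. I would introduce the one-parameter family
\[ y_t := y - t\, c_i\, y_i = \sum_{j \neq i} c_j y_j + (1-t) c_i y_i \in \H^1(\M), \qquad t \in [0,1]. \]
At $t = 0$ this is the target class $y$; at $t = 1$ it is the positive combination $\sum_{j \neq i} c_j y_j$. For $t \in [0,1)$ the coefficient $(1-t)c_i$ is strictly positive, so $y_t$ is a positive combination of all $y_j$, and $\HL(\M)$ applies; at $t = 1$, hard Lefschetz holds by $\HL_i(\M)$. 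Thus for every $t \in [0,1]$ and every $k \leq d/2$, multiplication by $y_t^{d-2k}$ is an isomorphism $\IH^k(\M) \to \IH^{d-k}(\M)$.

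Fix $k < d/2$ and consider the symmetric bilinear form
\[ Q_t^k(\eta, \xi) := (-1)^k \deg_\M\!\bigl(y_t^{d-2k}\,\eta\,\xi\bigr) \]
on the fixed finite-dimensional vector space $\IH^k(\M)$. Combining hard Lefschetz for $y_t$ with $\PD^k(\M)$ (which is available from the standing hypothesis $\CD^{<d/2}(\M)$ via Remark~\ref{CDPD}) shows that $Q_t^k$ is non-degenerate on $\IH^k(\M)$ for every $t$: if $Q_t^k(\eta,\cdot) = 0$, then $y_t^{d-2k}\eta \in \IH^{d-k}(\M)$ pairs trivially with $\IH^k(\M)$, so it vanishes by $\PD^k$, hence $\eta = 0$ by injectivity of $y_t^{d-2k}$.

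The algebraic step I would verify is that, under hard Lefschetz for $y_t$, the Hodge--Riemann relations in all degrees $\leq k$ are equivalent to the single statement that $(-1)^k Q_t^k$ is positive definite on all of $\IH^k(\M)$. This rests on the primitive decomposition $\IH^k(\M) = \bigoplus_{j=0}^{k} y_t^{k-j} P_t^j$, where $P_t^j := \ker(y_t^{d-2j+1})$, on the $Q_t^k$-orthogonality of this decomposition (for $j < j'$ one has $d - j - j' \geq d - 2j' + 1$, so $y_t^{d-j-j'}$ kills $P_t^{j'}$), and on the identity $Q_t^k(y_t^{k-j}\eta', y_t^{k-j}\xi') = (-1)^{k-j} Q_t^j(\eta', \xi')$ for $\eta',\xi' \in P_t^j$. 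These orthogonality and sign computations are routine but are the only non-formal input to the argument.

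Applying this equivalence at $t = 1$: by $\HR_i^{<d/2}(\M)$, HR holds for $y_1$ in every degree $\leq k$, so $(-1)^k Q_1^k$ is positive definite on $\IH^k(\M)$, with signature $\dim \IH^k(\M)$. Since $\{(-1)^k Q_t^k\}_{t \in [0,1]}$ is a continuous family of non-degenerate symmetric forms on the fixed space $\IH^k(\M)$, its signature is an integer-valued continuous, hence constant, function of $t$. Therefore $(-1)^k Q_0^k$ is positive definite on $\IH^k(\M)$, and in particular on the primitive subspace $P_0^k \subseteq \IH^k(\M)$. Since $y_0 = y$, this is exactly the Hodge--Riemann relation in degree $k$ for $y$; taking the conjunction over all $k < d/2$ gives $\HR^{<d/2}(\M)$.
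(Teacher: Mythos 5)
Your overall strategy is exactly the paper's: deform the Lefschetz class along the segment joining $y$ to $y'=\sum_{j\neq i}c_jy_j$, note that hard Lefschetz holds at every point of the segment ($\HL(\M)$ where the coefficient of $y_i$ is positive, $\HL_i(\M)$ at the endpoint where it vanishes), and transport the Hodge--Riemann relations from the endpoint supplied by $\HR_i^{<\frac{d}{2}}(\M)$ using constancy of the signature of a continuous family of non-degenerate forms. The paper compresses the final step into a citation of \cite[Proposition 7.6]{AHK}; you attempt to spell it out, and that is where the argument goes wrong.

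The claimed equivalence ``given hard Lefschetz, the Hodge--Riemann relations in degrees $\le k$ hold iff $(-1)^kQ_t^k$ is positive definite on all of $\IH^k(\M)$, with signature $\dim\IH^k(\M)$'' is false, and your own computation shows it: on the summand $y_t^{k-j}P_t^j$ of the primitive decomposition, $Q_t^k$ restricts to $(-1)^{k-j}Q_t^j$, which under the Hodge--Riemann relation in degree $j$ is positive definite when $k-j$ is even and \emph{negative} definite when $k-j$ is odd. Already for $k=1$, the form $Q_t^1$ is negative definite on the nonzero subspace $y_tP_t^0$, so it is never positive definite on $\IH^1(\M)$. The correct criterion, which is what \cite[Proposition 7.6]{AHK} provides, is that under hard Lefschetz the Hodge--Riemann relations in degrees $\le k$ are equivalent to the signature identity $\sig Q_t^j=\sum_{m=0}^{j}(-1)^{j-m}\bigl(\dim\IH^m(\M)-\dim\IH^{m-1}(\M)\bigr)$ for every $j\le k$ (the right-hand side being independent of $t$ because hard Lefschetz fixes $\dim P_t^m=\dim\IH^m(\M)-\dim\IH^{m-1}(\M)$). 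With this replacement your argument closes: the signatures at $t$ corresponding to $y'$ are the prescribed ones by $\HR_i^{<\frac{d}{2}}(\M)$, constancy of signature transports them to $y$, and an induction on $k$ recovers $\HR^{<\frac{d}{2}}(\M)$. So the gap is a misstatement of the key signature lemma rather than a structural flaw, but as written the step you rely on would fail.
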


\begin{proof}
Given $y = \sum_{j\in E} c_{j} y_{j}$ with every $c_j>0$, to show that $\IH(\M)$ satisfies the Hodge--Riemann relations with respect to multiplication by $y$ in degrees less than $d/2$, we consider 
\[
y_t\coloneq t \cdot c_i y_i+\sum_{j\in E, j\neq i}c_jy_j.
\]
By $\hyperlink{hl}{\HL(\M)}$ and $\hyperlink{hli}{\HL_i(\M)}$,  $\IH(\M)$ satisfies the hard Lefschetz theorem with respect to multiplication by $y_t$ for any $t\geq 0$. Therefore, for any $k< d/2$, the Hodge--Riemann form on $\IH^{k}(\M)$ associated with any $y_t$ with $t\geq 0$ has the same signature.  Given the hard Lefschetz theorem, the Hodge--Riemann relations are conditions on the signature of the Hodge--Riemann forms \cite[Proposition 7.6]{AHK}, thus the fact that 
$\IH(\M)$ satisfies the Hodge--Riemann relations with respect to multiplication by $y_0$ implies that it satisfies the Hodge--Riemann relations 
for any $y_t$ with $t\geq 0$. 
\end{proof}

\subsection{Establishing $\uHR(\M)$}
The purpose of this section is to prove Proposition \ref{alt}, which gives us a way to pass from $\hyperlink{uhri}{\uHR_i(\M)}$ to $\hyperlink{uhr}{\uHR(\M)}$.
If $\{i\}$ is not a flat,
then $i$ has a parallel element and the statements $\hyperlink{uhri}{\uHR_i^{\leq k}(\M)}$ and $\hyperlink{uhr}{\uHR^{\leq k}(\M)}$ are the same. So, without loss of generality, 
we may assume that 
$\{i\}$ is a flat.  To simplify the notation we will denote this flat without braces in this section, so we write $x_i$ instead of $x_{\{i\}}$, $\upsi^i$ instead of $\upsi^{\{i\}}$, etc.

For any  $t\geq 0$, consider the degree one linear operator $\uL_t$ on $\uIH_i(\M)$
given by multiplication by $\ub - t x_{i}$.   We will assume $\hyperlink{ucd}{\uCD(\M)}$ throughout this section, so that we have $\uIH_i(\M) = \uIH(\M) \oplus \uK{i}{\M}$.  Thus, if $k < (d-1)/2$ we have decompositions
\[
\uIH^k_i(\M) = \uIH^k(\M) \oplus \upsi^i_\Mo\big( \uJ^{k-1}(\M_i)\big)
\;\;\text{and}\;\; \uIH_i^{d-k-1}(\M) = \uIH^{d-k-1}(\M) \oplus \upsi^i_\Mo\big( \uJ^{d-k-2}(\M_i)\big).
\]
Here we have used the fact that $i$ has rank one (so $\uCH(\M^i) = \Q$) to suppress the second tensor factor in the source of $\upsi^i$.
\begin{lemma}\label{orthogonal}
The map 
\[
\uL_t^{d-2k-1} \colon \uIH_i^k(\M)\to\uIH_i^{d-k-1}(\M)
\]
is block diagonal with respect to the above direct sum decompositions.
\end{lemma}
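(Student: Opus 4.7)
The plan is to prove the two block containments separately: the second by direct computation of the action of $\uL_t$ on the summand $\upsi^i_\M\uJ(\M/i)$, and the first by combining that computation with self-adjointness of multiplication operators and the orthogonality of the given decomposition. The main bookkeeping step, which I expect to be the most delicate, is verifying that $\ub_{\M/i}^{d-2k-1}$ carries $\uJ^{k-1}(\M/i)$ into $\uJ^{d-k-2}(\M/i)$ across the two cases in the piecewise definition of $\uJ$; once this is in hand, the first block follows formally by an adjunction argument.

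For the second block, I will use that $\M^i$ is a rank one matroid, so both $\ua_{\M^i}$ and $\ub_{\M^i}$ vanish. Applying Proposition~\ref{lemma_uab}~(1) for the expressions $\uvarphi^i_\M(\ub)$ and $\uvarphi^i_\M(x_{\{i\}})$, together with the module-homomorphism identity of Proposition~\ref{lemma_uab}~(4), produces
\[
\ub\cdot\upsi^i_\M(\xi) = 0 \quad\text{and}\quad x_{\{i\}}\cdot\upsi^i_\M(\xi) = -\upsi^i_\M\bigl((\ub_{\M/i}\otimes 1)\xi\bigr),
\]
so that $\uL_t^{d-2k-1}$ acts on this summand as $t^{d-2k-1}\upsi^i_\M\circ(\ub_{\M/i}^{d-2k-1}\otimes 1)$. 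A short case analysis using the piecewise formula in Definition~\ref{def:thesubspaces} (applied to the rank $d-1$ matroid $\M/i$) shows that $\ub_{\M/i}^{d-2k-1}$ sends $\uJ^{k-1}(\M/i)$ onto $\uJ^{d-k-2}(\M/i)$, yielding $\uL_t^{d-2k-1}\bigl(\upsi^i_\M \uJ^{k-1}(\M/i)\bigr) \subseteq \upsi^i_\M \uJ^{d-k-2}(\M/i)$.

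For the first block, multiplication by $(\ub - tx_{\{i\}})^{d-2k-1}$ in the commutative ring $\uCH(\M)$ is self-adjoint with respect to the Poincar\'e pairing, and the decomposition $\uIH_i(\M) = \uIH(\M) \oplus \upsi^i_\M(\uJ(\M/i)\otimes\uCH(\M^i))$ afforded by $\uCD(\M)$ is orthogonal by the defining perpendicularity condition for $\uIH(\M)$. Given $\eta \in \uIH^k(\M)$ and $\xi \in \upsi^i_\M\uJ^{k-1}(\M/i)$, the preceding paragraph places $\uL_t^{d-2k-1}(\xi)$ in $\upsi^i_\M\uJ^{d-k-2}(\M/i)$, which is orthogonal to $\uIH(\M)$; hence
\[
\udeg_\M\bigl(\uL_t^{d-2k-1}(\eta)\cdot\xi\bigr) = \udeg_\M\bigl(\eta \cdot \uL_t^{d-2k-1}(\xi)\bigr) = 0.
\]
The restricted Poincar\'e pairing between $\upsi^i_\M\uJ^{k-1}(\M/i)$ and $\upsi^i_\M\uJ^{d-k-2}(\M/i)$ is non-degenerate by Lemma~\ref{lemma_J}, Proposition~\ref{reduction}, and Lemma~\ref{triangular}, so the vanishing above forces the $\upsi^i_\M\uJ^{d-k-2}(\M/i)$-component of $\uL_t^{d-2k-1}(\eta)$ to be zero, giving $\uL_t^{d-2k-1}(\uIH^k(\M)) \subseteq \uIH^{d-k-1}(\M)$.
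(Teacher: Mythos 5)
Your proof is correct and follows essentially the same route as the paper's: an explicit push--pull computation showing that $\uL_t$ carries the summand $\upsi^i_\M\uJ(\M/i)$ into itself with the expected degree shift, followed by an orthogonality and non-degeneracy argument for the $\uIH(\M)$ block. The only cosmetic difference is that you package the second step as self-adjointness of $\uL_t^{d-2k-1}$, whereas the paper expands $(\ub - tx_{\{i\}})^{d-2k-1} = \ub^{d-2k-1} + (-t)^{d-2k-1}x_{\{i\}}^{d-2k-1}$ and computes the relevant products directly; both amount to the same pairing computation.
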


\begin{proof}
Since $\ub=\sum_{i\notin G\neq\varnothing}x_G$, we have
$\ub x_i=0$.  Since the image of $\upsi^i_\Mo$ is equal to the ideal of $\uCH(\M)$ generated by $x_i$, multiplication by $\ub$ annihilates the image of $\upsi^i_\Mo$. 
Thus, we have
\begin{align*}
\uL_t^{d-2k-1} \upsi^i_\Mo\big( \uJ^{k-1}(\M_i)\big) &\subseteq 
x_i^{d-2k-1}  \upsi^i_\Mo\big( \uJ^{k-1}(\M_i)\big) && \\
&= \upsi^i_\Mo\Big(\uvarphi_\Mo^i(x_i^{d-2k-1}) \uJ^{k-1}(\M_i)\Big) && \text{by Proposition \ref{lemma_uab} (5)}\\
&= \upsi^i_\Mo\Big(\b_{\M_i}^{d-2k-1}  \uJ^{k-1}(\M_i)\Big) && \text{since $\ua_{\M^i} = 0$}\\
&= \upsi^i_\Mo\Big(\b_{\M_i}^{d-2k-1} \uIH^{k-1}(\M_i)\Big) && \text{by the definition of $\uJ$ }\\
&= \upsi^i_\Mo\big(\uJ^{d-k-2}(\M_i)\big)  && \text{by the definition of $\uJ$}.
\end{align*}
Thus $\uL_t^{d-2k-1}$ maps $\upsi^i_\Mo\!\left( \uJ^{k-1}(\M_i)\right)$ to $\upsi^i_\Mo\!\left( \uJ^{d-k-2}(\M_i)\right)$.

On the other hand, by  the same argument above, we have
\begin{align*}
x_i^{d-2k-1}\uIH^k(\M) \cdot \upsi^i_\Mo\left( \uJ^{k-1}(\M_i)\right) &= \uIH^k(\M)\cdot\upsi^i_\Mo\Big(\uvarphi_\Mo^i\big(x_i^{d-2k-1}\big)\uJ^{k-1}(\M_i)\Big) \\
&=   \uIH^k(\M) \cdot \upsi^i_\Mo\left(\uJ^{d-k-2}(\M_i)\right). 
\end{align*}
By Proposition \ref{summands}, the subspaces $\uIH^k(\M)$ and $\upsi^i_\Mo(\uJ^{d-k-2}(\M_i))$ are orthogonal in $\uCH(\M)$, and hence the above product is zero. Therefore, the subspaces $x_i^{d-2k-1}\uIH^k(\M)$ and $\upsi^i_\Mo\left( \uJ^{k-1}(\M_i)\right)$ are orthogonal in $\uIH_i(\M)$. 
Since $\uIH(\M)$ is the orthogonal complement of $\upsi^i_\Mo\!\left( \uJ(\M_i)\right)$ in $\uIH_i(\M)$, it follows that $$x_i^{d-2k-1}\uIH^k(\M) \subseteq \uIH^{d-k-1}(\M).$$ 
But, since $\ub x_i = 0$, we have $(\ub-tx_i)^{d-2k-1} = \ub^{d-2k-1} + (-tx_i)^{d-2k-1}$, and $\uIH(\M)$ is preserved by multiplication by $\ub$, so this shows that $\uL_t^{d-2k-1}$ maps $\uIH^k(\M)$ to $\uIH^{d-k-1}(\M)$.
\end{proof}

\begin{lemma}\label{HLt}
Let $k\leq (d-1)/2$ be given, and suppose that the statements $\hyperlink{uhri}{\uHR_i(\M)}$ and $\hyperlink{uhl}{\uHL^{\leq k}(\M)}$ hold. For any $0 < t \leq 1$, the map
\[
\uL_t^{d-2k-1} \colon \uIH^k(\M)\to\uIH^{d-k-1}(\M)
\]
is an isomorphism.
\end{lemma}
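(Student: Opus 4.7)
By Lemma \ref{orthogonal}, the operator $\uL_t^{d-2k-1}$ is block diagonal with respect to the decomposition $\uIH_i^\bullet(\M) = \uIH^\bullet(\M) \oplus \upsi_\M^{\{i\}} \uJ^{\bullet - 1}(\M/i)$, so the plan is to dispatch the second block by a direct computation and then attack the first block via a bilinear-form continuity argument whose crucial input at $t=1$ is $\uHR_i(\M)$. For the block on $\upsi_\M^{\{i\}}\uJ^{k-1}(\M/i)$, the identity $(\ub - t x_{\{i\}})^n = \ub^n + (-t)^n x_{\{i\}}^n$ for $n \geq 1$ (which holds because $\ub x_{\{i\}} = 0$ in $\uCH(\M)$), combined with the fact that $\M^{\{i\}}$ has rank one and hence $\ua_{\M^{\{i\}}}=0$, gives $\uvarphi_\M^{\{i\}}(x_{\{i\}}) = -\ub_{\M/i}\otimes 1$ via Definition \ref{DefinitionUnderlinedPullPush}. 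Therefore multiplication by $\uL_t$ on $\upsi_\M^{\{i\}}\uCH(\M/i)$ is intertwined through $\upsi_\M^{\{i\}}$ with multiplication by $t\,\ub_{\M/i}$, and the restriction of $\uL_t^{d-2k-1}$ to this block becomes $t^{d-2k-1}$ times the hard Lefschetz map $\ub_{\M/i}^{d-2k-1}\colon \uIH^{k-1}(\M/i)\to\uIH^{d-k-2}(\M/i)$, using $\uJ^{k-1}(\M/i)=\uIH^{k-1}(\M/i)$ in our range $k\leq(d-1)/2$. The inductive hypothesis $\uHL(\M/i)$ makes this an isomorphism for every $t>0$.

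It remains to prove that the restriction of $\uL_t^{d-2k-1}$ to $\uIH^k(\M)$ is an isomorphism for $0<t\leq 1$. I would introduce the symmetric bilinear form $B_t(\eta,\xi) = \udeg_\M(\uL_t^{d-2k-1}\eta\,\xi)$ on $\uIH^k(\M)$; since $\uPD(\M)$ is available via Corollary \ref{CD for free}, the sought isomorphism is equivalent to non-degeneracy of $B_t$. The form $B_t$ depends polynomially on $t$ (and in fact $\uL_t^{d-2k-1} = \ub^{d-2k-1}+(-t)^{d-2k-1}x_{\{i\}}^{d-2k-1}$ by the same binomial identity). At the two endpoints: $B_0$ is non-degenerate by $\uHL^{\leq k}(\M)$, while $B_1$ is non-degenerate by $\uHR_i(\M)$ (which forces the hard Lefschetz theorem for $\uL_1$ on $\uIH_i^k(\M)$) together with the block diagonality of Lemma \ref{orthogonal} and the fact that block~2 is an isomorphism at $t=1$.

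The heart of the proof — and the step I expect to be the main obstacle — is to rule out zeros of $\det B_t$ in the interior $(0,1)$. The plan is a signature-continuity argument in the spirit of Proposition \ref{delnodel}: the signature of $B_t$ is locally constant on the open non-degeneracy locus, so it suffices to verify that $\sigma(B_t)$ agrees at $t=0^+$ and at $t=1^-$ and that a continuous deformation of the primitive decomposition propagates non-degeneracy through the interval. On the side of $t=1$, $\uHR_i(\M)$ together with the Lefschetz decomposition on $\uIH_i^k(\M)$ computes $\sigma(\tilde B_1)$ on the full space; subtracting off the signature of the block-$2$ form, which is fixed for $t>0$ by $\uHR(\M/i)$ and the explicit identification above, recovers $\sigma(B_1)$. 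Matching this value to $\sigma(B_0^+)$ using $\uHL^{\leq k}(\M)$ and induction will force $B_t$ to remain non-degenerate on all of $(0,1]$ by connectedness, since any interior zero of $\det B_t$ would produce a signature jump incompatible with the two endpoint computations. If needed, the alternative route is a contradiction argument: choose a minimal $t^*\in(0,1)$ where $B_{t^*}$ degenerates, produce $\eta\in\uIH^k(\M)\setminus\{0\}$ with $\ub^{d-2k-1}\eta=-(-t^*)^{d-2k-1}x_{\{i\}}^{d-2k-1}\eta$, and deform the HR positivity of $(-1)^k\tilde B_1$ on primitives backward along the path to contradict the existence of such $\eta$.
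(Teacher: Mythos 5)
Your reduction to non-degeneracy of the form $B_t(\eta,\xi)=\udeg_\M(\uL_t^{d-2k-1}\eta\,\xi)$ on $\uIH^k(\M)$, and your treatment of the two endpoints ($t=0$ via $\uHL^{\leq k}(\M)$, $t=1$ via $\uHR_i(\M)$ and the block-diagonality of Lemma \ref{orthogonal}), are fine. The gap is exactly where you anticipated it: your mechanism for ruling out degeneracy in the interior does not work. Non-degeneracy at both endpoints together with equality of the endpoint signatures does \emph{not} preclude $\det B_t$ vanishing for some $t\in(0,1)$ --- eigenvalues can cross zero in canceling pairs, as for $B_t=\mathrm{diag}(t-\tfrac12,\,\tfrac12-t)$, which has signature $0$ at both $t=0$ and $t=1$ yet degenerates at $t=\tfrac12$. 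Signature constancy is a consequence of non-degeneracy on an interval, not a route to proving it; the deformation arguments elsewhere in the paper (Propositions \ref{delnodel} and \ref{alt}) all first establish hard Lefschetz for \emph{every} $t$ in the family by independent means and only then transport Hodge--Riemann via the signature. Here the whole content of the lemma is that interior non-degeneracy, so the logic is circular.

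The paper's actual argument is a pointwise contradiction that your vague fallback does not capture. Fix $t\in(0,1)$ and suppose $0\neq\eta\in\uIH^k(\M)$ satisfies $\bigl(\ub^{d-2k-1}+(-tx_{\{i\}})^{d-2k-1}\bigr)\eta=0$. Multiplying this relation separately by $\ub$ and by $x_{\{i\}}$ (using $\ub\, x_{\{i\}}=0$) yields $\ub^{d-2k}\eta=0$ and $x_{\{i\}}^{d-2k}\eta=0$, so $\eta$ is primitive for $\ub-x_{\{i\}}$ and $\uHR_i(\M)$ gives a \emph{strict} sign for $(-1)^k\udeg_\M\bigl((\ub^{d-2k-1}+(-x_{\{i\}})^{d-2k-1})\eta^2\bigr)$; substituting the kernel relation reduces this to a strict sign for $(1-t^{d-2k-1})\,\udeg_\M(x_{\{i\}}^{d-2k-1}\eta^2)$, where $0<t<1$ keeps the scalar positive. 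On the other hand, $\ub^{d-2k}\eta=0$ and Lemma \ref{push uIH} show that $\uvarphi^{\{i\}}_\M(\eta)$ is a $\ub_{\M/i}$-primitive class in $\uIH^k(\M/i)$, and $\uHR(\M/i)$ together with the push--pull formulas gives the \emph{opposite} (weak) sign for the same quantity $\udeg_\M(x_{\{i\}}^{d-2k-1}\eta^2)$. That two-sided squeeze is the key idea missing from your proposal; without it, or some genuine substitute, the interior of the interval is not controlled.
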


\begin{proof}
First note that the statement for $t=1$ holds by 
Lemma \ref{orthogonal} and $\hyperlink{uhri}{\uHR_i(\M)}$.  
For $0 < t < 1$, assume for the sake of contradiction that $0 \ne \eta \in \uIH^k(\M)$ and
\begin{equation}\label{negateHL}
\Big(\b^{d-2k-1} + (-tx_i)^{d-2k-1}\Big)\eta = 0.
\end{equation}
Multiplying this equation by $\b$ and by $x_i$ gives
\begin{equation*}\label{prim}
\b^{d-2k}\eta = 0 \;\;\text{and}\;\; x_i^{d-2k}\eta = 0.
\end{equation*}
Thus $\eta$ is a primitive class in $\uIH^k(\M)$ with respect to $\b - x_i$.  By $\hyperlink{uhri}{\uHR_i(\M)}$,
\[
(-1)^k\udeg_{\M}\Bigl(\Big(\b^{d-2k-1} + (-x_i)^{d-2k-1}\Big)\eta^2\Big) > 0.
\]
But by an application of \eqref{negateHL}, this inequality is equivalent to
\begin{align*}
0 &< (-1)^k \udeg_{\M} \Bigl(\Big(\b^{d-2k-1} + (-tx_i)^{d-2k-1} - (-tx_i)^{d-2k-1} + (-x_i)^{d-2k-1}\Big)\eta^2\Big) \\
&=(-1)^k \udeg_{\M} \Bigl(\Big(- (-tx_i)^{d-2k-1} + (-x_i)^{d-2k-1}\Big)\eta^2\Big) \\
&=(-1)^{d-k-1}\udeg_{\M} \Bigl(x_i^{d-2k-1}(-t^{d-2k-1}+1)\eta^2\Big).
\end{align*}
Since $0 < t < 1$, this inequality reduces to
\begin{equation*}\label{ineq}
(-1)^{d-k-1}\udeg_{\M}(x_i^{d-2k-1}\eta^2) > 0.
\end{equation*}
On the other hand, by Lemma \ref{push uIH} (3), we know that $\uvarphi_\Mo^i\!\left(\uIH(\M)\right)\subseteq \uIH(\M_i)$. Since $(\b_{\M})^{d-2k}\eta = 0$ and $\uvarphi_\Mo^i(\b_{\M})=\b_{\M_i}$, it follows that $(\b_{\M_i})^{d-2k}\uvarphi_\Mo^i(\eta)=0$. In other words, $\uvarphi_\Mo^i(\eta)\in  \uIH^k(\M_i)$ is a primitive class with respect to $\b_{\M_i}$.  
Thus, by Proposition \ref{lemma_uab} and Proposition \ref{Prop_uxmult}, we have
\begin{align*}
0 &\leq  (-1)^{k} \udeg_{\M_i} \Bigl(\b^{d-2k-2}\uvarphi_\Mo^i(\eta)^2\Big)\\
&= (-1)^{d-k} \udeg_{\M_i} \Bigl(\uvarphi_\Mo^i\big(x_i^{d-2k-2}\eta^2\big)\Big)\\
& = (-1)^{d-k} \udeg_{\M}\Bigl(\upsi^i_\Mo \uvarphi_\Mo^i\big(x_i^{d-2k-2}\eta^2\big)\Big)\\
&= (-1)^{d-k}\udeg_{\M}(x_i^{d-2k-1}\eta^2).
\end{align*}
Now, we have a contradiction between the above two sets of inequalities.
\end{proof}

The following proposition allows us to reduce the proof of the Hodge--Riemann relations to a signature computation, assuming that Poincar\'e duality and the hard Lefschetz theorem are already known.
We follow the notation from the start of Section \ref{sec:statements}: let $\N=\bigoplus_{k\geq 0}\N^k$ be a finite-dimensional graded $\mathbb{Q}$-vector space endowed with a bilinear form 
\[
\langle -,- \rangle \colon \N\times \N\to \Q
\]
and  a linear operator $\oL \colon \N\to \N$  of degree $1$ that satisfies 
$\langle \oL(\eta), \xi \rangle=\langle \eta, \oL(\xi) \rangle$ for all $\eta, \xi\in \N$.

\begin{proposition}\label{prop_sig}
Suppose that $(\N, \oL)$ satisfies Poincar\'e duality and the hard Lefschetz theorem of degree $d$. Then, it satisfies the Hodge--Riemann relations in degrees between $0$ and $k \le d/2$ if and only if 
\[
\sig_{\oL} \N^j- \sig_{\oL} \N^{j-1}= (-1)^j\left(\dim \N^j - \dim \N^{j-1}\right), \quad\text{for any }j\leq k,
\]
where $\sig_{\oL}$ denotes the signature of the Hodge--Riemann form associated with $\oL$. 
\end{proposition}
\begin{proof}
  This was proved in \cite[Proposition 7.6]{AHK}.  There it was assumed that $\N$ was a graded ring and $\oL$ is multiplication by an element of $\N^1$, but under our hypotheses the same argument works.
\end{proof}

\begin{proposition}\label{alt}
For any $k\leq (d-1)/2$, we have
\[
\hyperlink{uhr}{\uHR(\M_i)},\; \hyperlink{upd}{\uPD(\M)},\; \hyperlink{uhri}{\uHR_i(\M)}, \;\;\text{and}\;\; \hyperlink{uhl}{\uHL^{\leq k}(\M)}\enspace \Longrightarrow\enspace \hyperlink{uhr}{\uHR^{\leq k}(\M)}.
\]
\end{proposition}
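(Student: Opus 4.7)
The plan is a deformation argument along the family of operators $\uL_t = \ub - t x_{\{i\}}$ for $t \in [0,1]$ acting on $\uIH_i(\M)$, transferring the Hodge--Riemann relations from $t = 1$ (where $\uHR_i(\M)$ lives) to $t = 0$ (which is the statement $\uHR^{\leq k}(\M)$ we want). Following the section's running assumptions that $\{i\}$ is a flat and that $\uCD(\M)$ holds, I first observe that $\uIH_i(\M) = \uIH(\M) \oplus \upsi^i_\M\uJ(\M/i)$ is an orthogonal direct sum with respect to the Poincar\'e pairing on $\uCH(\M)$, since $\uIH(\M)$ is orthogonal to every summand $\upsi^F_\M\uJ(\M_F) \otimes \uCH(\M^F)$ by its very definition.

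The main step is to restrict $\uHR_i(\M)$ to $\uIH(\M)$ at $t = 1$. By Lemma \ref{orthogonal}, the operator $\uL_t^{d-2k-1}$ is block diagonal for this decomposition, so combined with the Poincar\'e orthogonality of the two blocks, the bilinear form
\[
Q_t^k(\eta_1,\eta_2) \coloneq (-1)^k \udeg_\M\bigl(\uL_t^{d-2k-1}\eta_1\eta_2\bigr)
\]
on $\uIH_i^k(\M)$ splits as an orthogonal sum of its restrictions to $\uIH^k(\M)$ and $\upsi^i_\M\uJ^{k-1}(\M/i)$, with the primitive subspace $\ker \uL_t^{d-2k}$ splitting along the same decomposition. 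The positive definiteness (up to the sign $(-1)^k$) of $Q_1^k$ on primitives furnished by $\uHR_i(\M)$ therefore descends to its restriction to the primitive part of $\uIH^k(\M)$. This yields the Hodge--Riemann relations on $\uIH(\M)$ with respect to the Lefschetz operator $\uL_1 = \ub - x_{\{i\}}$ in all degrees $\leq k$.

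To close, I would verify that $\uL_t$ satisfies hard Lefschetz on $\uIH(\M)$ in every degree $\leq k$ uniformly for $t \in [0,1]$: the case $t = 0$ is the hypothesis $\uHL^{\leq k}(\M)$, while for $t \in (0,1]$ this is exactly Lemma \ref{HLt}, whose hypotheses $\uHR(\M/i)$, $\uHR_i(\M)$, and $\uHL^{\leq k}(\M)$ are in force. The standard signature-continuity argument used in Proposition \ref{delnodel} then applies: the signature of $Q_t^k$ on the continuously varying primitive subspace of $\uIH^k(\M)$ is a discrete invariant and therefore constant in $t$, and its value at $t = 1$ matches the Hodge--Riemann prediction, so the same holds at $t = 0$. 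The one delicate ingredient is the block-diagonal compatibility of the Lefschetz operator with the $\uCD(\M)$-decomposition of $\uIH_i(\M)$, recorded in Lemma \ref{orthogonal}; without it there would be no mechanism to cleanly restrict Hodge--Riemann from $\uIH_i(\M)$ to $\uIH(\M)$, and everything else is routine deformation theory for Lefschetz operators.
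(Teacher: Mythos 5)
Your proposal is correct and follows the same overall skeleton as the paper's proof: deform along $\uL_t=\ub-tx_{\{i\}}$, use the block-diagonality of Lemma \ref{orthogonal} and the hard Lefschetz statement of Lemma \ref{HLt} for $t\in(0,1]$ together with $\uHL^{\leq k}(\M)$ at $t=0$, and transfer definiteness by constancy of signature. Where you genuinely diverge is at the $t=1$ endpoint. The paper does explicit signature bookkeeping: it writes $\sig_{\uL_1}\uIH_i^k(\M)=\sig_{\uL_1}\uIH^k(\M)+\sig_{\uL_1}\upsi^i_\M\uJ^{k-1}(\M/i)$, computes the second term as $-\sig_{\ub_{\M/i}}\uIH^{k-1}(\M/i)$ via Lemma \ref{we need this}(2), invokes $\uHR(\M/i)$ there, and runs the whole argument through the signature criterion of \cite[Proposition 7.6]{AHK} with an induction on $k$. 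You instead restrict $\uHR_i(\M)$ directly to the summand $\uIH(\M)$: since the relevant powers of $\uL_1$ are block diagonal and the decomposition is Poincar\'e-orthogonal, a primitive class of the block is a primitive class of $\uIH_i(\M)$, so definiteness descends. This is cleaner, avoids the signature computation on $\upsi^i_\M\uJ^{k-1}(\M/i)$, and uses $\uHR(\M/i)$ only through Lemma \ref{HLt}. Two small points deserve care if you write this up. First, Lemma \ref{orthogonal} as stated only gives block-diagonality of $\uL_t^{d-2k-1}$; to identify $\ker(\uL_1^{d-2k})\cap\uIH^k(\M)$ as the primitive subspace of the block you also need block-diagonality of $\uL_t^{d-2k}$, which holds by the identical computation (using $\ub\, x_{\{i\}}=0$ and $\ub^m\uJ^l(\M/i)\subseteq\uJ^{l+m}(\M/i)$) but is not literally recorded. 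Second, "the signature on the continuously varying primitive subspace is constant" needs the observation that $\dim\ker(\uL_t^{d-2k}|_{\uIH^k(\M)})=\dim\uIH^k(\M)-\dim\uIH^{k-1}(\M)$ for every $t\in[0,1]$ and that the restricted form stays non-degenerate; both follow from hard Lefschetz for $\uL_t$ in degrees $k-1$ and $k$ together with $\uPD(\M)$, all of which you have, so this is routine but should be said.
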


\begin{proof}
By induction on $k$, we may assume $\hyperlink{uhr}{\uHR^{< k}(\M)}$.  
To prove $\hyperlink{uhr}{\uHR^{k}(\M)}$,
by Proposition \ref{prop_sig}, it suffices to show that 
\[
\sig_{\uL_0} \uIH^k(\M) - \sig_{\uL_0} \uIH^{k-1}(\M) = (-1)^k\left(\dim \uIH^k(\M) - \dim \uIH^{k-1}(\M)\right),
\]
where $\sig_{\uL_0}$ denotes the signature of the Hodge--Riemann form associated with $\uL_0$.

By Lemma \ref{HLt} and $\hyperlink{upd}{\uPD(\M)}$, the Hodge--Riemann form associated with $\uL_t$ is non-degenerate for all $0< t\leq 1$, and by $\hyperlink{uhl}{\uHL^{\leq k}(\M)}$, the Hodge--Riemann form is also non-degenerate when $t=0$.  Thus, both $\sig_{\uL_t} \uIH^k(\M)$ and $\sig_{\uL_t} \uIH^{k-1}(\M)$ are constant as $t$ varies in the closed interval $[0, 1]$. 
Therefore, it suffices to show that 
\begin{equation}\label{eq_sig0}
\sig_{\uL_1} \uIH^k(\M) - \sig_{\uL_1} \uIH^{k-1}(\M) = (-1)^k\left(\dim \uIH^k(\M) - \dim \uIH^{k-1}(\M)\right).
\end{equation}
By Lemma \ref{orthogonal}, 
we have
\begin{equation}\label{eq_sig}
\sig_{\uL_1} \uIH_i^k(\M) = \sig_{\uL_1} \uIH^k(\M) + \sig_{\uL_1} \upsi^i_\Mo\big( \uJ^{k-1}(\M_i)\big).
\end{equation}
For any $\eta,\xi\in \uJ^{k-1}(\M_i)$, since  $\b$ annihilates the image of $\upsi^i_\Mo$, we have
\[
\uL_1^{d-2k-1}\big(\upsi^i_\Mo(\eta) \cdot \upsi^i_\Mo(\xi)\big)=(-x_i)^{d-2k-1}\big(\upsi^i_\Mo(\eta) \cdot \upsi^i_\Mo(\xi)\big),
\]
and hence
\[
\udeg_{\M}\left(\uL_1^{d-2k-1}\big(\upsi^i_\Mo(\eta) \cdot \upsi^i_\Mo(\xi)\big)\right)=\udeg_{\M}\left((-x_i)^{d-2k-1}\big(\upsi^i_\Mo(\eta) \cdot \upsi^i_\Mo(\xi)\big)\right)=\udeg_{\M}\left(\upsi^i_\Mo(\b_{\M_i}^{d-2k-1}\eta) \cdot \upsi^i_\Mo(\xi)\right).
\]
Since $\M^{\{i\}}$ has rank $1$, its Chow ring is isomorphic to $\Q$, and we therefore have $\ua_{\M^{\{i\}}}=0$.
By Lemma \ref{we need this} (2) with $F=\{i\}$,
this implies that
\[
\udeg_{\M}\left(\upsi^i_\Mo(\b_{\M_i}^{d-2k-1}\eta) \cdot \upsi^i_\Mo(\xi)\right)=
 -\udeg_{\M_i}\left(\b_{\M_i}^{d-2k}\eta\xi\right).
\]
Combining the previous two sets of equations, we find that 
$$\udeg_{\M}\left(\uL_1^{d-2k-1}\big(\upsi^i_\Mo(\eta) \cdot \upsi^i_\Mo(\xi)\big)\right)=
 -\udeg_{\M_i}\left(\b_{\M_i}^{d-2k}\eta\xi\right),$$ and therefore
\[
\sig_{\uL_1} \upsi^i_\Mo\big( \uJ^{k-1}(\M_i)\big)=-\sig_{\b_{\M_i}}\uJ^{k-1}(\M_i)=-\sig_{\b_{\M_i}}\uIH^{k-1}(\M_i).
\]
This implies that 
\begin{eqnarray*}
\sig_{\uL_1} \upsi^i_\Mo\big( \uJ^{k-1}(\M_i)\big) - \sig_{\uL_1} \upsi^i_\Mo \big(\uJ^{k-2}(\M_i)\big) 
&=& -\sig_{\b_{\M_i}}\uIH^{k-1}(\M_i) + \sig_{\b_{\M_i}}\uIH^{k-2}(\M_i)\\
&=& (-1)^{k-1}\left(\dim \uIH^{k-1}(\M_i) - \dim \uIH^{k-2}(\M_i)\right),
\end{eqnarray*}
where the last equality follows from $\hyperlink{uhr}{\uHR(\M_i)}$.
Similarly, by $\hyperlink{uhri}{\uHR_i(\M)}$, we have
\[
\sig_{\uL_1} \uIH_i^k(\M) - \sig_{\uL_1} \uIH_i^{k-1}(\M) = (-1)^k\left(\dim \uIH_i^k(\M) - \dim \uIH_i^{k-1}(\M)\right).
\]
The above two equations together with Equation \eqref{eq_sig} imply the desired Equation \eqref{eq_sig0}. 
\end{proof}

\subsection{Establishing {$\HL_\o(\M)$} and {$\HR_\o^{<\frac d 2}(\M)$}}
We now use similar arguments to those in the previous subsection in order to obtain the statements $\hyperlink{hlo}{\HL_\o(\M)}$ and $\hyperlink{hro}{\HR_\o^{<\frac d 2}(\M)}$.
Fix a positive sum $$y = \sum_{j\in E} c_j y_j.$$
For any  $t\geq 0$, consider the degree one linear operator $\oL_t$ on $\IH_\o(\M)$
given by multiplication by $y - t x_\varnothing$.  
As in the previous section, we will assume $\hyperlink{ucd}{\uCD(\M)}$.
We will also assume $\hyperlink{cd}{\CD^{<\frac{d}{2}}(\M)}$, so that for any
$k< d/2$, we have a direct sum decomposition
\[
\IH_\o^k(\M) = \IH^k(\M) \oplus \psi^\varnothing \big(\uJ^{k-1}(\M)\big)\;\;\text{and}\;\; \IH_\o^{d-k}(\M) = \IH^{d-k}(\M) \oplus \psi^\varnothing_\Mo\big( \uJ^{d-k-1}(\M)\big).
\]

\begin{lemma}\label{orthogonal again}
For any $t\geq 0$, the linear map 
\[
\oL_t^{d-2k} \colon \IH_\o^k(\M)\to \IH_\o^{d-k}(\M)
\]
is block diagonal with respect to the above decompositions. 
\end{lemma}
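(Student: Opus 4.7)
The plan is to mirror the proof of Lemma \ref{orthogonal} almost verbatim, exploiting the relation $y_j x_\varnothing = 0$ in $\CH(\M)$ for every $j \in E$, which is one of the defining quadratic relations of $J_\M$. Consequently $y \cdot x_\varnothing = 0$, so the binomial expansion collapses:
\[
\oL_t^{d-2k} = (y - t x_\varnothing)^{d-2k} = y^{d-2k} + (-t)^{d-2k}\, x_\varnothing^{d-2k},
\]
with all mixed terms vanishing. Block diagonality is then the conjunction of two claims: $\oL_t^{d-2k}$ preserves $\psi^\varnothing_\M \uJ^{k-1}(\M)$, and it sends $\IH^k(\M)$ into $\IH^{d-k}(\M)$.

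For the summand $\psi^\varnothing_\M \uJ^{k-1}(\M)$, multiplication by $y$ kills the ideal $\langle x_\varnothing\rangle$, which contains the image of $\psi^\varnothing_\M$, so only the $x_\varnothing^{d-2k}$ piece acts. Applying the $\CH(\M)$-module property of $\psi^\varnothing_\M$ from Proposition \ref{lemma_xdegree}(6) together with $\varphi^\varnothing_\M(x_\varnothing) = -\ub_\M$ (Definition \ref{DefinitionXPullback}, using $\alpha_{\M^\varnothing} = 0$) gives
\[
\oL_t^{d-2k}\psi^\varnothing_\M(\xi) = t^{d-2k}\,\psi^\varnothing_\M\bigl(\ub^{d-2k}\xi\bigr).
\]
When $k \leq d/2$, the two cases in Definition \ref{def:thesubspaces} match up to give $\uJ^{d-k-1}(\M) = \ub^{d-2k} \uIH^{k-1}(\M) = \ub^{d-2k}\uJ^{k-1}(\M)$, so the image lands in $\psi^\varnothing_\M\uJ^{d-k-1}(\M)$ as required.

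For the summand $\IH^k(\M)$, the image automatically lies in $\IH_\o^{d-k}(\M)$ by closure under $\H_\o(\M)$, so it suffices to check orthogonality against $\psi^\varnothing_\M \uJ^{k-1}(\M)$ in the Poincar\'e pairing of $\CH(\M)$. The $y^{d-2k}$ contribution again vanishes, and Lemma \ref{adjoint} rewrites the remaining piece as
\[
\deg_\M\bigl(x_\varnothing^{d-2k}\eta \cdot \psi^\varnothing_\M(\xi)\bigr) = (-1)^{d-2k}\,\udeg_\M\bigl(\varphi^\varnothing_\M(\eta) \cdot \ub^{d-2k}\xi\bigr).
\]
Since $\ub^{d-2k}\xi \in \uJ^{d-k-1}(\M)$ by the previous paragraph, and the defining perpendicularity of $\IH^k(\M)$ against $\psi^\varnothing_\M \uJ^{d-k-1}(\M)$ translates via Lemma \ref{adjoint} into $\udeg_\M(\varphi^\varnothing_\M(\eta) \cdot \zeta) = 0$ for every $\zeta \in \uJ^{d-k-1}(\M)$, this expression vanishes.

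The only real subtlety is the degree-matching step that identifies $\ub^{d-2k}\uIH^{k-1}(\M)$ with $\uJ^{d-k-1}(\M)$; this requires both $k \leq d/2$ and the specific case split in Definition \ref{def:thesubspaces}. Everything else is routine push-pull bookkeeping entirely analogous to Lemma \ref{orthogonal}, so I do not anticipate any genuine obstacle.
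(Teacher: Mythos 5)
Your proof is correct and follows essentially the same route as the paper: the binomial collapse via $y\,x_\varnothing=0$, the push--pull identification $\oL_t^{d-2k}\psi^\varnothing_\M\uJ^{k-1}(\M)\subseteq\psi^\varnothing_\M\uJ^{d-k-1}(\M)$ using $\ub^{d-2k}\uJ^{k-1}(\M)=\uJ^{d-k-1}(\M)$, and then orthogonality of the image of $\IH^k(\M)$ against $\psi^\varnothing_\M\uJ^{k-1}(\M)$. The paper phrases the last step more briefly via self-adjointness of $\oL_t$ rather than unwinding Lemma \ref{adjoint}, but the content is identical.
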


\begin{proof}
Since $y x_\varnothing=0$ and $y$ annihilates the image of $\psi^\varnothing_\Mo$, we have
\begin{eqnarray*}
\oL_t^{d-2k}\psi^\varnothing_\Mo\big( \uJ^{k-1}(\M)\big) &=& y^{d-2k}\psi^\varnothing_\Mo \big(\uJ^{k-1}(\M)\big) + (-t)^{d-2k}(x_\varnothing)^{d-2k}\psi^\varnothing_\Mo\big( \uJ^{k-1}(\M)\big)\\
&=& (-t)^{d-2k}(x_\varnothing)^{d-2k}\psi^\varnothing_\Mo\big( \uJ^{k-1}(\M)\big)\\
&=& t^{d-2k}\psi^\varnothing_\Mo\left(\b^{d-2k}\uJ^{k-1}(\M)\right)\\
&=& t^{d-2k}\psi^\varnothing_\Mo\big(\uJ^{d-k-1}(\M)\big),
\end{eqnarray*}
which is equal to $\psi^\varnothing_\Mo\big(\uJ^{d-k-1}(\M)\big)$ if $t> 0$ and $0$ if $t=0$. In either case, we have
\[
\oL_t^{d-2k}\psi^\varnothing_\Mo\big( \uJ^{k-1}(\M)\big)\subseteq \psi^\varnothing_\Mo\big( \uJ^{d-k-1}(\M)\big).
\]
By the above inclusion, for any $\eta\in \IH^k(\M)$ and $\xi \in \psi^\varnothing_\Mo\big( \uJ^{k-1}(\M)\big)$, we have
\[
\deg_{\M}\big(\oL_t^{d-2k}(\eta)\cdot \xi\big)=\deg_{\M}\big(\eta\cdot \oL_t^{d-2k}(\xi)\big)=0.
\]
Notice that the graded subspace $\IH(\M)\subseteq \IH_\o(\M)$ is the orthogonal complement of $\psi^\varnothing_\Mo\big( \uJ(\M)\big)$. Thus, we also have
\[
\oL_t^{d-2k}\IH^{k}(\M)\subseteq \IH^{d-k}(\M). \qedhere
\]
\end{proof}

\begin{proposition}\label{block_HL}
We have
\[
\hyperlink{cd}{\CD^{<\frac{d}{2}}(\M)},\;\;\hyperlink{uhl}{\uHL^{<\frac{d-2}{2}}(\M)},\;\;\text{and}\;\;\hyperlink{hl}{\HL(\M)}\;\; \Longrightarrow\;\;\hyperlink{hlo}{\HL_\o(\M)}.
\]
\end{proposition}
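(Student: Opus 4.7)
The plan is to apply Lemma~\ref{orthogonal again} to reduce the hard Lefschetz isomorphism for $\oL_\epsilon = y - \epsilon x_\varnothing$ to a hard Lefschetz statement on each summand of the $\CD^{<\frac{d}{2}}(\M)$ decomposition
\[
\IH_\o^k(\M) = \IH^k(\M) \oplus \psi^\varnothing_\M\uJ^{k-1}(\M) \qquad (k < d/2).
\]
(When $d$ is even and $k=d/2$, the map $\oL_\epsilon^0=\id$ is trivially an isomorphism.) Fix an integer $0 \le k < d/2$. By Lemma~\ref{orthogonal again}, $\oL_t^{d-2k}\colon\IH_\o^k(\M)\to\IH_\o^{d-k}(\M)$ is block diagonal with respect to the displayed decomposition for every $t \ge 0$, so it suffices to arrange that each block is an isomorphism for a single $\epsilon > 0$ that works uniformly in $k$.

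On the first block $\IH^k(\M)\to\IH^{d-k}(\M)$, evaluating at $t=0$ gives multiplication by $y^{d-2k}$, which is an isomorphism by $\HL(\M)$. Because invertibility is an open condition and the matrix entries depend continuously on $t$, the first block remains an isomorphism for every $t$ in some open neighborhood of $0$.

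On the second block $\psi^\varnothing_\M\uJ^{k-1}(\M)\to\psi^\varnothing_\M\uJ^{d-k-1}(\M)$, the computation inside the proof of Lemma~\ref{orthogonal again} gives
\[
\oL_t^{d-2k}\bigl(\psi^\varnothing_\M(\eta)\bigr) = t^{d-2k}\,\psi^\varnothing_\M(\ub^{d-2k}\eta) \qquad (\eta \in \uJ^{k-1}(\M)).
\]
Since $k - 1 < (d-2)/2$, Definition~\ref{def:thesubspaces} gives $\uJ^{k-1}(\M)=\uIH^{k-1}(\M)$ and $\uJ^{d-k-1}(\M) = \ub^{d-2k}\uIH^{k-1}(\M)$, so multiplication by $\ub^{d-2k}$ is tautologically surjective between them; its injectivity on $\uIH^{k-1}(\M)$ follows from $\uHL^{k-1}(\M)$, an instance of $\uHL^{<\frac{d-2}{2}}(\M)$, which asserts that $\ub^{d-2k+1}=\ub\cdot\ub^{d-2k}$ is an isomorphism on $\uIH^{k-1}(\M)$. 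Combined with the injectivity of $\psi^\varnothing_\M$, the second block is an isomorphism for every $t>0$.

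Since $k$ ranges over a finite set, we may choose $\epsilon>0$ small enough that the first block is an isomorphism for every admissible $k$; the second block is then automatically an isomorphism at that $\epsilon$, establishing $\HL_\o(\M)$. The only place where any care is needed is the inequality $k-1<(d-2)/2$, which is required in order to invoke $\uHL^{<\frac{d-2}{2}}(\M)$ and to identify the $\uJ$ pieces via Definition~\ref{def:thesubspaces}; but this inequality is equivalent to $k<d/2$ and so matches exactly the range of $k$ we need to address, making even this potential obstacle illusory.
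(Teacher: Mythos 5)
Your proof is correct and follows essentially the same route as the paper: block-diagonalize $\oL_t^{d-2k}$ via Lemma~\ref{orthogonal again}, handle the $\psi^\varnothing_\M\uJ$ block for all $t>0$ using the computation inside that lemma together with $\uHL^{<\frac{d-2}{2}}(\M)$, and handle the $\IH(\M)$ block by perturbing the $t=0$ isomorphism supplied by $\HL(\M)$. The only difference is that you spell out the identification $\uJ^{d-k-1}(\M)=\ub^{d-2k}\uIH^{k-1}(\M)$ explicitly, which the paper leaves implicit in its appeal to the proof of Lemma~\ref{orthogonal again}.
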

\begin{proof}
By Lemma \ref{orthogonal again}, we need to show that $\oL_t^{d-2k}$ induces isomorphisms $\IH^k(\M)\cong \IH^{d-k}(\M)$ and $\psi^\varnothing_\Mo\big( \uJ^{k-1}(\M)\big)\cong \psi^\varnothing_\Mo \big(\uJ^{d-k-1}(\M)\big)$ for some $t>0$. 
As shown in the proof of Lemma \ref{orthogonal again}, when $t>0$, $\hyperlink{uhl}{\uHL^{<\frac{d-2}{2}}(\M)}$ implies that 
\[
\oL_t^{d-2k}\psi^\varnothing_\Mo\big( \uJ^{k-1}(\M)\big)=t^{d-2k}\psi^\varnothing_\Mo\big(\uJ^{d-k-1}(\M)\big)=\psi^\varnothing_\Mo\big(\uJ^{d-k-1}(\M)\big).
\]
Thus, the second block of the block diagonal map 
\[
\oL_t^{d-2k} \colon \IH^k(\M) \oplus \psi^\varnothing \big(\uJ^{k-1}(\M)\big)\to \IH^{d-k}(\M) \oplus \psi^\varnothing_\Mo\big( \uJ^{d-k-1}(\M)\big)
\]
is an isomorphism. 

The statement $\hyperlink{hl}{\HL(\M)}$ implies that  $\oL_0^{d-2k}\colon \IH^k(\M)\to \IH^{d-k}(\M)$ is an isomorphism.  Therefore, for sufficiently small $t>0$, the map $\oL_t^{d-2k} \colon \IH^k(\M)\to \IH^{d-k}(\M)$ is also an isomorphism. 
\end{proof}

\begin{proposition}\label{block1}
We have 
\[
\hyperlink{cd}{\CD^{<\frac{d}{2}}(\M)}, \;\hyperlink{hl}{\HL(\M)}, \;\hyperlink{hr}{\HR^{<\frac{d}{2}}(\M)}, \; \hyperlink{uhl}{\uHL^{<\frac{d-2}{2}}(\M)},\;\;\text{and}\;\;\hyperlink{uhr}{\uHR^{<\frac{d-2}{2}}(\M)}\enspace \Longrightarrow \enspace\hyperlink{hro}{\HR_\o^{<\frac{d}{2}}(\M)}.
\]
\end{proposition}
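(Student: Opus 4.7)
The plan is to exploit the block structure of $\oL_t=y-tx_\varnothing$ furnished by Lemma \ref{orthogonal again} and deform between the two natural regimes $t=0$ (where $\HR(\M)$ controls the $\IH^k(\M)$ summand) and $t>0$ (where $\uHR^{<(d-2)/2}(\M)$ controls the $\psi_\M^\varnothing \uJ^{k-1}(\M)$ summand). First, since the hypotheses include those of Proposition \ref{block_HL}, we obtain $\HL_\o(\M)$: fixing a positive combination $y=\sum c_j y_j$, there exists $\epsilon_0>0$ such that $\oL_{\epsilon_0}^{d-2k}\colon \IH_\o^k(\M)\to\IH_\o^{d-k}(\M)$ is an isomorphism for every $k$. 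Lemma \ref{orthogonal again} shows that this map is block-diagonal with respect to $\IH_\o^k(\M)=\IH^k(\M)\oplus \psi_\M^\varnothing \uJ^{k-1}(\M)$; and since $\IH(\M)$ is, by definition, the orthogonal complement in $\IH_\o(\M)$ of $\psi_\M^\varnothing \uJ(\M)$ under the Poincar\'e pairing, the associated Hodge--Riemann form on $\IH_\o^k(\M)$ splits as the orthogonal direct sum of its restrictions to the two blocks. Thus it suffices to verify HR on each block separately.

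For the block $\psi_\M^\varnothing \uJ^{k-1}(\M)$ with $k<d/2$, the hypothesis $\CD^{<d/2}(\M)$ forces $\uJ^{k-1}(\M)=\uIH^{k-1}(\M)$. Because $y$ annihilates the image of $\psi_\M^\varnothing$ and $x_\varnothing\,\psi_\M^\varnothing(\tilde\mu)=\psi_\M^\varnothing(-\ub\,\tilde\mu)$, one computes
\[
\oL_t^{d-2k}\psi_\M^\varnothing(\tilde\mu)=t^{d-2k}\psi_\M^\varnothing(\ub^{d-2k}\tilde\mu),
\]
and Lemma \ref{we need this}(1) applied at $F=\varnothing$ (where $\alpha_{\M^\varnothing}=0$) gives
\[
\deg_\M\bigl(\psi_\M^\varnothing(\tilde\mu)\cdot\psi_\M^\varnothing(\tilde\nu)\bigr)=-\udeg_\M(\ub\,\tilde\mu\,\tilde\nu).
\]
Combining, the HR form on this block equals $t^{d-2k}$ times the HR form on $\uJ(\M)[-1]^k$ with Lefschetz operator $\ub$ and pairing $-\udeg_\M(\ub\,\cdot\,)$. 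In the relevant range $k<d/2$, the input to Lemma \ref{lemma_J} reduces to $\uPD(\M)$ (available from Corollary \ref{CD for free} under the induction hypothesis), together with $\uHL^{<(d-2)/2}(\M)$ and $\uHR^{<(d-2)/2}(\M)$, so the conclusion of that lemma yields HR of degree $d$ on $\uJ(\M)[-1]$ in degree $k$. For any $t>0$, the positive scaling $t^{d-2k}$ preserves positive-definiteness on primitives, so the block satisfies HR with respect to $\oL_t$.

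For the block $\IH^k(\M)$ with $k<d/2$, the HR form at $t=0$ is the one appearing in $\HR^{<d/2}(\M)$, and $\HL(\M)$ guarantees that it is non-degenerate on $\IH^k(\M)$. By the standard fact (see \cite[Proposition 7.6]{AHK}) that, given hard Lefschetz, HR is equivalent to the signature of the HR form equalling a prescribed value depending only on the graded dimensions, and by continuity of the signature of a continuously-varying family of non-degenerate symmetric forms, HR persists for $\oL_t$ restricted to $\IH^k(\M)$ for all $t$ in a neighborhood of $0$ on which $\oL_t^{d-2k}|_{\IH^k(\M)}$ remains an isomorphism (using Lemma \ref{orthogonal again} and $\HL_\o(\M)$ to see that the restriction to this block is an isomorphism at $t=\epsilon_0$ as well). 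Choosing $\epsilon\in(0,\epsilon_0]$ small enough so that both blocks satisfy HR for $\oL_\epsilon$ simultaneously for every $k<d/2$, we conclude $\HR_\o^{<d/2}(\M)$.

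The main obstacle is the continuity/signature argument on the $\IH^k(\M)$ block: at $t=0$ the operator $\oL_0=y$ does \emph{not} satisfy HL on the full space $\IH_\o^k(\M)$ (it annihilates the second block), so one cannot simply deform within a family for which $\HL_\o$ holds everywhere. The block-diagonal structure of Lemma \ref{orthogonal again} is precisely what decouples the issue, letting us run a one-sided deformation on each summand using the appropriate endpoint hypothesis.
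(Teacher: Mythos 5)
Your proof follows essentially the same route as the paper's: block-diagonalize $\oL_t^{d-2k}$ via Lemma \ref{orthogonal again}, reduce the pairing on the $\psi^\varnothing_\M\uJ^{k-1}(\M)$ block to the $\ub$-Lefschetz structure on $\uIH(\M)$ using Lemma \ref{we need this}, and handle the $\IH^k(\M)$ block by deforming from $t=0$, where $\HL(\M)$ and $\HR^{<\frac{d}{2}}(\M)$ pin down the signature. One caveat on the assembly step: the reduction ``it suffices to verify HR on each block separately'' is not literally valid as stated, because the degree-one operator $\oL_t$ itself does not preserve the block decomposition (indeed $x_\varnothing\IH^k(\M)\subseteq \psi^\varnothing_\M\uIH^k(\M)=\psi^\varnothing_\M\uJ^k(\M)$ for $k\le (d-2)/2$, so $\oL_t$ sends the first block partly into the second), and hence $\ker\bigl(\oL_t^{d-2k+1}\bigr)\cap\IH_\o^k(\M)$ need not split as the direct sum of kernels on the two blocks. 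The correct way to conclude --- which is what the paper does, and for which you already have every ingredient --- is to note that signature and dimension are additive over the orthogonal block decomposition, so the two block computations give the required identity $\sig_{\oL_t}\IH_\o^k(\M)-\sig_{\oL_t}\IH_\o^{k-1}(\M)=(-1)^k\bigl(\dim\IH_\o^k(\M)-\dim\IH_\o^{k-1}(\M)\bigr)$, and then to invoke the signature criterion of \cite[Proposition 7.6]{AHK} for the full module $\IH_\o(\M)$, using $\HL_\o(\M)$ and induction on $k$. With that one-sentence repair, your argument coincides with the paper's.
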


\begin{proof}
For $k<d/2$, we prove $\hyperlink{hro}{\HR_\o^{k}(\M)}$ by induction on $k$.
It is clear that $\IH_\o(\M)$ satisfies the Hodge--Riemann relations in degree zero with respect to $\oL_t$ for $t$ sufficiently small. 
Now fix $0<k<d/2$ and suppose that $\hyperlink{hro}{\HR_\o^{<k}(\M)}$ holds.
We need to show that, for $t$ sufficiently small, 
\[
\sig_{\oL_t}\IH_\o^k(\M) - \sig_{\oL_t}\IH_\o^{k-1}(\M) = (-1)^k\left(\dim\IH_\o^k(\M) - \dim\IH_\o^{k-1}(\M)\right).
\]
By Lemma \ref{orthogonal again}, we have
\[
\sig_{\oL_t}\IH_\o^k(\M) = \sig_{\oL_t}\IH^k(\M) + \sig_{\oL_t}\psi^\varnothing_\Mo\big( \uJ^{k-1}(\M)\big).
\]

For $\eta,\xi\in \uJ^{k-1}(\M)= \uIH^{k-1}(\M)$, since each $y_i$ annihilates the image of $\psi^\varnothing_\Mo$, we have
\[
\oL_t^{d-2k}\left(\psi^\varnothing_\Mo(\eta) \cdot  \psi^\varnothing_\Mo(\xi)\right)=(-t x_\varnothing)^{d-2k}\left(\psi^\varnothing_\Mo(\eta) \cdot  \psi^\varnothing_\Mo(\xi)\right),
\]
and hence
\begin{align*}
\deg_{\M}\left(\oL_t^{d-2k}\left(\psi^\varnothing_\Mo(\eta) \cdot  \psi^\varnothing_\Mo(\xi)\right)\right)
&=\deg_{\M}\left(\left(-t x_\varnothing\right)^{d-2k} \psi^\varnothing_\Mo(\eta) \cdot  \psi^\varnothing_\Mo(\xi)\right)\\
&=t^{d-2k}\deg_{\M}\left(\psi^\varnothing_\Mo\big(\b^{d-2k}\eta\big)\cdot \psi^\varnothing_\Mo(\xi)\right). 
\end{align*}
Note that $\CH(\M^\varnothing) = \Q$, hence $\a_{\M^\varnothing}=0$.
By Lemma \ref{we need this} (1) with $F=\varnothing$, we therefore have
\[
\deg_{\M}\left(\psi^\varnothing_\Mo\big(\b^{d-2k}\eta\big)\cdot \psi^\varnothing_\Mo(\xi)\right)
=-\udeg_{\M}\left(\b^{d-2k+1}\eta\xi\right).
\]
When $t$ is positive, by the above two sets of equations, we have
\[
\sig_{\oL_t}\psi^\varnothing_\Mo\big( \uJ^{k-1}(\M)\big) = -\sig_{\b}\uJ^{k-1}(\M) = -\sig_{\b}\uIH^{k-1}(\M),
\]
and therefore
\begin{equation}\label{eq_r1}
\sig_{\oL_t}\IH_\o^k(\M) = \sig_{\oL_t}\IH^k(\M) -\sig_{\b}\uIH^{k-1}(\M).
\end{equation}
By $\hyperlink{hl}{\HL(\M)}$ and $\hyperlink{hr}{\HR^{<\frac{d}{2}}(\M)}$, the Hodge--Riemann forms on $\IH^k(\M)$ and $\IH^{k-1}(\M)$ associated with $\oL_0$ are non-degenerate. Thus, for $t$ sufficiently small, we have
\begin{equation}\label{eq_r2}
\begin{split}
\sig_{\oL_t}\IH^k(\M) - \sig_{\oL_t}\IH^{k-1}(\M) &= \sig_{\oL_0}\IH^k(\M) - \sig_{\oL_0}\IH^{k-1}(\M)\\
&= (-1)^k\left(\dim\IH^k(\M) - \dim\IH^{k-1}(\M)\right).
\end{split}
\end{equation}
We also have 
\begin{equation}\label{eq_r3}
\sig_{\b}\uIH^{k-1}(\M) - \sig_{\b}\uIH^{k-2}(\M) = (-1)^{k-1}\left(\dim\uIH^{k-1}(\M) - \dim\uIH^{k-2}(\M)\right)
\end{equation}
by $\hyperlink{uhl}{\uHL^{<\frac{d-2}{2}}(\M)}$ and $\hyperlink{uhr}{\uHR^{<\frac{d-2}{2}}(\M)}$.  
Therefore, we have
\begin{align*}
&\sig_{\oL_t}\IH_\o^k(\M) - \sig_{\oL_t}\IH_\o^{k-1}(\M) \\
&= \left(\sig_{\oL_t}\IH^k(\M)-\sig_{\oL_t}\IH^{k-1}(\M)\right) -\left( \sig_{\b}\uIH^{k-1}(\M)- \sig_{\b}\uIH^{k-2}(\M) \right)  \\
&= (-1)^k\left(\dim\IH^k(\M) - \dim\IH^{k-1}(\M)\right) - (-1)^{k-1}\left(\dim\uIH^{k-1}(\M) - \dim\uIH^{k-2}(\M)\right) \\
&= (-1)^k\left(\dim\IH_\o^k(\M) - \dim\IH^{k-1}_\o(\M)\right), 
\end{align*}
where the first equality follows from \eqref{eq_r1} and the second equality follows from \eqref{eq_r2} and \eqref{eq_r3}. 
\end{proof}

\section{Proof of the main theorem}\label{SectionProof}
Sections \ref{sec:main proof} and \ref{sec:boo} are devoted to combining the results that we have obtained in the previous sections
in order to complete the proof of Theorem \ref{theorem_all}.  In Section \ref{sec:the rest} we prove Propositions~\ref{prop:no socle} and~\ref{prop:gr}, thus concluding the proof of Theorem \ref{thm:KL}.

\subsection{Proof of Theorem \ref{theorem_all} for non-Boolean matroids}\label{sec:main proof}
We now complete the inductive proof of Theorem \ref{theorem_all} when $\M$ is not the Boolean matroid; the Boolean case will be addressed
in Section \ref{sec:boo}.  Let $\M$ be a matroid that is not Boolean, and assume that Theorem \ref{theorem_all} holds 
for any matroid whose ground set is a proper subset of $E$.  Since $\M$ is not Boolean,
we may fix an element $i\in E$ which is not a coloop.
If $\{i\}$ is not a flat, then $i$ has a parallel element and all of our statements about $\M$ are equivalent to the corresponding statements about 
$\Mi$. Thus, we may assume that $\{i\}$ is a flat.  We will continue our convention of writing $x_i$ in place of $x_{\{i\}}$.

We recall the main results in the previous five sections. 
By Corollary \ref{CD for free}, we have $\hyperlink{pdo}{\PD_\o(\M)}$, $\hyperlink{upd}{\uPD(\M)}$, $\hyperlink{cdo}{\CD_\o(\M)}$, and $\hyperlink{ucd}{\uCD(\M)}$. 
By Proposition \ref{uNS1}, we also have $\hyperlink{uns}{\uNS^{<\frac{d-2}{2}}(\M)}$.
By Corollaries \ref{uHL_i} and \ref{uHR_i}, we have both $\hyperlink{uhli}{\uHL_i(\M)}$ and $\hyperlink{uhri}{\uHR_i(\M)}$.

\begin{proposition}\label{NS under half}
The statement $\hyperlink{uhl}{\uHL^{<\frac{d-2}{2}}(\M)}$ holds.
\end{proposition}

\begin{proof}
Given $1\leq k<d/2$, let  
$\eta\in \uIH^{k-1}(\M)$ be a nonzero class such that 
\[
\ub^{d-2k+1} \eta=0.
\]
Recall from the proof of Lemma \ref{orthogonal} that $\ub x_i=0$, and therefore
\[
(\ub-x_i)^{d-2k}\cdot (\ub \eta)=0.
\]
In other words, the class $\ub \eta$ is primitive in $\uIH^k_i(\M)$  with respect to multiplication by $\ub-x_i$. By $ \hyperlink{uns}{\uNS^{<\frac{d-2}{2}}(\M)}$, we have $\ub\eta\neq 0$. Now, $\hyperlink{uhri}{\uHR_i(\M)}$ implies that 
\[
0< (-1)^k\udeg_{\M}\Big((\ub-x_i)^{d-2k-1}\cdot (\ub \eta)^2\Big)=(-1)^k\udeg_{\M}\big(\b^{d-2k+1}\cdot \eta^2\big).
\]
This contradicts the assumption that $\ub^{d-2k+1} \eta=0$.
\end{proof}


By Proposition \ref{CDNS}, Proposition \ref{HLi}, and Corollary \ref{HRi}, we have
\[
\hyperlink{cd}{\CD^{<\frac{d}{2}}(\M)}\;\; \Longrightarrow \; \;  \hyperlink{ns}{\NS^{< \frac{d}{2}}(\M)}, \;\;\hyperlink{hli}{\HL_i^{<\frac{d}{2}}(\M)}, \;\;\text{and}\;\; \hyperlink{hri}{\HR_i^{<\frac{d}{2}}(\M)}.
\]

\begin{proposition}\label{dCM}
We have $$
\hyperlink{ns}{\NS^{<\frac{d}{2}}(\M)}\enspace \Longrightarrow \enspace\hyperlink{hl}{\HL(\M)}.$$
\end{proposition}

\begin{proof}
Given positive numbers $c_j$ for $j\in E$, we let $y = \sum_{j\in E} c_j y_j$.  Take $k < d/2$, and suppose that $\eta\in \IH^k(\M)$ satisfies $y^{d-2k} \eta = 0$.  
For any rank one flat $G$, we have $\varphi^\Mo_G(y) = \sum_{j\notin G} c_j y_j\in \CH^1(\M_G)$.
Since $y^{d-2k} \eta = 0$, we have 
\[
\varphi^\Mo_G(y)^{d-2k}\cdot \varphi^\Mo_G(\eta) = 0.
\]
By Lemma \ref{lem_imageofIH} (1), we know that $\varphi^\Mo_G(\eta)\in \IH^k(\M_G)$.  Thus, the class $\varphi^\Mo_G(\eta)\in \IH^k(\M_G)$ is primitive with respect to $\varphi^\Mo_G(y)$.
By $\hyperlink{hr}{\HR(\M_G)}$, Proposition \ref{lemma_degy}, and Proposition \ref{Prop_ymult}, for every rank one flat $G$ we have
\[
0 \le (-1)^k\deg_{\M_G}\Big(\varphi^\Mo_G(y)^{d-2k-1}\cdot \varphi^\Mo_G(\eta)^2\Big)=
(-1)^k\deg_{\M}\Big(y_G\cdot  y^{d-2k-1} \eta^2\Big),
\]
and the equality holds if and only if $\varphi^\Mo_G(\eta)=0$.

On the other hand, since $y^{d-2k} \eta = 0$, we have
\begin{align*}
0 &= (-1)^k \deg_{\M} \left(y^{d-2k} \eta^2\right)\\
&= (-1)^k \deg_{\M}\bigg( \bigg(\sum_{j\in E} c_j y_j\bigg)\cdot y^{d-2k-1} \eta^2\bigg)\\
&= (-1)^k\sum_{j\in E} c_j \deg_{\M}\Big(y_j\cdot  y^{d-2k-1} \eta^2\Big).
\end{align*}
Since each $c_j > 0$, the above two sets of equations imply that $\varphi^\Mo_G(\eta) = 0$ for every rank one flat $G$. Thus,
\[
y_G \eta = \psi_G \big(\varphi^\Mo_G(\eta) \big) = \psi_G(0) = 0
\]
for every rank one flat $G$.  By $\hyperlink{ns}{\NS^{<\frac{d}{2}}(\M)}$, it follows that $\eta=0$.

We have proved that multiplication by $y^{d-2k}$ is an injective map from $\IH^k(\M)$ to $\IH^{d-k}(\M)$.  To conclude it is an isomorphism, it is enough to know that these spaces have the same dimension.  We know that $\hyperlink{pdo}{\PD_\o(\M)}$ holds, and since $\IH(\M)$ is the perpendicular space to $\psi_\Mo^\varnothing(\uJ(\M))$ in $\IH_\o(\M)$, it is enough to know that $\dim \uJ^{k-1}(\M) = \dim \uJ^{d-k-1}(\M)$. This follows from $\hyperlink{uhl}{\uHL^{<\frac{d-2}{2}}(\M)}$.
\end{proof}

\begin{proposition}\label{HR1NS1}
We have 
\[
\hyperlink{hro}{\HR_\o(\M)}\;\; \Longrightarrow\;\;\hyperlink{nso}{\NS_\o(\M)}.
\]
\end{proposition}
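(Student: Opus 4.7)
The plan is to argue by contradiction: suppose $\eta\in \IH_\o^k(\M)$ is a nonzero socle element with $k\le d/2$, and split into the cases $k<d/2$ and $k=d/2$. Before starting, I note that the positive definiteness part of $\HR_\o(\M)$ immediately forces the restricted Poincar\'e pairing on $\IH_\o(\M)$ to be non-degenerate, giving $\PD_\o(\M)$ and hence $\CD_\o(\M)$ by Remark \ref{CDPD}. In particular Corollary \ref{very convenient} will be available.

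For $k<d/2$ the argument is immediate. The socle condition gives $y_i\eta=0$ for every $i\in E$ and $x_\varnothing\eta=0$, so $(y-\epsilon x_\varnothing)\eta=0$ for any choice of $y$ and $\epsilon$ coming from $\HR_\o(\M)$; raising to the power $d-2k\ge 1$ yields $(y-\epsilon x_\varnothing)^{d-2k}\eta=0$. Any such $\eta$ lies in $\ker(y-\epsilon x_\varnothing)^{d-2k+1}$, hence is primitive, and its Hodge--Riemann self-pairing is $(-1)^k\deg_\M((y-\epsilon x_\varnothing)^{d-2k}\eta^2)=0$; positive definiteness of $\HR_\o^k(\M)$ then forces $\eta=0$.

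The main subtlety, which I expect to be the hard part, is the middle degree $k=d/2$, which occurs only when $d$ is even; here $(y-\epsilon x_\varnothing)^{d-2k}=\id$ and hard Lefschetz gives no information. The strategy is to show that $\eta^2=0$ in $\CH(\M)$, which will contradict the Hodge--Riemann positivity $(-1)^{d/2}\deg_\M(\eta^2)>0$ applied to the primitive class $\eta$ (primitivity again follows from $(y-\epsilon x_\varnothing)\eta=0$). To establish $\eta^2=0$, I use that $y_i\eta=0$ for all $i$ places $\eta$ in the annihilator of the ideal $\langle y_i\mid i\in E\rangle\subseteq \CH(\M)$, which by Lemma \ref{mutual annihilators} equals $\langle x_\varnothing\rangle$. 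Combined with Corollary \ref{very convenient}, which identifies $\langle x_\varnothing\rangle\cap \IH_\o(\M)$ with $x_\varnothing\cdot \IH_\o(\M)$, I may write $\eta=x_\varnothing\,\eta'$ for some $\eta'\in \IH_\o^{(d-2)/2}(\M)$. The remaining socle relation $x_\varnothing\eta=0$ translates to $x_\varnothing^2\eta'=0$, and the short calculation $\eta^2=x_\varnothing^2(\eta')^2=(x_\varnothing^2\eta')\eta'=0$ supplies the desired contradiction.

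Thus the whole argument hinges on the middle-degree step, whose key input is the structural identification in Corollary \ref{very convenient}; without the factorization $\eta=x_\varnothing\eta'$ there would be no mechanism to force $\eta^2$ to vanish. Everything else is a direct combination of the socle definition, Lemma \ref{mutual annihilators}, and the positivity part of $\HR_\o(\M)$.
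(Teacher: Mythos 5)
Your proof is correct and follows essentially the same route as the paper: a socle element $\eta$ satisfies $(y-\epsilon x_\varnothing)\eta=0$, hence is primitive, and the socle relations force its Hodge--Riemann self-pairing to vanish, contradicting positive definiteness. The paper handles all $k\le d/2$ uniformly by observing that $\eta$ lies in both $\langle x_\varnothing\rangle$ and $\langle y_j \mid j\in E\rangle$ (both directions of Lemma \ref{mutual annihilators}), whence $\eta^2=0$ since $x_\varnothing y_j=0$; your case split and the detour through Corollary \ref{very convenient} are unnecessary, because the factorization $\eta=x_\varnothing\eta'$ with $\eta'\in\CH(\M)$ already follows from Lemma \ref{mutual annihilators} alone. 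One caveat: your opening claim that the positivity in $\HR_\o(\M)$ by itself forces non-degeneracy of the pairing on $\IH_\o(\M)$ is not justified (positive definiteness is asserted only on primitive subspaces, and without hard Lefschetz those need not span), but this is harmless since $\PD_\o(\M)$ and $\CD_\o(\M)$ are already supplied by Corollary \ref{CD for free} at this stage of the induction, and in any case your argument does not actually need them.
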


\begin{proof}
Let $y = \sum_{j\in E} y_j$.  By $\hyperlink{hro}{\HR_\o(\M)}$, we can choose $\epsilon > 0$ such that $\IH_\o(\M)$ satisfies the Hodge--Riemann relations with respect to multiplication by $y - \epsilon x_\varnothing$.
Suppose that $\eta$ is a nonzero element of the socle of $\IH^{k}_\o(\M)$ for some $k\leq d/2$.  By $\hyperlink{hro}{\HR_\o(\M)}$, we have 
\begin{equation}\label{eq:hreq}
(-1)^{k}\deg_{\M}\left((y - \epsilon x_\varnothing)^{d-2k}\eta^2\right)>0.
\end{equation}
Since $\eta$ is annihilated by every $y_j$, Lemma \ref{mutual annihilators} implies that $\eta$ is a multiple of $x_\varnothing$.  On the other hand, since $\eta$ is annihilated by $x_\varnothing$, Lemma \ref{mutual annihilators}
implies that $\eta$ is in the ideal spanned by the $y_j$.  Thus another application of Lemma \ref{mutual annihilators}
implies that $\eta^2=0$, which contradicts Equation~\eqref{eq:hreq}.
\end{proof}

\begin{proposition}\label{NS1uNS1}
We have 
\[
\hyperlink{nso}{\NS_\o(\M)}\;\; \Longrightarrow\;\;\hyperlink{uns}{\uNS(\M)}.
\]
\end{proposition}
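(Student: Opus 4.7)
The plan is to translate the socle condition for the $\uH(\M)$-module $\uIH(\M)$ into a socle condition for the $\H_\o(\M)$-module $\IH_\o(\M)$ by pushing forward along $\psi^\varnothing_\M$. The key observation is that, under the identification of Corollary \ref{very convenient}, the inclusion $\psi^\varnothing_\M \colon \uIH(\M) \hookrightarrow \IH_\o(\M)$ intertwines multiplication by $\ub$ on the source with (up to sign) multiplication by $x_\varnothing$ on the target, while the remaining generators $y_i$ of $\H_\o(\M)$ automatically kill the image because $\varphi^\varnothing_\M(y_i) = 0$.

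More precisely, suppose $\underline{\eta} \in \uIH^k(\M)$ with $k \le (d-2)/2$ lies in the $\uH(\M)$-socle of $\uIH(\M)$, i.e., $\ub \cdot \underline{\eta} = 0$. Set $\eta \coloneq \psi^\varnothing_\M(\underline{\eta}) \in \CH^{k+1}(\M)$. By Corollary \ref{very convenient} (which applies because $\CD_\o(\M)$ holds by Corollary \ref{CD for free}), $\eta$ lies in $\IH_\o^{k+1}(\M)$. Since $\varphi^\varnothing_\M(y_i) = 0$ for every $i \in E$ and $\varphi^\varnothing_\M(x_\varnothing) = -\ub$, Proposition \ref{lemma_xdegree}~(6) gives
\[
y_i \cdot \eta = \psi^\varnothing_\M\bigl(\varphi^\varnothing_\M(y_i) \cdot \underline{\eta}\bigr) = 0 \and x_\varnothing \cdot \eta = \psi^\varnothing_\M\bigl(-\ub \cdot \underline{\eta}\bigr) = 0.
\]
Thus $\eta$ is annihilated by all generators of $\H_\o(\M)$, so it lies in the $\H_\o(\M)$-socle of $\IH_\o(\M)$.

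Now the hypothesis $\NS_\o(\M)$ says this socle vanishes in degrees $\le d/2$. Since $k+1 \le d/2$, we conclude $\eta = 0$, and then the injectivity of $\psi^\varnothing_\M$ (Proposition \ref{lemma_xdegree}~(4)) forces $\underline{\eta} = 0$. This establishes $\uNS(\M)$.

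The argument is essentially a formal manipulation of the pullback/pushforward formalism, and there is no real obstacle: the only delicate point is making sure $\eta$ lands inside $\IH_\o(\M)$ rather than merely in $\CH(\M)$, but this is precisely the content of Corollary \ref{very convenient}.
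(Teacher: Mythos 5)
Your proof is correct and follows essentially the same route as the paper: push the socle element forward along $\psi^\varnothing_\M$ into $\IH_\o(\M)$ via Corollary \ref{very convenient}, check it is killed by all of $x_\varnothing$ and the $y_i$, invoke $\NS_\o(\M)$ with the degree shift $k\mapsto k+1$, and finish with injectivity of $\psi^\varnothing_\M$. The only cosmetic difference is that the paper justifies $y_i\cdot\eta=0$ via Lemma \ref{mutual annihilators} (the image of $\psi^\varnothing_\M$ lies in $\langle x_\varnothing\rangle$) rather than your projection-formula computation; both are valid.
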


\begin{proof}
Suppose that $k\leq d/2$ and $\eta\in\uIH^{k-1}(\M)$ is an element of the socle, that is, $\b \eta=0$.  
By Corollary \ref{very convenient}, it follows that $\psi_\Mo^\varnothing(\eta)$ is a multiple of $x_\varnothing$, and hence annihilated by each $y_j$ by Lemma~\ref{mutual annihilators}.
Furthermore, by Proposition \ref{lemma_xdegree}, 
we have \[
x_\varnothing\psi_\Mo^\varnothing(\eta) = \psi^\varnothing_\Mo\!\left(\varphi_\Mo^\varnothing(x_\varnothing)\eta\right) = \psi_\Mo^\varnothing(-\b\eta) = 0.
\]
Thus, $\psi_\Mo^\varnothing(\eta)\in\IH_\o^k(\M)$ is annihilated by each $y_j$ and $x_\varnothing$.
Then $\hyperlink{nso}{\NS_\o(\M)}$ implies that $\psi_\Mo^\varnothing(\eta)=0$,
and the injectivity of $\psi^\varnothing_\Mo$ implies that $\eta=0$.
\end{proof}

\begin{proposition}\label{NS1uHL}
We have
\[
 \hyperlink{uhl}{\uHL^{< \frac{d-2}{2}}(\M)}\;\;\text{and}\;\; 
\hyperlink{uns}{\uNS(\M)}\enspace \Longrightarrow \enspace \hyperlink{uhl}{\uHL(\M)}.
\]
\end{proposition}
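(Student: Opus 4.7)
The statement $\uHL(\M)$ asks that $\ub^{d-2k-1}\colon \uIH^k(\M) \to \uIH^{d-k-1}(\M)$ be an isomorphism for every $k \le (d-1)/2$, whereas $\uHL^{<\frac{d-2}{2}}(\M)$ already delivers this for $k < (d-2)/2$. My plan is to observe that there is essentially a single new case to address, and to settle it by combining the injectivity provided by $\uNS(\M)$ with a dimension count coming from $\uPD(\M)$ (which is in force at this point of the induction thanks to Corollary~\ref{CD for free}).

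First I would dispose of the odd-rank case. When $d$ is odd, the only value of $k \le (d-1)/2$ not covered by $\uHL^{<\frac{d-2}{2}}(\M)$ is $k=(d-1)/2$, and there the exponent $d-2k-1$ is zero, so the map is the identity on $\uIH^{(d-1)/2}(\M)$. Hence the whole content of the proposition concerns the case that $d$ is even and $k = d/2-1$, where the claim becomes that multiplication by $\ub$ gives an isomorphism
\[
\ub\colon \uIH^{d/2-1}(\M) \longrightarrow \uIH^{d/2}(\M).
\]

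For injectivity I would invoke $\uNS(\M)$ directly. Since $\uH(\M)$ is generated as an augmented algebra by $\ub$, any $\eta \in \uIH^{d/2-1}(\M)$ with $\ub\eta = 0$ lies in the socle of $\uIH(\M)$ as a graded $\uH(\M)$-module. But $\uNS(\M)$ tells us that this socle vanishes in degrees $\le (d-2)/2 = d/2-1$, so $\eta = 0$. For surjectivity I would use $\uPD(\M)$, which is available via Corollary~\ref{CD for free} under the inductive hypothesis of Section~\ref{sec:main proof}: Poincar\'e duality of degree $d-1$ yields $\dim \uIH^{d/2-1}(\M) = \dim \uIH^{d/2}(\M)$, and so the injection must be an isomorphism.

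There is no serious obstacle here; the argument is a short formal consequence of the given inputs. The only points that require care are the parity bookkeeping (making sure that $(d-2)/2$ genuinely misses exactly one degree in the even case) and the confirmation that $\uPD(\M)$ is legitimately on the table at this point of the induction, both of which are immediate from the setup.
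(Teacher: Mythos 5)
Your proof is correct and follows exactly the paper's (much terser) argument: the paper likewise notes that for odd $d$ nothing new is needed, and that for even $d$ the single missing case $\uHL^{\frac{d-2}{2}}(\M)$ is equivalent to $\uNS^{\frac{d-2}{2}}(\M)$, with the injectivity-to-isomorphism step supplied by $\uPD(\M)$ just as you spell out.
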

\begin{proof}
When $d$ is odd, the statement $\hyperlink{uhl}{\uHL(\M)}$ is identical to $\hyperlink{uhl}{\uHL^{< \frac{d-2}{2}}(\M)}$.
When $d$ is even, the only missing case is $\hyperlink{uhl}{\uHL^{\frac{d-2}{2}}(\M)}$, which is exactly the same as $\hyperlink{uns}{\uNS^{\frac{d-2}{2}}(\M)}$. 
\end{proof}



\begin{proposition}\label{non-boolean}
Suppose that $\M$ is a matroid on $E$ that is not Boolean, and that Theorem \ref{theorem_all} holds for all matroids whose ground sets are proper subsets of $E$.
Then Theorem \ref{theorem_all} holds for $\M$.
\end{proposition}

\begin{proof}
We have already established
$\hyperlink{pdo}{\PD_\o(\M)}$, $\hyperlink{upd}{\uPD(\M)}$, $\hyperlink{cdo}{\CD_\o(\M)}$, and $\hyperlink{ucd}{\uCD(\M)}$.
The statements $\hyperlink{cd}{\CD(\M)}$, $\hyperlink{ns}{\NS(\M)}$, $\hyperlink{nso}{\NS_\o(\M)}$, $\hyperlink{hl}{\HL(\M)}$, $\hyperlink{hlo}{\HL_\o(\M)}$, $\hyperlink{uhl}{\uHL(\M)}$, $\hyperlink{hr}{\HR(\M)}$, $\hyperlink{hro}{\HR_\o(\M)}$, and $\hyperlink{uhr}{\uHR(\M)}$
are obtained from the implications shown in Figure~\ref{Proof diagram}.
The statement $\hyperlink{pd}{\PD(\M)}$ follows from $\hyperlink{hl}{\HL(\M)}$ and $\hyperlink{hr}{\HR(\M)}$.
The statement $\hyperlink{uns}{\uNS(\M)}$ is proved in Proposition \ref{NS1uNS1}.
\end{proof}

\subsection{Proof of Theorem \ref{theorem_all}: Boolean case}\label{sec:boo}

Suppose $\M$ is the Boolean matroid on $E=\{1, 2, \ldots, d\}$ with $d>0$.
\begin{proposition} The canonical decomposition $\hyperlink{ucd}{\uCD(\M)}$ of $\uCH(\M)$ holds.
We have $\uIH(\M) = \uH(\M)$, and
the space $\uJ(\M)$ is spanned by $1, \ub, \dots, \ub^{d-2}$. 
\end{proposition}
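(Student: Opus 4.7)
The plan is to induct on $d$, with the base case $d=1$ an immediate verification: $\uCH(\M) = \uIH(\M) = \uH(\M) = \Q$ and $\uJ(\M) = 0$. For $d \geq 2$, I would assume the proposition for all Boolean matroids of rank strictly less than $d$. Since every contraction $\M_F$ at a nonempty flat $F$ is a Boolean matroid on the ground set $E \setminus F$, which is strictly smaller than $E$, the outer induction on $|E|$ supplies Theorem~\ref{theorem_all} for each such $\M_F$, and Corollary~\ref{CD for free} then yields $\uCD(\M)$ and $\uPD(\M)$ without further work.

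Next I would establish $\uH(\M) \subseteq \uIH(\M)$. By the definition of $\uIH(\M)$ and Lemma~\ref{underline adjoint}, this reduces to showing that for each nonempty proper flat $F$, the image $\uvarphi^F_\M(\uH(\M))$ is orthogonal to $\uJ(\M_F) \otimes \uCH(\M^F)$ inside $\uCH(\M_F) \otimes \uCH(\M^F)$. Since $\uvarphi^F_\M(\ub^k) = 1 \otimes \ub_{\M^F}^k$ by Proposition~\ref{lemma_uab}(1), the pairing factors and the relevant $\M_F$-side contribution is $\udeg_{\M_F}(\mu)$ for $\mu \in \uJ(\M_F)$. The inductive description of $\uJ(\M_F)$ places $\mu$ in degrees at most $d - |F| - 2$, strictly below the top degree $d - |F| - 1$ of $\uCH(\M_F)$, so this factor vanishes for degree reasons.

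The main obstacle is the reverse inclusion, and the key step is verifying $\ub^{d-1} \neq 0$ in $\uCH(\M)$. The plan is to compute $\udeg_\M(\ub^{d-1})$ by fixing an element $i \in E$ and expanding $\ub = \sum_{F \not\ni i} x_F$ over nonempty proper flats not containing $i$. For any such $F$ with $\rk F < d - 1$, the pullback $\uvarphi^F_\M(\ub^{d-2}) = 1 \otimes \ub_{\M^F}^{d-2}$ vanishes because $\uCH(\M^F)$ has top degree $\rk F - 1 < d - 2$, and Proposition~\ref{Prop_uxmult} then gives $\ub^{d-2} x_F = 0$. The only rank-$(d-1)$ flat $F \not\ni i$ is $F = E \setminus \{i\}$, for which the identity $\ub^{d-2} x_{E\setminus i} = \upsi^{E\setminus i}_\M(1 \otimes \ub^{d-2}_{\M^{E\setminus i}})$ combined with Proposition~\ref{lemma_uab}(3) yields $\udeg_\M(\ub^{d-2} x_{E \setminus \{i\}}) = \udeg_{B_1}(1) \cdot \udeg_{B_{d-1}}(\ub^{d-2}) \neq 0$ by the inductive hypothesis. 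Summing then produces $\udeg_\M(\ub^{d-1}) \neq 0$, so $\uH(\M) = \Q[\ub]/(\ub^d)$ has dimension exactly $d$.

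To close, I would do a dimension count via $\uCD(\M)$, using the inductive identity $\dim \uJ(B_k) = k-1$ together with the standard fact $\dim \uCH(B_k) = k!$ (the Bergman fan of the Boolean matroid of rank $k$ is the braid fan, whose associated permutohedral toric variety has Euler characteristic $k!$). The summand indexed by a flat of size $k$ contributes $(d - k - 1) \cdot k!$, and the telescoping identity $\sum_{k=1}^{d-1} \binom{d}{k}(d - k - 1) \, k! = d! - d$ yields $\dim \uIH(\M) = d! - (d! - d) = d$, matching $\dim \uH(\M)$. Hence the inclusion from the previous step is an equality, and the formula $\uJ(\M) = \operatorname{span}(1, \ub, \ldots, \ub^{d-2})$ follows by unfolding Definition~\ref{def:thesubspaces} for the cyclic module $\uIH(\M) = \Q[\ub]/(\ub^d)$.
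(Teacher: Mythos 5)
Your proof is correct, but it takes a genuinely different route from the paper's. The paper proves the whole proposition in one stroke by exhibiting the direct sum decomposition explicitly: it applies the involution $\tau\colon x_F\mapsto x_{E\setminus F}$ (which swaps $\ua$ and $\ub$) to the semi-small decomposition $(\underline{\mathrm D}_3)$ of \cite{BHMPW}, or equivalently sorts the Feichtner--Yuzvinsky monomial basis of $\uCH(\M)$ into the summands $\uH(\M)$ and $\upsi^F_\M\bigl(\ub_{\M_F}^{m}\otimes\uCH(\M^F)\bigr)$; the nonvanishing $\udeg_\M(\ub^{d-1})=\udeg_\M(\ua^{d-1})=1$ also falls out of $\tau$. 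You instead obtain $\uCD(\M)$ and $\uPD(\M)$ abstractly from Corollary~\ref{CD for free} (legitimate here, since every contraction $\M_F$ with $F\neq\varnothing$ lives on a proper subset of $E$ and the outer induction supplies Theorem~\ref{theorem_all} for it), prove $\uH(\M)\subseteq\uIH(\M)$ by the degree/orthogonality argument via Lemma~\ref{underline adjoint}, compute $\udeg_\M(\ub^{d-1})\neq 0$ by a push--pull expansion of $\ub=\sum_{F\not\ni i}x_F$ together with the rank-$(d-1)$ inductive hypothesis, and close with a dimension count: $\dim\uIH(\M)=d!-\sum_{k=1}^{d-1}\binom{d}{k}(d-k-1)\,k!=d$, where the telescoping identity and the input $\dim\uCH$ of the rank-$k$ Boolean matroid $=k!$ are both correct. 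What each approach buys: the paper's argument is self-contained and produces an explicit monomial basis adapted to the decomposition, while yours avoids both the involution $\tau$ and any appeal to the decomposition $(\underline{\mathrm D}_3)$, at the cost of importing the count $\dim\uCH = k!$ and running a slightly longer chain of inductions. One small point worth making explicit if you write this up: your inductive hypothesis for the rank-$(d-1)$ Boolean matroid gives the description of $\uIH$ and $\uJ$ but not literally the nonvanishing of $\udeg(\ub^{d-2})$; you should note that this follows from $\uPD$ of that matroid (nondegeneracy of the pairing between $\uIH^0$ and $\uIH^{d-2}$), or simply iterate your own push--pull computation down to rank $1$.
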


\begin{proof}  
Let $\uJ'(\M)$ be the subspace of 
$\uH(\M)$ spanned by $1, \ub, \dots, \ub^{d-2}$. 
We have $\uH(\M) \subseteq \uIH(\M)$, since $\uIH(\M)$ is an $\uH(\M)$-module that contains $1$.  Since $\ub^{d-2}$ is not zero, by Definition \ref{def:thesubspaces}, we have 
 $\uJ'(\M) \subseteq \uJ(\M)$.

Thus if we can show there is a direct sum decomposition
\begin{eqnarray}\label{eq:boolean sum}
\uCH(\M) = \uH(\M) \oplus \bigoplus_{\varnothing < F < E} \upsi^F_\M\left( \uJ'(\M_F)\otimes \uCH(\M^F)\right),
\end{eqnarray}
the proposition will follow.  

For a Boolean matroid $\M$, $\uCH(\M)$ admits an automorphism 
\[
\tau \colon \uCH(\M)\to \uCH(\M), \quad x_F\mapsto x_{E\setminus F}.
\]
The automorphism $\tau$ exchanges $\underline{\alpha}$ and $\b$.  It is then easy to see that the decomposition \eqref{eq:boolean sum} is the result of applying $\tau$ to the decomposition $(\underline{\mathrm D}_3)$ of \cite{BHMPW}.

Alternatively, one can use the basis of $\uCH(\M)$ given by Feichtner and Yuzvinsky \cite[Corollary 1]{FY}.  Their basis is given by all products
\[x_{G_1}^{m_1}x_{G_2}^{m_2}\cdots x_{G_k}^{m_k}\ua^{m_{k+1}},\]
where $G_1 < G_2 < \dots < G_k$ is a (possibly empty) flag of nonempty proper flats and we have $m_1 < \rk G_1$, $m_i < \rk G_i - \rk G_{i-1}$ for $1 < i \le k$, and $m_{k+1} < \crk G_k$ (when $k=0$, $m_{k+1}<\rk \M$).
Applying $\tau$ gives
\[\ub^{m_{k+1}}(x_{F_k})^{m_k}\cdots (x_{F_1})^{m_1},\]
where $F_i = E \setminus G_i$. 
If $k \ne 0$ this is in $\upsi_\Mo^{F_k}\big((\ub_{\M_{F_k}})^{m_k} \otimes \uCH(\M^{F_k})\big)$, while if $k=0$ it is in $\uH(\M)$.  The direct sum decomposition \eqref{eq:boolean sum} follows.
\end{proof}

Since $\uIH(\M)$ is isomorphic to $\uH(\M)$, which is spanned by $1, \b, \b^2, \ldots, \b^{d-1}$, we immediately deduce $\hyperlink{uns}{\uNS(\M)}$ and $\hyperlink{uhl}{\uHL(\M)}$. Notice that the involution $\tau$ induces the identity map on $\uCH^{d-1}(\M)$. Therefore, $\udeg_{\M}(\b^{d-1})=\udeg_{\M}(\underline{\alpha}^{d-1})=1$, and we have $\hyperlink{upd}{\uPD(\M)}$ and $\hyperlink{uhr}{\uHR(\M)}$.
By Proposition \ref{uHL1CD}, we also get $\hyperlink{cd}{\CD(\M)}$. By Lemma \ref{mutual annihilators} and Corollary \ref{underline it}, we have an isomorphism of graded vector spaces
\[
\IH_\o(\M)_\varnothing\cong \varphi_\Mo^\varnothing(\IH_\o(\M))=\uIH(\M).
\]
Since $\psi_\Mo^\varnothing(\b^i)=\psi^\varnothing\varphi^\varnothing((-x_\varnothing)^{i}) = (-1)^i(x_\varnothing)^{i+1}$, it follows that $\psi^\varnothing_\Mo \uJ(\M)$ is spanned by $x_\varnothing, \ldots, x_\varnothing^{d-1}$. Since $x_\varnothing y_j=0$ for any $j\in E$, we have an isomorphism of graded vector spaces 
\[
\big(\psi^\varnothing_\Mo \uJ(\M)\big)_\varnothing \cong\psi^\varnothing_\Mo \uJ(\M).
\]
By $\hyperlink{cd}{\CD(\M)}$, we have $\IH_\o(\M)_\varnothing \cong \IH(\M)_\varnothing \oplus \big(\psi^\varnothing_\Mo \uJ(\M)\big)_\varnothing$.  Since $\uIH(\M)$ has total dimension $d$ and $\uJ(\M)$ has total dimension $d-1$, the stalk $\IH(\M)_\varnothing$ is one-dimensional, and hence $\IH(\M)_\varnothing\cong \IH^0(\M)\cong \Q$. Therefore, $\IH(\M)$ is generated in degree zero as a module over $\H(\M)$. Equivalently, $\IH(\M)$ is isomorphic to a quotient of $\H(\M)$. 

On the other hand, since $\M$ is Boolean, $\H(\M)=\Q[y_1, \ldots, y_d]/(y_1^2, \ldots, y_d^2)$ is a Poincar\'e duality algebra. Since $\IH^d(\M)$ is one-dimensional, the quotient map $\H(\M)\to \IH(\M)$ is an isomorphism in degree $d$. Therefore, the quotient map must be an isomorphism, that is, 
\[
\IH(\M)\cong \H(\M)=\Q[y_1, \ldots, y_d]/(y_1^2, \ldots, y_d^2).
\]
One can explicitly verify that $\H(\M)$ 
satisfies Poincar\'e duality, the hard Lefschetz theorem, and the Hodge--Riemann relations,
or one can infer it from the fact that $\H(\M)$ is isomorphic to the Chow ring of the projective variety $(\mathbb{P}_{\mathbb{C}}^1)^d$.
The statement $\hyperlink{pdo}{\PD_\o(\M)}$ follows from $\hyperlink{pd}{\PD(\M)}$, $\hyperlink{upd}{\uPD(\M)}$, and $\hyperlink{uhl}{\uHL(\M)}$. By Lemma \ref{orthogonal}, the statement $\hyperlink{hlo}{\HL_\o(\M)}$ follows from $\hyperlink{hl}{\HL(\M)}$ and $\hyperlink{uhl}{\uHL(\M)}$, and the statement $\hyperlink{hro}{\HR_\o(\M)}$ follows from $\hyperlink{hr}{\HR(\M)}$ and $\hyperlink{uhr}{\uHR(\M)}$. 


\subsection{Proofs of Propositions~\ref{prop:no socle} and~\ref{prop:gr}}\label{sec:the rest}

Recall from Section~\ref{sec:strategy} that the proof of Theorem~\ref{thm:KL} relies on Theorem~\ref{prop:kahler}, which we have already proved as part of Theorem~\ref{theorem_all}, as well as on Propositions~\ref{prop:no socle} and~\ref{prop:gr}. In this subsection, we will prove these remaining two propositions.







\begin{proof}[Proof of Proposition~\ref{prop:no socle}]
As parts of Theorem~\ref{theorem_all}, we have already obtained $\hyperlink{pd}{\PD(\M)}$ and $\hyperlink{ns}{\NS(\M)}$. By $\hyperlink{pd}{\PD(\M)}$, the socle of $\IH(\M)$ is equal to the orthogonal complement $(\mm \IH(\M))^\perp$ in $\IH(\M)$. 
By $\hyperlink{ns}{\NS(\M)}$, we know that $(\mm \IH(\M))^\perp =0$ in degrees less than or equal to $d/2$. Thus, $\mm \IH(\M)= \IH(\M)$ in degrees greater than or equal to $d/2$, or equivalently, $\IH(\M)_{\varnothing}=0$ in degrees greater than or equal to $d/2$. 
\end{proof}

\begin{proof}[Proof of Proposition~\ref{prop:gr}]
Choose any ordering $F_1, \ldots, F_r$ of $\cL(\M)$ such that $\rk{F_i}\leq \rk{F_j}$ whenever $i\leq j$.  Then there exist $\mu$, $\nu$ so that 
$\Sigma_{\mu}=\{F_{\mu}, \ldots, F_r\}=\cL^{\geq k}(\M)$ and $\Sigma_{\nu}=\{F_{\nu}, \ldots, F_r\}=\cL^{\geq k+1}(\M)$. 
By definition,
\begin{equation}\label{eq_191}
\mm^k\IH(\M)/\mm^{k+1}\IH(\M)\cong 
\frac{\IH(\M)_{\Sigma_{\mu}}}{\IH(\M)_{\Sigma_{\nu}}}.
\end{equation}
Consider the natural maps
\begin{equation}\label{eq_192} 
\bigoplus_{F\in\cL^k(\M)}\frac{\IH(\M)_{\geq F}}{\IH(\M)_{>F}} \to
\bigoplus_{F\in \cL^k(\M)}\frac{\IH(\M)_{\Sigma_\nu\cup\{F\}}}{\IH(\M)_{\Sigma_\nu}} \to \frac{\IH(\M)_{\Sigma_{\mu}}}{\IH(\M)_{\Sigma_{\nu}}}.
\end{equation}
The first map is an isomorphism by Proposition \ref{pure} (1), and the second map is a surjection since $\Upsilon_{\Sigma_\mu}$ is the ideal generated by $y_F$, $F \in \cL^k(\M)$.  To show that it is an injection, suppose that there are elements $a_F \in \IH(\M)_{\Sigma_\nu\cup\{F\}}$ representing a nonzero element of the kernel.  Let $S$ be the set of flats for which $a_F \notin \IH(\M)_{\Sigma_\nu}$; we can assume that the $a_F$ are chosen so $S$ is as small as possible.  Each component of the map \eqref{eq_192} is injective, so $|S|>1$.  Let $S = S_1 \cup S_2$ be a nontrivial partition of $S$.  Then 
\[b_i := \sum_{F \in S_i} a_F \in \IH(\M)_{\Sigma_\nu\cup S_i}\]
is not in $\IH(\M)_{\Sigma_\nu}$, by the minimality of $S$.  But $b_1+b_2$
is in $\IH(\M)_{\Sigma_\nu}$, which implies that $b_1$ and $b_2$
are in \[\IH(\M)_{\Sigma_\nu\cup S_1} \cap \IH(\M)_{\Sigma_\nu\cup S_2} = 
\IH(\M)_{\Sigma_\nu},\]
by Proposition \ref{prop:purity characterization}.  This is a contradiction, so we conclude that \eqref{eq_192} is an isomorphism.


By Lemma \ref{everything is empty} and Lemma \ref{mult by y} (1), for any flat $F$, we have natural isomorphisms
\begin{equation}\label{eq_193}
\frac{\IH(\M)_{\geq F}}{\IH(\M)_{>F}} = \big(\IH(\M)[-\rk F]\big)_F \cong  \big(y_F\IH(\M)\big)_\varnothing\cong \IH(\M_F)_\varnothing[-\rk F].
\end{equation}


Now, the proposition follows from the isomorphisms in Equations \eqref{eq_191}, \eqref{eq_192}, and \eqref{eq_193}.
\end{proof}


\subsection{Proof of Theorem \ref{thm:monotonicity}}\label{subsec:monotonicity}
By Theorem \ref{theorem_all}, all of our statements hold for $\M$ and $\M_F$, and so in particular Lemma 
\ref{mult by y} says that the pullback $\varphi_F$ restricts to a surjection $\IH(\M) \to \IH(\M_F)$.  Because we are assuming that $y_F$ is fixed by the action of $\Gamma$, this surjection is $\Gamma$-equivariant.  Since $\varphi_F$ is a ring homomorphism that sends the maximal ideal $\mm$ of $\H(\M)$ to the maximal ideal $\mm_F$ of $\H(\M_F)$, it follows that we have a $\Gamma$-equivariant surjection 
\[\IH(\M)_\varnothing \to \IH(\M_F)_\varnothing.\]
The result now follows by taking $\Gamma$-equivariant Poincar\'e polynomials.

\appendix
\section{Equivariant polynomials}\label{Appendix}
The purpose of this appendix is to give precise definitions of equivariant Kazhdan--Lusztig polynomials, equivariant $Z$-polynomials,
and equivariant inverse Kazhdan--Lusztig polynomials.
We also prove an equivariant analogue of the characterization of Kazhdan--Lusztig polynomials
and $Z$-polynomials that appears  in \cite[Theorem 2.2]{BV}.

Let $\Gamma$ be a finite group, and let $\VRep(\Gamma)$ be the ring of virtual representations of $\Gamma$ over $\Q$ with coefficients in $\mathbb{Q}$.
For any finite-dimensional representation $V$ of $\Gamma$, let $[V]$ be its class in $\VRep(\Gamma)$.
If $\Gamma$ acts on a set $S$ and $x\in S$, we write $\Gamma_x\subseteq \Gamma$ for the stabilizer of $x$.
We use the following standard lemma  \cite[Lemma 2.7]{eq-incidence}.

\begin{lemma}\label{induced reps}
Let $V = \bigoplus_{x\in S} V_x$ be a vector space that decomposes as a direct sum of pieces indexed by a finite set $S$,
and suppose that $\Gamma$ acts linearly on $V$ and acts by permutations on $S$.
If $\gamma\cdot V_x = V_{\gamma\cdot x}$
 for all $x\in S$ and $\gamma\in\Gamma$,
then
$$[V] = \sum_{[x]\in S/\Gamma}\Ind_{\Gamma_x}^{\Gamma} [V_x] = \sum_{x\in S}\frac{|\Gamma_x|}{|\Gamma|}\Ind_{\Gamma_x}^{\Gamma} [V_x] \in \VRep(\Gamma).$$
\end{lemma}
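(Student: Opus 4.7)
The plan is to decompose the index set $S$ into its $\Gamma$-orbits and to analyze each orbit separately. Write $S = \bigsqcup_{\alpha} \mathcal{O}_\alpha$, where each $\mathcal{O}_\alpha$ is a single $\Gamma$-orbit, and fix a representative $x_\alpha \in \mathcal{O}_\alpha$. The hypothesis $\gamma\cdot V_x = V_{\gamma\cdot x}$ guarantees that each summand
$$W_\alpha \coloneq \bigoplus_{x \in \mathcal{O}_\alpha} V_x$$
is a $\Gamma$-subrepresentation of $V$, and that $V_{x_\alpha}$ is preserved by the stabilizer $\Gamma_{x_\alpha}$, so it is a $\Gamma_{x_\alpha}$-representation. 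Since $V = \bigoplus_\alpha W_\alpha$ as $\Gamma$-representations, it suffices to verify the claimed identity one orbit at a time.

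First I would identify $W_\alpha$ with an induced representation. Choose coset representatives $\gamma_1,\dots,\gamma_m \in \Gamma$ for $\Gamma/\Gamma_{x_\alpha}$, where $m = |\mathcal{O}_\alpha| = |\Gamma|/|\Gamma_{x_\alpha}|$, so that $\mathcal{O}_\alpha = \{\gamma_j \cdot x_\alpha\}_{j=1}^m$. Then the maps $\gamma_j \colon V_{x_\alpha} \to V_{\gamma_j \cdot x_\alpha}$ are isomorphisms of vector spaces, and they assemble into an isomorphism
$$W_\alpha \;\cong\; \Ind_{\Gamma_{x_\alpha}}^{\Gamma} V_{x_\alpha}$$
of $\Gamma$-representations, by the standard description of induced representations as $\bigoplus_j \gamma_j \otimes V_{x_\alpha}$. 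This step is essentially a bookkeeping check that the $\Gamma$-action on the left-hand side matches the $\Gamma$-action on the induced representation.

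Second, I would simplify the right-hand side of the claimed formula restricted to the orbit $\mathcal{O}_\alpha$. For $x = \gamma_j \cdot x_\alpha$, the stabilizer $\Gamma_x$ is conjugate to $\Gamma_{x_\alpha}$ by $\gamma_j$, so in particular $|\Gamma_x| = |\Gamma_{x_\alpha}|$, and $\Ind_{\Gamma_x}^\Gamma [V_x] = \Ind_{\Gamma_{x_\alpha}}^\Gamma [V_{x_\alpha}]$ in $\VRep(\Gamma)$ (since conjugation by $\gamma_j$ intertwines the data). Therefore
$$\sum_{x \in \mathcal{O}_\alpha} \frac{|\Gamma_x|}{|\Gamma|} \Ind_{\Gamma_x}^{\Gamma}[V_x] \;=\; m \cdot \frac{|\Gamma_{x_\alpha}|}{|\Gamma|}\,\Ind_{\Gamma_{x_\alpha}}^{\Gamma} [V_{x_\alpha}] \;=\; \Ind_{\Gamma_{x_\alpha}}^{\Gamma} [V_{x_\alpha}] \;=\; [W_\alpha],$$
using the first step. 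Summing over $\alpha$ gives $[V] = \sum_\alpha [W_\alpha]$, which is exactly the desired identity.

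There is no real obstacle: the only nontrivial point is the identification $W_\alpha \cong \Ind_{\Gamma_{x_\alpha}}^\Gamma V_{x_\alpha}$, and this is a standard fact about transitive permutation actions with equivariant local data. The factor $|\Gamma_x|/|\Gamma|$ on the right-hand side is precisely designed to absorb the overcounting that comes from summing over all points of an orbit rather than picking one representative, so the two sides must match orbit by orbit.
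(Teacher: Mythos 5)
Your proof is correct. The paper does not actually prove this lemma — it cites it as a standard fact from \cite{eq-incidence} — and your orbit-by-orbit argument, identifying $\bigoplus_{x\in\mathcal{O}_\alpha}V_x$ with $\Ind_{\Gamma_{x_\alpha}}^{\Gamma}V_{x_\alpha}$ and observing that the factor $|\Gamma_x|/|\Gamma|$ exactly compensates for summing over the $|\Gamma|/|\Gamma_{x_\alpha}|$ points of each orbit, is precisely the standard proof of that fact.
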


Let $\M$ be a matroid  on the ground set $E$, and let $\Gamma$ be a finite group acting on $\M$.  In other words,
the set $E$ is equipped with an action of $\Gamma$ by permutations that take flats of $\M$ to flats of $\M$.
We define the {\bf equivariant characteristic polynomial} 
$$\chi^\Gamma_\M(t) \coloneq \sum_{k=0}^{\rk \M} (-1)^k [\OS^k(\M)]\, t^{\rk \M -k} \in  \VRep(\Gamma)[t],$$
where $\OS^k(\M)$ is the degree $k$ part of the Orlik--Solomon algebra of $\M$.
The dimension homomorphism from $\VRep(\Gamma)[t]$ to $\Z[t]$ takes the equivariant characteristic polynomial
$\chi^\Gamma_\M(t)$ to the ordinary characteristic polynomial $\chi_\M(t)$; see \cite[Chapter 3]{Orlik-Terao}.
The following statement appears in \cite[Theorem 2.8]{GPY}.


\begin{theorem}\label{eqP}
To each matroid $\M$ and symmetry group $\Gamma$,
there is a unique way to assign 
 a polynomial $P_\M^\Gamma(t)$ with coefficients in $\VRep(\Gamma)$ 
with the following properties:
\begin{enumerate}[(a)]\itemsep 5pt
\item If the ground set of $\M$ is empty, then $P_{\M}^\Gamma(t) = 1$ (the trivial representation).
\item For every matroid $\M$ on a nonempty ground set, the degree of $P_\M^\Gamma(t)$ is strictly less than  $\rk \M / 2$.	
\item For every matroid $\M$, we have $t^{\rk \M} P_\M(t^{-1})\; = \displaystyle\sum_{F\in \cL(\M)} \frac{|\Gamma_F|}{|\Gamma|}\Ind_{\Gamma_F}^\Gamma\left(\chi_{\M^F}^{\Gamma_F}(t) P_{\M_F}^{\Gamma_F}(t)\right)$.
\end{enumerate}
The polynomial $P_\M^\Gamma(t)$ is called the {\bf equivariant Kazhdan--Lusztig polynomial} of $\M$ with respect to the action of $\Gamma$.
\end{theorem}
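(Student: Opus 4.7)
The plan is to prove this by strong induction on $d := \rk \M$, following the template of \cite[Theorem 2.2]{EPW} but keeping careful track of the representation-theoretic data. The base case $d = 0$ is immediate: condition (a) forces $P_\M^\Gamma(t) = 1$, which satisfies (b) vacuously and satisfies (c) since the right-hand side reduces to the single term $F = \varnothing = E$, which equals $1$. For the inductive step, isolating the $F = \varnothing$ contribution to the right-hand side of (c) --- where $\Gamma_\varnothing = \Gamma$, $\M^\varnothing$ has rank zero so $\chi_{\M^\varnothing}^\Gamma(t) = 1$, and $\M_\varnothing = \M$ --- rewrites (c) as
\begin{equation*}
t^d P_\M^\Gamma(t^{-1}) - P_\M^\Gamma(t) \;=\; R_\M^\Gamma(t) \;:=\; \sum_{\varnothing < F \leq E} \frac{|\Gamma_F|}{|\Gamma|}\, \Ind_{\Gamma_F}^\Gamma\!\left(\chi_{\M^F}^{\Gamma_F}(t)\, P_{\M_F}^{\Gamma_F}(t)\right).
\end{equation*}
Every summand on the right involves $P_{\M_F}^{\Gamma_F}(t)$ with $\rk \M_F = d - \rk F < d$, so $R_\M^\Gamma(t) \in \VRep(\Gamma)[t]$ is determined by the inductive hypothesis.

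Uniqueness of $P_\M^\Gamma(t)$ is then immediate: under the degree bound (b) the coefficients of $P_\M^\Gamma(t)$ in degrees $\geq d/2$ vanish and those of $t^d P_\M^\Gamma(t^{-1})$ in degrees $\leq d/2$ vanish, so the displayed equation forces the coefficient of $t^k$ in $P_\M^\Gamma(t)$ (for $k < d/2$) to equal the negative of the coefficient of $t^k$ in $R_\M^\Gamma(t)$. Defining $P_\M^\Gamma(t)$ by this formula produces a candidate polynomial satisfying (a) and (b) by construction; the remaining content of (c) is the compatibility
\begin{equation*}
R_\M^\Gamma(t) + t^d R_\M^\Gamma(t^{-1}) = 0,
\end{equation*}
which is equivalent to the palindromicity $Z_\M^\Gamma(t) = t^d Z_\M^\Gamma(t^{-1})$ of the equivariant $Z$-polynomial
\begin{equation*}
Z_\M^\Gamma(t) := \sum_{F \in \cL(\M)} \frac{|\Gamma_F|}{|\Gamma|}\, \Ind_{\Gamma_F}^\Gamma\!\left(t^{\rk F} P_{\M_F}^{\Gamma_F}(t)\right).
\end{equation*}

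The hard part will be establishing this equivariant palindromicity. The plan is to reduce it to its nonequivariant counterpart by passing to characters. Since the character map $\VRep(\Gamma) \to \prod_{\gamma \in \Gamma} \mathbb{Q}$ is injective, it suffices to prove $\chi_\gamma\bigl(Z_\M^\Gamma(t)\bigr) = t^d \chi_\gamma\bigl(Z_\M^\Gamma(t^{-1})\bigr)$ for each $\gamma \in \Gamma$. Combining the standard induced-character formula with the weights $|\Gamma_F|/|\Gamma| = 1/|\Gamma \cdot F|$ collapses the sum over $\cL(\M)$ into a sum over the $\gamma$-fixed sublattice $\cL(\M)^\gamma := \{F \in \cL(\M) : \gamma F = F\}$, yielding
\begin{equation*}
\chi_\gamma\bigl(Z_\M^\Gamma(t)\bigr) = \sum_{F \in \cL(\M)^\gamma} t^{\rk F}\, \chi_\gamma\bigl(P_{\M_F}^{\Gamma_F}(t)\bigr).
\end{equation*}
Applying the same character evaluation to the inductive identity (c) for each $\M_F$ with $F \in \cL(\M)^\gamma$ then reduces palindromicity to a double-summation identity on pairs $F \leq G$ in $\cL(\M)^\gamma$ weighted by character values of equivariant characteristic polynomials. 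This is the restriction to $\cL(\M)^\gamma$ of the standard $P$-kernel argument used in \cite{PXY} and \cite[Theorem 2.2]{BV} for the nonequivariant case, and its verification closes the induction.
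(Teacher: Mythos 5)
First, a point of reference: the paper does not prove Theorem \ref{eqP} at all --- it is quoted from \cite[Theorem 2.8]{GPY}, and the one nontrivial ingredient invoked nearby (palindromicity of $Z^\Gamma_\M(t)$) is likewise cited, to \cite[Corollary 4.5]{eq-incidence}. So your proposal is measured against the literature rather than against an argument in this paper. Your skeleton is the correct one and matches \cite{GPY}: induct on rank, peel off the $F=\varnothing$ term, obtain uniqueness from the degree bound by truncation, and reduce existence to the anti-palindromicity of the remainder $R^\Gamma_\M(t)$. The character-evaluation step is also correct as stated: the formula $\chi_\gamma\bigl(\sum_F\tfrac{|\Gamma_F|}{|\Gamma|}\Ind_{\Gamma_F}^\Gamma V_F\bigr)=\sum_{F\in\cL(\M)^\gamma}\chi_\gamma(V_F)$ is Lemma \ref{induced reps} read through characters, and injectivity of the character map on $\VRep(\Gamma)$ legitimately reduces everything to one $\gamma$ at a time.

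The gap is your final sentence, which is where all the content lives. After collapsing to $\cL(\M)^\gamma$, the identity you need amounts to $\sum_{F\in\cL(\M^G)^\gamma}\chi_\gamma\bigl(\chi^{\langle\gamma\rangle}_{\M_F^G}(t)\bigr)=t^{\rk G}$ for every $G>\varnothing$ fixed by $\gamma$; this is what makes $R^\Gamma_\M(t)=S(t)-t^dS(t^{-1})$ for $S(t)=\sum_{F\neq\varnothing}\tfrac{|\Gamma_F|}{|\Gamma|}\Ind_{\Gamma_F}^\Gamma\bigl(t^{\rk F}P^{\Gamma_F}_{\M_F}(t)\bigr)$ and closes the induction. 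You assert this is ``the restriction to $\cL(\M)^\gamma$ of the standard $P$-kernel argument,'' but that does not follow: $\cL(\M)^\gamma$ is in general not a geometric lattice, and the entries $\chi_\gamma\bigl(\chi^{\langle\gamma\rangle}_{\M_F^G}(t)\bigr)$, being alternating traces of $\gamma$ on Orlik--Solomon algebras, are not the characteristic polynomials of any matroid structure on the fixed poset, so Whitney's theorem and the M\"obius-function computation underlying the nonequivariant case cannot be invoked verbatim. The identity is true, but it needs its own proof --- for instance, take the $\gamma$-trace of the acyclic complex $(\OS(\M^G),\partial)$ together with the decomposition $\OS(\M^G)=\bigoplus_F\OS_F(\M^G)$; this is exactly \cite[Lemma 2.5]{GPY}, and it is the real mathematical content of the theorem. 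A second, smaller omission: your induction silently uses that $\chi_\gamma\bigl(P^{\Gamma_F}_{\M_F}(t)\bigr)$ is independent of the ambient group, i.e., the restriction compatibility $\Res^{\Gamma_F}_{H}P^{\Gamma_F}_{\M_F}=P^{H}_{\M_F}$ for $\gamma\in H\le\Gamma_F$; this must be proved alongside existence and uniqueness within the same induction.
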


The following definition appears in \cite[Section 6]{PXY}.

\begin{definition}\label{eqZ}
The {\bf equivariant \boldmath{$Z$}-polynomial} of
$\M$ with respect to the action of $\Gamma$ is $$Z_\M^\Gamma(t)\; \coloneq 
\sum_{F\in \cL(\M)} \frac{|\Gamma_F|}{|\Gamma|}\Ind_{\Gamma_F}^\Gamma\left(P_{\M_F}^{\Gamma_F}(t)\right)\, t^{\rk F} \in \VRep(\Gamma)[t].$$
\end{definition}

A polynomial $f(t)\in\VRep(\Gamma)[t]$ is called {\bf palindromic} if $t^{\deg f(t)}f(t^{-1}) = f(t)$.  
The fact that the equivariant $Z$-polynomial is palindromic is asserted without proof in \cite[Section 6]{PXY};
a full proof appears in \cite[Corollary 4.5]{eq-incidence}.

\excise{
\begin{lemma}\label{Zpal}
The equivariant $Z$-polynomial $Z_\M^\Gamma(t)$ is palindromic.
\end{lemma}

\begin{proof}
For any $F,G\in\cL(\M)$, let $\Gamma_{FG} \coloneq \Gamma_F \cap \Gamma_G$.
We have
\begin{eqnarray*}
t^d Z_\M^\Gamma(t^{-1}) &=& 
\sum_{F\in \cL(\M)} \frac{|\Gamma_F|}{|\Gamma|} \Ind_{\Gamma_F}^\Gamma\left( t^{\crk F} P_{\M_F}^{\Gamma_F}(t^{-1})\right)\\
&=& \sum_{F\in \cL(\M)} \frac{|\Gamma_F|}{|\Gamma|} \Ind_{\Gamma_F}^\Gamma\left( 
\sum_{G\in \cL(\M_F)} \frac{|\Gamma_{FG}|}{|\Gamma_F|}\Ind_{\Gamma_{FG}}^{\Gamma_F}\left(\chi_{\M_F^G}^{\Gamma_F}(t) P^{\Gamma_{FG}}_{\M_G}(t)\right)
\right)\\
&=& \sum_{F\leq G \in\cL(\M)}\frac{|\Gamma_{FG}|}{|\Gamma|}\Ind_{\Gamma_{FG}}^{\Gamma}\left(\chi_{\M_F^G}^{\Gamma_{FG}}(t) P^{\Gamma_{FG}}_{\M_G}(t)\right).
\end{eqnarray*}
Since $P^{\Gamma_{FG}}_{\M_G}(t)$ is the restriction of $P^{\Gamma_{G}}_{\M_G}(t)$ to $\Gamma_{FG}$ \cite[Theorem 2.8]{GPY},
this is equal to
\begin{eqnarray*}
&& \sum_{F\leq G \in\cL(\M)}\frac{|\Gamma_{FG}|}{|\Gamma|}\Ind_{\Gamma_{G}}^{\Gamma}\left(P^{\Gamma_G}_{\M_G}(t)\Ind_{\Gamma_{FG}}^{\Gamma_G}
\left( \chi_{\M_F^G}^{\Gamma_{FG}}(t) \right) \right)\\
&=& \sum_{G \in\cL(\M)}\frac{|\Gamma_G|}{|\Gamma|}\Ind_{\Gamma_{G}}^{\Gamma}\left(P^{\Gamma_G}_{\M_G}(t) \sum_{F\in\cL(\M^G)} 
\frac{|\Gamma_{FG}|}{|\Gamma_{G}|}\Ind_{\Gamma_{FG}}^{\Gamma_{G}} \left( \chi_{\M_F^G}^{\Gamma_{FG}}(t) \right)  \right).\\
\end{eqnarray*}
We now use the fact that 
$$\sum_{F\in\cL(\M^G)} 
\frac{|\Gamma_{FG}|}{|\Gamma_{G}|}\Ind_{\Gamma_{FG}}^{\Gamma_{G}} \left( \chi_{\M_F^G}^{\Gamma_{FG}}(t) \right) = t^{\rk G}$$
\cite[Lemma 2.5]{GPY} to conclude that
$$t^d Z_\M^\Gamma(t^{-1}) =
\sum_{G \in\cL(\M)}\frac{|\Gamma_G|}{|\Gamma|}\Ind_{\Gamma_{G}}^{\Gamma}\left(P^{\Gamma_G}_{\M_G}(t)\, t^{\rk G}\right)
= Z_\M(t).$$
This completes the proof.
\end{proof}
}

\begin{lemma}\label{duh}
For any polynomial $f(t)$ of degree $d$, 
there is a unique polynomial $g(t)$ of degree strictly less than $d/2$ such that $f(t) + g(t)$
is palindromic.
\end{lemma}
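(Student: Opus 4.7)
The plan is to reduce the statement to a straightforward comparison of coefficients. Write $f(t) = \sum_{i=0}^d f_i t^i$ with $f_d \neq 0$, and for an unknown $g(t) = \sum_i g_i t^i$ note that the hypothesis $\deg g(t) < d/2$ is equivalent to $g_i = 0$ whenever $i \geq d/2$. Since $\deg g < d \leq \deg f$, the sum $f+g$ still has degree exactly $d$, so palindromicity of $f+g$ is the system of equations
\[
(f_i + g_i) = (f_{d-i} + g_{d-i}) \qquad \text{for all } 0 \leq i \leq d/2.
\]

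First I would handle the pair $(i, d-i)$ with $i < d/2$. Then $d-i > d/2$, so $g_{d-i} = 0$, and the palindrome condition reduces to $g_i = f_{d-i} - f_i$. This uniquely determines $g_i$ in the range $0 \leq i < d/2$. If $d$ is even, the single remaining middle equation at $i = d/2$ is $0 = 0$ and imposes no constraint; if $d$ is odd there is no middle index. Conversely, setting $g_i \coloneq f_{d-i} - f_i$ for $i < d/2$ and $g_i \coloneq 0$ for $i \geq d/2$ produces a polynomial $g(t)$ of degree strictly less than $d/2$ for which $f(t) + g(t)$ satisfies the palindrome relation coefficient by coefficient, establishing both existence and uniqueness.

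There is no real obstacle: the linear system for the unknown coefficients of $g$ is triangular (even diagonal once one notes that $g_{d-i}$ vanishes in each relevant equation), so it has a unique solution. The only thing to watch is that $\deg g < d/2$ is the correct threshold so that no coefficient of $g$ appears on both sides of the equation for $i$ and the equation for $d-i$ simultaneously; that is exactly why the cutoff at $d/2$ is sharp.
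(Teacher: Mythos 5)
Your proof is correct and is essentially the paper's argument written out coefficient by coefficient: the paper simply states the answer as the truncation of $t^d f(t^{-1}) - f(t)$ to degree $\lfloor (d-1)/2 \rfloor$, whose $t^i$-coefficient is exactly your $f_{d-i} - f_i$. Nothing further is needed.
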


\begin{proof}
We must take $g(t)$ to be the truncation of $t^d f(t^{-1}) - f(t)$ to degree $\lfloor (d-1)/2\rfloor$.
\end{proof}

The following proposition is an equivariant analogue of \cite[Theorem 2.2]{BV}.

\begin{corollary}\label{Zcor}
Let $\M$ be a nonempty matroid, let $\tilde P_\M^\Gamma(t)$ be a polynomial of degree strictly less than $\rk \M/2$ in  $\VRep(\Gamma)[t]$,
and let
\[
\tilde Z_\M^\Gamma(t) \coloneq \tilde P^\Gamma_\M(t)\; + \sum_{\varnothing \neq F\in \cL(\M)} \frac{|\Gamma_F|}{|\Gamma|}\Ind_{\Gamma_F}^\Gamma\left(P_{\M_F}^{\Gamma_F}(t)\right)\, t^{\rk F}.
\]
If  $\tilde Z_\M^\Gamma(t)$ is a palindromic polynomial,
then $\tilde P_\M^\Gamma(t) = P_\M^\Gamma(t)$ and $\tilde Z_\M^\Gamma(t) = Z_\M^\Gamma(t)$.
\end{corollary}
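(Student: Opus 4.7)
The proof will follow the same pattern as the non-equivariant version and will really just amount to an application of Lemma~\ref{duh} in the setting of $\VRep(\Gamma)[t]$. Here is my plan.

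First I would rewrite the definition of $Z_\M^\Gamma(t)$ by splitting off the $F=\varnothing$ term. Since $\Gamma_\varnothing = \Gamma$, $\rk\varnothing = 0$, and $\M_\varnothing = \M$, Definition~\ref{eqZ} gives
\[
Z_\M^\Gamma(t)\; =\; P_\M^\Gamma(t) + \sum_{\varnothing\neq F\in\cL(\M)}\frac{|\Gamma_F|}{|\Gamma|}\Ind_{\Gamma_F}^\Gamma\!\left(P_{\M_F}^{\Gamma_F}(t)\right)t^{\rk F}.
\]
Thus, setting $f(t) \coloneq \sum_{\varnothing\neq F}\frac{|\Gamma_F|}{|\Gamma|}\Ind_{\Gamma_F}^\Gamma(P_{\M_F}^{\Gamma_F}(t))\,t^{\rk F}$, both candidates $Z_\M^\Gamma(t) = P_\M^\Gamma(t) + f(t)$ and $\tilde Z_\M^\Gamma(t) = \tilde P_\M^\Gamma(t) + f(t)$ are obtained from the common polynomial $f(t)$ by adding something of degree strictly less than $d/2 = \rk\M/2$.

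Next, I would check that $f(t)$ has degree exactly $d$. The only flat contributing in degree $d$ is $F = E$, where $\Gamma_E = \Gamma$, $\rk E = d$, $\M_E$ is the rank-$0$ matroid, and $P^{\Gamma}_{\M_E}(t) = 1$; this yields the leading term $t^d$ with coefficient $[\mathbf{1}]\in\VRep(\Gamma)$. Combined with the fact that every other summand has degree at most $d-1$ (since $\deg P^{\Gamma_F}_{\M_F}(t) < \crk F/2$), this shows $\deg f(t) = d$.

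Now I would invoke the equivariant version of Lemma~\ref{duh}: given any polynomial $f(t)\in\VRep(\Gamma)[t]$ of degree $d$, there is a unique $g(t)\in\VRep(\Gamma)[t]$ of degree strictly less than $d/2$ such that $f(t)+g(t)$ is palindromic. The lemma as stated in the paper is for polynomials with numerical coefficients, but the proof works coefficient-by-coefficient in any abelian group: the condition of palindromicity imposes one linear equation per degree, and the truncation argument in Lemma~\ref{duh} (take $g$ to be the truncation of $t^d f(t^{-1}) - f(t)$ to degree $\lfloor(d-1)/2\rfloor$) applies verbatim in $\VRep(\Gamma)$. I would state this equivariant uniqueness either by citing Lemma~\ref{duh} directly in $\VRep(\Gamma)[t]$ or by a one-line verification.

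The conclusion is then immediate. By \cite[Corollary 4.5]{eq-incidence} (which establishes that $Z_\M^\Gamma(t)$ is palindromic), $P_\M^\Gamma(t)$ is a valid choice of $g(t)$; by hypothesis, so is $\tilde P_\M^\Gamma(t)$, since $\deg \tilde P_\M^\Gamma(t) < d/2$ and $\tilde Z_\M^\Gamma(t) = f(t) + \tilde P_\M^\Gamma(t)$ is palindromic. Uniqueness forces $\tilde P_\M^\Gamma(t) = P_\M^\Gamma(t)$, and hence $\tilde Z_\M^\Gamma(t) = Z_\M^\Gamma(t)$. The main (minor) subtlety is ensuring that Lemma~\ref{duh} is legitimately applicable over $\VRep(\Gamma)$ rather than over $\mathbb{Z}$ — but as noted, the proof is formal and requires no positivity.
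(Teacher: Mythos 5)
Your proposal is correct and takes essentially the same route as the paper: the paper likewise splits off the $F=\varnothing$ term of $Z_\M^\Gamma(t)$ and then concludes from Lemma~\ref{duh} together with the palindromicity of $Z_\M^\Gamma(t)$ and of $\tilde Z_\M^\Gamma(t)$. Your additional verifications --- that the common summand over nonempty flats has degree exactly $d$ (leading term $t^d$ from $F=E$) and that Lemma~\ref{duh} applies coefficient-by-coefficient over $\VRep(\Gamma)$ --- are details the paper leaves implicit but which are indeed needed and correct.
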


\begin{proof}
By definition of $Z_\M^\Gamma(t)$, we have
$$Z_\M^\Gamma(t) = P^\Gamma_\M(t)\; + \sum_{\varnothing \neq F\in \cL(\M)} \frac{|\Gamma_F|}{|\Gamma|} \Ind_{\Gamma_F}^\Gamma\left(P_{\M_F}^{\Gamma_F}(t)\right)\, t^{\rk F}.$$  The corollary then follows from Lemma \ref{duh} and the palindromicity of $\tilde Z_\M^\Gamma(t)$.
\end{proof}

When the rank of $\M$ is positive, by \cite[Theorem 1.3]{GX}, the inverse Kazhdan--Lusztig polynomial $Q_\M(t)$ of $\M$  
satisfies the equation
\[
\sum_{F \in \cL(\M)} (-1)^{\rk F}P_{\M^F}(t)Q_{\M_F}(t) = 0.
\]
We use the recurrence relation to define an equivariant analogue of $Q_\M(t)$.

\begin{definition}\label{eqI}
The {\bf equivariant inverse Kazhdan--Lusztig polynomial} of
$\M$ with respect to the action of $\Gamma$ is defined by the condition that $Q^\Gamma_{\M}(t)$ is equal to the trivial representation if the ground set of $\M$ is empty,
and otherwise
$$\sum_{F\in \cL(\M)} (-1)^{\rk F}\frac{|\Gamma_F|}{|\Gamma|}\Ind_{\Gamma_F}^\Gamma\left(P_{\M^F}^{\Gamma_F}(t)Q_{\M_F}^{\Gamma_F}(t)\right)=0.$$
Equivalently, we recursively put
$$Q^\Gamma_{\M}(t) = - \sum_{\varnothing \neq F\in\cL(\M)} (-1)^{\rk F}\frac{|\Gamma_F|}{|\Gamma|}\Ind_{\Gamma_F}^\Gamma\left(P_{\M^F}^{\Gamma_F}(t)Q_{\M_F}^{\Gamma_F}(t)\right)\in \VRep(\Gamma)[t].$$
\end{definition}

For  equivalent definitions of $P^\Gamma_\M(t)$,  $Z^\Gamma_\M(t)$, and  $Q^\Gamma_\M(t)$ 
in the framework of equivariant incidence algebras and equivariant Kazhdan--Lusztig--Stanley theory, 
we refer to  \cite[Section 4]{eq-incidence}.

\bibliography{refs}

@incollection {Zaslavsky,
    AUTHOR = {Zaslavsky, Thomas},
     TITLE = {The {M}\"obius function and the characteristic polynomial},
 BOOKTITLE = {Combinatorial geometries},
    SERIES = {Encyclopedia Math. Appl.},
    VOLUME = {29},
     PAGES = {114--138},
 PUBLISHER = {Cambridge Univ. Press, Cambridge},
      YEAR = {1987},
      ISBN = {0-521-33339-3},
   MRCLASS = {03B35 (05A15 05C15)},
  MRNUMBER = {921071},
}

@article {McMullen,
    AUTHOR = {McMullen, Peter},
     TITLE = {On simple polytopes},
   JOURNAL = {Invent. Math.},
  FJOURNAL = {Inventiones Mathematicae},
    VOLUME = {113},
      YEAR = {1993},
    NUMBER = {2},
     PAGES = {419--444},
      ISSN = {0020-9910,1432-1297},
   MRCLASS = {52B05 (52B35)},
  MRNUMBER = {1228132},
MRREVIEWER = {Bernd\ Kind},
       DOI = {10.1007/BF01244313},
       URL = {https://doi-org.uoregon.idm.oclc.org/10.1007/BF01244313},
}

@article {BN07,
    AUTHOR = {Bar-Natan, Dror},
     TITLE = {Fast {K}hovanov homology computations},
   JOURNAL = {J. Knot Theory Ramifications},
  FJOURNAL = {Journal of Knot Theory and its Ramifications},
    VOLUME = {16},
      YEAR = {2007},
    NUMBER = {3},
     PAGES = {243--255},
      ISSN = {0218-2165,1793-6527},
   MRCLASS = {57M25},
  MRNUMBER = {2320156},
       DOI = {10.1142/S0218216507005294},
       URL = {https://doi.org/10.1142/S0218216507005294},
}

@article {Atiyah,
    AUTHOR = {Atiyah, M.},
     TITLE = {On the {K}rull-{S}chmidt theorem with application to sheaves},
   JOURNAL = {Bull. Soc. Math. France},
  FJOURNAL = {Bulletin de la Soci\'{e}t\'{e} Math\'{e}matique de France},
    VOLUME = {84},
      YEAR = {1956},
     PAGES = {307--317},
      ISSN = {0037-9484},
   MRCLASS = {53.3X},
  MRNUMBER = {86358},
MRREVIEWER = {S. Eilenberg},
       URL = {http://www.numdam.org/item?id=BSMF_1956__84__307_0},
}

@article {CF,
    AUTHOR = {Camillo, V. P. and Fuller, K. R.},
     TITLE = {On graded rings with finiteness conditions},
   JOURNAL = {Proc. Amer. Math. Soc.},
  FJOURNAL = {Proceedings of the American Mathematical Society},
    VOLUME = {86},
      YEAR = {1982},
    NUMBER = {1},
     PAGES = {1--5},
      ISSN = {0002-9939},
   MRCLASS = {16A03 (16A10 16A35 16A51)},
  MRNUMBER = {663852},
MRREVIEWER = {C. N\u{a}st\u{a}sescu},
       DOI = {10.2307/2044382},
       URL = {https://doi.org/10.2307/2044382},
}

@article {GG,
    AUTHOR = {Gordon, Robert and Green, Edward L.},
     TITLE = {Graded {A}rtin algebras},
   JOURNAL = {J. Algebra},
  FJOURNAL = {Journal of Algebra},
    VOLUME = {76},
      YEAR = {1982},
    NUMBER = {1},
     PAGES = {111--137},
      ISSN = {0021-8693},
   MRCLASS = {16A64 (16A46)},
  MRNUMBER = {659212},
MRREVIEWER = {Idun Reiten},
       DOI = {10.1016/0021-8693(82)90240-X},
       URL = {https://doi.org/10.1016/0021-8693(82)90240-X},
}

@unpublished{Melvin-Slofstra,
  author = {George Melvin and William Slofstra},
  title  = {Soergel bimodules and the shape of {B}ruhat intervals},
  note   = {Preprint},
  year = {2020},
}

@book {EMTW,
    AUTHOR = {Elias, Ben and Makisumi, Shotaro and Thiel, Ulrich and
              Williamson, Geordie},
     TITLE = {Introduction to {S}oergel bimodules},
    SERIES = {RSME Springer Series},
    VOLUME = {5},
 PUBLISHER = {Springer, Cham},
      YEAR = {2020},
     PAGES = {xxv+588},
      ISBN = {978-3-030-48825-3; 978-3-030-48826-0},
   MRCLASS = {20C08 (17B10 18M30 20F55)},
  MRNUMBER = {4220642},
       DOI = {10.1007/978-3-030-48826-0},
       URL = {https://doi.org/10.1007/978-3-030-48826-0},
}

@article {coron2024operadic,
    AUTHOR = {Coron, Basile},
     TITLE = {Operadic {K}azhdan-{L}usztig-{S}tanley theory},
   JOURNAL = {Int. Math. Res. Not. IMRN},
  FJOURNAL = {International Mathematics Research Notices. IMRN},
      YEAR = {2025},
    NUMBER = {3},
     PAGES = {Paper No. rnaf009, 28},
      ISSN = {1073-7928,1687-0247},
   MRCLASS = {18M80 (06A11 13P10)},
  MRNUMBER = {4857948},
MRREVIEWER = {Beno\^it\ Fresse},
       DOI = {10.1093/imrn/rnaf009},
       URL = {https://doi.org/10.1093/imrn/rnaf009},
}

@article {AR16,
    AUTHOR = {Achar, Pramod N. and Riche, Simon},
     TITLE = {Modular perverse sheaves on flag varieties, {II}: {K}oszul
              duality and formality},
   JOURNAL = {Duke Math. J.},
  FJOURNAL = {Duke Mathematical Journal},
    VOLUME = {165},
      YEAR = {2016},
    NUMBER = {1},
     PAGES = {161--215},
      ISSN = {0012-7094},
   MRCLASS = {14M15 (14F05 20G44)},
  MRNUMBER = {3450745},
MRREVIEWER = {Nicolas Perrin},
       DOI = {10.1215/00127094-3165541},
       URL = {https://doi.org/10.1215/00127094-3165541},
}

@article {Mak17,
    AUTHOR = {Makisumi, Shotaro},
     TITLE = {Modular {K}oszul duality for {S}oergel bimodules},
   JOURNAL = {Adv. Math.},
  FJOURNAL = {Advances in Mathematics},
    VOLUME = {428},
      YEAR = {2023},
     PAGES = {Paper No. 109158, 38},
      ISSN = {0001-8708,1090-2082},
   MRCLASS = {14F08 (17B10 20F55 20G15)},
  MRNUMBER = {4601783},
       DOI = {10.1016/j.aim.2023.109158},
       URL = {https://doi.org/10.1016/j.aim.2023.109158},
}

@incollection {Sp82,
    AUTHOR = {Springer, T. A.},
     TITLE = {Quelques applications de la cohomologie d'intersection},
 BOOKTITLE = {Bourbaki {S}eminar, {V}ol. 1981/1982},
    SERIES = {Ast\'{e}risque},
    VOLUME = {92},
     PAGES = {249--273},
 PUBLISHER = {Soc. Math. France, Paris},
      YEAR = {1982},
   MRCLASS = {32C38 (14G20 14L30 20G05)},
  MRNUMBER = {689533},
MRREVIEWER = {Robert D. MacPherson},
}

@article {DeCP,
    AUTHOR = {De Concini, C. and Procesi, C.},
     TITLE = {Wonderful models of subspace arrangements},
   JOURNAL = {Selecta Math. (N.S.)},
  FJOURNAL = {Selecta Mathematica. New Series},
    VOLUME = {1},
      YEAR = {1995},
    NUMBER = {3},
     PAGES = {459--494},
      ISSN = {1022-1824},
   MRCLASS = {14D99 (32G13 52B30)},
  MRNUMBER = {1366622},
MRREVIEWER = {V. Leksin},
       DOI = {10.1007/BF01589496},
       URL = {https://doi.org/10.1007/BF01589496},
}

@article {DL,
    AUTHOR = {Denef, J. and Loeser, F.},
     TITLE = {Weights of exponential sums, intersection cohomology, and
              {N}ewton polyhedra},
   JOURNAL = {Invent. Math.},
  FJOURNAL = {Inventiones Mathematicae},
    VOLUME = {106},
      YEAR = {1991},
    NUMBER = {2},
     PAGES = {275--294},
      ISSN = {0020-9910},
   MRCLASS = {14F32 (11L03 11T23 14F20 14M25)},
  MRNUMBER = {1128216},
MRREVIEWER = {F. Baldassarri},
       DOI = {10.1007/BF01243914},
       URL = {https://doi.org/10.1007/BF01243914},
}

@article {BrLu,
    AUTHOR = {Bressler, Paul and Lunts, Valery A.},
     TITLE = {Intersection cohomology on nonrational polytopes},
   JOURNAL = {Compositio Math.},
  FJOURNAL = {Compositio Mathematica},
    VOLUME = {135},
      YEAR = {2003},
    NUMBER = {3},
     PAGES = {245--278},
      ISSN = {0010-437X},
   MRCLASS = {52B05 (05A99 14F43 14M25)},
  MRNUMBER = {1956814},
MRREVIEWER = {Christian Haase},
       DOI = {10.1023/A:1022232232018},
       URL = {https://doi.org/10.1023/A:1022232232018},
}

@article {BrLu05,
    AUTHOR = {Bressler, Paul and Lunts, Valery A.},
     TITLE = {Hard {L}efschetz theorem and {H}odge-{R}iemann relations for
              intersection cohomology of nonrational polytopes},
   JOURNAL = {Indiana Univ. Math. J.},
  FJOURNAL = {Indiana University Mathematics Journal},
    VOLUME = {54},
      YEAR = {2005},
    NUMBER = {1},
     PAGES = {263--307},
      ISSN = {0022-2518},
   MRCLASS = {14F43 (14M25)},
  MRNUMBER = {2126725},
MRREVIEWER = {Tom C. Braden},
       DOI = {10.1512/iumj.2005.54.2528},
       URL = {https://doi.org/10.1512/iumj.2005.54.2528},
}

@article {BBFK,
    AUTHOR = {Barthel, Gottfried and Brasselet, Jean-Paul and Fieseler,
              Karl-Heinz and Kaup, Ludger},
     TITLE = {Combinatorial intersection cohomology for fans},
   JOURNAL = {Tohoku Math. J. (2)},
  FJOURNAL = {The Tohoku Mathematical Journal. Second Series},
    VOLUME = {54},
      YEAR = {2002},
    NUMBER = {1},
     PAGES = {1--41},
      ISSN = {0040-8735},
   MRCLASS = {14F43 (14M25 32S60 52B20)},
  MRNUMBER = {1878925},
MRREVIEWER = {Tom C. Braden},
       URL = {http://projecteuclid.org/euclid.tmj/1113247177},
}

@article {BrMacP,
    AUTHOR = {Braden, Tom and MacPherson, Robert},
     TITLE = {From moment graphs to intersection cohomology},
   JOURNAL = {Math. Ann.},
  FJOURNAL = {Mathematische Annalen},
    VOLUME = {321},
      YEAR = {2001},
    NUMBER = {3},
     PAGES = {533--551},
      ISSN = {0025-5831},
   MRCLASS = {14F43 (14M15 32S60)},
  MRNUMBER = {1871967},
       DOI = {10.1007/s002080100232},
       URL = {https://doi.org/10.1007/s002080100232},
}

@article {Irv,
    AUTHOR = {Irving, Ronald S.},
     TITLE = {The socle filtration of a {V}erma module},
   JOURNAL = {Ann. Sci. \'{E}cole Norm. Sup. (4)},
  FJOURNAL = {Annales Scientifiques de l'\'{E}cole Normale Sup\'{e}rieure. Quatri\`eme
              S\'{e}rie},
    VOLUME = {21},
      YEAR = {1988},
    NUMBER = {1},
     PAGES = {47--65},
      ISSN = {0012-9593},
   MRCLASS = {17B35},
  MRNUMBER = {944101},
MRREVIEWER = {Anthony Joseph},
       URL = {http://www.numdam.org/item?id=ASENS_1988_4_21_1_47_0},
}

@article {Braden-CICF,
    AUTHOR = {Braden, Tom},
     TITLE = {Remarks on the combinatorial intersection cohomology of fans},
   JOURNAL = {Pure Appl. Math. Q.},
  FJOURNAL = {Pure and Applied Mathematics Quarterly},
    VOLUME = {2},
      YEAR = {2006},
    NUMBER = {4, Special Issue: In honor of Robert D. MacPherson. Part
              2},
     PAGES = {1149--1186},
      ISSN = {1558-8599},
   MRCLASS = {14F43 (14M25)},
  MRNUMBER = {2282417},
MRREVIEWER = {Julianna Tymoczko},
       DOI = {10.4310/PAMQ.2006.v2.n4.a10},
       URL = {https://doi.org/10.4310/PAMQ.2006.v2.n4.a10},
}

@article {Fieseler,
    AUTHOR = {Fieseler, Karl-Heinz},
     TITLE = {Rational intersection cohomology of projective toric
              varieties},
   JOURNAL = {J. Reine Angew. Math.},
  FJOURNAL = {Journal f\"{u}r die Reine und Angewandte Mathematik. [Crelle's
              Journal]},
    VOLUME = {413},
      YEAR = {1991},
     PAGES = {88--98},
      ISSN = {0075-4102},
   MRCLASS = {14F32 (14M25 55N33 57N80)},
  MRNUMBER = {1089798},
MRREVIEWER = {V. P. Snaith},
       DOI = {10.1515/crll.1991.413.88},
       URL = {https://doi.org/10.1515/crll.1991.413.88},
}

@article {KL79,
    AUTHOR = {Kazhdan, David and Lusztig, George},
     TITLE = {Representations of {C}oxeter groups and {H}ecke algebras},
   JOURNAL = {Invent. Math.},
  FJOURNAL = {Inventiones Mathematicae},
    VOLUME = {53},
      YEAR = {1979},
    NUMBER = {2},
     PAGES = {165--184},
      ISSN = {0020-9910},
   MRCLASS = {20H15 (17B35 20G05 22E47)},
  MRNUMBER = {560412},
MRREVIEWER = {Vinay V. Deodhar},
       DOI = {10.1007/BF01390031},
       URL = {https://doi.org/10.1007/BF01390031},
}

@inproceedings {KL80,
    AUTHOR = {Kazhdan, David and Lusztig, George},
     TITLE = {Schubert varieties and {P}oincar\'{e} duality},
 BOOKTITLE = {Geometry of the {L}aplace operator ({P}roc. {S}ympos. {P}ure
              {M}ath., {U}niv. {H}awaii, {H}onolulu, {H}awaii, 1979)},
    SERIES = {Proc. Sympos. Pure Math., XXXVI},
     PAGES = {185--203},
 PUBLISHER = {Amer. Math. Soc., Providence, R.I.},
      YEAR = {1980},
   MRCLASS = {14M15 (55N35 57R19)},
  MRNUMBER = {573434},
}

@article {BGS,
    AUTHOR = {Beilinson, Alexander and Ginzburg, Victor and Soergel,
              Wolfgang},
     TITLE = {Koszul duality patterns in representation theory},
   JOURNAL = {J. Amer. Math. Soc.},
  FJOURNAL = {Journal of the American Mathematical Society},
    VOLUME = {9},
      YEAR = {1996},
    NUMBER = {2},
     PAGES = {473--527},
      ISSN = {0894-0347},
   MRCLASS = {17B10 (14F10 16W50)},
  MRNUMBER = {1322847},
MRREVIEWER = {Rolf K\"{a}llstr\"{o}m},
       DOI = {10.1090/S0894-0347-96-00192-0},
       URL = {https://doi.org/10.1090/S0894-0347-96-00192-0},
}

@article {Ginsburg-perverse,
    AUTHOR = {Ginsburg, Victor},
     TITLE = {Perverse sheaves and {${\bf C}^*$}-actions},
   JOURNAL = {J. Amer. Math. Soc.},
  FJOURNAL = {Journal of the American Mathematical Society},
    VOLUME = {4},
      YEAR = {1991},
    NUMBER = {3},
     PAGES = {483--490},
      ISSN = {0894-0347},
   MRCLASS = {14F32 (14L30 22E47 55N33 57R19)},
  MRNUMBER = {1091465},
MRREVIEWER = {Ivan Mirkovi\'{c}},
       DOI = {10.2307/2939265},
       URL = {https://doi.org/10.2307/2939265},
}

@article {BE,
    AUTHOR = {Bj\"{o}rner, Anders and Ekedahl, Torsten},
     TITLE = {On the shape of {B}ruhat intervals},
   JOURNAL = {Ann. of Math. (2)},
  FJOURNAL = {Annals of Mathematics. Second Series},
    VOLUME = {170},
      YEAR = {2009},
    NUMBER = {2},
     PAGES = {799--817},
      ISSN = {0003-486X},
   MRCLASS = {05E99 (05A05 06A11 14F20 14F43 14M15 20F55)},
  MRNUMBER = {2552108},
MRREVIEWER = {Frank Sottile},
       DOI = {10.4007/annals.2009.170.799},
       URL = {https://doi.org/10.4007/annals.2009.170.799},
}

@incollection {BBD,
    AUTHOR = {Be\u{\i}linson, A. A. and Bernstein, J. and Deligne, P.},
     TITLE = {Faisceaux pervers},
 BOOKTITLE = {Analysis and topology on singular spaces, {I} ({L}uminy,
              1981)},
    SERIES = {Ast\'{e}risque},
    VOLUME = {100},
     PAGES = {5--171},
 PUBLISHER = {Soc. Math. France, Paris},
      YEAR = {1982},
   MRCLASS = {32C38},
  MRNUMBER = {751966},
MRREVIEWER = {Zoghman Mebkhout},
}

@article {BV,
    AUTHOR = {Braden, Tom and Vysogorets, Artem},
     TITLE = {Kazhdan-{L}usztig polynomials of matroids under deletion},
   JOURNAL = {Electron. J. Combin.},
  FJOURNAL = {Electronic Journal of Combinatorics},
    VOLUME = {27},
      YEAR = {2020},
    NUMBER = {1},
     PAGES = {Paper No. 1.17, 17},
   MRCLASS = {05B35},
  MRNUMBER = {4061059},
}

@article {BHMPW,
    AUTHOR = {Braden, Tom and Huh, June and Matherne, Jacob P. and
              Proudfoot, Nicholas and Wang, Botong},
     TITLE = {A semi-small decomposition of the {C}how ring of a matroid},
   JOURNAL = {Adv. Math.},
  FJOURNAL = {Advances in Mathematics},
    VOLUME = {409},
      YEAR = {2022},
     PAGES = {Paper No. 108646},
      ISSN = {0001-8708},
   MRCLASS = {Prelim},
  MRNUMBER = {4477425},
       DOI = {10.1016/j.aim.2022.108646},
       URL = {https://doi.org/10.1016/j.aim.2022.108646},
}

@misc {BHMPW-IH,
   AUTHOR = {Braden, Tom and Huh, June and Matherne, Jacob P. and
              Proudfoot, Nicholas and Wang, Botong},
	 TITLE = {The parity module of a matroid},
	  NOTE = {In preparation},
}

@article {SparsePaving,
    AUTHOR = {Lee, Kyungyong and Nasr, George D. and Radcliffe, Jamie},
     TITLE = {A combinatorial formula for {K}azhdan-{L}usztig polynomials of
              sparse paving matroids},
   JOURNAL = {Electron. J. Combin.},
  FJOURNAL = {Electronic Journal of Combinatorics},
    VOLUME = {28},
      YEAR = {2021},
    NUMBER = {4},
     PAGES = {Paper No. 4.44, 29},
   MRCLASS = {05B35},
  MRNUMBER = {4395926},
       DOI = {10.37236/10602},
       URL = {https://doi.org/10.37236/10602},
}

@article {eq-incidence,
    AUTHOR = {Proudfoot, Nicholas},
     TITLE = {Equivariant incidence algebras and equivariant
              {K}azhdan-{L}usztig-{S}tanley theory},
   JOURNAL = {Algebr. Comb.},
  FJOURNAL = {Algebraic Combinatorics},
    VOLUME = {4},
      YEAR = {2021},
    NUMBER = {4},
     PAGES = {675--681},
   MRCLASS = {05B35 (06A11)},
  MRNUMBER = {4311083},
       DOI = {10.5802/alco.174},
       URL = {https://doi.org/10.5802/alco.174},
}

@article {GX,
    AUTHOR = {Gao, Alice L. L. and Xie, Matthew H. Y.},
     TITLE = {The inverse {K}azhdan-{L}usztig polynomial of a matroid},
   JOURNAL = {J. Combin. Theory Ser. B},
  FJOURNAL = {Journal of Combinatorial Theory. Series B},
    VOLUME = {151},
      YEAR = {2021},
     PAGES = {375--392},
      ISSN = {0095-8956},
   MRCLASS = {05B35},
  MRNUMBER = {4294228},
MRREVIEWER = {Eva Ferrara Dentice},
       DOI = {10.1016/j.jctb.2021.07.004},
       URL = {https://doi.org/10.1016/j.jctb.2021.07.004},
}

@article {Nelson,
    AUTHOR = {Nelson, Peter},
     TITLE = {Almost all matroids are nonrepresentable},
   JOURNAL = {Bull. Lond. Math. Soc.},
  FJOURNAL = {Bulletin of the London Mathematical Society},
    VOLUME = {50},
      YEAR = {2018},
    NUMBER = {2},
     PAGES = {245--248},
      ISSN = {0024-6093},
   MRCLASS = {05B35},
  MRNUMBER = {3830117},
MRREVIEWER = {Eva Ferrara Dentice},
       DOI = {10.1112/blms.12141},
       URL = {https://doi.org/10.1112/blms.12141},
}

@article {AHK,
    AUTHOR = {Adiprasito, Karim and Huh, June and Katz, Eric},
     TITLE = {Hodge theory for combinatorial geometries},
   JOURNAL = {Ann. of Math. (2)},
  FJOURNAL = {Annals of Mathematics. Second Series},
    VOLUME = {188},
      YEAR = {2018},
    NUMBER = {2},
     PAGES = {381--452},
      ISSN = {0003-486X},
   MRCLASS = {05B35 (05E99 14C25 14T05)},
  MRNUMBER = {3862944},
       DOI = {10.4007/annals.2018.188.2.1},
       URL = {https://doi.org/10.4007/annals.2018.188.2.1},
}

@article {BK,
    AUTHOR = {Basterfield, J. G. and Kelly, L. M.},
     TITLE = {A characterization of sets of {$n$} points which determine
              {$n$} hyperplanes},
   JOURNAL = {Proc. Cambridge Philos. Soc.},
  FJOURNAL = {Proceedings of the Cambridge Philosophical Society},
    VOLUME = {64},
      YEAR = {1968},
     PAGES = {585--588},
      ISSN = {0008-1981},
   MRCLASS = {05.27 (50.00)},
  MRNUMBER = {233719},
MRREVIEWER = {H. Hanani},
       DOI = {10.1017/s0305004100043243},
       URL = {https://doi.org/10.1017/s0305004100043243},
}

@article {dBE,
    AUTHOR = {de Bruijn, N. G. and Erd\"{o}s, P.},
     TITLE = {On a combinatorial problem},
   JOURNAL = {Indagationes Math.},
  FJOURNAL = {Proceedings. Akadamie van Wetenschappen Amsterdam.
              North-Holland, Amsterdam},
    VOLUME = {10},
      YEAR = {1948},
     PAGES = {421--423},
      ISSN = {0370-0348},
   MRCLASS = {09.0X},
  MRNUMBER = {28289},
MRREVIEWER = {M. Hall, Jr.},
}

@article {DW1,
    AUTHOR = {Dowling, Thomas A. and Wilson, Richard M.},
     TITLE = {The slimmest geometric lattices},
   JOURNAL = {Trans. Amer. Math. Soc.},
  FJOURNAL = {Transactions of the American Mathematical Society},
    VOLUME = {196},
      YEAR = {1974},
     PAGES = {203--215},
      ISSN = {0002-9947},
   MRCLASS = {05B35},
  MRNUMBER = {345849},
MRREVIEWER = {Ann Miller},
       DOI = {10.2307/1997023},
       URL = {https://doi.org/10.2307/1997023},
}

@article {DW2,
    AUTHOR = {Dowling, Thomas A. and Wilson, Richard M.},
     TITLE = {Whitney number inequalities for geometric lattices},
   JOURNAL = {Proc. Amer. Math. Soc.},
  FJOURNAL = {Proceedings of the American Mathematical Society},
    VOLUME = {47},
      YEAR = {1975},
     PAGES = {504--512},
      ISSN = {0002-9939},
   MRCLASS = {05B35},
  MRNUMBER = {354422},
MRREVIEWER = {Ann Miller},
       DOI = {10.2307/2039773},
       URL = {https://doi.org/10.2307/2039773},
}

@article {Karu,
    AUTHOR = {Karu, Kalle},
     TITLE = {Hard {L}efschetz theorem for nonrational polytopes},
   JOURNAL = {Invent. Math.},
  FJOURNAL = {Inventiones Mathematicae},
    VOLUME = {157},
      YEAR = {2004},
    NUMBER = {2},
     PAGES = {419--447},
      ISSN = {0020-9910},
   MRCLASS = {52B20 (14F43 52B05)},
  MRNUMBER = {2076929},
MRREVIEWER = {Boris J. Kazarnovski\u{\i}},
       DOI = {10.1007/s00222-004-0358-3},
       URL = {https://doi.org/10.1007/s00222-004-0358-3},
}

@article {EW,
    AUTHOR = {Elias, Ben and Williamson, Geordie},
     TITLE = {The {H}odge theory of {S}oergel bimodules},
   JOURNAL = {Ann. of Math. (2)},
  FJOURNAL = {Annals of Mathematics. Second Series},
    VOLUME = {180},
      YEAR = {2014},
    NUMBER = {3},
     PAGES = {1089--1136},
      ISSN = {0003-486X},
   MRCLASS = {20C08 (20F55)},
  MRNUMBER = {3245013},
MRREVIEWER = {Chi Kin Mak},
       DOI = {10.4007/annals.2014.180.3.6},
       URL = {https://doi.org/10.4007/annals.2014.180.3.6},
}

@article {dCM02,
    AUTHOR = {de Cataldo, Mark Andrea A. and Migliorini, Luca},
     TITLE = {The hard {L}efschetz theorem and the topology of semismall
              maps},
   JOURNAL = {Ann. Sci. \'{E}cole Norm. Sup. (4)},
  FJOURNAL = {Annales Scientifiques de l'\'{E}cole Normale Sup\'{e}rieure. Quatri\`eme
              S\'{e}rie},
    VOLUME = {35},
      YEAR = {2002},
    NUMBER = {5},
     PAGES = {759--772},
}

@article {dCM05,
    AUTHOR = {de Cataldo, Mark Andrea A. and Migliorini, Luca},
     TITLE = {The {H}odge theory of algebraic maps},
   JOURNAL = {Ann. Sci. \'Ecole Norm. Sup. (4)},
  FJOURNAL = {Annales Scientifiques de l'\'Ecole Normale Sup\'erieure.
              Quatri\`eme S\'erie},
    VOLUME = {38},
      YEAR = {2005},
    NUMBER = {5},
     PAGES = {693--750},
      ISSN = {0012-9593},
   MRCLASS = {14D07 (32S60)},
  MRNUMBER = {2195257},
MRREVIEWER = {Christophe\ Mourougane},
       DOI = {10.1016/j.ansens.2005.07.001},
       URL = {https://doi.org/10.1016/j.ansens.2005.07.001},
}

@article {JMW,
    AUTHOR = {Juteau, Daniel and Mautner, Carl and Williamson, Geordie},
     TITLE = {Parity sheaves},
   JOURNAL = {J. Amer. Math. Soc.},
  FJOURNAL = {Journal of the American Mathematical Society},
    VOLUME = {27},
      YEAR = {2014},
    NUMBER = {4},
     PAGES = {1169--1212},
      ISSN = {0894-0347},
   MRCLASS = {14F05 (14M25 20G15)},
  MRNUMBER = {3230821},
MRREVIEWER = {Markus Banagl},
       DOI = {10.1090/S0894-0347-2014-00804-3},
       URL = {https://doi.org/10.1090/S0894-0347-2014-00804-3},
}

@article {EPW,
    AUTHOR = {Elias, Ben and Proudfoot, Nicholas and Wakefield, Max},
     TITLE = {The {K}azhdan-{L}usztig polynomial of a matroid},
   JOURNAL = {Adv. Math.},
  FJOURNAL = {Advances in Mathematics},
    VOLUME = {299},
      YEAR = {2016},
     PAGES = {36--70},
      ISSN = {0001-8708},
   MRCLASS = {05B35},
  MRNUMBER = {3519463},
MRREVIEWER = {Eva Ferrara Dentice},
       DOI = {10.1016/j.aim.2016.05.005},
       URL = {https://doi.org/10.1016/j.aim.2016.05.005},
}

@article {FY,
    AUTHOR = {Feichtner, Eva Maria and Yuzvinsky, Sergey},
     TITLE = {Chow rings of toric varieties defined by atomic lattices},
   JOURNAL = {Invent. Math.},
  FJOURNAL = {Inventiones Mathematicae},
    VOLUME = {155},
      YEAR = {2004},
    NUMBER = {3},
     PAGES = {515--536},
      ISSN = {0020-9910},
   MRCLASS = {14C15 (14M25)},
  MRNUMBER = {2038195},
MRREVIEWER = {G. K. Sankaran},
       DOI = {10.1007/s00222-003-0327-2},
       URL = {https://doi.org/10.1007/s00222-003-0327-2},
}

@article {GPY,
    AUTHOR = {Gedeon, Katie and Proudfoot, Nicholas and Young, Benjamin},
     TITLE = {The equivariant {K}azhdan-{L}usztig polynomial of a matroid},
   JOURNAL = {J. Combin. Theory Ser. A},
  FJOURNAL = {Journal of Combinatorial Theory. Series A},
    VOLUME = {150},
      YEAR = {2017},
     PAGES = {267--294},
      ISSN = {0097-3165},
   MRCLASS = {05B35 (05E05 05E18 20C30)},
  MRNUMBER = {3645577},
MRREVIEWER = {Eva Ferrara Dentice},
       DOI = {10.1016/j.jcta.2017.03.007},
       URL = {https://doi.org/10.1016/j.jcta.2017.03.007},
}

@article {Greene,
    AUTHOR = {Greene, Curtis},
     TITLE = {A rank inequality for finite geometric lattices},
   JOURNAL = {J. Combinatorial Theory},
  FJOURNAL = {Journal of Combinatorial Theory},
    VOLUME = {9},
      YEAR = {1970},
     PAGES = {357--364},
      ISSN = {0021-9800},
   MRCLASS = {06.40},
  MRNUMBER = {266824},
MRREVIEWER = {R. J. Bumcrot},
}

@article {Heron,
    AUTHOR = {Heron, A. P.},
     TITLE = {A property of the hyperplanes of a matroid and an extension of
              {D}ilworth's theorem},
   JOURNAL = {J. Math. Anal. Appl.},
  FJOURNAL = {Journal of Mathematical Analysis and Applications},
    VOLUME = {42},
      YEAR = {1973},
     PAGES = {119--131},
      ISSN = {0022-247X},
   MRCLASS = {05B35},
  MRNUMBER = {376403},
MRREVIEWER = {Stephen Tanny},
       DOI = {10.1016/0022-247X(73)90124-8},
       URL = {https://doi.org/10.1016/0022-247X(73)90124-8},
}

@article {HW,
    AUTHOR = {Huh, June and Wang, Botong},
     TITLE = {Enumeration of points, lines, planes, etc},
   JOURNAL = {Acta Math.},
  FJOURNAL = {Acta Mathematica},
    VOLUME = {218},
      YEAR = {2017},
    NUMBER = {2},
     PAGES = {297--317},
      ISSN = {0001-5962},
   MRCLASS = {05A15 (52C10)},
  MRNUMBER = {3733101},
       DOI = {10.4310/ACTA.2017.v218.n2.a2},
       URL = {https://doi.org/10.4310/ACTA.2017.v218.n2.a2},
}

@article {KungRadonI,
    AUTHOR = {Kung, Joseph P. S.},
     TITLE = {The {R}adon transforms of a combinatorial geometry. {I}},
   JOURNAL = {J. Combin. Theory Ser. A},
  FJOURNAL = {Journal of Combinatorial Theory. Series A},
    VOLUME = {26},
      YEAR = {1979},
    NUMBER = {2},
     PAGES = {97--102},
      ISSN = {0097-3165},
   MRCLASS = {05B35},
  MRNUMBER = {530281},
MRREVIEWER = {Ethan D. Bolker},
       DOI = {10.1016/0097-3165(79)90059-1},
       URL = {https://doi.org/10.1016/0097-3165(79)90059-1},
}

@article {KungRadonII,
    AUTHOR = {Kung, Joseph P. S.},
     TITLE = {The {R}adon transforms of a combinatorial geometry. {II}.
              {P}artition lattices},
   JOURNAL = {Adv. Math.},
  FJOURNAL = {Advances in Mathematics},
    VOLUME = {101},
      YEAR = {1993},
    NUMBER = {1},
     PAGES = {114--132},
      ISSN = {0001-8708},
   MRCLASS = {05B35 (05A18 06A07 44A12 51D20)},
  MRNUMBER = {1239455},
MRREVIEWER = {J. R. Griggs},
       DOI = {10.1006/aima.1993.1044},
       URL = {https://doi.org/10.1006/aima.1993.1044},
}

@incollection {KungLinesPlanes,
    AUTHOR = {Kung, Joseph P. S.},
     TITLE = {On the lines-planes inequality for matroids},
      NOTE = {In memory of Gian-Carlo Rota},
   JOURNAL = {J. Combin. Theory Ser. A},
  FJOURNAL = {Journal of Combinatorial Theory. Series A},
    VOLUME = {91},
      YEAR = {2000},
    NUMBER = {1-2},
     PAGES = {363--368},
      ISSN = {0097-3165},
   MRCLASS = {05B35 (05B25 06C10 51D20 51D25)},
  MRNUMBER = {1780029},
MRREVIEWER = {Walter Wenzel},
       DOI = {10.1006/jcta.2000.3101},
       URL = {https://doi.org/10.1006/jcta.2000.3101},
}

@incollection {KungRadon,
    AUTHOR = {Kung, Joseph P. S.},
     TITLE = {Radon transforms in combinatorics and lattice theory},
 BOOKTITLE = {Combinatorics and ordered sets ({A}rcata, {C}alif., 1985)},
    SERIES = {Contemp. Math.},
    VOLUME = {57},
     PAGES = {33--74},
 PUBLISHER = {Amer. Math. Soc., Providence, RI},
      YEAR = {1986},
   MRCLASS = {05B05 (05C60 06B99 44A15)},
  MRNUMBER = {856232},
MRREVIEWER = {L. M. Kelly},
       DOI = {10.1090/conm/057/856232},
       URL = {https://doi.org/10.1090/conm/057/856232},
}

@inproceedings {Mason,
    AUTHOR = {Mason, J. H.},
     TITLE = {Matroids: unimodal conjectures and {M}otzkin's theorem},
 BOOKTITLE = {Combinatorics ({P}roc. {C}onf. {C}ombinatorial {M}ath.,
              {M}ath. {I}nst., {O}xford, 1972)},
     PAGES = {207--220},
      YEAR = {1972},
   MRCLASS = {05B35},
  MRNUMBER = {0349445},
MRREVIEWER = {W. Dorfler},
}

@article {Motzkin,
    AUTHOR = {Motzkin, Theodore},
     TITLE = {The lines and planes connecting the points of a finite set},
   JOURNAL = {Trans. Amer. Math. Soc.},
  FJOURNAL = {Transactions of the American Mathematical Society},
    VOLUME = {70},
      YEAR = {1951},
     PAGES = {451--464},
      ISSN = {0002-9947},
   MRCLASS = {48.0X},
  MRNUMBER = {41447},
MRREVIEWER = {P. Erd\H{o}s},
       DOI = {10.2307/1990609},
       URL = {https://doi.org/10.2307/1990609},
}

@book {Oxley,
    AUTHOR = {Oxley, James},
     TITLE = {Matroid theory},
    SERIES = {Oxford Graduate Texts in Mathematics},
    VOLUME = {21},
   EDITION = {Second},
 PUBLISHER = {Oxford University Press, Oxford},
      YEAR = {2011},
     PAGES = {xiv+684},
      ISBN = {978-0-19-960339-8},
   MRCLASS = {05-01 (05B35 90C27)},
  MRNUMBER = {2849819},
MRREVIEWER = {Maruti M. Shikare},
       DOI = {10.1093/acprof:oso/9780198566946.001.0001},
       URL = {https://doi.org/10.1093/acprof:oso/9780198566946.001.0001},
}

@article {subdivisions,
    AUTHOR = {Stanley, Richard P.},
     TITLE = {Subdivisions and local {$h$}-vectors},
   JOURNAL = {J. Amer. Math. Soc.},
  FJOURNAL = {Journal of the American Mathematical Society},
    VOLUME = {5},
      YEAR = {1992},
    NUMBER = {4},
     PAGES = {805--851},
      ISSN = {0894-0347},
   MRCLASS = {52B20 (05E99 06A07 13D40 55U10)},
  MRNUMBER = {1157293},
MRREVIEWER = {Margaret M. Bayer},
       DOI = {10.2307/2152711},
       URL = {https://doi.org/10.2307/2152711},
}

@article {KLS,
    AUTHOR = {Proudfoot, Nicholas},
     TITLE = {The algebraic geometry of {K}azhdan-{L}usztig-{S}tanley
              polynomials},
   JOURNAL = {EMS Surv. Math. Sci.},
  FJOURNAL = {EMS Surveys in Mathematical Sciences},
    VOLUME = {5},
      YEAR = {2018},
    NUMBER = {1-2},
     PAGES = {99--127},
      ISSN = {2308-2151},
   MRCLASS = {14N10 (06A11 14C17 14N15)},
  MRNUMBER = {3880222},
MRREVIEWER = {Gergely B\'{e}rczi},
       DOI = {10.4171/EMSS/28},
       URL = {https://doi.org/10.4171/EMSS/28},
}

@article {PXY,
    AUTHOR = {Proudfoot, Nicholas and Xu, Yuan and Young, Ben},
     TITLE = {The {$Z$}-polynomial of a matroid},
   JOURNAL = {Electron. J. Combin.},
  FJOURNAL = {Electronic Journal of Combinatorics},
    VOLUME = {25},
      YEAR = {2018},
    NUMBER = {1},
     PAGES = {Paper 1.26, 21},
   MRCLASS = {05B35 (11B83 55N33)},
  MRNUMBER = {3785005},
MRREVIEWER = {Daniel Matei},
}

@book {StanleyAC,
    AUTHOR = {Stanley, Richard P.},
     TITLE = {Algebraic combinatorics},
    SERIES = {Undergraduate Texts in Mathematics},
      NOTE = {Walks, trees, tableaux, and more,
              Second edition of [ MR3097651]},
 PUBLISHER = {Springer, Cham},
      YEAR = {2018},
     PAGES = {xvi+263},
      ISBN = {978-3-319-77172-4; 978-3-319-77173-1},
   MRCLASS = {05-01 (05Axx 05Exx)},
  MRNUMBER = {3823204},
       DOI = {10.1007/978-3-319-77173-1},
       URL = {https://doi.org/10.1007/978-3-319-77173-1},
}

@book {Welsh,
    AUTHOR = {Welsh, D. J. A.},
     TITLE = {Matroid theory},
      NOTE = {L. M. S. Monographs, No. 8},
 PUBLISHER = {Academic Press [Harcourt Brace Jovanovich, Publishers],
              London-New York},
      YEAR = {1976},
     PAGES = {xi+433},
   MRCLASS = {05B35},
  MRNUMBER = {0427112},
MRREVIEWER = {W. T. Tutte},
}

@article {Orlik-Solomon,
    AUTHOR = {Orlik, Peter and Solomon, Louis},
     TITLE = {Combinatorics and topology of complements of hyperplanes},
   JOURNAL = {Invent. Math.},
  FJOURNAL = {Inventiones Mathematicae},
    VOLUME = {56},
      YEAR = {1980},
    NUMBER = {2},
     PAGES = {167--189},
      ISSN = {0020-9910},
   MRCLASS = {32C40 (05A99 06B99)},
  MRNUMBER = {558866},
MRREVIEWER = {M. Sebastiani},
       DOI = {10.1007/BF01392549},
       URL = {https://doi-org.uoregon.idm.oclc.org/10.1007/BF01392549},
}

@book {Orlik-Terao,
    AUTHOR = {Orlik, Peter and Terao, Hiroaki},
     TITLE = {Arrangements of hyperplanes},
    SERIES = {Grundlehren der Mathematischen Wissenschaften [Fundamental
              Principles of Mathematical Sciences]},
    VOLUME = {300},
 PUBLISHER = {Springer-Verlag, Berlin},
      YEAR = {1992},
     PAGES = {xviii+325},
      ISBN = {3-540-55259-6},
   MRCLASS = {52B30 (14F35 20F36 20F55 32S25 57N65)},
  MRNUMBER = {1217488},
MRREVIEWER = {Michel Yves Jambu},
       DOI = {10.1007/978-3-662-02772-1},
       URL = {https://doi-org.ezproxy.library.wisc.edu/10.1007/978-3-662-02772-1},
}

@article {dCM,
    AUTHOR = {de Cataldo, Mark Andrea A. and Migliorini, Luca},
     TITLE = {The decomposition theorem, perverse sheaves and the topology
              of algebraic maps},
   JOURNAL = {Bull. Amer. Math. Soc. (N.S.)},
  FJOURNAL = {American Mathematical Society. Bulletin. New Series},
    VOLUME = {46},
      YEAR = {2009},
    NUMBER = {4},
     PAGES = {535--633},
      ISSN = {0273-0979},
   MRCLASS = {14C30 (14F05 14F43 18E30)},
  MRNUMBER = {2525735},
MRREVIEWER = {Adrian Langer},
       DOI = {10.1090/S0273-0979-09-01260-9},
       URL = {https://doi.org/10.1090/S0273-0979-09-01260-9},
}
\bibliographystyle{amsalpha}

\end{document}